\pdfoutput=1
\documentclass[reqno]{amsart}
\usepackage[unicode=true,pdfusetitle,
 bookmarks=true,bookmarksnumbered=false,bookmarksopen=false,
 breaklinks=false,pdfborder={0 0 0},pdfborderstyle={},backref=false,colorlinks=true]
 {hyperref}
\hypersetup{
    colorlinks, linkcolor=purple,
    citecolor=blue, urlcolor=magenta
}

\usepackage{amssymb, mathtools, enumerate}
\usepackage{amsmath}
\usepackage{tikz}
\usepackage{tikz-cd}
\usepackage[scr]{rsfso}
\usepackage{bm}
\usepackage{bbm}
\usepackage[margin=3cm]{geometry}
\usetikzlibrary{matrix,arrows,decorations.pathmorphing, decorations.markings, cd}
\usepackage[capitalise]{cleveref}
\usepackage{stmaryrd, rotate}
\usepackage[alphabetic]{amsrefs}
\usepackage{microtype}

\pgfdeclarelayer{bg}
\pgfsetlayers{bg,main}
\usetikzlibrary{calc}

\newtheorem{theorem}{Theorem}[section]

\newtheorem{corollary}[theorem]{Corollary}
\newtheorem{lemma}[theorem]{Lemma}
\newtheorem{proposition}[theorem]{Proposition}

\newtheorem{introtheorem}{Theorem}
\newtheorem{introcorollary}[introtheorem]{Corollary}
\newtheorem{introproposition}[introtheorem]{Proposition}

\theoremstyle{definition}
\newtheorem*{claim*}{Claim}
\newtheorem{construction}[theorem]{Construction}

\newtheorem{convention}[theorem]{Convention}
\newtheorem{definition}[theorem]{Definition}
\newtheorem{introdefinition}[introtheorem]{Definition}

\newtheorem{example}[theorem]{Example}

\newtheorem{notation}[theorem]{Notation}
\newtheorem{observation}[theorem]{Observation}

\newtheorem{remark}[theorem]{Remark}
\newtheorem*{remark*}{Remark}
\newtheorem{warning}[theorem]{Warning}

\newcommand{\lc}[1]{\smash{#1}^\uparrow}
\newcommand{\rc}[1]{\smash{#1}^\downarrow}
\newcommand{\lrc}[1]{\smash{#1}^\updownarrow}
\newcommand{\wlc}[1]{\smash{#1}^{\uparrowlabel{w}}}
\newcommand{\wrc}[1]{\smash{#1}^{\downarrowlabel{w}}}

\newcommand{\trc}[1]{\smash{#1}^{\downarrowlabel{t}}}

\newcommand{\wtc}[1]{\smash{#1}^{\biarrowlabel{w}{t}}}

\newcommand{\K}{\widehat{\mathcal{K}}}
\newcommand{\Kw}{\mathcal{K}}
\newcommand{\wb}{wb}
\newcommand{\tb}{tb}

\newcommand{\alt}[1]{\textcolor{gray}{\textup{(}#1\textup{)}}}

\usepackage{macros}

\title[Unbounded Weight Structures: (Re)construction and Completion]{Unbounded Weight Structures: \\(Re)construction and Completion}
\author{Thomas Nikolaus}
\address{T.N.: FB Mathematik und Informatik, Universität Münster, Einsteinstraße 62, 48149 Münster, Germany}

\author{Phil Pützstück}
\address{P.P.: FB Mathematik und Informatik, Universität Münster, Einsteinstraße 62, 48149 Münster, Germany}

\begin{document}
\begin{abstract}
    We develop a theory of completeness for weight structures on stable categories,
    dual to the theory of complete $t$-structures.
    As in the bounded case, we show that complete weight structures are determined by their weight heart,
    giving rise to a universal construction $\cA \mapsto \Kw(\cA)$ that assigns a complete weight category
    to an additive category and recovers classical examples such as homotopy categories of chain complexes.

    We also give a general construction of weight structures on presentable stable categories generated by a
    small set of objects, generalizing a result of Bondarko and Pauksztello.
    This recovers the standard weight structure on spectra and an exotic one related to Anderson duality.
    We identify their completions with modules over the (spectral) integral Steenrod algebra.

  To treat naturally occurring examples - such as derived categories of abelian categories and module categories over ring spectra - which are often only partially weight complete, we introduce the notion of weak $t$-structures. Within this framework, we prove that any stable category equipped with compatible weight and weak $t$-structures, and satisfying left weight completeness and right $t$-completeness, can be reconstructed from its heart via a two-step completion process $\cA \mapsto \K(\cA)$.
\end{abstract}

\begingroup\parskip=0pt
\maketitle
\tableofcontents
\endgroup

\section{Introduction}

Many constructions in homotopy theory and algebra proceed by building
objects from simpler pieces. Classical examples include CW decompositions
of spaces or spectra and projective resolutions of chain complexes.
Such cell structures provide filtrations whose successive pieces may be
viewed as ``pure'' building blocks and are an indispensable tool in many constructions and computations.

Weight structures provide an abstract framework for organizing such
decompositions in stable categories. Introduced by Bondarko
\cite{Bondarko10} (and independently by Pauksztello \cite{Pauksztello08} under the name of co-$t$-structures),
they can be viewed as a dual analogue of
$t$-structures on triangulated categories or stable $\infty$-categories. Although
the formal definition appeared only relatively recently, many of the
underlying ideas were already present earlier in the literature, notably in \cite[Section 1.7]{Waldhausen-Ktheoryofspaces}, \cite[Section 4]{BGT}. This presents some important applications of weight structures
to algebraic $K$-theory. They have also been used recently, see e.g.~\cite{Sosnilo}, \cite{Ishan-Vova}, \cite{Hebestreit-Steimle}.

Concretely, a weight structure on a stable category $\cC$ is determined
by a full subcategory $\cC_{\ge 0} \subseteq \cC$ closed under retracts, extensions and finite colimits
such that every object $X \in \cC$ admits a
fiber sequence
\[
    X' \to X \to X'',
\]
called a \emph{weight decomposition}, where $X'' \in \cC_{\ge 0}$ and
$X' \in \cC_{w \leq -1} \coloneqq {}^{\perp}(\cC_{\ge 0})$\footnote{Here
${}^{\perp}(\cC_{\ge 0})$ denotes the left orthogonal subcategory
consisting of objects that admit no nontrivial morphisms into objects of
$\cC_{\ge 0}$.}. Intuitively, the objects $X'$ and $X''$ describe the parts
of $X$ of weight $\leq -1$ and $\ge 0$, respectively.
Generally we set $\cC_{\geq n} = \Sigma^n \cC_{\geq 0}$ and $\cC_{w \leq n} = \Sigma^n \cC_{w\leq 0}$ for any $n \in \Z$.
The \emph{weight heart}
\[
    \cC^{w\heartsuit} \coloneqq \cC_{\ge 0} \cap \cC_{w \leq 0},
\]
consists of objects of pure weight $0$.

In many situations where weight structures arise—such as in algebraic
$K$-theory—the ambient category $\cC$ is small and the weight structure
is \emph{bounded}.
This means that every object lies in
$\cC_{[a,b]} \coloneqq \cC_{\ge a} \cap \cC_{w \le b}$ for some integers
$a \le b$, or equivalently, that every object has a finite filtration with associated graded in (shifts) of the heart  $\cC^{w\heartsuit}$.
In this case the category $\cC$ can be recovered from the heart as the
minimal stable category generated by $\cC^{w\heartsuit}$ \cite[Proposition 3.3 and Corollary 3.4]{Sosnilo}, see also Theorem \ref{thm:vova}.
In this sense, bounded weight structures describe stable categories
that are minimally generated by their weight heart.

While the bounded case is by now well understood, considerably less is known about unbounded weight structures. This leads to the following natural question:
\begin{quote}
\emph{To what extent can a stable category be reconstructed from its weight heart in the absence of boundedness?}
\end{quote}
The present paper is devoted to this problem: we study unbounded weight structures and, in particular, focus on the opposite extreme to the bounded case: those that describe stable categories which are, in a precise sense, maximal among those admitting a given weight heart.
Our guiding principle is that a weight structure encodes how objects are assembled from the heart, while completeness determines how far this assembly process can be extended beyond the bounded setting. \\

Concretely, we introduce completeness conditions for weight structures,
analogous to those familiar from the theory of $t$-structures \cite{BBD}, \cite{Neeman-Triangulated}, \cite[Section 1.2.1]{HA}. Completeness
expresses that constructions from the heart can be carried out
indefinitely in a controlled way, and we show that complete weight
structures are again determined by their weight heart
$\cC^{w\heartsuit}$.
Completeness for weight
structures, similarly to the case of $t$-structures, comes in two
directions: left and right. Since right completeness for
$\cC$ is equivalent to left completeness for the opposite category
$\cC^{\op}$, we concentrate on the left version here. More precisely, we have:

\begin{introdefinition}[Definition \ref{def:weight-left-complete}]\label{def_weightcomp}
A weight structure $(\cC, \cC_{\geq 0})$ is called left complete if
    \begin{enumerate}
        \item $\cC_{\geq \infty} \coloneqq \bigcap_{n \geq 0} \cC_{\geq n} = 0$.
        \item $\cC$ admits sequential colimits growing in connectivity, i.e. colimits of sequential diagrams where the successive cofibers become arbitrarily connective.
        \item $\cC_{\geq 0} \subseteq \cC$ is closed under such sequential colimits growing in connectivity.
    \end{enumerate}
\end{introdefinition}

We will see that by the Dold-Kan correspondence, (2) and (3) combined
are equivalent to the statement that $\cC_{\ge 0}$ admits geometric realizations and that the inclusion $\cC_{\ge 0} \subseteq \cC$ preserves them (Lemma \ref{lem:colim-comp-via-geom-real}).
Let us also mention that if $(\cC,\cC_{\geq 0})$ instead determines a $t$-structure,
and we replace colimits with limits in (2) and (3), then the above definition
precisely recovers left $t$-completeness (Lemma \ref{lem:left-t-comp-is-conn-comp}).

We shall study many examples and see for instance that the usual weight structures
on spectra and the derived category of a ring are left complete, whereas those on finite spectra and the perfect derived category are not.
The main structural result of the paper is the following.

\begin{introtheorem}[Proposition \ref{prop:weight-pdelta}, Theorem \ref{thm:upw-comp}, Corollary \ref{cor:upw-comp-idem}]\label{Theorem_intro}
Let $\cC$ be a stable category with a weight structure.
\begin{enumerate}
    \item There exists a ``left completion'' $\lc{\cC}$ of $\cC$, i.e.~a universal weight exact functor
        \[
            \cC \to \lc{\cC}
        \]
        to a left complete weight category which is an equivalence
        if and only if $\cC$ is left complete.


  \item The induced functor
        $\cC^{w\heartsuit} \to (\lc{\cC})^{w\heartsuit}$ is an idempotent completion and $\cC_{w<\infty} \to (\lc{\cC})_{w<\infty}$ is a fully faithful dense inclusion.

    \item If $\cC$ is left complete, then the subcategory $\cC_{\ge 0}$ is idempotent complete
        and obtained from the weight heart by freely adjoining geometric realizations,
        i.e.~$\cC_{\geq 0} = \cP^{\Delta^\op}(\cC^{w\heartsuit})$.
\end{enumerate}
\end{introtheorem}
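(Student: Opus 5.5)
We construct $\lc{\cC}$ directly from the weight heart and then compare it with $\cC$. Write $\cA = \cC^{w\heartsuit}$. The candidate for the connective part is $\cP^{\Delta^\op}(\cA)$, the category obtained from $\cA$ by freely adjoining geometric realizations. Concretely, it is the full subcategory of $\mathrm{Fun}^{\times}(\cA^{\op},\mathrm{Sp}_{\geq 0})$, or equivalently of $\mathrm{Fun}^\times(\cA^\op,\mathcal{S})$, generated by the representables under geometric realizations.

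\textbf{Step 1: $\cP^{\Delta^\op}(\cA)$ is prestable.} We first show that $\cP^{\Delta^\op}(\cA)$ is a prestable category with finite colimits. Using Dold--Kan, its objects are geometric realizations of simplicial objects in $\cA$, i.e.\ "connective chain complexes" in $\cA$. Suspension is given by shifting, and cofibers of maps are computed levelwise after resolving. Left exactness and stability under cofibers follow from the fact that $\cA$ is additive.

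\textbf{Step 2: define the completion.} We set
\[
\lc{\cC} := \operatorname{colim}\bigl(\cP^{\Delta^\op}(\cA) \xrightarrow{\Sigma} \cP^{\Delta^\op}(\cA) \xrightarrow{\Sigma} \cdots\bigr),
\]
that is, the Spanier--Whitehead stabilization. Its connective part is $\cP^{\Delta^\op}(\cA)$ and its $\cC_{w\le 0}$ is the left orthogonal. We have to produce weight decompositions. For an object $|X_\bullet|$, the decomposition comes from the skeletal filtration: the fiber sequence $\mathrm{sk}_{n}$-type splitting, combined with the fact that representables $y(a)$ lie in the heart because $\mathrm{Map}(y(a), -)$ commutes with geometric realizations in $\cP^{\Delta^\op}$. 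That these are orthogonal to $\Sigma$ of connectives is the key computation. Left completeness is then formal: $\cP^{\Delta^\op}$ has geometric realizations, and by Lemma \ref{lem:colim-comp-via-geom-real} this gives conditions (2) and (3). Condition (1) holds because an object in all $\cC_{\ge n}$ has vanishing maps from every $\Sigma^k y(a)$, and these generate under colimits.

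\textbf{Step 3: the comparison functor and universality.} We build $\cC \to \lc{\cC}$ by the restricted Yoneda embedding $X\mapsto \mathrm{Map}_\cC(-,X)|_{\cA}$. To see that it lands in the right place and is weight exact, we use weight decompositions to resolve each $X\in\cC_{\ge 0}$ by a simplicial object in $\cA$ (iterated weight decompositions, a "weight complex"). We then check that $X$ maps to its realization.

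For universality, let $\cD$ be left complete. A weight exact functor $\cC\to\cD$ restricts to $\cA\to\cD^{w\heartsuit}$. This extends uniquely to a geometric-realization-preserving functor $\cP^{\Delta^\op}(\cA)\to\cD_{\ge0}$, by the universal property of $\cP^{\Delta^\op}$ and left completeness of $\cD$, and then it extends to the stabilization.

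\textbf{Step 4: parts (2) and (3), and the main obstacle.} For (2), the heart of $\lc{\cC}$ consists of the retracts of representables, which is the idempotent completion of $\cA$. Objects of bounded-above weight are finite towers, so they come from $\cC$ up to retracts, which gives the dense inclusion. Part (3) follows once we know $\cC\to\lc{\cC}$ is an equivalence for left complete $\cC$.

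This equivalence is the main obstacle. We must show that every $X\in\cC_{\ge0}$ is the geometric realization in $\cC$ of its weight-complex resolution, and that mapping spaces agree. The plan is to iterate weight decompositions to get a tower $X\to\cdots$ whose successive fibers become increasingly connective. Its colimit exists by (2), lies in $\cC_{\ge0}$ by (3), and differs from $X$ by an object of $\cC_{\ge\infty}=0$. Full faithfulness reduces, via this convergent tower, to mapping out of heart objects, which holds by Yoneda.
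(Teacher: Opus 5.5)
\textbf{The gap: your construction only sees the weight bounded below part of $\cC$.} Every object of $\SW(\cP^{\Delta^\op}(\cA))$ is a desuspension of a connective one, so this category is always weight bounded below. In the paper's terms it is $\lc{\Stab(\cA)}\simeq\lc{(\cC_{\wb})}\simeq(\lc{\cC})_{w>-\infty}$, not $\lc{\cC}$.

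This is harmless when $\cC=\cC_{w>-\infty}$, but it fails in general. Take $\cC=\Sp$ with its standard weight structure, which is left complete, and $X=\bigoplus_{n\ge0}\mathbb{S}[-n]$. This $X$ is a sum of weight coconnective objects, hence lies in $\Sp_{w\le 0}$, but it is not bounded below. Your Step 3 recipe (resolve $\Sigma^kX\in\cC_{\ge0}$, then desuspend) has nothing to act on, and $\Sp$ cannot be equivalent, compatibly with connectives, to your $\lc{\Sp}\simeq\Sp_{>-\infty}$. So (1) fails.

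Worse, the universal property itself fails. Applied to $\cD=\Sp$, it would give $\mathrm{id}_{\Sp}\simeq G\circ\eta$ with $G$ weight exact out of a weight bounded below category. Then $G$ only takes bounded below values, so $X\simeq G(\eta X)$ would be bounded below, a contradiction.

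The error in Step 3 is that you verify universality relative to additive functors out of the heart. A weight exact functor out of $\cC$ is not determined by its restriction to $\cC^{w\heartsuit}$, because the unbounded-below objects of $\cC_{w<\infty}$ are not colimits of heart objects. For the same reason the second half of (2) fails in your model: there $(\lc{\cC})_{w<\infty}$ consists of weight bounded objects and cannot receive $\cC_{w<\infty}$ fully faithfully.

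\textbf{What is needed, and what survives.} The seed of the completion must be $\cC_{w<\infty}$, not the heart. The paper takes $\lc{\cC}$ to be the full subcategory of $\Ind(\cC_{w<\infty})$, with the weight structure compactly generated by $\cC_{w\le 0}$, on those Ind-objects that admit a $\cC_{w<\infty}$-weight complex. Equivalently, it is the closure of $\cC_{w<\infty}$ under sequential colimits growing in connectivity. The unit $\cC\to\lc{\cC}$ is the left Kan extension of the Yoneda embedding along $\cC_{w<\infty}\subseteq\cC$; it sends $X$ to the Ind-object given by any weight complex of $X$. Universality then follows because exact functors of bounded below amplitude into a left complete target are left Kan extended from any left dense subcategory. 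Part (2) comes from left density of $\cC_{w<\infty}$ together with the comparison of $\cC_{\wb}$ and $(\lc{\cC})_{\wb}$.

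What your proposal does establish correctly is the bounded below part. Your final paragraph (weight complexes plus Dold--Kan for essential surjectivity, and full faithfulness by mapping out of heart objects) is essentially the paper's proof of (3), $\cC_{\ge0}\simeq\cP^{\Delta^\op}(\cC^{w\heartsuit})$ for left complete $\cC$. Two corrections there:
\begin{itemize}
\item Full faithfulness is not ``by Yoneda''. It uses that $\hom_\cC(a,-)$, for $a\in\cC^{w\heartsuit}\subseteq\cC_{w\le0}$, has bounded below amplitude into $\Sp$ and therefore commutes with sequential colimits growing in connectivity.
\item Idempotent completeness of $\cC_{\ge0}$ still needs an argument. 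It holds because a category with finite coproducts and geometric realizations is idempotent complete.
\end{itemize}
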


In particular, if $\cC$ is both left and right complete, then the subcategories $\cC_{\ge 0}$ and $\cC_{w \le 0}$ admit universal characterizations determined by the weight heart $\cC^{w\heartsuit}$. In this case, the entire category $\cC$ can be recovered as the left and right completion of the bounded weight structure with the same heart, i.e. the stable envelope by Sosnilo's result (Theorem \ref{thm:vova}).

More generally, we denote by $\Kw(\mathcal{A})$ the $\infty$-category obtained from an additive $\infty$-category $\mathcal{A}$ as the left and right weight completion of its stable envelope. The notation is inspired by a well-known classical construction: for an ordinary additive category $\mathcal{A}$, we show (Theorem \ref{thm:chains}) that the $\infty$-category $\Kw(\mathcal{A})$ is equivalent to the $\infty$-category which has as objects chain complexes in $\mathcal{A}$ and as mapping anima the underlying anima of the hom-complexes. In particular, its homotopy category is the classical homotopy category of chain complexes. This  homotopy category is usually denoted by $K(\cA)$ \cite{Verdier}, and we use $\Kw(\mathcal{A})$ for the $\infty$-categorical analogue.
As a consequence of Theorem \ref{Theorem_intro} we obtain the following.

\begin{introcorollary}[Theorem \ref{thm_adjoint}]\label{cor:intro-kw}
The category $\Kw(\cA)$ only depends on the idempotent completion of $\cA$. For idempotent complete additive categories the functor $\cA \mapsto \Kw(\cA)$ is right adjoint to the functor $\cC \mapsto (\cC^{w\heartsuit})^\idem$ that assigns to a weight category the idempotent completion of its weight heart.
Moreover, this adjunction restricts to an equivalence
\[
    (-)^{w\heart} \colon \{\text{complete weight categories}\} \simeq \Cat^{\add,\idem} \noloc \cK(-)
\]
between (left and right) complete weight categories and idempotent complete additive categories.
\end{introcorollary}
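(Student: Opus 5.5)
We combine Theorem~\ref{Theorem_intro} with its dual (right completion, obtained by passing to opposite categories) and with Sosnilo's description of bounded weight categories (Theorem~\ref{thm:vova}). Define $\Kw(\cA) \coloneqq \rc{(\lc{\mathrm{Stab}^b(\cA)})}$, where $\mathrm{Stab}^b(\cA)$ is the stable envelope with its bounded weight structure of heart $\cA^\idem$.

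\textbf{Step 1: dependence only on $\cA^\idem$.} The functor $\cA \to \cA^\idem$ induces a weight exact functor $\mathrm{Stab}^b(\cA)\to\mathrm{Stab}^b(\cA^\idem)$ which is fully faithful and dense. By Theorem~\ref{Theorem_intro}(2), $\lc{(-)}$ idempotent completes the heart. We therefore expect both sides to have the same left completion. The concrete argument is via universal properties: any weight exact functor out of $\mathrm{Stab}^b(\cA)$ into a left complete target extends uniquely along the dense inclusion. The reason is that left complete categories have idempotent complete $\cC_{\ge 0}$ by Theorem~\ref{Theorem_intro}(3), and idempotents in bounded pieces can be split there. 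Right completion then preserves the equivalence.

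\textbf{Step 2: the adjunction.} Fix an idempotent complete $\cA$ and a weight category $\cC$. We want
\[\mathrm{Map}_{\mathrm{wt}}(\cC,\Kw(\cA))\simeq \mathrm{Map}_{\add}((\cC^{w\heartsuit})^\idem,\cA).\]
The natural route is through co-universal properties. I would first show that $\Kw(\cA)$ is characterized as a complete weight category with heart $\cA$. The key point is that for a complete target $\cD$, weight exact functors $\cC\to\cD$ are determined by their restriction to hearts.

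To prove this, use Theorem~\ref{Theorem_intro}(3): $\cD_{\ge0}=\cP^{\Delta^\op}(\cD^{w\heartsuit})$, and dually $\cD_{w\le0}$ is freely generated under totalizations. Given an additive functor $f\colon \cC^{w\heartsuit}\to \cA$:
\begin{itemize}
\item Extend $f$ to $\cC_{w<\infty}$-type bounded-above pieces. Every object of $\cC_{\ge0}$ has a weight (Postnikov-type) tower whose layers are shifted heart objects, and it maps to the resulting geometric realization in $\cD_{\ge0}$.
\item Glue with the dual construction on the bounded-below side.
\end{itemize}
This produces a functor $\mathrm{Map}_{\add}(\cC^{w\heartsuit},\cA)\to \mathrm{Map}_{\mathrm{wt}}(\cC,\Kw(\cA))$, inverse to restriction to hearts. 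Idempotent completeness of $\cA$ absorbs the $(-)^\idem$ on the left side.

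\textbf{Step 3: the equivalence.} The counit is the heart map $(\Kw(\cA)^{w\heartsuit})^\idem\to\cA$. It is an equivalence because Theorem~\ref{Theorem_intro}(2) and its dual say that the completions change the heart only by idempotent completion. The unit is $\cC\to\Kw(\cC^{w\heartsuit,\idem})$. For complete $\cC$ this factors as the completion of $\mathrm{Stab}^b(\cC^{w\heartsuit})\simeq \cC_{\text{bounded}}$ (Theorem~\ref{thm:vova}) into $\cC$. By the universal property in Theorem~\ref{Theorem_intro}(1), this map is an equivalence once we know $\cC$ is the left/right completion of its bounded part. This follows from (3): both $\cC_{\ge0}$ and $\cC_{w\le0}$ are freely generated from the heart, and every object is an extension of one from each.

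\textbf{Main obstacle.} I expect the hardest step to be Step 2: showing that a weight exact functor from an arbitrary (non-complete) $\cC$ into a complete $\cD$ is determined by, and can be built from, its heart restriction. This requires controlling the interaction of the left and right completions, specifically that right completing does not destroy left completeness. I would first try to prove that $\rc{(-)}$ preserves left complete weight categories by checking the connectivity-growing colimits directly in the right completion, described as a category of towers.
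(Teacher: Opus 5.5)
Your Steps 1 and 3 are essentially fine, but Step 2 has a genuine gap. You try to extend an additive $f\colon\cC^{w\heartsuit}\to\cA$ by hand: realize weight towers on $\cC_{\geq 0}$, do the dual with totalizations on $\cC_{w\leq 0}$, and then glue. This does not define a functor, for two reasons.
\begin{itemize}
\item Weight towers and weight decompositions are not functorial in $X$. So ``send $X$ to the realization of its tower'' only becomes a functor through a Kan extension together with the cofinality of weight complexes (Lemma~\ref{lem:weight-cplx}).
\item For objects unbounded in both directions there is no gluing operation. Such an $X$ is only recorded by the connecting map $X_{\geq 1}\to\Sigma X_{\leq 0}$ of a non-functorial decomposition, so functors defined separately on $\cC_{>-\infty}$ and on $\cC_{<\infty}$ do not assemble into a functor on $\cC$.
\end{itemize}
Hence the claim that weight exact functors from an arbitrary (neither bounded nor complete) $\cC$ into a complete $\cD$ are determined by, and extend from, their hearts is asserted rather than proved, and that claim is the whole content of the adjunction. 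The point you single out as the main obstacle, right completion preserving left completeness (Proposition~\ref{prop:lr-complete}(1)), is a needed input, but it is not what blocks this step.

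The missing idea is that you never have to extend by hand: the completion does it, and it only sees the bounded part. For complete $\cD$, the two universal properties give $\mathrm{Map}(\cC,\cD)\simeq\mathrm{Map}(\lrc{\cC},\cD)$. Moreover $\lrc{(\cC_{\wb})}\xrightarrow{\simeq}\lrc{\cC}$, because $(\lc{\cC})_{>-\infty}\simeq\lc{(\cC_{\wb})}$ (Corollary~\ref{cor:left-comp-inverts-idem}(1)) and, dually, a right completion only depends on the bounded below part (dual of Theorem~\ref{thm:upw-comp}(2)). On $\cC_{\wb}$, Sosnilo's adjunction $\Stab\dashv(-)_{=0}$ takes over.

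This is exactly the paper's proof. It composes the three adjunctions $\Stab\dashv(-)_{=0}$, $\mathrm{inc}_b\dashv(-)_{\wb}$ and $\lrc{(-)}\dashv\mathrm{inc}$. It then checks the unit on bounded idempotent complete objects and the counit on complete objects via Corollary~\ref{cor:upw-comp-idem} and Lemma~\ref{lem:upw-comp-eqv}. Finally it identifies the left adjoint with $(-)^{\idem}\circ(-)_{=0}$ via $\lrc{(-)}\circ(-)_{\wb}\simeq\lrc{(-)}$.

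Lemma~\ref{lem:upw-comp-eqv} is also the clean way to finish your Step 3. Free generation of $\cC_{\geq 0}$ and $\cC_{w\leq 0}$ gives equivalences on $\cC_{>-\infty}$ and $\cC_{w<\infty}$. You still need the mixed mapping spectra between bounded below and bounded above objects, which that lemma handles by writing objects as colimits of their weight complexes.

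If you prefer to keep your own ingredients, simply reorder: prove the equivalence of Step 3 first. Step 2 is then formal, since
\[
\mathrm{Map}(\cC,\Kw(\cA))\simeq\mathrm{Map}(\lrc{\cC},\Kw(\cA))\simeq\mathrm{Map}\bigl((\lrc{\cC})^{w\heartsuit},\cA\bigr)
\]
and $(\lrc{\cC})^{w\heartsuit}\simeq(\cC^{w\heartsuit})^{\idem}$ by Theorem~\ref{Theorem_intro}(2) and its dual. If you want a direct construction instead, it must be sequential rather than glued: first right Kan extend from $\cC_{\wb}$ to $\cC_{w<\infty}$, then left Kan extend to $\cC$, in which $\cC_{w<\infty}$ is left dense, using Lemma~\ref{lem:weight-comp-lkan} and its dual.
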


In particular, if $f \colon \cC \to \cD$ is a weight exact functor between complete weighted categories
which restricts to an equivalence on weight hearts $\cC^{w\heart} \simeq \cD^{w\heart}$,
then $f$ is already an equivalence.
Note that the analogous statement for $t$-structures fails,
as the example of the forgetful functor $\cD(\Z) \to \Sp$ shows;
this functor is $t$-exact, preserves all limits and colimits,
and restricts to an equivalence on $t$-hearts (both of which are the category of abelian groups $\Ab$).
Moreover, the $t$-structures on both source and target of this functor are complete.
Nevertheless, the functor is far from an equivalence.

\subsection*{Examples and Applications}

Our main motivation for this theory comes from a number of important examples, some of which we now discuss in order of increasing complexity.

The first example is the derived category of the integers $\cD(\Z)$ with the weight structure induced from usual ($t$-)connective objects. The weight coconnectives $\cD(\Z)_{w \leq 0}$ are given by those objects $C$ with $H_i(C) = 0$ for $i > 0$ and $H_0(C)$ a free abelian group.
This is left and right weight complete as one easily checks. The weight heart consists of the category of free abelian groups $\mathrm{Proj}_\Z$ (not necessarily finitely generated) so that we recover the well-known equivalence
\[
    \cD(\Z) = \Kw(\mathrm{Proj}_\Z) \ .
\]
The subcategory $\cD(\Z)^\omega$ of perfect complexes inherits a weight structure with the restriction of connectives and coconnectives. It is neither left nor right complete.  The left and right weight completion $\Kw(\mathrm{Proj}^{\mathrm{fg}}_\Z)$  is given  by the full subcategory $\cD(\Z)^{\mathrm{fg}} \subseteq \cD(\Z)$ of those objects $C$ such that $H_i(C)$ is finitely generated for each $i$.
Another example is obtained by taking the category of compact spectra $\Sp^\omega$, which is neither left nor right complete. The left completion is given by the category of bounded below spectra with finitely generated homotopy groups (Example \ref{ex:left-completion}).

In order to deal with further examples and construct new weight structures, we establish a new general construction principle for weight structures on large categories, which we believe is of independent interest.

\begin{introtheorem}[Theorem \ref{thm:gen-weight}]\label{introthm:gen-weight}
Let $\cC$ be a cocomplete stable category, $\kappa$ a regular cardinal,
and $S \subseteq \cC^\kappa$ a small set of $\kappa$-compact objects.\footnote{For example any small set of objects in a presentable category.} Then
\[
\cC_{\ge 0}
=
\bigcap_{n \ge 1} S[-n]^{\perp}
=
\{ X \in \cC \mid \hom(s,X) \text{ is connective for all } s \in S \}
\]
defines a weight structure on $\cC$.
\end{introtheorem}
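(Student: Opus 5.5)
Throughout, $\hom(s,X)$ denotes the mapping spectrum. The class $\cC_{\ge 0}=\bigcap_{n\ge1}S[-n]^\perp$ is defined as a right orthogonal, so it is closed under retracts, extensions, finite limits and all limits. It is also closed under suspension, since $\Sigma$ preserves connectivity of $\hom(s,-)$. It is closed under finite colimits: a cofiber of a map of objects with connective $\hom(s,-)$ again has connective $\hom(s,-)$, because cofibers of connective spectra are connective. For the remaining part of the definition of a weight structure, I plan to show two things:
\begin{enumerate}
\item every $X\in\cC$ admits a fiber sequence $X'\to X\to X''$ with $X''\in\cC_{\ge0}$ and $X'\in{}^\perp(\cC_{\ge0})$;
\item ${}^\perp(\cC_{\ge 0})$ has the expected closure properties.
\end{enumerate}
The latter will follow once I identify ${}^\perp(\cC_{\ge0})$ with the retracts of objects built from the $S[-n]$.

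\textbf{Small-object argument.} For the decomposition I will run a transfinite small-object argument along $\kappa$, killing negative homotopy of $\hom(s,X)$.
\begin{enumerate}
\item Set $X_0=X$.
\item Given $X_\alpha$, take the sum over all $s\in S$, $n\ge1$ and all maps $s[-n]\to X_\alpha$. This gives $\bigoplus s[-n]\to X_\alpha$, and I define $X_{\alpha+1}$ as its cofiber.
\item At limit ordinals take colimits.
\item Stop at $\alpha=\kappa$ and set $X''=X_\kappa$.
\end{enumerate}
Let $X'$ be the fiber of $X\to X''$. It is a transfinite composite of extensions of sums of objects $s[-n]$ with $n\ge1$. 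Each such $s[-n]$ lies in ${}^\perp(\cC_{\ge0})$ by definition. The class ${}^\perp(\cC_{\ge0})$ is closed under colimits and extensions, because $\cC_{\ge0}$ is closed under $\Sigma$ (so it is left orthogonal in the stable sense). Hence $X'\in{}^\perp(\cC_{\ge0})$.

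\textbf{Main obstacle: showing $X''\in\cC_{\ge 0}$.} Here $\kappa$-compactness is used. A map $s[-n]\to X_\kappa$ factors through some $X_\alpha$ with $\alpha<\kappa$, since $\kappa$ is regular and $\hom(s,-)$ commutes with $\kappa$-filtered colimits. It is then killed in $X_{\alpha+1}$. So $\pi_0\hom(s[-n],X'')=0$ for all $n\ge1$, which says exactly that $\hom(s,X'')$ has vanishing negative homotopy.

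There is a subtlety: "$\kappa$-compact" in the stable setting should give that $\hom(s,-)$ commutes with $\kappa$-filtered colimits as spectra, and the colimit over $\alpha<\kappa$ is $\kappa$-filtered. Only the cocompleteness of $\cC$ is needed for these colimits to exist. I expect this bookkeeping to be the delicate point, along with checking that a single factorization suffices, i.e.\ there is no need to worry about homotopies. That is fine, since we kill every map at every stage.

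\textbf{Identifying ${}^\perp(\cC_{\ge0})$.} Let $Y\in{}^\perp(\cC_{\ge0})$ and take its decomposition $Y'\to Y\to Y''$. The map $Y\to Y''$ is null, so $Y$ is a retract of $Y'$. Therefore ${}^\perp(\cC_{\ge0})$ is exactly the class of retracts of such cell objects. It is closed under desuspension, and the weight axioms, including orthogonality, hold by construction.
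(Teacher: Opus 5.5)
\textbf{Gap: the claim that ${}^\perp(\cC_{\ge 0})$ is closed under colimits.} Your outline matches the paper up to a shift of indexing: a $\kappa$-indexed small-object argument killing maps out of desuspended generators, $\kappa$-compactness to get $X''\in\cC_{\ge0}$, and the retract argument for the coconnectives. However, the step giving $X'\in{}^\perp(\cC_{\ge0})$ rests on this false claim.

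The class ${}^\perp(\cC_{\ge0})=\{X'\mid \hom(X',Y)\in\Sp_{\ge1}\text{ for all }Y\in\cC_{\ge0}\}$ is not closed under colimits. The functor $\hom(-,Y)$ turns colimits into limits, and $\Sp_{\ge1}$ is closed under products and extensions but not under fibers or general limits. So you only get closure under sums, extensions, retracts and $\Omega$. Two failures already occur for $S=\{\mathbb{S}\}$ in $\Sp$:
\begin{itemize}
\item The class is not closed under $\Sigma$: $\mathbb{S}[-1]$ lies in it, but $\mathbb{S}$ does not.
\item It is not closed under sequential colimits: $\mathbb{S}[1/2][-1]=\colim(\mathbb{S}[-1]\xrightarrow{2}\mathbb{S}[-1]\xrightarrow{2}\cdots)$ has $\pi_0\hom(\mathbb{S}[1/2][-1],\mathbb{Z})\cong\lim^1(\cdots\xrightarrow{2}\mathbb{Z}\xrightarrow{2}\mathbb{Z})\neq0$.
\end{itemize}
Closure of $\cD$ under $\Sigma$ makes ${}^\perp\cD$ closed under $\Omega$; it is closure of $\cD$ under $\Omega$ that would make ${}^\perp\cD$ closed under all colimits. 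The same reversal appears in your first paragraph: $\cC_{\ge0}$ is closed under products and finite colimits, not under finite limits (e.g.\ $\Omega\mathbb{S}\notin\Sp_{\ge0}$). That claim is unused, though.

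\textbf{What is needed instead: a transfinite Mittag-Leffler argument.} What is true, and what the paper uses, is closure under transfinite composites whose successive cofibers already lie in the class. This is a genuine Mittag-Leffler statement. Set $F_\alpha=\fib(X\to X_\alpha)$. Then $F_0=0$, $F_\lambda\simeq\colim_{\alpha<\lambda}F_\alpha$ at limit ordinals, and $\cofib(F_\alpha\to F_{\alpha+1})\simeq\bigoplus s[-n]$. Hence for $Y\in\cC_{\ge0}$ the tower $\hom(F_\bullet,Y)\colon\kappa^{\mathrm{op}}\to\Sp$ is continuous at limit ordinals, and its successor-stage fibers $\prod\Sigma^n\hom(s,Y)$ are $1$-connective. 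You must show that $\hom(X',Y)\simeq\lim_{\alpha<\kappa}\hom(F_\alpha,Y)$ is $1$-connective. For $\kappa=\omega$ this follows from the Milnor sequence, since $\lim^1$ vanishes. For $\kappa>\omega$ you need the transfinite result of Appendix \ref{sec:app}: Corollary \ref{cor:mittag-leffler-sp}, via Proposition \ref{prop:mittag-leffler-an}, which chooses compatible points by transfinite induction on sections of the associated right fibration. This is exactly the new input that takes the paper beyond the previously known case $\kappa=\omega$ (and is needed for $\kappa=\omega_1$ in the Anderson weight structure), so without it the key step of the theorem is unproven.
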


Theorem \ref{introthm:gen-weight} recovers the standard weight structure on spectra by
taking $S=\{\mathbb{S}\} \subseteq \Sp$. In this case the weight
connective objects are the connective spectra, and the heart consists
of direct sums of the sphere spectrum. The coconnectives $\Sp_{w \leq 0}$ are those spectra $X$ with $H_*(X;\Z) = 0$ for $* > 0$ and $H_0(X;\Z)$ a free abelian group.

This example already appeared in work of Bondarko (\cite[Section~4.6]{Bondarko10}),
who proved Theorem~\ref{introthm:gen-weight} under the additional assumption
$\kappa=\omega$ (\cite[Theorem~2.3.4]{Bondarko22}); the same result
was independently obtained by Pauksztello
(\cite[Theorem~5, Example~8]{Pauksztello}).
The resulting weight structure on $\Sp$
is left complete but not right complete as one can easily verify. We will identify its right weight completion as the category of modules over the (spectral) integral Steenrod algebra $A = \hom_{\Sp}(\Z,\Z)$.

\begin{introproposition}[Proposition \ref{prop:right-comp-steenrod}]\label{prop_intro_standard}
The functor $\Sp \to \LMod(A)$ that sends a spectrum $X$ to its homology $\Z \otimes X$ considered as a left module over $A$ exhibits the category of left modules $\LMod(A)$ as the weight completion of $\Sp$.
Here $\LMod(A)$ carries a weight structure in which the connectives are precisely the underlying connectives.
\end{introproposition}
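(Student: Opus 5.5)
We want to show that $F\colon \Sp \to \LMod(A)$, $X \mapsto \Z \otimes X$, is the right weight completion of the standard weight structure on $\Sp$. The standard weight structure is already left complete. By the dual of Theorem \ref{Theorem_intro} (applied to $\cC^{\op}$), the right completion $\rc{\Sp}$ is characterized as follows: it is the right complete weight category receiving a weight exact functor from $\Sp$ that induces an idempotent completion on weight hearts and a fully faithful dense inclusion on the weight bounded below parts $\Sp_{\geq -\infty}$. It also has a recognition principle: a weight exact functor to a right complete weight category with these two properties is the completion. So the plan is to verify these properties for $F$.

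\textbf{Step 1: the weight structure on $\LMod(A)$.} We apply Theorem \ref{introthm:gen-weight} with $S=\{A\}$, so that $\LMod(A)_{\geq 0}$ consists of the modules whose underlying spectrum $\hom(A,M)$ is connective. The weight heart is then the idempotent completion of sums of copies of $A$. Next we compute $F(\mathbb S)=\Z\otimes\mathbb S = \Z$ as a left $A$-module. The key identification is that $\Z \otimes X \simeq \hom(D\Z?,\ldots)$; instead we argue directly on hearts. For a set $I$ we need
\[
\hom_A(\textstyle\bigoplus_I \Z, \bigoplus_J \Z) \simeq \hom_{\Sp}(\bigoplus_I\mathbb S, \bigoplus_J \Z\otimes\mathbb S),
\]
with $\pi_0$ of the right-hand side equal to $\pi_0\hom(\bigoplus_I\mathbb S,\bigoplus_J\mathbb S)$. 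This forces the generator to be $\Z$ rather than $A$. So the weight structure on $\LMod(A)$ should instead be generated by $S=\{\Z\}$, where $\Z$ is compact because $A=\operatorname{End}(\Z)$ and $\Z$ is the free rank-one module in the Morita sense. I would therefore first rewrite $\LMod(A)$ via Morita theory as modules over $\operatorname{End}_A(\Z)$, and check that the connectives of the two descriptions agree.

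\textbf{Step 2: full faithfulness on bounded below objects.} For $X$ bounded below we need $\hom_{\Sp}(X,Y) \to \hom_A(\Z\otimes X, \Z\otimes Y)$ to be an equivalence. I would try to get this from convergence of the $H\Z$-based Adams tower, via the cobar resolution $Y \to \Z\otimes Y \rightrightarrows \Z\otimes\Z\otimes Y\cdots$. For $Y$ in the weight heart, i.e.\ a sum of spheres, this is false integrally: mapping into $\mathbb S$ does not factor through $H\Z$-localization. So full faithfulness can only hold on weight coconnective pieces in the direction of the completion, which is weight bounded above. I therefore expect the correct density statement to concern $\Sp_{w<\infty}$ and the hearts. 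I would check it on generators: $\pi_0$ equivalences between sums of spheres and sums of $\Z$-modules, then extend by d\'evissage using weight decompositions.

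\textbf{Step 3: right completeness of $\LMod(A)$, and density.} Here we use the dual of Definition \ref{def_weightcomp} with limits growing in coconnectivity. This should follow because $\LMod(A)_{\geq 0}$ is detected by the underlying spectrum and is closed under such limits.

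\textbf{Main obstacle.} The main obstacle is Step 2: identifying $\hom_A(\Z\otimes X,\Z\otimes Y)$ with $\hom(X,Y)$ in the required range. I would attack it by induction over the bounded weight filtration, reducing everything to the heart computation.
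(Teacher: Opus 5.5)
Your overall frame is correct: show that $\Z\otimes-$ is weight exact, that $\LMod(A)$ is right complete, and that the functor restricts to an equivalence $\Sp_{w>-\infty}\simeq\LMod(A)_{w>-\infty}$, then apply the dual of Corollary \ref{cor:easy-left-comp}. Your first choice $S=\{A\}$ is also exactly the paper's, and it is what makes the connectives the underlying connectives. However, each of your three steps then goes wrong.

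\textbf{Step 1.} $\Z$ is not the free rank-one module (that is $A$), and it is not a compact generator. If it were, Morita theory together with $\operatorname{End}_A(\Z)\simeq\mathbb{S}$ (which is true) would make $\Z\otimes-\colon\Sp\to\LMod(A)$ an equivalence. But this functor kills $I_{\Q/\Z}\neq 0$. So rewriting $\LMod(A)$ over $\operatorname{End}_A(\Z)$ would take you back to $\Sp$, not to its completion. There is also no tension between generating by $A$ and having $\Z$ in the heart. By Theorem \ref{thm:gen-weight}(4), the heart is built from the heart part of a weight decomposition of $A$, which here is $\Z$. $A$ itself is weight coconnective but not weight connective.

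\textbf{Step 2.} You have the direction backwards. The dual of Theorem \ref{thm:upw-comp}(2b) says a right completion $\eta$ satisfies $\hom(X,Y)\simeq\hom(\eta X,\eta Y)$ whenever $Y$ is weight bounded below. This is true here: $\mathbb{S}$ and all bounded below spectra are $H\Z$-local and $H\Z$-nilpotent complete, since $\pi_0\mathbb{S}=\Z$. Indeed, the paper identifies the unit $X\to\hom_A(\Z,\Z\otimes X)$ with $X\to\lim_n(\tau_{\le n}\mathbb{S}\otimes X)$. On $\Sp_{w<\infty}$, by contrast, the functor is not even faithful: it kills $\Sp_{w\le-\infty}=\{X\mid \Z\otimes X=0\}\neq 0$.

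\textbf{Step 3.} Right completeness is a condition on $\LMod(A)_{w\le 0}$ (limit completeness of the coconnectivity structure), not on the connectives. Your argument would apply verbatim to $\Sp$, which is not right complete (Remark \ref{rem:sp-not-dw-comp}).

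\textbf{The missing idea.} You need a mechanism for actually computing $\hom_A(\Z\otimes X,\Z\otimes Y)$. Already on the heart you need $\hom_A(\Z,\bigoplus_J\Z)\simeq\bigoplus_J\mathbb{S}$ as spectra, not just on $\pi_0$. Since $\Z$ is not compact, d\'evissage does not give this. The cobar complex only computes the $\Z$-nilpotent completion, and comparing that with $\hom_A(\Z,\Z\otimes-)$ is itself the hard point.

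The paper gets everything from Anderson duality:
\begin{enumerate}
\item The Anderson weight structure (Proposition \ref{prop:anderson-weight}, generated by $\Z$ with $\kappa=\omega_1$) is right complete with heart $\{I_\Z\}^\oplus$.
\item $R=\hom(\ell(-),-)\colon\Sp\to\Ind\Coh(\Sp)$ is its left completion (Proposition \ref{prop_andersoncompletion}). The key input is that $\hom(\Z,-)$ preserves uniformly bounded above filtered colimits (Lemma \ref{lem:ex-tensor}), so the counit $LR\to\mathrm{id}$ is an equivalence on bounded above spectra.
\item Hence $\Ind\Coh(\Sp)$ is left and right complete, by Proposition \ref{prop:lr-complete}.
\item Lemma \ref{lem:sum-iz}(2) identifies $\{\mathbb{S}\}^\oplus\simeq\{I_\Z\}^\oplus$ via $I_\Z\otimes-$. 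So $I_\bullet\otimes-\colon\Sp\to\Ind\Coh(\Sp)$ is weight exact and an equivalence on hearts.
\item Since both sides are left complete, Lemma \ref{lem:upw-comp-eqv}(2) upgrades this to an equivalence on weight bounded below objects. The dual of Corollary \ref{cor:easy-left-comp} then makes it a right completion (Corollary \ref{co_functor}).
\end{enumerate}
Only at the end do Anderson self-duality of $\Coh(\Sp)$ and evaluation at $\Z$ identify $\Ind\Coh(\Sp)$ with $\LMod(A)$ and $I_\bullet\otimes-$ with $\Z\otimes-$.
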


This result clarifies the extent to which a spectrum can be recovered
from its homology, regarded as a highly structured Steenrod module: namely precisely as an object in the right weight completion, and thus bounded below spectra can be completely recovered.
This is reflected in the (integral) Adams spectral
sequence, which computes the map
\[
\hom_\Sp(X,Y) \longrightarrow \hom_A(\Z \tensor X,\, \Z \tensor Y),
\]
whose target admits a Ext-type spectral sequence with $E_2$-page
$\Ext_{A_*}\big(H_*(X;\Z),\, H_*(Y;\Z)\big).$

Another interesting weight structure on spectra arises by applying Theorem \ref{thm:gen-weight} with $S=\{\mathbb{Z}\} \subseteq \Sp$ (this requires the case $\kappa = \omega_1$ which was not known before).
In this case the weight coconnective spectra are those spectra whose
positive homotopy groups vanish and whose $\pi_0$ is free. We refer to this weight structure as the Anderson weight structure, because of the following surprising result:

\begin{introproposition}[Proposition \ref{prop:anderson-weight}, Proposition \ref{prop_andersoncompletion}]\label{prop_intro_anderson}
The weight heart of the Anderson weight structure is given by the subcategory $\{I_\Z\}^{\oplus} \subseteq \Sp$  spanned by arbitrary direct sums of the
Anderson dual of the sphere $I_\Z \in \Sp$. This weight structure
is right complete but not left complete. The functor
\[
    \Sp \to \RMod(A),\qquad X \mapsto \hom_\Sp(\Z, X)
\]
exhibits the category of right $A$-modules $\RMod(A)$ as its left weight completion.
\end{introproposition}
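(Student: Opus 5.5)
The plan is to treat the two parts of the statement separately: first identify the heart and the completeness properties by hand, then recognise $\RMod(A)$ as the left completion using the characterisation in Theorem~\ref{Theorem_intro}. For the heart I will use the Anderson duality formula
\[
\pi_{-n}\hom_\Sp(Y,I_\Z)\cong \Hom(\pi_nY,\Z)\oplus\Ext(\pi_{n-1}Y,\Z).
\]
Applied to $Y=\Z$, it gives $\hom_\Sp(\Z,I_\Z)\simeq \Z$ as a spectrum. Hence $I_\Z\in \Sp_{\geq 0}$, since $\hom(\Z,I_\Z)$ is connective. Moreover $\pi_{>0}I_\Z=0$ and $\pi_0 I_\Z=\Z$, so $I_\Z$ lies in the coconnective part as described above, and therefore in the heart.

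For the converse, I would use the recipe behind Theorem~\ref{thm:gen-weight}. Weight decompositions there are built by transfinitely attaching cells from $S=\{\Z\}$, so every heart object is a retract of a sum of objects produced by this process from $\Z$. I would check, again via Anderson duality, that these are retracts of $\bigoplus I_\Z$. To show that $\{I_\Z\}^{\oplus}$ is closed under retracts, I would compare with free abelian groups: $\pi_0$ identifies maps $\bigoplus_J I_\Z\to\bigoplus_{J'}I_\Z$ with maps between the corresponding free abelian groups, and subgroups of free groups are free. This part I expect to be technical but routine.

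For right completeness, I would verify directly that $\bigcap_n\Sp_{w\le -n}=0$. A spectrum in this intersection has all homotopy groups zero, because $\Sp_{w\le -n}\subseteq \Sp_{\le -n}$ in the $t$-sense. I would also check that the relevant sequential limits exist and that $\Sp_{w\le 0}$ is closed under them: by the Milnor sequence, the growth condition kills the $\lim^1$ terms in the relevant degrees, and the freeness of $\pi_0$ is preserved. Left completeness fails because $\Sp_{\geq\infty}=\{X : \hom(\Z,X)=0\}$ is nonzero; for instance it contains $\mathbb S[1/p]/\mathbb S$-type objects. I would pick a concrete $\Z$-acyclic spectrum, such as $KU_p^\wedge/\dots$ or more simply a $K(n)$-local sphere, where $\hom(\Z,-)$ vanishes by the standard vanishing of maps from $H\Z$ into $K(n)$-local objects.

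For the completion statement, I first equip $\RMod(A)$ with the weight structure coming from Theorem~\ref{thm:gen-weight}, with generating object the right module $\Z\simeq\hom(\Z,I_\Z)$. Its connectives should be the underlying connective modules. I then show that $F=\hom_\Sp(\Z,-)$ is weight exact:
\begin{enumerate}
\item it sends $\Sp_{\geq0}$ into connectives by definition;
\item it sends $I_\Z$ to $\Z$;
\item since $\Z$ is $\omega_1$-compact, $F$ preserves the relevant sums in the heart up to the needed retracts.
\end{enumerate}
Next, $F$ is fully faithful on the heart:
\[
\hom_A(\hom(\Z,I_\Z),\hom(\Z,I_\Z))\simeq\hom_\Sp(I_\Z,I_\Z)\simeq \mathbb S^\wedge\text{-type object},
\]
where both sides are computed by Anderson duality and the equivalence $\Z\otimes_A \hom(\Z,-)\to$ id on the thick subcategory generated by $I_\Z$. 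The general-sum case then follows by $\omega_1$-compactness. Finally, $\RMod(A)$ is left complete because it is a module category whose connective part is closed under all colimits and whose $\geq\infty$ part is zero.

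By Theorem~\ref{Theorem_intro}(1)--(2), a weight exact functor into a left complete target which is an idempotent completion on hearts and fully faithful and dense on $(-)_{w<\infty}$ exhibits the left completion. The main obstacle is the full faithfulness on bounded-above objects, which is essentially an Adams-type convergence statement for $\hom(\Z,-)$ on the subcategory generated by $I_\Z$. I would attack it by dualising Proposition~\ref{prop_intro_standard} via Anderson duality, which interchanges $\Z\otimes-$ and $\hom(\Z,-)$ on spectra with finitely generated homotopy, and then pass to sums using $\omega_1$-compactness.
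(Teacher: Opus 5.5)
Your right completeness argument is fine; via the Milnor sequence it amounts to the paper's sandwich $\Sp_{t\le -1}\subseteq\Sp_{w'\le0}\subseteq\Sp_{t\le0}$. The failure of left completeness is also fine: $L_{K(n)}\mathbb{S}$ for $n\geq1$ works since $L_{K(n)}\Z=0$, where the paper instead uses finite $p$-complete spectra. Left completeness of $\RMod(A)$ and preservation of connectives by $\hom(\Z,-)$ are fine too. The heart identification, however, has a genuine gap. The objects produced by the small-object argument of Theorem~\ref{thm:gen-weight} are transfinite extensions of sums of $\Z[-n]$, and these are not retracts of sums of $I_\Z$. Already $\Z$ is not Anderson-connective, since $\hom(\Z,\Z)=A$ has $\pi_{-3}A\cong\Z/2$. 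So there is nothing to verify via Anderson duality along that route. The missing idea is a direct recognition argument. For $X\in\Sp_{w'=0}$, $\hom(\Z,X)$ is connective (by weight connectivity) and $t$-coconnective (as $X$ is $t$-coconnective and $\Z$ is connective), hence equals $\pi_0X\cong\bigoplus_J\Z$. The split UCT sequence of Lemma~\ref{lem:sum-iz} realizes this isomorphism by a map $f\colon X\to\bigoplus_J I_\Z$ with $\hom(\Z,f)$ an equivalence. Then $\fib(f)\in\Sp_{w'\le0}\cap\Sp_{w'\ge1}=0$. You also only checked that $I_\Z$ itself lies in the heart. For $\bigoplus_J I_\Z$ you need the sum version of the UCT, since $\Z$ is not compact. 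Both this argument and your right completeness argument also presuppose the description of $\Sp_{w'\le0}$ as $t$-coconnective spectra with free $\pi_0$, which itself needs proof, e.g.\ via Lemma~\ref{lem:free-ab}.

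The completion part leaves the decisive step, full faithfulness of $R=\hom(\Z,-)$ on $\Sp_{w'<\infty}=\Sp_{t<\infty}$, open, and the route you propose fails. Proposition~\ref{prop:right-comp-steenrod} is derived in the paper \emph{from} this statement (Corollary~\ref{co_functor}), so dualising it is circular. Anderson duality only relates $\Z\tensor-$ and $\hom(\Z,-)$ on $\Sp^\fg$, which misses e.g.\ $\bigoplus_J I_\Z$. And $\omega_1$-compactness of $\Z$ says nothing about countable sums: $\hom(\Z,\bigoplus_{n\geq0}\Z[n])\simeq\prod_n A[n]\not\simeq\bigoplus_n A[n]$. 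The argument actually needs four ingredients:
\begin{enumerate}
\item Identify $\RMod(A)\simeq\Ind\Coh(\Sp)$ with $\Coh(\Sp)=\Thick(\Z)$, so that $R$ is right adjoint to the $\Ind$-extension $L$ of $\Coh(\Sp)\subseteq\Sp$.
\item The counit $LR\to\id$ is an equivalence on $\Coh(\Sp)$, by a Yoneda argument with the compact generator $j\Z$.
\item $R$ preserves \emph{uniformly bounded above} filtered colimits, because $\Z$ is bounded below of finite type (Lemma~\ref{lem:ex-tensor}(2)).
\item $\Sp_{t<\infty}$ is generated as a stable category by $\Coh(\Sp)$ under such colimits.
\end{enumerate}
Together these give $LR\simeq\id$ on $\Sp_{t<\infty}$, hence full faithfulness. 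The same colimit property, not $\omega_1$-compactness, shows that $R$ preserves the sums and transfinite composites generating $\Sp_{w'\le0}$, so $R$ preserves coconnectives.

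Your generator is also misidentified. The weight structure on $\RMod(A)$ whose connectives are the underlying connectives is compactly generated by the free module $A=R(\Z)$. It is not generated by $\Z_A=R(I_\Z)$, which is the heart generator, is not compact, and would produce different connectives. Finally, Theorem~\ref{Theorem_intro}(2) lists properties of a completion; it is not a recognition criterion. The paper instead uses Corollary~\ref{cor:easy-left-comp}, which requires $R$ to be an \emph{equivalence} $\Sp_{w'<\infty}\simeq\RMod(A)_{w<\infty}$. Essential surjectivity there comes from $R$ hitting the generator $A$ and preserving the operations that generate the coconnectives.
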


We note that the integral Steenrod algebra $A$ is a ring spectrum with involution
$A \simeq A^{\op}$ induced by Anderson duality. Consequently, there is an equivalence of module categories $\LMod(A) \simeq \RMod(A)$. It follows that the completions of the two weight structures on spectra (Propositions \ref{prop_intro_standard} and \ref{prop_intro_anderson}) are equivalent.
This should be regarded as the correctly weight-completed refinement of the fact that the functor
\[
    \Sp \to \Sp, \qquad X \mapsto I_\Z \tensor X
\]
is an equivalence on weight bounded objects (Corollary \ref{cor:sp-st-vs-and}), where the source is equipped with the standard weight structure and the target with the Anderson weight structure.
However, this functor is not weight-exact on its entire domain; while it preserves weight coconnective objects, it does not preserve weight connective ones (Remark \ref{remark_bounded} and Remark \ref{remark_weight}).

\subsection*{Weak Weight and $t$-completion}

The examples discussed above suggest that weight structures arising in practice are often only left complete or only right complete. In many situations, however, a given weight structure is accompanied by a $t$-structure with the same connective objects; such pairs are called \emph{adjacent}. A typical phenomenon is that a category $\cC$ is left complete with respect to its weight structure and right complete with respect to the adjacent $t$-structure.
This occurs, for instance, for the standard weight structure on spectra.

At first glance, this asymmetry may seem surprising. However, it reflects a fundamental difference in the nature of the two notions of completeness: left weight completeness and right $t$-completeness are governed by colimit conditions, whereas right weight completeness and left $t$-completeness are governed by limit conditions. In practice, stable categories are far more often well-behaved with respect to colimits, or even generated under colimits by a controlled class of objects. This explains why left weight completeness and right $t$-completeness frequently appear together.
Another perspective on this asymmetry is that in the connective part of a $t$-structure,
the Postnikov tower gives a descending filtration, so here completeness should be a limit condition,
whereas in the connective part of a weight structure, weight complexes give ascending filtrations,
where completeness should be a colimit condition.

A natural question is whether such categories can still be recovered from their heart. In the bounded case, this is achieved via the stable envelope. In the present setting, one is led to consider a two-step process: first passing to the left weight completion, and then forming a right $t$-completion. Formally, for an additive category $\cA$, we define
\[
    \K(\cA) \coloneqq \trc{(\Kw(\cA)_{> - \infty})}
\]
as the right $t$-completion of $\Kw(\cA)_{> -\infty}$, which itself is the left weight completion of the stable envelope of $\cA$. We also write $\K^{\op}(\cA) \coloneqq \K(\cA^\op)^\op$ which is dual in the sense that it performs right weight completion and left $t$-completion.

At this point, however, a serious difficulty arises. In general, the category $\Kw(\cA)$ need not carry a $t$-structure adjacent to the weight structure, so that $t$-completion is a priori not defined.
One of the key insights of this paper is that this obstruction can be overcome by working in a more flexible framework. We introduce suitable weakenings of $t$-structures and show that the theory of completeness and completion for $t$-structures extends to this setting. In particular, $t$-completion can be carried out in a substantially more general context than previously available, without requiring the existence of finite limits or a genuine adjacent $t$-structure, while retaining its essential formal properties.

To make this precise, we consider weak $t$-structures (and similarly weak weight structures), which still allow for a meaningful notion of completeness and completion.

\begin{introdefinition}[Definition \ref{def:weak-t}, Remark \ref{rem:t-saturated}]
A weak $t$-structure on a stable category $\cC$ consists of a full subcategory $\cC_{\geq 0}$ closed under suspensions, retracts, and extensions such that for every $X \in \cC$ there is a fiber sequence
\[
X' \to X \to X''
\]
with $X' \in \cC_{> -\infty}$ and $X'' \in \cC_{t \leq 0} \coloneqq (\cC_{\geq 1})^\perp$.
\end{introdefinition}

One advantage of this flexibility is that it allows us, for instance, to define spectrum objects in prestable categories without assuming the existence of finite limits, generalizing the treatment of Lurie in \cite{SAG} (see Theorem \ref{thm:prestable-spectrum}).
In particular, we have $\Sp(\cC_{\geq 0}) \coloneqq \trc{(\cC_{> - \infty})},$
so that in this language we get an alternative description $\K(\cA) \simeq \Sp(\cP^{\Delta^\op}(\cA))$.

Similar to weak $t$-structures, we also introduce the notion of weak weight structures (Definition \ref{def:weak-w}), and extend the notions of weight completeness and completion to this setting (i.e. generalize Definition \ref{def_weightcomp} and Theorem \ref{Theorem_intro}). In fact, the entire paper is developed in the generality of weak weight structures. This additional flexibility is essential in practice: for instance, on the perfect derived category of a qcqs scheme $X$, one typically obtains only a weak weight structure (see Example \ref{ex:scheme-weak-weight}). \\

Our main theorem  shows that categories equipped with compatible weight and $t$-structures can still be recovered from their heart via this generalized completion procedure.

\begin{introtheorem}[Theorem \ref{thm:k}]\label{thm_recovery}
    A stable $\infty$-category $\cC$ with a full subcategory $\cC_{\geq 0} \subseteq \cC$ closed under finite colimits and extensions is equivalent to $\K(\cC^{w \heartsuit})$ (compatibly with the connectives)
    if and only if
\begin{enumerate}
    \item $(\cC, \cC_{\geq 0})$ is a right complete weak $t$-structure, and
    \item $(\cC_{> -\infty}, \cC_{\geq 0})$ is a left complete weight structure.
\end{enumerate}
\end{introtheorem}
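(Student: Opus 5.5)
We prove both implications, reducing everything to the two completion theorems: Theorem \ref{Theorem_intro} on weight completion, and its analogue for right $t$-completion of weak $t$-structures. Write $\cA \coloneqq \cC^{w\heartsuit}$. Recall that by definition $\K(\cA) = \trc{(\Kw(\cA)_{>-\infty})}$, and that $\Kw(\cA)_{>-\infty}$ is the left weight completion of the stable envelope of $\cA$.

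\textbf{Necessity.} We first check that $\K(\cA)$ itself satisfies (1) and (2); since the equivalence is compatible with the connectives, these properties then transfer to $\cC$.
\begin{enumerate}
\item Right $t$-completion yields a right complete weak $t$-structure whose connective part $\K(\cA)_{>-\infty}$ is the image of $\Kw(\cA)_{>-\infty}$. Right $t$-completion only adds objects that are $t$-coconnective in a limiting sense, so it should not change the bounded-below part.
\item That bounded-below part is $\Kw(\cA)_{>-\infty}$, which is left weight complete by Theorem \ref{Theorem_intro}.
\end{enumerate}
We also need that the weight heart of $\K(\cA)$ recovers $\cA$, at least up to idempotent completion. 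Since $\K(\cA)$ only sees $\cA^{\idem}$, either $\cA$ is idempotent complete or the statement should be read with $\cA^\idem$. This is harmless, because a left complete $\cC_{>-\infty}$ has idempotent complete heart by Theorem \ref{Theorem_intro}(3).

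\textbf{Sufficiency.} Assume (1) and (2). The bounded stable envelope of $\cA$ maps weight exactly into $\cC_{>-\infty}$ by Sosnilo's Theorem \ref{thm:vova}. The universal property of left weight completion then extends this map to a weight exact functor
\[
\Kw(\cA)_{>-\infty} \to \cC_{>-\infty}.
\]
We show this functor is an equivalence. On connectives it is a functor
\[
\cP^{\Delta^\op}(\cA) \to \cC_{\geq 0}
\]
that preserves geometric realizations and is the identity on $\cA$. It is an equivalence because $\cC_{\geq 0} \simeq \cP^{\Delta^\op}(\cA)$ by Theorem \ref{Theorem_intro}(3), applied to the left complete weight category $\cC_{>-\infty}$. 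Since both sides are unions of shifts of their connective parts, the functor is an equivalence on all of $\cC_{>-\infty}$.

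Next we use hypothesis (1). A right complete weak $t$-structure should be recovered from its bounded-below part as the right $t$-completion, $\cC \simeq \trc{(\cC_{>-\infty})}$. This is the weak-$t$ analogue of the statement that a completion is an equivalence exactly on complete objects; it should hold because the weak $t$-structure axiom only involves $\cC_{>-\infty}$. Combining the two steps gives
\[
\cC \simeq \trc{(\cC_{>-\infty})} \simeq \trc{(\Kw(\cA)_{>-\infty})} = \K(\cA),
\]
and every equivalence in this chain is compatible with the connectives.

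\textbf{Main obstacle.} The hardest step is the right $t$-completion theory for weak $t$-structures, which has to work without finite limits. Concretely, we must show two things:
\begin{itemize}
\item $\trc{(\cD)}$ can be described as the limit of $\cD_{>-\infty}$ under the truncations $\tau_{\le n}$, that is, as $\Sp$ of the prestable category $\cD_{\ge 0}$ as in Theorem \ref{thm:prestable-spectrum};
\item $\cD \to \trc{\cD}$ is an equivalence if and only if $\cD$ is right complete, and it preserves the bounded-below part.
\end{itemize}
We would prove these by writing
\[
\trc{\cD} \simeq \lim\bigl(\cdots \xrightarrow{\Omega} \cD_{\ge 0} \xrightarrow{\Omega} \cD_{\ge 0}\bigr),
\]
where $\Omega$ is formed inside $\cD_{\geq 0}$ using the truncation that the weak $t$-structure provides. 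The comparison map is fully faithful on $\cD_{>-\infty}$ by direct computation, and it is essentially surjective exactly when every tower can be realized, which is the right completeness condition.
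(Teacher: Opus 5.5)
Your reduction is the same as the paper's. Necessity is by transporting structure from $\K(\cA)$. Sufficiency goes via Sosnilo's theorem, then left weight completion to get $\Kw(\cA)_{>-\infty}\simeq\cC_{>-\infty}$, then right $t$-completion to recover $\cC$. You identify $\Kw(\cA)_{>-\infty}$ with $\cC_{>-\infty}$ through $\cP^{\Delta^\op}(\cA)$ and Proposition~\ref{prop:weight-pdelta}, whereas the paper applies Theorem~\ref{thm:upw-comp} to $\Stab(\cA)\simeq\cC_{\wb}$. Your variant is harmless once you note that the extended functor is weight exact between left complete categories. It therefore preserves connective geometric realizations (Corollary~\ref{cor:bdd-amp-pres} and Dold--Kan).

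The gap is in the step you call the main obstacle: you assert $\cC\simeq\trc{(\cC_{>-\infty})}$ with ``should'', and the route you propose to prove it fails.
\begin{itemize}
\item A weak $t$-structure of positive defect provides no truncation functors. Its decompositions $X_{>-m}\to X\to X_{\leq 0}$ are neither functorial nor connective covers.
\item $\cD_{\geq 0}$ need not admit finite limits. By Proposition~\ref{prop:prestable-finlim} it does exactly when $\SW(\cD_{\geq 0})$ carries a genuine $t$-structure. So the loop functor of $\cD_{\geq 0}$, and with it the tower $\lim(\cdots\xrightarrow{\Omega}\cD_{\geq 0}\xrightarrow{\Omega}\cD_{\geq 0})$, is not available.
\item In your application, $\cD=\Kw(\cA)_{>-\infty}$ carries only the trivial weak $t$-structure, with decompositions $X\xrightarrow{\id}X\to 0$. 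Also $\cP^{\Delta^\op}(\cA)$ need not have finite limits. This is exactly why the paper defines $\Sp(-)$ by Definition~\ref{def:prestable-spectrum} rather than as that limit.
\item Even classically, the tower for a right completion is $\lim_n\cD_{\geq -n}$ along $\tau_{\geq -n}$, not along $\tau_{\leq n}$.
\end{itemize}

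What works instead is the paper's construction, which involves no truncations or limits.
\begin{itemize}
\item $\trc{\cD}$ is the full subcategory of $\Ind(\cD_{>-\infty})$ on objects admitting a $\cD_{>-\infty}$-Whitehead tower. Its weak $t$-structure is restricted from the genuine $t$-structure on $\Ind(\cD_{>-\infty})$ with connectives $\Ind(\cD_{\geq 0})$ (Proposition~\ref{prop:ind-t}).
\item The universal property comes from left Kan extension along right dense subcategories (Lemma~\ref{lem:t-comp-lkan}). This uses only existence and cofinality of Whitehead towers.
\item The fact you need then follows from Corollary~\ref{cor:right-comp-via-embed}. $\cC_{>-\infty}$ is right dense in $\cC$ by the $t$-decompositions, and right completeness makes every Whitehead tower converge.
\end{itemize}

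Separately, in the necessity direction your heuristic that right $t$-completion does not change the bounded-below part is false in general. By Corollary~\ref{cor:dw-comp-idem} it idempotent-completes the $t$-bounded part, so $(\trc{\cD})_{>-\infty}=\cD_{>-\infty}$ holds exactly when $\cD_{\tb}$ is idempotent complete. Here it holds because $\Kw(\cA)_{>-\infty}$ is left weight complete, hence idempotent complete (Observation~\ref{obs:left-comp}). Its $t$-bounded part is retract-closed in it, so it is idempotent complete too.
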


As a consequence of Theorem \ref{thm_recovery}, we obtain equivalences expressing familiar stable categories as being built from additive ones:
\[
\Sp \simeq \K(\mathrm{Proj}_\mathbb{S}), \qquad
\Sp^{\mathrm{fg}} \simeq \K(\mathrm{Proj}_\mathbb{S}^{\mathrm{fg}}), \qquad
\Sp \simeq \K^\op(\{I_\Z\}^{\oplus}).
\]
More generally, for any connective $\mathbb{E}_1$-ring $R$, there is an equivalence
\[
\LMod(R) \simeq \K(\mathrm{Proj}_R)  \tag*{\text{(Example \ref{example_modules}).}}
\]
Moreover, if $\cA$ is a Grothendieck abelian category with enough projectives, then
\[
\cD(\cA) \simeq \K(\cA_{\proj})  \tag*{\text{(Corollary \ref{abelian}).}}
\]
Finally, we construct a weight structure on the category of Tate objects $\Tate(\cC)$ (in the sense of Hennion \cite{Hennion}) associated to a weighted category, and give an explicit description of its heart (Theorem \ref{thm:tate-weight}) as the full subcategory of $\Ind\Pro(\cC^{w\heartsuit})$ spanned by retracts of objects of the form
\[
\bigoplus_I a_i \oplus \prod_J b_j \qquad a_i, b_j \in \cC^{w\heartsuit}
\]
generalizing the classical notion of Tate vector spaces.
Under suitable hypotheses on $\cC$, we further show that there is a `completion' functor
\[
\Tate(\cC) \to \K\bigl(\Tate(\cC)^{w\heartsuit})
\]
with the key input being the existence of a weak $t$-structure (Lemma \ref{lem:tate-upw-comp} and Corollary \ref{coro_weak}).
As another motivation for the formalism of weak $t$-structures,
we see in Example \ref{ex:only-weak-t} that even in the nice case $\cC = \Sp^\omega$, there exists an adjacent \emph{weak} $t$-structure on $\Tate(\Sp^\omega)$, but not an adjacent $t$-structure.
In forthcoming work \cite{CondDual}, we apply these results on Tate categories to algebraic examples such as $\cD(\mathbb{Z})^\omega$, and relate these constructions to a full subcategory of condensed $\Ind$-objects in $\cC$.

\subsection*{Outline of the Sections}

Section \ref{sec:conn} recalls the notion of connectivity structure from \cite{Lawson} and introduces the notions of (co)limit completeness that underlie all the completeness conditions considered in this paper.
Both (weak) weight and (weak) $t$-structures admit underlying stable (co)connectivity structures,
and the various definitions of completeness can already be formulated on this level.
This allows us to give a general theory of functors commuting with certain sequential (co)limits
for (co)connectivity reasons that is crucial for the rest of the paper,
which we highlight via a number of examples.

Section \ref{sec:weight} introduces weakly weighted categories, a weakening of the notion of weight structure in which the weight decompositions are allowed to have bounded ``defect''.
In the case this defect is bounded by some $0 \leq \ell < \infty$,
this coincides up to reformulation with a notion studied by Levy--Sosnilo \cite{Ishan-Vova}.
After developing the basic formalism and introducing the related category $\wWCat$
of weakly weighted categories and exact functors of bounded weight amplitude,
we show that weight complexes still exist in this generality.

Section \ref{sec:upw-comp} is the technical heart of the paper.
We define left weight completeness of weakly weighted categories $\cC$
and prove Theorem \ref{Theorem_intro} in this generality.
The left completion $\lc{\cC}$ is identified with a canonical full subcategory
of $\Ind(\cC_{w< \infty})$,
and we highlight important consequences
such as the existence of a weight complex functor valued in $\Ind$-objects (Corollary \ref{cor:weight-complex}).

Section \ref{sec:chains} defines $\Kw(\cA)$ and proves Corollary \ref{cor:intro-kw}.
We show in Theorem \ref{thm:chains}
that the notation is warranted by proving that if $\cA$ is an (ordinary) additive category,
there is a weight exact equivalence between $\Kw(\cA)$ and an $\infty$-category of chain complexes in $\cA$
whose homotopy category recovers the classical $K(\cA)$, one of the motivating examples for weight structures.
Combining this with Theorem \ref{Theorem_intro}(3), we recover in Corollary \ref{cor:d-}
Lurie's universal property of the bounded below derived category of an abelian category with enough projectives.

Section \ref{sec:gen} proves Theorem \ref{introthm:gen-weight},
which crucially relies on the results of Appendix \ref{sec:app}.
We apply this to define the standard weight structure on $\LMod(R)$
for a connective $\E_1$-ring $R$ and study its completeness properties (Proposition \ref{prop:lmod-weight}).

Section \ref{sec:t} develops the theory of weak $t$-structures and
their right completions, paralleling Sections \ref{sec:weight} and
\ref{sec:upw-comp}.
The main result, Theorem \ref{thm:t-dw-comp}, is the direct analogue of
Theorem \ref{Theorem_intro} and recovers the classical right completion
when applied to an ordinary $t$-structure.

Section \ref{sec:prestable} applies the completion theory to prestable categories.
We define two kinds of colimit completions,
one ``weight-like'' and one ``$t$-like''.
The latter lets us define spectrum objects for prestable categories,
and the main result of the section Theorem \ref{thm:prestable-spectrum}
is on the properties of this construction.

Section \ref{sec:adj} introduces the functor
$\cA \mapsto \K(\cA)$ on additive categories and proves
Theorem \ref{thm_recovery}.
We apply this to $\LMod(R)$ for a connective $\E_1$-ring $R$
and to the derived category $\cD(\cA)$ of a Grothendieck abelian
category with enough projectives (Proposition \ref{prop:da-as-k}).

Section \ref{sec:anderson} studies weight structures on $\Sp$.
We construct the Anderson weight structure
on spectra mentioned in the introduction
and prove Propositions \ref{prop_intro_standard} and \ref{prop_andersoncompletion}.
In particular, we show how passing from spectra to modules
over the Steenrod algebra can be seen as a weight completion,
and passing backwards as a $t$-completion.

Section \ref{sec:tate} constructs weight structures on Tate categories
under minimal assumptions (Theorem \ref{thm:tate-weight}).
Moreover, we investigate sufficient conditions
for the existence of adjacent weak $t$-structures (Proposition \ref{prop:adj-t-on-tate}, Corollary \ref{coro_weak}),
and show that even in very nice examples such as that of finite spectra,
one can in general only expect a \emph{weak} adjacent $t$-structure
(Example \ref{ex:only-weak-t}).

Appendix \ref{sec:app} proves a crucial tool required
for the proof of Theorem \ref{introthm:gen-weight}.
Concretely, for a given ordinal $\gamma$,
we give concrete conditions
under which the limit of an inverse system of connective spectra
indexed by $\gamma^\op$ is again connective.
For $\gamma = \omega$ this recovers the usual Mittag--Leffler criterion.

\subsection*{Acknowledgements}

We would like to thank Thorger Geiß and Maxime Ramzi for many helpful conversations. We are especially grateful to Maxime Ramzi for allowing us to include Lemma \ref{lem:mittag-leffler-an}, which we learned from him. We also thank Vova Sosnilo for a discussion that led to Proposition \ref{prop_intro_anderson}. We are grateful to the homotopy theory group in Bielefeld for a joint research seminar, and in particular to Victor Saunier and Fabian Hebestreit for helpful discussions related to weight structures.
Finally, we would like to thank Peter J\o rgensen, Jiacheng Liang, Marius Nielsen, Kabeer Manali Rahul, and Victor Saunier
for helpful feedback on an earlier version of this article.

Both authors were funded by the Deutsche Forschungsgemeinschaft (DFG, German Research Foundation) – Project-ID 427320536 – SFB 1442, as well as under Germany’s Excellence Strategy EXC2044/2–390685587, Mathematics Münster: Dynamics–Geometry–Structure.

\subsection*{Notational Conventions}\label{sec:notation}

\begin{itemize}
    \item From now on, i.e.~in the body of the paper, ``category'' means ``$\infty$-category''.

    \item We denote mapping anima by $\map$, and hom spectra by $\hom$.

    \item Given a category $\cC$ we denote its idempotent completion by $\cC^\idem$.
        If $\cC$ is additive, then $\cC^\widem$ denotes its weak idempotent completion,
        which is the smallest full subcategory of $\cC^\idem$ containing $\cC$
        and closed under cofibers of split monomorphisms.

    \item By a dense subcategory $\cC$ of $\cD$
        we mean a full subcategory $\cC \subseteq \cD$
        so that every object in $\cD$ is a retract of an object in $\cC$.

    \item A $t$-category is a stable category with $t$-structure.
        A weighted category is a stable category with weight structure.

    \item If $\cC$ is a semiadditive category admitting arbitrary sums,
        then for a full subcategory $S \subseteq \cC$ we denote by $S^{\oplus} \subseteq \cC$
        the full subcategory on sums of objects in $S$.

    \item If $\cC$ is stable and $\cD \subseteq \cC$ a full subcategory, then
        we let $\lperp{\cD} \subseteq \cC$ be the full subcategory on objects $c \in \cC$
        for which all maps $c \to d$ for $d \in \cD$ are nullhomotopic. Dually for $\rperp{\cD}$.

    \item We will always use homological indexing conventions.

    \item Occasionally, instead of repeating an entire sentence with minimal changes,
        we will highlight the alternatives like \alt{this}.
        For example, we might write ``Suppose that $\cC$ is a category admitting countable coproducts \alt{countable products} and pushouts \alt{pullbacks}. Then $\cC$ admits sequential colimits \alt{sequential limits}.''
\end{itemize}

\section{Connectivity structures}\label{sec:conn}

The aim of this section is to recall the notion of
connectivity structure from \cite{Lawson}.
The idea is that this abstracts the notion of connective
maps of spaces to a general category.
The authors of \cite{Lawson} use this mainly to define a notion of skeleta
and cellular approximations which, although this is not mentioned
by the authors, essentially recovers the notion of weight structures in the setting of stable categories.
Our aim instead is to investigate
how to use the notions of connectivity
to prove that functors preserve certain kinds of (co)limits
for (co)connectivity reasons.
Concretely, we would like to cover the following examples
(which we will reprove below using the developed formalism).

\begin{example}[Lemmas \ref{lem:ex-group}, \ref{lem:ex-tensor}, \ref{lem:ex-t-struc} and \ref{lem:ex-suspension}]\label{ex:enum}
    ~
\begin{enumerate}
    \item For a finite group $G$, the functors $(-)^{hG},(-)_{hG},(-)^{tG} \colon \Sp^{BG} \to \Sp$ preserve the (co)limits of the Whitehead and Postnikov towers.

    \item If $X \in \Sp$ is bounded below with finitely generated homotopy (equivalently, homology) groups, then $X \tensor - \colon \Sp \to \Sp$
        preserves uniformly bounded below
        products, and hence uniformly bounded below sequential limits.
        Moreover, $\hom_\Sp(X,-) \colon \Sp \to \Sp$
        preserves uniformly bounded above filtered colimits.

    \item Suppose $A$ is a finite type anima (i.e.~a CW complex of finite type),
        and that $f \colon \cC \to \cD$ is an exact functor of left complete $t$-categories.
        Then $\cC$ and $\cD$ admit $A$-indexed colimits,
        and if $f$ restricts to $\cC_{t \geq 0} \to \cD_{t \geq -n}$
        for some $n$, then $f$ preserves them.

    \item For $X \in \An$ we have $\Sigma_+^\infty X \xto{\simeq} \lim_n \Sigma_+^\infty\tau_{\leq n}X$ in $\Sp$.
\end{enumerate}
\end{example}

Let us recall the definition of connectivity structures from \cite{Lawson}.
Their basic notion is that of \(k\)-connected maps. In the setting of topoi,
these coincide with \((k+1)\)-connective maps, that is, maps whose fibers are
\((k+1)\)-connective. In what follows, we will instead take connective maps as
the primary notion.

\begin{definition}[{{\cite[Definition 2.1]{Lawson}}}]\label{def:conn-struc}
    Let $\cC$ be a category admitting pullbacks.
    A connectivity structure on $\cC$ consists
    of a collection of $k$-connective maps for every $k \in \Z$
    satisfying the following axioms:
    \begin{enumerate}
        \item $k$-connective maps are $(k-1)$-connective.

        \item Equivalences are $\infty$-connective,
            meaning $k$-connective for all $k$.
            For convenience, we say that all morphisms
            are $-\infty$-connective.

        \item $k$-connective maps are stable under pullback.

        \item If $f$ and $g$ are composable morphisms, then:
            \begin{enumerate}
                \item If $f$ and $g$ are $k$-connective, so is $gf$.
                \item If $gf$ is $k$-connective and $f$ is $(k-1)$-connective,
                    then $g$ is $k$-connective.
                \item If $gf$ is $k$-connective and $g$ is $(k+1)$-connective,
                    then $f$ is $k$-connective.
            \end{enumerate}
    \end{enumerate}
    Given another category $\cD$ with connectivity structure
    we say that a functor $f \colon \cC \to \cD$ has amplitude $\geq a$
    for $-\infty \leq a \leq \infty$
    if for large enough $k$ it sends $k$-connective maps
    to $(k+a)$-connective maps.\footnote{This is slightly weaker than the notion ``adding at least $a$ to connectivity'' of \cite[Definition 2.3]{Lawson}.}
    We say $f$ has bounded below amplitude if there exists such an $a \in \Z$.
    As these are closed under composition,
    we obtain a category $\Cat^{\cn}$ of categories with connectivity structure and functors of bounded below amplitude.
\end{definition}

\begin{example}[{{\cite[Example 2.14]{Lawson}}}]\label{ex:conn-topos}
    Let $\cX$ be a topos.
    Then $\cX$ admits a connectivity structure
    where a map is $k$-connective
    precisely if it is $k$-connective in the sense of \cite[6.5.1.10]{HTT}.
    For $\cX = \An$,
    this recovers the usual notion of connectivity;
    a map $f \colon X \to Y$ of anima
    is $k$-connective precisely if for all choices of basepoints,
    $\pi_j(f)$ is an isomorphism for $j < k$ and $\pi_k(f)$ is a surjection.
    Equivalently, $f$ is $k$-connective precisely if all the fibers are $k$-connective for all choices of basepoints,
    where a $0$-connective anima is a nonempty one
    and $X \in \An$ is $k$-connective for $k \geq 1$
    if $\pi_\ell(X) \cong \pi_\ell(*)$ for $\ell \leq k-1$.
\end{example}

\begin{definition}[{{\cite[Definition 2.25]{Lawson}}}]
    Let $\cC$ be a pointed category equipped with a connectivity
    structure. We say an object $X \in \cC$ is $k$-connective
    if the map $X \to *$ is $k$-connective.\footnote{By \cite[Proposition 2.16]{Lawson} this is equivalent to $* \to X$ being $(k-1)$-connective.}
    We write $\cC_{\geq k} \subseteq \cC$ for the full subcategory of $k$-connective objects, and $\cC_{\geq \infty} = \bigcap_{k \geq 0} \cC_{\geq k}$.
\end{definition}

\begin{definition}[{{\cite[Definition 2.24]{Lawson}}}]
    Let $\cC$ be a stable category.
    A connectivity structure on $\cC$ is stable
    if any given map $f$ is $k$-connective if and only if $\Omega f$ is $(k-1)$-connective.
    In particular, we have that $\cC_{\geq k} = \{\Sigma^k X \mid X \in \cC_{\geq 0}\}$.
\end{definition}

The following Lemma shows that the definition
of a stable connectivity structure simplifies considerably,
essentially being completely determined by $\cC_{\geq 0} \subseteq \cC$.

\begin{lemma}\label{lem:stable-conn-struc}
    Let $\cC$ be a stable category.
    \begin{enumerate}
        \item If $\cC_{\geq 0} \subseteq \cC$
            is a full prestable subcategory in the sense of \cite[Appendix C]{SAG}, i.e.~closed under finite colimits and extensions\footnote{Equivalently: contains 0, and is closed under extensions and cofibers.}
            then this determines a stable connectivity structure
            on $\cC$ by setting $\cC_{\geq k} = \{\Sigma^k X \mid X \in \cC_{\geq 0}\}$
            and defining a map to be $k$-connective if its fiber lies in $\cC_{\geq k}$.

        \item If $\cC$ is equipped with a stable connectivity structure,
            then $\cC_{\geq 0} \subseteq \cC$ is closed under finite colimits
            and extensions, and $\cC_{\geq n} \subseteq \cC_{>-\infty}$ is closed under retracts.
            Moreover, the connectivity structure defined in (1) agrees with the given one on $\cC$.

        \item If $\cD$ is a prestable category,
            then it embeds fully faithfully into its Spanier-Whitehead stabilization
            $\SW(\cD) \coloneqq \colim_n (\cD \xto{\Sigma} \cD \xto{\Sigma} \cD \to \cdots)$ (colimit taken in $\Cat$),
            which is the initial stable category receiving a right exact functor from $\cD$.
            This determines a stable connectivity structure $\SW(\cD)_{\geq 0} = \cD$
            which is closed under retracts in $\SW(\cD)$.

        \item If $(\cC,\cC_{\geq 0})$ is a stable connectivity structure,
            then the canonical map $\SW(\cC_{\geq 0}) \to \cC$ is fully faithful
            with essential image $\cC_{>-\infty} = \bigcup_{n \geq 0} \cC_{\geq -n}$.
    \end{enumerate}
\end{lemma}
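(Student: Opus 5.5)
We treat the four parts in order; each reduces to elementary manipulations with fiber sequences in the stable category $\cC$.

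For (1), we verify the axioms of Definition \ref{def:conn-struc} directly, with $f$ defined to be $k$-connective when $\mathrm{fib}(f) \in \cC_{\geq k}$. First we note that $\cC_{\geq 0}$ is closed under $\Sigma$ (a pushout of $0 \leftarrow X \to 0$). Hence $\cC_{\geq k} \subseteq \cC_{\geq k-1}$, which gives axiom (1). Axiom (2) holds because $0 \in \cC_{\geq k}$ for all $k$, and axiom (3) holds because fibers are preserved under pullback. For axiom (4), we use the octahedral fiber sequence $\mathrm{fib}(f) \to \mathrm{fib}(gf) \to \mathrm{fib}(g)$.
\begin{itemize}
\item[(a)] This follows from closure of $\cC_{\geq k}$ under extensions.
\item[(b)] Rotating the sequence gives $\mathrm{fib}(gf) \to \mathrm{fib}(g) \to \Sigma\,\mathrm{fib}(f)$. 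Here $\Sigma\,\mathrm{fib}(f) \in \cC_{\geq k}$, so $\mathrm{fib}(g)$ is an extension of objects in $\cC_{\geq k}$.
\item[(c)] We have $\mathrm{fib}(f) \simeq \mathrm{fib}(\mathrm{fib}(gf) \to \mathrm{fib}(g))$, which is the cofiber of $\Omega\,\mathrm{fib}(g) \to \mathrm{fib}(gf)$. Since $\Omega\,\mathrm{fib}(g) \in \cC_{\geq k}$, closure under cofibers gives the claim.
\end{itemize}
Stability of the resulting structure is immediate from the definition.

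For (2), we first check that $k$-connectivity of $f$ depends only on $\mathrm{fib}(f)$. Write $f$ as the pullback of $\mathrm{fib}(f) \to 0$ along $Y \to 0$. Conversely, $\mathrm{fib}(f) \to 0$ is the pullback of $f$ along $0 \to Y$. Hence $f$ is $k$-connective if and only if $\mathrm{fib}(f) \to 0$ is, i.e. if and only if $\mathrm{fib}(f) \in \cC_{\geq k}$. This already shows that the connectivity structure agrees with the one from (1).

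Closure under extensions follows by applying axiom (4a) to $X' \to X \to X''$. Closure under cofibers of $A \to B$ with $A, B \in \cC_{\geq 0}$: the map $B \to \mathrm{cof}$ has fiber $A$, so it is $0$-connective; $\mathrm{cof} \to 0$ then follows by axiom (4b) applied to $B \to \mathrm{cof} \to 0$.

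For retracts, I expect the main subtlety to be why retract-closure holds only inside $\cC_{>-\infty}$. Let $X$ be a retract of $Y \in \cC_{\geq n}$ with $X \in \cC_{\geq m}$, $m < n$. We want to show $X \in \cC_{\geq m+1}$ and then induct. Axiom (4c), applied to the composite $X \to Y \to X$ (the identity), shows that $X \to Y$ is $(m+1)$-connective once $Y \to X$ is $(m+2)$-connective. Using $X \in \cC_{\geq m}$ together with stability, the connectivities can be bootstrapped one step at a time. Its fiber $\Omega X$ then lies in $\cC_{\geq m+1}$, and suspending gives $X \in \cC_{\geq m+2}$.

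I expect the real bookkeeping to be in this last step. The plan is to use (4b) for the composite $Y \to X \to 0$, noting that $Y \to X$ has fiber $\Omega$ of the complementary summand.

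For (3), we use the standard description from \cite[Appendix C]{SAG}: since $\Sigma$ is fully faithful on a prestable category, the colimit of the tower $\cD \xto{\Sigma} \cD \xto{\Sigma} \cdots$ has mapping spaces computed as colimits of eventually constant systems. Therefore $\cD \to \SW(\cD)$ is fully faithful. Stability follows because $\Sigma$ becomes invertible in the colimit and finite limits are computed at a finite stage. The universal property holds because $\SW$ is the left adjoint to stable inclusion on right exact functors. Retract closure follows from idempotents being detectable at a finite stage.

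For (4), we use the universal property to obtain $\SW(\cC_{\geq 0}) \to \cC$. It is fully faithful because on each stage $\Sigma^{-n}\cC_{\geq 0}$ it is the inclusion, which is compatible with the transition maps. Its image is $\bigcup_n \cC_{\geq -n} = \cC_{>-\infty}$.
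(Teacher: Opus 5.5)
Your verification of the axioms in (1) is correct, and your argument for (4) is the paper's, but the proof of (2) rests on a false identification. The pullback of $\fib(f)\to 0$ along $Y\to 0$ is the projection $\fib(f)\oplus Y\to Y$, not $f$. Only your converse holds, namely that $\fib(f)\to 0$ is the base change of $f$ along $0\to Y$. So you have not shown that $\fib(f)\in\cC_{\geq k}$ forces $f$ to be $k$-connective. That is exactly the direction your closure-under-extensions and closure-under-cofibers arguments use.

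The fix goes as follows. With $F=\fib(f)$, the cofiber sequence $X\to Y\to\Sigma F$ is a bicartesian square, so $f$ is the base change of $0\to\Sigma F$ along $Y\to\Sigma F$. Stability makes $\Sigma F\to 0$ $(k+1)$-connective. Axiom (4c), applied to the composite $0\to\Sigma F\to 0$ (which is $\id_0$), then shows $0\to\Sigma F$ is $k$-connective. This is \cite[Proposition 2.16]{Lawson}; the paper simply cites \cite[Remark 2.30]{Lawson} for everything in (1)--(2) except retracts.

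For retracts, drop the (4c) paragraph. Write $Y\simeq X\oplus C$ with $C=\cofib(X\to Y)$. The fiber of the section $X\to Y$ is $\Omega C$, not $\Omega X$. Making $Y\to X$ $(m+2)$-connective would need $C\in\cC_{\geq m+2}$, which you do not have, so the conclusion $X\in\cC_{\geq m+2}$ is unjustified.

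Your (4b) plan is the right one, but you have the fibers swapped: $\fib(Y\to X)\simeq C$, not $\Omega C$. Since $C$ is a cofiber of a map between objects of $\cC_{\geq m}$, the map $Y\to X$ is $m$-connective. Then (4b), with $Y\to 0$ being $(m+1)$-connective, yields $X\in\cC_{\geq m+1}$. This is precisely the paper's extension $Y\to X\to\Sigma C$ with outer terms in $\cC_{\geq m+1}$.

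In (3), ``idempotents are detectable at a finite stage'' does not give retract-closure. A retract of $X\in\cD$ in $\SW(\cD)$ has the form $\Sigma^{-n}Z$. Finite-stage detection only shows that $Z$ is a retract of $\Sigma^nX$ in $\cD$, not that $\Sigma^{-n}Z\in\cD$. The paper argues differently:
\begin{itemize}
\item $\cD\subseteq\SW(\cD)$ is closed under finite colimits and extensions, so (1) applies;
\item $\SW(\cD)=\SW(\cD)_{>-\infty}$;
\item the retract statement of (2) then gives the claim.
\end{itemize}
Also, your phrase `finite limits are computed at a finite stage' is not literally meaningful, since $\cD$ need not have finite limits. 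Stability and the universal property are best taken from \cite[Proposition C.1.1.7]{SAG}.
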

\begin{proof}
    Except the claim about retracts,
    the first two points were noted in \cite[Remark 2.30]{Lawson}.
    By shifting and induction, it is enough to show that if $X \in \cC_{\geq 0}$ is a retract of $Y \in \cC_{\geq 1}$,
    then also $X \in \cC_{\geq 1}$. Let $C = \cofib(X \to Y) \in \cC_{\geq 0}$, so that $Y \simeq X \oplus C$.
    Then $X$ sits in an extension $Y \to X \to \Sigma C$
    where the outer terms lie in $\cC_{\geq 1}$, so we conclude.
    Point (3) follows from (1), (2) and the universal property of $\SW(-)$ shown in \cite[Proposition C.1.1.7(b)]{SAG}.
    For (4), we have a cone
    \[\begin{tikzcd}
        {\cC_{\geq 0}} && {\cC_{\geq 0}} && {\cC_{\geq 0}} & \cdots \\
        &&& \cC
        \arrow["\Sigma", hook, from=1-1, to=1-3]
        \arrow["\inc"', curve={height=12pt}, hook, from=1-1, to=2-4]
        \arrow["\Sigma", hook, from=1-3, to=1-5]
        \arrow["{\Omega\inc}"{description}, hook, from=1-3, to=2-4]
        \arrow[from=1-5, to=1-6]
        \arrow["{\Omega^2\inc}"{description}, hook', from=1-5, to=2-4]
    \end{tikzcd}\]
    which induces a comparison functor $f \colon \SW(\cC_{\geq 0}) \to \cC$
    that clearly has essential image $\cC_{>-\infty}$.
    Since all the functors in this cone are fully faithful,
    it follows that $f$ is fully faithful.
\end{proof}

\begin{notation}
    Because of the above Lemma, we will denote a stable connectivity structure by $(\cC,\cC_{\geq 0})$.
\end{notation}

\begin{remark}\label{rem:prestable-conn}
    If $\cD$ is a prestable category,
    then the connectivity structure on $\SW(\cD)$
    induces a good notion of connectivity on $\cD$
    by $\cD_{\geq n} = \SW(\cD)_{\geq n} = \{\Sigma^n X \mid X \in \cD\} \subseteq \cD$
    for $n \geq 0$, so that $\cD_{\geq 0} = \cD$.
    Since $\cD$ generally does not admit pullbacks, this is not a connectivity structure in the strict sense
    of Definition \ref{def:conn-struc},
    but it still makes sense to refer to it as such since
    it essentially behaves like one,
    and we can perform all arguments in $\SW(\cD)$ where it constitutes a genuine connectivity structure.
    We will come back to this in Section \ref{sec:prestable}.
\end{remark}

\begin{convention}
    From now on, unless specified otherwise, we will always implicitly assume
    that a connectivity structure on a stable category is stable.
\end{convention}

\begin{remark}\label{rem:cocon}
    By passing to opposite categories,
    everything in this section dualizes to a notion of coconnectivity structure
    satisfying the dual results.
    Concretely, we say that a coconnectivity structure on $\cC$ is the data
    of a connectivity structure on $\cC^\op$.
    For example, if $\cC$ is stable
    there is a dual of Lemma \ref{lem:stable-conn-struc}
    which says that a coconnectivity structure on $\cC$
    is the same data as a full subcategory $\cC_{\leq 0} \subseteq \cC$
    closed under finite limits and extensions.
    A functor between stable coconnectivity structures
    has amplitude $\leq n$ precisely if it restricts to $\cC_{\leq 0} \to \cD_{\leq n}$.
\end{remark}

\begin{observation}\label{obs:conn}
    Let $\cC$ be a category with connectivity structure.
    \begin{enumerate}
        \item For any small category $K$,
            the category $\Fun(K,\cC)$ inherits a pointwise connectivity structure.
            Similarly, given $X \in \cC$ we can declare a map in $\cC_{X/}$ or $\cC_{/X}$
            to be $k$-connective precisely if it is so in $\cC$,
            and this defines a connectivity structure on the slices $\cC_{/X}$ and $\cC_{X/}$.

        \item If $\cC$ and its connectivity structure
            are stable, and $\cD$ is another such category,
            then an exact functor $f \colon \cC \to \cD$
            has amplitude $\geq a$ if and only if it restricts
            to $\cC_{\geq 0} \to \cD_{\geq a}$.

        \item If $\cC$ is a stable category with $t$-structure
            $(\cC_{t \geq 0},\cC_{t \leq 0})$
            \alt{weight structure $(\cC_{w \geq 0},\cC_{w \leq 0})$}
            then $\cC_{t \geq 0}$ \alt{$\cC_{w \geq 0}$}
            determines a stable connectivity structure on $\cC$
            via Lemma \ref{lem:stable-conn-struc}.
    \end{enumerate}
\end{observation}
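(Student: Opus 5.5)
For part (1), I take a pointwise connectivity structure on $\Fun(K,\cC)$: a natural transformation $\alpha\colon F\to G$ is $k$-connective precisely if each component $\alpha_x$ is $k$-connective in $\cC$. Pullbacks in $\Fun(K,\cC)$ exist and are computed pointwise because $\cC$ admits pullbacks. Each axiom of Definition \ref{def:conn-struc} quantifies over individual morphisms and their pullbacks and composites, so it can be checked componentwise:
\begin{itemize}
\item monotonicity in $k$ holds componentwise;
\item equivalences are pointwise equivalences, hence $\infty$-connective;
\item stability under pullback follows because pullbacks are pointwise;
\item the three two-out-of-three style conditions hold because composition is pointwise.
\end{itemize}

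For the slices, I use that the forgetful functors $\cC_{/X}\to\cC$ and $\cC_{X/}\to\cC$ are conservative and preserve composition. Pullbacks in $\cC_{/X}$ are computed in $\cC$, since the projection $\cC_{/X}\to\cC$ creates contractible-shaped limits such as pullbacks. For $\cC_{X/}$, pullbacks exist and are also computed in $\cC$, since the forgetful functor $\cC_{X/}\to\cC$ creates all limits. Granting these facts, the axioms transfer verbatim from $\cC$.

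For part (2), let $f\colon\cC\to\cD$ be exact. Suppose first that $f(\cC_{\geq0})\subseteq\cD_{\geq a}$. Then $f$ commutes with $\Sigma$, so it sends $\cC_{\geq k}$ into $\cD_{\geq k+a}$. A $k$-connective map has fiber in $\cC_{\geq k}$, and since $f$ is exact it preserves fibers; hence the image map has fiber in $\cD_{\geq k+a}$. This holds for every $k$, in particular for all large $k$.

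Conversely, suppose $f$ sends $k$-connective maps to $(k+a)$-connective maps for all $k\geq k_0$. Let $Y\in\cC_{\geq0}$ and choose $k\geq k_0$. The map $\Sigma^kY\to0$ has fiber $\Sigma^{k-1}Y$, so by stability of the connectivity structure it is $k$-connective. In the convention of Lemma \ref{lem:stable-conn-struc}, this says $\Sigma^k Y\in\cC_{\geq k}$, and the map is then $(k-1)$-connective in the fiber sense; I will track the indices carefully using \cite[Proposition 2.16]{Lawson}, but the shift is the same on both sides. Applying $f$, I get $\Sigma^kf(Y)\in\cD_{\geq k+a}$. Desuspending then gives $f(Y)\in\cD_{\geq a}$, using that $\cD_{\geq n}=\Sigma^n\cD_{\geq0}$ and that $\Sigma$ is an equivalence.

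For part (3), I note that $\cC_{t\geq0}$ (respectively $\cC_{w\geq0}$) contains $0$ and is closed under extensions and cofibers; for weight structures this is part of the definition. So Lemma \ref{lem:stable-conn-struc}(1) applies directly.

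The only point requiring care is the index bookkeeping between "object $k$-connective" and "map $k$-connective" in part (2). Since the same shift occurs in source and target, the amplitude is unaffected.
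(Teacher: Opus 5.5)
Your proposal is correct and is the routine verification that the paper leaves implicit, since the observation is stated without proof. There is one slip in (2): the fiber of $\Sigma^kY\to 0$ is $\Sigma^kY$ itself, not $\Sigma^{k-1}Y$ (it is $0\to\Sigma^kY$ that has fiber $\Sigma^{k-1}Y$). Consequently this map is $k$-connective directly by Lemma~\ref{lem:stable-conn-struc}, and its image $\Sigma^k f(Y)\to 0$ being $(k+a)$-connective means exactly $\Sigma^kf(Y)\in\cD_{\geq k+a}$; no further index bookkeeping via \cite{Lawson} is needed.
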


\begin{definition}\label{def:grow-in-conn}
    Let $\cC$ be a category with connectivity structure.
    A direct sequential diagram $X_\bullet \colon \N \to \cC$
    is growing in connectivity if for all $n \geq 0$
    the map $X_k \to X_{k+1}$ is $n$-connective for large enough $k$.
    Applying this definition to the slice category $\cC_{/X}$
    for some $X$, this defines the notion of a sequential
    cone $X_\bullet \Rightarrow \const X$ growing in connectivity;
    namely that for every $n$ the map $X_k \to X_{k+1}$ (equivalently, the map $X_k \to X$)
    is $n$-connective for large enough $k$.
    We also consider the analogously defined notions for inverse sequential diagrams $\N^\op \to \cC$ (and their cones).
\end{definition}

\begin{observation}\label{obs:pres-amp}
    If $f \colon \cC \to \cD$ is a functor of bounded below amplitude
    between categories with connectivity structures,
    then composition with $f$ preserves sequential diagrams growing in connectivity.
\end{observation}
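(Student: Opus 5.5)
This is immediate from the definitions; the only thing to check is the meaning of the connectivity notions in Definition \ref{def:grow-in-conn} and Definition \ref{def:conn-struc}. Let $X_\bullet \colon \N \to \cC$ be growing in connectivity, and suppose $f$ has amplitude $\geq a$ for some $a \in \Z$. By definition, there is some $k_0$ such that $f$ sends $k$-connective maps to $(k+a)$-connective maps whenever $k \geq k_0$.

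Given $n \geq 0$, set $m \coloneqq \max(n-a, k_0)$. Since $X_\bullet$ grows in connectivity, there is $K$ with $X_k \to X_{k+1}$ being $m$-connective for all $k \geq K$. Because $m \geq k_0$, the map $f(X_k) \to f(X_{k+1})$ is $(m+a)$-connective. Since $m+a \geq n$, axiom (1) of Definition \ref{def:conn-struc}, applied iteratively, shows that it is $n$-connective. Hence $f \circ X_\bullet$ grows in connectivity.

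The same argument, with arrows reversed, handles inverse sequential diagrams $\N^\op \to \cC$. For cones $X_\bullet \Rightarrow \const X$, we apply the argument to the maps $X_k \to X$, which are sent to $f(X_k) \to f(X)$. Equivalently, one may note that $f$ induces a functor $\cC_{/X} \to \cD_{/f(X)}$ of the same amplitude, since connectivity in slices is detected in $\cC$ by Observation \ref{obs:conn}(1).

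There is no real obstacle here. The only point needing care is that the amplitude condition holds only for large $k$, which is why we use $\max(n-a, k_0)$ rather than just $n-a$.
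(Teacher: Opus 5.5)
Your proof is correct and is exactly the unwinding of the definitions that the paper intends; the paper states this as an observation without giving a proof. Your choice $m = \max(n-a,k_0)$ correctly handles the fact that the amplitude condition only holds for large $k$. If $m$ happens to be negative, the needed $m$-connectivity of $X_k \to X_{k+1}$ follows from the case $0$ via axiom (1), so nothing changes.
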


As we will see below (Lemma \ref{lem:cn-cone-colim}), the point of the definition of these kinds of sequential diagrams
will be that under certain assumptions on the target $\cD$ a functor of bounded below amplitude
automatically sends sequential cones growing in connectivity to (co)limiting cones.

\begin{example}
    Endow $\An$ with the canonical connectivity structure
    from Example \ref{ex:conn-topos}.
    The cell structure of a CW complex $X$
    defines a sequential cone $X_\bullet \colon \N \to \An_{/X}$
    growing in connectivity in $\An$.
    Similarly, if $\cC$ is a stable category with weight structure
    and $X \in \cC$, there exists an associated
    weight complex $X_{\leq 0} \to X_{\leq 1} \to X_{\leq 2} \to \cdots$
    over $X$ which defines a sequential cone growing in (weight) connectivity
    (cf.~Lemma \ref{lem:weight-cplx}).
\end{example}

\begin{example}
    Given $X \in \An$, the Postnikov tower
    yields an inverse sequential cone $\const X \Rightarrow \tau_{\leq \bullet}X$
    growing in connectivity.
\end{example}

\begin{example}\label{ex:dold-kan}
    Let $(\cC,\cC_{\geq 0})$ be a stable category with connectivity structure
    and $X \colon \Delta^\op \to \cC$ a simplicial object.
    We recall properties of the Dold-Kan correspondence
    from \cite[Theorem 1.2.4.1, Remarks 1.2.4.2 and 1.2.4.3]{HA}
    (compare also the prestable version of \cite[Theorem C.1.3.1]{SAG}):
    \begin{enumerate}
        \item There is an equivalence $\cC^{\Delta^\op} \simeq \cC^{\N}$ which associates to a simplicial object $X$
            the sequential diagram $|X|_0 \to |X|_1 \to |X|_2 \to \cdots$
            where $|X|_n \coloneqq \colim_{\Delta^\op_{\leq n}}X$.
        \item If the colimits exist, then
            $|X| \coloneqq \colim_{\Delta^\op}X \simeq \colim_n |X|_n$.

        \item $\cofib(|X|_n \to |X|_{n+1})$ is a direct summand of $\Sigma^{n+1}X_{n+1}$,
            hence is $(n+k+1)$-connective whenever $X_{n+1}$ is $k$-connective
            (since $\cC_{\geq 0}$ is closed under retracts in $\cC_{>-\infty}$
             by Lemma \ref{lem:stable-conn-struc}).

        \item In particular, if $X$ is uniformly bounded below,
            i.e.~factors through $\cC_{\geq -k}$ for some $k$,
            then $(|X|_n)_n$ is growing in connectivity.
    \end{enumerate}
    Thus uniformly bounded below geometric realizations are examples
    of sequential colimits growing in connectivity.
    In fact, point (3) shows that the equivalence in (1)
    restricts to an equivalence between $(\cC_{\geq 0})^{\Delta^\op}$
    and the full subcategory of $(\cC_{\geq 0})^{\N}$
    on sequential diagrams $X_\bullet$ where $X_{n+1}/X_n \in \cC_{\geq n+1}$
    for $n \geq 0$.
    We found the description via sequential diagrams to be more convenient for most of our presentation, with the exception of Proposition \ref{prop:weight-pdelta} below.
\end{example}

\begin{definition}
    We say that a connectivity structure on a category $\cC$ is colimit \alt{limit} complete if
    \begin{enumerate}
        \item All $\infty$-connective maps are equivalences.

        \item $\cC$ admits colimits \alt{limits} of sequential diagrams growing in connectivity.

        \item For $k \in \Z$ and a sequential diagram growing in connectivity
            $X_\bullet \colon \N \to \cC$ \alt{$X_\bullet \colon \N^\op \to \cC$}
            all of whose maps are $k$-connective,
            also $X_0 \to \colim_n X_n$ \alt{$\lim_n X_n \to X_0$} is $k$-connective.\footnote{Equivalently,
                we can ask that for each $k \in \Z$, the full subcategory of $\Ar(\cC)$ on $k$-connective
                maps is closed under colimits \alt{limits} of sequential diagrams growing in connectivity
                (with respect to the pointwise connectivity structure).}
    \end{enumerate}
\end{definition}

\begin{observation}\label{obs:colim-comp}
    If $\cC$ (and its connectivity structure) is stable, the above definition simplifies to:
    \begin{enumerate}
        \item $\cC_{\geq \infty} \coloneqq \bigcap_{n \geq 0} \cC_{\geq n} = 0$.
        \item $\cC$ admits sequential colimits \alt{limits} growing in connectivity.
        \item $\cC_{\geq 0}$ is closed under sequential colimits \alt{limits} growing in connectivity.
    \end{enumerate}
\end{observation}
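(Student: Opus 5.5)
We check the three conditions of the general definition one at a time, using that for a stable connectivity structure a map $f$ is $k$-connective exactly when $\fib(f) \in \cC_{\geq k}$ (Lemma \ref{lem:stable-conn-struc}(2)). By Observation \ref{obs:conn}(1), the same holds for the pointwise structure on $\Ar(\cC)$.

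For (1): a map is $\infty$-connective precisely when its fiber lies in $\cC_{\geq \infty}$. Thus "all $\infty$-connective maps are equivalences" says that every object of $\cC_{\geq\infty}$ has $0 \to X$ (or equivalently $X \to 0$) an equivalence, i.e.\ $\cC_{\geq \infty} = 0$. Conversely, if $\cC_{\geq\infty}=0$, then an $\infty$-connective map has zero fiber and is an equivalence. Condition (2) is literally the same in both formulations.

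For (3), first suppose the general condition (3) holds, and let $X_\bullet$ be a sequential diagram in $\cC_{\geq 0}$ growing in connectivity. Apply the general condition to the diagram $\Omega X_\bullet$ augmented by $0$, or more directly to $0 \to X_\bullet$ viewed in $\Ar(\cC)$. The cleanest route is to consider the maps $X_n \to 0$, which are $0$-connective. Passing to the colimit via the footnote's arrow-category formulation, the colimit of a diagram of $0$-connective maps growing in connectivity is $0$-connective, so $\colim X_n \to 0$ is $0$-connective, i.e.\ $\colim X_n \in \cC_{\geq 0}$. If I want to avoid the footnote, I would instead use the stated version: the diagram $X'_\bullet$ with $X'_0 = 0$ and $X'_{n+1} = X_n$. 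Its transition maps have cofibers in $\cC_{\geq 0}$, so fibers in $\cC_{\geq -1}$, hence they are $(-1)$-connective. The general condition then gives that $0 \to \colim X_n$ is $(-1)$-connective, whose fiber $\Omega\colim X_n$ lies in $\cC_{\geq -1}$, so $\colim X_n \in \cC_{\geq 0}$. Shifting handles all $k$.

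Conversely, suppose $\cC_{\geq 0}$ is closed under such colimits, and let $X_\bullet$ grow in connectivity with all maps $k$-connective. Exactness of sequential colimits in the stable $\cC$ (they exist by (2)) gives
\[\cofib(X_0 \to \colim_n X_n) \simeq \colim_n \cofib(X_0 \to X_n).\]
Each $\cofib(X_0\to X_n)$ is an iterated extension of the cofibers $\cofib(X_i\to X_{i+1}) \in \cC_{\geq k+1}$, so it lies in $\cC_{\geq k+1}$. The diagram $n\mapsto \cofib(X_0\to X_n)$ is again growing in connectivity, since its successive cofibers are those of $X_\bullet$. By closure and shifting, the colimit lies in $\cC_{\geq k+1}$, so the fiber of $X_0 \to \colim X_n$ lies in $\cC_{\geq k}$. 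The limit version is dual. The only point needing care is the identification of the cofiber of the colimit map with the colimit of cofibers, which holds because the colimit is assumed to exist and cofibers commute with colimits in a stable category. I expect no real obstacle here.
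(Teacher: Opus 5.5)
Your unwinding of conditions (1) and (2), and both directions of (3) in the colimit case, are correct. This is the intended argument: the paper states the observation without proof, and it amounts to translating via ``$f$ is $k$-connective iff $\fib(f)\in\cC_{\geq k}$''. The converse direction for limits also dualizes verbatim, using $\fib(\lim_n X_n\to X_0)\simeq\lim_n\fib(X_n\to X_0)$ and the fact that $n\mapsto \fib(X_n\to X_0)$ has the same transition fibers as $X_\bullet$.

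The one place where ``the limit version is dual'' fails is the direction from the general condition (3) to closure of $\cC_{\geq 0}$ under sequential limits. Your footnote-free trick uses that a map between connective objects is $(-1)$-connective, and for colimits this suffices, because it gives a connective cofiber of $X_0\to\colim_n X_n$. For an inverse diagram $X_\bullet\colon\N^{\op}\to\cC_{\geq 0}$ the transition maps are again only $(-1)$-connective. Applying the general condition to $X_\bullet$, or to $X_\bullet$ with $0$ appended at the bottom, then only yields $\fib(\lim_n X_n\to X_0)\in\cC_{\geq -1}$. Hence you only get $\lim_n X_n\in\cC_{\geq -1}$, one degree short.

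You have to use the growth hypothesis here. Choose $N$ such that $X_{n+1}\to X_n$ is $0$-connective for all $n\geq N$. Apply the general condition with $k=0$ to the tail $X_{N+\bullet}$, which has the same limit, to get $\fib(\lim_n X_n\to X_N)\in\cC_{\geq 0}$. Then conclude by extension-closure, since $X_N\in\cC_{\geq 0}$. This passage to a tail is the same move the paper makes elsewhere, e.g.\ in the proofs of Lemma \ref{lem:left-t-comp-is-conn-comp} and Lemma \ref{lem:conn-comp-inv-under-eqv}.

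Your arrow-category variant would dualize without trouble. However, it rests on the footnote's equivalence, which you would then need to justify, and in the stable case that justification goes through the same computation.
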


\begin{warning}
    In \cite{Lawson}, a connectivity structure is called left complete if it satisfies axiom (1), namely that all $\infty$-connective maps are equivalences. In this paper, we will instead refer to this property as ``left separated'' or ``hypercomplete''. Our use of the term completeness is motivated by its compatibility with existing notions in the literature, for instance recovering the notion of a left complete $t$-structure (see Lemma \ref{lem:left-t-comp-is-conn-comp} below).
\end{warning}

Using Example \ref{ex:dold-kan} we can reformulate (2)+(3)
of Observation \ref{obs:colim-comp} using geometric realizations.

\begin{lemma}\label{lem:colim-comp-via-geom-real}
    Let $(\cC,\cC_{\geq 0})$ be a stable connectivity structure.
    The following are equivalent:
    \begin{enumerate}
        \item $\cC$ admits sequential colimits growing in connectivity, and
            $\cC_{\geq 0} \subseteq \cC$ is closed under them.
        \item $\cC_{\geq 0}$ admits geometric realizations
            and $\cC_{\geq 0} \subseteq \cC$ preserves them.
    \end{enumerate}
\end{lemma}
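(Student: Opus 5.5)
We will prove both implications using the Dold-Kan correspondence of Example \ref{ex:dold-kan}, which identifies $(\cC_{\geq 0})^{\Delta^\op}$ with the full subcategory of $(\cC_{\geq 0})^{\N}$ on sequential diagrams $X_\bullet$ with $X_{n+1}/X_n \in \cC_{\geq n+1}$. For such a diagram, $|X| \simeq \colim_n |X|_n$ whenever either colimit exists.

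\emph{(1) $\Rightarrow$ (2).} Let $X \colon \Delta^\op \to \cC_{\geq 0}$. By Example \ref{ex:dold-kan}(3),(4), the sequence $(|X|_n)_n$ grows in connectivity. Each $|X|_n$ lies in $\cC_{\geq 0}$, being a finite colimit of connective objects, so all maps $|X|_n \to |X|_{n+1}$ are $0$-connective. By (1) the colimit $\colim_n |X|_n$ exists in $\cC$ and lies in $\cC_{\geq 0}$. Hence $|X|$ exists in $\cC$ and lies in the full subcategory $\cC_{\geq 0}$. It is therefore also the colimit in $\cC_{\geq 0}$, and the inclusion preserves it.

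\emph{(2) $\Rightarrow$ (1).} Let $X_\bullet \colon \N \to \cC$ grow in connectivity.

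First, we reduce to a diagram in $\cC_{\geq 0}$ with strong connectivity growth. Discarding finitely many terms does not change the colimit, so we may assume $X_k \to X_{k+1}$ is $n_k$-connective with $n_k \to \infty$. Since $\cC_{\geq 0}$ is closed under extensions, all $X_k$ lie in $\cC_{\geq -m}$ for some $m$, where $X_0 \in \cC_{\geq -m}$; the case $X_0$ unbounded below cannot occur after shifting only if $X_0 \in \cC_{>-\infty}$. We handle this as follows. Write $X_k = X_0 \oplus' \ldots$; more precisely, form the cofiber sequence $X_0 \to X_k \to X_k/X_0$, which reduces the claim to the diagram $X_k/X_0$ with $X_0/X_0 = 0$. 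We may therefore assume $X_0 = 0$, so that everything lies in $\cC_{\geq -m}$, and after shifting, in $\cC_{\geq 0}$. For the full closure statement (3) we also need: if every map is $k$-connective then $X_0 \to \colim$ is $k$-connective. This is the same reduction, applied to the cofibers $X_n/X_0 \in \cC_{\geq k}$.

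Next, we reindex so that the diagram satisfies the Dold-Kan condition. Pass to a cofinal subsequence $X_{k_0} \to X_{k_1} \to \cdots$ in which the $j$-th map has cofiber in $\cC_{\geq j+1}$. This is possible because connectivities tend to infinity and composites of $n$-connective maps are $n$-connective.

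By the equivalence of Example \ref{ex:dold-kan}, this subsequence is $(|Y|_n)_n$ for a simplicial object $Y$ in $\cC_{\geq 0}$. By (2), $|Y|$ exists in $\cC_{\geq 0}$ and is a colimit in $\cC$. Hence $\colim_n |Y|_n$ exists in $\cC$, lies in $\cC_{\geq 0}$, and agrees with $\colim_k X_k$ by cofinality. The cofiber-sequence reduction then recovers the colimit of the original diagram, using that colimits commute with cofibers and that $\cC_{\geq 0}$ is closed under extensions.

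The main obstacle is the bookkeeping in this first reduction step. Specifically, one must check that (2) yields existence of the colimit in $\cC$ rather than only in $\cC_{\geq 0}$, and that the cofiber trick legitimately transfers existence of colimits, using stability and the fact that $\colim(X_0 \to X_k)$ is a constant diagram.
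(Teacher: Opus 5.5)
Your proposal is correct and follows the paper's proof. For (1)$\Rightarrow$(2) you read the result off the Dold--Kan correspondence of Example~\ref{ex:dold-kan}, and for (2)$\Rightarrow$(1) you pass to a cofinal subsequence with $X_{n+1}/X_n\in\cC_{\geq n+1}$, replace $X_\bullet$ by $X_\bullet/X_0$, realize the result as $(|Y|_n)_n$ for some $Y\in(\cC_{\geq 0})^{\Delta^\op}$, and use that $|Y|$ is also a colimit in $\cC$. The reduction paragraph is muddled in places: the sentence about $X_0$ being unbounded below and the ``$\oplus'$'' should simply be deleted, and cofibers of $k$-connective maps lie in $\cC_{\geq k+1}$ rather than just $\cC_{\geq k}$.
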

\begin{proof}
    The implication (1) $\Rightarrow$ (2) is immediate
    from Example \ref{ex:dold-kan}.
    For the converse, let $X_\bullet \colon \N \to \cC$
    be growing in connectivity.
    By cofinality, we may assume that $X_{n+1}/X_n \in \cC_{\geq n+1}$ for $n \geq 0$.
    We need to show that $X_\bullet$ admits a colimit in $\cC$
    which lies in $\cC_{\geq 0}$ if $X_\bullet$ is pointwise connective.
    By replacing $X_\bullet$ with $X_\bullet/X_0$
    we can assume that $X_\bullet$ is pointwise connective
    and linearly growing in connectivity.
    Now Example \ref{ex:dold-kan} shows that there is some
    $Y_\bullet \in (\cC_{\geq 0})^{\Delta^\op}$
    whose associated sequential object
    under the Dold-Kan correspondence is $X_\bullet$.
    In particular, $X_\bullet$ has colimit $|Y_\bullet| \in \cC_{\geq 0}$,
    and this is also a colimit in $\cC$ by assumption.
\end{proof}

\begin{lemma}\label{lem:cn-cone-colim}
    Let $f \colon \cC \to \cD$ be a map of categories with connectivity structures
    and suppose that $\cD$ is colimit \alt{limit} complete.
    If $f$ has bounded below amplitude,
    then it sends direct \alt{inverse} sequential cones growing in connectivity to colimit \alt{limit} cones.
\end{lemma}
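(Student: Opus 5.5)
We treat the colimit case; the limit case is the dual statement, obtained by passing to opposite categories as in Remark \ref{rem:cocon}. Let $X_\bullet \Rightarrow \const X$ be a direct sequential cone in $\cC$ growing in connectivity. Because $f$ has bounded below amplitude, Observation \ref{obs:pres-amp}, applied in the slice $\cD_{/f(X)}$ with its induced connectivity structure (Observation \ref{obs:conn}(1)), shows that $f(X_\bullet) \Rightarrow \const f(X)$ is again a cone growing in connectivity. So it suffices to prove the following: if $\cD$ is colimit complete and $Y_\bullet \Rightarrow \const Y$ is a sequential cone in $\cD$ growing in connectivity, then the induced map $\varphi \colon \colim_n Y_n \to Y$ is an equivalence.

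The colimit $L \coloneqq \colim_n Y_n$ exists by axiom (2) of colimit completeness, since $Y_\bullet$ is itself growing in connectivity. By axiom (1) it suffices to show that $\varphi$ is $k$-connective for every $k$. Fix $k$ and choose $N$ such that all maps $Y_m \to Y_{m+1}$ and $Y_m \to Y$ are $(k+1)$-connective for $m \geq N$. Replacing the diagram by its cofinal tail $(Y_m)_{m \geq N}$, axiom (3) shows that the canonical map $\iota \colon Y_N \to L$ is $(k+1)$-connective. The composite $\varphi \circ \iota$ is the cone map $Y_N \to Y$, which is $(k+1)$-connective, and in particular $k$-connective. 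Axiom (4b) of Definition \ref{def:conn-struc}, with $gf = \varphi\iota$ $k$-connective and $f = \iota$ $(k-1)$-connective, then gives that $\varphi$ is $k$-connective. Since $k$ was arbitrary, $\varphi$ is $\infty$-connective and hence an equivalence.

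This argument uses only the unstable axioms, so it does not require $\cD$ to be stable. In the stable case it is the familiar argument that $\cofib(\varphi)$ lies in $\cD_{\geq \infty} = 0$. The only points needing some care are bookkeeping. First, one has to pass to the cofinal subdiagram so that axiom (3), which requires all maps in the diagram to be $k$-connective, actually applies. Second, one has to match the connectivity indices in the 2-out-of-3 axiom (4b). Neither step is a serious obstacle.
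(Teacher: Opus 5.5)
Your argument is correct and is essentially the paper's: the paper writes $\colim_k f(X_k)\to f(X)$ as a colimit in $\Ar(\cD)$ of the cone maps and applies the arrow-category form of axiom (3) from the footnote, while you derive exactly that instance from the literal axiom (3) on a tail together with the cancellation axiom (4b). One small correction: the limit case does not follow by passing to opposites as in Remark \ref{rem:cocon}, because that turns a connectivity structure into a coconnectivity structure and the axioms of Definition \ref{def:conn-struc} are not self-dual; instead, run your argument with the $(k+1)$-connective projection $\lim_n Y_n \to Y_N$ and use axiom (4c) in place of (4b).
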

\begin{proof}
    We consider the colimit complete case, the limit complete one being analogous.
    Let $n \geq 0$. It suffices to check that the comparison map $\colim_k f(X_k) \to f(X)$ is $n$-connective.
    We can write this map as the colimit in $\Ar(\cD)$ of the maps $f(X_k) \to f(X)$.
    Since $f$ has bounded below amplitude, by Observation \ref{obs:pres-amp}
    this sequential diagram in $\Ar(\cD)$ is still growing in connectivity
    and eventually takes values in $n$-connective maps,
    so its colimit is $n$-connective by colimit completeness of $\cD$.
\end{proof}

\begin{corollary}\label{cor:bdd-amp-pres}
    If both $\cC$ and $\cD$ are colimit complete \alt{limit complete},
    then an exact functor of bounded below amplitude $f \colon \cC \to \cD$
    preserves colimits \alt{limits} of sequential diagrams growing in connectivity.
\end{corollary}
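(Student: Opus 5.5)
This follows directly from Lemma \ref{lem:cn-cone-colim} applied to colimit cones in $\cC$; I treat the colimit case, and the limit case is formally dual. Let $X_\bullet \colon \N \to \cC$ be a sequential diagram growing in connectivity. Since $\cC$ is colimit complete, the colimit $X \coloneqq \colim_n X_n$ exists in $\cC$, and we must show that the canonical cone $f(X_\bullet) \Rightarrow \const f(X)$ is a colimit cone in $\cD$.

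The first step is to check that the colimit cone $X_\bullet \Rightarrow \const X$ is itself a sequential cone growing in connectivity, in the sense of Definition \ref{def:grow-in-conn}. That is, for every $n$ the map $X_k \to X$ must be $n$-connective for large $k$. Fix $n$ and choose $k_0$ such that $X_k \to X_{k+1}$ is $n$-connective for all $k \geq k_0$. Using stability, set $C_j \coloneqq \cofib(X_k \to X_j)$ for $j \geq k$. Then $(C_j)_{j \geq k}$ is a sequential diagram growing in connectivity with $C_k = 0$. Its transition maps have cofibers $\cofib(X_j \to X_{j+1})$, which lie in $\cC_{\geq n+1}$, so all of its maps are $(n+1)$-connective.

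Since colimits commute with cofibers in a stable category, $\colim_j C_j \simeq \cofib(X_k \to X)$. By axiom (3) of colimit completeness, in the form of Observation \ref{obs:colim-comp}, this cofiber lies in $\cC_{\geq n+1}$. Hence the fiber of $X_k \to X$ lies in $\cC_{\geq n}$, which means $X_k \to X$ is $n$-connective. Alternatively, one can read this off directly from the arrow-category formulation in the footnote to the definition of colimit completeness.

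Finally, apply Lemma \ref{lem:cn-cone-colim} to $f$ and this cone. The functor $f$ has bounded below amplitude and $\cD$ is colimit complete, so $f$ sends the cone to a colimit cone, i.e.\ $\colim_n f(X_n) \simeq f(X)$. The only real point is the first step, that the colimit cone grows in connectivity, and that is a routine cofiber argument. Exactness of $f$ is used to identify amplitude with the condition $f(\cC_{\geq 0}) \subseteq \cD_{\geq a}$ via Observation \ref{obs:conn}.
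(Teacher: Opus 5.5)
Your proof is correct and takes the same route as the paper, which states this as an immediate consequence of Lemma~\ref{lem:cn-cone-colim}: colimit completeness of $\cC$ gives both the existence of the colimit and the fact that the colimit cone grows in connectivity (this is axiom (3) of colimit completeness applied to the tail $(X_j)_{j \geq k}$, which your cofiber argument re-derives). There is one harmless slip. Under the paper's convention (a map is $k$-connective if its fiber lies in $\cC_{\geq k}$), the transition maps of $C_\bullet$ are $n$-connective, not $(n+1)$-connective. What you actually use is that each $C_j$ lies in $\cC_{\geq n+1}$, which follows from $C_k = 0$ by closure under extensions, so your conclusion $\cofib(X_k \to X) \in \cC_{\geq n+1}$ still holds.
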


\begin{lemma}\label{lem:conn-comp-inv-under-eqv}
    (Co)limit completeness is invariant under equivalence in $\Cat^\cn$.
    Concretely, let $\cC$ and $\cD$ be categories with connectivity structures
    and $F \colon \cC \simeq \cD$ an equivalence so that $F$ and $F^{-1}$ have bounded below amplitude.
    Then $\cC$ is (co)limit complete if and only if $\cD$ is.
\end{lemma}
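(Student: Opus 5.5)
By symmetry (passing to opposite categories as in Remark \ref{rem:cocon}) I only treat colimit completeness, and only the direction ``$\cD$ colimit complete $\Rightarrow$ $\cC$ colimit complete''; the converse follows by swapping the roles of $F$ and $F^{-1}$. Fix an integer $a$ such that both $F$ and $G \coloneqq F^{-1}$ have amplitude $\geq a$: for large enough $k$, each sends $k$-connective maps to $(k+a)$-connective maps. The strategy is to transport each of the three axioms of the definition across $F$, losing only a bounded shift in connectivity, and to use that axiom (3) is stable under such shifts.

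\textbf{Step 1: $\infty$-connective maps.} Let $f$ be an $\infty$-connective map in $\cC$. It is $k$-connective for every large $k$, so $F(f)$ is $(k+a)$-connective for every large $k$, hence $\infty$-connective in $\cD$. By assumption $F(f)$ is then an equivalence, and since $F$ is an equivalence of categories, so is $f$.

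\textbf{Step 2: existence of colimits.} Let $X_\bullet$ be growing in connectivity in $\cC$. By Observation \ref{obs:pres-amp}, $F(X_\bullet)$ is growing in connectivity in $\cD$, so it has a colimit there. Since $F$ is an equivalence of categories, $G$ carries this colimit cone to a colimit cone of $GF(X_\bullet) \simeq X_\bullet$. Hence $X_\bullet$ has a colimit in $\cC$, and moreover $F$ preserves this colimit.

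\textbf{Step 3: closure of $k$-connective maps.} This is the only step where care is needed, because amplitude is only controlled for large $k$. Suppose all transition maps of $X_\bullet$ are $k$-connective, for some fixed $k$; we must show $X_0 \to \colim X_n$ is $k$-connective. Let $m$ be the connectivity of $X_0 \to X$, which we want to show is at least $k$. I would first show that this map is $N$-connective in a form usable after transport, and then descend to $k$.

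The plan for the descent is to compare with $X_j \to X$. For $j$ large, the map $X_j \to X$ has connectivity at least $K$ for any prescribed $K$, by the argument of Lemma \ref{lem:cn-cone-colim}. Indeed, the diagram $F(X_{\geq j})$ has transition maps of connectivity at least $K+a$ once $j$ is large, so by completeness of $\cD$ the map $F(X_j) \to F(X)$ is $(K+a)$-connective, and applying $G$ shows $X_j \to X$ is $K$-connective up to the shift. Then $X_0 \to X$ factors as $X_0 \to X_j \to X$. The first map is $k$-connective, being a finite composite of $k$-connective maps (axiom (4a)). The second map is $(k+1)$-connective once $j$ is large. By axiom (4a), the composite is $k$-connective.

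Thus the potential obstacle in Step 3 dissolves: we never need to transport low connectivities, only high ones, which is exactly what the large-$k$ clause of the amplitude condition allows. The only remaining check is that ``colimit in $\cC$'' matches the cone used, which Step 2 provides.
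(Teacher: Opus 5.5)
Your proof is correct and is essentially the paper's argument. Conservativity of $F$ handles $\infty$-connective maps, and the colimit is transported through the equivalence; for the closure axiom you factor $X_0 \to X$ as $X_0 \to X_j \to X$ and make $X_j \to X$ highly connective by applying completeness of $\cD$ to $F(X_{\geq j})$ and transporting back along $F^{-1}$, exactly as the paper does. Two small remarks: transporting only large connectivities is slightly more careful than the paper's own bookkeeping, since amplitude is defined only ``for large enough $k$''; and the limit case is not literally the colimit case for the opposite category, but your argument dualizes verbatim by factoring $\lim_n X_n \to X_0$ through $X_j$.
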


\begin{remark}\label{rem:conn-comp-inv-under-eqv}
    In particular, if $(\cC,\cC_{\geq 0})$ and $(\cC,\cC'_{\geq 0})$
    are two stable connectivity structures on $\cC$ so that $\cC_{\geq 0} \subseteq \cC'_{\geq -a} \subseteq \cC_{\geq -(a+b)}$ for some $a,b\in \Z$, then $(\cC,\cC_{\geq 0})$ is (co)limit complete if and only if $(\cC,\cC'_{\geq 0})$ is.
\end{remark}

\begin{proof}
    Suppose that $\cD$ is colimit complete (the limit case being similar).
    If $f$ is an $\infty$-connective map in $\cC$, then $Ff$ is $\infty$-connective in $\cD$,
    hence an equivalence. Since $F$ is conservative, also $f$ is an equivalence.
    If $X_\bullet \colon \N \to \cC$ is a sequential diagram growing in connectivity,
    then so is $FX_\bullet$, and hence the latter admits colimit in $\cD$.
    But then one checks that $F^{-1}(\colim_n FX_n)$ is also the colimit of $X_\bullet$ in $\cC$.
    Finally, let $F$ resp.~$F^{-1}$ have amplitude $\geq -a$ resp.~$\geq -b$
    and $X_\bullet \colon \N \to \cC$ be a sequential diagram growing in connectivity
    where all maps are connective (the same argument works for $k$-connective maps).
    Let $X = \colim_n X_n$ be the colimit.
    Now pick $n$ large enough so that all maps $X_k \to X_{k+1}$ are $(a+b)$-connective for $k \geq n$.
    Using colimit completeness of $\cD$, we see that $FX_n \to \colim_{k \geq n} FX_k$ is $b$-connective,
    and hence $X_n \to X$ is connective. Since also $X_0 \to X_n$ is connective,
    we deduce that $X_0 \to X$ is connective, as desired.
\end{proof}

\begin{lemma}\label{lem:left-t-comp-is-conn-comp}
    Let $\cC$ be a stable category with $t$-structure.
    The following are equivalent:
    \begin{enumerate}
        \item The $t$-structure on $\cC$ is left complete.
        \item The connectivity structure $(\cC,\cC_{t \geq 0})$ is limit complete.
        \item The connectivity structure $(\cC,\cC_{t \geq 0})$ is both limit and colimit complete.
    \end{enumerate}
\end{lemma}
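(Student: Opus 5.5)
We will prove (3) $\Rightarrow$ (2) $\Rightarrow$ (1) $\Rightarrow$ (3); the first implication is trivial. Throughout we use the convention that left completeness of a $t$-structure means $\cC \simeq \lim_n \cC_{t\leq n}$. By \cite[Proposition 1.2.1.17]{HA}, this is equivalent to two conditions: $\cC$ admits the limit of every Postnikov tower $\tau_{\leq \bullet}X$, and the natural map $X \to \lim_n \tau_{\leq n}X$ is an equivalence.

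\emph{(2) $\Rightarrow$ (1).} For $X \in \cC$, the tower $\tau_{\leq \bullet} X$ is an inverse sequential diagram growing in connectivity, since $\fib(\tau_{\leq n+1}X \to \tau_{\leq n}X) \simeq \Sigma^{n+1}\pi_{n+1}X \in \cC_{t\geq n+1}$. By limit completeness its limit exists. The cone $X \to \tau_{\leq \bullet}X$ is also growing in connectivity, because $\fib(X\to \tau_{\leq n}X) = \tau_{\geq n+1}X$. Applying Lemma \ref{lem:cn-cone-colim} to the identity functor, which has amplitude $\geq 0$, shows that $X \simeq \lim_n \tau_{\leq n}X$.

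\emph{(1) $\Rightarrow$ (3), limit part.} First, $\cC_{t\geq\infty}=0$: any object in this intersection has all truncations zero, so it is the limit of zeros. Next, let $X_\bullet$ be an inverse tower growing in connectivity. For each $n$ the tower $\tau_{\leq n}X_k$ is eventually constant in $k$, since the transition maps eventually have fibers in $\cC_{t\geq n+2}$. This produces a compatible family $(Y_n)_n$ with $Y_n \in \cC_{t\leq n}$ and $\tau_{\leq n}Y_{n+1}\simeq Y_n$, that is, an object of $\lim_n \cC_{t\leq n}$. Left completeness provides $Y \in \cC$ with $\tau_{\leq n}Y \simeq Y_n$, and we check directly that $Y \simeq \lim_k X_k$.

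The key step is to compute $\map(Z, \lim_k X_k)$ as $\lim_k \map(Z,X_k)$. To do this we write $\map(Z,X_k) = \lim_n \map(Z,\tau_{\leq n}X_k)$, which is valid because every $X_k$ is its own Postnikov limit. We then exchange the two limits and use that for fixed $n$ the system in $k$ is eventually constant.

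For closure of $\cC_{t\geq 0}$, suppose the maps of the tower are $k$-connective. Then the fiber of $Y \to X_0$ has the property that its $\tau_{\leq n}$ equals $\tau_{\leq n}\fib(X_m\to X_0)$ for large $m$. This truncation lies in $\cC_{t\geq k}$, so $\pi_j = 0$ for $j<k$. Left separatedness then gives that the fiber is $k$-connective.

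\emph{(1) $\Rightarrow$ (3), colimit part.} Here $\cC_{t\geq\infty}=0$ is already known. Let $X_\bullet$ be a direct sequence growing in connectivity. Its cofibers $X_{k+1}/X_k$ become arbitrarily connective, so for each $n$ the sequence $\tau_{\leq n}X_k$ is eventually constant. As before, this defines an object $Y \in \lim_n \cC_{t\leq n}\simeq\cC$. To show $Y = \colim X_k$, we check that for every $Z$,
\[
\map(Y,Z) \simeq \lim_k \map(X_k,Z).
\]
Here I expect the main obstacle: $Z$ is not truncated. The plan is to write $Z = \lim_n \tau_{\leq n}Z$ and use that maps into $\tau_{\leq n}Z$ from $X_k$ depend only on $\tau_{\leq n}X_k$, which is eventually $\tau_{\leq n}Y$. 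Exchanging limits then gives the claim.

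Closure of connectives under these colimits is easy: each $\tau_{\leq n}Y$ is some $\tau_{\leq n}X_k$, which is connective. Hence all negative homotopy vanishes, so $Y\in\cC_{t\geq 0}$ by separatedness; the relative version is identical.
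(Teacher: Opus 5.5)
\textbf{Gap in (2) $\Rightarrow$ (1).} Your convention is $\cC\simeq\lim_n\cC_{t\leq n}$. The two conditions you attribute to Lurie are that limits of Postnikov towers exist and that $X\simeq\lim_n\tau_{\leq n}X$. These only show that the comparison functor $\cC\to\lim_n\cC_{t\leq n}$ is fully faithful.

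You have dropped the other half of the criterion. It says that every compatible tower $(Y_n)_n$, with $Y_n\in\cC_{t\leq n}$ and $\tau_{\leq n}Y_{n+1}\simeq Y_n$, is the Postnikov tower of its limit. This half is not automatic. Take bounded above spectra $\Sp_{t<\infty}$: every Postnikov tower is eventually constant, so both of your conditions hold. Yet $\lim_n(\Sp_{t<\infty})_{t\leq n}=\lim_n\Sp_{t\leq n}\simeq\Sp$, so this $t$-structure is not left complete.

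The cycle of implications really does break here. In (1) $\Rightarrow$ (3) you use essential surjectivity when you write ``left completeness provides $Y$ with $\tau_{\leq n}Y\simeq Y_n$''.

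The missing step is exactly where the closure axiom of limit completeness enters, and it is what the paper does:
\begin{itemize}
\item Such a tower is growing in connectivity, since $\fib(Y_{n+1}\to Y_n)\in\cC_{t\geq n+1}$. So $Y=\lim_n Y_n$ exists by (2).
\item $\fib(Y\to Y_n)\simeq\lim_{k\geq n}\fib(Y_k\to Y_n)$ is a limit of a tower in $\cC_{t\geq n+1}$ growing in connectivity. By closure it lies in $\cC_{t\geq n+1}$.
\item Hence $\tau_{\leq n}Y\simeq Y_n$, i.e.\ the counit is an equivalence as well.
\end{itemize}

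\textbf{On (1) $\Rightarrow$ (3).} This is essentially the paper's argument, unpacked via mapping spaces. Truncations of a tower growing in connectivity are eventually constant, so the (co)limit can be assembled levelwise in $\lim_n\cC_{t\leq n}$.

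One inference there is not valid as stated. You conclude from ``$\pi_jF=0$ for $j<k$'' plus left separatedness that $F$ is $k$-connective. Vanishing homotopy objects only place $\tau_{\leq k-1}F$ in $\bigcap_m\cC_{t\leq m}$, and that intersection need not vanish even for a left complete $t$-structure. For example, take $\Sp\times\Sp$ with connectives $\Sp_{\geq 0}\times 0$ and coconnectives $\Sp_{\leq 0}\times\Sp$, and look at objects $(0,C)$.

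You do not need this inference. You already know $\tau_{\leq n}F\in\cC_{t\geq k}$ for large $n$. Hence $\tau_{\leq k-1}F\simeq\tau_{\leq k-1}\tau_{\leq n}F\simeq 0$, which is exactly $F\in\cC_{t\geq k}$. In the colimit case it is simpler still: $\cC_{t\geq 0}$ is closed under all colimits that exist.
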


\begin{remark}
    As the proof shows, this crucially relies on the functoriality
    of truncations in $t$-structures, and therefore
    has no direct weight analogue.
\end{remark}

\begin{proof}
    Clearly (3) implies (2), and the implication (2) $\Rightarrow$ (1)
    follows from \cite[Prop.~1.2.1.19]{HA},
    noting that by the discussion above Proposition 1.2.1.17 of op.~cit.,
    the proof actually only needs the existence
    of sequential limits growing in connectivity
    and the closure of $\cC_{\geq 0}$ under them,
    as opposed to general countable products.
    Specifically, we can identify $\what{\cC} \coloneqq \lim_n \cC_{\leq n}$
    with the full subcategory of $\Fun(\N^\op,\cC)$ on those diagrams
    $(X_n)_n$ where $X_n \in \cC_{\leq n}$ and $X_{n+1} \to X_n$
    induces an equivalence on $\tau_{\leq n}$.
    Note that such sequential diagrams are in particular growing in connectivity,
    so limit completeness guarantees that the canonical map
    $\cC \to \what{\cC}$ admits a right adjoint given by taking the limit.
    We want to show that unit and counit of this adjunction form an equivalence.
    The counit at an object $(X_n)_n \in \what{\cC}$ is given by the map
    $\tau_{\leq n}\lim_k X_k \to X_n$ in the $n$-th component.
    To see that this is an equivalence, we show that $\fib(\lim_k X_k \to X_n) = \lim_{k \geq n} \fib(X_k \to X_n)$ lies in $\cC_{\geq n+1}$.
    But $(\fib(X_k \to X_n))_{k \geq n}$ is a sequential diagram growing
    in connectivity and taking values in $\cC_{\geq n+1}$, so this follows
    from limit completeness. Thus the counit is an equivalence.
    The unit $X \to \lim_n \tau_{\leq n}X$ is also clearly an equivalence,
    since by similar arguments its fiber lies in $\cC_{\geq \infty} = 0$.
    This concludes the proof of (2) $\Rightarrow$ (1).

    It remains to show that (1) implies (3).
    If $X \in \cC_{\geq \infty}$, then $\tau_{\leq n}X = 0$ for all $n$, and hence $X = 0$ by left completeness.
    Next we show that $\cC$ admits limits and colimits of sequential diagrams growing in connectivity.
    By left completeness, the functors  $\tau_{\leq n} \colon \cC \to \cC_{\leq n}$
    respectively $\tau_{\leq n} \colon \cC_{\leq n+1} \to \cC_{\leq n}$
    induce an equivalence $\cC \simeq \lim_n \cC_{\leq n}$.

    The crucial point now is that given a sequential diagram $X_\bullet \colon \N \to \cC$
    (analogously for $\N^\op \to \cC$) growing in connectivity,
    each $\tau_{\leq n}X_\bullet$ is eventually constant, so admits a (co)limit.
    Moreover, any functor preserves (co)limits of eventually constant sequential diagrams.
    It follows that $\lim_n \cC_{\leq n}\simeq \cC$ admits (co)limits of
    sequential diagrams growing in connectivity,
    see e.g.~\cite[\href{https://kerodon.net/tag/06B5}{Tag 06B5}]{Kerodon}.

    Finally, we need to argue that the connectives are closed under such (co)limits.
    As in any $t$-structure, the connectives are closed under all existing colimits,
    so consider $X_\bullet \colon \N^\op \to \cC$ growing in connectivity.
    Up to cofinality, we may assume that $\fib(X_{n+1} \to X_n) \in \cC_{\geq n+1}$.
    Hence $X_{n+1} \to X_n$ is inverted by $\tau_{\leq n}$,
    and we see that $\lim_k \tau_{\leq n}X_k \simeq \tau_{\leq n}X_n$.
    Under the equivalence $\cC \simeq \lim_n \cC_{\leq n}$, we can thus identify the limit of $X_\bullet$
    with the tower $(\tau_{\leq n}X_n)_n$ with transition maps $\tau_{\leq n+1}X_{n+1} \to \tau_{\leq n}X_{n+1} \simeq \tau_{\leq n}X_n$.
    In other words, this tower identifies with the Postnikov tower of $\lim X_\bullet$.
    From this description it is clear that if $X_\bullet$ is pointwise connective,
    then also $\tau_{\leq 0}(\lim X_\bullet) \simeq \tau_{\leq 0}X_0$ is connective, and hence $\lim X_\bullet$ is too.
\end{proof}

\begin{warning}
    If $\cC$ is stable with $t$-structure
    whose underlying stable connectivity structure is colimit complete,
    then it does not have to be left complete.
    Indeed, if $\cC$ admits sequential colimits growing in connectivity
    and the $t$-structure is left separated (so that $\cC_{\geq \infty}=0$),
    then it is already colimit complete since for any $t$-structure
    the connectives are closed under all colimits.
    A concrete example of a stable cocomplete category
    with left separated but not left complete $t$-structure
    is $\cC = \Shv^\hyp(X;\cD(\Z))$ where $X=\prod_{n \geq 1}S^n$,
    which we learned from \cite[Section 2.3]{Haine}.
    The canonical $t$-structure is left separated by hypercompleteness,
    and clearly we have all colimits.
    However for $\cF = \bigoplus_{n=1}^\infty \Z[n]$
    (where $\Z[n]$ is the constant sheaf on $\Z[n] \in \cD(\Z)$)
    we have that $\cF \to \lim_n \tau_{\leq n}\cF = \prod_{n=1}^\infty \Z[n]$
    is not an equivalence by \cite[Examples 2.15,2.16]{Haine}.
\end{warning}

\begin{example}\label{ex:sp-conn-comp}
    The $t$-structure on $\Sp$ is complete,
    and hence $(\Sp,\Sp_{\geq 0})$ is both limit and colimit complete.
\end{example}

\begin{example}
    Let $\cX$ be a topos, and equip it with the connectivity
    structure from Example \ref{ex:conn-topos}.
    Analogously to Lemma \ref{lem:left-t-comp-is-conn-comp}
    one can show that the following are equivalent:
    \begin{enumerate}
        \item Postnikov towers converge in $\cX$.
        \item The connectivity structure on $\cX$ is limit complete.
        \item The connectivity structure on $\cX$ is limit and colimit complete.
    \end{enumerate}
    We warn the reader that it does not seem to be known
    whether generally, convergence of Postnikov towers implies
    Postnikov-completeness of topoi, cf.~\cite[Warning 1.23]{Haine}.
    In the following, we will only use that the canonical connectivity
    structure on $\An$ is limit complete, which also easily follows
    from the Milnor sequence on homotopy groups,
    so we leave the proof of the above equivalences to the interested reader.
\end{example}

As a next step, we want to show that colimit completeness
implies a certain idempotent completeness in the setting
of stable connectivity structures.
To prove this, we will need a strengthening
of \cite[Proposition 1.3.3.10(1)]{HA},
which says that a category admitting finite coproducts
and geometric realizations admits all coproducts.

\begin{lemma}\label{lem:fintype-colim}
    Let $\cC$ be a category admitting finite coproducts
    and geometric realizations.
    If $K$ is a quasi-category such that each set of $n$-simplices $K_n$ is finite
    then $\cC$ admits $K$-indexed colimits.
    More generally, if $\cD$ is a ($\infty$-)category so that $\map(\Delta^n,\cD) \in \An^\omega$
    for all $n \geq 0$, then $\cC$ admits $\cD$-indexed colimits.
    In particular, $\cC$ is idempotent complete.
\end{lemma}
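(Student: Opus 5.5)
The plan is to follow the strategy of \cite[Proposition 1.3.3.10]{HA}: build colimits skeleton by skeleton, using geometric realizations to glue in finitely many new simplices at each stage. Since $\cC$ admits finite coproducts and geometric realizations, it admits coequalizers of reflexive pairs, and in fact all colimits indexed by simplicial sets with finitely many nondegenerate simplices in each degree. The key observation is that for a simplicial set $K$ there is a simplicial object in $\Cat_\infty$, or rather in indexing diagrams, $[n] \mapsto \coprod_{K_n} \Delta^n$-type decompositions. Concretely, for $F \colon K \to \cC$ we have the Bousfield--Kan type formula
\[
    \colim_K F \simeq \bigl|\, [n] \mapsto \textstyle\coprod_{\sigma \in K_n} F(\sigma(0)) \,\bigr|,
\]
which is the colimit over the category of simplices $(\Delta_{/K})^\op$ computed via the cofinal map $(\Delta_{/K})^{\op} \to K$ (last-vertex or first-vertex map, \cite[Variant 4.2.3.15]{HTT}), followed by left Kan extension along $(\Delta_{/K})^\op \to \Delta^\op$. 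The fibers of the latter left Kan extension are the discrete sets $K_n$, so each term is a finite coproduct (as $K_n$ is finite), and the outer colimit is a geometric realization. I will make this precise by showing that the left Kan extension along $p \colon (\Delta_{/K})^\op \to \Delta^\op$ is computed fiberwise: $p$ is a coCartesian fibration (it is the Grothendieck construction of $[n] \mapsto K_n$), so left Kan extension along it is computed by colimits over the fibers $K_n$, which are finite discrete. Then $\colim_{(\Delta_{/K})^\op} F' \simeq |p_! F'|$ exists.

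For the general statement with $\cD$ an $\infty$-category with $\map(\Delta^n,\cD)$ compact, I plan to replace the fiber sets $K_n$ by the anima $\map(\Delta^n,\cD)$. The category of simplices of $\cD$ is now the unstraightening of $[n] \mapsto \map(\Delta^n,\cD)$, which still is cofinal over $\cD$, and the fibers are the compact anima $\map(\Delta^n,\cD)$. So I need $\cC$ to admit colimits indexed by compact anima, that is, retracts of finite anima. This is the main obstacle. First I will handle finite anima by induction on cells: a finite anima is an iterated pushout of spheres, and sphere-indexed colimits are colimits over finite simplicial sets, which were handled above. Pushouts of finite simplicial sets again have finitely many simplices in each degree, so in fact I only need finite simplicial set models, which always exist for finite anima. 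Then I will handle retracts using idempotents. This is where the last claim, idempotent completeness, enters.

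Idempotent completeness is obtained as follows. The idempotent-splitting category $\mathrm{Idem}$ has finitely many nondegenerate simplices in each dimension; in fact its nerve is levelwise finite (\cite[4.4.5]{HTT}: $\mathrm{Idem}^+$ and $\mathrm{Idem}$ are finite in each degree). By the first part, $\cC$ admits $\mathrm{Idem}$-indexed colimits, which are splittings of idempotents. Once $\cC$ is idempotent complete, a colimit indexed by a retract $A$ of a finite anima $B$ is obtained as a retract of the $B$-indexed colimit of the pulled-back diagram, via the composite $A \to B \to A$. This supplies colimits over the compact anima fibers and completes the general case.
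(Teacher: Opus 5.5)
Your proposal is correct and follows the paper's proof essentially step by step. The steps are: the coproduct form of the Bousfield--Kan formula for levelwise finite $K$; idempotent completeness from the levelwise finiteness of $N(\mathrm{Idem})$; passage from finite to compact anima via retracts, for which the paper cites \cite[Lemma 21.1.2.14]{SAG}; and finally the $\infty$-categorical Bousfield--Kan formula with fibers $\map(\Delta^n,\cD)$. You simply unpack ingredients the paper takes from \cite{Shah}, such as computing the left Kan extension along the left fibration $(\Delta_{/K})^{\op}\to\Delta^{\op}$ fiberwise.
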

\begin{proof}
    The classical ``coproduct version'' of the Bousfield-Kan formula (see \cite[Corollary 12.3]{Shah})
    immediately shows the statement for $K$-indexed colimits.
    In this way we also recover \cite[Proposition 1.3.3.10(1)]{HA},
    that $\cC$ admits finite colimits (since it only remains to check the existence of pushouts).

    Moreover, once we know that $\cC$ is idempotent complete,
    it follows from \cite[Lemma 21.1.2.14]{SAG} that if $\cC$ admits $K$-indexed
    colimits and $L$ is a retract of $K$, then $\cC$ admits $L$-indexed colimits.
    In particular, it will follow that $\cC$ admits colimits indexed by compact (as opposed to only finite)
    anima, and then we can conclude via the $\infty$-categorical Bousfield-Kan formula (see \cite[Corollary 12.5]{Shah}),
    which precisely shows that if $\cC$ admits geometric realizations and $\map(\Delta^n,\cD)$-indexed
    colimits for all $n \geq 0$, then it also admits $\cD$-indexed colimits (and provides a formula for this colimit).

    Thus it remains to show idempotent completeness of $\cC$.
    Now $\Idem = N(\mathrm{Idem})$ is the nerve of the 1-category $\mathrm{Idem}$
    which has one object $x$ and two endomorphisms $\id_x$ and $e$, subject to the relation $e^2 = e$.
    The nerve $\Idem$ has a particularly simple description;
    it is the simplicial set obtained from the terminal semi-simplicial set by freely adjoining degeneracies.
    Concretely, for each $n \geq 0$, the set of $n$-simplices is in bijection with the set of surjective,
    order-preserving maps
    $
        d \colon [n] \twoheadrightarrow [m]
    $
    in $\Delta$. In particular, the set of $n$-simplices in this nerve is finite for all $n$,
    and we conclude by applying the first part.
   \footnote{Given $\Idem \to \cC$ classifying an idempotent endomorphism
        $e \colon X \to X$, the (cofinal) semi-simplicial subobject of the simplicial object
        provided by the Bousfield-Kan formula looks as follows: each object is $X$,
        the first face map is always $e$, and all other face maps are identities.
        The colimit of this (semi-)simplicial diagram is the retract classified by $e$.}
\end{proof}

\begin{corollary}\label{cor:grow-conn-idem}
    Let $\cC$ be a prestable category (e.g.~the connective
    part of a stable connectivity structure).
    Then $\cC$ admits geometric realizations if and only if it admits
    sequential colimits growing in connectivity (with respect
    to the canonical notion of connectivity of Remark \ref{rem:prestable-conn}).
    In this case, $\cC$ and hence also $\SW(\cC)$ are idempotent complete.
    In particular, if $(\cC,\cC_{\geq 0})$ is a colimit complete stable connectivity structure,
    then $\cC_{\geq 0}$ and $\cC_{>-\infty}$ are idempotent complete.
\end{corollary}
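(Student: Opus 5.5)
The plan is to reduce the corollary to Lemma \ref{lem:fintype-colim} and the Dold–Kan dictionary of Example \ref{ex:dold-kan}, working throughout inside $\SW(\cC)$. By Lemma \ref{lem:stable-conn-struc}(3), $\cC \subseteq \SW(\cC)$ is the connective part of a stable connectivity structure and is closed under retracts there. This makes statements about sequential colimits growing in connectivity in the prestable $\cC$ meaningful: they are diagrams in $\cC$ that grow in connectivity as diagrams in $\SW(\cC)$, and we ask for a colimit computed in $\cC$.

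For the equivalence, I would imitate the proof of Lemma \ref{lem:colim-comp-via-geom-real}, but with colimits taken in $\cC$ rather than in the stable category. The steps are as follows.

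\begin{enumerate}
\item Suppose $\cC$ admits geometric realizations and let $X_\bullet \colon \N \to \cC$ grow in connectivity. By cofinality, I may assume $X_{n+1}/X_n \in \SW(\cC)_{\geq n+1}$.
\item The cofibers $X_n/X_0$ are computed in $\cC$ since $\cC$ is prestable. Using the restricted Dold–Kan equivalence of Example \ref{ex:dold-kan} (whose prestable version is \cite[Theorem C.1.3.1]{SAG}), the sequence $X_\bullet/X_0$ corresponds to a simplicial object $Y \in \cC^{\Delta^\op}$.
\item Its geometric realization in $\cC$ gives $\colim_n X_n/X_0$. Then $\colim X_n$ is recovered as the pushout $X_0 \sqcup_{0} \colim X_n/X_0$ along the diagram; more precisely, a sequential diagram in $\cC$ is the same as a constant diagram $X_0$ together with a diagram of cofibers. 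Alternatively, and more cleanly, I would apply Dold–Kan directly to $X_\bullet$ with the convention $X_{-1}=0$, so that $X_0$ is just the zeroth filtration stage.
\item Conversely, if $\cC$ admits sequential colimits growing in connectivity, then a simplicial object $Y$ in $\cC$ yields via Dold–Kan the sequence $|Y|_n$. Each $|Y|_n$ is a finite colimit, hence exists in $\cC$, and the sequence grows in connectivity by Example \ref{ex:dold-kan}(3). Its colimit is then $|Y|$ by Example \ref{ex:dold-kan}(2).
\end{enumerate}

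The one point needing care is that Dold–Kan in a prestable category only uses finite colimits and the fact that cofibers of $|Y|_n \to |Y|_{n+1}$ are summands of $\Sigma^{n+1} Y_{n+1}$. These summands stay in $\cC$ because $\cC$ is closed under retracts in $\SW(\cC)$ by Lemma \ref{lem:stable-conn-struc}(3).

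Idempotent completeness of $\cC$ follows immediately from Lemma \ref{lem:fintype-colim}, since $\cC$ has finite coproducts and geometric realizations. For $\SW(\cC)$, take an idempotent $e$ on $Z \in \SW(\cC)$. Then $Z \simeq \Sigma^{-n} Z'$ with $Z' \in \cC$, and $\Sigma^n e$ is an idempotent of $Z'$ in $\cC$ by full faithfulness. It splits in $\cC$, and desuspending splits $e$.

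The final sentence is then a direct consequence. If $(\cC,\cC_{\geq 0})$ is colimit complete, condition (2)+(3) of Observation \ref{obs:colim-comp} together with Lemma \ref{lem:colim-comp-via-geom-real} give geometric realizations in $\cC_{\geq 0}$. The first part then shows $\cC_{\geq 0}$ and $\SW(\cC_{\geq 0}) \simeq \cC_{>-\infty}$ are idempotent complete, the last identification by Lemma \ref{lem:stable-conn-struc}(4).

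The main obstacle is step (2) of the equivalence: ensuring the prestable Dold–Kan correspondence is available with colimits in $\cC$, not just in $\SW(\cC)$. I expect citing \cite[Theorem C.1.3.1]{SAG} to handle it.
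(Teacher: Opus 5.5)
Your proposal is correct and follows the paper's own (one-line) argument: the equivalence comes from the prestable Dold--Kan correspondence, as in Example \ref{ex:dold-kan}; idempotent completeness of $\cC$ is Lemma \ref{lem:fintype-colim}; and your shifting argument for $\SW(\cC)$, together with Lemma \ref{lem:stable-conn-struc}(4), gives the remaining claims. The one slip is in your step (3): $X_0 \sqcup_0 \colim_n X_n/X_0$ is the direct sum $X_0 \oplus \colim_n X_n/X_0$, whereas $X_0 \to \colim_n X_n \to \colim_n X_n/X_0$ need not split, so discard that formula and use your alternative of applying the restricted Dold--Kan equivalence directly to $X_\bullet$, which is legitimate because $X_0 \in \cC$ already.
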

\begin{proof}
    This follows immediately
    from Lemma \ref{lem:fintype-colim}
    and the prestable Dold-Kan correspondence of \cite[Theorem C.1.3.1]{SAG},
    which allows us to view geometric realizations
    as sequential colimits growing in connectivity
    analogously to the stable case we recalled in Example \ref{ex:dold-kan}.
\end{proof}

If $(\cC,\cC_{\geq 0})$ is a limit complete stable connectivity structure,
there is generally no reason for it to admit totalizations (or geometric realizations).
However, the idempotent completeness still holds.

\begin{lemma}\label{lem:lim-comp-conn-idem}
    Let $(\cC,\cC_{\geq 0})$ be a limit complete stable connectivity structure.
    Then $\SW(\cC_{\geq 0}) \simeq \cC_{>-\infty}$ and $\cC_{\geq 0}$ are idempotent complete.
\end{lemma}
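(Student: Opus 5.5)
Given an idempotent $e\colon X \to X$ in $\cC_{>-\infty}$, I will build the retract as a limit of a tower growing in connectivity and then use limit completeness. After shifting, we may assume $X \in \cC_{\geq 0}$. Consider the constant inverse tower
\[
\cdots \xto{e} X \xto{e} X \xto{e} X .
\]
Its limit would be the splitting of $e$, but this tower is not growing in connectivity: the fibers of $e$ need not become connective. The task is to replace it by an equivalent tower that does grow in connectivity, using the stable structure.

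\textbf{Building the tower.} In the idempotent completion $\cC^\idem$ we have $X \simeq Y \oplus Z$, where $Y = \im(e)$ and $Z = \im(1-e)$. In $\cC$ itself we can form the Eilenberg swindle-type objects iteratively. Set $X_0 = X$, and let $X_{n+1}$ be the fiber of the map $X_n \to \Sigma^{n} X$ that is (a shift of) $1-e$, chosen so that in $\cC^\idem$ one has $X_n \simeq Y \oplus \Sigma^{n} Z$. Equivalently, one can use the fiber sequence $\Sigma^{n}Z \to \Sigma^{n}Z\oplus\Sigma^{n+1}Z \to \Sigma^{n+1} Z$ realized by $\cofib(1-e)$ on $\Sigma^n X$. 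Concretely, $\cofib(e\colon X\to X)$ lies in $\cC$ and is $Z \oplus \Sigma Z$ in $\cC^\idem$; iterating such cofibers and fibers produces objects of $\cC$ of the form $Y \oplus \Sigma^n Z$ up to summands which cancel. The maps $X_{n+1} \to X_n$ should then have fibers that are shifts of $Z\oplus\Sigma Z$ lying in $\cC_{\geq n}$, so the tower grows in connectivity. All objects stay in $\cC_{\geq 0}$, using that $\cC_{\geq 0}$ is closed under fibers of maps with connective cofiber, i.e.\ extensions.

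\textbf{Concluding.} By limit completeness, $L = \lim_n X_n$ exists in $\cC$, and the maps $L \to X_n$ are increasingly connective. Mapping out of $L$, the composite $L \to X_0 = X$ together with a compatible map $X \to L$ induced from $e$ exhibits $L$ as a retract splitting $e$. To check this, I would verify that the cofiber of $Y \to L$, computed in $\cC^\idem$ (where everything splits), lies in $\bigcap_n \cC^\idem_{\geq n}$ and vanishes; this uses axiom~(1) transported to the idempotent completion, via Lemma~\ref{lem:stable-conn-struc}(2), which gives closure of connectives under retracts. Hence $Y \in \cC$, which proves that $\cC_{>-\infty}$ is idempotent complete. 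Idempotent completeness of $\cC_{\geq 0}$ then follows, since a retract of a connective object is connective by Lemma~\ref{lem:stable-conn-struc}(2), and $\SW(\cC_{\geq 0})\simeq\cC_{>-\infty}$ by Lemma~\ref{lem:stable-conn-struc}(4).

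\textbf{Main obstacle.} The hard step is making the tower $X_n \simeq Y\oplus\Sigma^n Z$ honestly inside $\cC$, with explicit transition maps. I would first try the standard trick that $Y \oplus \Sigma^{2k}Z$-type objects arise as fibers of maps built from $e$ and $1-e$ between sums of copies of $X$. The point is that $X\oplus\Sigma X$ carries an automorphism-like map whose cofiber realizes the shift on the $Z$ summand while fixing $Y$. I would then check connectivity of the fibers directly in $\cC^\idem$.
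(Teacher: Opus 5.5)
\textbf{The tower you describe cannot exist in general.} Your plan is to split $e$ directly as the limit of a tower growing in connectivity. The paper never splits $e$ by hand: it uses the tower $\cdots\to\Sigma^4X\oplus\Sigma^2X\oplus X\to\Sigma^2X\oplus X\to X$ to define $S(X)=\prod_{n\geq 0}\Sigma^{2n}X$ on $\cC_{>-\infty}$ with $S\simeq\Sigma^2S\oplus\id$, deduces $K_0((\cC_{>-\infty})^\idem)=0$ by an Eilenberg swindle, and concludes with Thomason's classification of dense subcategories. A direct splitting can work, but your proposal has a genuine gap exactly at the step you call the main obstacle.

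\begin{itemize}
\item Your $X_n$ are produced from $(X,e)$ by finitely many fibers, cofibers and sums, so such a construction would have to work in every stable category. But $X_1\simeq Y\oplus\Sigma Z$ has class $[Y]-[Z]$ in $K_0(\cC^\idem)$. A dense stable subcategory consists precisely of the objects whose class lies in a subgroup $H$, and $H$ can contain $[X]=[Y]+[Z]$ without containing $[Y]-[Z]$. For example, take $\cC^\idem=\Perf(k)\times\Perf(k)$, $H=\Z\cdot(1,1)$, $X=(k,k)$, and $e$ the projection to the first factor.
\item Comparing $K_0$-classes, no map $Y\oplus\Sigma^{n+1}Z\to Y\oplus\Sigma^nZ$ can have fiber $\Sigma^n(Z\oplus\Sigma Z)$ unless $2[Z]=0$.
\item The explicit recipes are off. $\cofib(1-e)\simeq Y\oplus\Sigma Y$ involves no $Z$. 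The fiber of $Y\oplus\Sigma^nZ\to\Sigma^nX$ through the $Z$-summand is $Y\oplus\Sigma^{n-1}Y$, not $Y\oplus\Sigma^{n+1}Z$.
\item The concluding step is not carried out: no map $X\to L$ or $Y\to L$ is constructed, and $Y$ is not yet known to be an object of $\cC$.
\end{itemize}

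\textbf{The repair is to use only even shifts,} as the paper's swindle does.
\begin{itemize}
\item Put $C=\cofib(e)\simeq Z\oplus\Sigma Z\in\cC_{\geq 0}$. Then $Y\oplus\Sigma^{2k}Z$ is the cofiber of the evident split inclusion $\bigoplus_{j<k}\Sigma^{2j}C\to X\oplus\bigoplus_{j<k}\Sigma^{2j+1}C$. This is a map in $\cC$ because $\cC\subseteq\cC^\idem$ is full, so $Y\oplus\Sigma^{2k}Z$ is an object $X_k\in\cC_{\geq 0}$. The maps $\id_Y\oplus 0\colon X_{k+1}\to X_k$ lie in $\cC$ for the same reason.
\item The fiber of $X_{k+1}\to X_k$ is $\Sigma^{2k}F$, where $F\simeq\Sigma^{-1}Z\oplus\Sigma^2Z$ is the fiber of $X_1\to X_0$. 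Since $F$ is the fiber of a map between connective objects, it lies in $\cC_{\geq -1}$.
\item Hence the tower grows in connectivity, and by limit completeness $L=\lim_k X_k$ exists in $\cC$ and lies in $\cC_{\geq -1}$.
\item You do not need to transport axiom (1) to $\cC^\idem$. Instead note that $\cC\hookrightarrow\cC^\idem$ preserves all limits that exist in $\cC$: for $W$ a retract of $c\in\cC$, the comparison map for $\map(W,-)$ is a retract of the one for $\map(c,-)$.
\item In $\cC^\idem$ the limit of $(Y\oplus\Sigma^{2k}Z,\ \id_Y\oplus 0)$ is $Y$, since a tower with zero transition maps has limit $0$ by the Milnor sequence. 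Hence $Y\simeq L\in\cC_{>-\infty}$.
\end{itemize}

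With this repair your argument is a valid, more hands-on alternative to the paper's proof. It avoids $K_0$ and Thomason's theorem at the cost of building the tower explicitly.
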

\begin{proof}
    This is analogous to the proof of \cite[Lemma 2.4]{Balmer-Schlichting}.
    By Thomason's theorem \cite{Thomason} the zeroth (connective) algebraic $K$-theory functor
    $K_0 \colon \Cat_\st \to \Ab$ induces for any given stable category $\cD$ a bijection
    \[
        \{\text{dense stable subcategories }\cD_0 \subseteq \cD\} \leftrightarrow \{\text{subgroups } H \leq K_0(\cD)\}.
    \]
    In particular, if we can show that $K_0(\cD^\idem) = 0$, then it follows that $\cD = \cD^\idem$.
    So let $\cD = \SW(\cC_{\geq 0}) \simeq \cC_{>-\infty}$.
    We establish that $K_0(\cD^\idem) = 0$ by an Eilenberg Swindle.
    Note that if $X \in \cC_{>-\infty}$, then $\cdots \to \Sigma^4X \oplus \Sigma^2X \oplus X \to \Sigma^2X \oplus X \to X$
    is a sequential diagram growing in connectivity, so its limit $\prod_{n \geq 0} \Sigma^{2n}X$ exists.
    This defines an endofunctor $S \colon \cD \to \cD,\ X \mapsto \prod_{n \geq 0}\Sigma^{2n}X$.
    Note also that $S \simeq \Sigma^2S \oplus \id$.
    Now $S$ extends to an endofunctor on idempotent completions $S' \colon \cD^\idem \to \cD^\idem$
    which still satisfies $S' \simeq \Sigma^2S' \oplus \id$.
    Since $\Sigma^2$ induces the identity on $K_0$, it follows that $K_0(S') \oplus K_0(\id) = K_0(S')$,
    and hence that $K_0(\id) = 0$, i.e.~$K_0(\cD^\idem) = 0$, as desired.
    Thus $\cD$ is idempotent complete.
    The same follows for $\cC_{\geq 0}$ as it is closed under retracts in $\cC_{>-\infty}$ by Lemma \ref{lem:stable-conn-struc}.
\end{proof}

We now discuss the examples from the introduction of this section.
They are well known to experts, but we want to highlight
how they are easily intuited and proved via the formalism developed here.

\begin{lemma}\label{lem:ex-group}
    For a finite group $G$, the functors $(-)^{hG},(-)_{hG},(-)^{tG} \colon \Sp^{BG} \to \Sp$ preserve the (co)limits of the Whitehead and Postnikov towers.
\end{lemma}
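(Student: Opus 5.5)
The plan is to put the $t$-structure connectivity structure $(\cC,\cC_{t\geq 0})$ on both $\Sp^{BG}$ (pointwise, i.e.\ on underlying spectra) and $\Sp$. By Example \ref{ex:sp-conn-comp}, $\Sp$ is both limit and colimit complete for it. Lemma \ref{lem:cn-cone-colim} then says that any functor $\Sp^{BG} \to \Sp$ of bounded below amplitude sends direct and inverse sequential cones growing in connectivity to colimit, respectively limit, cones.

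So there are two things to check: that the relevant cones are growing in connectivity, and that each of the three functors has bounded below amplitude.

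\textbf{The cones.} For $X \in \Sp^{BG}$, the Whitehead tower $\cdots \to \tau_{\geq n+1}X \to \tau_{\geq n}X \to \cdots \to X$ is an inverse cone over $X$. The Postnikov tower $X \to \tau_{\leq n}X$ is a direct cone, read in the opposite direction. The truncations here are taken pointwise, since the $t$-structure on $\Sp^{BG}$ is the pointwise one.
\begin{enumerate}
\item For the Postnikov cone, the map $X \to \tau_{\leq n}X$ has fiber $\tau_{\geq n+1}X$, so it is $(n+1)$-connective. Hence the inverse cone $\const X \Rightarrow \tau_{\leq\bullet}X$ grows in connectivity.
\item For the Whitehead tower, the direct cone should be read as $n \to -\infty$. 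The relevant notion is then coconnectivity: the maps $\tau_{\geq -n}X \to X$ have cofiber $\tau_{\leq -n-1}X$, which becomes arbitrarily coconnective. So here I use the dual coconnectivity structure $\cC_{t\leq 0}$ of Remark \ref{rem:cocon}, for which $\Sp$ is again (co)limit complete. Alternatively, I can use $\colim_n \tau_{\geq -n}X \simeq X$ in $\Sp$ directly from the completeness of the $t$-structure.
\end{enumerate}
The upshot is that I need bounded amplitude in both directions: bounded below for the Postnikov limit and bounded above for the Whitehead colimit. The "limit of the Whitehead tower" (which is $0$) and the "colimit of the Postnikov tower" are handled symmetrically in the same way.

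\textbf{The amplitudes.} This is the step I expect to be the main obstacle, since the three functors behave differently.
\begin{enumerate}
\item $(-)_{hG}$ is a colimit, so it preserves connectives and has amplitude $\geq 0$. To bound its coconnectivity amplitude, I would use that $G$ is finite: for $X$ pointwise in $\Sp_{\leq 0}$, the homotopy orbit spectral sequence $H_p(G;\pi_q X)$ is not bounded above in general. So $(-)_{hG}$ does \emph{not} have bounded above amplitude. However, it commutes with all colimits, so for the Whitehead colimit nothing is needed.
\item Dually, $(-)^{hG}$ is a limit, so it preserves coconnectives (amplitude $\leq 0$) and commutes with all limits. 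This settles the Postnikov limit for $(-)^{hG}$ and the Whitehead colimit requires no argument for $(-)_{hG}$.
\item For the remaining cases I would use the following. Since $G$ is finite, $BG$ has a cell structure with finitely many cells in each dimension, and $(-)^{hG}$ is computed as a totalization whose $n$-th skeleton is a finite limit. Hence $(-)^{hG}$ sends pointwise connective objects to objects whose fiber against the skeleton is highly connective. This shows that $(-)^{hG}$ preserves colimits of direct sequences growing in connectivity, via the Milnor sequence, i.e.\ it has the needed amplitude on the relevant towers.

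More cleanly, I would argue directly that $(X_k)^{hG} \to X^{hG}$ has cofiber $(\tau_{\leq -k-1}X)^{hG} \in \Sp_{\leq -k-1}$. Dually, $(\tau_{\geq n+1}X)_{hG} \in \Sp_{\geq n+1}$ gives the Postnikov limit for $(-)_{hG}$.
\item Finally, $(-)^{tG} = \cofib(\mathrm{Nm}\colon(-)_{hG}\to(-)^{hG})$, and a cofiber of two functors that preserve a given sequential (co)limit preserves it too. So the case of $(-)^{tG}$ follows from the other two.
\end{enumerate}
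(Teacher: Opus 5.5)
Your final argument is the paper's proof. The three pieces are the same:
\begin{itemize}
\item $(-)_{hG}$ commutes with colimits and has connectivity amplitude $\geq 0$, which handles the Postnikov limit via Lemma \ref{lem:cn-cone-colim}.
\item $(-)^{hG}$ commutes with limits and has coconnectivity amplitude $\leq 0$, which handles the Whitehead colimit. Your ``more cleanly'' computation, $(\tau_{\leq -k-1}X)^{hG}\in\Sp_{\leq -k-1}$, is exactly this. It already follows from your item 2 by the dual of Lemma \ref{lem:cn-cone-colim}.
\item $(-)^{tG}$ then follows from the norm cofiber sequence.
\end{itemize}

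Delete the first paragraph of item 3 (the skeleton/Milnor argument). It is false, and you never use it.

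For pointwise connective $X$, the fiber of $X^{hG}$ over the limit along the $n$-skeleton of $BG$ is built from products of $\Omega^{k}X(a)$ with $k>n$. These are only $(-k)$-connective, so the fibers get \emph{less} connective as $n$ grows, not more. In particular $X^{hG}$ need not even be bounded below.

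In fact $(-)^{hG}$ does not preserve sequential colimits growing in connectivity. Take $C_p$ acting trivially on $X_k=\bigoplus_{0\leq j\leq k}\Z[j]$. The comparison map is
\[
\textstyle\bigoplus_j\Z[j]^{hC_p}\to\bigl(\bigoplus_j\Z[j]\bigr)^{hC_p}\simeq\prod_j\Z[j]^{hC_p}.
\]
On $\pi_0$ this is the inclusion $\bigoplus_j H^j(C_p;\Z)\to\prod_j H^j(C_p;\Z)$, which is not surjective.

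The skeletal argument only works for uniformly bounded above input. That is not the case the Whitehead tower needs, so drop it rather than repair it.
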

\begin{proof}
    The category $\Sp$ is presentable stable with complete $t$-structure,
    so the underlying (stable) connectivity structure is both limit and colimit complete (Lemma \ref{lem:left-t-comp-is-conn-comp}).
    Clearly $(-)_{hG}$ preserves colimits and $(-)^{hG}$ preserves limits,
    so due to the canonical fiber sequence $(-)_{hG} \to (-)^{hG} \to (-)^{tG}$
    it suffices to prove that $(-)_{hG}$ preserves sequential limits growing in ($t$-)connectivity
    and $(-)^{hG}$ preserves sequential colimits growing in $t$-coconnectivity,
    where we equip $\Sp^{BG}$ with the pointwise $t$-structure.
    Since $\Sp_{\geq 0}$ is closed under colimits, $(-)_{hG}$ has amplitude $\geq 0$,
    and thus preserves such limits by Corollary \ref{cor:bdd-amp-pres}.
    Dually (cf.~Remark \ref{rem:cocon}) $(-)^{hG}$ has coconnectivity amplitude $\leq 0$, so preserves such colimits.
\end{proof}

\begin{lemma}\label{lem:ex-tensor}
    Let $X \in \Sp$ be bounded below with finitely generated
    homotopy (equivalently, homology) groups.
    Then:
    \begin{enumerate}
        \item $X \tensor -  \colon \Sp \to \Sp$
            preserves uniformly bounded below products
            and hence uniformly bounded below sequential limits.
            For example, this shows $\S_{p}^\land \tensor \Z \simeq \Z_p^\land$.

        \item $\hom_{\Sp}(X,-) \colon \Sp \to \Sp$
            preserves uniformly bounded above filtered colimits.
    \end{enumerate}
\end{lemma}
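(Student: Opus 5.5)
The plan is to reduce both statements to the case $X = \S$ (respectively $X$ a finite spectrum) by a connectivity approximation, and then invoke Corollary \ref{cor:bdd-amp-pres} and Lemma \ref{lem:cn-cone-colim} for the complete connectivity structures on $\Sp$ (Example \ref{ex:sp-conn-comp}).

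The key input is that a bounded below spectrum $X$ with finitely generated homotopy groups admits a cell structure with finitely many cells in each dimension. Concretely, there is a sequence of finite spectra
\[
X_0 \to X_1 \to X_2 \to \cdots \to X
\]
with $X \simeq \colim_n X_n$ and $\cofib(X_n \to X) \in \Sp_{\geq n}$ for all $n$ (say $X \in \Sp_{\geq 0}$ after shifting). For each finite $X_n$, the functor $X_n \tensor -$ preserves all finite limits, hence products, since $\Sp$ is stable and $X_n$ is dualizable. Likewise $\hom(X_n,-) \simeq DX_n \tensor -$ preserves all colimits.

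For (1), let $(Y_i)_{i \in I}$ be a family with all $Y_i \in \Sp_{\geq -k}$. I would compare $X \tensor \prod_i Y_i$ with $\prod_i X \tensor Y_i$. Both are the colimit over $n$ of the corresponding expressions with $X_n$:
\begin{itemize}
\item On the left, $X \tensor -$ preserves colimits, so $X \tensor \prod_i Y_i \simeq \colim_n X_n \tensor \prod_i Y_i$.
\item On the right, $\prod_i X_n \tensor Y_i \to \prod_i X\tensor Y_i$ has cofiber $\prod_i \cofib(X_n\to X)\tensor Y_i$, which lies in $\Sp_{\geq n-k}$ because $\Sp_{\geq m}$ is closed under products. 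So $n \mapsto \prod_i X_n\tensor Y_i$ is a sequential cone growing in connectivity, and by Lemma \ref{lem:cn-cone-colim} (with $f=\id$ on the colimit complete $\Sp$) it is a colimit cone.
\end{itemize}
At each finite stage the two sides agree, so the comparison map is an equivalence. Uniformly bounded below sequential limits are then handled as fibers of maps between uniformly bounded below products, via the Milnor sequence $\lim \to \prod \to \prod$. The example follows by writing $\S_p^\land = \lim \S/p^k$, a uniformly bounded below limit, so that $\Z\tensor\S_p^\land \simeq \lim \Z/p^k \simeq \Z_p^\land$.

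For (2), let $Y_\bullet$ be a filtered diagram uniformly in $\Sp_{\leq k}$. Since $\hom(\cofib(X_n\to X), Y) \in \Sp_{\leq k-n}$, the maps $\hom(X,Y)\to\hom(X_n,Y)$ form an inverse cone growing in coconnectivity, natural in $Y$. Coconnectives are closed under filtered colimits in $\Sp$, so this remains true after applying $\colim_j$. Using the dual (coconnectivity) version of Lemma \ref{lem:cn-cone-colim} via Remark \ref{rem:cocon}, both $\hom(X,\colim Y_j)$ and $\colim_j\hom(X,Y_j)$ are identified with $\lim_n$ of the finite-stage terms. Alternatively, and more simply, one can check directly that each $\pi_m$ is determined at a finite stage. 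At finite stages, $\hom(X_n,-)$ commutes with the colimit.

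The main obstacle is the uniformity bookkeeping. One must make sure the error terms become arbitrarily (co)connective uniformly in the index $i$ or $j$, so that the comparison maps are $\infty$-(co)connective and hence equivalences by separatedness $\Sp_{\geq\infty}=0$ (respectively $\Sp_{\leq -\infty}=0$). The existence of the finite-type cell structure is standard: inductively attach finitely many cells to kill finitely generated homotopy, using Hurewicz.
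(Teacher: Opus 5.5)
Your proof is correct and follows the paper's route: the finite-type cell structure $X=\colim_n X_n$, agreement of $X_n\otimes\prod_i Y_i$ and $\prod_i X_n\otimes Y_i$ at finite stages, and the fact that $\bigl(\prod_i X_n\otimes Y_i\bigr)_n$ grows in connectivity, so it is a colimit cone (the paper says the same thing by noting that $\prod_i\colon\Sp^I\to\Sp$ has bounded amplitude and invoking Corollary~\ref{cor:bdd-amp-pres}), with (2) handled by the dual argument. One wording fix: $X_n\otimes-$ preserves arbitrary products because $X_n$ is dualizable, so that $X_n\otimes-\simeq\hom(DX_n,-)$ is a right adjoint; preserving finite limits alone would not give this.
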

\begin{proof}
    We only prove (1), the argument for point (2) being analogous.
    Recall that the $t$-structure on $\Sp$ is compatible with the monoidal structure,
    so that $\Sp_{\geq 0}$ is closed under tensor products.
    Moreover, products and coproducts / filtered colimits are $t$-exact.
    It follows that $\lim \colon \Sp^{\N^\op} \to \Sp$ has amplitude $\geq -1$;
    a sequential inverse limit of connective spectra is $(-1)$-connective.
    Now let $X \in \Sp_{\geq 0}^\fg$ be a connective spectrum
    with finitely generated homology groups.
    It is well known that then there exists a cell structure $X = \colim_n X_n$
    where each $X_n$ is finite and each $\cofib(X_n \to X_{n+1})$
    is a finite sum of $\Sigma^{n+1} \S$,
    so that in particular $X_\bullet$ is growing in connectivity
    (for example, this follows from the weight structure on $\Sp^\fg_{>-\infty}$
    which is restricted from the standard weight structure on $\Sp$).
    We show that $X \tensor -$ preserves uniformly bounded below products.
    Indeed, for a collection of (without loss of generality)
    connective spectra $(Y_i)_{i \in I}$ we have a commutative diagram
    \[\begin{tikzcd}
        {X \tensor \prod_i Y_i} & {\prod_i X \tensor Y_i} \\
        {\colim_n X_n \tensor \prod_i Y_i} & {\colim_n \prod_i X_n \tensor Y_i}
        \arrow[from=1-1, to=1-2]
        \arrow["\simeq", from=2-1, to=1-1]
        \arrow["\simeq"', from=2-1, to=2-2]
        \arrow[from=2-2, to=1-2]
    \end{tikzcd}\]
    where in the bottom we use that each $X_n$ is finite. Finally, products are $t$-exact,
    so $\prod_i \colon \Sp^{I} \to \Sp$ has bounded amplitude,
    and $(X_\bullet \tensor Y_i)_i$ is a sequential diagram in $\Sp^{I}$ growing in connectivity,
    so that its colimit is preserved by $\prod_i$.
    So the remaining two maps in the square are equivalences, as desired.
\end{proof}

\begin{lemma}\label{lem:ex-t-struc}
    Suppose $A$ is a finite type anima (i.e.~a CW complex of finite type),
    and that $f \colon \cC \to \cD$ is an exact functor of left complete $t$-categories.
    Then $\cC$ and $\cD$ admit colimits of uniformly bounded below $A$-indexed diagrams,
    and if $f$ restricts to $\cC_{t \geq 0} \to \cD_{t \geq -n}$
    for some $n$, then $f$ preserves them.
    Dually, if $\cC$ and $\cD$ are right $t$-complete
    then they admit limits of uniformly bounded above $A$-indexed diagrams
    and functors of bounded above $t$-amplitude preserve them.
\end{lemma}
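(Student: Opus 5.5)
We reduce to the Bousfield--Kan formula, exactly as in the proof of Lemma \ref{lem:fintype-colim}, and then control connectivity using the results on colimit completeness. Since $\cC$ is a left complete $t$-category, Lemma \ref{lem:left-t-comp-is-conn-comp} shows that the connectivity structure $(\cC,\cC_{t\geq 0})$ is both limit and colimit complete; the same holds for $\cD$. Replacing $A$ by a simplicial set $K$ of finite type, i.e.\ with each $K_n$ finite (possible since $A$ is a CW complex of finite type), a diagram $X\colon K\to\cC$ has colimit computed by the geometric realization of the simplicial object
\[
[n]\mapsto \bigoplus_{\sigma\in K_n} X(\sigma(0)),
\]
using the coproduct form of the Bousfield--Kan formula \cite[Corollary 12.3]{Shah}. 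Each term is a finite direct sum, so it exists in $\cC$.

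If $X$ factors through $\cC_{t\geq -k}$, then this simplicial object is uniformly bounded below, because $\cC_{t\geq -k}$ is closed under finite sums. By Example \ref{ex:dold-kan}, its skeletal filtration $(|{-}|_n)_n$ is then a sequential diagram growing in connectivity. Colimit completeness gives the existence of the colimit of this filtration, which is the geometric realization, and hence the $K$-indexed colimit. One should check that the Bousfield--Kan argument only needs this particular geometric realization to exist rather than all geometric realizations. This holds because the formula is a formal left Kan extension/cofinality statement valid whenever the relevant colimits exist; this bookkeeping is the main point requiring care.

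For preservation, note that $f$ is exact, so it preserves finite sums, and therefore carries the Bousfield--Kan simplicial object for $X$ to the one for $fX$. By Observation \ref{obs:conn}(2), the hypothesis $f(\cC_{t\geq 0})\subseteq\cD_{t\geq -n}$ says that $f$ has amplitude $\geq -n$. Exactness also gives $f(|Y|_m)\simeq |fY|_m$, since $|{-}|_m$ is a finite colimit. Corollary \ref{cor:bdd-amp-pres} then shows that $f$ preserves colimits of sequential diagrams growing in connectivity, so $f(|Y|)\simeq\colim_m f(|Y|_m)\simeq |fY|$. This identifies $f(\colim_K X)$ with $\colim_K fX$.

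The dual statement follows by applying the above to $\cC^{\op}$ and $\cD^{\op}$, using Remark \ref{rem:cocon}. Right $t$-completeness of $\cC$ is left $t$-completeness of $\cC^{\op}$, and uniformly bounded above diagrams become uniformly bounded below. Under this passage, limits indexed by $A$ correspond to colimits indexed by $A^{\op}\simeq A$ (an anima), and bounded above $t$-amplitude corresponds to bounded below amplitude.
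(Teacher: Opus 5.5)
\textbf{Gap: the finite-type simplicial model.} Your reduction rests on the parenthetical claim that a finite type anima $A$ can be modelled by a simplicial set $K$ with every $K_n$ finite. You give no argument for this, and it is not a standard fact.

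It is not automatic. Simplicial \emph{complexes} with finitely many simplices in each dimension have finitely many vertices, hence are finite; so, e.g., $\mathbb{CP}^\infty$ has no such model. For simplicial sets, the usual construction (e.g.\ the proof of Hatcher's Theorem 2C.5) approximates each attaching map by a simplicial map out of a subdivided sphere and then cones it off. That adds new vertices and new low-dimensional simplices for \emph{every} cell, so it yields a finite or countable model but not a finite-type one. To avoid this you would have to realize each attaching map $S^{n-1}\to A^{(n-1)}$ by a simplicial map out of $\partial\Delta^n$ (or another sphere model without low-dimensional simplices) into a non-Kan model. That is a genuine combinatorial problem, not bookkeeping.

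The finiteness is also essential to your argument. $\cC$ is not assumed to have infinite sums, so if some $K_n$ is infinite, the Bousfield--Kan simplicial object $[n]\mapsto\bigoplus_{\sigma\in K_n}X(\sigma(0))$ need not exist.

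\textbf{What is fine, and how the paper avoids the issue.} The rest of your argument works. Only the one geometric realization is needed, because left Kan extension along the left fibration $(\Delta_{/K})^{\op}\to\Delta^{\op}$ is computed fiberwise. The Dold--Kan connectivity estimate, preservation via Corollary~\ref{cor:bdd-amp-pres}, and the dualization are also correct.

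The paper never models all of $A$ at once. It uses a CW filtration $A=\colim_n A_n$ with each $A_n$ finite. Then $\colim_{A_n}\phi$ is a finite colimit, so it exists in any stable category and is preserved by exact functors. The cell-attachment pushout identifies
\[
\cofib\bigl(\colim_{A_n}\phi\to\colim_{A_{n+1}}\phi\bigr)\simeq\bigoplus_i\Sigma^{n+1}\phi(a_i).
\]
Hence for uniformly bounded below $\phi$ the tower $(\colim_{A_n}\phi)_n$ grows in connectivity. Its colimit, which is $\colim_A\phi$, exists by colimit completeness, and functors of bounded below amplitude preserve it by Corollary~\ref{cor:bdd-amp-pres}.

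You can repair your proof the same way. Your Bousfield--Kan argument is then needed, if at all, only for the finite stages $A_n$, where finite simplicial models certainly exist.
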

\begin{proof}
Let $A \in \An$ be some space, and suppose we have a cell structure $A = \colim_n A_n$
where $A_0$ is a set and $A_n \to A_{n+1}$ is obtained by attaching $(n+1)$-cells,
i.e.~we have pushouts
\[\begin{tikzcd}
    {\coprod_{i \in I_{n+1}}S^n} & {\coprod_{i \in I_{n+1}} *} \\
    {A_n} & {A_{n+1}}
    \arrow[from=1-1, to=1-2]
    \arrow["{(\ell_i)_i}"', from=1-1, to=2-1]
    \arrow["{(a_i)_i}", from=1-2, to=2-2]
    \arrow[from=2-1, to=2-2]
    \arrow["\lrcorner"{anchor=center, pos=0.125, rotate=180}, draw=none, from=2-2, to=1-1]
\end{tikzcd}\]
Now let $\cC$ be a stable category with $t$-structure and enough (co)limits
and $\phi \colon A \to \cC$ be some diagram. Then we have the following
pushout / pullback squares in $\cC$:
\[\begin{tikzcd}[column sep = tiny]
	{\bigoplus_{i}\colim_{S^n}\phi\ell_i} & {\colim_{A_n} \phi|_{A_n}} & 0 & {\prod_{i}\Omega^{n+1}\phi(a_i)} & {\lim_{A_{n+1}}\phi|_{A_{n+1}}} & {\prod_{i}\phi(a_i)} \\
	{\bigoplus_{i}\phi(a_i)} & {\colim_{A_{n+1}}\phi|_{A_{n+1}}} & {\bigoplus_{i}\Sigma^{n+1}\phi(a_i)} & 0 & {\lim_{A_n}\phi|_{A_n}} & {\prod_{i}\lim_{S^n}\phi\ell_i}
	\arrow[from=1-1, to=1-2]
	\arrow[from=1-1, to=2-1]
	\arrow[from=1-2, to=1-3]
	\arrow[from=1-2, to=2-2]
	\arrow[from=1-3, to=2-3]
	\arrow[from=1-4, to=1-5]
	\arrow[from=1-4, to=2-4]
	\arrow["\lrcorner"{anchor=center, pos=0.125}, draw=none, from=1-4, to=2-5]
	\arrow[from=1-5, to=1-6]
	\arrow[from=1-5, to=2-5]
	\arrow["\lrcorner"{anchor=center, pos=0.125}, draw=none, from=1-5, to=2-6]
	\arrow[from=1-6, to=2-6]
	\arrow[from=2-1, to=2-2]
	\arrow["\lrcorner"{anchor=center, pos=0.125, rotate=180}, draw=none, from=2-2, to=1-1]
	\arrow[from=2-2, to=2-3]
	\arrow["\lrcorner"{anchor=center, pos=0.125, rotate=180}, draw=none, from=2-3, to=1-2]
	\arrow[from=2-4, to=2-5]
	\arrow[from=2-5, to=2-6]
\end{tikzcd}\]
Note also that generally $\colim_n \colim_{A_n} \phi|_{A_n} \simeq \colim_A \phi$
and analogously for limits. We thus make the following observations:
\begin{enumerate}
    \item If $\phi$ takes uniformly bounded below values, then
        $\colim_{A_0} \phi \to \colim_{A_1}\phi \to \colim_{A_2}\phi \to \cdots$
        is a sequential diagram growing in $t$-connectivity.

    \item If $\phi$ takes uniformly bounded above values, then
        $\cdots \to \lim_{A_2}\phi \to \lim_{A_1}\phi \to \lim_{A_0}\phi$
        is a sequential diagram growing in $t$-coconnectivity.
\end{enumerate}
Now assume that $A$ is of finite type,
so that each $A_n$ is a finite\footnote{In fact, as we argued in the proof of Lemma \ref{lem:fintype-colim}, $A_n$ being compact is enough here.}
anima.
If $\cC$ and $\cD$ are stable categories with left complete
$t$-structure, then by Lemma \ref{lem:left-t-comp-is-conn-comp}
they are colimit complete, and thus admit colimits of uniformly bounded below
$A$-indexed diagrams by the above analysis.
Moreover, if $f \colon \cC \to \cD$ is an exact functor restricting
to $\cC_{\geq 0} \to \cD_{\geq -n}$ for some $n$,
then $f$ has bounded amplitude and thus preserves such colimits.
Dually, if $\cC$ and $\cD$ are right $t$-complete,
they admit $A$-indexed limits of uniformly bounded above diagrams
by the dual of Lemma \ref{lem:left-t-comp-is-conn-comp},
and if $f \colon \cC \to \cD$ is an exact functor
which restricts to $\cC_{\leq 0} \to \cD_{\leq n}$ for some $n$,
then it preserves those limits.
\end{proof}

\begin{corollary}
    The $t$-structure on $\Sp$ is complete and compatible with filtered colimits and products,
    so that if $I$ is a filtered category and $J$ a set
    the functors $\colim \colon \Sp^I \to \Sp$ and $\prod_J \colon \Sp^J \to \Sp$
    are $t$-exact for the pointwise $t$-structures.
    The above shows that if $A$ is a finite type anima,
    then these functors preserve colimits resp.~limits of uniformly bounded below resp.~above $A$-indexed diagrams.
    Equivalently, $\colim_A \colon \Sp^A \to \Sp$ preserves uniformly bounded below products,
    and $\lim_A \colon \Sp^A \to \Sp$ preserves uniformly bounded above filtered colimits.
\end{corollary}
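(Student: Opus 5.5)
The corollary packages three claims: $t$-exactness, preservation of (co)limits over $A$-indexed diagrams, and an equivalent interchange reformulation. I will prove them in that order, using Lemma \ref{lem:ex-t-struc} for the middle step.

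\textbf{Step 1: $t$-exactness.} The $t$-structure on $\Sp$ is complete (Example \ref{ex:sp-conn-comp}). For $t$-exactness I will use that homotopy groups commute with filtered colimits and with products. Concretely, $\pi_k\colim_I X_i \cong \colim_I \pi_k X_i$ because filtered colimits of abelian groups are exact, and $\pi_k \prod_J X_j \cong \prod_J \pi_k X_j$ for a set $J$. Hence both functors preserve $\Sp_{t\geq 0}$ and $\Sp_{t\leq 0}$ for the pointwise $t$-structures on $\Sp^I$ and $\Sp^J$. These pointwise $t$-structures are also complete, since limits and colimits in functor categories are computed pointwise.

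\textbf{Step 2: applying Lemma \ref{lem:ex-t-struc}.} Apply the lemma to $f=\colim\colon \Sp^I\to\Sp$. This functor is exact and has amplitude $\geq 0$, and source and target are left complete $t$-categories. So $f$ preserves colimits of uniformly bounded below $A$-indexed diagrams in $\Sp^I$.

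Dually, apply the lemma to $\prod_J\colon\Sp^J\to\Sp$. It is exact with coconnective amplitude $\leq 0$, and both sides are right complete. So $\prod_J$ preserves limits of uniformly bounded above $A$-indexed diagrams.

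\textbf{Step 3: the reformulation.} Read an $A$-indexed diagram in $\Sp^I$ as an $I$-indexed diagram in $\Sp^A$, that is, as a functor $I\times A\to\Sp$. Being uniformly bounded below is a condition on the values in $\Sp$, so it is the same condition from either viewpoint.

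The statement ``$\colim_I$ commutes with $\colim_A$ on such diagrams'' is then a tautology, since colimits commute with colimits. The real content lies elsewhere: in $\Sp^I$ the colimit of the $A$-diagram is computed pointwise in $I$. So the claim says that $\colim_I$ applied to $A$-colimits agrees with the $A$-colimit of the $I$-colimits, again symmetric.

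The intended ``equivalently'' therefore rests on the correct pairing:
\begin{itemize}
\item $\colim_A$ preserves products, which is an interchange of $\colim_A$ with $\prod_J$;
\item $\lim_A$ preserves filtered colimits, which is an interchange of $\lim_A$ with $\colim_I$.
\end{itemize}
So the plan must interchange colimits over $A$ with products, and limits over $A$ with filtered colimits.

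\textbf{Step 4: the cross-interchange.} I will prove these by redoing the argument of Lemma \ref{lem:ex-t-struc} directly in the cell filtration $A=\colim_n A_n$, where each $A_n$ is finite.

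For a uniformly bounded below family $(\phi_j)_{j\in J}$, first use that $\prod_J$ is exact, so it commutes with the finite colimits $\colim_{A_n}$. Next, the sequence $n\mapsto\colim_{A_n}\phi_j$ grows in connectivity uniformly in $j$. That makes it a sequential diagram growing in connectivity in $\Sp^J$, and $\prod_J$ preserves its colimit by Corollary \ref{cor:bdd-amp-pres}, using $t$-exactness and colimit completeness. Combining the two gives $\colim_A\prod_J\phi_j\simeq\prod_J\colim_A\phi_j$.

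The limit case is dual. Finite limits $\lim_{A_n}$ commute with filtered colimits in the stable category $\Sp$. The inverse tower in $n$ grows in coconnectivity uniformly over $I$, and Corollary \ref{cor:bdd-amp-pres} for limit complete coconnectivity structures gives the interchange. The main obstacle is keeping the connectivity growth uniform over the index set, which is exactly what the uniform bounds provide.
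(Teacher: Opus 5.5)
Your argument is correct and is essentially the paper's. The paper obtains the corollary by applying Lemma \ref{lem:ex-t-struc} and its dual to the $t$-exact functors $\prod_J$ and $\colim_I$ between complete $t$-categories, and your Step 4 just re-runs that lemma's cell-filtration proof by hand. Note, though, that your Step 2 applies the lemma only to the tautological pairings. Since both functors satisfy the hypotheses of both halves of the lemma, you could instead apply the colimit half to $\prod_J$ and the dual half to $\colim_I$, which gives the cross-interchange immediately without redoing the argument.
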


\begin{lemma}\label{lem:ex-suspension}
    For $X \in \An$ we have $\Sigma_+^\infty X \xto{\simeq} \lim_n \Sigma_+^\infty\tau_{\leq n}X$ in $\Sp$.
\end{lemma}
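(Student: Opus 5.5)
The statement should be a direct instance of Lemma~\ref{lem:cn-cone-colim}, applied to $f = \Sigma^\infty_+ \colon \An \to \Sp$. We equip $\An$ with the connectivity structure of Example~\ref{ex:conn-topos} and $\Sp$ with the stable connectivity structure $(\Sp,\Sp_{\geq 0})$. By Example~\ref{ex:sp-conn-comp}, the target $\Sp$ is limit complete. The Postnikov tower gives an inverse sequential cone $\const X \Rightarrow \tau_{\leq \bullet}X$ in $\An$. This cone is growing in connectivity, since $X \to \tau_{\leq n}X$ is $(n+1)$-connective: it is an isomorphism on $\pi_j$ for $j \leq n$ and trivially surjective on $\pi_{n+1}$. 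So once we know that $\Sigma^\infty_+$ has bounded below amplitude, the lemma shows that $\Sigma^\infty_+ X \to \lim_n \Sigma^\infty_+\tau_{\leq n}X$ is an equivalence.

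The only real input is the amplitude claim. I will show that $\Sigma^\infty_+$ sends $k$-connective maps of anima to $k$-connective maps of spectra, i.e.\ to maps whose fiber lies in $\Sp_{\geq k}$. Since $\Sp$ is stable, it is equivalent to show that the cofiber of $\Sigma^\infty_+ f$ lies in $\Sp_{\geq k+1}$. As a sanity check, $S^0 \to *$ is $0$-connective, and the cofiber of $\Sigma^\infty_+$ of it is $\Sigma\S$, which is $1$-connective. This is the step I expect to need the most care, because of fundamental group and basepoint issues in the unstable world.

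To handle it, I plan to use CW approximation. A $k$-connective map $f \colon Y \to Z$ of anima can, up to equivalence, be modeled as a relative CW inclusion $Y \hookrightarrow Z'$ with $Z' \simeq Z$, in which only cells of dimension $\geq k+1$ are attached. This holds componentwise, and when $k = 0$ the surjectivity on $\pi_0$ means no $0$-cells are needed. Then $\cofib(\Sigma^\infty_+ Y \to \Sigma^\infty_+ Z)$ is the colimit of a sequential diagram whose successive cofibers are sums of $\Sigma^{m}\S$ with $m \geq k+1$. Hence it lies in $\Sp_{\geq k+1}$, because connectives in a $t$-structure are closed under colimits and extensions. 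Alternatively, one can argue via the relative Hurewicz theorem with local coefficients, which shows $H_j(Z,Y;\Z[\pi_1])=0$ for $j \leq k$. Stable homotopy of the cofiber then vanishes in degrees $\leq k$ by the stable Hurewicz theorem, but the CW argument avoids fundamental group issues.

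With amplitude $\geq 0$ established, Observation~\ref{obs:pres-amp} shows that $\Sigma^\infty_+ X \Rightarrow \Sigma^\infty_+ \tau_{\leq \bullet}X$ is an inverse cone growing in connectivity in $\Sp$. Lemma~\ref{lem:cn-cone-colim}, in its limit version, then gives that it is a limit cone. This is exactly the claimed equivalence.
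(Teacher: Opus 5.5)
Your proof is correct and is essentially the paper's argument: $\Sigma^\infty_+$ has bounded below amplitude, $\Sp$ is limit complete, and Lemma~\ref{lem:cn-cone-colim} turns the Postnikov cone into a limit cone. You also justify the amplitude claim via relative CW approximation, which the paper takes for granted, and you use limit completeness only of the target $\Sp$, whereas the paper additionally cites it for $\An$.
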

\begin{proof}
    The functor $\Sigma^\infty_+ \colon \An \to \Sp$ is a functor
    of bounded below amplitude between limit complete connectivity structures
    (cf.~Example \ref{ex:conn-topos}).
    Thus it preserves sequential inverse limits growing in connectivity,
    of which Postnikov towers are the canonical example.
\end{proof}

\section{Weak weight structures}\label{sec:weight}

It turns out that the theory of completions of weight structures developed in this paper—specifically in Section \ref{sec:upw-comp} - also extends to a natural weakening of the definition of weight structures. In most cases, this weaker notion coincides with one considered by Levy-Sosnilo in \cite{Ishan-Vova}.
In this section, we begin by introducing this variant,
and then show in subsection \ref{subsec:weight-cplx}, as in the setting of weighted categories,
that weight complexes still exist.

\begin{definition}\label{def:weak-w}
    Let $\ell \in \N_0 \cup \{\infty\}$.
    A weakly $\ell$-weighted category is a stable category $\cC$
    together with full subcategories $\cC_{\geq 0},\cC_{\leq 0} \subseteq \cC$ such that
    \begin{enumerate}
        \item $\cC_{\geq 0}$ is closed under suspensions,
            $\cC_{\leq 0}$ under loops, and both under extensions and retracts.\footnote{In particular, $\cC_{\geq 0}$ is closed under finite colimits and $\cC_{\leq 0}$ under finite limits.}
        \item $\hom_\cC(X_{\leq 0},Y_{\geq 0}) \in \Sp_{\geq 0}$
            for $X_{\leq 0} \in \cC_{\leq 0}$ and $Y_{\geq0} \in \cC_{\geq 0}$.

        \item For every $X \in \cC$ there is a cofiber sequence
            $X_{< \ell} \to X \to X_{\geq 0}$
            with $X_{<\ell} \in \cC_{< \ell}$ and $X_{\geq 0} \in \cC_{\geq 0}$.
            We refer to this as a weight decomposition of $X$.
    \end{enumerate}
    Here $\cC_{\geq n} = \Sigma^n \cC_{\geq 0}$ and $\cC_{<n+1} = \cC_{\leq n} = \Sigma^n \cC_{\leq 0}$
    for $n \in \Z$, and $\cC_{<\infty} = \bigcup_{n \geq 0} \cC_{\leq n}$. In the case $\ell = \infty$, we simply refer to such a structure as weakly weighted.
\end{definition}

\begin{warning}\label{warn:wwcat-non-dual}
    Suppose that $(\cC^\op,(\cC_{\leq 0})^\op,(\cC_{\geq 0})^\op)$ is weakly $\ell$-weighted.
    For $\ell < \infty$,
    it is straightforward to check that this is the same
    as $(\cC,\cC_{\geq 0},\cC_{\leq 0})$ being weakly $\ell$-weighted.
    However for $\ell = \infty$, the decomposition axiom (3)
    slightly changes to require the existence of decompositions
    $X_{\leq 0} \to X \to X_{>-\infty}$ in $\cC$.
    In this case, we call $\cC$ weakly co-weighted,
    and weakly bi-weighted if both $(\cC,\cC_{\geq 0},\cC_{\leq 0})$
    and $(\cC^\op,(\cC_{\leq 0})^\op,(\cC_{\geq 0})^\op)$ are weakly weighted.
\end{warning}

Note that if $0 \leq \ell \leq \ell' \leq \infty$, then any weakly $\ell$-weighted structure $(\cC,\cC_{\geq 0},\cC_{\leq 0})$ is also weakly $\ell'$-weighted. In particular, every weakly $\ell$-weighted category is weakly weighted.

\begin{remark}\label{rem:bounded-is-weakly-weighted}
    Suppose that $(\cC,\cC_{\geq 0},\cC_{\leq 0})$ satisfies axioms (1) and (2) of Definition \ref{def:weak-w}.
    If $\cC_{<\infty} = \cC$, then $\cC$ is weakly weighted via the trivial decompositions $X \xto{\id} X \to 0$ for all $X \in \cC$.
    Dually, if $\cC_{>-\infty} = \cC$, then $\cC$ is weakly co-weighted via the trivial decompositions $0 \to X \xto{\id} X$.
    So if $\cC_{<\infty} = \cC = \cC_{>-\infty}$, then $\cC$ is weakly bi-weighted.
    In particular, if $(\cC,\cC_{\geq 0})$ is a stable connectivity structure
    we can define $\cC_{\leq 0} = \lperp{(\cC_{\geq 1})}$.
    Then $(\cC_{>-\infty},\cC_{\geq 0}, \cC_{(-\infty,0])})$ is always weakly co-weighted
    since $\cC_{\geq 0} \subseteq \cC_{>-\infty}$ is closed under retracts (Lemma \ref{lem:stable-conn-struc}),
    and $(\cC_{<\infty},\cC_{[0,\infty)}, \cC_{\leq 0})$ is weakly weighted
    if $\cC_{[0,\infty)} \subseteq \cC_{<\infty}$ is closed under retracts.

    In this sense, the notions of weakly $\infty$-(co-)weighted categories and their associated decompositions are only genuinely interesting for unbounded objects. Nevertheless, even when the original category $\cC$ is bounded, its completions studied in the next section remain nontrivial and worthy of study. Thus, the observation may also be interpreted as saying that the bounded case fits into the completion framework without requiring any additional hypotheses.
\end{remark}

\begin{remark}\label{rem:weak-weight-reformulation}
    A priori, there is another way to reasonably weaken the axioms
    of a weighted category;
    for $n,\ell \in \N_0$, let us temporarily define an $(n,\ell)$-weighted category $\cC$
    as a stable category $\cC$ with two full subcategories
    $\cC_{\geq 0},\cC_{\leq 0}$ still satisfying axioms (1) and (3) as above,
    but with the following instead of (2):
    \begin{enumerate}
        \item[(2')] $\hom_\cC(X_{\leq 0},Y_{\geq 0}) \in \Sp_{\geq -n}$
            for $X_{\leq 0} \in \cC_{\leq 0}$ and $Y_{\geq 0} \in \cC_{\geq 0}$.
    \end{enumerate}
    Given an $(n,\ell)$-weighted category $(\cC,\cC_{\geq 0},\cC_{\leq 0})$,
    it is straightforward from the definitions to show that
    \begin{itemize}
        \item $(\cC,\cC_{\geq k},\cC_{\leq k})$ is $(n,\ell)$-weighted for all $k \in \Z$.
        \item $(\cC,\cC_{\geq 0}, \cC_{\leq -n})$ is $(0,n+\ell)$-weighted.
        \item $(\cC,\cC_{\geq -\ell},\cC_{\leq 0})$ is $(n+\ell,0)$-weighted.
    \end{itemize}
    In \cite{Ishan-Vova} the $(\ell,0)$-weighted categories which are bounded
    (i.e.~$\cC = \cC_{<\infty} = \cC_{>-\infty}$)
    were called heart categories of cohomological dimension $\leq \ell$.\footnote{A priori, their definition does not include that $\cC_{\geq 0}$ and $\cC_{\leq 0}$ are closed under retracts in $\cC$. However, given the other assumptions, this actually follows from boundedness of $\cC$, cf.~Lemma \ref{lem:stable-conn-struc}.}
    In particular, this notion is structurally equivalent
    to that of $(0,\ell)$-weighted categories,
    i.e.~our notion of weakly $\ell$-weighted categories,
    but we will adopt the latter formulation for our purposes.
\end{remark}

\begin{example}\label{ex:perf-weakly-weighted}
    In view of the above Remark, \cite[Example 2.18]{Ishan-Vova} shows that if $R$ is any $(-1)$-connective
    ring spectrum, then $\Perf(R)$ is weakly $1$-weighted
    for $\Perf(R)_{\leq 0}$ resp.~$\Perf(R)_{\geq 0}$ the retract closures of the extension closures
    of the categories $\{\Sigma^{-n}R \mid n \geq 0\}$ resp.~$\{\Sigma^n R \mid n \geq 1\}$. Note that we need $n \geq 1$ in the connectives due to the shift in translating from weakly $(1,0)$-weighted
    to weakly $(0,1)$-weighted category in the notation of Remark \ref{rem:weak-weight-reformulation}.
    More generally, we can consider a $(-\ell)$-connective ring spectrum $R$ for some $0 \leq \ell < \infty$
    and define $\Perf(R)_{\leq 0}$ as before and $\Perf(R)_{\geq 0}$ as the retract closure
    of the extension closure of $\{\Sigma^n R \mid n \geq \ell\}$.
    Then $(\Perf(R),\Perf(R)_{\geq 0},\Perf(R)_{\leq 0})$ satisfies axioms (1) and (2)
    of Definition \ref{def:weak-w},
    however by \cite[Remark 2.19]{Ishan-Vova} it is generally not weakly $\ell$-weighted.
    On the other hand, it is trivially weakly $\infty$-(co-)weighted by Remark \ref{rem:bounded-is-weakly-weighted}
    since $\Perf(R) = \Perf(R)_{<\infty} = \Perf(R)_{>-\infty}$.
\end{example}

\begin{definition}\label{def:weight-amplitude}
    An exact functor $f \colon \cC \to \cD$ between weakly weighted categories
    has (weight) amplitude $[a,b]$ for $-\infty \leq a \leq b \leq \infty$
    if the underlying functor of connectivity structures
    has connectivity amplitude $\geq a$ and coconnectivity amplitude $\leq b$.
    Equivalently (cf.~Observation \ref{obs:conn}(2)),
    if it restricts to $\cC_{\geq 0} \to \cD_{\geq a}$ and $\cC_{\leq 0} \to \cD_{\leq b}$.
    Here $\cD_{\geq -\infty} = \cD = \cD_{\leq \infty}$.
    Let us also say that $f$ has amplitude in $(a,b)$
    if it restricts to $\cC_{\geq 0} \to \cD_{>a}$ and $\cC_{\leq0} \to \cD_{<b}$
    and similarly for amplitude $[a,b)$ or $(a,b]$.
    We say that $f$ has bounded below (weight) amplitude if $a \in \Z$,
    and bounded amplitude if $a,b \in \Z$.
\end{definition}

\begin{example}\label{ex:corep-bdd}
    Let $X \in \cC_{\leq b}$ for some $b \in \Z$.
    Then $\hom_\cC(X,-) \colon \cC \to \Sp$ has weight amplitude $[-b,\infty]$
    by the axioms of a weakly weighted category,
    where we equip $\Sp$ with the standard weight structure (whose connectives are the $t$-connectives).
    In particular, by Example \ref{ex:sp-conn-comp} and Corollary \ref{cor:bdd-amp-pres}
    this functor sends sequential cones growing in (weight) connectivity to colimiting cones.
\end{example}

\begin{definition}\label{def:wwcat}
    Given two weakly weighted categories $\cC,\cD$ and $-\infty \leq a \leq b \leq \infty$ we let
    \[
        \Fun^\Ex_{[a,b]}(\cC,\cD) \subseteq \Fun^\Ex(\cC,\cD)
    \]
    be the full subcategory on exact functors of amplitude $[a,b]$,
    and similarly for (half-)open intervals.
    Let $\wWCat$ denote the category of weakly weighted categories and functors of bounded below amplitude.
    Similarly, we let $\wWCat_{=0}$ resp.~$\wWCat_{\geq 0}$ denote the category of weakly weighted
    categories and exact functors that are weight exact resp.~preserve connectives.
    Note that these are not subcategories of $\wWCat$ (or each other)
    since they do not share the same equivalences.
\end{definition}

\begin{definition}\label{def:saturated}
    Let $\cC$ be a weakly weighted category.
    We say that $\cC$ is left saturated if $\cC_{\leq 0} = \lperp{(\cC_{\geq 1})}$
    and right saturated if $\cC_{\geq 0} = \rperp{(\cC_{\leq -1})}$.
    We call $\cC$ saturated if it is both left and right saturated.
\end{definition}

We now present three lemmas on the main advantages of saturation.
Namely, by virtue of being defined as orthogonal complements,
the (co)connectives in a saturated weakly weighted category
enjoy many more closure properties than required in the definition
(Lemma \ref{lem:weight-closure}).
Moreover, weight exact functors between saturated
weakly weighted categories behave more as one expects from
the theory of weighted categories (Lemma \ref{lem:saturated}).
Finally, we show that in the case $\ell < \infty$,
up to equivalence in $\wWCat$ (though not up to weight-exact equivalence!),
every weakly $\ell$-weighted category is saturated.

\begin{lemma}\label{lem:weight-closure}
    Let $\cC$ be a weakly weighted category.
    If $\cC$ is left \alt{right} saturated, then $\cC_{\leq 0}$ \alt{$\cC_{\geq 0}$}
        is closed under the following operations, provided they exist:
    \begin{enumerate}
        \item retracts and extensions.
        \item finite limits \alt{finite colimits}.
        \item arbitrary direct sums \alt{arbitrary products}.
        \item colimits \alt{limits} of diagrams $X_\bullet \colon \gamma \to \cC$ \alt{$X_\bullet \colon \gamma^\op \to \cC$}
            for some ordinal $\gamma$ satisfying
            \begin{enumerate}
                \item For a successor ordinal $\alpha+1 < \gamma$,
                    the map $X_{\alpha} \to X_{\alpha+1}$ has cofiber in $\cC_{\leq 0}$
                    \alt{the map $X_{\alpha+1} \to X_\alpha$ has fiber in $\cC_{\geq 0}$}.
                \item For a limit ordinal $\lambda < \gamma$,
                    the map $\colim_{\alpha < \lambda} X_\alpha \to X_\lambda$
                    has cofiber in $\cC_{\leq 0}$
                    \alt{the map $X_\lambda \to \lim_{\alpha < \lambda} X_\alpha$
                    has fiber in $\cC_{\geq 0}$}.
            \end{enumerate}
            Note this includes sequential colimits \alt{limits} growing in weight coconnectivity
            \alt{connectivity}.
    \end{enumerate}
\end{lemma}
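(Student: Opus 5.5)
By duality it suffices to treat the left saturated case, $\cC_{\leq 0} = \lperp{(\cC_{\geq 1})}$. Passing to $\cC^\op$ exchanges the two orthogonality conditions, and the closure properties needed below are purely orthogonality statements. So the right saturated case follows by the same argument with colimits and limits interchanged. The guiding principle is that a left orthogonal complement $\lperp{S}$ is closed under every colimit that exists in $\cC$, since $\map(\colim_i X_i, s) \simeq \lim_i \map(X_i,s)$. Here $S = \cC_{\geq 1}$ is closed under suspensions. This lets us test membership in $\cC_{\leq 0}$ spectrum-wise: $X \in \cC_{\leq 0}$ if and only if $\hom_\cC(X,Y)$ is $1$-connective for every $Y \in \cC_{\geq 1}$, equivalently $\pi_0\map(X,Y)=0$ for all such $Y$, using closure of $\cC_{\geq 1}$ under $\Sigma$.

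Step (1): retracts are immediate, since a retract of a nullhomotopic mapping anima is contractible. For an extension $X' \to X \to X''$ we get a fiber sequence $\hom(X'',Y) \to \hom(X,Y) \to \hom(X',Y)$ with outer terms $1$-connective, so the middle term is $1$-connective.

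Step (2): closure under finite limits reduces to closure under loops and pullbacks. For loops, $\hom(\Omega X, Y) \simeq \Sigma \hom(X,Y)$, which is even more connective. A pullback $X\times_Z W$ sits in a fiber sequence $X \times_Z W \to X \oplus W \to Z$, so it is an extension of $\Omega Z$ and $X\oplus W$, and Step (1) applies.

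Step (3): direct sums are colimits, so $\hom(\bigoplus X_i, Y) \simeq \prod \hom(X_i,Y)$, and $\Sp_{\geq 1}$ is closed under products.

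Step (4): this is the only step needing care, and I would argue by transfinite induction. For $\alpha < \gamma$ the claim is $X_\alpha / X_0 \in \cC_{\leq 0}$, and from this $X_\alpha \in \cC_{\leq 0}$ follows by extension once $X_0 \in \cC_{\leq 0}$; that is the intended reading, taking $X_0$ coconnective or starting the diagram at $0$. At a successor, $X_{\alpha+1}$ is an extension of $X_\alpha$ by the cofiber, so Step (1) applies. At a limit ordinal $\lambda$, $\colim_{\alpha<\lambda}X_\alpha$ lies in $\lperp{(\cC_{\geq 1})}$ because hom out of a colimit is a limit of $1$-connective... — and here is the main obstacle: a limit of $1$-connective spectra need not be $1$-connective. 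I would avoid this by working with mapping anima rather than spectra. For fixed $Y\in\cC_{\geq 1}$, each $\map(X_\alpha,Y)$ is contractible, hence so is their limit $\map(\colim X_\alpha, Y)$, regardless of $\lim^1$-type issues. Since $\cC_{\geq 1}$ is closed under suspension, contractibility of $\map(-,Y)$ for all $Y\in\cC_{\geq 1}$ is exactly membership in $\lperp{(\cC_{\geq 1})}=\cC_{\leq 0}$. Then $X_\lambda$ is an extension of $\colim_{\alpha<\lambda}X_\alpha$ by a coconnective cofiber, so Step (1) applies again. Finally the colimit over $\gamma$ itself lies in $\cC_{\leq 0}$ by the same mapping-anima argument.

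For the sequential case growing in weight coconnectivity, I would pass to a cofinal subsequence starting from an index beyond which all cofibers lie in $\cC_{\leq 0}$ (after shifting). The transition maps then satisfy (a), and (b) is vacuous for $\gamma=\omega$.
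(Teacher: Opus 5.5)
Your Steps 1--3 are correct and essentially match the paper's argument. There, $X\in\cC_{\leq 0}={}^\perp(\cC_{\geq 1})$ if and only if $\hom_\cC(X,Y)\in\Sp_{\geq 1}$ for all $Y\in\cC_{\geq 1}$, and one then uses that $\Sp_{\geq 1}$ is closed under retracts, extensions, cofibers and products.

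The gap is in Step~4, together with your ``guiding principle'': you silently upgrade ``every map $X\to Y$ is null'' to ``$\map(X,Y)$ is contractible''. But $\cC_{\geq 1}$ is closed under $\Sigma$, not under $\Omega$. So $X\in\cC_{\leq 0}$ only says that $\map(X,Y)=\Omega^\infty\hom(X,Y)$ is \emph{connected}; for example, $\map(\mathbb{S},\Sigma\mathbb{S})$ has $\pi_1\cong\mathbb{Z}$. Connected anima are not closed under limits: for a tower of connected anima one has $\pi_0\lim_n M_n\cong\lim^1_n\pi_1 M_n$. Hence a left orthogonal of a suspension-closed class is not closed under all colimits. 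Your limit-stage argument never uses (a) or (b), and it proves a false statement. Concretely, in $\Sp$ with its standard (saturated) weight structure, $\mathbb{S}[1/p]=\colim(\mathbb{S}\xrightarrow{p}\mathbb{S}\xrightarrow{p}\cdots)$ is a sequential colimit of objects of $\Sp_{w\leq 0}$. Yet $[\mathbb{S}[1/p],\Sigma\mathbb{S}]\cong\lim^1(\cdots\xrightarrow{p}\mathbb{Z}\xrightarrow{p}\mathbb{Z})\cong\mathbb{Z}_p/\mathbb{Z}\neq 0$, so $\mathbb{S}[1/p]\notin\Sp_{w\leq 0}$. Here it is the cofibers $\mathbb{S}/p$ that violate (a).

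What is needed instead is what the paper does: keep the spectrum-level test and apply a Mittag--Leffler criterion to the inverse system $\hom(X_\bullet,Y)\colon\gamma^{\op}\to\Sp$.
\begin{itemize}
\item Its initial term is $1$-connective.
\item At a successor, the fiber of $\hom(X_{\alpha+1},Y)\to\hom(X_\alpha,Y)$ is $\hom(\cofib(X_\alpha\to X_{\alpha+1}),Y)$, which is $1$-connective by (a).
\item At a limit ordinal $\lambda$, the fiber of $\hom(X_\lambda,Y)\to\lim_{\alpha<\lambda}\hom(X_\alpha,Y)$ is $\hom(\cofib(\colim_{\alpha<\lambda}X_\alpha\to X_\lambda),Y)$, which is $1$-connective by (b).
\end{itemize}
You then need the fact that an ordinal-indexed inverse system with $1$-connective initial term and $1$-connective successor and limit-stage fibers has $1$-connective limit.
\begin{itemize}
\item For $\gamma=\omega$ this is the usual Milnor argument: connective fibers force the maps on $\pi_1$ to be surjective, which kills $\lim^1$.
\item For general $\gamma$ it is a genuinely transfinite statement, Corollary~\ref{cor:mittag-leffler-sp}, proved in the appendix by choosing compatible points through the tower.
\end{itemize}
This lemma, not an extension argument, is where (a) and (b) do their work. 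You can keep your transfinite induction as a skeleton. However, the claim $\colim_{\alpha<\lambda}X_\alpha\in\cC_{\leq 0}$ at each limit stage, and at $\gamma$ itself, must come from this Mittag--Leffler statement applied to the restricted system, not from a contractibility claim.
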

\begin{proof}
    This is a special case of Lemma \ref{lem:orth-closure}.
\end{proof}

\begin{remark}\label{rem:saturated}
    If $\cC$ is a weighted category
    then it satisfies the above saturation properties,
    see e.g.~\cite[Lemma 3.1.2]{Hebestreit-Steimle}.
    As a consequence, $\cC_{\leq 0}$ respectively $\cC_{\geq 0}$
    is automatically closed under finite limits
    respectively finite colimits,
    so a weakly $0$-weighted category is just a weighted category.
\end{remark}

\begin{lemma}\label{lem:saturated}
    The following are convenient consequences of saturation:
    \begin{enumerate}
        \item Let $f \colon \cC \to \cD$ be a fully faithful weight exact functor of weakly weighted categories.
            Let $X \in \cC$ and suppose that $f(X)$ is connective \alt{coconnective}.
            If $\cC$ is right \alt{left} saturated, then $X$ is connective \alt{coconnective}.

        \item Let $L \colon \cC \rightleftarrows \cD \noloc R$
            be an adjunction between weakly weighted categories.
            If $\cD$ is left saturated and $R$ preserves connectives,
            then $L$ preserves coconnectives.
            If $\cC$ is right saturated and $L$ preserves coconnectives,
            then $R$ preserves connectives.

        \item In particular, if $f \colon \cC \to \cD$ is a weight
            exact functor of saturated weakly weighted categories which is
            an underlying equivalence (i.e.~in $\Cat$),
            then also $f^{-1}$ is weight exact.
    \end{enumerate}
\end{lemma}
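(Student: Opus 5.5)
Each part reduces to the defining orthogonality properties of saturation together with the axioms of Definition \ref{def:weak-w}; all three are short formal checks.

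For (1), suppose $\cC$ is right saturated, so $\cC_{\geq 0} = \rperp{(\cC_{\leq -1})}$. To show $X \in \cC_{\geq 0}$, take any $Z \in \cC_{\leq -1}$. Weight exactness of $f$ gives $f(Z) \in \cD_{\leq -1}$. Axiom (2) for $\cD$ then says $\hom_\cD(f(Z), f(X))$ is connective after the shift, so every map $f(Z) \to f(X)$ is nullhomotopic. Full faithfulness identifies $\map_\cC(Z,X)$ with $\map_\cD(f(Z),f(X))$, so every map $Z \to X$ is null as well, and $X \in \cC_{\geq 0}$. Strictly, axiom (2) gives connectivity of $\hom(\cD_{\leq -1}, \cD_{\geq 0})$ shifted, i.e.\ $\pi_0\hom(Z',Y)=0$ for $Z' \in \cD_{\leq -1}$, $Y \in \cD_{\geq 0}$, which is exactly what is used. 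The coconnective case is dual, using left saturation $\cC_{\leq 0} = \lperp{(\cC_{\geq 1})}$.

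For (2), let $X \in \cC_{\leq 0}$ and $Y \in \cD_{\geq 1}$. Then $\map_\cD(LX, Y) \simeq \map_\cC(X, RY)$. Since $R$ preserves connectives and commutes with suspension, being exact as a right adjoint between stable categories, we get $RY \in \cC_{\geq 1}$. Axiom (2) makes the right-hand side connected, so $LX \in \lperp{(\cD_{\geq 1})} = \cD_{\leq 0}$ by left saturation of $\cD$. The second claim is the mirror image. Take $X \in \cC_{\leq -1}$ and $Y \in \cD_{\geq 0}$. Then $LX \in \cD_{\leq -1}$, so $\map(X,RY) \simeq \map(LX,Y)$ is connected, and right saturation of $\cC$ gives $RY \in \cC_{\geq 0}$.

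For (3), an underlying equivalence $f$ is in particular fully faithful. Given $Y \in \cD_{\geq 0}$, write $Y \simeq f(X)$; then part (1) with $\cC$ right saturated gives $X = f^{-1}(Y) \in \cC_{\geq 0}$. The same argument with coconnectives uses left saturation. Alternatively, apply (2) to the adjunction $f \dashv f^{-1}$.

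The only point needing care is the bookkeeping of shifts in axiom (2): one must see that connectivity of $\hom(\cC_{\leq 0},\cC_{\geq 0})$ yields vanishing of $\pi_0\map(\cC_{\leq -1},\cC_{\geq 0})$ and of $\pi_0\map(\cC_{\leq 0},\cC_{\geq 1})$. This matches the orthogonality used to define saturation, and beyond it I see no real obstacle.
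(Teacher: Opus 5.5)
Your proposal is correct and follows the paper's proof. Parts (1) and (2) are the same direct orthogonality check against $\cC_{\geq 0} = \rperp{(\cC_{\leq -1})}$ and $\cC_{\leq 0} = \lperp{(\cC_{\geq 1})}$. The only cosmetic difference is in (3): the paper deduces it from (2), using that $f^{-1}$ is both left and right adjoint to $f$, whereas you use (1) directly and mention (2) only as an alternative.
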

\begin{proof}
    For (1) the ``only if'' follows from weight exactness.
    If $f(X)$ is connective (the coconnective case being analogous),
    then $\hom(Y,X) \simeq \hom(f(Y),f(X)) \in \Sp_{\geq 0}$
    for all $Y \in \cC_{\leq 0}$, so saturation of $\cC$ implies $X \in \cC_{\geq 0}$.
    The proof for (2) is similarly based on saturation of both $\cC$ and $\cD$,
    and (3) is a direct consequence of (2) and the fact that $f^{-1}$
    will be both left and right adjoint to $f$.
\end{proof}

\begin{remark}
    All of these can go wrong in the non-saturated setting.
    For example, if $(\cC,\cC_{\geq 0},\cC_{\leq 0})$
    is weakly weighted but not saturated
    then $(\cC,\cC_{\geq 0},\cC_{\leq -1})$
    is also weakly weighted,
    and $\Sigma \colon (\cC,\cC_{\geq 0},\cC_{\leq -1}) \to (\cC,\cC_{\geq 0},\cC_{\leq 0})$ is a weight exact functor which is an underlying
    equivalence, but its inverse is not weight exact.
\end{remark}

\begin{lemma}\label{lem:saturation}
    Let $\cC$ be weakly $\ell$-weighted
    and define $\cC'_{\leq 0} \coloneqq \lperp{(\cC_{\geq 1})}$
    and $\cC'_{\geq 0} \coloneqq \rperp{(\cC'_{\leq -1})}$.
    Then:
    \begin{enumerate}
        \item $\cD = (\cC,\cC_{\geq 0}, \cC'_{\leq 0})$
            and $\cE = (\cC,\cC'_{\geq 0}, \cC'_{\leq 0})$ are still weakly $\ell$-weighted.

        \item The identity functors $\cC \to \cD \to \cE$ are weight exact,
            and the identity $\cE \to \cD$ has amplitude $(-(\ell+1),0]$ and $\cD \to \cC$ has amplitude $[0,\ell+1)$.

        \item In particular, $\cC \to \cD$ is an equivalence in $\wWCat_{\geq 0}$,
            showing that any weakly weighted category is equivalent to a left saturated one
            in a way which preserves the weight connectives.

        \item If $\ell < \infty$, then $\cE \to \cC$ has bounded amplitude $[-\ell,\ell]$
            so yields an equivalence in $\wWCat$, showing that any weakly $\ell$-weighted category
            is equivalent in $\wWCat$ to a saturated one.\footnote{Observe that this also proves that (left) saturation is not invariant under equivalence in $\wWCat$, though of course it is in $\wWCat_{=0}$.}

        \item If $\cD$ is weakly $\ell'$-weighted and $\ell,\ell' < \infty$
            and $f \colon \cC \to \cD$ is a map of bounded below \alt{bounded} amplitude
            which is an equivalence on underlying categories,
            then $f^{-1}$ also has bounded below \alt{bounded} amplitude, so $f$ is an equivalence in $\wWCat$.
    \end{enumerate}
    Analogous results hold if we swap the order of definition
    as $\cC'_{\geq 0} = \rperp{(\cC_{\leq -1})}$ and $\cC'_{\leq 0} = \lperp{(\cC'_{\geq 1})}$.
\end{lemma}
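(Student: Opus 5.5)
The plan is to verify the five items in order, using only the axioms of Definition \ref{def:weak-w} and orthogonality. Write $\cC'_{\leq n} = \Sigma^n\cC'_{\leq 0} = \lperp{(\cC_{\geq n+1})}$. First, $\cC_{\leq 0} \subseteq \cC'_{\leq 0}$: by axiom (2), $\hom(X_{\leq 0},Y_{\geq 1})$ is $1$-connective, so $\pi_0$ vanishes on all shifts relevant to $\lperp{}$. Since $\lperp{}$ of a class closed under suspension is automatically closed under loops, extensions and retracts (Lemma \ref{lem:orth-closure}), axiom (1) holds for $\cD$. For axiom (2), take $X \in \cC'_{\leq 0}$ and $Y \in \cC_{\geq 0}$. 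Then $\pi_{-k}\hom(X,Y) = \pi_0\map(X,\Sigma^k Y)$, and this vanishes for $k\geq 1$ because $\Sigma^kY \in \cC_{\geq 1}$. So $\hom(X,Y)$ is connective. Axiom (3) is inherited, because $\cC_{<\ell} \subseteq \cC'_{<\ell}$. This shows that $\cD$ is weakly $\ell$-weighted.

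For $\cE$, set $\cC'_{\geq 0} = \rperp{(\cC'_{\leq -1})}$. Dually it is closed under suspensions, extensions and retracts, and it contains $\cC_{\geq 0}$ by the axiom (2) just proved for $\cD$. Axiom (2) for $\cE$ holds by the same $\pi_0$ shifting argument, now using that $\cC'_{\leq 0}$ is closed under loops. Axiom (3) is again inherited, since both classes only got bigger. This completes (1).

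For (2), the inclusions above show that $\cC \to \cD \to \cE$ are weight exact. For $\cD \to \cC$ we need $\cC'_{\leq 0} \subseteq \cC_{<\ell+1}$. Given $X \in \cC'_{\leq 0}$, take a $\cC$-weight decomposition of $\Sigma^{-1}X$ shifted appropriately, namely $X_{<\ell} \to X \to X_{\geq 1}$ with the decomposition at level $1$. Because $X \in \lperp{(\cC_{\geq 1})}$, the map $X \to X_{\geq 1}$ is null. Hence $X$ is a retract of $X_{<\ell}$, which lies in $\cC_{<\ell}\subseteq\cC_{<\ell+1}$. Retract-closure of $\cC_{\leq \ell}$ then gives the claim; the exact index bookkeeping ($\ell$ versus $\ell+1$, and the shift of the decomposition) has to be checked carefully. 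Symmetrically, for $\cE \to \cD$ we need $\cC'_{\geq 0} \subseteq \cC_{>-(\ell+1)}$. For $Y \in \cC'_{\geq 0}$, take a decomposition $Y_{<\ell'} \to Y \to Y_{\geq -m}$ with $m$ chosen so that the first term lies in $\cC_{\leq -1} \subseteq \cC'_{\leq -1}$. That term then maps trivially to $Y$, so $Y$ is a retract of $Y_{\geq -m}$, which lands in $\cC_{\geq -\ell}$ for the appropriate index. The case $\ell = \infty$ is the subtle one here: the stated amplitude is then vacuous on one side, and only the connectivity bound matters.

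Items (3) and (4) are formal consequences. In (3), $\cC\to\cD$ and $\cD \to \cC$ both preserve connectives, and the latter has bounded below amplitude, so they are inverse equivalences in $\wWCat_{\geq 0}$. In (4), compose the amplitude bounds from (2) to get bounded amplitude in both directions for $\ell<\infty$. For (5), replace $\cC$ and $\cD$ by their saturations $\cE_\cC$ and $\cE_\cD$ via (4); these are equivalences of bounded amplitude. Then $f$ becomes a functor of bounded below (respectively bounded) amplitude between saturated categories. After shifting the target structure, it is connective-preserving (respectively weight exact) and an underlying equivalence. Lemma \ref{lem:saturated}(2) then transfers the bounds to $f^{-1}$: adjunction with $R = f^{-1}$ and saturation give the required preservation. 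The shifts are absorbed into bounded amplitude. The main obstacle I expect is this step, specifically the bounded-below-only case, where one must argue directly with $\rperp{}$ instead of invoking weight exactness.
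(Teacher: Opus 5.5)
Your items (1)--(4) follow the paper's argument. Orthogonality and the closure properties are formal, weight decompositions are inherited because both classes only grow, and the amplitude bounds come from a weight decomposition one of whose maps is forced to vanish, exhibiting the object as a retract.

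\textbf{The genuine problem is (5).} A functor of amplitude $[a,b]$ with $a<b$ cannot be made weight exact by shifting the target. Moreover, Lemma~\ref{lem:saturated}(2) applied to $f\dashv f^{-1}$ (i.e.\ $R=f^{-1}$) shows that $f^{-1}$ preserves connectives up to shift only if $f$ preserves \emph{coconnectives} up to shift. The bounded case goes through directly:
\begin{itemize}
\item Suppose $\cC$ is saturated (after replacing it via (4)) and $f$ has amplitude $[a,b]$.
\item For $Y\in\cD_{\geq 0}$ and $X\in\cC_{\leq -b-1}$ we have $fX\in\cD_{\leq -1}$, so $\pi_0\map(X,f^{-1}Y)\cong\pi_0\map(fX,Y)=0$.
\item Hence $f^{-1}Y\in\rperp{(\cC_{\leq -b-1})}=\cC_{\geq -b}$, and dually $f^{-1}(\cD_{\leq 0})\subseteq\cC_{\leq -a}$.
\end{itemize}
So the lower bound for $f^{-1}$ comes from the upper bound for $f$. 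In the bounded-below-only case the obstacle you anticipate cannot be overcome by any argument with $\rperp{}$, because that version of (5) is false. Take $f=\id$ from $\Sp$ with its standard weight structure to $\Sp$ with connectives all of $\Sp$ and coconnectives $0$. The target is a saturated weight structure, with decompositions $0\to X\to X$. Then $f$ has amplitude $[0,\infty]$ and is an underlying equivalence, but $f^{-1}$ would have to send $\Sp$ into some $\Sp_{\geq a}$. The paper's one-line appeal to Lemma~\ref{lem:saturated}(3) likewise cannot give the bounded-below version.

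\textbf{Two smaller points concern (2).}
\begin{itemize}
\item \emph{Index slip.} Shifting a weight decomposition of $\Sigma^{-1}X$ gives $X_{<\ell+1}\to X\to X_{\geq 1}$, with first term in $\cC_{<\ell+1}$, not $\cC_{<\ell}$. Your intermediate claim would yield $\cC'_{\leq 0}\subseteq\cC_{\leq \ell-1}$, which fails already for $\ell=0$ because $\cC_{\leq 0}\subseteq\cC'_{\leq 0}$. With this corrected, and $m=\ell$ on the other side, your argument is exactly the paper's.
\item \emph{The case $\ell=\infty$.} With the paper's conventions, amplitude $(-\infty,0]$ asks for $\cC'_{\geq 0}\subseteq\cC_{>-\infty}=\bigcup_n\cC_{\geq -n}$, which is not vacuous. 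No suitable $m$ exists then, nor does the paper's decomposition $X_{\leq -1}\to X\to X_{>-(\ell+1)}$, and the inclusion can fail. Let $\cC\subseteq\prod_{n\geq 0}\Perf(k)$ consist of the uniformly bounded above sequences. Take $\cC_{\leq 0}$ to be the componentwise coconnective ones and $\cC_{\geq 0}$ the componentwise connective ones of finite support. Then $\cC=\cC_{<\infty}$ is weakly weighted with trivial decompositions. However, $\cC'_{\geq 0}$ contains the constant sequence $(k,k,\dots)$, while $\cC_{>-\infty}$ consists only of finite-support sequences. So for $\ell=\infty$ that bound must be read as vacuous, as you did.
\end{itemize}
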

\begin{proof}
    By definition the orthogonality axiom still holds in $\cD$ and $\cE$.
    Lemma \ref{lem:weight-closure} shows
    that $\cC'_{\geq 0}$ and $\cC'_{\leq 0}$ have the required closure properties in $\cC$.
    Because $\cC_{\leq 0} \subseteq \cC'_{\leq 0}$ and $\cC_{\geq 0} \subseteq \cC'_{\geq 0}$
    the weight decompositions of $\cC$ also give ones for $\cD$ and $\cE$,
    and this also proves that the functors $\cC \to \cD \to \cE$ are weight exact.

    Now suppose that $X \in \cC'_{\geq 0}$.
    Picking a weight decomposition $X_{\leq -1} \to X \to X_{>-(\ell+1)}$ in $\cC$,
    the first map vanishes, exhibiting $X$ as retract of the last term,
    thus proving $X \in \cC_{>-(\ell+1)}$.
    Similarly, if $Y \in \cC'_{\leq 0}$, we can pick a weight decomposition $Y_{<\ell+1} \to Y \to Y_{\geq 1}$
    in $\cC$ and again the map $Y \to Y_{\geq 1}$ vanishes since $\cC_{\geq 1} \subseteq \cC'_{\geq 1}$.
    Thus $Y \in \cC_{<\ell+1}$.
    This shows that $\cC'_{\geq 0} \subseteq \cC_{>-(\ell+1)}$ and $\cC'_{\leq 0} \subseteq \cC_{<\ell+1}$,
    which yields (2).
    Now (3) and (4) are immediate from the definition of maps in $\wWCat$,
    and (5) follows from Lemma \ref{lem:saturated}(3).
\end{proof}

\begin{example}\label{ex:scheme-weak-weight}
    Let $X$ be a qcqs scheme and $\Perf(X)$ its stable category of perfect complexes.
    Then there exists some $\ell < \infty$ and a left saturated weak $\ell$-weight structure on $\Perf(X)$
    where $\Perf(X)_{w \geq 0}$ consists of those perfect complexes of Tor-amplitude $\geq 0$.
    Under the monoidal duality $\hom(-,\cO_X) \colon \Perf(X)^\op \simeq \Perf(X)$
    this corresponds to a right saturated weak $\ell$-weight structure with coconnectives
    those complexes of Tor-amplitude $\leq 0$.

    To see this, note that by \cite[Theorem 3.2(iv)]{Neeman-Qcoh},
    for any qcqs scheme $X$ the category $\QCoh(X)$
    with its standard $t$-structure is weakly approximable in the sense of Neeman \cite{Neeman-Approx},
    which is recalled in \cite[Appendix A]{Ishan-Vova} in a way that matches our notation.
    Concretely, Neeman shows that there exists a single perfect complex $G \in \Perf(X)$
    and some integer $A > 0$ so that
    \begin{enumerate}
        \item $G \in \QCoh(X)_{t \leq A} \cap \QCoh(X)_{w \leq A}$ where $\QCoh(X)_{w \leq 0} \coloneqq \lperp{(\QCoh(X)_{t \geq 1})}$\footnote{In fact, we even have $G \in \QCoh(X)_{w \leq A-1}$, but for notational convenience we will use the slightly weaker bound.},
        \item $G$ generates $\QCoh(X)$,
        \item for every $F \in \QCoh(X)_{t \geq 0}$ there exists a fiber sequence $E \to F \to D$
            where $D \in \QCoh(X)_{t \geq 1}$ and $E$ lies in the closure of $\{\Sigma^n G \mid -A \leq n \leq A\}$
            under retracts, extensions and coproducts, hence $E \in \QCoh(X)_{w \leq 2A}$.
    \end{enumerate}
    Note that (2) implies that $\QCoh(X)^\omega = \Perf(X)$.
    As detailed in \cite[Corollary A.8]{Ishan-Vova}, this implies
    that $\Perf(X)$ admits a $(2A-1,0)$-weight structure in the sense of Remark \ref{rem:weak-weight-reformulation}
    with connectives $\Perf(X) \cap \QCoh(X)_{t \geq 0}$
    and coconnectives the closure of $\{\Sigma^n G \mid n \leq A\}$ under retracts and extensions.

    Translating to our setting as in the remark,
    we see that $\Perf(X)$ admits a weakly $2A$-weighted structure
    with connectives $\Perf(X)_{w \geq 0} = \Perf(X) \cap \QCoh(X)_{t \geq 0}$
    and coconnectives the closure of $\{\Sigma^n G \mid n \leq -A\}$
    under extensions and retracts.
    Note that $\Perf(X)_{w \geq 0}$ consists precisely of those perfect complexes of Tor-amplitude $\geq 0$,
    and the coconnectives are contained in $\QCoh(X)_{t \leq 0}$.
    We can now apply Lemma \ref{lem:saturation} to see that we can saturate the coconnectives
    without any issue, and hence obtain the desired weak weight structure on $\Perf(X)$.
    Finally, we remark that the duality $\hom(-,\cO_X) \colon \Perf(X)^\op \simeq \Perf(X)$
    precisely interchanges the Tor-amplitude $\geq 0$ and Tor-amplitude $\leq 0$
    perfect complexes. Thus we formally obtain a weak $2A$-weight structure on $\Perf(X)$
    which is right saturated and has coconnectives the perfect complexes of Tor-amplitude $\leq 0$.
\end{example}

\subsection{Weight complexes in weakly weighted categories}\label{subsec:weight-cplx}

Much of the theory of weight structures carries over to this weakened setting.
Importantly, we have a good notion of weight complexes,
which we want to introduce in slightly larger generality.

\begin{definition}\label{def:upw-dense}
    Let $\cC$ be a weakly weighted category.
    We call a full stable subcategory $\cU \subseteq \cC_{<\infty}$
    left dense in $\cC$ if for every $X \in \cC$ and $n \geq 0$ there is a map
    $f \colon U \to X$ with $U \in \cU$ and $\cofib(f) \in \cC_{\geq n}$.
    Note that $\cU = \cC_{w < \infty}$ is left dense in $\cC$.

    Given an object $X \in \cC$, a $\cU$-weight complex for $X$
    is a diagram $U_\bullet \colon \N \to \cU_{/X} = \cU \times_\cC \cC_{/X}$
    so that $\N \to \cC_{/X}$ is a cone growing in connectivity
    in the sense of Definition \ref{def:grow-in-conn},
    meaning that for every $n \geq 0$ the cofibers of $U_k \to X$
    (and hence of $U_k \to U_{k+1}$) are eventually $n$-connective.
\end{definition}

\begin{remark}\label{rem:upw-dense}
    A left dense subcategory $\cU \subseteq \cC_{<\infty}$ is always dense.
    In fact, given $X \in \cC_{\leq n}$
    we can choose a cofiber sequence $U \to X \to X_{w \geq n+1}$
    with $U \in \cU$. Since $X \to X_{w \geq n+1}$ vanishes,
    this exhibits $X$ as retract of $U$.
    Note also that if $X \in \cC_{w \geq 0}$ then $U \in \cC_{w \geq 0}$.
    Dealing with retract / idempotent completeness issues is precisely the purpose of this definition.
\end{remark}

\begin{lemma}\label{lem:weight-cplx}
    Let $\cC$ be a weakly weighted category
    and $\cU \subseteq \cC_{< \infty}$ be left dense in $\cC$.
    Given $X \in \cC$, there exists a strictly increasing $f \colon \N \to \N$
    and a commutative diagram
    \[\begin{tikzcd}
        {U_{\leq f(0)}} & {U_{\leq f(1)}} & {U_{\leq f(2)}} & \cdots \\
        X & X & X & \cdots \\
        {X_{\geq 0}} & {X_{\geq f(0)+1}} & {X_{\geq f(1)+1}} & \cdots
        \arrow[from=1-1, to=1-2]
        \arrow[from=1-1, to=2-1]
        \arrow[from=1-2, to=1-3]
        \arrow[from=1-2, to=2-2]
        \arrow[from=1-3, to=1-4]
        \arrow[from=1-3, to=2-3]
        \arrow[Rightarrow, no head, from=2-1, to=2-2]
        \arrow[from=2-1, to=3-1]
        \arrow[Rightarrow, no head, from=2-2, to=2-3]
        \arrow[from=2-2, to=3-2]
        \arrow[Rightarrow, no head, from=2-3, to=2-4]
        \arrow[from=2-3, to=3-3]
        \arrow[from=3-1, to=3-2]
        \arrow[from=3-2, to=3-3]
        \arrow[from=3-3, to=3-4]
    \end{tikzcd}\]
    where each column is a cofiber sequence, with $X_{\geq n} \in \cC_{\geq n}$
    and $U_{\leq f(n)} \in \cU \cap \cC_{\leq f(n)}$.
    In particular, the first two rows define a $\cU$-weight complex for $X$.
    Moreover, if $U_\bullet \colon \N \to \cU_{/X}$ is any $\cU$-weight complex
    for $X$, then $U_\bullet \colon \N \to \cU_{/X}$ is cofinal.
\end{lemma}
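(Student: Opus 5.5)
The plan is to build the diagram by induction using left density, closing each step with the octahedral axiom. Cofinality will then follow from the fact that corepresentables of bounded-above objects turn cones growing in connectivity into colimit cones (Example \ref{ex:corep-bdd}).

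\emph{Construction.} By left density of $\cU$, choose $U_{\leq f(0)} \to X$ with $U_{\leq f(0)} \in \cU$ and cofiber $X_{\geq 0} \in \cC_{\geq 0}$. Since $\cU \subseteq \cC_{<\infty}$, there is some $f(0)$ with $U_{\leq f(0)} \in \cC_{\leq f(0)}$.

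Inductively, suppose we have a cofiber sequence $U_{\leq f(n)} \to X \to X_{\geq f(n-1)+1}$ (with $f(-1) = -1$). Apply left density to the object $X_{\geq f(n-1)+1}$ with the integer $f(n)+1$. This gives a map $V \to X_{\geq f(n-1)+1}$ with $V \in \cU$ and cofiber $X_{\geq f(n)+1} \in \cC_{\geq f(n)+1}$.

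Set $U_{\leq f(n+1)} \coloneqq \fib(X \to X_{\geq f(n)+1})$, where the map is the composite $X \to X_{\geq f(n-1)+1} \to X_{\geq f(n)+1}$. The octahedral axiom yields a cofiber sequence $U_{\leq f(n)} \to U_{\leq f(n+1)} \to V$. Hence $U_{\leq f(n+1)} \simeq \fib(V \to \Sigma U_{\leq f(n)})$, which lies in $\cU$ because $\cU$ is a stable subcategory. It also yields the commuting map of cofiber sequences required in the diagram.

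Since $U_{\leq f(n+1)} \in \cC_{<\infty}$, we may pick $f(n+1) > f(n)$ with $U_{\leq f(n+1)} \in \cC_{\leq f(n+1)}$. The cofibers $X_{\geq f(n-1)+1}$ of $U_{\leq f(n)} \to X$ have connectivity tending to infinity because $f$ is strictly increasing. So the top two rows form a $\cU$-weight complex.

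\emph{Cofinality.} Let $U_\bullet \colon \N \to \cU_{/X}$ be any $\cU$-weight complex. Since $\N$ is filtered, Quillen's Theorem A reduces cofinality to showing, for every object $(W \to X)$ of $\cU_{/X}$, that $\colim_k \map_{/X}(W, U_k) \simeq *$. Now $\map_{/X}(W,U_k)$ is the fiber of $\map(W,U_k) \to \map(W,X)$ over the given point. Filtered colimits in $\An$ commute with such fibers, so it suffices to show $\colim_k \map(W,U_k) \to \map(W,X)$ is an equivalence.

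This is where Example \ref{ex:corep-bdd} enters. Since $W \in \cU \subseteq \cC_{\leq b}$ for some $b$, the functor $\hom_\cC(W,-)$ sends the cone $U_\bullet \Rightarrow X$, which grows in connectivity, to a colimit cone in $\Sp$. Applying $\Omega^\infty$, which commutes with sequential colimits, gives the equivalence. I expect this step to be the only non-formal point, and Example \ref{ex:corep-bdd} disposes of it. The remaining care is bookkeeping: checking the octahedral diagram indeed gives the commutativity of the middle and bottom rows.
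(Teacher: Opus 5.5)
Your proof is correct. The cofinality half is exactly the paper's argument: Quillen's Theorem A, the mapping space in $\cU_{/X}$ computed as a fiber, Example \ref{ex:corep-bdd}, and $\Omega^\infty$ commuting with sequential colimits.

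The inductive construction, however, follows a different route.

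\textbf{The paper's construction.} At every stage the paper applies left density to $X$ itself, obtaining a fresh cofiber sequence $U_{\leq f(n+1)} \to X \to X_{\geq f(n)+1}$. It then gets the connecting maps $U_{\leq f(n)} \to U_{\leq f(n+1)}$ and $X_{\geq f(n-1)+1} \to X_{\geq f(n)+1}$ as factorizations. These exist by the orthogonality axiom: the composite $U_{\leq f(n)} \to X \to X_{\geq f(n)+1}$ is null because $\hom(\cC_{\leq f(n)},\cC_{\geq f(n)+1})$ is $1$-connective.

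\textbf{Your construction.} You instead apply left density to the previous cofiber $X_{\geq f(n-1)+1}$ and define $U_{\leq f(n+1)}$ as a fiber. The octahedral axiom then gives the whole map of cofiber sequences, and $U_{\leq f(n+1)} \in \cU$ follows because $\cU$ is closed under extensions.

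\textbf{What each route uses.} The paper relies on orthogonality, which is why it must record the bound $U_{\leq f(n)} \in \cC_{\leq f(n)}$ in order to run the next step. On the other hand, it never forms extensions inside $\cU$, so it does not need $\cU$ to be stable. Your version does not use orthogonality in the construction at all; the bounds $f(n)$ are recorded only to match the statement. Instead it uses the stability of $\cU$, which is part of the definition of left density here.

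A small side benefit of your route is that the bottom row is literally a tower of successive quotients of $X$. The top row is obtained from it by taking fibers, so its successive cofibers are exactly the objects $V \in \cU$ you chose.
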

\begin{proof}
    We construct the diagram inductively. To begin, choose a cofiber
    sequence $U \to X \to X_{\geq 0}$. Since $\cU \subseteq \cC_{< \infty}$,
    we define $f(0)$ so that $U \in \cC_{\leq f(0)}$
    and let $U_{\leq f(0)} = U$.
    For notational convenience, let $f(-1) \coloneqq -1$.
    Suppose by induction we have constructed the diagram up to the $n$-th column.
    Again we can find $f(n+1) \geq f(n)$
    and some $U_{\leq f(n+1)} \in \cU \cap \cC_{\leq f(n+1)}$
    with a cofiber sequence $U_{\leq f(n+1)} \to X \to X_{\geq f(n)+1}$.
    Since the composite $U_{\leq f(n)} \to X_{\geq f(n)+1}$ vanishes,
    we obtain factorizations
    \[\begin{tikzcd}
        & {U_{\leq f(n+1)}} \\
        {U_{\leq f(n)}} & X & {X_{\geq f(n-1)+1}} \\
                        & {X_{\geq f(n)+1}}
                        \arrow[from=1-2, to=2-2]
                        \arrow[dashed, from=2-1, to=1-2]
                        \arrow[from=2-1, to=2-2]
                        \arrow["0"', from=2-1, to=3-2]
                        \arrow[from=2-2, to=2-3]
                        \arrow[from=2-2, to=3-2]
                        \arrow[dashed, from=2-3, to=3-2]
    \end{tikzcd}\]
    which give us the desired maps to extend the diagram to the $(n+1)$-st column.
    By induction, we obtain the desired diagram.

    For the addendum, we argue using Quillen's Theorem A.
    Given $U \in \cU_{/X}$ we have a pullback
    \[\begin{tikzcd}
        {\N_{U/}} & {(\cU_{/X})_{U/}} \\
        \N & {\cU_{/X}}
        \arrow[from=1-1, to=1-2]
        \arrow[from=1-1, to=2-1]
        \arrow["\lrcorner"{anchor=center, pos=0.125}, draw=none, from=1-1, to=2-2]
        \arrow[from=1-2, to=2-2]
        \arrow["{U_{\bullet}}", from=2-1, to=2-2]
    \end{tikzcd}\]
    We need to show that $\N_{U/}$ is weakly contractible, i.e.~$|\N_{U/}| \simeq *$.
    Note that $\N_{U/} \to \N$ is a left fibration classifying the functor
    $\N \to \An, n \mapsto \map_{\cU_{/X}}(U,U_{n})$.
    Therefore
    \[
        |\N_{U/}|
        \simeq \colim_n \map_{\cU_{/X}}(U, U_{n})
        \simeq \colim_n \map_{\cC}(U,U_{n}) \times_{\map_{\cC}(U,X)} *
    \]
    Since $U \in \cC_{< \infty}$, Example \ref{ex:corep-bdd}
    yields $\colim_n \hom_\cC(U,U_n) \xto{\simeq} \hom_\cC(U,X)$.
    Using that $\Omega^\infty \colon \Sp \to \An$ preserves sequential colimits,
    we conclude that $\N_{U/}$ is weakly contractible, so $U_{\bullet}$
    is cofinal.
\end{proof}

\begin{example}\label{ex:standard-weight-complex}
    In the case $\cU = \cC_{< \infty}$, and $\ell = 0$,
    we can pick $f(n) = n$ and obtain the classical weight complex,
    with $n$-th column $X_{\leq n} \to X \to X_{\geq n+1}$,
    and where $X_{\leq n} \to X_{\leq n+1}$ has cofiber in $\cC_{=n+1}$.
    Similarly, for $1 \leq \ell < \infty$ we can pick $f(n) = \ell n$,
    where the $n$-th column looks like $X_{\leq \ell n} \to X \to X_{\geq 1+\ell(n-1)}$,
    and the cofiber of $X_{\leq \ell n} \to X_{\leq \ell (n+1)}$
    lies in $\cC_{[1+\ell (n-1),\,\ell(n+1)]}$.
\end{example}

The following lemma is an analogue of the classical cellular approximation theorem.

\begin{lemma}
    Let $\cC$ be a weakly $\ell$-weighted category for $1 \leq \ell < \infty$
    and $X_{\leq \ell \bullet}$ a $\cC_{<\infty}$-weight complex for $X \in \cC$
    as in the above example. Given a map $f \colon X \to Y$,
    there exists a $\cC_{<\infty}$-weight complex $Y_{\leq \ell\bullet}$
    as above and a commutative diagram
    \[\begin{tikzcd}[ampersand replacement=\&]
        {X_{\leq 0}} \& \cdots \& {X_{\leq \ell n}} \& {X_{\leq \ell(n+1)}} \& \cdots \& X \\
        {Y_{\leq \ell}} \& \cdots \& {Y_{\leq \ell (n+1)}} \& {Y_{\leq \ell(n+2)}} \& \cdots \& Y
        \arrow[from=1-1, to=1-2]
        \arrow["{f_0}", from=1-1, to=2-1]
        \arrow[from=1-2, to=1-3]
        \arrow[from=1-3, to=1-4]
        \arrow["{f_n}", from=1-3, to=2-3]
        \arrow[from=1-4, to=1-5]
        \arrow["{f_{n+1}}", from=1-4, to=2-4]
        \arrow[from=1-5, to=1-6]
        \arrow["f", from=1-6, to=2-6]
        \arrow[from=2-1, to=2-2]
        \arrow[from=2-2, to=2-3]
        \arrow[from=2-3, to=2-4]
        \arrow[from=2-4, to=2-5]
        \arrow[from=2-5, to=2-6]
    \end{tikzcd}\]
    In the case $\ell=0$ where $\cC$ is weighted, we can even choose $Y_{\leq \bullet}$
    first and then find maps $X_{\leq n} \to Y_{\leq n}$ making a commutative diagram
    \[\begin{tikzcd}
        {X_{\leq 0}} & \cdots & {X_{\leq n}} & {X_{\leq (n+1)}} & \cdots & X \\
        {Y_{\leq 0}} & \cdots & {Y_{\leq n}} & {Y_{\leq (n+1)}} & \cdots & Y
        \arrow[from=1-1, to=1-2]
        \arrow["{f_0}", from=1-1, to=2-1]
        \arrow[from=1-2, to=1-3]
        \arrow[from=1-3, to=1-4]
        \arrow["{f_n}", from=1-3, to=2-3]
        \arrow[from=1-4, to=1-5]
        \arrow["{f_{n+1}}", from=1-4, to=2-4]
        \arrow[from=1-5, to=1-6]
        \arrow["f", from=1-6, to=2-6]
        \arrow[from=2-1, to=2-2]
        \arrow[from=2-2, to=2-3]
        \arrow[from=2-3, to=2-4]
        \arrow[from=2-4, to=2-5]
        \arrow[from=2-5, to=2-6]
    \end{tikzcd}\]
\end{lemma}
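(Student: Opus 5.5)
The plan is to build the weight complex of $Y$ and the maps $f_n$ together, by induction on $n$. Two facts from the definition of a weakly weighted category drive every step. First, axiom (2), applied after shifting, says that $\hom_\cC(\cC_{\leq a},\cC_{\geq a+1})\in \Sp_{\geq 1}$. So every map from an object of $\cC_{\leq a}$ to an object of $\cC_{\geq a+1}$ is nullhomotopic, and so is every map from $\cC_{\leq a-1}$ to $\cC_{\geq a}$. Second, in the complex $X_{\leq \ell\bullet}$ of Example \ref{ex:standard-weight-complex}, the cofiber $C_n$ of $i_n\colon X_{\leq \ell n}\to X_{\leq \ell(n+1)}$ lies in $\cC_{[1+\ell(n-1),\,\ell(n+1)]}$.

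\emph{Base case.} Choose a weight decomposition $Y_{\leq \ell}\to Y\to Y_{\geq 1}$. The composite $X_{\leq 0}\to X\xto{f} Y\to Y_{\geq 1}$ vanishes because $X_{\leq 0}\in\cC_{\leq 0}$. Hence it factors through $Y_{\leq \ell}$, which gives $f_0$.

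\emph{Inductive step, producing the lift.} Suppose $Y_{\leq \ell(n+1)}$ and $f_n$ have been constructed. As in the proof of Lemma \ref{lem:weight-cplx}, pick a decomposition $Y_{\leq \ell(n+2)}\to Y\to Y_{\geq 1+\ell(n+1)}$ and factor $Y_{\leq \ell(n+1)}\to Y$ through it; this gives the transition map $t$. The composite $X_{\leq \ell(n+1)}\to Y\to Y_{\geq 1+\ell(n+1)}$ vanishes by the first fact, so it admits some lift $g\colon X_{\leq \ell(n+1)}\to Y_{\leq \ell(n+2)}$ over $Y$.

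\emph{Inductive step, correcting the lift.} The remaining issue is that $g$ need not satisfy $g\circ i_n\simeq t\circ f_n$. Both maps agree after composing to $Y$. So their difference $d$ factors as $X_{\leq \ell n}\to \Omega Y_{\geq 1+\ell(n+1)}\xto{\partial} Y_{\leq \ell(n+2)}$, and $\Omega Y_{\geq 1+\ell(n+1)}\in \cC_{\geq \ell(n+1)}$. I would extend this factorization along $i_n$. The obstruction is the composite $\Omega C_n\to X_{\leq\ell n}\to \Omega Y_{\geq 1+\ell(n+1)}$, and it vanishes because $\Omega C_n\in \cC_{\leq \ell(n+1)-1}$. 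Replacing $g$ by $g$ minus $\partial$ composed with the extension keeps the composite to $Y$ unchanged and fixes the square; this yields $f_{n+1}$.

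For $\ell=0$ this correction fails: now $\Omega C_n\in\cC_{\leq n}$ while $\Omega Y_{\geq n+1}\in\cC_{\geq n}$, so the obstruction no longer has to vanish. This is the step I expect to need the most care. The plan there is to fix the full weight complex $Y_{\leq\bullet}$ first, which is possible since its terms are determined up to the relevant choices. Then construct $f_{n+1}$ in two moves, extending first and correcting over $Y$ afterwards.
\begin{enumerate}
\item Extend $Y_{\leq n}\to Y_{\leq n+1}$ precomposed with $f_n$ along $i_n$. The obstruction is the composite $h\colon\Omega C_n\to X_{\leq n}\to Y_{\leq n+1}$. After composing to $Y$, $h$ becomes $\Omega C_n\to X_{\leq n+1}\to X\to Y$, which is zero because $\Omega C_n\to X_{\leq n+1}$ is already zero. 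Hence $h$ factors through $\Omega Y_{\geq n+2}\in\cC_{\geq n+1}$, and since $\Omega C_n\in\cC_{\leq n}$ the factored map is null. So an extension $g$ exists.
\item The difference between $X_{\leq n+1}\xto{g}Y_{\leq n+1}\to Y$ and $X_{\leq n+1}\to X\xto{f}Y$ vanishes on $X_{\leq n}$, so it factors through $C_n\in\cC_{=n+1}$. The map $C_n\to Y$ lifts to $Y_{\leq n+1}$ because $C_n\to Y\to Y_{\geq n+2}$ vanishes by the first fact.
\item Correct $g$ by $C_n\to Y_{\leq n+1}$ precomposed with $X_{\leq n+1}\to C_n$. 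This map vanishes on $X_{\leq n}$, so it preserves the square while fixing the map to $Y$.
\end{enumerate}
The same extend-then-correct argument should also work for $\ell\geq 1$, so I may use it uniformly.

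Finally, the homotopies produced at each stage assemble into a diagram $\N\to\Ar(\cC)$ together with compatible maps to $f$, since $\N$ is free on its generating arrows. The main obstacle is therefore the coherence and connectivity bookkeeping in the correction step, in particular matching the degree offsets $\ell n$ versus $\ell(n+1)$ so that every obstruction group has vanishing $\pi_0$.
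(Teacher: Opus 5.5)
Your proof is correct, but it takes a different route from the paper. The paper never corrects a lift. In the inductive step it forms the pushout $P=X_{\leq \ell(n+1)}\amalg_{X_{\leq \ell n}}Y_{\leq \ell(n+1)}$, observes $P\in\cC_{\leq \ell(n+1)}$, and factors $P\to Y$ through a new decomposition $Y_{\leq \ell(n+2)}\to Y\to Y_{\geq 1+\ell(n+1)}$. It then defines both $f_{n+1}$ and the new transition map of $Y_{\leq \ell\bullet}$ as composites through $P$. The squares commute by the universal property of the pushout, with no homotopy bookkeeping. The cost is that the transition maps of $Y$ are dictated by $f$, so a separate diagram chase is needed to see that $Y_{\leq\ell\bullet}$ still grows in connectivity. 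For $\ell=0$ the paper uses the same pushout, plus an orthogonality argument showing that $Y_{\leq n}\to P\to Y_{\leq n+1}$ agrees with the prescribed transition map.

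Your lift-and-correct argument leaves the weight complex of $Y$ untouched, so no diagram chase is needed. Because of the index shift $X_{\leq \ell n}\mapsto Y_{\leq \ell(n+1)}$, you only need agreement after precomposing with $f_n$. This shows that for $\ell\geq 1$ one can also prescribe $Y_{\leq\ell\bullet}$ in advance, which the paper's closing remark says its own method cannot do.

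Three points to tidy up.

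First, the correction step is superfluous. The difference $d$ factors through a map $X_{\leq \ell n}\to\Omega Y_{\geq 1+\ell(n+1)}$, and $\hom(\cC_{\leq \ell n},\cC_{\geq \ell(n+1)})\in\Sp_{\geq \ell}$. So that map is already null, and any lift $g$ over $Y$ works.

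Second, your assertion that the correction fails for $\ell=0$ is an off-by-one error. There the fiber of $Y_{\leq n+1}\to Y$ is $\Omega Y_{\geq n+2}\in\cC_{\geq n+1}$, as you yourself use in step 1. Since the source lies in $\cC_{\leq n}$, the obstruction vanishes, and even $d$ does. Your three-step argument for $\ell=0$ is nevertheless valid.

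Third, freeness of $\N$ only assembles the squares into a functor $\N\to\Ar(\cC)$. The squares must also be compatible with the maps $X_{\leq \ell n}\to X\xrightarrow{f}Y$ and $Y_{\leq\ell(n+1)}\to Y$. For that, perform all lifts and corrections in $\cC_{/Y}$. There $\map_{\cC_{/Y}}((A,a),(B,b))$ is either empty or equivalent to $\map_\cC(A,\fib(b))$, so the same vanishing statements produce the required homotopies over $Y$.
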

\begin{proof}
    We begin with the case $\ell \geq 1$.
    Picking a weight decomposition $Y_{\leq \ell} \to Y \to Y_{\geq 1}$,
    we note that $X_{\leq 0} \to Y_{\geq 1}$ vanishes,
    and hence we obtain a factorization
    \[\begin{tikzcd}
        {X_{\leq 0}} & X & {X_{\geq 1-\ell}} \\
        {Y_{\leq \ell}} & Y & {Y_{\geq 1}}
        \arrow[from=1-1, to=1-2]
        \arrow["{f_0}"', dashed, from=1-1, to=2-1]
        \arrow[from=1-2, to=1-3]
        \arrow["f", from=1-2, to=2-2]
        \arrow[dashed, from=1-3, to=2-3]
        \arrow[from=2-1, to=2-2]
        \arrow[from=2-2, to=2-3]
    \end{tikzcd}\]
    The left square yields the base case of our induction on $n$.
    Suppose now we have already constructed the diagram up to $f_n$,
    and we wish to construct $f_{n+1}$ making the squares with $f_n$
    and $f$ commute.
    Consider the following commutative diagram, where $P$ is defined as the pushout
    \[\begin{tikzcd}
        {X_{\leq\ell n}} & {X_{\leq \ell(n+1)}} & X \\
        {Y_{\leq \ell(n+1)}} & P & Y
        \arrow[from=1-1, to=1-2]
        \arrow[from=1-1, to=2-1]
        \arrow[from=1-2, to=1-3]
        \arrow[from=1-2, to=2-2]
        \arrow[from=1-3, to=2-3]
        \arrow[from=2-1, to=2-2]
        \arrow["\lrcorner"{anchor=center, pos=0.125, rotate=180}, draw=none, from=2-2, to=1-1]
        \arrow[from=2-2, to=2-3]
    \end{tikzcd}\]
    Since $\ell \geq 1$, we have $\cofib(Y_{\leq \ell(n+1)} \to P) = \cofib(X_{\leq \ell n} \to X_{\leq \ell(n+1)}) \in \cC_{\leq \ell(n+1)}$,
    and hence also $P \in \cC_{\leq \ell (n+1)}$.
    Now pick a weight decomposition $Y_{\leq \ell(n+2)} \to Y \to Y_{\geq 1+\ell(n+1)}$,
    which yields a factorization of $P \to Y$ through $Y_{\leq \ell(n+2)} \to Y$,
    ultimately giving the commutative diagram
    \[\begin{tikzcd}
        {X_{\leq\ell n}} & {X_{\leq \ell(n+1)}} && X \\
        {Y_{\leq \ell(n+1)}} & P & {Y_{\leq \ell(n+2)}} & Y
        \arrow[from=1-1, to=1-2]
        \arrow[from=1-1, to=2-1]
        \arrow[from=1-2, to=1-4]
        \arrow[from=1-2, to=2-2]
        \arrow[from=1-4, to=2-4]
        \arrow[from=2-1, to=2-2]
        \arrow["\lrcorner"{anchor=center, pos=0.125, rotate=180}, draw=none, from=2-2, to=1-1]
        \arrow[from=2-2, to=2-3]
        \arrow[from=2-3, to=2-4]
    \end{tikzcd}\]
    By induction, we can thus build the desired diagram,
    and it remains to show that the sequential diagram $Y_{\leq \ell \bullet}$
    constructed in this way is indeed a $\cC_{<\infty}$-weight complex for $Y$.
    To this end, it suffices to check that the cofiber of the composite
    $Y_{\leq \ell(n+1)} \to P \to Y_{\leq \ell(n+2)}$ grows in connectivity
    with $n$. Using that $\cofib(Y_{\leq \ell(n+1)} \to P) = \cofib(X_{\leq \ell n} \to X_{\leq \ell(n+1)}) \in \cC_{\geq 1+\ell(n-1)}$
    and that we can similarly control the connectivity
    of the cofibers of the maps $Y_{\leq \ell(n+1)} \to Y$
    and $Y_{\leq \ell(n+2)} \to Y$,
    a straightforward diagram chase shows that
    the cofiber of the above composite is indeed in $\cC_{\geq 1+\ell(n-1)}$.

    For the case $\ell = 0$, the construction is essentially the same;
    in the inductive step,
    we again define $P = X_{\leq n+1} \amalg_{X_{\leq n}} Y_{\leq n} \in \cC_{\leq n+1}$
    and obtain a commutative diagram
    \[\begin{tikzcd}[ampersand replacement=\&]
        {X_{\leq n}} \& {X_{\leq n+1}} \& X \\
        {Y_{\leq n}} \& P \& Y
        \arrow[from=1-1, to=1-2]
        \arrow[from=1-1, to=2-1]
        \arrow[from=1-2, to=1-3]
        \arrow[from=1-2, to=2-2]
        \arrow[from=1-3, to=2-3]
        \arrow[from=2-1, to=2-2]
        \arrow["\lrcorner"{anchor=center, pos=0.125, rotate=180}, draw=none, from=2-2, to=1-1]
        \arrow[from=2-2, to=2-3]
    \end{tikzcd}\]
    We again note that since $P \in \cC_{\leq n+1}$ and $Y/Y_{\leq n+1} \in \cC_{\geq n+2}$
    we have a factorization of $P \to Y$ through $Y_{\leq n+1} \to Y$.
    We now need to argue that the composite $c \colon Y_{\leq n} \to P \to Y_{\leq n+1}$
    is homotopic to the map $i \colon Y_{\leq n} \to Y_{\leq n+1}$
    that is part of the given weight complex $Y_{\leq \bullet}$.
    By construction, both maps agree upon postcomposing with $Y_{\leq n+1} \to Y$,
    and hence $i-c \colon Y_{\leq n} \to Y_{\leq n+1}$ lifts to $\fib(Y_{\leq n+1} \to Y) = \Omega Y_{\geq n+2} \in \cC_{\geq n+1}$. By orthogonality we see that $i-c \simeq 0$, as desired.
    This agreement of maps is the part which fails for $\ell \geq 1$,
    and is hence the reason we cannot choose $Y_{\leq \ell \bullet}$ freely in that case.
\end{proof}

\section{Left completion of weak weight structures}\label{sec:upw-comp}

We now come to the heart of this paper.
The section is divided into three subsections.
In subsection \ref{subsec:left-completeness}
we define left completeness of weak weight structures
in terms of the completeness conditions we saw in Section \ref{sec:conn},
and draw some consequences from the theory developed there.
We conclude by showing that the connective part of a left complete
weighted category is freely generated by its weight heart under geometric realizations
(Proposition \ref{prop:weight-pdelta}).

Next, in subsection \ref{subsec:left-completion}
we define left completions
and state our main theorem (Theorem \ref{thm:upw-comp}) on their existence and properties.
This identifies the left completion $\lc{\cC}$ of $\cC$ as a certain full subcategory
of $\Ind(\cC_{<\infty})$ and discusses functoriality of the association $\cC \mapsto \lc{\cC}$.
We also draw many useful consequences of this Theorem,
such as the functoriality of weight complexes when considering them as $\Ind$-objects
(Corollary \ref{cor:weight-complex})
and that left completion interacts well with the dual notion of right completion
(Proposition \ref{prop:lr-complete}).

Finally, subsection \ref{subsec:proof-thm-upw-comp}
is devoted to the proof of Theorem \ref{thm:upw-comp}.
Along the way, we will see a criterion
for recognizing left completions (Corollary \ref{cor:left-comp-via-embed})
and how to generate a weight structure on $\Ind(\cC)$ for any weakly weighted $\cC$
(Proposition \ref{prop:ind-weight}).

\subsection{Left completeness of weak weight structures}\label{subsec:left-completeness}

\begin{definition}\label{def:weight-left-complete}
    A weakly weighted category $\cC$ is left (weight) complete
    if its underlying connectivity structure is colimit complete (cf.~Observation \ref{obs:colim-comp}), meaning:
    \begin{enumerate}
        \item $\cC_{\geq \infty} \coloneqq \bigcap_{n \geq 0} \cC_{\geq n} = 0$.
        \item $\cC$ admits sequential colimits growing in (weight) connectivity.
        \item $\cC_{\geq 0} \subseteq \cC$ is closed under such sequential colimits.
    \end{enumerate}
    We let $\wWCatl \subseteq \wWCat$ be the full subcategory on left complete weakly weighted categories,
    and similarly for $\wWCatl_{\geq 0},\wWCatl_{=0}$ from Definition \ref{def:wwcat}.
\end{definition}

\begin{remark}
    As mentioned in Observation \ref{obs:colim-comp},
    parts (2) and (3) together are equivalent to $\cC_{\geq 0}$ admitting
    and $\cC_{\geq 0} \subseteq \cC$ preserving geometric realizations.
\end{remark}

\begin{observation}\label{obs:left-comp}
    The property of a weakly weighted category $\cC$ being left complete
    does not depend on $\cC_{\leq 0}$. In particular, if $(\cC,\cC_{\geq 0},\cC'_{\leq 0})$
    is another weak weight structure on $\cC$ with the same connectives (e.g.~$\cC'_{\leq 0} = {}^\perp(\cC_{\geq 1})$), then it is also left complete.
    Moreover, if $\cC$ is left complete then $\cC_{>-\infty}$ is left complete and idempotent complete by Corollary \ref{cor:grow-conn-idem},
    and $\cC_{\geq 0}$ admits the ``finite type'' colimits described in Lemma \ref{lem:fintype-colim}.
    In particular, also $\cC_{\geq 0}$ and $\cC_{=0}$ are idempotent complete.
    Moreover, if $\cU$ is left dense in $\cC$,
    then any $\cU$-weight complex $X_\bullet$ of $X$ converges, i.e.~$\colim_n X_n \simeq X$.
    In this sense, any object in $\cC$ may be obtained as the colimit
    of a sequential diagram in $\cC_{<\infty}$ growing in connectivity.
\end{observation}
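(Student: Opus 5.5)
The plan is to go through the claims of the observation in order, since each reduces to results already established in Section \ref{sec:conn} and Subsection \ref{subsec:weight-cplx}.

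\emph{Independence of $\cC_{\leq 0}$.} Definition \ref{def:weight-left-complete} refers only to the stable connectivity structure $(\cC,\cC_{\geq 0})$ via Observation \ref{obs:colim-comp}, so it is independent of $\cC_{\leq 0}$. For the parenthetical example, $(\cC,\cC_{\geq 0},\lperp{(\cC_{\geq 1})})$ is again weakly weighted by Lemma \ref{lem:saturation}(1).

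\emph{The subcategory $\cC_{>-\infty}$.} For left completeness, I first check that $\cC_{\geq \infty}\subseteq \cC_{>-\infty}$ vanishes. Next, a sequential diagram in $\cC_{>-\infty}$ growing in connectivity has, up to cofinality, all its successive cofibers in some fixed $\cC_{\geq -k}$. Hence its colimit in $\cC$ lies in $\cC_{\geq -k}\subseteq\cC_{>-\infty}$, by condition (3) applied after shifting and passing to $X_\bullet/X_0$ as in the proof of Lemma \ref{lem:colim-comp-via-geom-real}. Closure of the connectives is inherited from $\cC$.

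For idempotent completeness, Lemma \ref{lem:colim-comp-via-geom-real} shows that $\cC_{\geq 0}$ is prestable and admits geometric realizations. Corollary \ref{cor:grow-conn-idem} then makes $\cC_{\geq 0}$ and $\SW(\cC_{\geq 0})\simeq \cC_{>-\infty}$ idempotent complete, using Lemma \ref{lem:stable-conn-struc}(4) for the identification. Lemma \ref{lem:fintype-colim} applies directly to $\cC_{\geq 0}$, which has finite coproducts and geometric realizations, and gives the ``finite type'' colimits.

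\emph{The heart $\cC_{=0}$.} Here I use the axiom that $\cC_{\leq 0}$ is closed under retracts in $\cC$. An idempotent on $X\in \cC_{=0}$ splits in $\cC_{\geq 0}$, which is idempotent complete. The resulting retract of $X$ lies in $\cC_{\leq 0}$ by retract-closure, so it lies in $\cC_{=0}$.

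\emph{Convergence of weight complexes.} Let $U_\bullet\colon\N\to\cU_{/X}$ be a $\cU$-weight complex. By Definition \ref{def:upw-dense} it is a sequential cone over $X$ growing in connectivity. Apply Lemma \ref{lem:cn-cone-colim} to $\id_\cC$, which has amplitude $\geq 0$, with $\cC$ colimit complete. The cone is then a colimit cone, so $\colim_n U_n\simeq X$. Existence of such weight complexes for every $X$ is Lemma \ref{lem:weight-cplx}, giving the final sentence.

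The main (mild) obstacle is the left completeness of $\cC_{>-\infty}$. One must make sure the colimit computed in $\cC$ lands in $\cC_{>-\infty}$, which requires the cofinality reduction to uniformly bounded below cofibers before invoking closure of $\cC_{\geq -k}$. Everything else is a direct citation.
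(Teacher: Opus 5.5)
Your proposal is correct and follows the route the paper indicates inline. Left completeness only depends on $(\cC,\cC_{\geq 0})$; idempotent completeness of $\cC_{\geq 0}$ and $\cC_{>-\infty}$ and the finite-type colimits come from Corollary \ref{cor:grow-conn-idem} and Lemma \ref{lem:fintype-colim}; the heart inherits idempotent completeness via retract-closure of $\cC_{\leq 0}$; and convergence of $\cU$-weight complexes is Lemma \ref{lem:cn-cone-colim} applied to $\id_\cC$. Two minor corrections: prestability of $\cC_{\geq 0}$ comes from Lemma \ref{lem:stable-conn-struc} rather than Lemma \ref{lem:colim-comp-via-geom-real}, and in the $\cC_{>-\infty}$ step you should choose $k$ so that also $X_0\in\cC_{\geq -k}$.
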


\begin{lemma}\label{lem:t-vs-w-comp}
    Let $\cC$ be a weakly weighted category which also admits a $t$-structure with $\cC_{w\geq 0} = \cC_{t \geq 0}$.
    \begin{enumerate}
        \item If the $t$-structure is left complete, then also the weight structure is left complete.
        \item Suppose that
            \begin{enumerate}
                \item $\cC_{\geq \infty} = 0$.
                \item $\cC$ admits sequential limits growing in connectivity.
                \item $\cC$ is right saturated in the sense of Definition \ref{def:saturated} (e.g.~it is a non-weak weight structure).
            \end{enumerate}
            Then the $t$-structure and hence also the weight structure is left complete.
    \end{enumerate}
\end{lemma}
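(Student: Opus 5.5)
Part (1) is essentially formal. Left completeness of the weight structure is, by Definition \ref{def:weight-left-complete}, colimit completeness of the underlying stable connectivity structure $(\cC,\cC_{w\geq 0})$. Since $\cC_{w\geq 0}=\cC_{t\geq 0}$, this is the same connectivity structure as $(\cC,\cC_{t\geq 0})$. By Lemma \ref{lem:left-t-comp-is-conn-comp}, left completeness of the $t$-structure implies that this connectivity structure is both limit and colimit complete. In particular it is colimit complete, which is exactly what (1) asserts.

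For (2), the plan is to verify condition (2) of Lemma \ref{lem:left-t-comp-is-conn-comp}, namely that $(\cC,\cC_{t\geq 0})$ is limit complete, and then apply part (1). By Observation \ref{obs:colim-comp} (limit version), there are three things to check:
\begin{enumerate}
\item $\cC_{\geq\infty}=0$, which is hypothesis (a);
\item $\cC$ admits sequential limits growing in connectivity, which is hypothesis (b);
\item $\cC_{\geq 0}$ is closed under such limits.
\end{enumerate}
So only the third point needs an argument.

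For the third point, I would use right saturation, $\cC_{\geq 0}=\rperp{(\cC_{\leq -1})}$, together with Lemma \ref{lem:weight-closure}(4) in its bracketed (limit) form. That lemma says a right saturated $\cC_{\geq 0}$ is closed under limits of towers $X_\bullet\colon \N^\op\to\cC$ whose successive fibers $\fib(X_{n+1}\to X_n)$ lie in $\cC_{\geq 0}$; for $\gamma=\omega$ there are no limit-ordinal conditions. Now let $X_\bullet$ be a tower growing in connectivity with each $X_n\in\cC_{\geq 0}$. Then each fiber $\fib(X_{n+1}\to X_n)$ lies in $\cC_{\geq -1}$, since it is an extension of $\Omega X_n\in\cC_{\geq -1}$ and $X_{n+1}\in\cC_{\geq 0}\subseteq\cC_{\geq -1}$. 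Applying the lemma to the shifted tower $\Sigma X_\bullet$ gives $\lim X_\bullet\in\cC_{\geq -1}$.

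The main obstacle is improving this bound from $\cC_{\geq -1}$ to $\cC_{\geq 0}$. For this I would use orthogonality directly. Since $\lim$ commutes with $\hom(Y,-)$, for $Y\in\cC_{\leq -1}$ we have $\hom(Y,\lim X_n)\simeq\lim_n\hom(Y,X_n)$, and each $\hom(Y,X_n)$ is connective.
\begin{itemize}
\item The transition maps $X_{n+1}\to X_n$ are eventually highly connective. Their fibers are then eventually in $\cC_{\geq k}$ for large $k$, so $\hom(Y,-)$ of these fibers is eventually $k$-connective by orthogonality.
\item Hence the tower of spectra $\hom(Y,X_n)$ has eventually surjective $\pi_0$ transition maps, so the $\lim^1$ term on $\pi_{-1}$ vanishes.
\item By the Milnor sequence, $\pi_{-1}\lim_n\hom(Y,X_n)$ is a $\lim^1$ of $\pi_0$ groups, which is therefore zero, and all lower homotopy groups vanish as well.
\end{itemize}
So $\hom(Y,\lim X_\bullet)$ is connective, and right saturation gives $\lim X_\bullet\in\cC_{\geq 0}$. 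This makes the appeal to Lemma \ref{lem:weight-closure} unnecessary, but it confirms the required closure property.

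Having established limit completeness of $(\cC,\cC_{t\geq 0})$, Lemma \ref{lem:left-t-comp-is-conn-comp} gives that the $t$-structure is left complete, and part (1) then gives left completeness of the weight structure.
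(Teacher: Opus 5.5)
Your proof of (1), and your reduction of (2) to checking that $\cC_{\geq 0}$ is closed under towers growing in connectivity, are exactly the paper's argument. For that last point, however, the paper just cites right saturation and Lemma~\ref{lem:weight-closure}(4), and there is no obstacle. Because the tower grows in connectivity, you may discard finitely many terms, which does not change the limit, so that every fiber $\fib(X_{n+1}\to X_n)$ already lies in $\cC_{\geq 0}$. The truncated tower then starts at an object of $\cC_{\geq 0}$ and has all transition fibers in $\cC_{\geq 0}$, so the lemma gives $\lim X_\bullet\in\cC_{\geq 0}$ directly. The weaker bound $\cC_{\geq -1}$ only appeared because you applied the lemma to the untruncated tower.

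Your Milnor-sequence workaround is essentially a hand-made version of the $\gamma=\omega$ case of that lemma, but as written it is off by one. Right saturation $\cC_{\geq 0}=\rperp{(\cC_{\leq -1})}$ requires $\pi_0\hom(Y,\lim X_\bullet)=0$ for $Y\in\cC_{\leq -1}$, i.e.\ $\hom(Y,\lim X_\bullet)\in\Sp_{\geq 1}$. Knowing only that $\hom(Y,\lim X_\bullet)$ is connective for all $Y\in\cC_{\leq -1}$ characterizes $\cC_{\geq -1}$, which is the bound you already had. Indeed, for $Y\in\cC_{\leq -1}$ each $\hom(Y,X_n)$ lies in $\Sp_{\geq 1}$, so your $\lim^1$ of the $\pi_0$'s vanishes for trivial reasons.

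The fix is to run the same argument one degree higher. Either test against $Y\in\cC_{\leq 0}$, or show that the $\lim^1$ of the groups $\pi_1\hom(Y,X_n)$ vanishes for $Y\in\cC_{\leq -1}$. The latter holds because $\pi_1\hom(Y,X_{n+1})\to\pi_1\hom(Y,X_n)$ is surjective as soon as $\fib(X_{n+1}\to X_n)\in\cC_{\geq 0}$. With that correction your argument is valid, but it is a longer route to what the cofinality observation gives immediately.
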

\begin{proof}
    Point (1) is a direct consequence of Lemma \ref{lem:left-t-comp-is-conn-comp}.
    For (2), by the same lemma it suffices to show that the underlying connectivity
    structure is limit complete. The first two requirements are exactly the hypotheses (a) and (b),
    and the closure of $\cC_{\geq 0}$ under sequential limits growing in connectivity
    follows from the right saturation and Lemma \ref{lem:weight-closure}.
\end{proof}

\begin{example}
    Since the usual $t$-structure on $\Sp$ is complete, also the standard weight structure on $\Sp$ is left complete.
\end{example}

\begin{lemma}\label{lem:upw-comp-eqv}~
    \begin{enumerate}
        \item Let $f \colon \cC \to \cD$ be of amplitude $[a,b)$
            for $-\infty < a \leq b \leq \infty$
            between left complete weakly weighted categories.
            If its restriction $\cC_{<\infty} \to \cD_{<\infty}$
            is fully faithful \alt{an equivalence}, then so is $f$.

        \item Let $f \colon \cC \to \cD$ be a weight exact functor of left complete weakly weighted categories.
            If the restriction $\cC_{=0} \to \cD_{=0}$ is fully faithful \alt{an equivalence},
            then so is the restriction $\cC_{>-\infty} \to \cD_{>-\infty}$.
    \end{enumerate}
\end{lemma}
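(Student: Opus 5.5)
For (1), I will write every object of $\cC$ as the colimit of a weight complex and use that $f$ preserves these colimits. By Observation \ref{obs:left-comp}, every $X \in \cC$ is the colimit of a $\cC_{<\infty}$-weight complex $X_\bullet$ (Lemma \ref{lem:weight-cplx}), i.e.~of a sequential cone growing in connectivity with values in $\cC_{<\infty}$. Since $f$ has amplitude $\geq a$ with $a > -\infty$, Lemma \ref{lem:cn-cone-colim} shows that $f$ sends this cone to a colimit cone in $\cD$, so $f(X) \simeq \colim_n f(X_n)$.

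\emph{Fully faithfulness.} For $X, Y \in \cC$ with weight complexes $X_\bullet \to X$ and $Y_\bullet \to Y$, we have
\[
    \hom(X,Y) \simeq \lim_n \hom(X_n, Y), \qquad \hom(fX,fY) \simeq \lim_n \hom(fX_n, fY).
\]
It therefore suffices to treat the case $X \in \cC_{\leq m}$. Then Example \ref{ex:corep-bdd} gives $\hom(X,Y) \simeq \colim_k \hom(X,Y_k)$. Since $f$ has amplitude $<b$, we have $f(X) \in \cD_{<\infty}$, so the same example gives $\hom(fX,fY) \simeq \colim_k \hom(fX,fY_k)$. Termwise the comparison maps are equivalences by the hypothesis on $\cC_{<\infty}$, and hence $f$ is fully faithful.

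\emph{Essential surjectivity.} Suppose $\cC_{<\infty} \to \cD_{<\infty}$ is an equivalence and let $Z \in \cD$. Pick a $\cD_{<\infty}$-weight complex $Z_\bullet$ of $Z$. Its terms and transition maps lift to a sequential diagram $X_\bullet$ in $\cC_{<\infty}$. The main obstacle is to check that $X_\bullet$ is again growing in connectivity, i.e.~that connectivity is reflected. The cofiber $c$ of $X_k \to X_{k+1}$ lies in $\cC_{<\infty}$ and $f(c) \in \cD_{\geq n}$. Take a weight decomposition $c' \to c \to c''$ with $c'' \in \cC_{\geq n-a}$ and $c' \in \cC_{<n-a+\ell}$; this gives $c' \in \cC_{<\infty}$ and hence $c'' \in \cC_{<\infty}$ by extensions. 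To see that $c \in \cC_{\geq n'}$ for some $n' \to \infty$, I plan to show that $c' \to c$ is nonzero only through a bounded window. Concretely, I will use full faithfulness on $\cC_{<\infty}$ and orthogonality in $\cD$ after passing to the left saturation (Lemma \ref{lem:saturation}, which preserves connectives and hence left completeness). There, membership in $\cC_{\geq n}$ is tested against $\cC_{\leq n-1}$, and the amplitude bounds transport this test to $\cD$ with a bounded shift of $a$ and $b$. Granting this, $X := \colim X_\bullet$ exists by left completeness, and $f(X) \simeq \colim f(X_n) \simeq Z$.

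For (2), it suffices to treat the bounded objects. By shifting, a weight exact $f$ has amplitude $[0,0]$, so by (1) applied to $\cC_{>-\infty}$ (which is again left complete) it suffices to show that $f$ is fully faithful, respectively an equivalence, on $\cC_{[a,b]} := \cC_{\geq a} \cap \cC_{<\infty}$ for bounded objects. I will argue by induction on the length of the finite weight filtrations provided by the weight complexes of Example \ref{ex:standard-weight-complex}: each such object is an iterated extension of shifts of objects in $\cC_{=0}$, up to retracts. The five lemma on mapping spectra then gives fully faithfulness. For essential surjectivity, I will lift the extension classes using fully faithfulness and handle retracts via idempotent completeness of $\cC_{>-\infty}$ (Corollary \ref{cor:grow-conn-idem}). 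The delicate point is that, for $\ell > 0$, the graded pieces lie only in $\cC_{[m,m+\ell]}$ rather than in the heart. I expect to handle this through the heart-generation statement in Proposition \ref{prop:weight-pdelta}, or by restricting first to connectives in the saturated model.
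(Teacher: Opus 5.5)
\textbf{Overall.} Your outline follows the paper's proof: convergent weight complexes plus Lemma~\ref{lem:cn-cone-colim}, the $\lim_k\colim_n$ formula for mapping spectra, and for (2) a reduction to weight-bounded objects followed by (1). Your full faithfulness argument in (1) is complete.

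\textbf{The gap in (1).} The genuine gap is the step you flag and then grant: that the lift $X_\bullet$ of a $\cD_{<\infty}$-weight complex grows in connectivity in $\cC$. The paper's proof also passes over this in one sentence, so it is indeed the crux. Your proposed remedy does not deliver it. Left saturation keeps $\cC_{\geq 0}$ and does not make it a right orthogonal. Testing connectivity against coconnectives is right saturation, which replaces $\cC_{\geq 0}$ by $\cC'_{\geq 0}\subseteq\cC_{>-(\ell+1)}$, so it is comparable with the original connectives only for $\ell<\infty$ (Lemma~\ref{lem:saturation}(2)).

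The step is best done directly. Let $c\in\cC_{<\infty}$ with $f(c)\in\cD_{\geq n}$, and pick $c'\to c\to c''$ with $c''\in\cC_{\geq k}$ and $c'\in\cC_{\leq k+\ell-1}$. Then $f(c')\in\cD_{\leq k+\ell+b-2}$. For $k=n-\ell-b+1$ the map $f(c')\to f(c)$ is null by orthogonality in $\cD$. Hence $c'\to c$ is null by full faithfulness on $\cC_{<\infty}$, and $c$ is a retract of $c''\in\cC_{\geq k}$. This uses $\ell<\infty$ and $b<\infty$ (not $a$).

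Without these bounds the conclusion is false. Take $\cC=\Sp_{w<\infty}$ with $\cC_{\geq 0}=0$ and $\cC_{\leq 0}=\cC$. This is a weighted category, trivially left complete because diagrams growing in connectivity are eventually constant. Its inclusion into $\Sp$ with the standard weight structure has amplitude $[0,\infty)$ and is the identity on $\cC_{<\infty}=\Sp_{w<\infty}$. But $\prod_{n\geq 0}\S[-n]\notin\Sp_{w<\infty}$ by Remark~\ref{rem:sp-not-dw-comp}, so the inclusion is not essentially surjective. Taking $\cC_{\leq 0}=\Sp_{w\leq 0}$ instead gives a left complete weakly weighted ($\ell=\infty$) counterexample of amplitude $[0,1)$.

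\textbf{Part (2).} For $\ell=0$, your induction on finite weight filtrations (five lemma on mapping spectra, lifting extension classes by full faithfulness) is what the paper's one-line reduction amounts to. After that, (1) applies with amplitude $[0,1)$.

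For $\ell\geq 1$ there is nothing to repair. Proposition~\ref{prop:weight-pdelta} only concerns weighted categories, and the claim itself fails. Indeed, $(\Sp,\Sp_{w\geq 0},\Sp_{w\leq -1})$ is left complete and weakly $1$-weighted with zero heart. So the zero endofunctor is weight exact and an equivalence on hearts, but it is not fully faithful on $\Sp_{>-\infty}$.

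What your argument can actually establish is:
\begin{itemize}
\item full faithfulness in (1) in general;
\item essential surjectivity in (1) for $\ell,b<\infty$;
\item (2) for $\ell=0$.
\end{itemize}
These cover the uses in Theorem~\ref{thm_adjoint} and Corollary~\ref{co_functor}.
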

\begin{proof}
    For (1), we note that $f$ has bounded below amplitude
    and thus preserves sequential colimits growing in connectivity by Lemma \ref{lem:cn-cone-colim}.
    By the existence of convergent $\cD_{<\infty}$-weight complexes in $\cD$,
    essential surjectivity of the restriction implies essential surjectivity of $f$.
    Now suppose the restriction is fully faithful and let $X,Y \in \cC$ have $\cC_{<\infty}$-weight complexes $X_\bullet$ and $Y_\bullet$.
    Then $\hom(X,Y) \to \hom(fX,fY)$ identifies with $\lim_k \colim_n \hom(X_k,Y_n) \simeq \lim_k \colim_n \hom(fX_k,fY_n)$, so we conclude.
    For (2), exactness of $f$ implies that it restricts to a fully faithful functor
    on weight bounded objects which is essentially surjective if $f|_{\cC_{=0}}$ is.
    We noted in Observation \ref{obs:left-comp} that $\cC_{>-\infty},\cD_{>-\infty}$ are still left complete,
    so the claim now follows from (1).
\end{proof}

We now want to present another description
of the bounded below part of a left complete weighted category (i.e.~the case $\ell=0$).
This expresses that in a left complete weighted
category $\cC$, the category $\cC_{\geq 0}$ is in a sense
``freely generated'' by the colimits of weight complexes.
To make this formal, it is convenient to use the Dold-Kan correspondence
to recast sequential diagrams growing linearly in connectivity
into geometric realizations of diagrams landing in the weight heart.
To this end, recall the properties of the Dold-Kan correspondence
mentioned in Example \ref{ex:dold-kan},
which lets us make our above intuition precise in the following way.
Let $\cP^{\Delta^\op}(-)$ denote the operation of freely adding geometric realizations
in the sense of \cite[Prop.~5.3.6.2]{HTT}.

\begin{proposition}\label{prop:weight-pdelta}
    If $\cC$ is a left complete weighted category,
    then the inclusion $\cC_{=0} \subseteq \cC_{\geq 0}$
    induces equivalences $\cP^{\Delta^\op}(\cC_{=0}) \xto{\simeq} \cC_{\geq 0}$
    and $\SW(\cP^{\Delta^\op}(\cC_{=0})) \simeq \cC_{>-\infty}$.
    Moreover, every object in $\cP^{\Delta^\op}(\cC_{=0})$ is a geometric realization
    of a simplicial object in $\cC_{=0}$.
\end{proposition}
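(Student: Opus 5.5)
We will use the universal property of $\cP^{\Delta^\op}$ from \cite[Prop.~5.3.6.2]{HTT}. Since $\cC$ is left complete, $\cC_{\geq 0}$ admits geometric realizations (Lemma \ref{lem:colim-comp-via-geom-real}). The inclusion $\cC_{=0} \subseteq \cC_{\geq 0}$ therefore extends essentially uniquely to a functor $F \colon \cP^{\Delta^\op}(\cC_{=0}) \to \cC_{\geq 0}$ that preserves geometric realizations. We then check that $F$ is fully faithful and essentially surjective. The final claim, and the equivalence $\SW(\cP^{\Delta^\op}(\cC_{=0})) \simeq \SW(\cC_{\geq 0}) \simeq \cC_{>-\infty}$, then follow from Lemma \ref{lem:stable-conn-struc}(4).

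\emph{Essential surjectivity, and the last claim.} For $X \in \cC_{\geq 0}$ take the classical weight complex $X_{\leq 0} \to X_{\leq 1} \to \cdots$ from Example \ref{ex:standard-weight-complex}. Since $X$ is connective, Remark \ref{rem:upw-dense} lets us choose $X_{\leq n} \in \cC_{[0,n]}$, and the successive cofibers lie in $\cC_{=n+1}$. Left completeness gives $\colim_n X_{\leq n} \simeq X$ (Observation \ref{obs:left-comp}). By the Dold--Kan description of Example \ref{ex:dold-kan}, this filtration corresponds to a simplicial object $Y_\bullet$ in $\cC_{\geq 0}$ with $|Y| \simeq X$. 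We must check that $Y$ lands in $\cC_{=0}$. I expect to do this by induction: $Y_n$ is built from $\Omega^{k}$ of the cofibers $X_{\leq k}/X_{\leq k-1} \in \cC_{=k}$ for $k \le n$, assembled as finite sums. The Dold--Kan inverse (see \cite[1.2.4]{HA}) expresses $Y_n$ as $\bigoplus_{[n]\twoheadrightarrow[k]} \Sigma^{-k}\cofib(X_{\leq k-1}\to X_{\leq k})$, and each summand lies in $\cC_{=0}$. This shows that every object of $\cC_{\geq 0}$ is a geometric realization of a simplicial object in $\cC_{=0}$. Once full faithfulness is known, the same holds for every object of $\cP^{\Delta^\op}(\cC_{=0})$.

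\emph{Full faithfulness.} This is the main step. Objects of $\cP^{\Delta^\op}(\cC_{=0})$ are iterated geometric realizations of representables. Since $F$ preserves geometric realizations, in the source $\map(|A_\bullet|, B) = \lim_\Delta \map(A_n, B)$, and the same holds in the target. It therefore suffices to show that for $a \in \cC_{=0}$ the functor $\map_{\cC_{\geq 0}}(a,-)$ preserves geometric realizations in $\cC_{\geq 0}$, since the corresponding statement holds by construction in $\cP^{\Delta^\op}$ (representables are projective there). Geometric realizations in $\cC_{\geq 0}$ are computed in $\cC$ as sequential colimits growing in connectivity (Example \ref{ex:dold-kan}(4)). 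By Example \ref{ex:corep-bdd}, $\hom_\cC(a,-)$ preserves such colimits. The remaining point is that after applying $\hom(a,-)$ to a simplicial object $Z_\bullet$ in $\cC_{\geq 0}$ we get a simplicial object in $\Sp_{\geq 0}$, and $\Omega^\infty$ commutes with geometric realizations of connective spectra. Hence
\[
\map(a,|Z|) \simeq \Omega^\infty |\hom(a,Z_\bullet)| \simeq |\map(a,Z_\bullet)|.
\]
Now the subcategory of objects $P \in \cP^{\Delta^\op}(\cC_{=0})$ for which $\map(P,Q) \to \map(FP,FQ)$ is an equivalence for all $Q$ contains the representables. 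Hence it is everything, by the standard argument of \cite[Prop.~5.3.5.11]{HTT}: the representables are "compact projective for $\Delta^\op$", and we first fix $P$ representable and vary $Q$ through generation by geometric realizations. Full faithfulness and essential surjectivity together give the equivalence.
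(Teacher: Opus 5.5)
Your proof is correct and follows essentially the same route as the paper. Essential surjectivity comes from the classical weight complex plus Dold--Kan. Full faithfulness reduces to the fact that $\map(a,-)$ for $a \in \cC_{=0}$ commutes with geometric realizations in $\cC_{\geq 0}$, which follows from Example \ref{ex:corep-bdd} together with $\Omega^\infty$ commuting with geometric realizations of connective spectra.

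The only difference is organizational. The paper first proves the equivalence on the full subcategory of single (non-iterated) geometric realizations of simplicial objects in $\cC_{=0}$, then uses the universal property to see that this subcategory is all of $\cP^{\Delta^\op}(\cC_{=0})$. You instead extend first and run the two-variable generation argument directly.
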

\begin{proof}~
    For this proof, we let $\cP'(\cC_{=0}) \subseteq \cP^{\Delta^\op}(\cC_{=0})$
    denote the full subcategory
    on geometric realizations of simplicial objects in $\cC_{=0}$
    (the point being that we don't allow iterated geometric realizations).
    We will first show that $\cP'(\cC_{=0}) \simeq \cC_{\geq 0}$,
    and then conclude that actually $\cP'(\cC_{=0}) \simeq \cP^{\Delta^\op}(\cC_{=0})$.

    By Example \ref{ex:dold-kan}, the category $\cC$
    admits geometric realizations of uniformly bounded below diagrams.
    In particular, $\cC_{\geq 0}$ admits geometric realizations,
    and we obtain a comparison functor
    $\cP^{\Delta^\op}(\cC_{=0}) \to \cC_{\geq 0}$ taking the colimit
    and restricting to the inclusion on $\cC_{=0}$.
    Let $f \colon \cP'(\cC_{=0}) \to \cC_{\geq 0}$ denote its restriction.
    We claim that $f$ is an equivalence.
    To see that it is essentially surjective,
    let $X \in \cC_{\geq 0}$ and consider an associated weight complex
    $X_{\leq \bullet}$ constructed as in Lemma \ref{lem:weight-cplx},
    so that $X_{\leq n} \in \cC_{[0,n]}$.
    The sequential diagram $X_{\leq 0} \to X_{\leq 1} \to \cdots$
    corresponds under Dold-Kan to a simplicial object $X'$
    with $|X'| = \colim_n X_{\leq n} = X$.
    Moreover, $X'$ is really a simplicial object in $\cC_{=0}$,
    since $X'_{n+1}$ is a direct summand of $\Omega^{n+1}\cofib(X_{\leq n} \to X_{\leq n+1}) \in \cC_{=0}$.
    Hence $f$ is essentially surjective.

    For $X_\bullet,Y_\bullet \in \cC_{=0}^{\Delta^\op}$ we have a commutative diagram
    \[\hspace{-2em}\begin{tikzcd}[ampersand replacement=\&, column sep = small]
        {\map_{\cP'(\cC_{=0})}(\indcolim{m} X_m, \indcolim{n} Y_n)} \& {\lim_m \map_{\cP'(\cC_{=0})}(X_m, \indcolim{n} Y_n)} \& {\lim_m \colim_n \map_{\cC_{=0}}(X_m,Y_n)} \\
        {\map_{\cC_{\geq 0}}(\colim_m X_m, \colim_n Y_n)} \& {\lim_m \map_{\cC_{\geq 0}}(X_m, \colim_n Y_n)} \& {\lim_m \colim_n \map_{\cC_{\geq 0}}(X_m,Y_n)}
        \arrow["\simeq", from=1-1, to=1-2]
        \arrow[from=1-1, to=2-1]
        \arrow[from=1-2, to=2-2]
        \arrow["\simeq"', from=1-3, to=1-2]
        \arrow["\simeq", from=1-3, to=2-3]
        \arrow["\simeq"', from=2-1, to=2-2]
        \arrow[from=2-3, to=2-2]
    \end{tikzcd}\]
    which reduces proving fully faithfulness of $f$ to showing
    that if $X \in \cC_{=0}$, then the colimit interchange morphism
    $
        |\map_{\cC_{\geq 0}}(X, Y_\bullet)| \to \map_{\cC_{\geq 0}}(X, |Y|)
    $
    is an equivalence.
    Since $X$ is weight coconnective and $|Y|$ and all the $Y_n$
    are weight connective, we may identify the above mapping spaces
    (which are canonically connective spectra by additivity
of $\cC_{\geq 0}$) with the corresponding mapping spectra in $\cC$.
    Then we note that $|Y| = \colim_n |Y|_n$ is a sequential colimit growing in connectivity,
    and that $\hom_\cC(X,-)$ preserves finite colimits and sequential colimits growing in connectivity
    by Example \ref{ex:corep-bdd}.

    This proves that $f$ is an equivalence.
    But then, since this is true in $\cC_{\geq 0}$,
    this implies that $\cP'(\cC_{=0})$ admits geometric realizations (since $\cC_{\geq 0}$ does)
    and for $X \in \cC_{=0}$
    the functor $\hom_{\cP'(\cC_{=0})}(X,-)$ preserves them.
    From this it follows that the inclusion $\cP'(\cC_{=0}) \subseteq \cP^{\Delta^\op}(\cC_{=0})$
    preserves geometric realizations, and hence, by the universal property of $\cP^{\Delta^\op}$,
    that the inclusion is an equivalence.
    The equivalence $\SW(\cP^{\Delta^\op}(\cC_{=0})) \simeq \cC_{>-\infty}$
    is then an instance of Lemma \ref{lem:stable-conn-struc}.
\end{proof}

\subsection{Left completion of weak weight structures and consequences}\label{subsec:left-completion}

Now that we have discussed the notion of left completeness,
the next goal will be to define and prove existence of a left completion.

\begin{definition}\label{def:left-completion}
    A weight exact functor of weakly weighted categories
    $\eta \colon \cC \to \lc{\cC}$
    exhibits $\lc{\cC}$ as the left completion of $\cC$
    if $\lc{\cC}$ is left complete and
    for any other left complete weakly weighted category $\cD$,
    precomposition by $\eta$ induces an equivalence
    \begin{equation}\label{eq:upw-comp}
        \eta^* \colon \Fun^\Ex_{[0,0]}(\lc{\cC},\cD) \xto{\simeq} \Fun^{\Ex}_{[0,0]}(\cC,\cD).
    \end{equation}
    In other words, $\eta$ exhibits $\lc{\cC}$ as the left adjoint object to $\cC$
    under the inclusion $\wWCatl_{=0} \subseteq \wWCat_{=0}$.
\end{definition}

To state the main theorem of this section,
let us mention that we will see in Proposition \ref{prop:ind-weight} below
that if $\cC$ is weakly weighted, then $\Ind(\cC)$ always admits a weight structure
where the Yoneda embedding $j \colon \cC \hookrightarrow \Ind(\cC)$ is weight exact.

\begin{theorem}\label{thm:upw-comp}
    Any weakly $\ell$-weighted category $\cC$ admits a weakly $\ell$-weighted left completion $\lc{\cC}$.
    This yields a left Bousfield localization $\lc{(-)} \colon \wWCat_{=0} \to \wWCatl_{=0}$ such that
    \begin{enumerate}
        \setlength{\itemsep}{.2em}
        \item There exists a fully faithful weight exact functor $\lc{\cC} \hookrightarrow \Ind(\cC_{<\infty})$
            which lets us identify
            \begin{align*}
                \qquad\qquad\lc{\cC}
                &= \{X \in \Ind(\cC_{<\infty}) \mid \forall n \geq 0\text{ there is $c \in \cC_{<\infty}$ and a map
                $jc \to X$ with $n$-connective cofiber}\}\\
                &= \{\indcolim{n} c_n \in \Ind(\cC_{<\infty}) \mid c_\bullet \colon \N \to \cC_{<\infty} \text{ growing in connectivity}\}\\
                &= \{X \in \Ind(\cC_{<\infty}) \mid X \text{ admits a $\cC_{<\infty}$-weight complex}\}
            \end{align*}
            which is the largest full stable subcategory of $\Ind(\cC_{<\infty})$
            in which $\cC_{<\infty}$ is left dense,
            and also the smallest full stable subcategory of $\Ind(\cC_{<\infty})$
            containing $\cC_{<\infty}$ and closed under sequential colimits growing in connectivity.
            Moreover, we have
            \[
                \lc{\cC}_{\geq 0}
                = \{\indcolim{n} c_n \mid c_\bullet \colon \N \to \cC_{[0,\infty)}\text{ growing in connectivity}\}.
            \]

        \item The inclusion induces a weight exact equivalence $\lc{(\cC_{<\infty})} \xto{\simeq} \lc{\cC}$. Moreover:
            \begin{enumerate}
                \setlength{\itemsep}{.2em}
                \item The unit $\eta_\cC \colon \cC \to \lc{\cC}$
                    is weight exact and the left Kan extension
                    of $\eta_{\cC_{<\infty}}$ along $\cC_{<\infty} \subseteq \cC$.

                \item $\eta_{\cC_{<\infty}}$
                    agrees with the Yoneda embedding
                    $j \colon \cC_{< \infty} \hookrightarrow \lc{\cC} \subseteq \Ind(\cC_{<\infty})$, so $\eta_{\cC_{<\infty}}$ always fully faithful.
                    In fact, for $X \in \cC_{<\infty}$ we have $\hom_\cC(X,-) \xto{\simeq} \hom_{\lc{\cC}}(\eta X, \eta -)$.

                \item $\cC_{<\infty} \subseteq \lc{\cC}$
                    is left dense, $\cC_{\leq 0} \subseteq \lc{\cC}_{\leq 0}$ is dense,
                    and we have $\cC_{[0,\infty)} = \cC_{<\infty} \cap \lc{\cC}_{\geq 0}$.

                \item $\eta_\cC$ agrees with the Verdier projection
                    $\cC \to \cC/\cC_{\geq \infty}$ whenever $\cC$ already
                    admits sequential colimits growing in connectivity
                    and $\cC_{\geq 0}$ is closed under them.
            \end{enumerate}

        \item For any left complete weakly weighted category $\cD$,
            $a \in \Z$ and $a \leq b \leq \infty$,
            restriction along $\eta$ induces an equivalence
            \[
                \eta_\cC^* \colon \Fun^\Ex_{[a,b]}(\lc{\cC},\cD) \xto{\simeq} \Fun^\Ex_{[a,b]}(\cC,\cD)
            \]
            with inverse given by left Kan extension along $\eta_\cC$.
            In particular, if $f \colon \cC \to \cD$ is a functor in $\wWCat$ of amplitude $[a,b]$,
            then $\lc{f} = (\eta_{\cC})_!(\eta_\cD f) \colon \lc{\cC} \to \lc{\cD}$
            again has amplitude $[a,b]$.
            Analogously for $[a,b)$ instead of $[a,b]$.

        \item In particular, left completion also defines a left adjoint
            to the inclusions $\wWCatl_{\geq 0} \subseteq \wWCat_{\geq 0}$
            respectively $\wWCatl \subseteq \wWCat$.
    \end{enumerate}
\end{theorem}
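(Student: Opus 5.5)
We construct $\lc{\cC}$ explicitly inside $\Ind(\cC_{<\infty})$ and verify the universal property afterwards. First we equip $\Ind(\cC_{<\infty})$ with a weak weight structure making the Yoneda embedding $j$ weight exact. We take $\Ind(\cC_{<\infty})_{\geq 0}$ to be the closure of $j(\cC_{[0,\infty)})$ under filtered colimits and extensions. We take the coconnectives to be the retract/extension closure of $j(\cC_{\leq 0})$, or its left orthogonal saturation; this is the content of Proposition \ref{prop:ind-weight}, which we prove along the way. Orthogonality for compact coconnectives against filtered colimits of connectives is immediate, since $\hom(jc,-)$ commutes with filtered colimits.

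We then define $\lc{\cC}\subseteq \Ind(\cC_{<\infty})$ as the full subcategory of formal colimits $\indcolim{n} c_n$ of sequences in $\cC_{<\infty}$ growing in connectivity, and check the following.
\begin{enumerate}
\item The three descriptions in (1) agree. Given maps $jc\to X$ with $n$-connective cofiber, the factorization argument of Lemma \ref{lem:weight-cplx} builds a weight complex. Conversely, a colimit of a sequence growing in connectivity receives such maps.
\item $\lc{\cC}$ is stable.
\item $\lc{\cC}$ inherits a weakly $\ell$-weighted structure, with decompositions obtained by truncating a weight complex, and with connectives given by the stated formula.
\item $\lc{\cC}$ is left complete. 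A sequential diagram growing in connectivity in $\lc{\cC}$ can be diagonalized into a single sequence in $\cC_{<\infty}$ growing in connectivity, using cofinality of weight complexes (Lemma \ref{lem:weight-cplx}). The connectives are closed under the resulting colimits. Finally, $\lc{\cC}_{\geq\infty}=0$ because $\hom(jc,X)$ for $c\in \cC_{\leq b}$ is $(n-b)$-connective when $X$ is $n$-connective; hence $jc\to X$ is null for $X\in\lc{\cC}_{\geq\infty}$, and $X=\indcolim{} c_n$ is zero.
\end{enumerate}
The maximality and minimality characterizations follow from the same diagonal argument. Property (2)(b) is Yoneda together with Example \ref{ex:corep-bdd}: for $X\in\cC_{<\infty}$ we have $\hom(X,-)\simeq\colim_n\hom(X,c_n)$. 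Density and the identity $\cC_{[0,\infty)}=\cC_{<\infty}\cap\lc{\cC}_{\geq 0}$ follow from Remark \ref{rem:upw-dense} and a retract argument.

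Next we define $\eta_\cC$ by $X\mapsto \indcolim{n} X_n$ for a $\cC_{<\infty}$-weight complex $X_\bullet$ of $X$, that is, as the left Kan extension of $j$ along $\cC_{<\infty}\subseteq\cC$. The restricted Yoneda functor $X\mapsto \map_\cC(-,X)|_{\cC_{<\infty}}$ lands in $\Ind(\cC_{<\infty})$ precisely because weight complexes are cofinal in $(\cC_{<\infty})_{/X}$ (Lemma \ref{lem:weight-cplx}). This gives (2)(a), the equivalence $\lc{(\cC_{<\infty})}\simeq\lc{\cC}$, and weight exactness. For (2)(d), if $\cC$ already has the required colimits, the colimit functor $\lc{\cC}\to\cC$ is a section of $\eta$ up to the kernel $\cC_{\geq\infty}$, since the fiber of $X\to\colim_n X_n$ is $\infty$-connective; this identifies $\eta$ with the Verdier projection.

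For the universal property (3), let $f\colon\cC\to\cD$ have amplitude $[a,b]$ with $\cD$ left complete. We define $\lc f(\indcolim{} c_n)=\colim f(c_n)$; this colimit exists because $f$ preserves growth in connectivity (Observation \ref{obs:pres-amp}). This is the left Kan extension. It preserves colimits of sequences growing in connectivity, by Lemma \ref{lem:cn-cone-colim} and the diagonalization above, and it has amplitude $[a,b]$. For connectives this uses closure of $\cD_{\geq a}$ under such colimits. For coconnectives it uses that $\lc{\cC}_{\leq0}$ consists of retracts of objects of $\cC_{\leq 0}$, together with retract-closure of $\cD_{\leq b}$; we expect to need left saturation here, or else the density statement from (2)(c). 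Full faithfulness of $\eta^*$ follows from Lemma \ref{lem:upw-comp-eqv}(1)-style arguments: any functor of bounded below amplitude out of $\lc{\cC}$ is determined by its restriction to $\cC_{<\infty}$, since objects are colimits of weight complexes preserved by Lemma \ref{lem:cn-cone-colim}. The reflective-localization statements in (4) and the claim about $\lc{f}$ are then formal.

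The main obstacle is the diagonalization step showing that $\lc{\cC}$ is closed in $\Ind(\cC_{<\infty})$ under sequential colimits growing in connectivity. A naive choice of representatives does not assemble into a functor. The approach I would try first is to replace each $X_k$ by a weight complex $c^{(k)}_\bullet$ and use cofinality (Lemma \ref{lem:weight-cplx}) to lift each map $X_k\to X_{k+1}$, at the level of compact objects, to maps $c^{(k)}_{m}\to c^{(k+1)}_{m'}$. Only finitely many choices are needed at each stage, so the sequence can be built inductively, picking indices increasing fast enough that the cofibers become $k$-connective. I would then verify that $\indcolim{}$ of the diagonal sequence agrees with the $\Ind$-colimit by a cofinality argument.
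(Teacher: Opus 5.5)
Your overall architecture is the paper's. You realize $\lc{\cC}$ inside $\Ind(\cC_{<\infty})$ as formal colimits of sequences growing in connectivity, lift maps between such colimits to maps of sequences and diagonalize, and obtain $\eta_\cC$ and $\lc{f}$ as pointwise left Kan extensions computed on cofinal weight complexes. You use Example~\ref{ex:corep-bdd} for (2)(b) and Lemma~\ref{lem:cn-cone-colim} for the counit.

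The gap is on the coconnective side. You leave the choice of $\lc{\cC}_{\leq 0}$ open. But the upper amplitude bound in (3), and hence the universal property already for amplitude $[0,0]$, rests on $\cC_{\leq 0}\subseteq\lc{\cC}_{\leq 0}$ being dense. Remark~\ref{rem:upw-dense} does not give this. It only exhibits $X\in\lc{\cC}_{\leq 0}$ as a retract of some $jU$ with $U\in\cC_{<\infty}$; one more use of orthogonality gives $U\in\cC_{\leq\ell}$, but not $U\in\cC_{\leq 0}$.

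The ``left orthogonal saturation'' option is in fact wrong when $\cC$ is not left saturated, and left saturation is not a hypothesis. Take $\cC=\Sp^\omega$ with connectives $\Sp^\omega_{\geq 0}$ and coconnectives $\Sp^\omega_{w\leq -1}$; this is weakly $1$-weighted.
\begin{itemize}
\item Then $j\S\in\lperp{(\lc{\cC}_{\geq 1})}$, although $\S$ is not a retract of any object of $\Sp^\omega_{w\leq -1}$, since its $H_0$ is $\Z$.
\item Let $\cD=\Sp^\fg_{>-\infty}$ with connectives $\Sp^\fg_{\geq 0}$ and coconnectives $\Sp^\fg_{>-\infty}\cap\Sp_{w\leq -1}$; this is left complete, and the inclusion $\cC\to\cD$ is weight exact.
\item Any extension along $\eta$ sends $j\S$ to $\S\notin\cD_{\leq 0}$, so $\eta^*$ is not essentially surjective.
\end{itemize}
Neither of your two options is the structure of Proposition~\ref{prop:ind-weight}. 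There the connectives are $\rperp{(j\cC_{\leq -1})}$, which for $\ell>0$ can be strictly larger than $\Ind(\cC_{[0,\infty)})$. Its coconnectives are correspondingly smaller than $\lperp{(\Ind(\cC_{[1,\infty)}))}$.

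There are two ways to close the gap.
\begin{itemize}
\item The paper's way is to set $\lc{\cC}_{\leq 0}=\lc{\cC}\cap\Ind(\cC_{<\infty})_{\leq 0}$ for the compactly generated structure of Proposition~\ref{prop:ind-weight}. By part (1b) there, these coconnectives are generated from $j\cC_{\leq 0}$ by operations preserving $\Ind(\cC_{\leq 0})$. An object of $\lc{\cC}_{\leq 0}$ is a retract of some $jc$, hence compact, hence compact in $\Ind(\cC_{\leq 0})$, so it lies in $(\cC_{\leq 0})^\idem$.
\item Alternatively, define $\lc{\cC}_{\leq 0}$ directly as the objects of $\lc{\cC}$ that are retracts of objects of $j\cC_{\leq 0}$, so density holds by definition. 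You must then check closure under extensions: an extension of retracts of $jc$ and $jc'$ is a retract of an extension of $jc'$ by $jc$, which comes from $\cC_{\leq 0}$ by full faithfulness. You must also produce decompositions of defect $\ell$. Write $X=\indcolim{n}c_n$, normalized so that $X/c_0$ is $1$-connective, and compose $(c_0)_{<\ell+1}\to c_0\to X$ using a decomposition of $c_0$ in $\cC$. Truncating the sequence alone gives no bound on the defect.
\end{itemize}

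Separately, the step you flag as the main obstacle is already covered by your step (1). The first description there is visibly closed under sequential colimits growing in connectivity: compose $jc\to X_k$ with $X_k\to X$. The factorization argument of Lemma~\ref{lem:weight-cplx} then produces the diagonal sequence.
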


\begin{corollary}\label{cor:weight-complex}
    There is a ``weight complex functor''
    $w = \eta \colon \cC \to \lc{\cC} \subseteq \Ind(\cC_{<\infty})$ so that
    \begin{enumerate}
        \item $w$ is weight exact and
            restricts to the Yoneda embedding on $\cC_{<\infty}$.
        \item if $X_\bullet$ is any $\cC_{<\infty}$-weight complex
            for $X$, then $wX \simeq \indcolim{n} X_n$.
        \item In particular, $\cC_{<\infty}$-weight complexes are unique when viewed as $\Ind$-objects.
        \item If $\cC$ is stably symmetric monoidal and the weak weight structure
            is compatible with the monoidal structure
            in the sense that both $\cC_{\leq 0},\cC_{\geq 0} \subseteq \cC$
            are symmetric monoidal subcategories, then $w \colon \cC \to \Ind(\cC_{<\infty})$
            is unital and lax symmetric monoidal,
            and the structure map $wX \tensor wY \to w(X \tensor Y)$ is an equivalence
            when $X$ and $Y$ are either both bounded above or both bounded below.
            This is an analogue of Aoki's monoidal weight complex functor \cite{Aoki-Weight}.
    \end{enumerate}
\end{corollary}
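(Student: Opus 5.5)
Points (1)--(3) are essentially a repackaging of Theorem \ref{thm:upw-comp}, and the plan is to read them off from it. For (1), take $w \coloneqq \eta_\cC \colon \cC \to \lc{\cC} \subseteq \Ind(\cC_{<\infty})$. This functor is weight exact by Theorem \ref{thm:upw-comp}(2a). On $\cC_{<\infty}$ it agrees with the Yoneda embedding by (2b), once we identify $\lc{\cC}$ with its image in $\Ind(\cC_{<\infty})$ via the fully faithful weight exact functor of (1).

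For (2), let $X_\bullet$ be a $\cC_{<\infty}$-weight complex for $X$, so $X_\bullet \Rightarrow \const X$ is a cone growing in connectivity. Since $w$ is weight exact, it has bounded below amplitude, and $\lc{\cC}$ is left complete. Lemma \ref{lem:cn-cone-colim} then shows that $w$ sends this cone to a colimit cone, so $wX \simeq \colim_n wX_n$ in $\lc{\cC}$. It remains to see that this colimit agrees with the formal colimit $\indcolim{n} X_n$ in $\Ind(\cC_{<\infty})$. By Theorem \ref{thm:upw-comp}(1), $\lc{\cC}$ is closed in $\Ind(\cC_{<\infty})$ under sequential colimits growing in connectivity, and it contains $\indcolim{n} X_n$. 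Filtered colimits in $\Ind$ are computed formally, so the colimit in $\lc{\cC}$ is $\indcolim{n} jX_n$. Point (3) follows at once, since any two weight complexes give Ind-objects both equivalent to $wX$. Alternatively, one could use the cofinality statement of Lemma \ref{lem:weight-cplx}: any weight complex is cofinal in $(\cC_{<\infty})_{/X}$, so the Ind-object does not depend on the choice.

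Point (4) is the main work. To produce the lax structure, I would first restrict to $\cC_{<\infty}$. This is a symmetric monoidal subcategory, because $\cC_{\leq 0}$ is a symmetric monoidal subcategory and $\cC_{\leq a}\tensor\cC_{\leq b}\subseteq \cC_{\leq a+b}$. The Yoneda embedding $\cC_{<\infty} \to \Ind(\cC_{<\infty})$ is then strongly symmetric monoidal for the Day convolution (Ind-extended) tensor product. Next I would check that $\lc{\cC}$ is a symmetric monoidal subcategory of $\Ind(\cC_{<\infty})$. Given towers $c_\bullet, d_\bullet$ growing in connectivity, the diagonal $c_n \tensor d_n$ again grows in connectivity, using compatibility and exactness in each variable: the cofiber of $c_n\tensor d_n \to c_{n+1}\tensor d_{n+1}$ is an extension of terms $\cofib(c_n\to c_{n+1})\tensor d_{n+1}$ and $c_n \tensor \cofib(d_n \to d_{n+1})$. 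The obstacle is that $d_{n+1}$ and $c_n$ need not be uniformly bounded below. For this reason I expect the monoidal closure, and hence strong monoidality, to hold only with a boundedness hypothesis. In general I would obtain the lax structure formally rather than through closure. Here $w = \eta_\cC$ is the left Kan extension of the symmetric monoidal functor $j$ along the symmetric monoidal inclusion $\cC_{<\infty}\subseteq \cC$ (Theorem \ref{thm:upw-comp}(2a)). Left Kan extensions of symmetric monoidal functors along symmetric monoidal functors are canonically lax symmetric monoidal, as operadic left Kan extensions, and unitality holds because $\1 \in \cC_{\leq 0}$.

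Finally, the comparison $wX\tensor wY \to w(X\tensor Y)$ needs to be checked in two cases. If $X$ and $Y$ are bounded above, both lie in $\cC_{<\infty}$ and the map is the Yoneda monoidal equivalence. If both are bounded below, say in $\cC_{\geq 0}$ after shifting, choose weight complexes $X_n$ and $Y_n$ via Lemma \ref{lem:weight-cplx} with all terms in $\cC_{[0,\infty)}$, using Remark \ref{rem:upw-dense}. Then $\cofib(X_n\tensor Y_n \to X\tensor Y)$ is an extension of $\cofib(X_n \to X)\tensor Y$ and $X_n \tensor \cofib(Y_n\to Y)$. Both are highly connective, because the factors $Y$ and $X_n$ are connective. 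So $X_n\tensor Y_n$ is a $\cC_{<\infty}$-weight complex for $X\tensor Y$. By (2), $w(X\tensor Y)\simeq \indcolim{n} X_n\tensor Y_n \simeq wX\tensor wY$, using that the Ind tensor commutes with filtered colimits and that the diagonal of $\N\times\N$ is cofinal. The main remaining point is that the lax structure map identifies with this identification, which I would verify by naturality on $\cC_{<\infty}$.
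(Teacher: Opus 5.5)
Your proposal is correct and follows essentially the same route as the paper: (1)--(3) are read off from Theorem \ref{thm:upw-comp}, and for (4) $w=i_!j$ is lax symmetric monoidal as the left Kan extension of the symmetric monoidal Yoneda embedding along the symmetric monoidal inclusion $\cC_{<\infty}\subseteq\cC$. The bounded below case is then settled by noting that $(X_n\otimes Y_n)_n$ is again a weight complex for $X\otimes Y$. The only cosmetic difference is that the paper gets the lax structure from Day convolution, where $i_!$ into $\Ind(\cC_{<\infty})$ is symmetric monoidal and commutative algebras are lax symmetric monoidal functors, whereas you cite operadic left Kan extensions; these amount to the same fact.
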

\begin{proof}
    Points (1)-(3) are immediate consequences of the theorem.
    For (4), we equip $\Ind(\cC_{<\infty})$ with the canonical induced presentably symmetric monoidal structure
    making the Yoneda embedding $j$ symmetric monoidal.
    Now since $i \colon \cC_{<\infty} \subseteq \cC$ is symmetric monoidal,
    also the left Kan extension $i_! \colon \Fun(\cC_{<\infty},\Ind(\cC_{<\infty}))
    \to \Fun(\cC,\Ind(\cC_{<\infty}))$ obtains a symmetric monoidal structure
    when we equip both source and target with the Day convolution monoidal structure
    (for target category $\An$ this holds by symmetric monoidality of $\cP(-) \colon \An \to \Pr^L$
    \cite[Section 4.8.1]{HA}, and then we can tensor in $\CAlg(\Pr^L)$ with $\Ind(\cC_{<\infty})$).
    Commutative algebras in the day convolution are equivalently lax symmetric monoidal functors
    \cite[Example 2.2.6.9]{HA},
    hence $\eta = i_! j$ is lax symmetric monoidal.
    It restricts to $j$ hence is symmetric monoidal on $\cC_{<\infty}$.
    If $X,Y$ are bounded below with weight complexes $X_\bullet$ respectively $Y_\bullet$,
    then it is easy to check that also $(X_n \tensor Y_n)_n$ is a weight complex for $X \tensor Y$.
    Thus $\colim_n w(X_n \tensor Y_n) \simeq w(X \tensor Y)$, and the canonical structure map
    $wX \tensor wY \to w(X \tensor Y)$ is the map induced on colimits by the equivalences
    $wX_n \tensor wY_n \xto{\simeq} w(X_n \tensor Y_n)$.
\end{proof}

\begin{corollary}\label{cor:upw-comp-idem}
    Let $\cC$ be a weakly weighted category. Then:
    \begin{enumerate}
        \item $\cC_{\wb} \hookrightarrow (\lc{\cC})_{\wb}$
            is an idempotent completion.
            In particular, we recover a special
            case of \cite[Theorem 0.1.(2)]{BondarkoSosnilo},
            that bounded weight structures always extend to the idempotent completion.

        \item $\lc{\cC}_{<\infty} = \{X \in \Ind(\cC_{<\infty}) \mid \text{ there is some bounded below $Y$ with }X \oplus Y \in \cC_{<\infty}\}$.

        \item $\cC_{<\infty} = (\lc{\cC})_{<\infty}$
            if and only if $\cC_{\wb}$ is idempotent complete.
            In particular, if $\cC_{<\infty}$ is idempotent complete
            then also $(\lc{\cC})_{<\infty}$ is.
    \end{enumerate}
\end{corollary}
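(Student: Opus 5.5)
The plan is to read everything off Theorem \ref{thm:upw-comp}, together with Observation \ref{obs:left-comp}, which says that $\lc{\cC}_{>-\infty}$ is idempotent complete. I read $\cC_{\wb}$ as the weight bounded objects $\cC_{<\infty}\cap\cC_{>-\infty}$.

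The central tool is a "splitting trick". Let $X\in\lc{\cC}$ with $X\in\lc{\cC}_{\leq n}$. By Theorem \ref{thm:upw-comp}(2c), $\cC_{<\infty}$ is left dense in $\lc{\cC}$. So for any $N$ there is $c\in\cC_{<\infty}$ and a map $jc\to X$ whose cofiber $K$ lies in $\lc{\cC}_{\geq N}$. Choose $N>n+1$. Orthogonality in the weakly weighted category $\lc{\cC}$ gives that $\hom(X,K)$ is connective, and in fact $\pi_0$ vanishes after the shift, so $X\to K$ is null. Hence $c\simeq X\oplus\Omega K$, where $\Omega K\in\lc{\cC}_{\geq N-1}$ is bounded below.

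For (1), full faithfulness is Theorem \ref{thm:upw-comp}(2b), so two things remain.
\begin{itemize}
\item \emph{Target idempotent complete.} $\lc{\cC}_{\wb}$ is closed under retracts in $\lc{\cC}_{>-\infty}$, since both connectives and coconnectives are retract closed. The latter is idempotent complete, so $\lc{\cC}_{\wb}$ is too.
\item \emph{Density.} Let $X\in\lc{\cC}_{[-m,n]}$ and apply the trick. Then $c$ is an extension of $X$ and $\Omega K$, so $c\in\lc{\cC}_{\geq -m}$ once $N-1\geq -m$. By (2c), $\cC_{[0,\infty)}=\cC_{<\infty}\cap\lc{\cC}_{\geq 0}$, hence (after shifting) $c\in\cC_{\wb}$, and $X$ is a retract of $c$.
\end{itemize}

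For (2), the inclusion $\subseteq$ is exactly the splitting trick with $Y=\Omega K$. For $\supseteq$, let $X\oplus Y=c\in\cC_{<\infty}$ with $Y\in\Ind(\cC_{<\infty})$ bounded below. Once $Y\in\lc{\cC}$, we get $X\simeq\fib(c\to Y)\in\lc{\cC}$, and $X$ is bounded above as a retract of $c$. This step, showing $Y\in\lc{\cC}$, is where I expect the main difficulty.

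My first attempt: $Y$ is the image of an idempotent $e$ on $c$, i.e.\ $Y=\indcolim{}(c\xrightarrow{e}c\xrightarrow{e}\cdots)$ in $\Ind$. Take a weight decomposition $c'\to c\to c''$ with $c''$ connective (after shift) and bounded. The bounded below hypothesis on $Y$ should let the idempotent be replaced, up to a highly connective error, by one living on the weight bounded part, hence in the idempotent complete $\lc{\cC}_{>-\infty}$. Concretely, I would verify the first description of $\lc{\cC}$ in Theorem \ref{thm:upw-comp}(1) for $Y$: for each $n$, produce maps from $\cC_{<\infty}$ into $Y$ with $n$-connective cofiber. To do this I would compose weight complex maps of $c$ with $c\to Y$, and use that $Y$ bounded below forces the relevant cofibers to grow in connectivity.

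For (3), first assume $\cC_{\wb}$ is idempotent complete, and let $X\oplus Y=c$ as in (2). Then $Y$ is bounded below and bounded above (as a retract of $c$), and lies in $\lc{\cC}$. So $Y\in\lc{\cC}_{\wb}=\cC_{\wb}$ by (1), and therefore $X=\fib(c\to Y)\in\cC_{<\infty}$.

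Conversely, assume $\cC_{<\infty}=\lc{\cC}_{<\infty}$. Then $\lc{\cC}_{\wb}\subseteq\cC_{<\infty}\cap\lc{\cC}_{>-\infty}=\cC_{\wb}$ by (2c). So $\cC_{\wb}=\lc{\cC}_{\wb}$, which is idempotent complete by (1).

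The final claim of (3) then follows: if $\cC_{<\infty}$ is idempotent complete, so is $\cC_{\wb}$, since it is retract closed in $\cC_{<\infty}$; hence $\lc{\cC}_{<\infty}=\cC_{<\infty}$ is idempotent complete.
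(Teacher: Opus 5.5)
\textbf{Verdict: there is a genuine gap.} Your arguments for (1), for the inclusion $\subseteq$ in (2), and for both directions of (3) are correct and essentially the paper's. Your converse in (3), via $\lc{\cC}_{\wb}\subseteq\cC_{<\infty}\cap\lc{\cC}_{>-\infty}=\cC_{\wb}$, is even slightly more direct. The gap sits exactly where you flagged it: the inclusion $\supseteq$ in (2), i.e.\ showing that a bounded below summand $Y$ of $jc$ lies in $\lc{\cC}$.

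\emph{Why your plan fails.} Let $c_k\to c$ be a weight complex map, with cofiber in $\cC_{\geq k+1}$. The cofiber of the composite $c_k\to c\to Y$ is an extension of $\cofib(c_k\to c)$ and $\cofib(c\to Y)\simeq\Sigma X$. Since $\Sigma X$ is bounded above, orthogonality makes this extension split for large $k$. So $\Sigma X$ remains a summand, and these cofibers never become highly connective unless $X=0$. The bounded below hypothesis on $Y$ does not help here.

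\emph{Why the idempotent-transport idea also fails.} Replacing the idempotent by one on a weight-bounded piece of $c$ breaks down in the stated generality. For $\ell=\infty$, an object of $\cC_{<\infty}$ need not have any useful weight decomposition, since the trivial one $c\to c\to 0$ is allowed. Moreover, invoking idempotent completeness of $\lc{\cC}_{>-\infty}$ presupposes that the relevant object already lies in $\lc{\cC}$, which is what you are trying to prove.

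\emph{The missing idea} is a swindle that uses only stability of $\cC_{<\infty}$.
\begin{enumerate}
\item By full faithfulness of $j$, the idempotent $e=\id_X\oplus 0$ on $jc\simeq X\oplus Y$ comes from an endomorphism of $c$.
\item Then $\cofib(e)\simeq Y\oplus\Sigma Y$, so $Y\oplus\Sigma Y\in j(\cC_{<\infty})$.
\item Iterate, e.g.\ via the octahedral axiom for the null maps $Y\to\Sigma^{2n}Y\to\Sigma^{2n+2}Y$, as the paper does. This gives $e_n\in\cC_{<\infty}$ with $je_n\simeq Y\oplus\Sigma^{2n-1}Y$.
\item The projection $je_n\to Y$ has cofiber $\Sigma^{2n}Y$. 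If $Y\in\Ind(\cC_{<\infty})_{\geq b}$, this cofiber is $(2n+b)$-connective.
\end{enumerate}
So $Y$ satisfies the first description of $\lc{\cC}$ in Theorem \ref{thm:upw-comp}(1). Then $X\simeq\fib(jc\to Y)\in\lc{\cC}$, exactly as you intended.
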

\begin{proof}
    For (1), note that $\lc{\cC}_{\wb}$ is idempotent complete by Observation \ref{obs:left-comp},
    so it remains to show that $\cC_{\wb} \hookrightarrow (\lc{\cC})_{\wb}$ is dense.
    By Theorem \ref{thm:upw-comp}(2c)
    the inclusion $\cC_{<\infty} \subseteq \lc{\cC}$ is left dense,
    so $\cC_{<\infty} \subseteq \lc{\cC}_{<\infty}$ is dense.
    Now let $X \in \lc{\cC}_{[a,b]}$ for some $a,b \in \Z$.
    Then there is some $c \in \cC_{<\infty}$ and a map $c \to X$ with cofiber in $\lc{\cC}_{\geq b+1}$.
    It follows that $X$ is a retract of $c \in \cC_{<\infty} \cap \lc{\cC}_{\geq a} = \cC_{[a,\infty)} \subseteq \cC_{\wb}$, as desired.

    To see (2), note that by density we have an inclusion $(\lc{\cC})_{<\infty} \subseteq (\cC_{<\infty})^\idem$.
    Now suppose that $X \in (\lc{\cC})_{\leq n}$.
    We find $c \in \cC_{<\infty}$ and a map $jc \to X$
    with cofiber $C$ in $\lc{\cC}_{\geq n+1}$.
    So with $Y = \Omega C$ we get a splitting $jc \simeq X \oplus Y$.

    Conversely, suppose that $X,Y \in \Ind(\cC_{<\infty})$
    with $X \oplus Y \simeq jc$, where $c \in \cC_{<\infty}$
    and $Y$ is bounded below. We show that then $Y \in \lc{\cC}$ (so also
    $X \in \lc{\cC}$) using the explicit description of $\lc{\cC}$ from Theorem \ref{thm:upw-comp}(1).
    We have the cofiber sequences $jc \to Y \to \Sigma X$
    and $\Sigma jc \to \Sigma X \to \Sigma^2 Y$,
    which together yield cofiber sequences
    $jd \to Y \to \Sigma^2 Y$ and $c \to d \to \Sigma c$ so that $d \in \cC_{<\infty}$.
    Continuing inductively, we obtain cofiber sequences
    $je_n \to Y \to \Sigma^{2n} Y$ for some $e_n \in \cC_{<\infty}$.
    Since $Y$ was assumed bounded below, this shows $Y \in \lc{\cC}$.

    Finally, let us show (3). If $\cC_{\wb}$ is idempotent complete,
    then combining (1) and (2) shows that for any $X \in (\lc{\cC})_{< \infty}$
    there is some $Y \in \cC_{\wb}$ for which $X \oplus Y \in \cC_{<\infty}$.
    But then clearly also $X \in \cC_{<\infty}$.
    Conversely, suppose that $\cC_{<\infty} = (\lc{\cC})_{< \infty}$
    and let $X \in (\cC_{\wb})^\idem \subseteq (\cC_{<\infty})^\idem$.
    Then there is some $Y \in (\cC_{\wb})^\idem$ such that $X \oplus Y \in \cC_{\wb}$,
    and hence $X \in (\lc{\cC})_{<\infty} = \cC_{<\infty}$ by (1).
\end{proof}

\begin{remark}\label{rem:w-idem-bs}
    Even if $\cC$ is left complete,
    the question of idempotent completeness of the entire
    category $\cC$ is a lot more subtle.
    We will see a concrete example of a left (and right!)
    complete weighted category
    whose non-idempotent-completeness we learned from \cite{BondarkoSosnilo}
    in Remark \ref{rem:ch-idem} below.
    Their paper discusses the interaction of weight structures and idempotent completions, and their methods show that if $\cC$ is a left complete (weakly) weighted category, then the following are equivalent:
    \begin{enumerate}
        \item $\cC$ is idempotent complete.
        \item There exists a (weak) weight structure on $\cC^\idem$ so that
            the inclusion $\cC \subseteq \cC^\idem$ is weight exact.
        \item $\cC^\idem$ is generated as a stable category by $(\cC_{<\infty})^\idem$ and $(\cC_{>-\infty})^\idem$ ($=\cC_{>-\infty}$).
    \end{enumerate}
    Indeed, it is not hard to see a priori that if $\cC$ is left complete
    and the weak weight structure extends to $\cC^\idem$,
    then the latter is also left complete, by noting that $\cC \subseteq \cC^\idem \subseteq \Ind(\cC_{<\infty})$.
    Moreover, by \cite[\href{https://kerodon.net/tag/03YQ}{Tag 03YQ}]{Kerodon}
    if $\cD'$ is any category and $\cD \subseteq \cD'$ a dense full subcategory,
    then any functor $\cD' \to \cE$ is both left and right Kan extended from $\cD$.
    In particular, since the definition of the left completion of a weakly weighted category $\cC$
    is essentially ``the maximal left complete weakly weighted category so that every map of bounded below amplitude
    is left Kan extended from $\cC$'',
    this yields the equivalence of (1) and (2).
    In the notation of \cite{BondarkoSosnilo},
    we always have $\lc{\cC} = \mathrm{Kar}^w_\mathsf{max}(\lc{\cC})$.
    The equivalence with point (3) follows from \cite[Theorem 2.2.2(I.2)]{BondarkoSosnilo}.
\end{remark}

\begin{corollary}\label{cor:left-comp-inverts-idem}
    Let $\cC,\cD$ be weakly weighted
    and $f \colon \cC \to \cD$ of amplitude $[a,\infty)$ for $a \in \Z$.
    \begin{enumerate}
        \item $\lc{f} \colon \lc{\cC}_{>-\infty} \to \lc{\cD}_{>-\infty}$
            is fully faithful \alt{an equivalence}
            exactly if $f^\idem \colon (\cC_{\wb})^\idem \to (\cD_{\wb})^\idem$ is.

        \item $\lc{f} \colon \lc{\cC} \to \lc{\cD}$ is fully faithful
            exactly if $f \colon \cC_{<\infty} \to \cD_{<\infty}$ is.

        \item $\lc{f} \colon \lc{\cC} \to \lc{\cD}$ is an equivalence
            exactly if $f \colon \cC_{<\infty} \to \cD_{<\infty}$ is a fully faithful left dense inclusion.
    \end{enumerate}
    Here `equivalence' means `equivalence of underlying categories'
    as opposed to `equivalence in $\wWCat$',
    though in the saturated case we do get an equivalence in $\wWCat$
    by Lemma \ref{lem:saturated}(3),
    and for $\ell < \infty$ it also follows from Lemma \ref{lem:saturation}.
\end{corollary}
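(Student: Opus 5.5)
The strategy is to reduce everything to Lemma \ref{lem:upw-comp-eqv}, using the naturality $\lc{f}\eta_\cC \simeq \eta_\cD f$ from Theorem \ref{thm:upw-comp}(3). By Theorem \ref{thm:upw-comp}(2b), $\eta$ restricted to weight bounded above objects is the fully faithful Yoneda embedding. Hence $\lc{f}$ restricted to $\cC_{<\infty}\subseteq \lc{\cC}_{<\infty}$ is identified with $f\colon \cC_{<\infty}\to\cD_{<\infty}\subseteq\lc{\cD}_{<\infty}$. Note that $\lc{f}$ has amplitude $[a,\infty)$, so it sends $\lc{\cC}_{<\infty}$ only into $\lc{\cD}$, not necessarily into $\lc{\cD}_{<\infty}$. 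This is harmless: in the proof of Lemma \ref{lem:upw-comp-eqv}(1), only bounded below amplitude and $\cC_{<\infty}$-weight complexes were used, and the mapping spectrum formula $\hom(X,Y)\simeq\lim_k\colim_n\hom(X_k,Y_n)$ needs only that the targets are left complete.

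For (2), I would first reduce fully faithfulness of $\lc{f}$ to fully faithfulness on $\cC_{<\infty}$. Every object of $\lc{\cC}$ admits a $\cC_{<\infty}$-weight complex by Theorem \ref{thm:upw-comp}(1). The functor $\lc{f}$ preserves colimits of these by Lemma \ref{lem:cn-cone-colim}, and $\hom_{\lc{\cD}}(fX_k,-)$ commutes with them by Example \ref{ex:corep-bdd}, since $fX_k\in\cD_{<\infty}$. The formula above then gives fully faithfulness of $\lc{f}$ from that of $f|_{\cC_{<\infty}}$. Conversely, the restriction of $\lc{f}$ to $\cC_{<\infty}$ is $f$ composed with the fully faithful $\eta_{\cD}|_{\cD_{<\infty}}$, so fully faithfulness of $\lc{f}$ forces that of $f|_{\cC_{<\infty}}$.

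For (3), I would prove the two directions separately.
\begin{itemize}
\item[($\Leftarrow$)] Assume $f|_{\cC_{<\infty}}$ is fully faithful and left dense. By (2), $\lc{f}$ is fully faithful. For essential surjectivity, take $Y\in\lc{\cD}$. Since $\cD_{<\infty}$ is left dense in $\lc{\cD}$, for each $n$ there is a map $d\to Y$ with $n$-connective cofiber. Left density of $f(\cC_{<\infty})$ in $\cD$ provides $c\to d$ with $n$-connective cofiber, and composing gives $fc\to Y$ with $n$-connective cofiber. Using the argument of Lemma \ref{lem:weight-cplx}, we obtain an $f(\cC_{<\infty})$-weight complex for $Y$. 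Its preimage in $\cC_{<\infty}$ is growing in connectivity: a map $c\to c'$ whose image has $n$-connective cofiber is itself $(n+a')$-connective, because $\hom(-,\cdot)$ is detected via fully faithfulness and $\lc{\cC}$ is left saturated enough. I expect this step to be the main obstacle. The cleanest route is to avoid the saturation question: lift the diagram to $\lc{\cC}$, take its colimit $X$, and then $\lc{f}X\simeq Y$ follows from Yoneda-type comparison of $\hom$ into both sides.
\item[($\Rightarrow$)] Assume $\lc{f}$ is an equivalence. Then $f|_{\cC_{<\infty}}$ is fully faithful by (2). Left density of $\cC_{<\infty}$ in $\lc{\cC}$ transports along $\lc{f}$ to left density of $f(\cC_{<\infty})$ in $\lc{\cD}$, using that $\lc{f}^{-1}$ has bounded below amplitude on weight complexes. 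Since $\cD\to\lc{\cD}$ detects $n$-connectivity of cofibers up to a shift (Theorem \ref{thm:upw-comp}(2c)), it follows that $f(\cC_{<\infty})$ is left dense in $\cD$.
\end{itemize}

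For (1), I would apply (2) and (3) to the restriction $f\colon\cC_{>-\infty}\to\cD_{>-\infty}$, using $\lc{(\cC_{>-\infty})}=\lc{\cC}_{>-\infty}$. Here $(\cC_{>-\infty})_{<\infty}=\cC_{\wb}$, and by Corollary \ref{cor:upw-comp-idem}(1) the category $\lc{\cC}_{\wb}$ is $(\cC_{\wb})^\idem$. Fully faithfulness on $\cC_{\wb}$ is equivalent to fully faithfulness on the idempotent completions. A fully faithful functor $\cC_{\wb}\to\cD_{\wb}$ is left dense in $\cD_{>-\infty}$ if and only if it is dense, which holds precisely when $f^\idem$ is essentially surjective. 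Hence (1) follows from (2) and (3).
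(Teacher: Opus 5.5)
\paragraph{The gap.} The step you flag as the main obstacle in (3)($\Leftarrow$) is a genuine gap, and your workaround does not avoid it. $\lc{\cC}$ only has colimits of sequences that grow in \emph{its own} connectivity. So ``lift the diagram to $\lc{\cC}$ and take its colimit'' presupposes that the lifted sequence $c_\bullet$ in $\cC_{<\infty}$ grows in $\cC$-connectivity, whereas you only know this for $f(c_\bullet)$ in $\lc{\cD}$. Full faithfulness does not carry connectivity backwards here. Connectivity can only be tested by mapping in from coconnectives, which needs \emph{right} saturation (left saturation is irrelevant). Moreover, amplitude $[a,\infty)$ gives no uniform bound on $f(\cC_{\leq 0})$.

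In fact the step cannot be completed: the ``if'' directions of (1) and (3) fail as stated. Take $\cC=\cD=\bigoplus_{k\geq 0}\Sp^\omega$ (finitely supported sequences) with the componentwise standard weight structure, and $f(X_k)_k=(\Sigma^kX_k)_k$.
\begin{itemize}
\item This $f$ has amplitude $[0,\infty)$, is an equivalence of underlying categories, and $\cC=\cC_{<\infty}=\cC_{\wb}$ is idempotent complete. So the hypotheses of both (1) and (3) hold.
\item By Corollary~\ref{cor:easy-left-comp}, $\lc{\cC}$ is the category of sequences $(X_k)_k$ with $X_k\in\Sp^{\mathrm{fg}}_{>-\infty}$ such that for each $n$ almost all $X_k$ are $n$-connective.
\item $\lc{f}$ is again componentwise $\Sigma^k$, and $(\Sigma^k\mathbb{S})_k\in\lc{\cD}_{\geq 0}$ is not in its image, since $(\mathbb{S})_k\notin\lc{\cC}$.
\end{itemize}
Without saturation even weight exact functors fail. $(\Sp^\omega,\{0\},\Sp^\omega_{w\leq 0})$ is a left complete weakly weighted category, and the identity to $\Sp^\omega$ with its standard weight structure satisfies the hypotheses of (3). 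Yet $\lc{f}$ is the inclusion $\Sp^\omega\subseteq\Sp^{\mathrm{fg}}_{>-\infty}$.

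\paragraph{Relation to the paper, and a repair.} The paper's own proof makes the same leap: ``Since $c_\bullet$ is growing in connectivity, it admits a colimit in $\lc{\cC}$.'' So does the essential-surjectivity clause of Lemma~\ref{lem:upw-comp-eqv}(1), on which its proof of (1) rests. Your argument is therefore no weaker than the paper's, but neither closes the gap under the stated hypotheses.

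What is needed is that $f|_{\cC_{<\infty}}$ reflects connectivity up to a uniform shift. This holds, for example, if $f$ has amplitude $[a,b]$ with $b<\infty$ and $\cC$ is right saturated (e.g.\ a weight structure); for $\ell<\infty$ one can reduce to this via Lemma~\ref{lem:saturation}. Indeed, let $x\in\cC_{<\infty}$ with $fx\in\cD_{\geq n}$, and let $z\in\cC_{\leq -1}$. Then $f\Sigma^{n-b}z\in\cD_{\leq n-1}$, so
\[
\pi_0\hom_\cC(\Sigma^{n-b}z,x)\cong\pi_0\hom_\cD(f\Sigma^{n-b}z,fx)=0,
\]
and hence $x\in\cC_{\geq n-b}$. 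Under this hypothesis your lifted weight complex does grow in connectivity and converges in $\lc{\cC}$, and Lemma~\ref{lem:cn-cone-colim} gives $\lc{f}X\simeq Y$. This covers every application of the corollary in the paper, which are all weight exact functors between weight structures. Your reduction of (1) to (2) and (3), via ``dense iff left dense'' on bounded objects, then goes through.

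\paragraph{The remaining parts.} Your (2) is correct. It runs the argument of Lemma~\ref{lem:upw-comp-eqv}(1) directly with $\cC_{<\infty}$-weight complexes, instead of passing through $\cC_{<\infty}\subseteq\lc{\cC}_{<\infty}\subseteq(\cC_{<\infty})^{\idem}$ as the paper does. Your (3)($\Rightarrow$) matches the paper, except that transporting left density uses that $\lc{f}$, not $\lc{f}^{-1}$, has amplitude $\geq a$. Finally, amplitude $[a,\infty)$ means $f(\cC_{\leq 0})\subseteq\cD_{<\infty}$. So $\lc{f}$ does send bounded above objects to bounded above objects, contrary to your opening remark.
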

\begin{proof}
    By Theorem \ref{thm:upw-comp} also $\lc{f}$ has amplitude $[a,\infty)$.
    For (1), note that if $\lc{f} \colon \lc{\cC}_{>-\infty} \to \lc{\cD}_{>-\infty}$
    is an equivalence, then also its restriction to weight bounded objects is an equivalence,
    so by Corollary \ref{cor:upw-comp-idem}(1) also $f^\idem$ is.
    For the converse assume that $f^\idem \colon (\cC_{\wb})^\idem \to (\cD_{\wb})^\idem$
    is an equivalence. By Observation \ref{obs:left-comp} and Lemma \ref{lem:upw-comp-eqv} it suffices to show
    that $\lc{f}$ restricts to an equivalence on weight bounded objects.
    We have a commutative square
    \[\begin{tikzcd}[ampersand replacement=\&]
        {\cC_{\wb}} \& {(\lc{\cC})_{\wb}} \\
        {\cD_{\wb}} \& {(\lc{\cD})_{\wb}}
        \arrow["{\eta_{\cC}}", from=1-1, to=1-2]
        \arrow["f"', from=1-1, to=2-1]
        \arrow["{\lc{f}}", from=1-2, to=2-2]
        \arrow["{\eta_{\cD}}", from=2-1, to=2-2]
    \end{tikzcd}\]
    and by Corollary \ref{cor:upw-comp-idem}(1)
    the horizontal maps are idempotent completions,
    which allows us to identify $\lc{f}|_{(\lc{\cC})_{\wb}}$
    with the equivalence $(f|_{\cC_{\wb}})^\idem$.
    This shows (1).

    Point (2) is a consequence of Lemma \ref{lem:upw-comp-eqv},
    the natural fully faithful inclusions
    $\cC_{<\infty} \subseteq \lc{\cC}_{<\infty} \subseteq (\cC_{<\infty})^\idem$,
    cancellation properties of fully faithful functors,
    and that $(-)^\idem$ preserves fully faithful functors.

    For (3), suppose first that $f \colon \cC_{<\infty} \to \cD_{<\infty}$ is fully faithful left dense.
    Since also $\cD_{<\infty} \subseteq \lc{\cD}$ is fully faithful left dense,
    one easily checks that the composite $\cC_{<\infty} \to \lc{\cD}$ is left dense as well.
    In particular, if $X \in \lc{\cD}$, it admits a $\cC_{<\infty}$-weight complex $c_\bullet \colon \N \to (\cC_{<\infty})_{/X}$, which converges to $X$ in $\lc{\cD}$ by left completeness.
    Since $c_\bullet$ is growing in connectivity, it admits a colimit in $\lc{\cC}$
    and $\lc{f}$ preserves this colimit.
    This proves that $\lc{f}$ is essentially surjective,
    and fully faithfulness follows from (2).

    For the converse, suppose that $\lc{f}$ is an equivalence.
    Then $f \colon \cC_{<\infty} \to \cD_{<\infty}$ is fully faithful.
    Let $d \in \cD_{<\infty}$ and $n \geq 0$.
    There exists $X \in \lc{\cC}$ with $\lc{f}(X) = \eta_\cD(d) \in \lc{\cD}$.
    Now by left density of $\cC_{<\infty} \subseteq \lc{\cC}$,
    there is some $c \in \cC_{<\infty}$ and a map $\eta_\cC c \to X$
    with cofiber in $\lc{\cC}_{\geq n-a}$.
    Hence $\eta_{\cD}(fc) = \lc{f}(\eta_\cC c) \to \lc{f}(X) = \eta_{\cD}(d)$ has cofiber in $\lc{\cD}_{\geq n}$.
    By fully faithfulness of $\eta_{\cD}$ on bounded above objects,
    we get that there is a map $fc \to d$ in $\cD_{<\infty}$ whose cofiber is in $\cD_{<\infty} \cap \lc{\cD}_{\geq n} = \cD_{[n,\infty)}$ (cf.~Theorem \ref{thm:upw-comp}(2c)).
    This proves that $f \colon \cC_{<\infty} \hookrightarrow \cD_{<\infty}$ is also left dense.
\end{proof}

\begin{corollary}\label{cor:left-comp-included}
    Let $\cC,\cD$ be weakly weighted categories, and suppose that $\cD$ is left complete
    and we have a fully faithful
    weight exact $f \colon \cC_{<\infty} \hookrightarrow \cD$.
    Then the induced $F = (\eta_\cC)_!f \colon \lc{\cC} \to \cD$ is fully faithful.
\end{corollary}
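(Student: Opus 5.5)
The plan is to reduce full faithfulness of $F$ to the case of weight bounded above objects, where it is given by hypothesis, and then to extend to all of $\lc{\cC}$ using weight complexes together with the colimit-preservation results of Section~\ref{sec:conn}.

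First I check that $F$ makes sense and identify it on $\cC_{<\infty}$. The functor $f$ is weight exact on $\cC_{<\infty}$, which is weakly weighted by Remark~\ref{rem:bounded-is-weakly-weighted}. By Theorem~\ref{thm:upw-comp}(2), $\lc{(\cC_{<\infty})} \simeq \lc{\cC}$, and by Theorem~\ref{thm:upw-comp}(3) the functor $f$ extends uniquely to a weight exact functor $F\colon \lc{\cC} \to \cD$, namely the left Kan extension along $\eta$. Theorem~\ref{thm:upw-comp}(2b) says $\eta$ restricted to $\cC_{<\infty}$ is the Yoneda embedding, which is fully faithful. Hence $F \circ \eta|_{\cC_{<\infty}} \simeq f$, and $F$ is fully faithful on the essential image $\eta(\cC_{<\infty})$.

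Next I extend to all objects. Take $X, Y \in \lc{\cC}$. Since $\cC_{<\infty}$ is left dense in $\lc{\cC}$ (Theorem~\ref{thm:upw-comp}(2c)), I can choose $\cC_{<\infty}$-weight complexes $X_\bullet$ and $Y_\bullet$. These converge in $\lc{\cC}$ by Observation~\ref{obs:left-comp}. The functor $F$ is weight exact, so it has bounded below amplitude and preserves sequential colimits growing in connectivity by Corollary~\ref{cor:bdd-amp-pres}, since both $\lc{\cC}$ and $\cD$ are left complete. Thus $FX \simeq \colim_k fX_k$ and $FY \simeq \colim_n fY_n$. This gives
\[
\hom(X,Y) \simeq \lim_k \hom(X_k, \colim_n Y_n) \quad\text{and}\quad \hom(FX,FY) \simeq \lim_k \hom(fX_k, \colim_n fY_n).
\]
Each $X_k$ lies in $\cC_{\leq b}$ for some $b$. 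By Example~\ref{ex:corep-bdd}, $\hom(X_k,-)$ commutes with sequential colimits growing in connectivity, and likewise $\hom(fX_k,-)$ in $\cD$, because $fX_k \in \cD_{<\infty}$ by weight exactness. Both sides therefore become $\lim_k \colim_n \hom(X_k,Y_n)$, and the comparison map is induced by the equivalences $\hom(X_k,Y_n)\simeq\hom(fX_k,fY_n)$. This is the same argument as in Lemma~\ref{lem:upw-comp-eqv}(1), except that there $\cD_{<\infty}$ need not receive an equivalence.

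The main obstacle is naturality. I need the two identifications above to be compatible with the map $\hom(X,Y)\to\hom(FX,FY)$ induced by $F$, and in particular that the colimit-comparison maps match under $F$. I would handle this by working with the whole cone diagrams $\N^{\triangleright}\to\lc{\cC}$ and applying $F$ to them, so that everything comes from the functoriality of $F$ on diagram categories. Alternatively, I could argue via the Yoneda description of $\lc{\cC}\subseteq\Ind(\cC_{<\infty})$: $F$ corresponds to the restricted Yoneda functor $\cD \to \Ind(\cC_{<\infty})$-type comparison, which keeps the naturality formal.
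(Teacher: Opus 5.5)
Your argument is correct and is essentially the paper's. The paper observes that $F$ is fully faithful on $\lc{\cC}_{<\infty}$ because $\cC_{<\infty}\subseteq\lc{\cC}_{<\infty}$ is dense (full faithfulness passes to retracts), and then cites Lemma~\ref{lem:upw-comp-eqv}(1), whose proof is exactly the $\lim_k\colim_n$ computation you write out; you skip the retract step by taking the weight complexes in $\cC_{<\infty}$ directly via left density, and the naturality you flag is formal, since it comes from applying the natural transformation $\hom(-,-)\to\hom(F-,F-)$ to the cone diagrams.
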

\begin{proof}
    Since $\cC_{<\infty} \subseteq \lc{\cC}_{<\infty}$ is dense,
    this follows from Lemma \ref{lem:upw-comp-eqv}(1).
\end{proof}

\begin{remark}\label{rem:dw-comp}
    Let $\cC$ be weakly $\ell$-co-weighted in the sense of Warning \ref{warn:wwcat-non-dual},
    so that $\cC^\op$ is weakly $\ell$-weighted.
    We say that $\cC$ is right complete if $\cC^\op$ is left complete,
    i.e.~if its underlying coconnectivity structure $(\cC,\cC_{\leq 0})$ is limit complete.
    The results in this section
    then give an analogous good theory of right (weight) completion
    on the category $\wWCat'$ of weakly co-weighted categories and exact functors
    of bounded above amplitude.
    For example, Theorem \ref{thm:upw-comp}(1)
    dualizes to exhibit the right completion $\rc{\cC}$
    of some weakly co-weighted category $\cC$ as the full subcategory
    of $\Pro(\cC_{>-\infty})$ on those objects $X$
    where for each $n \geq 0$ there exists some $c \in \cC_{>-\infty}$
    and a map $X \to c$ with $(-n)$-coconnective fiber
    (with respect to the weight structure induced by the dual of Proposition \ref{prop:ind-weight}).
\end{remark}

We now show that the procedure of left completing weakly bi-weighted categories
(cf.~Warning \ref{warn:wwcat-non-dual}) commutes with the dual procedure of right completion
(cf.~Remark \ref{rem:dw-comp}).

\begin{proposition}\label{prop:lr-complete}
    Let $\cC$ be a weakly bi-weighted category
    (e.g.~$\cC$ is weakly $\ell$-weighted for some $\ell < \infty$).
    \begin{enumerate}
        \item If $\cC$ is right \alt{left} complete
            then $\lc{\cC}$ \alt{$\rc{\cC}$} is both left and right complete.

        \item There is an equivalence $\rc{(\lc{\cC})} \simeq \lc{(\rc{\cC})}$ in $(\wWCat_{=0})_{\cC/}$.
    \end{enumerate}
\end{proposition}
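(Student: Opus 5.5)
The plan is to prove (1) directly and then deduce (2) formally from the universal properties. I only treat the unbracketed case of (1), that $\cC$ right complete implies $\lc{\cC}$ right complete; the bracketed case is the formal dual.

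\emph{Step 1: $\lc{\cC}$ inherits the structure needed for the statement.} First I check that $\lc{\cC}$ is again weakly bi-weighted, so that right completeness of it makes sense. For $\ell<\infty$ this is automatic, since Theorem \ref{thm:upw-comp} gives a weakly $\ell$-weighted $\lc{\cC}$. In the general bi-weighted case, I would produce decompositions $X_{\leq 0}\to X\to X_{>-\infty}$ in $\lc{\cC}$ as follows. Write $X=\indcolim{n}c_n$ with $c_\bullet$ a $\cC_{<\infty}$-weight complex. Co-weight decompose $c_0$ in $\cC$. Then absorb the cofiber $X/c_0\in\lc{\cC}_{>-\infty}$ into the bounded below part.

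\emph{Step 2: $\lc{\cC}$ is right complete.} I need three things: $\lc{\cC}_{\leq-\infty}=0$, existence of sequential limits growing in coconnectivity, and closure of $\lc{\cC}_{\leq 0}$ under them.

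\begin{itemize}
\item The key tool is the formula of Theorem \ref{thm:upw-comp}(2b): for $c\in\cC_{<\infty}$ we have $\hom_{\lc{\cC}}(\eta c,\eta Y)\simeq\hom_\cC(c,Y)$. Every object of $\lc{\cC}$ is a sequential colimit, growing in connectivity, of such $\eta c$. Hence a cone in $\lc{\cC}$ is a limit cone as soon as it is one after applying $\hom(\eta c,-)$ for all $c\in\cC_{<\infty}$.
\item For the separation axiom, let $X\in\lc{\cC}_{\leq -\infty}$. By Corollary \ref{cor:upw-comp-idem}(2), $X$ is a summand of some $\eta c$. Then $\hom(\eta d,X)$ vanishes for every $d\in\cC_{<\infty}$: it is a retract of $\hom_\cC(d,c')$ with $c'$ arbitrarily coconnective, so $X=0$.
\item For existence of limits, take a tower $X_\bullet$ growing in coconnectivity. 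After shifting indices, the fibers $F_n=\fib(X_n\to X_0)$ lie in $\lc{\cC}_{<\infty}\subseteq(\cC_{<\infty})^\idem$, with successive fibers increasingly coconnective. Using Corollary \ref{cor:upw-comp-idem}(2), I would replace each $F_n$ by an object of $\cC$, up to adding a complementary bounded below summand that is absorbed by the growth condition. This gives a tower in $\cC$ growing in coconnectivity, whose limit $L$ exists in $\cC$ by right completeness.
\item I then take $\lim X_n$ to be the extension of $X_0$ by $\eta L$, corrected by the summands. To verify this is a limit, test against $\hom(\eta c,-)$ with $c\in\cC_{<\infty}$. There it becomes $\hom_\cC(c,-)$, which preserves limits of towers growing in coconnectivity by the dual of Corollary \ref{cor:bdd-amp-pres}, since $\hom_\cC(c,-)$ has bounded above amplitude.
\item Closure of $\lc{\cC}_{\leq 0}$ follows the same way: the coconnectives of $\lc{\cC}$ are dense in the image of $\cC_{\leq 0}$ (Theorem \ref{thm:upw-comp}(2c)), and $\cC_{\leq 0}$ is closed under such limits in $\cC$.
\end{itemize}

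This step is the main obstacle: the retract and idempotent bookkeeping in replacing the $F_n$ by objects of $\cC$, and checking that $\eta$ carries the limit in $\cC$ to the limit in $\lc{\cC}$. If the summand trick is too messy, my fallback is to work entirely in $\Ind(\cC_{<\infty})$. There I would define the limit as the object corepresenting $c\mapsto\lim_n\hom(\eta c,X_n)$, and show it lies in $\lc{\cC}$ by exhibiting a $\cC_{<\infty}$-weight complex assembled from weight complexes of $X_0$ and of $L$.

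\emph{Step 3: part (2).} Put $E=\rc{(\lc{\cC})}$ and $D=\lc{(\rc{\cC})}$. Applying (1) to the right complete $\rc{\cC}$ shows $D$ is left and right complete. The dual of (1), applied to the left complete $\lc{\cC}$, shows $E$ is left and right complete. Now compose universal properties:
\[
\cC\to\rc{\cC}\to D
\]
induces $\lc{\cC}\to D$ because $D$ is left complete, and then $E\to D$ because $D$ is right complete. Symmetrically, $\cC\to\lc{\cC}\to E$ gives $D\to E$. Both composites are weight exact endofunctors under $\cC$. Weight exact functors out of $E$ into a bicomplete target are determined by their restriction along $\lc{\cC}\to E$, and hence by their restriction along $\cC$, by the two universal properties (Definition \ref{def:left-completion} and its dual). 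Therefore both composites are identities in $(\wWCat_{=0})_{\cC/}$, and $E\simeq D$.
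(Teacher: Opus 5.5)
Your Step~3 is the same Yoneda-type argument the paper uses. Your Step~1 (that $\lc{\cC}$ is again weakly bi-weighted) is a correct check that the paper leaves implicit. The gap is in Step~2.

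\textbf{The separation argument is invalid.} Knowing that $X$ is a retract of $\eta c'$ with $c'\in\cC_{\leq -n}$ gives no control over $\hom_\cC(d,c')$. Nothing in a (weak) weight structure makes maps \emph{into} highly weight-coconnective objects small; for example $\pi_k\hom(\S,\S[-n])=\pi_{k+n}\S$. Your argument also never uses $\cC_{\leq-\infty}=0$, and it cannot succeed without it. Take $\Sp$ with its standard weight structure, which is left complete, so $\lc{\Sp}\simeq\Sp$. The same reasoning would kill the nonzero spectrum $I_{\Q/\Z}$, which lies in $\Sp_{w\leq-\infty}$ because $\Z\otimes I_{\Q/\Z}=0$.

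\textbf{The ``summand trick'' is not substantiated.} This affects both the existence of limits and the closure of $\lc{\cC}_{\leq 0}$. Density of $\eta(\cC_{\leq 0})$ in $\lc{\cC}_{\leq 0}$ only provides objectwise retracts. You give no way to choose the complementary bounded below summands compatibly along the tower, so you never actually obtain a tower in $\cC$.

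\textbf{The missing idea.} Right completeness of $\cC$ already makes $\cC_{<\infty}$ and $\cC_{\leq 0}$ idempotent complete: $\cC_{<\infty}$ is again right complete, so the dual of Corollary~\ref{cor:grow-conn-idem} applies. Combine this with density and full faithfulness from Theorem~\ref{thm:upw-comp}(2b),(2c). This gives $\eta\colon\cC_{\leq b}\xrightarrow{\simeq}\lc{\cC}_{\leq b}$ for every $b$. Hence $\lc{\cC}_{<\infty}=\eta(\cC_{<\infty})$ and $\lc{\cC}_{\leq-\infty}\simeq\cC_{\leq-\infty}=0$.

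With this identification, your own decomposition goes through with no bookkeeping:
\begin{enumerate}
\item $F_n=\fib(X_n\to X_0)$ is a tower in $\lc{\cC}_{\leq -1}=\eta(\cC_{\leq-1})$ growing in coconnectivity.
\item It lifts to such a tower in $\cC$, whose limit $L$ exists and lies in $\cC_{\leq-1}$ by right completeness.
\item $\eta L$ is the limit in $\lc{\cC}$: test against $\hom(\eta c,-)$ using (2b). Corepresentables preserve existing limits, so no amplitude argument is needed.
\item Then $\lim_n X_n\simeq\fib(X_0\to\Sigma\eta L)$.
\item Closure of $\lc{\cC}_{\leq 0}$ is immediate, since such towers live in $\eta(\cC_{\leq 0})$.
\end{enumerate}

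The paper relies on the same identification but argues slightly differently. It computes $\lim_n X_n$ in $\Ind(\cC_{<\infty})$. It then verifies left density using a weight decomposition $X_{0,\leq b}\to X_0\to X_{0,\geq k}$ and the tower $\fib(X_n\to X_{0,\geq k})$, which lies in $\lc{\cC}_{\leq b}=\cC_{\leq b}$.
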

\begin{proof}
    We begin with (1).
    Let $\cC$ be right complete, so that $\cC_{\leq -\infty} = 0$,
    $\cC$ admits sequential limits growing in coconnectivity
    and $\cC_{\leq 0}$ is closed under such limits.
    Then $\cC_{<\infty}$ is also right complete, and in particular idempotent complete by the dual of Corollary \ref{cor:grow-conn-idem}.
    Thus also $\cC_{\leq 0}$ is idempotent complete,
    and the inclusion $\cC_{\leq 0} \subseteq (\lc{\cC})_{\leq 0}$
    is an equivalence by density (Theorem \ref{thm:upw-comp}(2c)).
    Therefore $(\lc{\cC})_{\leq -\infty} \simeq \cC_{\leq -\infty} = 0$.

    Now let $X_\bullet \colon \N^\op \to \lc{\cC}$ be a sequential diagram
    growing in coconnectivity.
    By cofinality we can assume that $\fib(X_{n+1} \to X_n) \in \lc{\cC}_{\leq -(n+1)}$ for all $n \geq 0$.
    Now let $X = \lim_n X_n$ computed in $\Ind(\cC_{<\infty})$,
    and $k \geq 0$. We need to show that there exists a $c \in \cC_{<\infty}$
    and a map $c \to X$ with $k$-connective cofiber.
    There is a weight decomposition $X_{0,\leq b} \to X_0 \to X_{0, \geq k}$
    in $\lc{\cC}$ for some $0 \leq b < \infty$ and an extension
    $\fib(X_1 \to X_0) \to \fib(X_1 \to X_{0,\geq k}) \to X_{0,\leq b}$
    which shows that also
    $X_{1,\leq b} \coloneqq \fib(X_1 \to X_{0,\geq k})$ lies in $(\lc{\cC})_{\leq b}$.
    Continuing inductively, we can build the sequential diagram of vertical fiber sequences in $\lc{\cC}$
    \[\begin{tikzcd}[ampersand replacement=\&]
        \cdots \& {X_{2,\leq b}} \& {X_{1,\leq b}} \& {X_{0,\leq b}} \\
        \cdots \& {X_2} \& {X_1} \& {X_0} \\
        \cdots \& {X_{0,\geq k}} \& {X_{0,\geq k}} \& {X_{0,\geq k}}
        \arrow[from=1-1, to=1-2]
        \arrow[from=1-2, to=1-3]
        \arrow[from=1-2, to=2-2]
        \arrow[from=1-3, to=1-4]
        \arrow[from=1-3, to=2-3]
        \arrow[from=1-4, to=2-4]
        \arrow[from=2-1, to=2-2]
        \arrow[from=2-2, to=2-3]
        \arrow[from=2-2, to=3-2]
        \arrow[from=2-3, to=2-4]
        \arrow[from=2-3, to=3-3]
        \arrow[from=2-4, to=3-4]
        \arrow[Rightarrow, no head, from=3-1, to=3-2]
        \arrow[Rightarrow, no head, from=3-2, to=3-3]
        \arrow[Rightarrow, no head, from=3-3, to=3-4]
    \end{tikzcd}\]
    Note also that $\fib(X_{n+1,\leq b} \to X_{n,\leq b}) = \fib(X_{n+1} \to X_n)$,
    so that the top row is also a sequential diagram growing in coconnectivity.
    Since it also lives entirely in $\lc{\cC}_{\leq b} = \cC_{\leq b}$
    and the latter admits limits of such diagrams
    (and the inclusion into $\Ind(\cC_{<\infty})$ preserves them),
    we see that passing to the limit yields a fiber sequence
    $X_{\leq b} \to X \to X_{0,\geq k}$ with $X_{\leq b} \in \cC_{\leq b}$,
    as desired.
    Thus $\lc{\cC}$ admits limits of sequential diagrams growing in coconnectivity.
    Since $\cC_{\leq 0} \simeq \lc{\cC}_{\leq 0}$ also admits such limits
    and the inclusion $\cC_{<\infty} \to \lc{\cC}$ preserves limits
    we deduce that $\lc{\cC}$ is still right complete.
    Applying this result to $\cC^\op$,
    we also see that if $\cC$ is left complete, then $\rc{\cC}$ is both left and right complete.

    We now show (2).
    Let $\cB$ be a (left and right) complete weakly (co-)weighted category.
    Denote by $\eta_\cC \colon \cC \to \lc{\cC}$ the left completion
    and by $\eps_\cC \colon \cC \to \rc{\cC}$ the right completion.
    Then
    \[
        \Fun^\Ex_{[0,0]}(\lc{(\rc{\cC})},\cB)
        \xto[\simeq]{\eta_{\rc{\cC}}^*} \Fun^\Ex_{[0,0]}(\rc{\cC},\cB)
        \xto[\simeq]{\eps_\cC^*} \Fun^\Ex_{[0,0]}(\cC,\cB)
    \]
    and similarly $\Fun^\Ex_{[0,0]}(\rc{(\lc{\cC})},\cB) \simeq \Fun^\Ex_{[0,0]}(\cC,\cB)$.
    By Yoneda (applied to the full subcategory of $\wWCat_{=0}$ on the complete weakly (co-)weighted categories),
    this yields an equivalence $\lc{(\rc{\cC})} \simeq \rc{(\lc{\cC})}$ in $(\wWCat_{=0})_{\cC/}$.
\end{proof}

\begin{notation}\label{not:lr-comp}
    If $\cC$ is weakly bi-weighted we will simply write $\lrc{\cC} \coloneqq \rc{(\lc{\cC})}$.
\end{notation}

\subsection{The proof of Theorem \ref{thm:upw-comp}}\label{subsec:proof-thm-upw-comp}

We now work towards the proof of Theorem \ref{thm:upw-comp},
and begin by showing that functors $f \colon \cC \to \cD$ in $\wWCat$
where $\cD$ is left complete are uniquely determined by their restriction to a left dense subcategory.
It will be convenient to state this in the slightly larger generality of a colimit complete
connectivity structure $(\cD,\cD_{\geq 0})$ as target.

\begin{lemma}\label{lem:weight-comp-lkan}
    Let $\cC$ be a weakly weighted category and $(\cD,\cD_{\geq 0})$
    a colimit complete stable connectivity structure.
    Suppose that $i \colon \cU \subseteq \cC$ is left dense
    and equip $\cU$ with connectives $\cU_{\geq 0} = \cU \cap \cC_{\geq 0}$.
    For $a \in \Z$, Kan extension and restriction along $i$ induce an equivalence
    \[
        i_! \colon \Fun^\Ex_{\geq a}(\cU,\cD) \simeq \Fun^\Ex_{\geq a}(\cC,\cD) \noloc i^*
    \]
    where $\Fun^\Ex_{\geq a} \subseteq \Fun^\Ex$ is the full subcategory
    of functors of amplitude $\geq a$, i.e.~restricting to $\cC_{\geq 0} \to \cD_{\geq a}$.
\end{lemma}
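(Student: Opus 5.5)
Our plan is to show three things. First, for $g \in \Fun^\Ex_{\geq a}(\cU,\cD)$ the pointwise left Kan extension $i_!g$ exists and has an explicit description. Second, $i_!g$ is exact of amplitude $\geq a$. Third, every $F \in \Fun^\Ex_{\geq a}(\cC,\cD)$ is left Kan extended from its restriction to $\cU$. Because $i$ is fully faithful, the unit $g \to i^*i_!g$ is automatically an equivalence. Together these give the claimed equivalence.

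For existence, fix $X \in \cC$. The pointwise Kan extension is $\colim_{\cU_{/X}} g$. By Lemma \ref{lem:weight-cplx}, $X$ admits a $\cU$-weight complex $U_\bullet \colon \N \to \cU_{/X}$, and the addendum of that lemma says it is cofinal. Hence it suffices to show that $\colim_n g(U_n)$ exists in $\cD$. Since $\cofib(U_n \to U_{n+1})$ is eventually $m$-connective in $\cC$ for every $m$, it lies in $\cU \cap \cC_{\geq m} = \cU_{\geq m}$. As $g$ is exact of amplitude $\geq a$, the diagram $g(U_\bullet)$ grows in connectivity in $\cD$, so by colimit completeness of $\cD$ the colimit exists. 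We therefore set $(i_!g)(X) = \colim_n g(U_n)$.

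For the amplitude bound, take $X \in \cC_{\geq 0}$ and choose the weight complex from Lemma \ref{lem:weight-cplx}, using Remark \ref{rem:upw-dense}. Each column is a cofiber sequence $U_{\leq f(n)} \to X \to X_{\geq f(n-1)+1}$ with $X \in \cC_{\geq 0}$, so $U_{\leq f(n)} \in \cC_{\geq 0}$ by closure under extensions. Hence $g(U_n) \in \cD_{\geq a}$, and the colimit stays in $\cD_{\geq a}$ by condition (3) of colimit completeness. For exactness, it is enough that $i_!g$ preserves cofiber sequences and $0$. Given $X \to Y \to Z$, we want compatible weight complexes, and here sequential cofinality helps. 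Choose a weight complex $U_\bullet$ for $X$. Pass to a subsequence of a weight complex for $Y$, and use cofinality of the $\cU$-weight complex of $Y$ in $\cU_{/Y}$ to factor each composite $U_n \to X \to Y$ through some $V_{m(n)}$. This yields a map of sequential diagrams $U_\bullet \to V_\bullet$ over $X \to Y$. The levelwise cofibers $W_n = \cofib(U_n \to V_n)$ lie in $\cU$, since $\cU$ is stable, and map to $Z$ with cofiber an extension of cofibers that grow in connectivity. So $W_\bullet$ is a $\cU$-weight complex for $Z$, and since $g$ is exact and sequential colimits commute with cofibers, we get the required cofiber sequence. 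The main obstacle is making this functorially coherent rather than objectwise. An alternative route is to observe that $i_!g$ applied to the cofiber sequence gives a canonical comparison map $\cofib(i_!g X \to i_!g Y) \to i_!g Z$, and to check that it is an equivalence using the objectwise construction above. That reduces everything to the computation just sketched.

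Finally, let $F$ be exact of amplitude $\geq a$. The comparison map $(i_!i^*F)(X) = \colim_n F(U_n) \to F(X)$ is an equivalence by Lemma \ref{lem:cn-cone-colim}, because $F$ has bounded below amplitude, $\cD$ is colimit complete, and $U_\bullet \Rightarrow X$ is a cone growing in connectivity. So the counit $i_!i^*F \to F$ is an equivalence. Hence $i^*$ and $i_!$ restrict to mutually inverse functors between the two full subcategories.
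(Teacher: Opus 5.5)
Your proposal is correct and follows the paper's proof. Both obtain $i_!g$ from a cofinal $\cU$-weight complex together with colimit completeness of $\cD$, get the amplitude bound from a weight complex of a connective $X$, get the unit from full faithfulness of $i$, and get the counit from Lemma~\ref{lem:cn-cone-colim}. There is one slip: the $0$-th column $U_{\leq f(0)} \to X \to X_{\geq 0}$ only gives $U_{\leq f(0)} \in \cC_{\geq -1}$, so you should discard the first term before applying colimit completeness, as the paper does.

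Your exactness discussion covers a point the paper leaves implicit, and it can be done without any coherence bookkeeping. Naturally in $X$, $\hom_\cD(i_!g(X),d)$ is the mapping spectrum in $\Fun^\Ex(\cU^\op,\Sp)$ from $\hom_\cC(i(-),X)$ to $\hom_\cD(g(-),d)$. Since $X \mapsto \hom_\cC(i(-),X)$ is exact, $i_!g$ sends cofiber sequences to cofiber sequences.
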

\begin{proof}
    Let $f \colon \cU \to \cD$ be an exact functor of amplitude $\geq a$.
    We begin by showing that $i_!f$ exists.
    Using the pointwise formula for left Kan extensions,
    we need to show that for $X \in \cC$, the colimit of the diagram $\cU_{/X} \to \cU \xto{f} \cD$ exists.
    Pick a $\cU$-weight complex $U_\bullet \colon \N \to \cU_{/X}$ for $X$.
    This is cofinal by Lemma \ref{lem:weight-cplx},
    so it suffices to check that the colimit
    of the composite diagram $f(U_\bullet) \colon \N \to \cD$ exists.
    But by assumption on $f$ this is again a sequential diagram growing in connectivity,
    hence admits a colimit by colimit completeness of $\cD$.
    Thus the left Kan extension $i_!f$ exists.

    Now let $X \in \cC_{\geq 0}$.
    Picking a $\cU$-weight complex $U_\bullet$ for $X$ as in Lemma \ref{lem:weight-cplx},
    we see that after (without loss of generality) discarding the first term,
    we have $U_n \in \cC_{\geq 0}$.
    Since by definition $\cU_{\geq 0} = \cU \cap \cC_{\geq 0}$,
    it follows that also $U_n \in \cU_{\geq 0}$.
    But then $(i_!f)(X) = \colim_n f(U_n)$ is a colimit of a sequential diagram
    growing in connectivity in $\cD_{\geq a}$.
    By colimit completeness of $\cD$ the colimit is still $a$-connective.
    Thus $i_!f$ sends connectives to $a$-connectives,
    so has amplitude $\geq a$ again.

    Hence $i_!$ exists and is fully faithful since $i$ is fully faithful.
    Clearly $i^*$ also exists and is right adjoint to $i_!$.
    We now check that $i_!$ is also essentially surjective.
    Suppose that $f \colon \cC \to \cD$ has amplitude $\geq a$.
    Then $i_!i^*f$ exists and is of amplitude $\geq a$ by the above,
    and we have a counit $\eps \colon i_!i^*f \Rightarrow f$.
    For $X \in \cC$ with weight complex $U_\bullet \colon \N \to \cU_{/X}$,
    the above shows that $\eps_X$ is precisely map $\colim_n f(U_n) \to f(X)$,
    which is an equivalence by Lemma \ref{lem:cn-cone-colim}.
\end{proof}

\begin{proposition}\label{prop:weight-comp-lkan}
    Let $\cU,\cC,\cD$ be weakly weighted categories.
    Suppose that
    \begin{enumerate}
        \item $\cD$ is left complete.
        \item There is a fully faithful weight exact inclusion $\cU \subseteq \cC_{<\infty}$
            so that $i \colon \cU \subseteq \cC$ is left dense.
        \item $\cU_{\leq 0} \subseteq \cC_{\leq 0}$ is dense.
        \item $\cU_{\geq 0} = \cU \cap \cC_{\geq 0}$.\footnote{For example, by Lemma \ref{lem:saturated} this holds if $\cU$ is right saturated (e.g.~$\ell=0$).}
    \end{enumerate}
    Then for $a \in \Z$ and $a \leq b \leq \infty$
    the equivalence of Lemma \ref{lem:weight-comp-lkan} restricts to
    \[
        i_! \colon \Fun^\Ex_{[a,b]}(\cU,\cD) \simeq \Fun^\Ex_{[a,b]}(\cC,\cD) \noloc i^*
    \]
    and similarly for $[a,b)$.
\end{proposition}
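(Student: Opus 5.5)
Since $\cD$ is left complete, its underlying connectivity structure is colimit complete, so Lemma \ref{lem:weight-comp-lkan} applies (hypothesis (4) supplies its convention $\cU_{\geq 0} = \cU \cap \cC_{\geq 0}$). It gives mutually inverse equivalences $i_!$ and $i^*$ between $\Fun^\Ex_{\geq a}(\cU,\cD)$ and $\Fun^\Ex_{\geq a}(\cC,\cD)$. The plan is to check that both functors preserve the extra coconnectivity condition $\cC_{\leq 0} \to \cD_{\leq b}$ (respectively $\cD_{<b}$). On the connective side nothing new is needed.

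First, consider restriction. If $f \colon \cC \to \cD$ has amplitude $[a,b]$, then $i^*f$ sends $\cU_{\leq 0}$ into $\cD_{\leq b}$, because $i$ is weight exact and so $\cU_{\leq 0} \subseteq \cC_{\leq 0}$. Hence $i^*f$ has amplitude $[a,b]$. The same argument works for $[a,b)$.

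Conversely, let $g \colon \cU \to \cD$ have amplitude $[a,b]$, and take $X \in \cC_{\leq 0}$. By hypothesis (3), $X$ is a retract of some $U \in \cU_{\leq 0}$. Since $i_!g$ restricts to $g$ on $\cU$ ($i$ is fully faithful), $(i_!g)(X)$ is a retract of $g(U) \in \cD_{\leq b}$. The remaining point is that $\cD_{\leq b}$ (and $\cD_{<b}$) is closed under retracts in $\cD$, which is exactly axiom (1) of Definition \ref{def:weak-w}. Therefore $i_!g$ has amplitude $[a,b]$.

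The case $b=\infty$ is also immediate, since $\cD_{\leq\infty} = \cD$. For $b$ finite in the half-open case $[a,b)$, note that $\cD_{<b} = \cD_{\leq b-1}$ is again retract closed, so the argument goes through verbatim. The only step requiring care is remembering that density means ``up to retracts'' rather than essential surjectivity; retract-closedness of coconnectives resolves it. There is no genuine obstacle here.
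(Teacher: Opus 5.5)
Your proof is correct and matches the paper's: after invoking Lemma \ref{lem:weight-comp-lkan}, density of $\cU_{\leq 0}$ in $\cC_{\leq 0}$ together with retract-closure of $\cD_{\leq b}$ shows that $i_!g$ keeps the coconnective bound, and the restriction direction, which you also check, is immediate from weight exactness of $i$. The case $[a,\infty)$, which you don't address explicitly, goes through identically, since a retract of an object of $\cD_{\leq n}$ again lies in $\cD_{\leq n}$.
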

\begin{proof}
    By Lemma \ref{lem:weight-comp-lkan}
    it remains to check that if $f \colon \cU \to \cD$ has amplitude $[-\infty,b]$,
    then also $i_!f$ does.
    But by density of $\cU_{\leq 0} \subseteq \cC_{\leq 0}$ any $X \in \cC_{\leq 0}$
    is a retract of some $U \in \cU_{\leq 0}$.
    Hence $i_!f(X)$ is a retract of $i_!f(U) = f(U) \in \cD_{\leq b}$,
    showing that $i_!f$ has amplitude $[-\infty,b]$ again.
    The same argument works for $[-\infty,b)$.
\end{proof}

\begin{corollary}\label{cor:left-comp-via-embed}
    Let $\cC,\cD$ be a weakly weighted categories.
    Suppose that
    \begin{enumerate}
        \item $\cD$ is left complete.
        \item There is a fully faithful weight exact $j \colon \cC_{<\infty} \hookrightarrow \cD$
            which is left dense,
        \item $\cC_{\leq 0} \subseteq \cD_{\leq 0}$ is dense.
        \item $\cC_{[0,\infty)} = \cC_{<\infty} \cap \cD_{\geq 0}$
            (e.g.~$\cC_{<\infty}$ is right saturated, e.g.~weighted).
    \end{enumerate}
    Then $j$ and the left Kan extension $i_!j \colon \cC \to \cD$
    of $j$ along $i \colon \cC_{<\infty} \subseteq \cC$ are left completions.
\end{corollary}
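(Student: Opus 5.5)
The plan is to verify the universal property of Definition \ref{def:left-completion} directly, using Proposition \ref{prop:weight-comp-lkan} twice: once with $\cU = \cC_{<\infty}$ inside $\cD$, and once with $\cU = \cC_{<\infty}$ inside $\cC$. First I check that $F \coloneqq i_!j \colon \cC \to \cD$ exists and is weight exact. Since $\cC_{<\infty} \subseteq \cC$ is left dense (it always is, Definition \ref{def:upw-dense}), hypotheses (2)--(4) of Proposition \ref{prop:weight-comp-lkan} hold for $\cU = \cC_{<\infty} \subseteq \cC$. Condition (4) there, $\cU_{\geq 0} = \cU \cap \cC_{\geq 0}$, is tautological with the induced connectives $\cC_{[0,\infty)}$. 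Since $\cD$ is left complete and $j$ has amplitude $[0,0]$, that proposition shows that $i_!j$ exists and has amplitude $[0,0]$.

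Next, for any left complete weakly weighted $\cE$ and any $a \leq b$, I consider the chain of restrictions
\[
\Fun^\Ex_{[a,b]}(\cD,\cE) \xrightarrow{j^*} \Fun^\Ex_{[a,b]}(\cC_{<\infty},\cE) \xleftarrow{i^*} \Fun^\Ex_{[a,b]}(\cC,\cE).
\]
The second arrow is an equivalence by Proposition \ref{prop:weight-comp-lkan} applied to $\cU = \cC_{<\infty} \subseteq \cC$, as above. For the first, I regard $\cC_{<\infty}$ as a full subcategory of $\cD$ via $j$ and apply the same proposition with $\cU = \cC_{<\infty}$ and ambient category $\cD$. Hypothesis (1) holds because $\cE$ is left complete. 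Hypothesis (2) is exactly our assumption (2); note that $j$ lands in $\cD_{<\infty}$ since it is weight exact. Hypothesis (3) is our assumption (3). Hypothesis (4) is our assumption (4), namely $\cC_{[0,\infty)} = \cC_{<\infty} \cap \cD_{\geq 0}$. Thus $j^*$ is an equivalence for every $[a,b]$. Taking $[a,b]=[0,0]$ shows that $j$ satisfies the equivalence \eqref{eq:upw-comp}, so it exhibits $\cD$ as the left completion of $\cC_{<\infty}$.

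For $F$, I use $F \circ i \simeq j$. This holds because $i$ is fully faithful, so the left Kan extension restricts back to $j$. Hence $i^* F^* \simeq j^*$, and by two-out-of-three $F^* \colon \Fun^\Ex_{[0,0]}(\cD,\cE) \to \Fun^\Ex_{[0,0]}(\cC,\cE)$ is an equivalence. So $F$ is a left completion as well.

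The step I expect to need the most care is the application of Proposition \ref{prop:weight-comp-lkan} to $\cC_{<\infty} \subseteq \cD$, since $\cC_{<\infty}$ must be endowed with the weak weight structure restricted from $\cC$. I have to check that this agrees with the one restricted from $\cD$ along $j$ in the sense the proposition needs. For the connectives this is exactly assumption (4). For the coconnectives, weight exactness gives $\cC_{\leq 0} \subseteq \cD_{\leq 0}$, and density (3) is precisely what the proof of Proposition \ref{prop:weight-comp-lkan} uses. Beyond that, everything is formal.
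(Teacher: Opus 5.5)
Your proposal is correct and follows the paper's proof essentially verbatim. You apply Proposition~\ref{prop:weight-comp-lkan} to $j$ to get the universal property for $j$, and to $i\colon \cC_{<\infty}\subseteq\cC$ to get weight exactness of $i_!j$ and that $i^*$ is an equivalence; then two-out-of-three in the triangle $i^*(i_!j)^*\simeq j^*$ shows that $i_!j$ is a left completion as well.
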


\begin{remark}
    In particular, if $\cC$ is left complete, then $\cC_{<\infty} \subseteq \cC$ is a left completion.
\end{remark}

\begin{proof}
    That $j$ is a left completion is immediate from Proposition \ref{prop:weight-comp-lkan}.
    Similarly, since $i \colon \cC_{<\infty} \subseteq \cC$ also satisfies these assumptions,
    the same result shows that $i_!j \colon \cC \to \cD$ is again weight exact,
    and that for any left complete weakly weighted $\cE$
    also $i^*$ is an equivalence in the following commutative triangle
    \[\begin{tikzcd}[ampersand replacement=\&]
        {\Fun^\Ex_{[0,0]}(\cD,\cE)} \& {\Fun^\Ex_{[0,0]}(\cC,\cE)} \\
        \& {\Fun^\Ex_{[0,0]}(\cC_{<\infty},\cE)}
        \arrow["{(i_!j)^*}", from=1-1, to=1-2]
        \arrow["{j^*}"', from=1-1, to=2-2]
        \arrow["\simeq", draw=none, from=1-1, to=2-2]
        \arrow["{i^*}", from=1-2, to=2-2]
        \arrow["\simeq"', draw=none, from=1-2, to=2-2]
    \end{tikzcd}\]
    Thus by 2-out-of-3 we see that also $i_!j$ has the correct universal property.
\end{proof}

\begin{corollary}\label{cor:easy-left-comp}
    Suppose there is a weight exact functor $f \colon \cC \to \cD$
    of weighted categories so that $\cD$ is left complete
    and $f$ restricts to an equivalence $f \colon \cC_{w < \infty} \simeq \cD_{w < \infty}$. Then $f$ is a left completion.
\end{corollary}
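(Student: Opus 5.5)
The plan is to deduce this from Corollary \ref{cor:left-comp-via-embed}, applied to the weight exact functor $j \coloneqq f|_{\cC_{w<\infty}} \colon \cC_{w<\infty} \to \cD$. Since $\cD$ is left complete, hypothesis (1) holds. Hypothesis (4) also holds because $\cC$ is weighted: weighted categories are saturated by Remark \ref{rem:saturated}, so $\cC_{<\infty}$ is right saturated. It then only remains to verify hypotheses (2) and (3), together with the identification of $f$ with $i_!j$.

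For (2), the functor $j$ is fully faithful because it is the equivalence $\cC_{w<\infty} \simeq \cD_{w<\infty}$ followed by the fully faithful inclusion $\cD_{w<\infty} \subseteq \cD$. Left density follows because $\cD_{w<\infty} \subseteq \cD$ is always left dense (Definition \ref{def:upw-dense}) and $j$ has essential image $\cD_{w<\infty}$. For (3), the equivalence $\cC_{<\infty} \simeq \cD_{<\infty}$ is weight exact. Its inverse is also weight exact by Lemma \ref{lem:saturated}(3), since both $\cC_{<\infty}$ and $\cD_{<\infty}$ are saturated weighted categories. Hence $f$ identifies $\cC_{\leq 0}$ with $\cD_{\leq 0}$, which is even stronger than density.

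Corollary \ref{cor:left-comp-via-embed} now shows that $i_!j \colon \cC \to \cD$ is a left completion. It remains to identify $f$ with $i_!j$, where $i \colon \cC_{<\infty} \subseteq \cC$. There is a canonical comparison map $i_!j = i_!i^*f \Rightarrow f$, namely the counit. This is an equivalence by Lemma \ref{lem:weight-comp-lkan}, applied with $\cU = \cC_{<\infty}$ left dense in $\cC$, target the colimit complete connectivity structure on $\cD$, and $f$ of amplitude $\geq 0$. Concretely, for $X \in \cC$ with weight complex $X_\bullet$, the counit is the map $\colim_n f(X_n) \to f(X)$, which is an equivalence by Lemma \ref{lem:cn-cone-colim}. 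Therefore $f \simeq i_!j$ is a left completion.

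The main point to check carefully is hypothesis (3) in the non-idempotent-complete setting. I expect it to be harmless here, because the restriction of $f$ is an honest equivalence rather than merely a dense inclusion. The saturation input from Lemma \ref{lem:saturated}(3) is what makes the coconnectives match exactly.
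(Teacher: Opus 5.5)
Your proposal is correct and follows the paper's proof, which applies Corollary \ref{cor:left-comp-via-embed} to $f|_{\cC_{w<\infty}}$ and obtains hypothesis (4) from $\cC$ being weighted. You also make explicit two steps the paper leaves implicit: hypothesis (3), via Lemma \ref{lem:saturated}(3), and the identification $f \simeq i_!j$, via Lemma \ref{lem:weight-comp-lkan}.
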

\begin{proof}
    This follows by applying Corollary \ref{cor:left-comp-via-embed},
    noting that assumption (4) holds since $\cC$ is weighted.
\end{proof}

\begin{example}\label{ex:left-completion}
    Consider $\Sp^\omega$ with its standard weight structure (restricted from $\Sp$).
    Applying Corollary \ref{cor:left-comp-via-embed},
    we recognize $\lc{(\Sp^\omega)} \simeq \Sp^\fg_{>-\infty}$ as the full subcategory of $\Sp$
    consisting of those spectra which are bounded
    below and have finitely generated homotopy groups.
    We will see more examples in Sections \ref{sec:chains} and \ref{sec:anderson}.
\end{example}

\begin{lemma}\label{lem:upw-comp-basic}
    Let $\cC$ be weakly $\ell$-weighted and $\cD$ weakly weighted.
    Suppose that:
    \begin{enumerate}
        \item[(i)] $\cD$ admits sequential colimits growing in connectivity and $\cD_{\geq 0}$ is closed under them.
        \item[(ii)] There is a fully faithful weight exact embedding $j \colon \cC_{<\infty} \hookrightarrow \cD$.
    \end{enumerate}
    Denote by $\langle \cC_{<\infty}\rangle \subseteq \cD$
    the smallest full subcategory of $\cD$ which contains $\cC_{<\infty}$
    and is closed under sequential colimits growing in connectivity.
    Then:
    \begin{enumerate}
        \setlength{\itemsep}{.5em}
        \item If $c_\bullet,d_\bullet \colon \N \to\cC_{<\infty}$ are growing in connectivity
            then any map $\colim c_\bullet \to \colim d_\bullet$ in $\cD$
            lifts to a map of diagrams $c_\bullet \Rightarrow d_{f(\bullet)}$ for some
            strictly increasing $f \colon \N \to \N$.

        \item As full subcategories of $\cD$, we have
            \begin{align*}
                \langle \cC_{<\infty} \rangle
                    =\ & \{\colim_n c_n \mid c_\bullet \colon \N \to \cC_{<\infty} \text{ growing in connectivity}\},\\
                \langle \cC_{<\infty} \rangle_{\geq 0}
                    \coloneqq\ &
                    \{\colim_n c_n \mid c_\bullet \colon \N \to \cC_{[0,\infty)} \text{ growing in connectivity}\}
                    \subseteq \langle \cC_{<\infty}\rangle \cap \cD_{\geq 0},\\
                \langle \cC_{<\infty} \rangle \cap \cD_{\geq \infty}
                    =\ &0,\\
                \qquad\qquad\cC_{<\infty} \cap \langle \cC_{<\infty}\rangle_{\geq 0}
                    =\ & \cC_{[0,\infty)}.
            \end{align*}
            Moreover, $\langle \cC_{<\infty}\rangle_{\geq 0}$
            is closed under sequential colimits growing in connectivity.
            In case that $\cC_{[0,\infty)} = \cC_{<\infty} \cap \cD_{\geq 0}$
            (e.g.~if $\cC_{<\infty}$ is right saturated)
            we have $\langle \cC_{<\infty}\rangle_{\geq 0} = \langle \cC_{<\infty}\rangle \cap \cD_{\geq 0}$.

        \item $\langle \cC_{<\infty} \rangle$ is a stable subcategory of $\cD$
            and with $\langle \cC_{<\infty}\rangle_{\leq 0} \coloneqq \langle \cC_{<\infty}\rangle \cap \cD_{\leq 0}$ we have:
            \begin{enumerate}
                \item $\langle \cC_{<\infty}\rangle$ is a left complete weakly $\ell$-weighted category.
                \item Both inclusions $\cC_{<\infty} \subseteq \langle \cC_{<\infty}\rangle
                    \subseteq \cD$ are weight exact, and the first is left dense.
                \item If $\cC_{\leq 0} \subseteq \langle \cC_{<\infty}\rangle_{\leq 0}$
                    is dense, then both $\cC_{<\infty} \subseteq \langle \cC_{<\infty}\rangle$
                    and its Kan extension $\cC \to \langle \cC_{<\infty}\rangle$
                    are left completions.
            \end{enumerate}

        \item If $\cD_{\geq \infty} = 0$ so that $\cD$ is left complete,
            then $\langle \cC_{<\infty} \rangle$ is the largest stable subcategory
            of $\cD$ in which $\cC_{<\infty}$ is left dense, i.e.~
            \begin{align*}
                \qquad\qquad\langle \cC_{<\infty} \rangle
                =\ &\{X \in \cD \mid \text{for $n \geq 0$ there is $c \in \cC_{<\infty}$ and a map
                    $c \to X$ with cofiber in $\cD_{\geq n}$}\}\\
                =\ & \{X \in \cD \mid X\text{ admits a $\cC_{<\infty}$-weight complex}\}.
            \end{align*}
    \end{enumerate}
\end{lemma}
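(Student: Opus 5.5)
The plan is to prove (1) first, since everything else rests on it. Write $X = \colim c_n$ and $Y = \colim d_n$. Because each $c_k$ is bounded above, the functor $\hom_\cD(jc_k,-)$ preserves sequential colimits growing in connectivity by Example \ref{ex:corep-bdd}. Hence
\[
\map(c_k, Y) \simeq \colim_n \map(c_k,d_n).
\]
So the composite $c_k \to X \to Y$ factors through some $d_{n_k}$. The real task is making these factorizations compatible. The $\pi_0$-level factorizations only commute up to a homotopy that appears at a later stage, but $\N$ is free on its generating arrows. It therefore suffices to choose, inductively, maps $c_k \to d_{f(k)}$ over $Y$ such that the square with $c_{k-1}\to c_k$ commutes after enlarging $f(k)$. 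Two maps $c_{k-1} \to d_{f(k)}$ that agree in $Y$ become equal in some $d_m$ by the same colimit formula on $\pi_0$; I then redefine $f(k)$ as $m$ and postcompose.

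Next I would prove (2). Let $\cE$ denote the class of colimits of growing sequences in $\cC_{<\infty}$. It contains $\cC_{<\infty}$ via constant sequences. To show it is closed under sequential colimits growing in connectivity, take $X_0 \to X_1 \to \cdots$ in $\cE$ with $\cofib(X_k\to X_{k+1}) \in \cD_{\geq k}$. Choose weight complexes for each $X_k$ and use (1) to lift the transition maps. A diagonal argument then produces a single growing sequence: pick $c^{(k)}_{m_k}$ with $\cofib(c^{(k)}_{m_k}\to X_k)$ highly connective. The connectivity of the resulting diagonal maps is controlled by the cofiber estimates in $\cD$ through the octahedral axiom, and the diagonal colimit is $\colim X_k$ by cofinality. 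The same argument with $\cC_{[0,\infty)}$ gives closure of $\langle\cC_{<\infty}\rangle_{\geq 0}$. The inclusion into $\cD_{\geq 0}$ holds by hypothesis (i).

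For $\langle\cC_{<\infty}\rangle \cap \cD_{\geq\infty}=0$, take $X=\colim c_n$ in $\cD_{\geq\infty}$. Each map $c_n \to X$ factors through a weight decomposition, since $c_n \in \cC_{\leq b}$ and, by orthogonality in $\cD$, $\hom(c_n,X) \simeq \colim_m \hom(c_n,c_m)$ is $\infty$-connective relative to the bounded object. Hence $\hom(c_n,X)=0$ for all $n$, so $\hom(X,X)=\lim\hom(c_n,X)=0$. For $\cC_{<\infty}\cap\langle\cdot\rangle_{\geq 0}=\cC_{[0,\infty)}$, I would use (1) to factor the identity of $c$ through some $c_m \in \cC_{[0,\infty)}$. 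This exhibits $c$ as a retract of a connective object in $\cC_{<\infty}$, and closure under retracts gives the claim. The saturated variant follows from Lemma \ref{lem:saturated}(1).

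For (3), stability follows because cofibers of maps of colimits can be computed levelwise after lifting via (1), and levelwise cofibers of growing sequences are again growing. Weight decompositions in $\langle\cC_{<\infty}\rangle$ come from $c_m \to X \to \cofib$, where the cofiber is a colimit of $\cofib(c_m\to c_n)$. After shifting, this lies in $\langle\cdot\rangle_{\geq 0}$, and $c_m$ is decomposed inside $\cC$ with defect $\ell$. Left completeness then follows from (2), and left density is immediate. Part (3c) is Corollary \ref{cor:left-comp-via-embed}, with hypothesis (4) there supplied by the last identity in (2).

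For (4), the inclusion ``$\supseteq$'' is the step I expect to be the main obstacle. Given $X$ with approximations $c \to X$ of growing connectivity, I would build a weight complex as in Lemma \ref{lem:weight-cplx}, using that maps from bounded-above objects into highly connective cofibers vanish. By Lemma \ref{lem:cn-cone-colim}, its colimit maps to $X$ with $\infty$-connective cofiber, and that cofiber vanishes because $\cD_{\geq\infty}=0$. Since the two descriptions in (4) coincide, maximality is then formal.
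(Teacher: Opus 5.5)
Your outline follows the paper's proof step for step. It uses the inductive factorization for (1) via Example~\ref{ex:corep-bdd}; enlarging $f(k)$ is a fine substitute for the paper's pushout $P$. It uses the $\N\times\N$ diagonal for the first equality in (2), orthogonality for $\cD_{\geq\infty}$, and a splitting for $\cC_{[0,\infty)}$. In (3) it uses levelwise cofibers and glues a decomposition of $c_0$ to $X/c_0$. Finally it invokes Corollary~\ref{cor:left-comp-via-embed} for (3c) and convergent weight complexes for (4).

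Two steps, however, are not actually established.

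First, the saturated variant does not follow from Lemma~\ref{lem:saturated}(1). That lemma only yields the parenthetical example (right saturation of $\cC_{<\infty}$ gives $\cC_{[0,\infty)}=\cC_{<\infty}\cap\cD_{\geq 0}$). It does not give $\langle\cC_{<\infty}\rangle_{\geq 0}=\langle\cC_{<\infty}\rangle\cap\cD_{\geq 0}$. The missing argument is short. Write $X=\colim_n c_n\in\cD_{\geq 0}$ with $\cofib(c_n\to c_{n+1})\in\cC_{\geq n+1}$. By (i), $X/c_n\in\cD_{\geq n+1}$, so $c_n\simeq\fib(X\to X/c_n)\in\cC_{<\infty}\cap\cD_{\geq 0}=\cC_{[0,\infty)}$.

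Second, for (3a) you never verify the closure axioms for $\langle\cC_{<\infty}\rangle_{\geq 0}$. Suspensions and extensions go through by the same levelwise lifting you use for stability. Closure under retracts is not formal, because without the extra hypothesis you cannot detect membership via $\cD_{\geq 0}$. The paper obtains it from Corollary~\ref{cor:grow-conn-idem}: $\langle\cC_{<\infty}\rangle_{\geq 0}$ is prestable and admits sequential colimits growing in connectivity, hence is idempotent complete.

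There is also a bookkeeping issue, which the paper's $3\times 3$ diagram passes over too. Your diagonal in (2) is only shown to grow in $\cD$-connectivity. The weight decompositions in (3), however, use $\cofib(c_m\to c_n)\in\cC_{\geq m+1}$.

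For $\ell<\infty$ this is harmless. If $x\in\cC_{<\infty}$ and $jx\in\cD_{\geq N}$, take a decomposition $x_{<N}\to x\to x_{\geq N-\ell}$. The first map is null by orthogonality in $\cD$ and full faithfulness of $j$, so $x$ is a retract of $x_{\geq N-\ell}$ and lies in $\cC_{\geq N-\ell}$.

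For $\ell=\infty$ you need $\cC_{[0,\infty)}=\cC_{<\infty}\cap\cD_{\geq 0}$ instead, and without it the statement fails. Take $\cC=\Sp^\omega$ with $\cC_{\geq 0}=0$ and $\cC_{\leq 0}=\Sp^\omega_{w\leq 0}$, inside $\cD=\Sp$. Then the closure $\langle\cC_{<\infty}\rangle$ contains $\Z$, while $\langle\cC_{<\infty}\rangle_{\geq 0}=0$. So (3a) would force $\Z$ to be weight bounded above, which it is not, since $\Z\otimes\Z$ is unbounded. If ``growing'' is instead read in $\cC$, the first equality in (2) fails, since $\Z$ is not finite.
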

\begin{proof}~
\begin{enumerate}
    \item Let us first show the addendum, that we can lift maps between colimits of such diagrams.
        To this end, let $c_\bullet, d_\bullet \colon \N \to \cC_{<\infty}$ be sequential diagrams growing in connectivity
        and define $c_\infty = \colim_n c_n$ and similarly for $d_\infty$.
        We show that given a map $c_\infty \to d_\infty$,
        there is a strictly increasing functor $f \colon \N \to \N$
        and a natural transformation $c_\bullet \Rightarrow d_{f(\bullet)}$,
        which on colimits induces the given map.
        Recall from Example \ref{ex:corep-bdd} that since $c_0 \in \cC_{<\infty} \subseteq \cD_{<\infty}$
        we have $\colim_n \hom(c_0,d_n) \simeq \hom(c_0, d_\infty)$.
        It follows that $c_0 \to c_\infty \to d_\infty$ factors through some $d_{f(0)} \to d_\infty$.
        Now suppose by induction on $k \geq 0$ that we have constructed a commutative diagram
        \[\begin{tikzcd}[ampersand replacement=\&]
            {c_0} \& {c_1} \& \cdots \& {c_k} \& {c_\infty} \\
            {d_{f(0)}} \& {d_{f(1)}} \& \cdots \& {d_{f(k)}} \& {d_\infty}
            \arrow[from=1-1, to=1-2]
            \arrow[from=1-1, to=2-1]
            \arrow[from=1-2, to=1-3]
            \arrow[from=1-2, to=2-2]
            \arrow[from=1-3, to=1-4]
            \arrow[from=1-4, to=1-5]
            \arrow[from=1-4, to=2-4]
            \arrow[from=1-5, to=2-5]
            \arrow[from=2-1, to=2-2]
            \arrow[from=2-2, to=2-3]
            \arrow[from=2-3, to=2-4]
            \arrow[from=2-4, to=2-5]
        \end{tikzcd}\]
        for some strictly increasing sequence $f(0),f(1),\dots,f(k)$.
        We can then consider the commutative diagram
        \[\begin{tikzcd}[ampersand replacement=\&]
            {c_k} \& {c_{k+1}} \& {c_\infty} \\
            {d_{f(k)}} \& P \& {d_\infty}
            \arrow[from=1-1, to=1-2]
            \arrow[from=1-1, to=2-1]
            \arrow[from=1-2, to=1-3]
            \arrow[from=1-2, to=2-2]
            \arrow[from=1-3, to=2-3]
            \arrow[from=2-1, to=2-2]
            \arrow["\lrcorner"{anchor=center, pos=0.125, rotate=180}, draw=none, from=2-2, to=1-1]
            \arrow[from=2-2, to=2-3]
        \end{tikzcd}\]
        where $P$ is defined as the pushout of the left square, and thus lies in $\cD_{<\infty}$.
        Hence the same argument as for $c_0$ above shows that $P \to d_\infty$
        factors through some $d_{f(k+1)} \to d_\infty$.
        By induction, this proves the claim that we can lift the given map $c_\infty \to d_\infty$
        to a map of diagrams.

    \item For the first equality,
        we prove that the right side already admits sequential colimits growing in connectivity.
        So suppose that we have a sequential diagram growing in connectivity
        $c_\infty^\bullet \colon \N \to \cD$ where for each $n \geq 0$ there is some sequential
        diagram $c^n_\bullet \colon \N \to \cC_{<\infty}$ and an equivalence $c^n_\infty \simeq \colim_k c_k^n$.
        We can first pass to a subsequence of $c_\infty^\bullet$ to assume
        that $\cofib(c_\infty^n \to c_\infty^{n+1}) \in \cD_{\geq n+1}$ for all $n \geq 0$.
        By the claim we just showed, we can moreover inductively pass to suitable subsequences
        and in this way lift this to a diagram $c^\bullet_\bullet \colon \N \times \N \to \cC_{<\infty}$
        where also for each $n,k \geq 0$ we have that $\cofib(c^n_k \to c^n_{k+1}) \in \cC_{[k+1,\infty)}$.
        Since the diagonal $\N \to \N \times \N$ is cofinal,
        we see that $\colim_n c_\infty^n = \colim_n c_n^n$,
        so it remains to check that $(c_n^n)_n$ is growing in connectivity.
        Now we have an extension $\cofib(c^n_n \to c^{n+1}_n) \to \cofib(c_n^n \to c_{n+1}^{n+1}) \to \cofib(c^{n+1}_n \to c^{n+1}_{n+1})$
        where the last term lies in $\cC_{\geq n+2}$, so it suffices to show that the first term
        lies in $\cC_{\geq n+1}$.
        But we can form the diagram of cofiber sequences
        \[\begin{tikzcd}[ampersand replacement=\&]
            {c_n^n} \& {c_\infty^n} \& {A_{\geq n+1}} \\
            {c^{n+1}_n} \& {c^{n+1}_{\infty}} \& {B_{\geq n+2}} \\
            {E_{\geq n+1}} \& {D_{\geq n+1}} \& {C_{\geq n+2}}
            \arrow[from=1-1, to=1-2]
            \arrow[from=1-1, to=2-1]
            \arrow[from=1-2, to=1-3]
            \arrow[from=1-2, to=2-2]
            \arrow[from=1-3, to=2-3]
            \arrow[from=2-1, to=2-2]
            \arrow[from=2-1, to=3-1]
            \arrow[from=2-2, to=2-3]
            \arrow[from=2-2, to=3-2]
            \arrow[from=2-3, to=3-3]
            \arrow[from=3-1, to=3-2]
            \arrow[from=3-2, to=3-3]
        \end{tikzcd}\]
        where $A_{\geq n+1}, B_{\geq n+2}$ and $D_{\geq n+1}$ have indicated weight connectivity
        by construction of the diagram $c_\bullet^\bullet$.
        But then the claimed weight connectivity of $C_{\geq n+2}$ and $E_{\geq n+1}$ follows.

        The inclusion $\langle \cC_{<\infty}\rangle_{\geq 0} \subseteq \langle \cC_{<\infty}\rangle \cap \cD_{\geq 0}$
        is clear by the assumption on $\cD$.
        Let us show that the reverse inclusion also holds under the assumption
        that $\cC_{[0,\infty)} = \cC_{<\infty} \cap \cD_{\geq 0}$.
        Indeed, given $X \in \langle \cC_{<\infty}\rangle \cap \cD_{\geq 0}$,
        we can write $X = \colim_n c_n$ for $c_\bullet \colon \N \to \cC_{<\infty}$
        satisfying $\cofib(c_n \to c_{n+1}) \in \cC_{\geq n+1}$ for all $n \geq 0$.
        Note also that by the assumption on $\cD$ we have $X/c_n = \colim_{k \geq n}c_k/c_n \in \cD_{\geq n+1}$.
        Now we have cofiber sequences $c_n \to X \to X/c_n$ where the middle term lies in $\cD_{\geq 0}$
        and the right term in $\cD_{\geq 1}$, hence the left term also lies in $\cD_{\geq 0}$.
        Thus $c_n \in \cC_{<\infty} \cap \cD_{\geq 0} = \cC_{[0,\infty)}$, as desired.

        To see that $\langle \cC_{<\infty} \rangle \cap \cD_{\geq \infty} = 0$,
        suppose that $c_\bullet \colon \N \to \cC_{<\infty}$ is growing in connectivity
        and $c_\infty = \colim_n c_n \in \cD_{\geq \infty}$.
        Then each map $c_n \to c_\infty$ vanishes, which upon taking colimits
        shows that $0 \simeq \id_{c_\infty}$, and hence that $c_\infty = 0$.

        Next we show that $\cC_{<\infty} \cap \langle \cC_{<\infty}\rangle_{\geq 0} = \cC_{[0,\infty)}$.
        The inclusion $\supseteq$ is clear,
        so suppose that $c \in \cC_{\leq b}$ and there exists some $c_\bullet \colon \N \to \cC_{[0,\infty)}$
        so that $\colim_n c_n = c$ and $\cofib(c_n \to c_{n+1}) \in \cC_{\geq n+1}$ for all $n \geq 0$.
        Then $c/c_b \in \cD_{\geq b+1}$
        so the map $c_b \to c$ admits a retraction.
        Since $\cC_{<\infty}$ is a full subcategory, $c$ is a retract of $c_b$ in $\cC_{<\infty}$,
        and since $\cC_{[0,\infty)}$ is closed under retracts, we deduce $c \in \cC_{[0,\infty)}$.

        Finally, to see that $\langle \cC_{<\infty}\rangle_{\geq 0}$ is closed under sequential
        colimits growing in connectivity,
        we can use the same argument as in the proof of the first equality
        to lift a sequential diagram growing in connectivity in $\langle \cC_{<\infty}\rangle_{\geq 0}$
        to a $\N \times \N$ indexed diagram in $\cC_{[0,\infty)}$, and show that the resulting diagonal
        is still growing in connectivity. Taking the colimit, we deduce the result from the assumption on $\cD$.

    \item To see that $\langle \cC_{<\infty} \rangle$ is a stable subcategory of $\cD$,
        note that the equivalent description from (2)
        is closed under finite sums and shifts,
        so it remains to check closure under cofibers.
        Since we can lift maps on colimits to maps of diagrams by (1),
        this reduces to checking that the (pointwise) cofiber of a map $c_\bullet \Rightarrow d_\bullet$
        of sequential diagrams growing in connectivity is itself still growing in connectivity,
        but this is straightforward.
        From this description of computing finite colimits
        it is also clear that $\langle \cC_{<\infty}\rangle_{\geq 0}$ is closed under finite colimits and extensions.
        Moreover, since it admits sequential colimits growing in connectivity,
        it is idempotent complete by Corollary \ref{cor:grow-conn-idem}, hence closed under retracts.
        On the other hand, $\langle \cC_{<\infty}\rangle_{\leq 0}$ is closed under finite limits,
        extensions, and retracts because $\cD_{\leq 0} \subseteq \cD$ is.
        It is also clear that the inclusion $\cC_{<\infty} \subseteq \langle \cC_{<\infty}\rangle$ is weight exact.
        The orthogonality follows from $\langle \cC_{<\infty} \rangle_{\geq 0} \subseteq \cD_{\geq 0}$.

        Next, we provide a weight decomposition of $X \in \langle \cC_{<\infty}\rangle$.
        We can choose $c_\bullet \colon \N \to \cC_{<\infty}$
        with $\colim_n c_n = X$ and $\cofib(c_n \to c_{n+1}) \in \cC_{\geq n+1}$ for $n \geq 0$.
        In particular, we get $X_{\geq 1} \coloneqq X/c_0 = \colim_{n} c_n/c_0 \in \langle \cC_{<\infty}\rangle_{\geq 1}$.
        Pick a weight decomposition $(c_0)_{<\ell+1} \to c_0 \to (c_0)_{\geq 1}$
        where $(c_0)_{< \ell+1} \in \cC_{<\ell+1}$ and $(c_0)_{\geq 1} \in \cC_{\geq 1}$.
        Then the cofiber of $(c_0)_{<\ell+1} \to X$ is an extension of $(c_0)_{\geq 1}$ and $X_{\geq 1}$,
        hence still lies in $\langle \cC_{<\infty}\rangle_{\geq 1}$.
        This proves that $\langle \cC_{<\infty}\rangle$ has weight decompositions of defect $\ell$.
        The argument also shows that $\cC_{<\infty} \subseteq \langle \cC_{<\infty}\rangle$ is left dense.

        This proves that $\langle \cC_{<\infty}\rangle$ is weakly $\ell$-weighted,
        and by the results in (2) it follows that it is left complete, showing (a).
        We have also already observed (b),
        and hence (c) follows from Corollary \ref{cor:left-comp-via-embed}
        in view of the last equality of (2).

    \item The two new proposed descriptions of $\langle \cC_{<\infty}\rangle$ agree
        since given an $X$ in the first one, we can build a weight complex
        as in Lemma \ref{lem:weight-cplx},
        and admitting a $\cC_{w <\infty}$-weight complex also evidently implies lying in the first one.
        Clearly every object in $\langle \cC_{<\infty}\rangle$ admits a $\cC_{<\infty}$-weight complex.
        Conversely, if $X \in \cD$ admits a $\cC_{<\infty}$-weight complex,
        then it converges by left completeness of $\cD$,
        and hence $X \in \langle \cC_{<\infty}\rangle$.
        It is also clear that the first new description is the largest stable subcategory
        of $\cD$ in which $\cC_{<\infty}$ is left dense.
\end{enumerate}
\end{proof}

In the case $\cC = \cD$ of the above Lemma there is more to be said.

\begin{lemma}\label{lem:upw-comp-quot}
    Let $\cC$ be a weakly $\ell$-weighted category
    and suppose that $\cC$ admits sequential colimits growing in connectivity and $\cC_{\geq 0}$ is closed under them.
    Let $\langle \cC_{<\infty}\rangle$ be as in Lemma \ref{lem:upw-comp-basic} ($\cC = \cD$).
    Then:
    \begin{enumerate}
        \item The full stable subcategories $\langle \cC_{<\infty}\rangle,\cC_{\geq \infty}$ determine a semiorthogonal decomposition of $\cC$
        \[\begin{tikzcd}[ampersand replacement=\&]
            {\langle \cC_{<\infty}\rangle} \&\& \cC \&\& {\cC_{\geq \infty}}
            \arrow[""{name=0, anchor=center, inner sep=0}, "j", shift left=2, hook, from=1-1, to=1-3]
            \arrow[""{name=1, anchor=center, inner sep=0}, "{j^R}", shift left=2, from=1-3, to=1-1]
            \arrow[""{name=2, anchor=center, inner sep=0}, "p", shift left=2, from=1-3, to=1-5]
            \arrow[""{name=3, anchor=center, inner sep=0}, "{p^R}", shift left=2, hook', from=1-5, to=1-3]
            \arrow["\dashv"{anchor=center, rotate=-90}, draw=none, from=0, to=1]
            \arrow["\dashv"{anchor=center, rotate=-90}, draw=none, from=2, to=3]
        \end{tikzcd}\]
            where both $j$ and $j^R$ are weight exact
            and commute with the inclusions of $\cC_{<\infty}$.

        \item Given a $\cC_{< \infty}$-weight complex
            $X_{\bullet}$ of $X \in \cC$ we have an identification
            of cofiber sequences
            \[\begin{tikzcd}[ampersand replacement=\&]
                {\colim_n X_n} \& X \& {\colim_n X/X_n} \\
                {jj^RX} \& X \& {p^RpX}
                \arrow[from=1-1, to=1-2]
                \arrow["\simeq"', from=1-1, to=2-1]
                \arrow[from=1-2, to=1-3]
                \arrow["\simeq", from=1-3, to=2-3]
                \arrow["{\eps_X^j}", from=2-1, to=2-2]
                \arrow[Rightarrow, no head, from=2-2, to=1-2]
                \arrow["{\eta^p_X}", from=2-2, to=2-3]
            \end{tikzcd}\]
            where $\eps^j \colon jj^R \Rightarrow \id$ is the counit
            and $\eta^p \colon \id \Rightarrow p^Rp$ the unit.

        \item $j^R$ agrees with the left Kan extension of $\cC_{<\infty} \subseteq \langle \cC_{<\infty}\rangle$
            along $\cC_{<\infty} \subseteq \cC$
            and exhibits $\cC/\cC_{\geq \infty} \simeq \langle \cC_{<\infty}\rangle$ as left completion of $\cC$.

        \item If $\cC$ is already left complete,
            then $j$ and $j^R$ are mutually inverse weight exact equivalences.
    \end{enumerate}
\end{lemma}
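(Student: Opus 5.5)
The plan is to construct the right adjoint $j^R$ explicitly via weight complexes, and then read off the semiorthogonal decomposition from the cofiber sequence in (2).

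\textbf{Construction of $j^R$ and the orthogonality.} Given $X \in \cC$, choose a $\cC_{<\infty}$-weight complex $X_\bullet \to X$ using Lemma \ref{lem:weight-cplx}. Set $j^RX \coloneqq \colim_n X_n$, which exists by hypothesis and lies in $\langle \cC_{<\infty}\rangle$ by Lemma \ref{lem:upw-comp-basic}(2). The cofiber $C \coloneqq \colim_n X/X_n$ is a colimit of a diagram growing in connectivity whose terms are eventually $m$-connective for every $m$. Since $\cC_{\geq m}$ is closed under such colimits, $C \in \cC_{\geq \infty}$.

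Next I need $\map(\langle \cC_{<\infty}\rangle, \cC_{\geq\infty}) \simeq 0$. Write $Y = \colim c_n$ with $c_n \in \cC_{<\infty}$ and let $Z \in \cC_{\geq \infty}$. Then $\hom(Y,Z) = \lim_n \hom(c_n,Z)$, and each term vanishes because $c_n \in \cC_{\leq b}$ while $Z \in \cC_{\geq b+1}$, using the orthogonality axiom after shifting. Given this orthogonality, the cofiber sequence $j^RX \to X \to C$ is the standard semiorthogonal decomposition sequence. Hence the functor $X \mapsto j^RX$ is right adjoint to $j$, the functor $X \mapsto C$ is left adjoint to the inclusion $p^R$ of $\cC_{\geq\infty}$, and the construction is independent of the chosen complex. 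This proves (1) and (2) simultaneously.

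\textbf{Weight exactness.} The functor $j$ is weight exact by Lemma \ref{lem:upw-comp-basic}(3b). For $j^R$, the hard direction is connectives, where I lean on $\langle\cC_{<\infty}\rangle_{\geq 0}$:

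1. If $X \in \cC_{\geq 0}$, the weight complex can be chosen with $X_n \in \cC_{[0,\infty)}$ after discarding the first term, as in the proof of Lemma \ref{lem:weight-comp-lkan}. Then $j^RX \in \langle\cC_{<\infty}\rangle_{\geq 0}$.
2. If $X \in \cC_{\leq 0}$, then $X \in \cC_{<\infty}$, so $C$ receives zero maps from $\cC_{<\infty}$-objects. The constant complex is a weight complex, so $j^RX = X$.

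It follows that $j^R$ restricts to the identity on $\cC_{<\infty}$. This also gives commutation with the inclusions of $\cC_{<\infty}$.

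\textbf{Left Kan extension and left completion (3).} By Lemma \ref{lem:weight-comp-lkan}, a functor of bounded below amplitude into the left complete category $\langle\cC_{<\infty}\rangle$ (Lemma \ref{lem:upw-comp-basic}(3a)) is the left Kan extension of its restriction to the left dense subcategory $\cC_{<\infty}$. Applying this to $j^R$ identifies it with the Kan extension of the inclusion. The identification $\cC/\cC_{\geq\infty}\simeq\langle\cC_{<\infty}\rangle$ is the usual quotient description of a semiorthogonal decomposition.

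The left completion claim is the step I expect to be the main obstacle. I will reduce it to Lemma \ref{lem:upw-comp-basic}(3c), which requires that $\cC_{\leq 0} \subseteq \langle\cC_{<\infty}\rangle_{\leq 0}$ be dense. Take $Y \in \langle\cC_{<\infty}\rangle \cap \cC_{\leq 0}$. Then $Y \in \cC_{<\infty}$ already, so $Y$ lies in $\cC_{\leq 0}$ itself, and density holds trivially. I still need to verify that this definition of $\langle\cC_{<\infty}\rangle_{\leq 0}$, namely $\langle \cC_{<\infty}\rangle \cap \cD_{\leq 0}$ with $\cD = \cC$, is the one used in Lemma \ref{lem:upw-comp-basic}(3).

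\textbf{The left complete case (4).} If $\cC$ is left complete, then $\cC_{\geq\infty} = 0$, so $p$ vanishes and $j$ and $j^R$ are inverse equivalences. Both are weight exact by the previous step.
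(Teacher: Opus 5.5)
Your proposal is correct and follows essentially the same route as the paper: define $j^RX=\colim_n X_n$ via a $\cC_{<\infty}$-weight complex, read off the semiorthogonal decomposition from the cofiber sequence $\colim_n X_n\to X\to\colim_n X/X_n$ (right term in $\cC_{\geq\infty}$), get weight exactness of $j^R$ from connective weight complexes and $j^Rj\simeq\id$, and deduce (3) from Lemma~\ref{lem:upw-comp-basic}(3c), since $\langle\cC_{<\infty}\rangle_{\leq 0}=\cC_{\leq 0}$ is trivially dense in itself (and yes, that is exactly the coconnective class used there). The differences are cosmetic: you invoke Lemma~\ref{lem:weight-comp-lkan} rather than Proposition~\ref{prop:weight-comp-lkan} for the Kan extension and argue (4) directly from $\cC_{\geq\infty}=0$; also, in a weakly weighted category the vanishing of $\hom(c_n,Z)$ for $c_n\in\cC_{\leq b}$ and $Z\in\cC_{\geq\infty}$ comes from $\hom(c_n,Z)\in\Sp_{\geq k-b}$ for every $k$, not from $Z\in\cC_{\geq b+1}$ alone.
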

\begin{proof}
    Clearly $\langle \cC_{<\infty}\rangle \subseteq {}^\perp(\cC_{\geq \infty})$.
    Moreover, given any $\cC_{< \infty}$-weight complex $X_\bullet$ of $X \in \cC$
    there is a cofiber sequence as depicted
    in the first row of (2). The left term lies in $\langle \cC_{<\infty}\rangle$
    and the right one in $\cC_{\geq \infty}$ by stability
    of each $\cC_{\geq n}$ under sequential colimits growing in connectivity.
    From this we automatically obtain the claimed semiorthogonal decomposition
    and identification as in (2), see e.g.~\cite[Section 7.2.1]{SAG}.

    We already know from Lemma \ref{lem:upw-comp-basic} that $j$
    is weight exact and commutes with the inclusions of $\cC_{<\infty}$.
    But then the fact that $\id \simeq j^Rj$ shows that also $j^R$
    commutes with the inclusions of $\cC_{<\infty}$,
    and hence that $j^R$ preserves coconnectives.
    Now let $X \in \cC_{\geq 0}$.
    Then by Lemma \ref{lem:weight-cplx}
    we can choose the $\cC_{<\infty}$-weight complex
    $X_\bullet$ for $X$,
    and after possibly discarding some terms, we may assume that $X_n \in \cC_{\geq 0}$ for all $n$.
    This shows that $j^RX = \colim_n X_n$ is again connective
    in the weak weight structure on $\langle \cC_{<\infty}\rangle$
    defined in Lemma \ref{lem:upw-comp-basic}.
    Thus $j^R$ is also weight exact,
    and is therefore left Kan extended from its restriction by Proposition \ref{prop:weight-comp-lkan}.
    Since $\cC_{\leq 0}$ is dense in itself, the rest of (3)
    follows from Lemma \ref{lem:upw-comp-basic}(3c).
    Also (4) is clear by Lemma \ref{lem:upw-comp-basic}(4).
\end{proof}

\begin{proposition}\label{prop:ind-weight}
    Let $\cC$ be a weakly $\ell$-weighted category.
    There is a weight structure on $\Ind(\cC)$
    so that the Yoneda-embedding $j \colon \cC \hookrightarrow \Ind(\cC)$ is weight exact. Moreover:
    \begin{enumerate}
        \item The weight structure is compactly generated by $\cC_{\leq 0}$
            in the sense of Definition \ref{def:gen-weight}. Concretely:
            \begin{enumerate}
                \item $X \in \Ind(\cC)$ is weight connective
                    if and only if $\hom(c_{\leq 0},X)$
                    is connective for all $c_{\leq 0} \in \cC_{\leq 0}$.

                \item $\Ind(\cC)_{\leq 0}$ is the smallest class
                    containing $\cC_{\leq 0}$ and closed under
                    finite limits, extensions, retracts,
                    sums, and sequential colimits of maps whose
                    cofiber is already in the class.
            \end{enumerate}

        \item $\Ind(\cC)$ admits an adjacent $t$-structure
            (i.e.~$\Ind(\cC)_{t \geq 0} = \Ind(\cC)_{w \geq 0}$).
            In particular, $\Ind(\cC)_{w \geq 0}$ is closed under all colimits.

        \item We have $\Ind(\cC_{\geq 0}) \subseteq \Ind(\cC)_{\geq 0} \subseteq \Ind(\cC_{\geq -\ell})$.\footnote{For $\ell = \infty$, the second inclusion is vacuous. If $\cC$ is also a weakly co-weighted category
                (cf.~Warning \ref{warn:wwcat-non-dual}), i.e.~we also have weight decompositions
                $X_{\leq 0} \to X \to X_{>-\infty}$,
                then the same proof shows that $\Ind(\cC)_{\geq 0} \subseteq \Ind(\cC_{>-\infty})$.}
                In particular, for a weight structure on $\cC$ (i.e.~for $\ell = 0$) we have equality.

        \item If $\ell = 0$,
            then the weight heart $\Ind(\cC)_{w=0}$
            coincides with the idempotent completion
            of $(\cC_{=0})^\oplus$, the full subcategory on sums of objects in $\cC_{=0}$.

        \item There exists a stable recollement
            \[\begin{tikzcd}
                {\Ind(\cC_{< \infty})} && {\Ind(\cC)} && {\Ind(\cC)_{\geq \infty}}
                \arrow["{j_!}", shift left=3, hook', from=1-1, to=1-3]
                \arrow["{j_*}"', shift right=3, hook, from=1-1, to=1-3]
                \arrow["{j^*}"{description}, from=1-3, to=1-1]
                \arrow["p", shift left=3, from=1-3, to=1-5]
                \arrow["{p^{RR}}"', shift right=3, from=1-3, to=1-5]
                \arrow["{p^R}"{description}, hook', from=1-5, to=1-3]
            \end{tikzcd}\]
            where $j_!$ is induced by the inclusion via $\Ind(-)$,
            and $j_!,j^*$ are weight exact and $j_*$ has amplitude $[0,\infty]$,
            and $j^*$ is a left completion.
            Both the weight structure and adjacent $t$-structure
            on $\Ind(\cC_{<\infty})$ are left complete.

        \item If $\cD$ is another weakly weighted category
            and $f \colon \cC \to \cD$ a map of amplitude $[a,b)$ \alt{$[a,b]$}
            for $-\infty \leq a \leq b \leq \infty$,
            then $f_! = \Ind(f) \colon \Ind(\cC) \to \Ind(\cD)$
            is of weight amplitude $[a-\ell,b)$ \alt{$[a-\ell,b]$}.
    \end{enumerate}
\end{proposition}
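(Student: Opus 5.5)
We define the weight structure on $\Ind(\cC)$ directly, by the formula in (1a): let $\Ind(\cC)_{\geq 0}$ be the $X$ with $\hom(c,X)\in\Sp_{\geq 0}$ for all $c\in\cC_{\leq 0}$. Since $\cC_{\leq 0}$ consists of compact objects of the compactly generated stable category $\Ind(\cC)$, this is the general construction of Theorem \ref{thm:gen-weight} with $S=\Sigma\cC_{\leq 0}$ and $\kappa=\omega$. In fact we only need the easy compactly generated case, since all of $S$ is compact. This is the Bondarko--Pauksztello situation, which we reprove by a small object argument.

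Concretely, $\Ind(\cC)_{\geq 0}$ is a right orthogonal of compacts, so it is closed under all colimits, extensions and suspensions. Hence it is the connective part of the $t$-structure generated by $\{\Sigma^n c\}$, $c\in\cC_{\leq 0}$, $n\geq 0$ (by \cite[1.4.4.11]{HA}), which gives (2). For the weight decompositions of $X$, we run the small object argument: glue cells of the form $\Sigma^{-1} c$, attaching one along every map $\Sigma^{-1} c\to X$ with $c\in\cC_{\leq 0}$ and iterating countably often. This produces $W\to X\to X''$ with $X''\in\Ind(\cC)_{\geq 0}$, and with $W$ in the closure described in (1b). We then define $\Ind(\cC)_{\leq 0}$ as the left orthogonal of $\Ind(\cC)_{\geq 1}$. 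This class contains the closure in (1b), and a retract argument as in Lemma \ref{lem:saturation} shows it equals that closure. Orthogonality then holds by construction, so we have an honest (non-weak) weight structure.

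Weight exactness of $j$ has two parts. First, $j(\cC_{\leq 0})\subseteq\Ind(\cC)_{\leq 0}$ is immediate. Second, for $Y\in\cC_{\geq 0}$ we have $\hom(c,Y)$ connective for $c\in\cC_{\leq 0}$ by axiom (2) of Definition \ref{def:weak-w}. For (3), $\Ind(\cC_{\geq 0})\subseteq\Ind(\cC)_{\geq 0}$ by closure under filtered colimits. For the other inclusion, let $X=\indcolim{} c_i$ be connective. Each $c_i$ has a decomposition $c_{i,<\ell}\to c_i\to c_{i,\geq 0}$, and the composite $\Sigma^{-\ell-1}c_{i,<\ell}\to X$ vanishes after a shift. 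Using the decompositions of $\Sigma^{\ell} c_i$, each map $c_i\to X$ factors through an object of $\cC_{\geq -\ell}$, so the filtered diagram is cofinally replaced by one in $\cC_{\geq -\ell}$. This is the step I expect to be the main obstacle, because factoring maps from $c_i$ into an ind-object compatibly requires passing to $\cC_{/X}$-slices and checking filteredness. I would try to show that $(\cC_{\geq-\ell})_{/X}\to\cC_{/X}$ is cofinal via Quillen A.

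For (4), the heart consists of retracts of the cells used in the small object argument when $\ell=0$. For (5), $\Ind(\cC_{<\infty})\subseteq\Ind(\cC)$ is generated by compacts, and its right orthogonal is $\Ind(\cC)_{\geq\infty}$: an object $X$ with $\hom(c,X)=0$ for bounded above $c$ is $n$-connective for all $n$, and conversely. This gives the recollement. Weight exactness of $j^*$ follows from Lemma \ref{lem:saturated}(2), and amplitude of $j_*$ similarly. Left completeness of $\Ind(\cC_{<\infty})$ holds because $\Ind(\cC_{<\infty})_{\geq\infty}=0$ and its $t$-structure connectives are closed under colimits, using Lemma \ref{lem:t-vs-w-comp}. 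That $j^*$ is a left completion then follows from Lemma \ref{lem:upw-comp-quot}(3). Finally, for (6), $f_!$ preserves compact coconnectives and hence all of $\Ind(\cC)_{\leq 0}$, by the closure description in (1b). For connectives we use (3): $\Ind(\cC)_{\geq 0}\subseteq\Ind(\cC_{\geq-\ell})$, which maps into $\Ind(\cD_{\geq a-\ell})\subseteq\Ind(\cD)_{\geq a-\ell}$.
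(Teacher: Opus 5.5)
Your treatment of (1), (3), (5) and (6) follows the paper's route. That includes the small object argument, which is the paper's up to a shift: you attach cells $\Sigma^{-1}c$, the paper attaches $\bigoplus_{c\in(\cC_{\leq 0})_{/X}}c$ and iterates. It also includes the retract argument for (1b), the factorization of $c\to X$ through $c_{\geq -\ell}$ via $c_{\leq -1}\to c\to c_{\geq-\ell}$, and $f_!$ preserving the generating operations. The step you flag in (3) needs no Quillen A: $(\cC_{\geq-\ell})_{/X}$ is filtered because $\cC_{\geq-\ell}$ is closed under finite colimits. A full inclusion of filtered categories such that every object of $\cC_{/X}$ maps into its image is cofinal \cite[Tags 06CK and 06CP]{Kerodon}.

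The genuine gap is (4). ``The heart consists of retracts of the cells used in the small object argument'' is not an argument, and read literally it is false. Your cells $\Sigma^{-1}c$ lie in $\cC_{\leq -1}$, so a heart object that is a retract of a sum of them lies in $\Ind(\cC)_{\leq -1}\cap\Ind(\cC)_{\geq 0}=0$. Even undesuspended cells are arbitrary objects of $\cC_{\leq 0}$, not of $\cC_{=0}$. What is missing is the one place where $\ell=0$ is used. The paper takes $X\in\Ind(\cC)_{w=0}$ and $f\colon\bigoplus_{c\in(\cC_{=0})_{/X}}c\to X$, and shows $\cofib(f)\in\Ind(\cC)_{\geq 1}$ by testing against $d\in\cC_{\leq 0}$:
\begin{itemize}
\item A map $d\to\cofib(f)$ lifts to $d\to X$, since $d\to\Sigma\bigoplus c$ is null.
\item Because $\ell=0$ there is a decomposition $d_{\leq -1}\to d\to d_{=0}$, and $d_{\leq-1}\to X$ is null as $X$ is connective.
\item So $d\to X$ factors through $d_{=0}$, hence through $f$, and $d\to\cofib(f)$ is null.
\end{itemize}
Since $X$ is coconnective, $X\to\cofib(f)$ vanishes, so $X$ is a retract of $\bigoplus c$. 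Conversely, such retracts lie in the heart because $\Ind(\cC)_{\geq0}$ is closed under colimits and $\Ind(\cC)_{\leq 0}$ under sums and retracts.

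Two smaller corrections.

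In (2), $\Ind(\cC)_{\geq 0}$ is not the connective part of the $t$-structure generated by $\{\Sigma^n c\mid c\in\cC_{\leq0},\,n\geq0\}$. That set contains $\cC_{\leq 0}$ itself, for example $\mathbb{S}^{-1}$ when $\cC=\Sp^\omega$, which is not weight connective. Instead apply \cite[Proposition 1.4.4.11]{HA} directly to $\Ind(\cC)_{\geq 0}$. By (1a) it is closed under colimits and extensions, and it is presentable, being cut out by conditions on the accessible functors $\hom(c,-)$. Also, the generating set matching (1a) is $S=\cC_{\leq 0}$, not $\Sigma\cC_{\leq0}$.

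In (5), Lemma \ref{lem:saturated}(2) applied to $j_!\dashv j^*$ only shows that $j^*$ preserves connectives. That is immediate anyway from $\hom(c,j^*X)\simeq\hom(c,X)$ for $c\in\cC_{\leq0}$. For coconnectives you need that $\Ind(\cC)_{\leq 0}$ lies in the image of $j_!$, where $j^*$ inverts $j_!$. Only then does the lemma applied to $j^*\dashv j_*$ give that $j_*$ preserves connectives.

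Also, Lemma \ref{lem:upw-comp-quot}(3) is about $\langle\Ind(\cC)_{<\infty}\rangle$, so you must identify it with $\Ind(\cC_{<\infty})$ carrying the same weight structure. The paper does this by showing $\langle\Ind(\cC)_{<\infty}\rangle$ is closed under sums, using weight complexes with $X^i_n\in\Ind(\cC)_{\leq n}$. It also follows because both are the left orthogonal of $\Ind(\cC)_{\geq\infty}$ in a semiorthogonal decomposition.
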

\begin{proof}
    To show (1), we could just cite Theorem \ref{thm:gen-weight}
    (which in this specific case was well known before,
    cf.~Remark \ref{rem:gen-weight}),
    though we find it instructive to give the short proof here:
    The only nontrivial part is to check the existence of weight decompositions,
    which are constructed via a small object argument.
    Given $X \in \Ind(\cC)$, define $X'$ via the cofiber sequence
    $\bigoplus_{c \in (\cC_{\leq 0})_{/X}} c \to X \to X'$.
    Now let $X_0 = X$ and define $X_n \to X_{n+1}$ as $X_n \to (X_n)'$.
    The weight decomposition is then $\colim_n F_n \to X \to \colim_n X_n$
    where inductively each $F_n = \fib(X \to X_n)$
    and thus also $\colim_nF_n$
    is weight coconnective (since $\cofib(F_n \to F_{n+1})$ is a coproduct
    of objects in $\cC_{\leq 0}$).
    On the other hand, clearly $\pi_0\hom(c,\colim_n X_n) = 0$
    for $c \in \cC_{\leq 0}$, and since $\cC_{\leq 0}$ is closed under $\Omega$
    we see $\colim_n X_n$ is 1-connective, as desired.

    For (2), we note that by (1a) the subcategory $\Ind(\cC)_{w \geq 0} \subseteq \Ind(\cC)$
    is closed under colimits and extensions, so the claim follows from \cite[Proposition 1.4.4.11]{HA}.

    To see (3), note that clearly $\Ind(\cC_{\geq 0}) \subseteq \Ind(\cC)_{\geq 0}$
    since Yoneda is weight exact and the latter is closed under colimits.
    Now let $X \in \Ind(\cC)_{\geq 0}$.
    The canonical map $\colim_{c \in \cC_{/X}} c \to X$ is an equivalence
    by compact generation. Note also that $\cC_{/X}$ is filtered since
    it admits finite colimits.
    The full subcategory $(\cC_{\geq -\ell})_{/X} \subseteq \cC_{/X}$
    is still filtered for the same reason,
    and for every map $c \to X$,
    we can pick a weight decomposition $c_{\leq -1} \to c \to c_{\geq -\ell}$
    and note that $c_{\leq -1} \to X$ vanishes, so that $c \to X$
    admits a map to $c_{\geq -\ell} \to X$ in $\cC_{/X}$.
    Thus the inclusion of filtered categories $(\cC_{\geq -\ell})_{/X} \subseteq \cC_{/X}$
    is weakly cofinal in the sense of \cite[\href{https://kerodon.net/tag/06CK}{Tag 06CK}]{Kerodon},
    and thus \cite[\href{https://kerodon.net/tag/06CP}{Tag 06CP}]{Kerodon} shows that it is already cofinal.
    Thus $\colim_{c \in (\cC_{\geq -\ell})_{/X}}c \simeq X$, which proves (3).

    For (4), suppose that $\ell = 0$ and $X \in \Ind(\cC)_{w=0}$
    and consider the map $f \colon \bigoplus_{c \in (\cC_{=0})_{/X}}c \to X$.
    It suffices to check that the cofiber of $f$ is 1-connective,
    since then $f$ admits a retraction.
    We can check this by mapping in from $d \in \cC_{\leq 0}$.
    But $d \to \Sigma c$ vanishes, so any such map $d \to \cofib(f)$
    factors through $d \to X$. Picking a weight decomposition
    $d_{\leq -1} \to d \to d_{=0}$, we get that $d \to X$
    actually factors through $d \to d_{= 0} \to X$.
    But then by definition of $f$ the map $d_{=0} \to X$
    factors through $f$, and hence the entire composite
    $d \to \cofib(f)$ vanishes. This shows $\cofib(f) \in \Ind(\cC)_{\geq 1}$
    so that $f$ admits a retraction, as desired.

    For point (5) we want to apply Lemma \ref{lem:upw-comp-quot}
    to the weighted category $\Ind(\cC)$.
    To this end, we first need to argue that $\langle \Ind(\cC)_{<\infty}\rangle = \Ind(\cC_{<\infty})$
    in the notation of Lemma \ref{lem:upw-comp-basic}.
    It suffices to show that $\langle \Ind(\cC)_{<\infty}\rangle$ is closed under all colimits,
    since it contains $\cC_{<\infty}$.
    As it is a stable subcategory, it suffices to consider coproducts.
    Let $X^i \in \langle \Ind(\cC)_{<\infty}\rangle$ for $i \in I$.
    Since $\Ind(\cC)_{<\infty}$ is left dense and we are in the $\ell =0$ case,
    we can use Example \ref{ex:standard-weight-complex}
    to find $X^i_\bullet \colon \N \to \Ind(\cC)_{<\infty}$ with $\colim_n X^i_n \simeq X^i$
    and $X^i_n \in \Ind(\cC)_{\leq n}$ for all $n$.
    Since $\Ind(\cC)$ is saturated, each $\Ind(\cC)_{\leq n}$ is closed under arbitrary sums,
    and we see that $\bigoplus_i X^i_\bullet \colon \N \to \Ind(\cC)_{<\infty}$
    is still a well defined sequential diagram growing in connectivity.
    Clearly its colimit is $\bigoplus_i X^i$, so this proves that $\langle \Ind(\cC)_{<\infty}\rangle = \Ind(\cC_{<\infty})$.

    Let us also note that $\Ind(\cC_{<\infty})_{\leq 0} = \Ind(\cC)_{\leq 0}$,
    which agrees with the definition of $\langle \Ind(\cC)_{<\infty}\rangle_{\leq 0}$ of Lemma \ref{lem:upw-comp-basic}.
    But then the weight structure on $\Ind(\cC_{<\infty})$ defined here
    necessarily agrees with the weight structure on $\langle \Ind(\cC)_{<\infty}\rangle$
    defined in Lemma \ref{lem:upw-comp-basic}, since both are saturated
    and the coconnectives agree.

    Now $p^{RR}$ exists for formal reasons;
    $j_!$ is a functor of compactly generated stable categories
    that preserves colimits and compact objects,
    hence admits adjoints $j_! \dashv j^* \dashv j_*$,
    and thus the functor $p$ admits analogous adjoints
    so that each row fits into a Verdier sequence,
    see e.g.~\cite[Corollary A.2.10]{Hermitian-II}.
    Moreover, it follows from Lemma \ref{lem:saturated} that $j_*$ preserves connectives.

    Finally, for (6), note that $f_!$ preserves the operations
    under which $\cC_{\leq 0}$ generates $\Ind(\cC)_{\leq 0}$
    and restricts to $f$ on $\cC$.
    From this we see that if $f$ has amplitude $[-\infty,b)$, then also $f_!$ has.
    In the case $\ell = \infty$, there is now nothing left to do,
    so let $\ell < \infty$
    and suppose that $f$ has amplitude $[a,\infty]$ for some $a \in \Z$.
    Then by (3) the restriction of $f_!$ to $\Ind(\cC)_{\geq 0}$ factors as
    $\Ind(\cC)_{\geq 0} \subseteq \Ind(\cC_{\geq -\ell}) \to \Ind(\cD_{\geq a-\ell}) \subseteq \Ind(\cD)_{\geq a-\ell}$.
    Thus $f_!$ has amplitude $[a-\ell, \infty]$.
\end{proof}

\begin{remark}\label{rem:easy-ind-left-comp}
    It is also easy to see directly that the above weight structure on $\Ind(\cC_{<\infty})$ is left complete.
    By (2) it only remains to observe that if $X \in \Ind(\cC_{<\infty})_{\geq \infty}$
    then any map $c \to X$ vanishes for $c \in \cC_{<\infty}$, hence $X = 0$.
\end{remark}

With this, we have all the necessary ingredients to prove Theorem \ref{thm:upw-comp}.

\begin{proof}[Proof of Theorem \ref{thm:upw-comp}]\label{proof:upw-comp}
    Let us begin by showing that every weakly weighted category $\cC$
    admits a left completion.
    To this end, we apply Lemma \ref{lem:upw-comp-basic}
    to the Yoneda embedding $j \colon \cC_{<\infty} \hookrightarrow \Ind(\cC_{<\infty})$
    and let $\lc{\cC} \coloneqq \langle \cC_{<\infty}\rangle$ in the notation of said lemma.
    We need to show that $\cC_{\leq 0} \subseteq \lc{\cC}_{\leq 0}$ is dense.
    This is a consequence of the special description we have for $\Ind(\cC)_{\leq 0}$
    from Proposition \ref{prop:ind-weight}(1b).
    Indeed, by left density of $\cC_{<\infty} \subseteq \lc{\cC}$,
    it follows that a given $X \in \lc{\cC}_{\leq 0}$
    is a retract of some $c \in \cC_{<\infty}$ and hence compact in $\Ind(\cC_{<\infty})$.
    Now $\Ind(\cC)_{\leq 0}$ is generated by $\cC_{\leq 0}$
    under finite limits, extensions, retracts,
    sums and certain sequential colimits.
    In particular, since $\cC_{\leq 0}$ is already closed under finite limits and extensions,
    this means that $X \in \Ind(\cC_{\leq 0}) \subseteq \Ind(\cC_{<\infty})$.
    Since this inclusion preserves filtered colimits and $X$ is compact in $\Ind(\cC_{<\infty})$
    it is also compact in $\Ind(\cC_{\leq 0})$, and hence $X \in (\cC_{\leq 0})^\idem$,
    as desired.

    Thus Lemma \ref{lem:upw-comp-basic}(3c) now shows that the (codomain-)restricted
    Yoneda embedding $\cC_{<\infty} \subseteq \langle \cC_{<\infty}\rangle$
    as well as its left Kan extension $\cC \to \langle \cC_{<\infty}\rangle$
    are left completions.
    Now the points (1), (2a), and (2c) follow from the Lemma,
    and (2d) is just Lemma \ref{lem:upw-comp-quot}.

    The fully faithfulness in (2b) is also clear.
    For the rest of (2b),
    consider $X \in \cC_{w \leq n}$ and $Y \in \cC$
    with $\cC_{w < \infty}$-weight complex $Y_\bullet$.
    We have the following commutative square
\[\begin{tikzcd}
	{\colim_n \hom_\cC(X,Y_n)} & {\hom_\cC(X,Y)} \\
	{\colim_n\hom_{\lc{\cC}}(\eta X,\eta Y_n)} & {\hom_{\lc{\cC}}(\eta X, \eta Y)}
	\arrow[from=1-1, to=1-2]
	\arrow["\simeq", from=1-1, to=2-1]
	\arrow[from=1-2, to=2-2]
	\arrow["\simeq"', from=2-1, to=2-2]
\end{tikzcd}\]
    Here the left vertical map is an equivalence by fully faithfulness
    of $\eta$ on $\cC_{w < \infty}$,
    and the bottom map is an equivalence by definition of $\eta$
    and since $\hom_{\lc{\cC}}(\eta X, -)$ has bounded below weight amplitude by Example \ref{ex:corep-bdd}.
    The cofiber of the top map is given by $\colim_n \hom_\cC(X,Y/Y_n)$,
    which lies in $\Sp_{w \geq \infty} = 0$ by another application
    of Example \ref{ex:corep-bdd}.
    Thus also the top map and hence the right map is an equivalence,
    proving the rest of (2b).

    To see (3), note that as in the proof of Corollary \ref{cor:left-comp-via-embed} we have a commutative diagram
    \[\begin{tikzcd}[ampersand replacement=\&]
        {\Fun^\Ex_{[a,b]}(\lc{\cC},\cE)} \& {\Fun^\Ex_{[a,b]}(\cC,\cE)} \\
        \& {\Fun^\Ex_{[a,b]}(\cC_{<\infty},\cE)}
        \arrow["{(\eta_\cC)^*}", from=1-1, to=1-2]
        \arrow["{j^*}"', from=1-1, to=2-2]
        \arrow["\simeq", draw=none, from=1-1, to=2-2]
        \arrow["{i^*}", from=1-2, to=2-2]
        \arrow["\simeq"', draw=none, from=1-2, to=2-2]
    \end{tikzcd}\]
    where $i^*$ and $j^*$ are equivalences due to Proposition \ref{prop:weight-comp-lkan},
    and similarly for $[a,b)$ in place of $[a,b]$.
    Since Kan extensions compose, also the inverse
    of $(\eta_\cC)^*$ is given by left Kan extension along $\eta_\cC$.
    Finally, (4) is an immediate consequence of (3).
\end{proof}

\section{Generating complete weight structures from their hearts}\label{sec:chains}

In this section, we see what the theory of completions of (weak) weight structures
can say about one of the motivating examples of a weight structure,
the category of chain complexes in an ordinary additive category modulo chain homotopy equivalences
(or more specifically, its $\infty$-categorical incarnation).
Concretely, we will show that this category can be obtained as the (left and right) weight completion
of the category of bounded chain complexes of $\cA$ with its canonical weight structure.
We will also give an explicit description of the (unique!) left complete and right bounded
weight structure with a given heart.

To begin, let us recall the following result of Sosnilo, which tells us how to generate
bounded weight structures from their hearts.
Given an additive category $\cA$, denote by $\Stab(\cA)$ the stable envelope of $\cA$,
which is the smallest full stable subcategory of $\Fun^\times(\cA^\op,\Sp)$
containing the image of the Yoneda embedding.
This assembles into a left adjoint $\Stab(-) \colon \Cat^\add \to \Cat^\Ex$
of the forgetful functor from $\Cat^\Ex \to \Cat^\add$.
Recall also that $\cA^\widem$ denotes the weak idempotent completion (cf.~the notational conventions).

\begin{theorem}[{{\cite[Prop.~3.3 and Cor.~3.4]{Sosnilo}}}]\label{thm:vova}
    Let $\cA$ be an additive category.
    \begin{enumerate}
        \item $\Stab(\cA)$ admits a bounded weight structure
            whose weight heart identifies with $\cA^\widem$,
            the weak idempotent completion of $\cA$.

        \item Let $\WCat_{=0}$ be the category of weighted categories and weight exact functors.
            The above construction upgrades to an adjunction
            \[
                \Stab(-) \colon \Cat^\add \rightleftarrows \WCat_{=0} \noloc (-)_{=0}
            \]
            where the right adjoint $(-)_{=0}$ associates to a weighted category its weight heart.

        \item Let $\WCat^b_{=0} \subseteq \WCat_{=0}$ be the full subcategory on bounded weighted categories.
            Then the functor $(-)_{=0} \colon \WCat^b_{=0} \to \Cat^\add$ is fully faithful
            with essential image consisting of the weakly idempotent complete additive categories,
            so the above adjunction restricts to a left Bousfield localization
            $\Stab(-) \colon \Cat^\add \rightleftarrows \WCat^b_{=0} \noloc (-)_{=0}$.

        \item In particular, restricting to bounded idempotent complete weighted categories
            $\WCat^{b,\perf}_{=0} \subseteq \WCat^b_{=0}$
            and idempotent complete additive categories $\Cat^{\add,\idem} \subseteq \Cat^\add$
            the above adjunction becomes an equivalence $\Cat^{\add,\idem} \simeq \WCat^{b,\perf}_{=0}$.
    \end{enumerate}
\end{theorem}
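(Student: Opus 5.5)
The plan is to construct the weight structure on $\Stab(\cA)$ by hand, then deduce the adjunction and full faithfulness from universal properties and the weight complex technology of Section~\ref{sec:weight}.

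For (1), define $\Stab(\cA)_{\geq 0}$ as the closure of $\{\Sigma^n a \mid a\in\cA,\ n\geq 0\}$ under extensions and retracts, and $\Stab(\cA)_{\leq 0}$ as the analogous closure of $\{\Sigma^{-n}a\}$. Orthogonality, i.e.\ axiom (2) of Definition~\ref{def:weak-w}, reduces to $\hom(a,\Sigma^n b)\in\Sp_{\geq 0}$ for $n\geq 0$. This holds because $\hom(a,b)=\map_\cA(a,b)$ is connective, being computed in $\Fun^\times(\cA^\op,\Sp)$ via Yoneda.

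For weight decompositions, every object of $\Stab(\cA)$ is built from shifts of objects of $\cA$ by finitely many cofibers. It therefore suffices to show that the class of objects admitting decompositions $X_{\leq 0}\to X\to X_{\geq 1}$ is closed under cofibers. Given $f\colon X\to Y$ with decompositions, I would lift $X_{\leq 0}\to Y$ through $Y_{\leq 0}$ up to a shift, as in the cellular-approximation lemma above. One must be careful here: a naive cofiber only gives a decomposition with defect, so the standard trick is to decompose $X$ at level $0$ and $Y$ at level $1$ and use the octahedral axiom. I expect this bookkeeping to be the main obstacle.

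Boundedness is immediate from the construction. The heart consists of retracts of objects of $\cA$ that lie in $\Stab(\cA)$. A retract $e$ of $a$ has a complement that is a cofiber of a split mono, so $e\in\cA^\widem$. Conversely, each object of $\cA^\widem$ lies in $\Stab(\cA)$, since the cofiber of a split mono in $\cA$ is computed in $\Stab(\cA)$.

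For (2), a weight exact $F\colon \Stab(\cA)\to\cC$ restricts to an additive functor $\cA\to\cC_{=0}$. Conversely, an additive $\cA\to\cC_{=0}$ extends uniquely to an exact functor by the universal property of $\Stab$. This extension is weight exact, since $\cC_{\geq 0}$ and $\cC_{\leq 0}$ are closed under extensions and retracts and contain the images of the generators.

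For (3), I would show that the counit $\Stab(\cC_{=0})\to\cC$ is an equivalence for bounded $\cC$. Essential surjectivity uses finite weight complexes (Lemma~\ref{lem:weight-cplx} with $\cU=\cC_{<\infty}$, $\ell=0$): a bounded object is a finite iterated extension of shifts of heart objects. Full faithfulness follows by dévissage, reducing to maps between heart objects, where it holds by definition. The heart of $\Stab(\cA)$ is $\cA^\widem$ by (1), which gives the essential image claim. Statement (4) then follows formally, since idempotent completeness of a bounded weighted category is equivalent to idempotent completeness of its heart, by a retract argument on weight complexes.
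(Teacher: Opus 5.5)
The paper does not prove this theorem; it only cites Sosnilo, so your outline is judged on its own. Most of it is sound:

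\begin{itemize}
\item The generated classes form a bounded weight structure on $\Stab(\cA)$. Your level-shifted lifting argument is the standard proof that objects admitting weight decompositions are closed under extensions.
\item The adjunction in (2) follows from the universal property of $\Stab$, as you say.
\item For bounded $\cC$, the counit $\Stab(\cC_{=0})\to\cC$ is an equivalence. The base case of the d\'evissage needs $\hom_\cC(h,h')$ to be connective for $h,h'\in\cC_{=0}$. That is weight orthogonality, not just a matter of definition.
\end{itemize}

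The genuine gap is the inclusion $\Stab(\cA)_{=0}\subseteq\cA^\widem$ in (1). Your step ``a retract $e$ of $a$ has a complement that is a cofiber of a split mono, so $e\in\cA^\widem$'' is circular. Writing $a\simeq e\oplus e'$ only gives $e\simeq\cofib(e'\to a)$, and this places $e$ in $\cA^\widem$ only if $e'$ is already known to lie there. The argument also never uses that $e$ lies in $\Stab(\cA)$, so it would show that every retract of an object of $\cA$ lies in $\cA^\widem$. That is false. Take $\cA$ to be the finitely generated free modules over a ring $R$ with $\tilde K_0(R)\neq 0$. Then $\cA^\widem$ consists of the stably free modules. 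A projective $P$ that is not stably free is still a retract of some $R^n$, with complement $Q=\cofib(P\to R^n)$. Such a $P$ indeed does not lie in $\Stab(\cA)$, but your argument cannot detect this.

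What you need is an argument that genuinely uses membership in $\Stab(\cA)$. Namely, if $e\in\cA^\idem$ lies in $\Stab(\cA)$, then $e\oplus b\simeq a'$ for some $a',b\in\cA$, so $e=\cofib(b\to a')$ is a cofiber of a split mono in $\cA$. This is a $K_0$ statement and requires new input. Two possible routes:
\begin{itemize}
\item Thomason's classification of dense subcategories (as in the proof of Lemma~\ref{lem:lim-comp-conn-idem}), combined with the identification of $K_0$ of the bounded weighted category $\Stab(\cA)^\idem$ with $K_0^\oplus(\cA^\idem)$.
\item An Euler characteristic argument. A finite cell filtration of $e$ with cells $\Sigma^k a_k$ gives a weight complex in the homotopy category of $\cA^\idem$. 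The cone of its comparison with $e[0]$ is a contractible bounded complex, hence split, which yields $e\oplus\bigoplus_{k \text{ odd}}a_k\simeq\bigoplus_{k \text{ even}}a_k$.
\end{itemize}

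Two smaller points also need filling in.
\begin{itemize}
\item Your earlier claim that heart objects are retracts of objects of $\cA$ needs proof. It follows from such a cell filtration: for $X\in\Stab(\cA)_{=0}$, the retraction $X_{\leq 0}\to X$ kills the negative-degree cells and so factors through the degree-$0$ cell.
\item In (4), a ``retract argument on weight complexes'' does not work as stated. Weight truncations are not functorial, so an idempotent on $Y$ induces endomorphisms of the pieces of its weight complex that are idempotent only up to an error term. The clean route is to extend the weight structure to $\cC^\idem$ with heart $(\cC_{=0})^\idem$, for instance via Corollary~\ref{cor:upw-comp-idem}(1) or Bondarko--Sosnilo. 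Then idempotent completeness of the heart forces $\cC=\cC^\idem$ by finite weight complexes.
\end{itemize}
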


We obtain the following as a corollary of Proposition \ref{prop:weight-pdelta}.

\begin{proposition}\label{prop:pdelta-add}
    Let $\cA$ be an additive category. Then:
    \begin{enumerate}
        \item $\cP^{\Delta^\op}(\cA)$ is idempotent complete,
            and the map $\cP^{\Delta^\op}(\cA) \to \cP^{\Delta^\op}(\cA^\idem)$ is an equivalence.
            The full subcategory $\cA^\idem \subseteq \cP^{\Delta^\op}(\cA)$
            consists precisely of the projective objects in the sense of \cite[Section 5.5.8]{HTT},
            i.e.~those $X \in \cP^{\Delta^\op}(\cA)$ where $\map(X,-) \colon \cP^{\Delta^\op}(\cA) \to \An$
            preserves geometric realizations.

        \item $\cP^{\Delta^\op}(\cA)$ is prestable,
            with $\SW(\cP^{\Delta^\op}(\cA)) \simeq \lc{\Stab(\cA)}$.
            In particular, $\SW(\cP^{\Delta^\op}(\cA))$
            always admits a left complete weight structure with heart $\cA^\idem$.

        \item Every object in $\cP^{\Delta^\op}(\cA)$
            is a geometric realization of a simplicial object that is levelwise projective
            i.e.~a simplicial object in $\cA^\idem$.
    \end{enumerate}
\end{proposition}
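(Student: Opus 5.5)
Set $\cC \coloneqq \lc{\Stab(\cA)}$. By Theorem \ref{thm:vova}, $\Stab(\cA)$ carries a bounded weight structure with heart $\cA^\widem$. By Theorem \ref{thm:upw-comp}, $\cC$ is a left complete weighted category (the case $\ell = 0$). By Corollary \ref{cor:upw-comp-idem}(1), the bounded part $\cC_{\wb}$ is the idempotent completion of $\Stab(\cA)$. Since $\cC$ is weighted, $\cC_{=0}$ is closed under retracts in $\cC_{\wb}$. Hence $\cC_{=0} \simeq (\cA^\widem)^\idem = \cA^\idem$, with the inclusion $\cA \subseteq \cC_{=0}$ being the idempotent completion.

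Proposition \ref{prop:weight-pdelta} then gives $\cP^{\Delta^\op}(\cA^\idem) \simeq \cC_{\geq 0}$, together with $\SW(\cP^{\Delta^\op}(\cA^\idem)) \simeq \cC_{>-\infty} = \cC$; the last equality holds because $\cC$ is the left completion of a bounded category, so it is bounded below. For (3) applied to $\cA^\idem$, the same proposition says every object is a geometric realization of a simplicial object in $\cA^\idem$. So the statements for $\cA^\idem$ reduce to this proposition. Prestability of $\cP^{\Delta^\op}(\cA^\idem)$ follows because it is the connective part of a stable connectivity structure. Idempotent completeness follows from Corollary \ref{cor:grow-conn-idem}.

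The key step is to show $\cP^{\Delta^\op}(\cA) \to \cP^{\Delta^\op}(\cA^\idem)$ is an equivalence. I will use the universal property. It suffices to show that $\cP^{\Delta^\op}(\cA^\idem)$, together with the inclusion $\cA \to \cA^\idem$, satisfies the universal property of freely adjoining geometric realizations to $\cA$. Concretely, the claim is that for any category $\cE$ admitting geometric realizations, restriction along $\cA \subseteq \cA^\idem$ induces an equivalence between all functors $\cA^\idem \to \cE$ and all functors $\cA \to \cE$. Such an $\cE$ is idempotent complete by Lemma \ref{lem:fintype-colim}, since colimits over $\Idem$ are computed by a geometric realization, as in that proof. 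Any functor $\cA \to \cE$ into an idempotent complete category extends uniquely to $\cA^\idem$. Combining this with the universal property of $\cP^{\Delta^\op}(\cA^\idem)$ relative to $\cA^\idem$ gives the identification. The same universal-property argument also yields idempotent completeness of $\cP^{\Delta^\op}(\cA)$ directly.

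Consequently $\SW(\cP^{\Delta^\op}(\cA)) \simeq \lc{\Stab(\cA)}$, and this carries a left complete weight structure with heart $\cA^\idem$. This proves (2); point (3) follows from the proposition as noted above.

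For the projectives characterization in (1), there are two inclusions.

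First, objects of $\cA^\idem = \cC_{=0}$ are projective. For $X \in \cC_{=0}$, the proof of Proposition \ref{prop:weight-pdelta} shows $\hom_\cC(X,-)$ commutes with geometric realizations in $\cC_{\geq 0}$. These mapping spectra are connective, and $\Omega^\infty$ commutes with geometric realizations of connective spectra, so $\map(X,-)$ preserves geometric realizations.

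Conversely, suppose $P$ is projective. By (3), write $P = |Y_\bullet|$ with $Y_\bullet$ levelwise in $\cA^\idem$. Projectivity gives $\map(P,P) \simeq |\map(P,Y_\bullet)|$, so $\pi_0\map(P,P)$ is a quotient of $\pi_0\map(P,Y_0)$. Hence $\id_P$ factors through $Y_0 \to P$, and $P$ is a retract of $Y_0 \in \cA^\idem$. Since $\cA^\idem$ is idempotent complete, $P \in \cA^\idem$.

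The main obstacle is the step reducing $\cA$ to $\cA^\idem$, namely that $\cP^{\Delta^\op}(\cA)$ already sees the idempotent completion. This rests on the Bousfield–Kan observation that idempotents split via geometric realizations. Everything else is bookkeeping with Theorem \ref{thm:vova}, Corollary \ref{cor:upw-comp-idem} and Proposition \ref{prop:weight-pdelta}.
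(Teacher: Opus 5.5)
Your proposal has a genuine gap, and it sits in the step you yourself flag as the main obstacle. You assert that every category $\cE$ admitting geometric realizations is idempotent complete ``by Lemma~\ref{lem:fintype-colim}''. That lemma also assumes finite coproducts, and they are used essentially. The Bousfield--Kan simplicial object computing the colimit of an idempotent $e\colon X\to X$ has $n$-th term $\coprod_{\mathrm{Idem}_n}X$, which is $2^n$ copies of $X$. The diagram in the footnote that is constant at $X$ is only \emph{semi}-simplicial, so its colimit is not a geometric realization.

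The general claim is in fact false. Take the ordinary category with one object $x$ and $\mathrm{End}(x)=\{\id,e\}$, $e^2=e$. In this monoid $ab=\id$ forces $a=b=\id$. The identities $d_is_i=\id=d_{i+1}s_i$ therefore force every simplicial object to be constant with identity structure maps, and its colimit is $x$. So this category admits all geometric realizations, yet $e$ does not split.

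Consequently, restriction $\Fun(\cA^\idem,\cE)\to\Fun(\cA,\cE)$ need not be an equivalence for every $\cE$ with geometric realizations. So you have not shown that $\cP^{\Delta^\op}(\cA^\idem)$ has the universal property of $\cP^{\Delta^\op}(\cA)$. Your remark that the same argument gives idempotent completeness of $\cP^{\Delta^\op}(\cA)$ ``directly'' rests on the same false claim.

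The missing input is finite coproducts in $\cP^{\Delta^\op}(\cA)$ itself; this is what the paper's one-line appeal to Lemma~\ref{lem:fintype-colim} uses. Here is one way to supply it.
\begin{enumerate}
\item $\cP^{\Delta^\op}(\cA)$ lies in the category $\mathcal{P}_\Sigma(\cA)\subseteq\cP(\cA)$ of finite-product-preserving presheaves. This subcategory is closed under sifted colimits, and in it $j(a)\sqcup j(b)\simeq j(a\oplus b)$.
\item Hence the Bousfield--Kan resolution of the splitting $jP$ of an idempotent on $ja$ has terms $j(a^{\oplus 2^n})$ with $a^{\oplus 2^n}\in\cA$.
\item So $j(\cA^\idem)\subseteq\cP^{\Delta^\op}(\cA)$. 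Then $\cP^{\Delta^\op}(\cA)$ and $\cP^{\Delta^\op}(\cA^\idem)$ coincide, as the closure under geometric realizations of the same objects in $\cP(\cA)\simeq\cP(\cA^\idem)$.
\end{enumerate}
Alternatively, show that $\cP^{\Delta^\op}(\cA)$ has finite coproducts, deduce idempotent completeness from the lemma, and run your universal-property comparison only against idempotent complete targets.

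Once this step is repaired, the rest of your argument goes through. This covers the identification of the heart of $\lc{\Stab(\cA)}$, the use of Proposition~\ref{prop:weight-pdelta}, bounded-belowness of $\lc{\Stab(\cA)}$, and both inclusions in the characterization of projectives.
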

\begin{proof}
    Here (1) is a consequence of Lemma \ref{lem:fintype-colim}.
    By Observation \ref{obs:left-comp} and Theorem \ref{thm:vova}
    we have $(\lc{\Stab(\cA)})_{w=0} = \cA^{\idem}$,
    so Proposition \ref{prop:weight-pdelta} and (1)
    yield $\cP^{\Delta^\op}(\cA) \simeq (\lc{\Stab(\cA)})_{w \geq 0}$,
    which gives (2) by applying $\SW(-)$, see Lemma \ref{lem:stable-conn-struc}(4).
    Also (3) is a direct consequence of Proposition \ref{prop:weight-pdelta}.
\end{proof}

\begin{definition}\label{def:kw}
    Let $\cA$ be an additive category. We will denote by
    \[
        \Kw(\cA) \coloneqq \lrc{\Stab(\cA)} \simeq \rc{\SW(\cP^{\Delta^\op}(\cA))}
    \]
    the (left and right) weight completion of $\Stab(\cA)$
    (by Proposition \ref{prop:lr-complete} the order does not matter).
    Note also that $\Kw(\cA)_{\wb} = \Stab(\cA)^\idem = \Stab(\cA^\idem)$ and
    $\Kw(\cA)_{=0} = \cA^\idem$ by Corollary \ref{cor:upw-comp-idem} and its dual.
\end{definition}

We will see below that the terminology / notation of $\Kw(\cA)$ is justified
by comparing it to the classical case of chain complexes in an ordinary additive category up to chain homotopy.
For now, we deduce the following result from our theory.

\begin{theorem}\label{thm_adjoint}
    Let $\WCat^{b,\perf}_{=0} \subseteq \WCat^b_{=0} \subseteq \WCat_{=0}$ be as in Theorem \ref{thm:vova}.
    Consider also the full subcategory $\WCat^{\updownarrow}_{=0} \subseteq \WCat_{=0}$
    on (left and right) complete weighted categories.
    \begin{enumerate}
        \item We have adjunctions
            \[\begin{tikzcd}[ampersand replacement=\&]
                {\Cat^\add} \&\& {\WCat^b_{=0}} \&\& {\WCat_{=0}} \&\& {\WCat_{=0}^{\updownarrow}}
                \arrow[""{name=0, anchor=center, inner sep=0}, "{\Stab(-)}", curve={height=-12pt}, from=1-1, to=1-3]
                \arrow[""{name=1, anchor=center, inner sep=0}, "{(-)_{=0}}", curve={height=-12pt}, hook', from=1-3, to=1-1]
                \arrow[""{name=2, anchor=center, inner sep=0}, "{\inc_b}", curve={height=-12pt}, hook', from=1-3, to=1-5]
                \arrow[""{name=3, anchor=center, inner sep=0}, "{(-)_{\wb}}", curve={height=-12pt}, from=1-5, to=1-3]
                \arrow[""{name=4, anchor=center, inner sep=0}, "{\lrc{(-)}}", curve={height=-12pt}, from=1-5, to=1-7]
                \arrow[""{name=5, anchor=center, inner sep=0}, "{\inc_{\updownarrow}}", curve={height=-12pt}, hook', from=1-7, to=1-5]
                \arrow["\dashv"{anchor=center, rotate=-90}, draw=none, from=0, to=1]
                \arrow["\dashv"{anchor=center, rotate=-90}, draw=none, from=2, to=3]
                \arrow["\dashv"{anchor=center, rotate=-90}, draw=none, from=4, to=5]
            \end{tikzcd}\]

        \item By composing and restricting we obtain an adjoint equivalence
            \[
                \lrc{(-)} \colon \WCat^{b,\perf}_{=0} \simeq \WCat_{=0}^{\updownarrow} \noloc (-)_{\wb}.
            \]
            In particular, composing further with Theorem \ref{thm:vova}(4),
            we also obtain
            \[
                \Kw(-) \colon \Cat^{\add,\idem} \simeq \WCat_{=0}^{\updownarrow} \noloc (-)_{=0}.
            \]

        \item The composite functor $(-)^\idem \circ (-)_{=0} \colon \WCat_{=0} \to \Cat^{\add,\idem}$
            admits the fully faithful right adjoint $\Kw(-) \colon \Cat^{\add,\idem} \hookrightarrow \WCat_{=0}$.
            In particular, we can think of $\Kw(\cA)$ as the terminal weighted category
            whose weight heart is $\cA^\idem$.
    \end{enumerate}
\end{theorem}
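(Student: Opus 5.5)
The plan is to verify the three adjunctions in (1) separately and then obtain (2) and (3) formally.

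\textbf{Step 1: the three adjunctions.} The first adjunction $\Stab(-) \dashv (-)_{=0}$ is Theorem \ref{thm:vova}(3).

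For the middle adjunction $\inc_b \dashv (-)_{\wb}$, I first check that $\cC_{\wb}$ inherits a weight structure from $\cC$. For bounded $X$, the weight decomposition $X_{\leq -1} \to X \to X_{\geq 0}$ has bounded outer terms, since $X_{\geq 0}$ is an extension of $X$ and $\Sigma X_{\leq -1}$. Next, any weight exact $F \colon \cB \to \cC$ with $\cB$ bounded lands in $\cC_{\wb}$, because weight exact functors preserve $\cC_{[a,b]}$. Hence maps $\cB \to \cC$ are the same as maps $\cB \to \cC_{\wb}$.

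The last adjunction $\lrc{(-)} \dashv \inc_\updownarrow$ is the composite of left and right completion. Theorem \ref{thm:upw-comp}(1) gives left completion as a localization $\WCat_{=0} \to \WCat^{\uparrow}_{=0}$ for $\ell=0$, and its dual (Remark \ref{rem:dw-comp}) gives right completion. Proposition \ref{prop:lr-complete}(1) shows that right completing a left complete category stays left complete. So for $\cB$ complete we get $\Fun^\Ex_{[0,0]}(\lrc{\cC},\cB) \simeq \Fun^\Ex_{[0,0]}(\lc{\cC},\cB) \simeq \Fun^\Ex_{[0,0]}(\cC,\cB)$. One must also confirm that completions of weighted categories are again genuinely weighted ($\ell=0$), which holds by Theorem \ref{thm:upw-comp}.

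\textbf{Step 2: the equivalence in (2).} I show that unit and counit of the composite adjunction $\WCat^{b,\perf}_{=0} \rightleftarrows \WCat^\updownarrow_{=0}$ are equivalences.

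For the unit, take $\cB$ bounded and idempotent complete. Corollary \ref{cor:upw-comp-idem}(1) says $\cB_{\wb} \to (\lc{\cB})_{\wb}$ is an idempotent completion, hence an equivalence. Its dual gives the same for right completion, so $\cB \simeq (\lrc{\cB})_{\wb}$.

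For the counit, take $\cD$ complete and consider $\lrc{(\cD_{\wb})} \to \cD$. This functor is weight exact, and on weight bounded parts it is an equivalence, since $\cD_{\wb}$ is idempotent complete by Observation \ref{obs:left-comp}. To conclude it is an equivalence I argue in two stages:
\begin{enumerate}
\item Lemma \ref{lem:upw-comp-eqv}(1), applied to left complete categories, upgrades the equivalence on bounded parts to one on bounded below parts, giving $(\lrc{\cD_{\wb}})_{>-\infty} \simeq \cD_{>-\infty}$.
\item The dual of Lemma \ref{lem:upw-comp-eqv}(1) then passes from bounded below parts to all of $\cD$, using right completeness of both sides.
\end{enumerate}
Equivalently, I can apply Corollary \ref{cor:left-comp-inverts-idem} and its dual. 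The second statement of (2) follows by composing with Theorem \ref{thm:vova}(4), since $\Kw(\cA) = \lrc{\Stab(\cA)}$ by definition.

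\textbf{Step 3: part (3).} I compose the adjunctions of (1) to get
\[\map(\cC, \Kw(\cA)) \simeq \map((\cC_{\wb})_{=0}, \cA).\]
It remains to identify the right side with $\map((\cC_{=0})^\idem, \cA)$, which holds because $\cA$ is idempotent complete and $(\cC_{\wb})_{=0} = \cC_{=0}$. Fully faithfulness of the right adjoint $\Kw$ is equivalent to the counit $(\Kw(\cA)_{=0})^\idem \simeq \cA$ being an equivalence. This is Definition \ref{def:kw}, which gives $\Kw(\cA)_{=0} = \cA^\idem$.

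\textbf{Main obstacle.} I expect the hardest step to be the counit equivalence in (2). It needs a careful two-stage application of Lemma \ref{lem:upw-comp-eqv}: once for bounded below objects and once dually for the whole category. The subtlety is that each stage requires the appropriate completeness to be available at that level.
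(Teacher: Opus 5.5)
Your parts (1) and (2) follow the paper's proof essentially verbatim, and they are fine. The unit is handled by Corollary \ref{cor:upw-comp-idem}(1) and its dual. The counit is handled by passing to bounded parts and then applying Lemma \ref{lem:upw-comp-eqv} and its dual.

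The gap is in Step 3. There you claim that composing the adjunctions of (1) gives $\map(\cC,\Kw(\cA))\simeq\map((\cC_{\wb})_{=0},\cA)$, and that does not follow.

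\begin{itemize}
\item Composing (1) yields $\lrc{(-)}\circ\inc_b\circ\Stab \dashv (-)_{=0}\circ(-)_{\wb}\circ\inc_{\updownarrow}$. This describes maps \emph{out of} $\Kw(\cA)$ into a complete $\cD$, with $\Kw$ as a \emph{left} adjoint.
\item Part (3) needs $\Kw$ as a \emph{right} adjoint, i.e.\ a description of maps \emph{into} $\Kw(\cA)$ from an arbitrary weighted $\cC$.
\item In (1), the functors $(-)_{\wb}$ and $(-)_{=0}$ are right adjoints, so they cannot move $\cC$ across.
\item The only move (1) actually provides is $\map(\cC,\Kw(\cA))\simeq\map(\lrc{\cC},\Kw(\cA))$. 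Via the equivalence (2), this becomes $\map((\lrc{\cC})_{=0},\cA)$.
\end{itemize}

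So the real content of (3) is a natural identification $(\lrc{\cC})_{=0}\simeq(\cC_{=0})^\idem$ for arbitrary, possibly unbounded and non-idempotent-complete, $\cC$. Equivalently, $\cC_{\wb}\to(\lrc{\cC})_{\wb}$ must be an idempotent completion, naturally in $\cC$. Your proposal never supplies this; you only used Corollary \ref{cor:upw-comp-idem} for $\cB$ already bounded and idempotent complete.

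The paper proves the identification in two steps:
\begin{itemize}
\item It first shows $\lrc{(-)}\circ(-)_{\wb}\simeq\lrc{(-)}$, using Corollary \ref{cor:left-comp-inverts-idem} and its dual.
\item The unit then gives a natural map $(-)_{\wb}\Rightarrow\lrc{(-)}\circ(-)_{\wb}\simeq\lrc{(-)}$, which is an idempotent completion by Corollary \ref{cor:upw-comp-idem}. Applying $(-)_{=0}$ makes the square $(-)_{=0}\circ\lrc{(-)}\simeq(-)^\idem\circ(-)_{=0}$ commute.
\end{itemize}

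Alternatively, you can rescue your own formula:
\begin{itemize}
\item Show that restriction along $\cC_{\wb}\subseteq\cC$ induces $\map(\cC,\Kw(\cA))\simeq\map(\cC_{\wb},\Kw(\cA))$. This needs $\lrc{(\cC_{\wb})}\simeq\lrc{\cC}$, which is again Corollary \ref{cor:left-comp-inverts-idem} and its dual.
\item Use the middle adjunction to land in $\Kw(\cA)_{\wb}\simeq\Stab(\cA)$.
\item Conclude with the \emph{full faithfulness} of $(-)_{=0}$ on $\WCat^b_{=0}$ (Theorem \ref{thm:vova}(3)), not with the $\Stab\dashv(-)_{=0}$ adjunction.
\end{itemize}

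Either way, the missing idea is that completion inverts the passage to weight-bounded objects up to idempotent completion. Once that natural equivalence is in place, your full-faithfulness argument via the counit $\Kw(\cA)_{=0}\simeq\cA$ goes through.
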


\begin{proof}
    For (1) the left adjunction is Theorem \ref{thm:vova},
    the middle one is obvious, and the right one follows from Theorem \ref{thm:upw-comp}.

    For (2), we simply note that the unit and counit maps are equivalences.
    Indeed, the unit map at some $\cC \in \WCat^b_{=0}$ is given by $\eta_\cC \colon \cC \to (\lrc{\cC})_{\wb}$
    which is adjoint to the canonical map $\cC \to \lrc{\cC}$ using that $\cC$ is bounded.
    Now if $\cC$ is idempotent complete,
    it follows from Corollary \ref{cor:upw-comp-idem}(1) and its dual
    that $\eta_\cC$ is an equivalence.
    Similarly, the counit map at some $\cD \in \WCat_{=0}^{\updownarrow}$
    is given by $\eps_\cD \colon \lrc{(\cD_{\wb})} \to \cD$ which is adjoint to the inclusion $\cD_{\wb} \subseteq \cD$.
    In particular, we know that $(\eps_\cD)_{\wb}$ is an equivalence,
    so it follows from Lemma \ref{lem:upw-comp-eqv} and its dual that $\eps_\cD$ is an equivalence.

    For (3), note that under the equivalence $\Cat^{\add,\idem} \simeq \WCat^{\updownarrow}_{=0}$
    from (2), the functor $\Kw \colon \Cat^{\add,\idem} \to \WCat_{=0}$
    identifies with the right adjoint $\inc_{\updownarrow} \colon \WCat^{\updownarrow}_{=0} \subseteq \WCat_{=0}$.
    The left adjoint is then the top right composite in the following square
    \[\begin{tikzcd}[ampersand replacement=\&]
        {\WCat_{=0}} \& {\WCat^\updownarrow_{=0}} \\
        {\Cat^\add} \& {\Cat^{\add,\idem}}
        \arrow["{\lrc{(-)}}", from=1-1, to=1-2]
        \arrow["{(-)_{=0}}"', from=1-1, to=2-1]
        \arrow["\simeq"', from=1-2, to=2-2]
        \arrow["{(-)_{=0}}", draw=none, from=1-2, to=2-2]
        \arrow["{(-)^\idem}", from=2-1, to=2-2]
    \end{tikzcd}\]
    so it remains to see that the square commutes.
    In other words, we need to construct a natural transformation $(-)_{=0} \Rightarrow (-)_{=0} \circ \lrc{(-)}$
    and show it is an idempotent completion.
    The natural transformation $(-)_{\wb} \Rightarrow \id_{\WCat_{=0}}$ yields an equivalence
    $\lrc{(-)} \circ (-)_{\wb} \simeq \lrc{(-)}$ by using Corollary \ref{cor:left-comp-inverts-idem} and its dual.
    But now the unit yields a natural map $(-)_{\wb} \Rightarrow \lrc{(-)} \circ (-)_{\wb} \simeq \lrc{(-)}$
    which is an idempotent completion by Corollary \ref{cor:upw-comp-idem}.
    Postcomposing with $(-)_{=0}$ then yields the claim.
\end{proof}

Let us now discuss the example of chain complexes in more detail
and show that the above notation matches up with the classical
meaning of $K(\cA)$ for an ordinary additive category.
For an ordinary additive category $\cA$
we let $\Ch_\infty(\cA) \coloneqq N_{dg}(\Ch(\cA))$
be the dg-nerve (in the sense of \cite[Section 1.3.1]{HA})
of the ordinary category of chain complexes in $\cA$,
see also \cite[Remark 1.3.2.2]{HA}.
This is stable by \cite[Proposition 1.3.2.10]{HA},
and its homotopy category is the classical triangulated category $K(\cA)$,
whose objects are chain complexes in $\cA$ up to chain homotopy equivalence,
and whose morphisms are chain homotopy classes of chain maps.
As mentioned in the paragraph above \cite[Corollary 1.3.2.18]{HA},
any full subcategory of $\Ch(\cA)$ closed under shifts and formations of mapping cones
defines a (full) stable subcategory of $\Ch_\infty(\cA)$.
In particular, the categories $\Ch^b(\cA), \Ch^-(\cA), \Ch^+(\cA)$ of bounded, bounded below, and bounded above
chain complexes determine full stable subcategories $\Ch^b_\infty(\cA), \Ch_\infty^-(\cA),\Ch^+_\infty(\cA) \subseteq \Ch_\infty(\cA)$.

One of the motivating examples of weight structures in \cite{Bondarko10}
(see also \cite[Proposition 4.6]{Schnuerer})
is the weight structure on $K(\cA)$ where $K(\cA)_{w \geq 0}$ consists of those chain complexes that are chain homotopy equivalent to ones concentrated in non-negative degrees, and similarly for $K(\cA)_{w \leq 0}$.
The weight decompositions are achieved by the ``stupid'' / ``naive'' truncations.
Of course, this also yields a weight structure on $\Ch_\infty(\cA)$,
which turns out to restrict to $\Ch^b_\infty(\cA)$,
and yield a weight exact equivalence $\Ch^b_\infty(\cA) \simeq \Stab(\cA)$.
For the convenience of the reader, we will reprove the existence
of this weight structure on the way to the following theorem.

\begin{theorem}\label{thm:chains}
    Let $\cA$ be an ordinary additive category.
    There is a weight structure on $\Ch_\infty(\cA)$ so that
    \begin{enumerate}
        \item $\Ch_\infty(\cA)_{w \geq 0}$ is the full subcategory on those objects equivalent
            to chain complexes concentrated in non-negative degrees,
            and dually for $\Ch_\infty(\cA)_{w \leq 0}$.
            The ``naive'' truncations of a chain complex
            provide a weight decomposition.
            The weight heart is $\cA^\idem$.

        \item There is a weight exact equivalence $\Ch_\infty(\cA^\op) \simeq \Ch_\infty(\cA)^\op$.

        \item The weight structure is both left and right complete.

        \item The weight structure restricts to a bounded weight structure on $\Ch^b_\infty(\cA)$
            with heart $\cA^\widem$, so there is a weight exact equivalence $\Stab(\cA) \simeq \Ch^b_\infty(\cA)$.
            We have $\Ch^-_\infty(\cA) = \Ch_\infty(\cA)_{w >-\infty}$,
            so $\Ch^-_\infty(\cA)$ admits a left complete and right bounded weight structure with weight
            heart $\cA^\idem$, and dually for $\Ch^+_\infty(\cA)$.

        \item We have a weight exact equivalence
            $
                \Kw(\cA) \simeq \Ch_\infty(\cA),
            $
            which justifies the notation $\Kw(\cA)$.

        \item Let $\cC$ be a (left and right) complete weighted category.
            If $\cC_{w = 0}$ is an ordinary category,
            then there exists a weight exact equivalence $\Ch_\infty(\cC_{w=0}) \simeq \cC$.
    \end{enumerate}
\end{theorem}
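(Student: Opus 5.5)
I would organize the proof around the naive (brutal) truncations and then feed everything into the abstract machinery of Sections \ref{sec:upw-comp} and \ref{sec:chains}. First, for (1), I would set $\Ch_\infty(\cA)_{w\geq 0}$ and $\Ch_\infty(\cA)_{w\leq 0}$ to be the closures under chain homotopy equivalence of complexes concentrated in degrees $\geq 0$, respectively $\leq 0$. Orthogonality is a direct computation. A chain map from a complex in degrees $\leq 0$ to a complex in degrees $\geq 1$ is zero, and the mapping spectrum $\hom(X,Y)$ for $X$ in degrees $\leq 0$ and $Y$ in degrees $\geq 0$ is the hom complex, which is concentrated in degrees $\geq 0$. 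The naive truncation $\sigma_{\leq n}X\to X\to\sigma_{\geq n+1}X$ is a degreewise split short exact sequence, hence a cofiber sequence in $\Ch_\infty(\cA)$, and this gives the decompositions.

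Closure under retracts I would deduce from saturation. Suppose $X\in{}^\perp(\Ch_{\geq 1})$. Its naive decomposition has vanishing map $X\to\sigma_{\geq 1}X$, so $X$ is a retract of $\sigma_{\leq 0}X$. To see that a retract of a complex in degrees $\leq 0$ is homotopy equivalent to one in degrees $\leq 0$, I would use that an idempotent on $\cA$-objects can be split after adding a contractible cone: the classical trick $X\oplus \Sigma X\oplus\cdots$ gives a complex over $\cA$, without needing $\cA$ idempotent complete. This is where I expect the heart to come out as $\cA^\idem$: a heart object is a retract of a degree $0$ object $a$, and such retracts are realized by the two-sided infinite complexes $\cdots\xrightarrow{e} a\xrightarrow{1-e} a\xrightarrow{e} a\to\cdots$. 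I expect this identification of retracts, together with checking that it is a genuine weight structure (closure under retracts in the unbounded category), to be the fiddliest step. Point (2) is immediate: $\Ch(\cA^\op)\cong\Ch(\cA)^\op$ by reindexing degrees, and this identification is compatible with the dg-nerve and swaps the two classes.

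For (3), by (2) it suffices to prove left completeness. We have $\Ch_{\geq\infty}=0$, since a complex $X\in\Ch_{\geq n}$ for all $n$ is orthogonal to each $\sigma_{\leq n}X$, and $X=\colim\sigma_{\leq n}X$. For the colimit conditions I would use Lemma \ref{lem:colim-comp-via-geom-real} in its sequential form. Given a sequence growing in connectivity, I would first replace it up to cofinality so that its cofibers lie in $\Ch_{\geq n+1}$. Then I would strictify it to a sequence of degreewise split inclusions $X_n\hookrightarrow X_{n+1}$ that are isomorphisms in degrees $\leq n$. This is possible by replacing cofibers with complexes in degrees $\geq n+1$ and forming mapping cones. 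The degreewise eventually constant union then exists in $\Ch(\cA)$, and for each test object $T$ in $\Ch_{\leq k}$, or for an arbitrary complex via its naive truncations, one checks that it is the $\infty$-categorical colimit by computing hom complexes. If all the $X_n$ are in degrees $\geq 0$, so is the union.

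For (4), the restriction to $\Ch^b$ is bounded, because naive truncations preserve boundedness and $\Ch^b$ is generated by $\cA$ under extensions and shifts. Theorem \ref{thm:vova}(3) then gives $\Stab(\cA)\simeq\Ch^b_\infty(\cA)$, and $\Ch^-=\Ch_{w>-\infty}$ holds by definition of bounded below. For (5), I would apply Corollary \ref{cor:left-comp-via-embed} and its dual. The inclusion $\Ch^+_\infty(\cA)=\Ch_{w<\infty}\subseteq\Ch_\infty(\cA)$ is left dense via naive truncations and satisfies the density and saturation hypotheses, so $\Ch_\infty(\cA)\simeq\lc{(\Ch^+_\infty(\cA))}$. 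Dually, $\Ch^+_\infty\simeq\rc{\Ch^b_\infty}$, which gives $\Kw(\cA)=\lrc{\Stab(\cA)}\simeq\Ch_\infty(\cA)$, using Proposition \ref{prop:lr-complete} for the order of completions. Finally, (6) follows from Theorem \ref{thm_adjoint}(2): $\cC\simeq\Kw(\cC_{w=0})$, since the heart is idempotent complete by Observation \ref{obs:left-comp}, and (5) identifies $\Kw(\cC_{w=0})\simeq\Ch_\infty(\cC_{w=0})$.
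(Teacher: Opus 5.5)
\textbf{The gap: closure under retracts in (1).} This step is where the real content of the theorem sits. Closure of $\Ch_\infty(\cA)_{w\geq 0}$ and $\Ch_\infty(\cA)_{w\leq 0}$ under retracts in all of $\Ch_\infty(\cA)$ is Schn\"urer's idempotent completeness of $\Ch^{\pm}_\infty(\cA)$ (Corollary \ref{cor:ch-idem}), and your sketch does not prove it.

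The trick you cite splits an idempotent $e$ on an \emph{object} $a\in\cA$: take $\bigoplus_{k\geq 0}a[k]$ with differentials alternating between $e$ and $1-e$. A retract of a complex $X$ concentrated in degrees $\leq 0$ only gives a homotopy idempotent $e$ on $X$, for which $e(1-e)$ is merely nullhomotopic. So $X\oplus\Sigma X\oplus\cdots$ is not a complex until you add a twisted differential built from all the higher coherences of $e$. That is exactly the missing work.

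The swindle also points the wrong way in the paper's homological grading. For $X$ in degrees $\leq 0$ it is not concentrated in degrees $\leq 0$, and in general it needs infinite sums in each degree. You would want $X\oplus\Omega X\oplus\Omega^2X\oplus\cdots$ instead.

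Your description of the heart has a slip as well. The two-sided alternating complex $\cdots\xrightarrow{e}a\xrightarrow{1-e}a\xrightarrow{e}\cdots$ is contractible: the identities form a contracting homotopy because $e+(1-e)=1$. What is true is that the one-sided complexes $\cdots\xrightarrow{1-e}a\xrightarrow{e}a\to 0$ (degrees $\geq 0$) and $0\to a\xrightarrow{e}a\xrightarrow{1-e}\cdots$ (degrees $\leq 0$) both split the idempotent $1-e$ on $a[0]$. This shows $\cA^\idem\subseteq\Ch_\infty(\cA)_{w=0}$ explicitly, but says nothing about retracts of general complexes.

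\textbf{The fix, and smaller points.} The fix is already in your plan, and it is the paper's route. Your argument for (3) uses only the classes up to homotopy equivalence, orthogonality, and the naive-truncation cofiber sequences; it never uses closure under retracts. Concretely, it consists of the colimit of naive truncations via hom complexes and the strictification of a sequence growing in connectivity through mapping cones. Moreover, $\Ch_\infty(\cA)_{w\geq 0}$ is closed under cofibers and extensions, so it already defines a stable connectivity structure.

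So prove (2) and (3) first. Then Lemma \ref{lem:colim-comp-via-geom-real} and Corollary \ref{cor:grow-conn-idem} show that $\Ch_\infty(\cA)_{w\geq 0}$ admits geometric realizations and hence is idempotent complete. That gives closure under retracts in $\Ch_\infty(\cA)$, and the duality (2) gives the same for $\Ch_\infty(\cA)_{w\leq 0}$.

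In (4), Theorem \ref{thm:vova}(3) yields $\Stab(\cA)\simeq\Ch^b_\infty(\cA)$ only once you know the heart of $\Ch^b_\infty(\cA)$ is $\cA^\widem$; the paper cites Hebestreit--Steimle for this. Alternatively, your generation remark gives the equivalence directly. The canonical exact functor $\Stab(\cA)\to\Ch^b_\infty(\cA)$ is fully faithful on $\cA$, and both sides are generated by $\cA$ under finite colimits and shifts. Then Lemma \ref{lem:saturated}(3) and Theorem \ref{thm:vova}(1) give weight exactness of the inverse and the heart.

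The rest is sound. Your (5) goes through Corollary \ref{cor:left-comp-via-embed} and its dual along $\Ch^b_\infty(\cA)\subseteq\Ch^+_\infty(\cA)\subseteq\Ch_\infty(\cA)$. This is a valid alternative to the paper's use of Corollary \ref{cor:left-comp-inverts-idem} on the idempotent completion $\Stab(\cA)\to\Ch_\infty(\cA)_{\wb}$; yours needs the on-the-nose equivalence from (4), the paper's does not. Deducing (6) from Theorem \ref{thm_adjoint}(2) together with (5) is cleaner than the paper's ``similar argument''.
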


\begin{warning}\label{warn:ch-bounded}
    We always have an inclusion $\Ch^b_\infty(\cA) \subseteq \Ch_\infty(\cA)_{\wb}$
    of the dg-nerve of the category of bounded chain complexes
    into the weight-bounded objects in $\Ch_\infty(\cA)$.
    This is generally \emph{not} an equivalence
    (note that definitionally $\Ch_\infty(\cA)_{[-n,n]}$ contains objects $X \in \Ch_\infty(\cA)$
    which can be represented by a $(-n)$-connective complex and also represented by an $n$-coconnective
    complex in $\cA$, but not necessarily by a simultaneously $(-n)$-connective and $n$-coconnective
    complex). This comes down to an idempotent completeness issue.
\end{warning}

As a consequence, we recover a Theorem of Schnürer\footnote{This is stated in the language of triangulated categories, but if $\cA$ is an additive $\infty$-category, then it is idempotent complete if and only if its homotopy category is idempotent complete, see e.g.~\cite[Proposition 2.3.2]{Sosnilo-Thesis}.} on the idempotent completeness of bounded below / above chain complexes.

\begin{corollary}[{{\cite[Theorem 1.2]{Schnuerer}}}]\label{cor:ch-idem}
    The categories $\Ch^-_\infty(\cA)= \Ch_\infty(\cA)_{w>-\infty}$ and $\Ch^+_\infty(\cA) = \Ch_\infty(\cA)_{w< \infty}$
    are idempotent complete. In particular, they are closed under retracts in $\Ch_\infty(\cA)$.
\end{corollary}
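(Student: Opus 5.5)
The plan is to deduce this from Theorem \ref{thm:chains} together with the idempotent completeness built into the completeness formalism. By Theorem \ref{thm:chains}(4) we have $\Ch^-_\infty(\cA) = \Ch_\infty(\cA)_{w>-\infty}$, and by the dual statement $\Ch^+_\infty(\cA) = \Ch_\infty(\cA)_{w<\infty}$. By Theorem \ref{thm:chains}(3) the weight structure on $\Ch_\infty(\cA)$ is left complete. Observation \ref{obs:left-comp}, which rests on Corollary \ref{cor:grow-conn-idem}, says that for a left complete weakly weighted category $\cC$ the subcategory $\cC_{>-\infty}$ is idempotent complete. The underlying mechanism is that $\cC_{\geq 0}$ admits geometric realizations, so by Lemma \ref{lem:fintype-colim} it is idempotent complete, and then $\SW(\cC_{\geq 0}) \simeq \cC_{>-\infty}$ is too. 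This settles $\Ch^-_\infty(\cA)$.

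For $\Ch^+_\infty(\cA)$ we dualize. Theorem \ref{thm:chains}(2) gives a weight exact equivalence $\Ch_\infty(\cA)^\op \simeq \Ch_\infty(\cA^\op)$. Under it, $\Ch_\infty(\cA)_{w<\infty}$ corresponds to $\Ch_\infty(\cA^\op)_{w>-\infty}$. The latter is idempotent complete by the first paragraph applied to the ordinary additive category $\cA^\op$, and idempotent completeness is self-dual. Alternatively, one could use right completeness directly with the dual of Corollary \ref{cor:grow-conn-idem}.

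For the addendum about closure under retracts, suppose $X \in \Ch_\infty(\cA)$ is a retract of some $Y \in \Ch^-_\infty(\cA)$. The idempotent $e$ on $Y$ lives in the full subcategory $\Ch^-_\infty(\cA)$, so it splits there, giving an object $X'$. Since full inclusions preserve split idempotents and retract data, $X' \simeq X$ in $\Ch_\infty(\cA)$. The same argument applies to $\Ch^+_\infty(\cA)$.

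The only real content lies in the identifications $\Ch^\pm_\infty(\cA) = \Ch_\infty(\cA)_{w \gtrless \mp\infty}$ and in completeness. Both are supplied by Theorem \ref{thm:chains}, so I expect the main care to go into citing the dual statement of (4) correctly. There is one subtle point to keep in mind: "bounded below" here means bounded in the homological direction corresponding to weight $>-\infty$, and one should check that this matches the paper's convention for $\Ch^-$.
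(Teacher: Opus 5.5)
Your argument is essentially the paper's: colimit completeness of $(\Ch_\infty(\cA),\Ch_\infty(\cA)_{w\geq 0})$ makes the prestable category $\Ch_\infty(\cA)_{\geq 0}$ idempotent complete by Corollary~\ref{cor:grow-conn-idem}, hence also $\Ch_\infty(\cA)_{>-\infty}$, and the $\Ch^+_\infty(\cA)$ case follows by duality. One caution on citations: the paper's proof of Theorem~\ref{thm:chains}(1) uses this very corollary to show that the weight (co)connectives are closed under retracts. So you should take the completeness input from Lemma~\ref{lem:ch-conn-comp}, which concerns only the connectivity structure, and the duality from Remark~\ref{rem:ch-dual}, then apply Corollary~\ref{cor:grow-conn-idem} directly as in your ``underlying mechanism'' sentence; invoking Observation~\ref{obs:left-comp} presupposes a weakly weighted category whose existence is not yet established.
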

\begin{proof}
    By the above $\Ch_\infty(\cA)_{\geq 0}$ admits finite colimits and sequential colimits
    growing in connectivity, so that it is idempotent complete by Corollary \ref{cor:grow-conn-idem}.
    By shifting we deduce that $\Ch_\infty(\cA)_{>-\infty}$ is idempotent complete,
    and dually $\Ch_\infty(\cA)_{< \infty}$ is idempotent complete.
\end{proof}

\begin{remark}\label{rem:ch-idem}
    Coming back to Warning \ref{warn:ch-bounded},
    the above corollary of course also implies that each $\Ch_\infty(\cA)_{w[-n,n]}$
    and hence $\Ch_\infty(\cA)_{\wb} = \bigcup_{n \geq 0} \Ch_\infty(\cA)_{w[-n,n]}$ are idempotent complete.
    On the other hand, the dg-nerve $\Ch_\infty^b(\cA)$ of the category of bounded chain complexes in $\cA$
    is generally not idempotent complete, see \cite[Remark 3.2]{Schnuerer} for a concrete example.
    However, the inclusion $\Ch_\infty^b(\cA) \subseteq \Ch_\infty(\cA)_{\wb}$ is always an idempotent completion.
    Moreover, as detailed in \cite[Section 3.1]{BondarkoSosnilo},
    any commutative ring $R$ which has $K_{-1}(R) \neq 0$
    yields an example of an ordinary additive category $\cA = \Proj^{\fg}(R)$
    for which $\Ch_\infty(\cA)$ is not idempotent complete.
    See also the discussion at Remark \ref{rem:w-idem-bs}.
\end{remark}

\begin{corollary}\label{cor:d-}
    Let $\cA$ be an abelian category with enough projective objects.
    Let $\cD^-(\cA) \coloneqq \Ch_\infty^-(\cA_\proj)$ be the bounded-below
    derived category of $\cA$ as defined in \cite[Definition 1.3.2.7]{HA}.
    Then $\cD^-(\cA)$ admits a weight structure such that
    \begin{enumerate}
        \item $X \in \cD^-(\cA)$ is weight connective
            if and only if it can be represented
            by a connective complex of projectives in $\cA$,
            if and only if $H_n(X) = 0$ for $n < 0$.
            In particular, the weight structure is adjacent
            to the left complete $t$-structure of \cite[Prop.~1.3.2.19, 1.3.3.16]{HA}.

        \item $X \in \cD^-(\cA)$ is weight coconnective
            if and only if it can be represented
            by a complex of projectives concentrated
            in non-positive (though bounded below) degrees.

        \item $\cD^-(\cA)_{w=0} = \cA_\proj$.

        \item The weight structure is left complete and right bounded.

        \item It follows that
            $
                \cD^-(\cA)_{\geq 0} \simeq \cP^{\Delta^\op}(\cA_\proj)
            $
            and we have weight exact equivalences
            \[
                \cD^-(\cA)
                \simeq \lc{\Stab(\cA_{\proj})}
                \simeq \SW(\cP^{\Delta^\op}(\cA_\proj)).
            \]
            In particular, this recovers the universal property
            of $\cD^-(\cA)$ shown in \cite[Theorem 1.3.3.8]{HA}
            (see also \cite[Lemma 1.3.3.17]{HA}).
    \end{enumerate}
\end{corollary}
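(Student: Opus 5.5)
The plan is to deduce everything from Theorem \ref{thm:chains}(4), applied to the ordinary additive category $\cA_\proj$, together with the classical facts about $\cD^-(\cA)$ recorded in \cite[Section 1.3.2--1.3.3]{HA}. Since $\cA$ has enough projectives, $\cA_\proj$ is idempotent complete: a retract of a projective object of $\cA$ is projective, and idempotents split in the abelian category $\cA$. Theorem \ref{thm:chains}(4) then says that $\cD^-(\cA) = \Ch^-_\infty(\cA_\proj) = \Ch_\infty(\cA_\proj)_{w>-\infty}$ carries a left complete, right bounded weight structure with heart $(\cA_\proj)^\idem = \cA_\proj$. This gives (3) and (4) at once.

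For (1) and (2), note that $\cD^-(\cA)$ is a full stable subcategory of $\Ch_\infty(\cA_\proj)$ which, by Corollary \ref{cor:ch-idem}, is closed under retracts. Its connectives are therefore the objects representable by complexes of projectives concentrated in non-negative degrees, as in Theorem \ref{thm:chains}(1). The coconnectives are the objects representable by complexes in non-positive degrees, which are automatically bounded below since we are in $\Ch^-$.

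It remains to see that connectivity by representability agrees with homological connectivity. If $X$ is a bounded below complex of projectives with $H_n(X)=0$ for $n<0$, I would kill the negative part inductively from the bottom. The map $d\colon X_0\to X_{-1}$ is surjective, and since $X_{-1}$ is projective it splits, so $Z_0=\ker d$ is projective. Continuing down from the lowest degree, the complex splits off an acyclic (hence contractible) bounded piece and leaves the good truncation $\cdots\to X_1\to Z_0$, a connective complex of projectives homotopy equivalent to $X$. The converse direction is clear.

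Hence the weight connectives coincide with $\cD^-(\cA)_{t\geq 0}$ for the $t$-structure of \cite[Prop.~1.3.2.19]{HA}, giving adjacency. That $t$-structure is left complete by \cite[Prop.~1.3.3.16]{HA}; alternatively, Lemma \ref{lem:t-vs-w-comp}(1) already gives left weight completeness from left $t$-completeness.

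For (5), apply Proposition \ref{prop:weight-pdelta} to the left complete weighted category $\cD^-(\cA)$. This gives $\cD^-(\cA)_{\geq 0}\simeq \cP^{\Delta^\op}(\cA_\proj)$ and $\cD^-(\cA)=\cD^-(\cA)_{>-\infty}\simeq \SW(\cP^{\Delta^\op}(\cA_\proj))$. Then Proposition \ref{prop:pdelta-add}(2) identifies the latter with $\lc{\Stab(\cA_\proj)}$. Alternatively, use Corollary \ref{cor:easy-left-comp} with the inclusion $\Stab(\cA_\proj)\simeq\Ch^b_\infty(\cA_\proj)\subseteq\cD^-(\cA)$: it is an equivalence on weight-bounded-above parts up to idempotent completion, which is handled by Corollary \ref{cor:left-comp-inverts-idem}.

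The universal property of \cite[Theorem 1.3.3.8]{HA} then reads off from the universal property of $\cP^{\Delta^\op}$: right exact functors out of $\cD^-(\cA)_{\geq 0}$ preserving geometric realizations correspond to additive functors out of $\cA_\proj$.

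The only step requiring genuine (if classical) care is the identification in (1) of homologically connective objects with complexes of projectives in non-negative degrees, via the splitting argument above; everything else is bookkeeping against earlier results.
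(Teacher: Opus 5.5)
Your proposal is correct and is essentially the paper's argument. You read off (2)--(4) from Theorem~\ref{thm:chains} restricted to $\Ch^-_\infty(\cA_\proj)$, get (3) from idempotent completeness of $\cA_\proj$, and get (5) from Propositions~\ref{prop:weight-pdelta} and~\ref{prop:pdelta-add}; the only difference is that you prove the homological description of the connectives in (1) by the bottom-up splitting of a bounded below complex of projectives, where the paper cites the proof of \cite[Proposition 1.3.2.19]{HA}.

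One small point in (2) deserves a line: a non-positively graded representative is not literally ``automatically bounded below.'' However, since $X\to\tau_{\geq 1}X$ (naive truncation) is null, you get $\tau_{\leq 0}X\simeq X\oplus A[0]$ with $A\in\cA_\proj$, because that summand lies in the heart. Hence $X\simeq\fib(\tau_{\leq 0}X\to A[0])$ is represented by a bounded complex of projectives in non-positive degrees.
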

\begin{proof}
    We take the weight structure provided to us by Theorem \ref{thm:chains},
    which makes (2) and (4) clear (note that (4)
    also follows from (1) via Lemma \ref{lem:left-t-comp-is-conn-comp}).
    Since abelian categories are idempotent complete
    and projectives are closed under retracts, (3) follows.
    For (1) it remains to see the homological characterization
    of weight connectives, which was shown in the proof
    of \cite[Proposition 1.3.2.19]{HA}.
    Finally (5) follows from Proposition \ref{prop:pdelta-add}.
\end{proof}

We will also discuss the unbounded derived category of an abelian category $\cA$ in Section \ref{sec:adj} below.
For now, we work towards proving Theorem \ref{thm:chains}
and begin with the naive truncations.

\begin{lemma}\label{lem:ch-trunc-cofseq}
    Let $\cA$ be an ordinary additive category
    and $X \in \Ch(\cA)$ be a chain complex in $\cA$.
    For every $k \in \Z$, the ``naive truncations'' yield a short exact sequence
    of chain complexes $\tau_{\leq k}X \to X \to \tau_{\geq k+1}X$
    \[\begin{tikzcd}[ampersand replacement=\&]
        \cdots \& 0 \& 0 \& {X_k} \& {X_{k-1}} \& \cdots \\
        \cdots \& {X_{k+2}} \& {X_{k+1}} \& {X_k} \& {X_{k-1}} \& \cdots \\
        \cdots \& {X_{k+2}} \& {X_{k+1}} \& 0 \& 0 \& \cdots
        \arrow[from=1-1, to=1-2]
        \arrow[from=1-2, to=1-3]
        \arrow[from=1-2, to=2-2]
        \arrow[from=1-3, to=1-4]
        \arrow[from=1-3, to=2-3]
        \arrow[from=1-4, to=1-5]
        \arrow[Rightarrow, no head, from=1-4, to=2-4]
        \arrow[from=1-5, to=1-6]
        \arrow[Rightarrow, no head, from=1-5, to=2-5]
        \arrow[from=2-1, to=2-2]
        \arrow[from=2-2, to=2-3]
        \arrow[Rightarrow, no head, from=2-2, to=3-2]
        \arrow[from=2-3, to=2-4]
        \arrow[Rightarrow, no head, from=2-3, to=3-3]
        \arrow[from=2-4, to=2-5]
        \arrow[from=2-4, to=3-4]
        \arrow[from=2-5, to=2-6]
        \arrow[from=2-5, to=3-5]
        \arrow[from=3-1, to=3-2]
        \arrow[from=3-2, to=3-3]
        \arrow[from=3-3, to=3-4]
        \arrow[from=3-4, to=3-5]
        \arrow[from=3-5, to=3-6]
    \end{tikzcd}\]
    This is a cofiber sequence in $\Ch_\infty(\cA)$.\footnote{Note that this is not a \emph{split} short exact sequence, in which case the claim would be trivial.}
\end{lemma}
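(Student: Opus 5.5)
The plan is to reduce to the standard comparison between mapping cones in $\Ch(\cA)$ and cofibers in the dg-nerve. By \cite[Proposition 1.3.2.10]{HA} and the surrounding discussion, $\Ch_\infty(\cA)$ is stable. Moreover, for a chain map $f \colon A \to B$, the canonical square
\[
A \to B \to \mathrm{Cone}(f), \qquad A \to 0 \to \mathrm{Cone}(f),
\]
with the homotopy given by the evident null-homotopy of $A \to \mathrm{Cone}(f)$, is a pushout square in $\Ch_\infty(\cA)$. So it suffices to identify $\tau_{\geq k+1}X$, together with the given projection, with $\mathrm{Cone}(\tau_{\leq k}X \to X)$ and its canonical map from $X$. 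This identification must hold up to chain homotopy equivalence under $X$.

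Write $i \colon \tau_{\leq k}X \to X$. Then $\mathrm{Cone}(i)_n = X_n \oplus (\tau_{\leq k}X)_{n-1}$, with differential $(x,y) \mapsto (dx + y, -dy)$. Define $p \colon \mathrm{Cone}(i) \to \tau_{\geq k+1}X$ as the projection $X_n \to X_n$ for $n \geq k+1$ and zero otherwise. Define $s \colon \tau_{\geq k+1}X \to \mathrm{Cone}(i)$ by $x \mapsto (x, 0)$ for $n \geq k+2$. In degree $k+1$, $s$ must be $x \mapsto (x, -dx)$ with $-dx \in X_k = (\tau_{\leq k}X)_k$; this is what makes $s$ a chain map. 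The key computation is that $p$ and $s$ are chain maps, that $ps = \id$, and that $sp \simeq \id$. The homotopy for the last claim is $h_n(x,y) = (0, x)$ for $n \leq k$ (sending $X_n$ into $(\tau_{\leq k}X)_n$, which sits in degree $n+1$ of the cone) and zero for $n > k$. One then checks $dh + hd = \id - sp$ degreewise, separating the cases $n \leq k$, $n = k+1$ and $n \geq k+2$.

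This is the one place requiring care, since the sequence is not split and the homotopy must absorb the differential crossing from degree $k+1$ to $k$. I expect the sign and boundary-degree bookkeeping at $n = k+1$ to be the main (if routine) obstacle.

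Finally, check that $p$ composed with the canonical inclusion $X \to \mathrm{Cone}(i)$ is exactly the projection $X \to \tau_{\geq k+1}X$. Also check that the composite $\tau_{\leq k}X \to X \to \tau_{\geq k+1}X$ is zero on the nose, and that the null-homotopy coming from the cone maps to the zero null-homotopy under $p$; this holds because $p$ kills the $\tau_{\leq k}X[1]$ summand. Since chain homotopy equivalences are equivalences in the dg-nerve, the given square is equivalent to the cone square, hence is a cofiber sequence. Alternatively, one could avoid explicit homotopies by noting that the chain map $\mathrm{Cone}(i) \to \tau_{\geq k+1}X$ has kernel equal to the cone of $\id_{\tau_{\leq k}X}$, which is contractible with an explicit contraction, and is degreewise split surjective. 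That would also suffice, but I would present the direct homotopy.
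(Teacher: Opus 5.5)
Your proposal is correct and follows the paper's route: identify the cofiber in $\Ch_\infty(\cA)$ with the mapping cone (\cite[Remark 1.3.2.17]{HA}), then show that $\mathrm{Cone}(\tau_{\leq k}X \to X)$ is chain homotopy equivalent to $\tau_{\geq k+1}X$ compatibly with the maps from $X$. The paper gets this equivalence by citing the general fact for termwise split short exact sequences \cite[Tag 014L]{stacks-project}, whereas you write out $p$, $s$ and $h$ explicitly (they check out, including the boundary degree $n=k+1$) and also verify compatibility with the null-homotopy, which the paper leaves implicit.
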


\begin{warning}
    Despite the notation, the naive truncations are \emph{not} functorial on $\Ch_\infty(\cA)$.
\end{warning}

\begin{proof}
    By \cite[Remark 1.3.2.17]{HA}, the cofiber of the map $f \colon \tau_{\leq k}X \to X$ in $\Ch_\infty(\cA)$
    is computed by the mapping cone $C(f)$.
    It thus suffices to show that there is a chain homotopy equivalence from $C(f)$ to $\tau_{\geq k+1}X$.
    This follows from the above sequence of chain complexes being termwise split exact,
    see e.g.~\cite[\href{https://stacks.math.columbia.edu/tag/014L}{Tag 014L}]{stacks-project}.
\end{proof}

\begin{remark}\label{rem:ch-dual}
    Note that the classical equivalence $\Ch(\cA^\op) \simeq \Ch(\cA)^\op$
    induces an equivalence $\Ch_\infty(\cA^\op) \simeq \Ch_\infty(\cA)^\op$
    which restricts to $\Ch_\infty(\cA^\op)_{w \geq 0} \simeq (\Ch_\infty(\cA)_{w \leq 0})^\op$.
    In other words, endowing $\Ch(\cA)^\op$ with the evident dual (co)connectivity structures,
    the equivalence $\Ch(\cA^\op) \simeq \Ch(\cA)^\op$
    has (co)connectivity amplitude $0$.
\end{remark}

\begin{lemma}\label{lem:ch-conn-comp}
    The connectivity structure $(\Ch_\infty(\cA),\Ch_\infty(\cA)_{w \geq 0})$ is colimit complete,
    and for any complex $X_*$ the naive truncations
    determine a sequential cone $\tau_{\leq \bullet}X_*$ over $X$
    which is growing in connectivity and also a colimit cone.
    By the duality of Remark \ref{rem:ch-dual}, it follows that $(\Ch_\infty(\cA), \Ch_\infty(\cA)_{w \leq 0})$
    is limit complete.
\end{lemma}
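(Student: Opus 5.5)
The plan is to work throughout with explicit chain complexes. The main tool is that mapping spectra in the dg-nerve $\Ch_\infty(\cA)$ are computed by Hom complexes: $\hom(X,Y)$ corresponds to the complex $\underline{\Hom}(X,Y)$, whose degree-$d$ term is $\prod_i \Hom_\cA(X_i,Y_{i+d})$. All colimit statements will be verified by mapping out into an arbitrary $Y$.

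\emph{Step 1: naive truncations form a colimit cone growing in connectivity.} By Lemma \ref{lem:ch-trunc-cofseq}, the cofiber of $\tau_{\leq k}X \to X$ is $\tau_{\geq k+1}X$. This complex is concentrated in degrees $\geq k+1$, hence lies in $\Ch_\infty(\cA)_{w\geq k+1}$, so the cone $\tau_{\leq \bullet}X \Rightarrow X$ is growing in connectivity. To see that it is a colimit cone, fix $Y$. We have the strict identification $\underline{\Hom}(X,Y) = \lim_k \underline{\Hom}(\tau_{\leq k}X,Y)$. The transition maps are degreewise split surjections, since they are restrictions along degreewise split inclusions. A tower of degreewise surjective maps of chain complexes of abelian groups is a fibrant tower, so its strict limit computes the homotopy limit. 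Hence $\hom(X,Y)\simeq \lim_k \hom(\tau_{\leq k}X,Y)$, i.e.~$X\simeq\colim_k \tau_{\leq k}X$.

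\emph{Step 2: $\cC_{\geq\infty}=0$.} Suppose $X\in \Ch_\infty(\cA)_{w\geq n}$ for all $n$. For each $k$, $X$ is equivalent to a complex concentrated in degrees $\geq k+1$. Any chain map from $\tau_{\leq k}X$ (concentrated in degrees $\leq k$) into such a complex is zero, so each map $\tau_{\leq k}X\to X$ is nullhomotopic. Passing to the colimit of Step 1, $\id_X\simeq 0$, and hence $X=0$.

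\emph{Step 3: existence and connectivity of sequential colimits.} This is the step I expect to be the main obstacle, since it requires strictifying a homotopy-coherent diagram. Let $X_\bullet$ be a sequential diagram growing in connectivity. By cofinality, we may assume $C_n\coloneqq\cofib(X_n\to X_{n+1})\in\Ch_\infty(\cA)_{w\geq n+1}$. Represent $X_0$ by a complex, and represent each $C_n$ by a complex concentrated in degrees $\geq n+1$. Inductively, realize the connecting map $\Sigma^{-1}C_n\to X_n$ by an honest chain map; this is possible because morphisms in the homotopy category are chain homotopy classes. Then take $X_{n+1}$ to be its mapping cone, which by the description of cofibers in \cite[Remark 1.3.2.17]{HA} models the given extension. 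This produces a strict sequence of degreewise split inclusions $X_n\subseteq X_{n+1}$ with quotient $C_n$. Since each $C_n$ lives in degrees $\geq n+1$, the sequence stabilizes in each fixed degree, so the degreewise union $X_\infty$ is a well-defined complex in $\cA$ without any need for infinite sums.

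Exactly as in Step 1, $\underline{\Hom}(X_\infty,Y)=\lim_n\underline{\Hom}(X_n,Y)$ is a tower of degreewise split surjections, so $X_\infty$ is the colimit in $\Ch_\infty(\cA)$. One point needs care: the strict diagram must recover the given $\infty$-diagram. This holds because, by the Dold--Kan-type equivalence of Example \ref{ex:dold-kan}, a sequential diagram is determined inductively by its successive extensions. Alternatively, one can argue up to cofinal replacement, which suffices for computing colimits.

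Finally, connectivity. If the $X_n$ are all connective, then in the above construction we may choose $X_0$ concentrated in degrees $\geq 0$, and each $C_n$ is concentrated in degrees $\geq n+1\geq 0$. Hence $X_\infty$ is concentrated in degrees $\geq 0$ and is connective. Shifting handles $k$-connective maps in general. This proves colimit completeness.

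\emph{Step 4: the dual statement.} The limit completeness of $(\Ch_\infty(\cA),\Ch_\infty(\cA)_{w\leq 0})$ then follows formally by applying the above to $\cA^\op$ and using the equivalence of Remark \ref{rem:ch-dual}.
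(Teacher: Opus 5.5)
Your overall route is the paper's: the Hom-complex argument with a degreewise split surjective (hence fibrant) tower for the naive truncations, the inductive mapping-cone construction with the fibers modeled in degrees $\geq n$ so that the tower stabilizes degreewise, and duality via Remark \ref{rem:ch-dual}. In Step 3 you check the colimit directly through $\underline{\Hom}(X_\infty,Y)\cong\lim_n\underline{\Hom}(X_n,Y)$ rather than routing through $\tau_{\leq n}X_n$ as the paper does; that variation is harmless.

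However, Step 2 does not follow as written. You show only that each restriction $\tau_{\leq k}X\to X$ of $\id_X$ is nullhomotopic, and then ``pass to the colimit''. A map out of a sequential colimit all of whose restrictions are null need not itself be null (phantom maps). By the Milnor sequence
\[
0\to\lim\nolimits^1_k\,\pi_1\hom(\tau_{\leq k}X,X)\to\pi_0\hom(X,X)\to\lim_k\pi_0\hom(\tau_{\leq k}X,X)\to 0
\]
you would additionally need the $\lim^1$-term to vanish. The fix is what the paper does. Since $X$ is equivalent to a complex concentrated in degrees $\geq k+m$ for every $m$, the Hom complex from $\tau_{\leq k}X$ into it is concentrated in degrees $\geq m$. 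The same holds with $\tau_{\leq k}A$ in place of $\tau_{\leq k}X$, for any $A$. So the whole mapping spectrum $\hom(\tau_{\leq k}X,X)$ is $m$-connective for all $m$, hence zero. Step 1 then gives $\hom(X,X)\simeq\lim_k\hom(\tau_{\leq k}X,X)=0$.

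A smaller point concerns why your strict tower models the given $\infty$-diagram. This is not a consequence of the Dold--Kan correspondence of Example \ref{ex:dold-kan}. What you need is that a sequential diagram in an $\infty$-category is determined up to equivalence by its successive morphisms up to homotopy, because the spine inclusion into $N(\N)$ is inner anodyne. So it suffices to match each $X_n\to X_{n+1}$ up to homotopy under the chosen identifications; the paper leaves this implicit too.
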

\begin{proof}
    That the sequential cone $\tau_{\leq \bullet}X_*$ over $X_*$ is growing in connectivity
    follows from Lemma \ref{lem:ch-trunc-cofseq}.
    To see that it is a colimit cone, we verify the universal property.
    Recall from \cite[Construction 1.3.1.13, Proposition 1.3.1.17]{HA}
    that the mapping anima in $\Ch_\infty(\cA)$ between two complexes $A_*,B_*$
    in $\cA$ is computed as the underlying anima of the object in $\cD(\Z)$ represented by
    the $\Ch(\Ab)$-enriched hom $\und{\hom}_{\Ch(\cA)}(A_\bullet,B_\bullet)$
    whose definition is recalled in \cite[Definition 1.3.2.1]{HA}.
    Important for us will only be that the $p$-th term of this complex
    is given by $\prod_{n \in \Z} \Hom_{\cA}(A_n,B_{n+p})$ for $p \in \Z$.

    Namely, given any $A_* \in \Ch(\cA)$,
    the above cone yields a (strict) limit cone
    $\und{\hom}_{\Ch(\cA)}(X_*,A_*) \xto{\cong} \und{\hom}_{\Ch(\cA)}(\tau_{\leq \bullet}X_*,A_*)$ in $\Ch(\Z)$.
    We claim that this also presents the limit in $\cD(\Z)$.
    Recall that there is a model structure on $\Ch(\Z)$ (constructed e.g.~in \cite[Theorem 2.3.11]{Hovey})
    whose fibrations are the levelwise surjections and whose weak equivalences are the quasi-isomorphisms
    so that the model structure presents $\cD(\Z)$.
    Now we see that every map $\und{\hom}_{\Ch(\cA)}(\tau_{\leq n+1}X_*,A_*) \to \und{\hom}_{\Ch(\cA)}(\tau_{\leq n}X_*,A_*)$ is levelwise just projecting away one factor.
    Thus our inverse sequential diagram in $\Ch(\Z)$ consists entirely of fibrations in this model structure, and is thus fibrant in the injective model structure for inverse sequential diagrams in $\Ch(\Z)$ (whose weak equivalences and cofibrations are pointwise).
    We conclude that the strict limit indeed models the limit in $\cD(\Z)$.
    Overall, since the forgetful functor $\cD(\Z) \to \An$ also preserves limits,
    this verifies that $\tau_{\leq n}X_* \Rightarrow \const X_*$
    is a colimit cone in $\Ch_\infty(\cA)$.

    From this it is easy to see that any $X_* \in \Ch_\infty(\cA)_{w \geq \infty}$
    has to be chain-homotopy equivalent to $0$.
    Indeed, given any other chain complex $A_*$,
    we note that $\hom_{\Ch_\infty}(\tau_{\leq n}A_*,X_*)$ is $k$-connective for all $k\geq 0$
    since $X_*$ is equivalent to an $(n+k)$-connective complex for all $k$.
    Hence the hom spectrum vanishes, and by taking the limit also $\hom(A_*,X_*) = 0$,
    which shows $X_* = 0$.

    Now let $X^0 \to X^1 \to X^2 \to \cdots$ be a sequential diagram growing in connectivity in $\Ch_\infty(\cA)$,
    where by cofinality we may assume that $\cofib(X^n \to X^{n+1}) \in \Ch_\infty(\cA)_{w\geq n+1}$
    for all $n \geq 0$.
    We claim that we can inductively choose representatives $X^n_*$ of $X^n$
    and of the maps so that for all $n \geq 0$ and $k \leq n$ the map $X^n_k \to X^{n+1}_k$ is the identity.
    Indeed, begin by choosing any representative $X^0_*$ of $X_0$.
    Having chosen representatives for the objects and maps up to $X^n_*$,
    we note that we have a cofiber sequence $F \to X^n_* \to X^{n+1}$
    with $F \in \Ch_\infty(\cA)_{w \geq n}$,
    so we can pick a model $F_*$ for $F$ with $F_k = 0$ for $k < n$,
    as well as any chain map $F_* \to X^n_*$ of the right chain homotopy class.
    We have $X^{n+1} \simeq \cofib(f)$, but the cofiber of $f$ in $\Ch_\infty(\cA)$
    can be computed by a mapping cone construction.
    Thus the explicit formula for the mapping cone yields the desired representative $X^{n+1}_* \coloneqq C(f)$.

    We now define the complex $X^\infty_*$ by $\tau_{\leq n}X^\infty_* = \tau_{\leq n}X^n_*$
    and compute for any $A_*$ that
    \[
        \lim_n \hom(X_*^n,A_*)
        \simeq \lim_{n,k} \hom(\tau_{\leq k}X_*^n,A_*)
        \simeq \lim_n \hom(\tau_{\leq n} X_*^n,A_*)
        \simeq \lim_n \hom(\tau_{\leq n}X^\infty_*,A_*)
        \simeq \hom(X^\infty_*,A_*)
    \]
    which proves $\colim_n X^n_* \simeq X^\infty_*$.
    Thus $\Ch_\infty(\cA)$ admits sequential colimits growing in connectivity,
    and it is clear from the inductive construction of the $X^n_*$ and $X^\infty_*$ that
    $\Ch_\infty(\cA)_{w \geq 0}$ is closed under such colimits.
    This concludes the proof that $(\Ch_\infty(\cA), \Ch_\infty(\cA)_{w \geq 0})$ is colimit complete.
\end{proof}

With this, we can now prove Theorem \ref{thm:chains}.

\begin{proof}[Proof of Theorem \ref{thm:chains}]
    The proposed weight connectives and coconnectives
    clearly satisfy the desired orthogonality,
    are closed under retracts by Corollary \ref{cor:ch-idem},
    and naive truncations provide us with weight decompositions
    by Lemma \ref{lem:ch-trunc-cofseq}.
    To determine the weight heart, note that $\Ch_\infty(\cA)_{w=0}$ clearly contains $\cA$
    and hence $\cA^\idem$ by the idempotent completeness of $\Ch_\infty(\cA)$
    and retract-closure of the weight heart.
    Conversely, suppose that $X \in \Ch_\infty(\cA)_{w = 0}$.
    Then $X$ is represented by
    a connective chain complex $X_*$,
    and we obtain a weight decomposition
    $X_0[0] \to X \to \tau_{\geq 1}X$.
    Since the map $X \to \tau_{\geq 1}X$ vanishes,
    this exhibits $X$ as a retract of $X_0[0] \in \cA \subseteq \Ch_\infty(\cA)$.
    This proves (1).

    Now (2) is clear from Remark \ref{rem:ch-dual},
    and (3) follows from Lemma \ref{lem:ch-conn-comp}.

    It is also clear from the construction that the weight structure restricts to $\Ch^b_\infty(\cA)$
    and $\Ch^{\pm}_\infty(\cA)$, so by Theorem \ref{thm:vova},
    for (4) it remains to identify the weight heart of $\Ch^b_\infty(\cA)$ as $\cA^\idem$.
    This was done in \cite[Example 3.1.3(1)]{Hebestreit-Steimle}.

    For (5), note that by (3) we have $\Ch_\infty(\cA) \simeq \lrc{(\Ch_\infty(\cA)_{\wb})}$.
    The inclusion $\cA \subseteq \cA^\idem \subseteq \Ch_\infty(\cA)_{\wb}$
    induces by Theorem \ref{thm:vova} a weight exact functor $f \colon \Stab(\cA) \to \Ch_\infty(\cA)_{\wb}$
    which restricts to the inclusion $\cA \subseteq \cA^\idem$ on weight hearts.
    In particular, $f$ is an idempotent completion, and we deduce from Corollary \ref{cor:left-comp-inverts-idem}
    that $\lc{f} \colon \lc{\Stab(\cA)} \to \lc{(\Ch_\infty(\cA)_{\wb})} = \Ch^-_\infty(\cA)$
    is an equivalence.
    But then of course also $\lrc{f} = \rc{(\lc{f})} \colon \Kw(\cA) \coloneqq \lrc{\Stab(\cA)} \to \lrc{(\Ch_\infty(\cA)_{\wb})} = \Ch_\infty(\cA)$ is an equivalence (recall also from Proposition \ref{prop:lr-complete} that the order of left and right completion does not matter here, since we are in the $\ell=0$ case).
    Finally, (6) follows via a similar argument as in (5).
\end{proof}

\section{$\kappa$-compactly generated weight structures}\label{sec:gen}

As we saw in the last section and specifically Theorem \ref{thm:vova},
it is well known how to generate bounded weight structures from their heart.
Another result in this vein is \cite[Theorem 4.3.2(II)]{Bondarko10}, which shows how a set of generators
for a stable category with mutually connective hom spectra determine a bounded weight structure.
The goal of this section is to instead focus on generating weight structures
on (large) cocomplete stable categories by a collection of objects.
Namely, in the proof of Proposition \ref{prop:ind-weight}
we saw how given a weakly weighted category $\cC$
we can construct a weight structure on $\Ind(\cC)$
which is in a certain sense generated by the Yoneda-image of $\cC_{\leq 0}$.
In this section, we will generalize
this to construct weight structures generated by $\kappa$-compact generators
for some regular cardinal $\kappa$,
see Theorem \ref{thm:gen-weight} below.
We also give some examples in this section and analyze their completeness properties,
although the main application of the theorem is in constructing a certain exotic ``Anderson weight structure''
in Section \ref{sec:anderson}.

We will again focus on non-weak weight structures.
Recall from Remark \ref{rem:saturated} that these are saturated,
and hence the weight (co)connectives satisfy the closure properties listed in Lemma \ref{lem:weight-closure}.
It is the fourth closure property about ordinal-indexed diagrams,
or more specifically the results in Appendix \ref{sec:app}
which enable us to generalize away from the compact case discussed in Proposition \ref{prop:ind-weight}.

\begin{definition}\label{def:gen-weight}
    Let $\cC$ be a cocomplete weighted category,
    $\kappa$ a regular cardinal, and $S \subseteq \cC^\kappa$
    a small collection of $\kappa$-compact objects.
    We say the weight structure is $\kappa$-compactly generated by $S$
    if $X \in \cC$ is weight connective
    precisely if $\hom(s,X) \in \Sp_{\geq 0}$ for all $s \in S$.
\end{definition}

\begin{remark}
    If $\cC$ admits a weight structure $\kappa$-compactly generated by $S$,
    then the same weight structure is also $\lambda$-compactly generated by $S$
    for any regular cardinal $\lambda \geq \kappa$.
    Recall also that if $\cC$ is presentable, then $\cC^\kappa$ is small for each $\kappa$.
    In this case, if $\cC$ admits a $\kappa$-compactly generated weight structure,
    then the weight structure is also $\kappa$-compactly generated by $\cC_{\leq 0} \cap \cC^\kappa$.
\end{remark}

Recall that by $S^\oplus$ we denote the full subcategory on arbitrary sums of objects in $S$.

\begin{theorem}\label{thm:gen-weight}
    Let $\cC$ be a cocomplete stable category, $\kappa$ a regular
    cardinal, and $S \subseteq \cC^\kappa$ a small collection of $\kappa$-compact objects.
    Then there exists a weight structure on $\cC$ that is $\kappa$-compactly generated by $S$. This satisfies:
    \begin{enumerate}
        \item $\cC_{\geq 0} = \{X \in \cC \mid \hom(s,X) \in \Sp_{\geq 0} \text{ for all $s \in S$}\}$.
        \item $\cC_{\leq 0}$ is the smallest class $\langle S\rangle$ containing $S$
            and closed under finite limits, arbitrary sums, retracts, and colimits over diagrams
            $X_\bullet \colon \gamma \to \cC$ such that $\gamma \leq \kappa$ is an ordinal,
            $X_\bullet$ preserves colimits\footnote{This means that the canonical map $\colim_{\alpha < \lambda}X_\alpha \to X_\lambda$
            is an equivalence for all limit ordinals $\lambda < \gamma$.},
            and $X_0$ and $\cofib(X_{\alpha} \to X_{\alpha+1})$ already lie in the class for any $\alpha+1 < \gamma$.

        \item If shifts of objects in $S$ jointly detect equivalences,
            then $\cC_{\geq \infty} = 0$.

        \item Suppose that $\kappa = \omega$. Then $\cC_{\geq 0}$ is closed under all colimits.
            Moreover, if we pick a weight decomposition $s_{\leq -1} \to s \to s_{=0}$ for all $s \in S$
            and let $S_{=0} \subseteq \cC_{=0}$ denote the full subcategory on the $s_{=0}$,
            then $\cC_{=0} = (S_{=0})^{\oplus,\idem}$ consists of retracts of sums of objects in $S_{=0}$.

        \item In particular, if $\kappa = \omega$ and the (shifts of) $s \in S$ jointly detect equivalences, then $\cC$ is left complete.
    \end{enumerate}
\end{theorem}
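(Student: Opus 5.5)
The plan is a transfinite small object argument. I set $\cC_{\geq 0}$ as in (1) and $\cC_{\leq 0} \coloneqq \langle S\rangle$ as in (2).

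\emph{Closure and orthogonality.} By definition $\cC_{\geq 0}$ is closed under suspensions, extensions, retracts, finite colimits, arbitrary products and limits. Since $\hom(-,\cdot)$ turns colimits into limits, we have $\hom(\bigoplus_i X_i, Y) = \prod_i \hom(X_i,Y)$. Hence the class of $X$ with $\hom(X,Y) \in \Sp_{\geq 0}$ for all $Y \in \cC_{\geq 0}$ is closed under finite limits, arbitrary sums and retracts, and it contains $S$. The only nontrivial closure is under the $\gamma$-indexed colimits in (2). There $\hom(\colim_\alpha X_\alpha, Y) = \lim_{\alpha<\gamma} \hom(X_\alpha,Y)$ is a continuous $\gamma^\op$-tower of connective spectra. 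Its successor fibers $\hom(\cofib(X_\alpha\to X_{\alpha+1}),Y)$ are connective. I then apply the result of Appendix \ref{sec:app}, which says such continuous ordinal towers have connective limit. At this point I need to check the hypotheses of that result. I expect this to be exactly where the restriction $\gamma \leq \kappa$ and the continuity requirement on $X_\bullet$ enter, and it is the step I expect to be the main obstacle. It is the transfinite analogue of Mittag--Leffler: for $\gamma=\omega$ surjectivity on $\pi_0$ comes for free from connectivity of the fibers, but at limit ordinals one must control $\lim^1$-type terms. So orthogonality holds. Since $S$ is closed under nothing in particular, I also need $\langle S\rangle$ closed under extensions; I get this from closure under finite limits (pullbacks, fibers of maps in the class).

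\emph{Weight decompositions.} For $X \in \cC$ I build a continuous tower $X = X_0 \to X_1 \to \cdots \to X_\alpha \to \cdots$ for $\alpha \leq \kappa$. At successors I set $X_{\alpha+1} = \cofib\bigl(\bigoplus \Sigma^{-n}s \to X_\alpha\bigr)$, where the sum runs over all $s\in S$, all $n \geq 1$ and all maps $\Sigma^{-n}s \to X_\alpha$. At limit ordinals I take colimits. By $\kappa$-compactness of $s$ and regularity of $\kappa$, every map $\Sigma^{-n}s \to X_\kappa$ factors through some $X_\alpha$ with $\alpha<\kappa$, and it dies in $X_{\alpha+1}$. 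Hence $\pi_{-n}\hom(s,X_\kappa)=0$ for $n\geq 1$, i.e.\ $X_\kappa \in \cC_{\geq 0}$. Set $F_\alpha = \fib(X\to X_\alpha)$. Then $F_\bullet$ is a continuous $\kappa$-indexed diagram with $F_0=0$ and successor cofibers $\fib(X_\alpha\to X_{\alpha+1}) = \bigoplus \Sigma^{-n}s$, which lie in $\Sigma^{-1}\langle S\rangle$. Therefore $F_\kappa \in \cC_{\leq -1}$, and $F_\kappa \to X \to X_\kappa$ is the desired decomposition. This shows we have a weight structure, and (1) holds by construction.

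For (2), take $X$ in the left orthogonal of $\cC_{\geq 1}$, and consider the decomposition of $\Sigma^{-1}$-shifted type $X_{\leq 0}\to X\to X_{\geq 1}$. The second map vanishes, so $X$ is a retract of $X_{\leq 0}\in\langle S\rangle$. Hence $\cC_{\leq 0}=\langle S\rangle$ exactly, and the structure is saturated.

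\emph{Remaining points.} (3) is immediate: if $X\in\cC_{\geq\infty}$ then all $\hom(\Sigma^k s,X)$ vanish, so $X=0$. For (4), with $\kappa=\omega$ the spectra $\hom(s,-)$ commute with all colimits. Since $\Sp_{\geq 0}$ is closed under colimits, so is $\cC_{\geq 0}$. For the heart, given $X\in\cC_{=0}$ I consider $\bigoplus_{s_{=0}\to X} s_{=0}\to X$ and argue as in Proposition \ref{prop:ind-weight}(4) that its cofiber is $1$-connective: maps $\Sigma^{-n}s \to$ cofiber factor through $s\to s_{=0}\to X$. The map therefore splits. Finally, (5) follows from (3), (4) and Lemma \ref{lem:t-vs-w-comp}-style reasoning. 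Indeed, $\cC$ is cocomplete and $\cC_{\geq 0}$ is closed under all colimits, so conditions (2) and (3) of Definition \ref{def:weight-left-complete} hold, and condition (1) is (3).
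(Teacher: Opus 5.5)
Your proposal is correct and is essentially the paper's proof. You use the same length-$\kappa$ small object argument killing maps from negative shifts of $S$, $\kappa$-compactness to see that $X_\kappa$ is connective, Corollary \ref{cor:mittag-leffler-sp} for orthogonality, the vanishing-map retract trick for (2), and the splitting argument of Proposition \ref{prop:ind-weight}(4) for the heart. The only difference is the order: you set $\cC_{\leq 0}=\langle S\rangle$ and check that it lies in ${}^\perp(\cC_{\geq 1})$, whereas the paper takes the left orthogonal as the definition and applies the Appendix to $\hom(F_\bullet,Y)$.

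Two small corrections.
\begin{enumerate}
\item The Appendix holds for every ordinal, and continuity of $X_\bullet$ makes the limit-stage fibers of $\hom(X_\bullet,Y)$ vanish. So $\gamma\le\kappa$ plays no role in the orthogonality check, and checking the hypotheses is immediate rather than an obstacle.
\item Extension-closure of $\langle S\rangle$ does not follow from finite limits: an extension $A\to B\to C$ has $B=\fib(C\to\Sigma A)$, and $\Sigma A$ need not lie in the class. It follows instead from the $\gamma=2$ case of the ordinal colimits, or a posteriori from $\langle S\rangle={}^\perp(\cC_{\geq 1})$.
\end{enumerate}
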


\begin{remark}\label{rem:gen-weight}
    If $\kappa = \omega$, this has previously appeared as \cite[Theorem 5]{Pauksztello}
    or \cite[Theorem 2.3.4]{Bondarko22}.
\end{remark}

\begin{proof}
    We define $\cC_{\geq 0}$ as in (1)
    and $\cC_{\leq 0} = \{X \in \cC \mid \hom(X,Y) \in \Sp_{\geq 0} \text{ for }Y \in \cC_{\geq 0}\}$ for now.
    Clearly both are closed under retracts and orthogonal,
    so to establish a weight structure it remains to prove the existence of weight decompositions, which we shall achieve via a small object argument.
    Let $X \in \cC$. We now use transfinite induction to construct a diagram $X_\bullet \colon \kappa+1 \to \cC$ as follows:
    \begin{enumerate}
        \item Let $X_0 = X$.
        \item Having constructed the diagram up to $X_{\alpha}$, we define $X_{\alpha+1}$ via the cofiber sequence
            \[
                \bigoplus_{n \geq 0} \bigoplus_{s \in S_{/X_\alpha[n]}} s[-n] \to X_\alpha \to X_{\alpha+1}.
            \]
        \item For limit ordinals $\lambda \leq \kappa$ we let $X_{\lambda} \simeq \colim_{\alpha < \lambda}X_\alpha$.
    \end{enumerate}
    Now consider the $(\kappa+1)$-indexed diagram of fibers
    $F_\bullet \coloneqq \fib(\const X \Rightarrow X_\bullet)$.
    Taking colimits, this yields a fiber sequence $F_\kappa \to X \to X_\kappa$,
    and we claim that $X_{\kappa} \in \cC_{\geq 1}$ and $F_\kappa \in \cC_{\leq 0}$.
    For the former, let $n \geq 0$ and note that since each $s \in S$ is $\kappa$-compact
    and $\kappa$ is $\kappa$-filtered, we have
    \[
        \pi_{-n}\hom(s,X_\kappa) \simeq \colim_{\alpha < \kappa}\pi_0\hom(s[-n],X_\alpha)
    \]
    However, for each $\alpha < \kappa$, the map $\hom(s[-n],X_\alpha) \to \hom(s[-n],X_{\alpha+1})$
    induces the zero map on $\pi_0$ by construction.
    Thus $\hom(s,X_\kappa)$ is 1-connective, showing $X_\kappa \in \cC_{\geq 1}$.

    To see that $F_{\kappa} \in \cC_{\leq 0}$,
    we let $Y \in \cC_{\geq 0}$ and show that $\hom(F_\kappa,Y) \in \Sp_{\geq 0}$.
    By construction of $F_\bullet$
    we have $F_0 = 0$, $F_\lambda = \colim_{\alpha < \lambda} F_\alpha$
    for limit ordinals $\lambda \leq \kappa$
    and also $\cofib(F_\alpha \to F_{\alpha+1}) = \bigoplus_{n \geq 0}\bigoplus_{s \in S_{/X_\alpha[n]}}s[-n]$ for $\alpha < \kappa$.
    It follows that $\hom(F_\bullet,Y) \colon \kappa^\op \to \Sp$
    satisfies the assumptions
    of Corollary \ref{cor:mittag-leffler-sp} for $n=0$,
    so that $\hom(F_\kappa,Y) = \lim_{\alpha < \kappa}\hom(F_\alpha,Y)$
    is connective.

    Next, to obtain the desired description of $\cC_{\leq 0}$
    from (2), suppose that $X \in \cC_{\leq 0}$. Taking a weight decomposition
    $F_{\kappa} \to X \to X_{\kappa}$ as above,
    the second map vanishes
    so $X$ is a retract of $F_\kappa \in \langle S\rangle$
    and hence also lies in $\langle S \rangle$, as desired.

    Everything of (3)-(5) except the description of $\cC_{=0}$ in the case $\kappa = \omega$ is clear
    from the definitions.
    For the latter, let $S_{=0}$ be as described in (4). The argument is as in the proof
    of Proposition \ref{prop:ind-weight}(4).
    Namely, given $X \in \cC_{=0}$ we consider the canonical map $f \colon \bigoplus_{s_{=0} \in (S_{=0})_{/X}}s_{=0} \to X$.
    Then every map $s \to \cofib(f)$ factors through $X \to \cofib(f)$.
    But since $X$ is connective, the map $s \to X$ factors through $s_{=0}$
    and hence lifts to $\fib(X \to \cofib(f))$,
    which shows that the original map $s \to \cofib(f)$ was null.
    This proves that $\cofib(f)$ is 1-connective,
    so that $f$ admits a retraction, as desired.
\end{proof}

We now apply Theorem \ref{thm:gen-weight} to construct a weight
structure on the category of (left-)modules over a connective $\E_1$-ring.
For $R = \S$, the resulting weight structure on spectra was originally constructed by Bondarko in \cite[Section 4.6]{Bondarko10}.

\begin{proposition}\label{prop:lmod-weight}
    Let $R$ be a connective $\E_1$-ring.
    There is a weight structure on the category $\LMod(R)$
    compactly generated by $\{R\}$, satisfying:
    \begin{enumerate}
        \item the weight structure is adjacent to the canonical complete $t$-structure
            of \cite[Proposition 7.1.13]{HA}.
            We let $\LMod(R)_{\geq 0} \coloneqq \LMod(R)_{w \geq 0} = \LMod(R)_{t \geq 0}$.

        \item If $A$ is an ordinary ring,
            this recovers the ``projective'' weight structure on $\LMod(A) \simeq \cD(A)$
            considered in \cite[Section 4.1]{Bondarko-Shamov}, for which
            \begin{enumerate}
                \item $\cD(A)_{w \leq 0}$ consists
                    of objects represented by termwise projective,
                    $K$-projective\footnote{In the sense of Spaltenstein \cite{Spaltenstein}.} complexes concentrated in degrees $\leq 0$.
                    In particular, $\cD(A)_{w(-\infty,0]} = \cD(A)_{>-\infty} \cap \cD(A)_{w \leq 0}$
                    consists of retracts of termwise projective complexes concentrated
                    in a bounded range of non-positive degrees.

                \item $\cD(A)_{w = 0}$ is the additive category of projective $A$-modules.

                \item If $M \in \cD(A)_{t \leq 0}$ has $\pi_{-n}M$ of projective dimension $\leq n$
                    for each $n \geq 0$, then $M \in \cD(A)_{w \leq 0}$.
                    In particular, if $A$ has left global dimension $\leq d$,
                    then $\cD(A)_{t \leq -d} \subseteq \cD(A)_{w \leq 0}$
            \end{enumerate}

        \item In the general case, an $R$-module $M$ is weight coconnective
            if and only if $\pi_0R \tensor_R M$ is weight coconnective in $\LMod(\pi_0R)$.
            Note that if $R$ is $n$-truncated, then $\LMod(R)_{w \leq 0} \subseteq \LMod(R)_{t \leq n}$.

        \item The weight heart $\LMod(R)^{w \heartsuit} = \LMod(R)_{w = 0}$
            is given by $\Proj(R) = \{R\}^{\oplus,\idem}$, the idempotent completion
            of the full subcategory of $\LMod(R)$ on direct sums of $R$.

        \item The weight structure is left complete.
            If $A$ is an ordinary ring of finite left global dimension,
            then the weight structure on $\LMod(A) = \cD(A)$ is also right complete.

        \item If $R \to S$ is a map of connective $\E_1$-rings,
            then $S \tensor_R - \colon \LMod(R) \to \LMod(S)$ is weight exact.

        \item If $R$ is commutative, then the weight structure
            is compatible with the monoidal structure in the sense of
            Corollary \ref{cor:weight-complex}(4).
    \end{enumerate}
\end{proposition}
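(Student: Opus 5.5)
The plan is to apply Theorem \ref{thm:gen-weight} with $\kappa = \omega$ and $S = \{R\}$. Since $R$ is compact in $\LMod(R)$ and its shifts detect equivalences, this gives a weight structure with $\LMod(R)_{w\geq 0} = \{M \mid \hom_R(R,M) \in \Sp_{\geq 0}\}$. Because $\hom_R(R,M)$ is the underlying spectrum of $M$, this is exactly $\LMod(R)_{t\geq 0}$, which proves (1); completeness of the $t$-structure is \cite[Proposition 7.1.13]{HA}.

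The remaining items follow from the parts of that theorem, one at a time.

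\begin{itemize}
\item[(5), left completeness.] This is immediate from Theorem \ref{thm:gen-weight}(5), or alternatively from Lemma \ref{lem:t-vs-w-comp}(1).
\item[(4), the heart.] By Theorem \ref{thm:gen-weight}(4), the heart consists of retracts of sums of $R_{=0}$. But $R$ is connective, so $\hom(R, \Sigma R_{\geq 1})$ vanishes after mapping from the coconnective piece. More directly, $R \in \cC_{\geq 0}$ and $R \in \langle S\rangle = \cC_{\leq 0}$, so we may take $R_{=0} = R$. This gives $\Proj(R)$.
\item[(6), base change.] The functor $S\otimes_R -$ preserves $t$-connectives because $S$ is connective. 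It also preserves all colimits, finite limits and retracts, and sends $R$ to $S$. Hence it preserves the class $\langle\{R\}\rangle$ of Theorem \ref{thm:gen-weight}(2), so it is weight exact.
\item[(7), monoidality.] The connectives are closed under $\otimes$ by the $t$-structure. The class $\langle R\rangle$ is closed under $\otimes$ because $M\otimes -$ preserves each of the generating operations for $M \in \langle R\rangle$; this follows by a two-step induction on the generation of the class.
\end{itemize}

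For (3), the forward direction follows from (6) applied to $R\to\pi_0R$. For the converse I would use saturation: $M$ is coconnective iff $\hom_R(M,N)$ is connective for all connective $N$. I would reduce to $N$ truncated, then write $N$ via its Postnikov tower, which is a limit growing in connectivity. Connectivity of $\hom$ survives such limits, as in Lemma \ref{lem:left-t-comp-is-conn-comp} and Lemma \ref{lem:weight-closure}. Each layer is a $\pi_0R$-module, and $\hom_R(M,\Sigma^k H) = \hom_{\pi_0R}(\pi_0R\otimes_R M,\Sigma^k H)$. This is connective by the assumption on $\pi_0R\otimes_R M$, and the extensions in the tower are handled by the fiber sequences. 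For the truncation remark: if $R$ is $n$-truncated, then $R \in \LMod(R)_{t\leq n}$, and $\langle R\rangle$ stays $t$-coconnective up to $n$. This uses that $t$-coconnectives of the form $\LMod_{t\leq n}$ are closed under the operations in (2) in the compactly generated setting, in particular under sums and sequential colimits of this kind.

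For (2) with $A$ discrete, the key step is (a). A generated object built by Theorem \ref{thm:gen-weight}(2) from $A$ under cones, sums and sequential colimits whose cofibers are in the class is a termwise projective K-projective complex in degrees $\leq 0$. Conversely, such a complex is the colimit of its naive truncations in the appropriate (co)direction, with cofibers shifted projectives, so it lies in $\langle A\rangle$. The bounded statement then uses Lemma \ref{lem:ch-trunc-cofseq}. (b) is the heart statement (4). For (c), take $M \in \cD(A)_{t\leq 0}$ and write it as an extension or limit of $\Sigma^{-n}\pi_{-n}M$ along the Postnikov tower, which is a limit growing in $t$-coconnectivity. A projective resolution of length $\leq n$ puts $\Sigma^{-n}\pi_{-n}M$ into $\cD(A)_{w\leq 0}$, and closure under extensions handles finite stages. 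To pass to the limit, I expect it is cleaner to test against connective $N$: $\hom(M,N)$ has the needed connectivity by a Milnor $\lim^1$ argument, or one can assemble a projective complex directly via a Cartan–Eilenberg-type resolution.

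The right completeness in (5) for finite global dimension $d$ follows from (c). We get $\cD(A)_{t\leq -d} \subseteq \cD(A)_{w\leq 0} \subseteq \cD(A)_{t\leq 0}$, so the coconnectivity structures are equivalent up to shift. By Remark \ref{rem:conn-comp-inv-under-eqv} and right $t$-completeness of $\cD(A)$ (the dual of Lemma \ref{lem:left-t-comp-is-conn-comp}), the weight structure is right complete.

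I expect the main obstacle to be (2c) and the converse direction of (3), since both require controlling the passage to a limit over the Postnikov tower against weight coconnectivity.
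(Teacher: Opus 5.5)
\textbf{The gap is in (2a).} Your treatment of (1), (3), (5), (6) and (7) matches the paper. Your route to (4) via Theorem~\ref{thm:gen-weight}(4) with the trivial decomposition $0 \to R \to R$ is a valid shortcut, since $R$ lies in both $\LMod(R)_{w\geq 0}$ and $\langle\{R\}\rangle = \LMod(R)_{w\leq 0}$. The paper instead lifts a basis of the projective $\pi_0R$-module $\pi_0R\otimes_R M$.

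The failing claim is that a K-projective, termwise projective complex $P$ in degrees $\leq 0$ lies in $\langle\{A\}\rangle$ because it is ``the colimit of its naive truncations''. It is not such a colimit.
\begin{itemize}
\item The naive truncations of $P$ are the quotient complexes $\sigma_{\geq -n}P$, in degrees $[-n,0]$. They present $P$ as an inverse limit $\lim_n \sigma_{\geq -n}P$, a sequential limit growing in weight coconnectivity.
\item The subcomplexes $\sigma_{\leq -n}P$ shrink to $0$. So there is no sequential colimit presentation starting from an object of the class.
\item Closure of $\cD(A)_{w\leq 0}$ under such limits is exactly right weight completeness, which fails in general.
\end{itemize}
Your argument also never uses K-projectivity, and without it the claim is false. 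Over $A=\Z/4$, take the complex $\cdots \to 0 \to \Z/4 \xrightarrow{2} \Z/4 \xrightarrow{2} \Z/4 \to \cdots$ concentrated in degrees $\leq 0$. It is termwise free, with homology $\Z/2$ in degree $0$ only. Yet $\pi_{-1}\hom(\Z/2,\Z/2) = \Ext^1_{\Z/4}(\Z/2,\Z/2) \neq 0$, so $\Z/2 \notin \cD(\Z/4)_{w\leq 0}$.

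The missing idea is to use K-projectivity to compute $\hom_{\cD(A)}(P,Q)$. Here $Q$ is a complex in degrees $\geq 0$ representing an arbitrary connective object. K-projectivity lets you compute this as the naive Hom complex, whose degree-$p$ term is $\prod_n \Hom_A(P_n,Q_{n+p})$. This vanishes for $p<0$, and saturation then gives $P \in \cD(A)_{w\leq 0}$.

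\textbf{The other inclusion in (2a).} Your closure argument for $\langle\{A\}\rangle$ omits closure under retracts. You would need to show that a retract in $\cD(A)$ of such a complex is again representable by one. This is an idempotent-completeness question of the kind in Corollary~\ref{cor:ch-idem}, and it needs an argument.

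The paper avoids this by starting from an arbitrary $X \in \cD(A)_{w\leq 0}$ with a cofibrant representative $P_\bullet$. For every discrete module $N$, the Hom complex from $P_\bullet$ to $N$ has no homology in negative degrees. This forces $P_\bullet$ to be exact in positive degrees, with each $\mathrm{Im}(d_m)\subseteq P_{m-1}$ a direct summand for $m\geq 1$. The positive-degree part is therefore a split exact, hence contractible, subcomplex that can be quotiented out.

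\textbf{A minor point in (2c).} The Whitehead tower $\tau_{\geq -n}M$ exhibits $M$ as a colimit, not a limit. Its successive cofibers $\Sigma^{-(n+1)}\pi_{-(n+1)}M$ lie in $\cD(A)_{w\leq 0}$ by the projective-dimension hypothesis. So Theorem~\ref{thm:gen-weight}(2) already gives $M\in\cD(A)_{w\leq 0}$ without any $\lim^1$ argument. Your fallback of testing against connective $N$ with a Mittag--Leffler argument is what the paper does, and it also works.
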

\begin{proof}
    Applying Theorem \ref{thm:gen-weight}, point (1) is clear.
    We assume (2) for now and postpone its proof towards the end.

    For point (6), the basechange preserves (weight) connectives since all the rings we consider
    are connective.
    Since the basechange functor is an exact left adjoint,
    it also preserves all the operations under which $\LMod(R)_{w \leq 0}$ is generated
    from $R$. Since $S \tensor_R R = S \in \LMod(S)_{w \leq 0}$,
    we conclude that $S \tensor_R -$ is weight exact.

    For (3), let $\cC \subseteq \LMod(R)$ be the full subcategory
    on those $M$ for which $M_0 \coloneqq \pi_0R \tensor_R M \in \LMod(\pi_0R)_{w \leq 0}$.
    By (6) we have $\LMod(R)_{w \leq 0} \subseteq \cC$.
    Conversely, let $M \in \cC$ and $N \in \LMod(R)_{\geq 0}$.
    For each $k \geq 0$, we have that
    \[
        \hom_{R}(M, \Sigma^k\pi_kN) \simeq \hom_{\pi_0R}(\pi_0R \tensor_R M, \Sigma^k\pi_k N) \in \Sp_{\geq k}
    \]
    and hence an induction shows that $\hom_R(M, \tau_{\leq k}N)$
    is connective for all $k$.
    By left completeness of the $t$-structure,
    $\hom_R(M,N) \simeq \lim_n \hom_R(M, \tau_{\leq n}N)$
    is a sequential limit of connective spectra
    growing in connectivity, hence itself connective.
    This shows $\cC \subseteq \LMod(R)_{w \leq 0}$.

    Next, we show (4).
    Consider $M \in \LMod(R)_{w = 0}$.
    Then $M_0 \coloneqq \pi_0R \tensor_R M \in \LMod(\pi_0R)$ is an ordinary projective $\pi_0R$-module
    by (2) and (6). In particular, there exists another $\pi_0R$-module $P$
    so that $P \oplus M_0$ is free.
    Picking a basis, we find a map $\bigoplus_I R \to M_0$
    which is a split epimorphism on $\pi_0$. Since $\cofib(M \to M_0) \simeq \Sigma\tau_{\geq 1}R \tensor_R M$ is 2-connective and $\bigoplus_I R$ lies in the weight-heart,
    we see that this lifts to a map $f \colon \bigoplus_I R \to M$
    which is still a split epi on $\pi_0$
    since $M \to M_0$ is a $\pi_0$ isomorphism.
    But then $\cofib(f)$ is 1-connective,
    and we see that $f$ admits a retraction, as desired.

    For (5), the left completeness follows from left completeness of the $t$-structure
    and Lemma \ref{lem:left-t-comp-is-conn-comp}.
    Given an ordinary ring $A$ of left global dimension $\leq d$,
    then by (2) we have inclusions $\LMod(A)_{w \leq 0} \subseteq \LMod(A)_{t \leq 0} \subseteq \LMod(A)_{w \leq d}$.
    Since the $t$-structure on $\LMod(A)$ is right complete,
    we deduce that also the weight structure is right complete
    via Remark \ref{rem:conn-comp-inv-under-eqv} and the dual of Lemma \ref{lem:left-t-comp-is-conn-comp}.

    For (7), the only possibly non-trivial part is the closure of $\Mod(R)_{w \leq 0}$ under tensor products.
    However, $\tensor_R$ preserves the operations under which $\Mod(R)_{w \leq 0}$ is generated from $R$
    in both variables separately, and $R \tensor_R R \simeq R \in \Mod(R)_{w \leq 0}$,
    which allows us to conclude.

    It remains to show (2).
    We will use the classical projective model structure on the category $\Ch(A)$
    of (unbounded) chain complexes of $A$-modules, see \cite[Section 2.3]{Hovey}.
    This presents the $\infty$-category $\cD(A)$.
    Recall that the cofibrant objects are termwise projective
    and also $K$-projective in the sense of Spaltenstein \cite{Spaltenstein}.
    Note that $K$-projectivity, unlike cofibrancy in $\Ch(A)$, is definitionally
    invariant under chain homotopy equivalence (though of course not under quasi-isomorphism).
    Note also that bounded below termwise projective complexes are cofibrant, hence $K$-projective,
    which allows us to deduce the description of $\cD(A)_{w(-\infty,0]}$ from that of $\cD(A)_{w \leq 0}$.

    Suppose that $P_\bullet$ is a $K$-projective complex concentrated
    in degrees $\leq 0$. Let $X \in \cD(A)_{\geq 0}$, represented by some complex $Q_\bullet$
    concentrated in degrees $\geq 0$ (e.g.~using the smart truncation).
    Then $K$-projectivity of $P_\bullet$ means that we can compute $\hom_{\cD(A)}(P_\bullet, Q_\bullet)$
    as the chain complex hom, whose degree $p$ term is given by $\prod_{n \in \Z} \Hom_A(P_n, Q_{n+p})$.
    Clearly this vanishes for $p < 0$, which proves that $P_\bullet$ presents a weight-coconnective
    object in $\cD(A)$.

    Conversely, suppose that $X \in \cD(A)_{w \leq 0}$,
    which we can represent by a cofibrant (hence $K$-projective and pointwise projective) complex $P_\bullet$.
    For any ordinary $A$-module $N$,
    we then know that the Hom-complex $\und{\Hom}_A(P_\bullet, N)$ has vanishing homology in degrees $< 0$.
    So for each $m \geq 1$ we have exact sequences
    \[
        \Hom_A(P_{m-1},N) \xto{d_m^*} \Hom_A(P_m, N) \xto{d_{m+1}^*} \Hom_A(P_{m+1},N).
    \]
    Now set $C_m = \Coker(d_{m+1})$, and let $\overline{d_m} \colon C_m \to P_{m-1}$ be the induced map.
    The exactness yields that for every $N$, the induced map
    $\overline{d_m}^* \colon \Hom_A(P_{m-1},N) \to \Hom_A(C_m,N)$
    is surjective. Taking $N = C_m$, we get that $\overline{d_m}$ is split injective,
    so in particular $\Img(d_m) \subseteq P_{m-1}$ is a direct summand, hence again projective.
    Moreover, since $0 = \Ker(\overline{d_m}) \cong \Ker(d_m)/\Img(d_{m+1})$,
    we see that $P_\bullet$ is exact at $m$.
    Now the short exact sequences $0 \to \Img(d_{m+1}) \to P_m \to \Img(d_m) \to 0$
    split by projectivity of $\Img(d_m)$,
    showing that the complex $(\cdots \to P_3 \to P_2 \to P_1 \to \Img(d_1))$
    is even split exact, and hence a contractible subcomplex of $P_\bullet$.
    So $P_\bullet$ is chain homotopy equivalent to the quotient by this subcomplex.
    The resulting quotient is concentrated in degrees $\leq 0$
    and is still termwise projective since $\Img(d_1) \subseteq P_0$ is a direct summand,
    and is still $K$-projective since it is chain homotopy equivalent to the $K$-projective $P_\bullet$.
    This concludes the identification of $\cD(A)_{w \leq 0}$.

    For (2b), let $X \in \cD(A)_{w = 0} \subseteq \cD(A)_{t = 0}$.
    So we can view $X$ as an ordinary $A$-module.
    Moreover, for any other ordinary $A$-module $M$, we have
    that $\Ext_A^k(X,M) \cong \pi_{-k}\hom_{\cD(A)}(X,M) = 0$,
    which shows that $X$ is projective, as desired.

    For (2c), suppose $M \in \cD(A)$ is $t$-coconnective and $\pi_{-n}M$ has projective dimension $\leq n$
    for each $n \geq 0$. Let $N$ be an ordinary $A$-module.
    The assumption exactly yields that $\hom(\Omega^n \pi_{-n}M,N)$ is connective for each $n \geq 0$.
    An induction along the Whitehead tower then shows that also each $\hom(\tau_{\geq -n}M,N)$ is connective,
    and a Mittag-Leffler argument then shows that $\hom(M,N) \simeq \lim_n \hom(\tau_{\geq -n}M,N)$
    is connective.
    For general connective $N \in \cD(A)_{\geq 0}$,
    we can again induct our way up the Postnikov tower of $N$ to deduce that each $\hom(M,\tau_{\leq n}N)$
    is connective,
    and that $\hom(M,N) = \lim_n \hom(M,\tau_{\leq n}N)$ is a sequential limit growing in connectivity
    of connective spectra, hence connective.
    This shows that $M \in \cD(A)_{w \leq 0}$.
\end{proof}

\begin{example}\label{spectra}
    For $R = \Z$, we get a complete weight structure on $\cD(\Z)$
    which is adjacent to the complete $t$-structure.
    The weight coconnectives are precisely the $t$-coconnective complexes with free abelian $\pi_0$.

    For $R = \S$, we obtain a left complete weight structure adjacent to the complete $t$-structure on $\Sp$.
    The weight coconnective spectra are precisely those for which $\Z \tensor X$ is weight coconnective
    in $\cD(\Z)$, i.e.~those spectra whose $\Z$-homology is free in degree $0$ and vanishes in degrees $\geq 1$.
\end{example}

\begin{remark}\label{rem:sp-not-dw-comp}
    The description of weight coconnective spectra
    yields that $\Sp_{w \leq -\infty} = \{X \in \Sp \mid \Z \tensor X = 0\} \neq 0$.
    This is not the only reason that the standard weight structure on $\Sp$ is not right complete;
    $\Sp_{w \leq 0}$ is also not closed under sequential inverse limits growing in weight coconnectivity,
    so we cannot just quotient out $\Sp_{w \leq -\infty}$
    to guarantee right completeness by (the dual of) Lemma \ref{lem:upw-comp-quot}.
    Indeed, to see this, note that $\prod_{n \geq 0} \S[-n] \simeq \lim(\cdots \to \S[-1] \times \S \to \S)$
    is such a limit, where each term $\prod_{n =0}^k \S[-n] \in \Sp_{w \leq 0}$.
    However,
    \[
        \pi_k\bigg(\Z \tensor \prod_{n \geq 0}\S[-n]\bigg)_\Q
        = \pi_k\bigg(\Q \tensor \prod_{n \geq 0}\S[-n]\bigg)
        = \pi_k\bigg(\prod_{n \geq 0}\S[-n]\bigg)_\Q
        = \bigg(\prod_{n \geq 0}\pi_{k+n}(\S)\bigg)_\Q
        \neq 0
    \]
    for all $k$, since there is for example an embedding
    $\prod_{p\geq k\text{ prime}} \F_p \hookrightarrow \prod_{n \geq 0} \pi_{k+n}(\S)$,
    and the element $([1],[1],[1],\dots)$ spans an infinite cyclic group in the former,
    so that both groups have non-trivial rationalization.
    In particular, we see that $\prod_{n \geq 0} \S[-n] \notin \Sp_{w < \infty}$.
\end{remark}

The examples discussed here only required the previously known version of Theorem \ref{thm:gen-weight}
for $\kappa = \omega$.
However, in Section \ref{sec:anderson} we will see another interesting
example of a weight structure on Spectra, for whose existence we need the $\kappa = \omega_1$ version of the Theorem.

\section{Weak $t$-structures and their completions}\label{sec:t}

Unlike completions of weight structures as introduced in the previous sections,
the notion of complete $t$-structure is already well established.
In this section, we record how the right completion of a $t$-structure
can be achieved by an analogous procedure as that of the left completion of weight structures above.
In fact, we will work with the following weakening of the notion of $t$-structure,
akin to weakly weighted categories.
The entire theory parallels that of weakly weighted categories and their completions,
so we will be brief.

\begin{definition}\label{def:weak-t}
    Let $\cC$ be a stable category and $m \in \N_0 \cup \{\infty\}$.
    A weak $t$-structure of defect $m$ on $\cC$ consists of full subcategories
    $\cC_{\leq 0},\cC_{\geq 0} \subseteq \cC$ such that
    \begin{enumerate}
        \item $\cC_{\geq 0}$ is closed under suspensions,
            $\cC_{\leq 0}$ under loops, and both under retracts and extensions.
        \item $\hom_\cC(X_{\geq 0},Y_{\leq 0}) \in \Sp_{t\leq 0}$
            for $X_{\geq 0} \in \cC_{\geq 0}$ and $Y_{\leq 0} \in \cC_{\leq 0}$.
        \item For every $X \in \cC$ there is a cofiber sequence
            $X_{>-m} \to X \to X_{\leq 0}$ with $X_{> -m} \in \cC_{> -m}$ and $X_{\leq 0} \in \cC_{\leq 0}$.
            We refer to this as a $t$-decomposition of $X$.
    \end{enumerate}
    Here $\cC_{>n-1} = \cC_{\geq n} = \Sigma^n \cC_{\geq 0}$ and $\cC_{\leq n} = \Sigma^n \cC_{\leq 0}$
    for $n \in \Z$, and $\cC_{>-\infty} = \bigcup_{n \geq 0} \cC_{\geq -n}$.
    This recovers the usual notion of $t$-structure for $m = 0$.
\end{definition}

As before, a weak $t$-structure of defect $m$ is also a weak $t$-structure of any defect $m' \geq m$.
By a weak $t$-category we mean a stable category equipped with a weak $t$-structure of defect $\infty$.

\begin{warning}\label{warn:wtcat-non-dual}
    As in Warning \ref{warn:wwcat-non-dual},
    if $(\cC^\op,(\cC_{\leq 0})^\op,(\cC_{\geq 0})^\op)$ is a weak $t$-category of defect $m$
    and $m < \infty$, then $(\cC,\cC_{\geq 0},\cC_{\leq 0})$ is a weak $t$-category of the same defect.
    But if $m = \infty$, then we have $t$-decompositions $X_{\geq 0} \to X \to X_{<\infty}$ instead.
    In this case, we say $\cC$ is a weak co-$t$-structure\footnote{In the literature a weight structure is sometimes
    referred to as a co-$t$-structure. This is incompatible with the nomenclature here.},
    and a weak bi-$t$-structure if both $(\cC,\cC_{\geq 0},\cC_{\leq 0})$
    and $(\cC^\op,(\cC_{\leq 0})^\op,(\cC_{\geq 0})^\op)$ are weak $t$-structures.
\end{warning}

\begin{remark}\label{rem:bounded-is-weak-t}
    Analogously to Remark \ref{rem:bounded-is-weakly-weighted},
    we note that if $(\cC,\cC_{\geq 0},\cC_{\leq 0})$ satisfies axioms (1) and (2) of Definition \ref{def:weak-t}
    and $\cC_{>-\infty} = \cC$, then $\cC$ is a weak $t$-category via the trivial decompositions $X \xto{\id} X \to 0$
    for all $X \in \cC$.
    Dually, if $\cC_{<\infty} = \cC$, then $\cC$ is a weak co-$t$-category
    via the trivial decompositions $0 \to X \xto{\id} X$.
    So if $\cC_{<\infty} = \cC = \cC_{>-\infty}$, then $\cC$ is a weak bi-$t$-category.
    In particular, if $(\cC,\cC_{\geq 0})$ is a stable connectivity structure
    we can define $\cC_{\leq 0} = \rperp{(\cC_{\geq 1})}$.
    Then $(\cC_{>-\infty},\cC_{\geq 0}, \cC_{(-\infty,0]})$ is always a weak $t$-structure,
    and $(\cC_{<\infty}, \cC_{[0,\infty)}, \cC_{\leq 0})$ is a weak co-$t$-structure
    when $\cC_{[0,\infty)}$ is closed under retracts in $\cC_{<\infty}$.
\end{remark}

There are various equivalent reformulations of the above notion,
analogous to those of Remark \ref{rem:weak-weight-reformulation}.
The notion of $t$-amplitude of an exact functor between weak $t$-categories
is defined analogously to Definition \ref{def:weight-amplitude},
which yields categories $\wTCat,\wTCat_{\leq 0},\wTCat_{=0}$ of weak $t$-categories and exact functors of bounded above $t$-amplitude, $t$-amplitude $[-\infty,0]$, and $t$-amplitude $[0,0]$.
Dual to the case of weak weight structures, we will mostly be concerned with the \emph{right} completions of weak $t$-structures, i.e.~colimit completions of their underlying coconnectivity structures.

\begin{observation}
    Following Definition \ref{def:saturated}, we say that a weak $t$-category is left resp.~right saturated
    if $\cC_{\leq 0} = \rperp{(\cC_{\geq 1})}$ resp.~$\cC_{\geq 0} = \lperp{(\cC_{\leq -1})}$
    and saturated if it is both left and right saturated.
    If $\cC$ is a left \alt{right} saturated weak $t$-category,
    then $\cC_{\leq 0}$ \alt{$\cC_{\geq 0}$} is closed under limits \alt{colimits} in $\cC$.
\end{observation}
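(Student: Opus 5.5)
This is an orthogonality statement, so the plan is to reduce it to the general closure result the paper already uses for Lemma \ref{lem:weight-closure}, namely Lemma \ref{lem:orth-closure}. Take the left saturated case, so $\cC_{\leq 0} = \rperp{(\cC_{\geq 1})}$. Then $Y \in \cC_{\leq 0}$ exactly when $\hom(X,Y) \in \Sp_{t\leq 0}$ for all $X \in \cC_{\geq 1}$. The inclusion $\supseteq$ comes from axiom (2) of Definition \ref{def:weak-t}, using that $\cC_{\geq 1}$ is $\Sigma\cC_{\geq 0}$ and shifting. For the reverse, we note that $\pi_0\hom(X,Y)$ for $X \in \cC_{\geq 1}$ already encodes all higher $\pi_k$, because $\cC_{\geq 1}$ is closed under suspension. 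So the class is the right orthogonal, in the homotopy-category sense, of the suspension-closed class $\cC_{\geq 1}$.

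For limits, fix $X \in \cC_{\geq 1}$. The functor $\hom(X,-)\colon \cC \to \Sp$ preserves all limits that exist in $\cC$, and $\Sp_{t\leq 0}$ is closed under all limits in $\Sp$. Hence if $Y_\bullet$ is any diagram in $\cC_{\leq 0}$ with a limit in $\cC$, we get
\[
\hom(X,\lim Y_\bullet) \simeq \lim \hom(X,Y_\bullet) \in \Sp_{t\leq 0},
\]
so $\lim Y_\bullet \in \cC_{\leq 0}$. The right saturated case is dual. If $\cC_{\geq 0} = \lperp{(\cC_{\leq -1})}$, then $X$ is connective exactly when $\hom(X,Y) \in \Sp_{t\leq 0}$ for every $Y \in \cC_{\leq -1}$, where we use that $\cC_{\leq -1}$ is loop-closed. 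Since $\hom(-,Y)$ turns colimits in $\cC$ into limits in $\Sp$, colimits of connectives stay connective.

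The only point that needs care is confirming that saturation gives an orthogonality condition matching the $\Sp_{t\leq 0}$ bound in axiom (2), with the shift by one done correctly. Unlike the weight case, the target class $\Sp_{t\leq 0}$ is closed under all limits, not only under the restricted transfinite ones of Lemma \ref{lem:weight-closure}(4). That is why closure under arbitrary limits (resp. colimits) follows here without any Mittag-Leffler input.
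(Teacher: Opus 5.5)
Your route is the paper's own. The observation is the instance of Lemma \ref{lem:orth-closure}(2) with $\cD=\cC_{\geq 1}$ (suspension-closed), resp.\ $\cD=\cC_{\leq -1}$ (loop-closed). That lemma is proved exactly as you do: the corepresentable $\hom(X,-)$ preserves limits, and coconnective spectra are closed under limits.

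However, the characterization you write down is off by one, at exactly the point you flagged. For $X\in\cC_{\geq 1}$ and $Y\in\rperp{(\cC_{\geq 1})}$, suspension-closure gives $\pi_k\hom(X,Y)=\pi_0\map(\Sigma^kX,Y)=0$ for all $k\geq 0$. So the correct statement is
\[
\rperp{(\cC_{\geq 1})}=\{Y\mid \hom(X,Y)\in\Sp_{t\leq -1}\text{ for all }X\in\cC_{\geq 1}\},
\]
or equivalently $\hom(X,Y)\in\Sp_{t\leq 0}$ for all $X\in\cC_{\geq 0}$. The class you describe, with $\Sp_{t\leq 0}$ and $X\in\cC_{\geq 1}$, is $\rperp{(\cC_{\geq 2})}=\cC_{\leq 1}$, so your ``reverse'' inclusion is false. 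For example, in $\Sp$ with its standard $t$-structure, $\hom(X,\Sigma\Z)\in\Sp_{t\leq 0}$ for every $X\in\Sp_{\geq 1}$, yet $\Sigma\Z\notin\Sp_{t\leq 0}$. Your justification (``$\pi_0$ encodes all higher $\pi_k$'') runs in the wrong direction. It converts vanishing of $\pi_0$ for all $X\in\cC_{\geq 1}$ into vanishing of all $\pi_{k}$, $k\geq 0$. It does not convert vanishing of $\pi_{\geq 1}$ into vanishing of $\pi_0$. The same shift appears in your right-saturated case, which should read $\hom(X,Y)\in\Sp_{t\leq -1}$ for all $Y\in\cC_{\leq -1}$.

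The repair is immediate and leaves the rest of your argument intact. The corrected characterization follows from suspension- (resp.\ loop-) closure alone, with no need for axiom (2). With it, your limit step goes through verbatim, since $\Sp_{t\leq -1}$ is also closed under all limits. Alternatively, your argument as written does prove that $\cC_{\leq 1}$ is limit-closed, which is equivalent after desuspending, but only once you identify your class as $\cC_{\leq 1}$ rather than $\cC_{\leq 0}$. Your closing remark, that no Mittag--Leffler input is needed on this side, is correct.
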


\begin{remark}\label{rem:t-saturated}
    In practice, the weak $t$-structures we consider will typically be left saturated in the above sense,
    because we often just start with a class of connectives and then \emph{define}
    the weight- and $t$-coconnectives as the orthogonal complements, as in the introduction.
    In particular, if we specify only a single class $\mathcal{C}_{\geq 0}$ and refer to it as defining a weak $t$-structure, we implicitly take the other class to be its saturation as above.

    In general, the analogue of Lemma \ref{lem:saturation},
    shows that given a weak $t$-structure,
    one can construct another by \emph{left saturating} $\cC$,
    i.e.~by replacing $\cC_{\leq 0}$ with the right orthogonal of $\mathcal{C}_{\geq 1}$.
    The identity will yield a functor of $t$-amplitude $[0,0]$ in one direction,
    and a functor of $t$-amplitude $[0,m]$ in the other.
\end{remark}

\begin{example}\label{ex:corep-bdd-t}
    If $X \in \cC_{\geq a}$ for some $a \in \Z$,
    then $\hom_\cC(X,-) \colon \cC \to \Sp$ has $t$-amplitude in $[-\infty,-a]$.
    Hence Corollary \ref{cor:bdd-amp-pres} implies that this functor preserves
    sequential colimits growing in coconnectivity.
\end{example}

\begin{definition}\label{def:right-dense}
    Analogously to Definition \ref{def:upw-dense} we say that a full stable
    subcategory $\cU \subseteq \cC_{>-\infty}$ is right dense in the weak $t$-category $\cC$ if for every $X \in \cC$
    and $n \geq 0$ there exists a map $U \to X$ with cofiber in $\cC_{\leq -n}$.
    The canonical example is $\cC_{>-\infty}$ itself.
    Given $X \in \cC$, a $\cU$-Whitehead tower for $X$ is a diagram $U_\bullet \colon \N \to \cU_{/X}$
    so that $\N \to \cC_{/X}$ is a cone growing in coconnectivity in the sense of (the dual of)
    Definition \ref{def:grow-in-conn}, meaning that for every $n \geq 0$
    the cofibers $U_k \to X$ (and hence of $U_k \to U_{k+1}$) are eventually $n$-coconnective.
\end{definition}

Lemma \ref{lem:weight-cplx} and its proof dualize verbatim to show the following.

\begin{lemma}
    If $\cU$ is right dense in $\cC$, then any $X \in \cC$ admits a $\cU$-Whitehead tower,
    and any such tower $U_\bullet \colon \N \to \cU_{/X}$ is cofinal.
\end{lemma}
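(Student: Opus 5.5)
We dualize the proof of Lemma \ref{lem:weight-cplx}, with connectivity and coconnectivity exchanged and weight decompositions replaced by the right density of $\cU$. The lemma has two parts: existence of a $\cU$-Whitehead tower, and cofinality of any such tower.

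\textbf{Existence.} Fix $X \in \cC$. We construct maps $U_0 \to U_1 \to \cdots$ over $X$ inductively, with $U_k \in \cU$ and $\cofib(U_k \to X) \in \cC_{\leq -k}$.

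For $k=0$, right density gives a map $U_0 \to X$ with cofiber in $\cC_{\leq 0}$.

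For the inductive step, suppose $U_k \to X$ has been built. Right density gives some map $V \to X$ with $V \in \cU$ and cofiber in $\cC_{\leq -(k+1)}$. Since $U_k \in \cU \subseteq \cC_{>-\infty}$, we have $U_k \in \cC_{\geq a}$ for some $a$. Orthogonality alone does not force the composite $U_k \to X \to \cofib(V \to X)$ to vanish, so we cannot factor $U_k \to X$ through $V$ directly. Instead we use a pushout: set
\[
U_{k+1} \coloneqq \cofib\big(\fib(U_k \oplus V \to X) \to U_k \oplus V\big),
\]
or more simply take $U_{k+1} = U_k \oplus V$ with the sum map to $X$. The sum map is the natural choice, for two reasons. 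First, $\cU$ is stable, so $U_k \oplus V \in \cU$. Second, the cofiber of $U_k \oplus V \to X$ receives a map from $\cofib(V \to X)$ whose cofiber is $\Sigma U_k$. Checking the connectivity bound for this cofiber is the step we expect to need the most care.

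A cleaner alternative is to mimic the weight-complex proof exactly: choose $V$ first, then replace it by $U_{k+1} \coloneqq V \amalg_{F} U_k$ for a suitable $F$. In the end we prefer the sum construction. Its cofiber is an extension of $\cofib(V \to X) \in \cC_{\leq -(k+1)}$ by $\Sigma U_k$, and that extension does not obviously lie in $\cC_{\leq -(k+1)}$. So we repair it by taking $U_{k+1} = \fib(X \to \cofib(V\to X))$-style replacements. Concretely, replace $U_k \oplus V$ by $V$ and force compatibility: the composite $U_k \to X \to C_V \coloneqq \cofib(V \to X)$ factors through $\cofib(U_k \to X) \to C_V$, and one then enlarges $V$ by a cone.

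Whichever construction we use, the resulting cone only needs eventual coconnectivity of $\cofib(U_k \to X)$, which is what the construction controls. This bookkeeping is the main obstacle, and we would resolve it by following the diagram chase of Lemma \ref{lem:weight-cplx} verbatim in $\cC^{\op}$.

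\textbf{Cofinality.} We argue via Quillen's Theorem A exactly as in Lemma \ref{lem:weight-cplx}. For $U \in \cU_{/X}$, the fiber category $\N_{U/}$ has realization
\[
\colim_n \map(U,U_n)\times_{\map(U,X)} *.
\]
Since $U \in \cC_{\geq a}$, Example \ref{ex:corep-bdd-t} shows that $\hom(U,-)$ has $t$-amplitude bounded above, so by Corollary \ref{cor:bdd-amp-pres} it sends the cone $U_\bullet \Rightarrow X$, which grows in coconnectivity, to a colimit cone in $\Sp$. Then $\Omega^\infty$ preserves sequential colimits, so this realization is contractible, and hence $U_\bullet$ is cofinal.
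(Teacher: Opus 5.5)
Your cofinality argument is correct and is the paper's argument: Quillen's Theorem A, together with the fact that $\hom(U,-)$ has bounded above $t$-amplitude by Example~\ref{ex:corep-bdd-t}. The precise input is the cone version, Lemma~\ref{lem:cn-cone-colim}, because $\cC$ need not be right complete and $X$ need not be $\colim_n U_n$ in $\cC$.

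The existence part has a genuine gap: you never produce a valid inductive step.
\begin{itemize}
\item Your first displayed formula just returns $X$. For any $f \colon A \to B$ in a stable category one has $\cofib(\fib(f) \to A) \simeq B$, and $X$ need not lie in $\cU$.
\item For $U_{k+1} = U_k \oplus V$, the cofiber of the sum map is an extension of $\cofib(V \to X)$ by $\Sigma U_k$. The term $\Sigma U_k$ is bounded below but in general not coconnective at all, so the tower does not grow in coconnectivity.
\item The repair ``enlarge $V$ by a cone'' is never specified.
\item The fallback of running Lemma~\ref{lem:weight-cplx} ``verbatim in $\cC^{\op}$'' is not available. $\cC^{\op}$ carries a weak co-$t$-structure, not a weak weight structure, since the orthogonality axiom bounds hom spectra from above rather than from below. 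Moreover, that lemma in $\cC^{\op}$ would produce maps out of $X$ rather than into it.
\end{itemize}
The duality the paper means is swapping weight for $t$ inside $\cC$, keeping all maps pointing into $X$.

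The idea you dismissed is exactly the one that works, provided the next stage is chosen adaptively, as with the choice of $f(n+1)$ in Lemma~\ref{lem:weight-cplx}. Orthogonality fails to kill $U_k \to X \to \cofib(V \to X)$ only because you insisted on the fixed bound $\cC_{\leq -(k+1)}$.

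Since $U_k \in \cU \subseteq \cC_{>-\infty}$, pick $a_k$ with $U_k \in \cC_{\geq a_k}$. Use right density to choose $V \to X$ with $V \in \cU$ and cofiber $C \in \cC_{\leq \min(a_k,-k)-1}$.
\begin{itemize}
\item Axiom (2) of Definition~\ref{def:weak-t} gives $\hom(U_k, C) \in \Sp_{t \leq -1}$, so $\pi_0 \hom(U_k,C) = 0$.
\item Hence the composite $U_k \to X \to C$ is null, and $U_k \to X$ factors through $V \to X$ as a morphism in $\cU_{/X}$.
\item Setting $U_{k+1} \coloneqq V$ gives $\cofib(U_{k+1} \to X) \in \cC_{\leq -(k+1)}$.
\end{itemize}
Iterating this step yields the desired $\cU$-Whitehead tower.
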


\begin{definition}
    A weak $t$-structure on $\cC$ is right complete if
    its underlying coconnectivity structure $(\cC,\cC_{t \leq 0})$ is colimit complete, meaning:
    \begin{enumerate}
        \item $\cC_{\leq -\infty} \coloneqq \bigcap_{n \geq 0} \cC_{\leq -n} = 0$.
        \item $\cC$ admits sequential colimits growing in $t$-coconnectivity.
        \item $\cC_{\leq 0} \subseteq \cC$ is closed under such sequential colimits.
    \end{enumerate}
    We let $\wTCatr \subseteq \wTCat$ be the full subcategory on right complete weak $t$-categories.
\end{definition}

\begin{observation}\label{obs:right-comp}
    Analogously to Observation \ref{obs:left-comp},
    the property of a weak $t$-category $\cC$ being right complete
    does not depend on $\cC_{\geq 0}$. In particular, if $(\cC,\cC'_{\geq 0},\cC_{\leq 0})$
    is another weak $t$-structure on $\cC$ with the same coconnectives (e.g.~$\cC'_{\geq 0} = \lperp{(\cC_{\leq -1})}$), then it is also right complete.
    Moreover, if $\cC$ is right complete, then $\cC_{<\infty}$ is right complete and idempotent complete by the dual of Lemma \ref{lem:lim-comp-conn-idem}.
    In particular, also $\cC_{\leq 0}$ and $\cC_{=0}$ are idempotent complete.
    Moreover, if $\cU$ is right dense in $\cC$,
    then any $\cU$-Whitehead tower $X_\bullet$ of $X$ converges, i.e.~$\colim_n X_n \simeq X$.
    In this sense, any object in $\cC$ may be obtained as the colimit
    of a sequential diagram in $\cC_{>-\infty}$ growing in coconnectivity.
\end{observation}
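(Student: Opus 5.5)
The plan is to verify each clause directly from the definition of right completeness, transcribing the weight-structure arguments of Section \ref{sec:conn} through duality. Throughout we use that a coconnectivity structure on $\cC$ is a connectivity structure on $\cC^\op$ (Remark \ref{rem:cocon}). Under this dictionary, colimit completeness of $(\cC,\cC_{\leq 0})$ corresponds to limit completeness of $(\cC^\op,(\cC_{\leq 0})^\op)$.

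\emph{Independence of $\cC_{\geq 0}$.} Conditions (1)--(3) in the definition of right completeness mention only $\cC_{\leq 0}$, so they are unchanged when $\cC_{\geq 0}$ is replaced by any $\cC'_{\geq 0}$. This is immediate.

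\emph{$\cC_{<\infty}$ is right complete.} Condition (1) is inherited, since $(\cC_{<\infty})_{\leq -\infty}=\cC_{\leq-\infty}=0$. For (2) and (3), take $X_\bullet\colon\N\to\cC_{<\infty}$ growing in coconnectivity with $X_0\in\cC_{\leq b}$.
\begin{enumerate}
\item By cofinality, discard finitely many terms so that every cofiber $X_k\to X_{k+1}$ lies in $\cC_{\leq 0}$. The new $X_0$ still lies in some $\cC_{\leq b}$.
\item The colimit $X$ exists in $\cC$. Applying closure property (3) to the diagram $X_\bullet/X_0$, which lies in $\cC_{\leq 0}$ and still grows in coconnectivity, gives $X/X_0\in\cC_{\leq 0}$.
\item The extension $X_0\to X\to X/X_0$ then places $X$ in $\cC_{\leq\max(b,0)}\subseteq\cC_{<\infty}$. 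Hence the colimit exists in $\cC_{<\infty}$, and closure of $\cC_{\leq 0}$ under these colimits is inherited from $\cC$.
\end{enumerate}

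\emph{Idempotent completeness.} Since $(\cC^\op,(\cC_{\leq 0})^\op)$ is limit complete, Lemma \ref{lem:lim-comp-conn-idem} applied to $\cC^\op$ shows that $(\cC^\op)_{>-\infty}=(\cC_{<\infty})^\op$ is idempotent complete. The subcategory $\cC_{\leq 0}$ is closed under retracts by axiom (1) of Definition \ref{def:weak-t}, so it is idempotent complete. Likewise $\cC_{=0}=\cC_{\leq0}\cap\cC_{\geq0}$ is an intersection of retract-closed subcategories of the idempotent complete $\cC_{<\infty}$, so it is idempotent complete.

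\emph{Convergence of Whitehead towers.} Let $U_\bullet\to X$ be a $\cU$-Whitehead tower. The colimit $\colim_kU_k$ exists by (2), and we compare it with $X$ through the cofiber $\colim_k X/U_k$. For each $n\ge0$ the terms $X/U_k$ eventually lie in $\cC_{\leq -n}$, and this diagram grows in coconnectivity. Closure property (3), after shifting, therefore puts the colimit in $\cC_{\leq -n}$ for every $n$, hence in $\cC_{\leq-\infty}=0$. Equivalently, one can apply the dual of Lemma \ref{lem:cn-cone-colim} to the identity functor. The final sentence of the statement then follows by taking $\cU=\cC_{>-\infty}$, which is right dense.

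I expect the only point needing care to be the non-uniform bound in the $\cC_{<\infty}$ step, namely that the colimit actually stays bounded above. The cofinality reduction followed by one extension handles it.
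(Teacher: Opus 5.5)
Your proposal is correct and takes the route the paper indicates. Independence from the connectives is read off the definition, and idempotent completeness of $\cC_{<\infty}$ (hence of the retract-closed $\cC_{\leq 0}$ and $\cC_{=0}$) is the dual of Lemma \ref{lem:lim-comp-conn-idem} applied to $(\cC^\op,(\cC_{\leq 0})^\op)$. Convergence of Whitehead towers is the dual of Lemma \ref{lem:cn-cone-colim}, which is equivalent to your argument that $\colim_k X/U_k$ lies in $\cC_{\leq -\infty}=0$; your check via $X_0 \to X \to X/X_0$ that these colimits stay bounded above supplies the step the paper leaves to ``analogously to Observation \ref{obs:left-comp}''.
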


The dual of Lemma \ref{lem:upw-comp-eqv} gives the following lemma.

\begin{lemma}\label{lem:t-dw-comp-eqv}~
    \begin{enumerate}
        \item Let $f \colon \cC \to \cD$ be of amplitude $(a,b]$
            for $-\infty \leq a \leq b < \infty$
            between right complete weak $t$-categories.
            If its restriction $\cC_{>-\infty} \to \cD_{>-\infty}$
            is fully faithful \alt{an equivalence}, then so is $f$.

        \item Let $f \colon \cC \to \cD$ be a $t$-exact functor of right complete weak $t$-categories.
            If the restriction $\cC_{=0} \to \cD_{=0}$ is fully faithful \alt{an equivalence},
            then so is the restriction $\cC_{<\infty} \to \cD_{<\infty}$.
    \end{enumerate}
\end{lemma}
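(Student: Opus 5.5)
I will dualize the proof of Lemma \ref{lem:upw-comp-eqv} verbatim, replacing weight connectivity by $t$-coconnectivity, $\cC_{<\infty}$ by $\cC_{>-\infty}$, weight complexes by $\cC_{>-\infty}$-Whitehead towers, and left weight completeness by right $t$-completeness (i.e.\ colimit completeness of the coconnectivity structure $(\cC,\cC_{\leq 0})$).

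For (1), the amplitude $(a,b]$ with $b<\infty$ says that $f$ has bounded above $t$-amplitude. This is bounded below amplitude for the coconnectivity structure, viewed as a connectivity structure on the opposite. So the colimit version of Lemma \ref{lem:cn-cone-colim}, applied to the coconnectivity structures, shows that $f$ sends sequential cones growing in $t$-coconnectivity to colimit cones, since $\cD$ is colimit complete for $(\cD,\cD_{\leq 0})$. By Observation \ref{obs:right-comp}, every $Y\in\cD$ is the colimit of a convergent $\cD_{>-\infty}$-Whitehead tower.

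\emph{Essential surjectivity.} Suppose the restriction to bounded below objects is an equivalence, and let $Y\in\cD$ with tower $Y_\bullet$. Lift $Y_\bullet$ to a diagram $X_\bullet$ in $\cC_{>-\infty}$. This diagram grows in coconnectivity in $\cC$: the cofibers $\cofib(X_k\to X_{k+1})$ are bounded below, and $f$ is fully faithful there, so their images being highly coconnective forces them to be highly coconnective. I would verify this by mapping in from bounded-below objects and using left saturation, or alternatively run the argument via fully faithfulness exactly as the weight case does implicitly. The colimit $X=\colim X_k$ then exists in $\cC$ and $f(X)\simeq Y$.

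\emph{Fully faithfulness.} Take towers $X_\bullet$, $Y_\bullet$ for $X,Y\in\cC$. Then
\[
\hom(X,Y)\simeq \lim_k \hom(X_k,Y)\simeq \lim_k\colim_n \hom(X_k,Y_n),
\]
where the first equivalence holds because $X=\colim X_k$, and the second by Example \ref{ex:corep-bdd-t}, since $X_k$ is bounded below and therefore $\hom(X_k,-)$ preserves sequential colimits growing in coconnectivity. The same formula holds in $\cD$ for $fX_\bullet$, $fY_\bullet$, because $f$ preserves these colimits. The comparison is then an equivalence termwise by the hypothesis on $\cC_{>-\infty}$.

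For (2), a $t$-exact $f$ restricts to $\cC_{<\infty}\to\cD_{<\infty}$. On the bounded parts $\cC_{\mathrm{tb}}$ it is fully faithful (resp.\ an equivalence) whenever it is so on hearts: induct on amplitude using extensions and the five lemma, as in Lemma \ref{lem:upw-comp-eqv}(2). Here the bounded objects are finite extensions of shifts of heart objects, which needs the decompositions to exist; in the weak setting I would use the trivial decompositions of Remark \ref{rem:bounded-is-weak-t} on the bounded part. By Observation \ref{obs:right-comp}, $\cC_{<\infty}$ and $\cD_{<\infty}$ are again right complete, with bounded below part equal to the bounded part, so (1) applies with $(a,b]=(-1,0]$.

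The main obstacle I expect is the bookkeeping in the weak setting for (2): the heart-to-bounded induction requires each bounded object to be built from heart pieces, which is only available once $t$-decompositions of finite defect exist on the bounded part. If this is unavailable, I would instead phrase (2), as the paper's weight version effectively does, via (1) applied to the bounded below part of $\cC_{<\infty}$.
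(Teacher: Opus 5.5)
Your strategy matches the paper's: its proof of this lemma is simply ``the dual of Lemma~\ref{lem:upw-comp-eqv}''. Your fully faithful half of (1) is exactly that argument and is correct; you compute both sides as $\lim_k\colim_n\hom(X_k,Y_n)$, using Example~\ref{ex:corep-bdd-t} and the fact that $f$ preserves colimits of towers growing in coconnectivity.

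\textbf{First gap: essential surjectivity in (1).} The problem is your claim that a lifted tower $X_\bullet$ in $\cC_{>-\infty}$ grows in coconnectivity in $\cC$. Full faithfulness on $\cC_{>-\infty}$ only controls mapping spectra. For $c\in\cC_{>-\infty}$ with $f(c)\in\cD_{\leq -n}$ and $x\in\cC_{\geq k}$ it gives $\hom(x,c)\simeq\hom(fx,fc)\in\Sp_{\leq -n-a-k-1}$, and even this needs $a>-\infty$, so that $f(\cC_{\geq k})\subseteq\cD_{>a+k}$. Converting this into $c\in\cC_{\leq -(n+a+1)}$ requires $\cC$ to be left saturated, $\cC_{\leq 0}=(\cC_{\geq 1})^{\perp}$, and that is not a hypothesis. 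Appealing to ``the weight case'' does not help: the paper's one-line argument there (``by the existence of convergent towers, essential surjectivity of the restriction implies that of $f$'') silently uses the same step.

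In fact both the step and the equivalence claim fail in this generality. Let $\cC$ be $\Sp_{>-\infty}$ with $\cC_{\geq 0}=\Sp_{\geq 0}$ and $\cC_{\leq 0}=0$. This is a right complete weak $t$-category, since towers growing in coconnectivity are eventually constant. The inclusion $\cC\hookrightarrow\Sp$ is $t$-exact and restricts to the identity of $\cC_{>-\infty}=\Sp_{>-\infty}$, yet it is not essentially surjective. Concretely, $\Sigma^{-n}\Z$ maps into $\Sp_{\leq -n}$ but lies in no $\cC_{\leq -k}$. Once you add left saturation of $\cC$ and $a>-\infty$, your argument for (1) is complete.

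\textbf{Second gap: the induction in (2).} Your induction through bounded objects needs every object of $\cC_{\geq 0}\cap\cC_{\leq n}$ to be an iterated extension of shifted heart objects. That comes from the truncations of an honest $t$-structure. The trivial decompositions of Remark~\ref{rem:bounded-is-weak-t} give no such filtration, and finite defect does not suffice either.

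For example, take $\Sp$ with $\cC_{\geq 0}=\Sp_{\geq 0}$ and $\cC_{\leq 0}=\Sp_{\leq -1}$. This is a right complete weak $t$-category of defect $1$. All its hearts $\cC_{\geq n}\cap\cC_{\leq n}$ vanish, while $\cC_{\geq 0}\cap\cC_{\leq 1}$ is the standard heart $\Ab$. The zero endofunctor is $t$-exact and an equivalence on hearts, but it is not fully faithful on $\cC_{<\infty}$.

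If $\cC$ and $\cD$ carry genuine $t$-structures, both problems disappear. Truncations supply the filtrations for your five-lemma induction, $\cC$ is automatically left saturated, and $t$-exactness gives $a=-1$. Your reduction of (2) to (1) for $\cC_{<\infty}\to\cD_{<\infty}$ then goes through.
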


\begin{definition}\label{def:right-completion}
    A $t$-exact functor of weak $t$-categories
    $\eta \colon \cC \to \rc{\cC}$
    exhibits $\rc{\cC}$ as the right completion of $\cC$
    if $\rc{\cC}$ is right complete and
    for any other right complete weak $t$-category $\cD$,
    precomposition by $\eta$ induces an equivalence
    \begin{equation}\label{eq:dw-comp}
        \eta^* \colon \Fun^\Ex_{[0,0]}(\rc{\cC},\cD) \xto{\simeq} \Fun^{\Ex}_{[0,0]}(\cC,\cD).
    \end{equation}
    In other words, $\eta$ exhibits $\rc{\cC}$ as the left adjoint object to $\cC$
    under the inclusion $\wTCatr_{=0} \subseteq \wTCat_{=0}$.
\end{definition}

As one expects,
the main theorem on left completions of weak weight structures
has an analogue for right completions of weak $t$-structures:

\begin{theorem}\label{thm:t-dw-comp}
    Any weak $t$-category of defect $m$ admits a right completion $\rc{\cC}$.
    This yields a left Bousfield localization $\rc{(-)} \colon \wTCat_{=0} \to \wTCatr_{=0}$ such that
    \begin{enumerate}
        \item There exists a fully faithful $t$-exact functor $\rc{\cC} \hookrightarrow \Ind(\cC_{>-\infty})$\footnote{Here we equip $\Ind(\cC_{>-\infty})$ with the $t$-structure from Proposition \ref{prop:ind-t} with connectives $\Ind(\cC_{\geq 0})$.}
            which lets us identify
            \begin{align*}
                \qquad\qquad\rc{\cC}
                &\simeq \{X \in \Ind(\cC_{>-\infty}) \mid \text{$\forall n \geq 0$ there is $c \in \cC_{>-\infty}$ and a map
                $jc \to X$ with $n$-coconnective cofiber}\}\\
                &= \{X \in \Ind(\cC_{>-\infty}) \mid X \text{ admits a $\cC_{>-\infty}$-Whitehead tower}\}\\
                &= \{\indcolim{n} c_n \in \Ind(\cC_{>-\infty}) \mid c_\bullet \colon \N \to \cC_{>-\infty} \text{ growing in coconnectivity}\}.
            \end{align*}
            which is the largest stable subcategory of $\Ind(\cC_{>-\infty})$
            in which $\cC_{>-\infty}$ is right dense,
            and also the smallest stable subcategory of $\Ind(\cC_{>-\infty})$
            containing $\cC_{>-\infty}$ and closed under sequential colimits growing in coconnectivity.
            Moreover, we have
            \[
                \rc{\cC}_{\leq 0}
                = \{\indcolim{n} c_n \mid c_\bullet \colon \N \to \cC_{(-\infty,0]}\text{ growing in coconnectivity}\}.
            \]

        \item The inclusion induces a $t$-exact equivalence $\rc{(\cC_{>-\infty})} \xto{\simeq} \rc{\cC}$. Moreover:
            \begin{enumerate}
                \setlength{\itemsep}{.2em}
                \item The unit $\eta_\cC \colon \cC \to \rc{\cC}$
                    is $t$-exact and the left Kan extension
                    of $\eta_{\cC_{>-\infty}}$ along $\cC_{>-\infty} \subseteq \cC$.

                \item $\eta_{\cC_{>-\infty}}$
                    agrees with the Yoneda embedding
                    $j \colon \cC_{>- \infty} \hookrightarrow \rc{\cC} \subseteq \Ind(\cC_{>-\infty})$, so is always fully faithful.
                    In fact, for $X \in \cC_{>-\infty}$ we have $\hom_\cC(X,-) \xto{\simeq} \hom_{\rc{\cC}}(\eta X, \eta -)$.

                \item $\cC_{>-\infty} \subseteq \rc{\cC}$
                    is right dense, $\cC_{\geq 0} \subseteq \rc{\cC}_{\geq 0}$ is dense,
                    and we have $\cC_{(-\infty,0]} = \cC_{>-\infty} \cap \rc{\cC}_{\leq 0}$.

                \item $\eta_\cC$ agrees with the Verdier projection
                    $\cC \to \cC/\cC_{\leq -\infty}$ whenever $\cC$ already
                    admits sequential colimits growing in coconnectivity
                    and $\cC_{\leq 0}$ is closed under them.
            \end{enumerate}

        \item For any right complete weak $t$-category $\cD$, $b \in \Z$
            and $-\infty \leq a \leq b$,
            restriction along $\eta$ induces an equivalence
            \[
                \eta_\cC^* \colon \Fun^\Ex_{[a,b]}(\rc{\cC},\cD) \xto{\simeq} \Fun^\Ex_{[a,b]}(\cC,\cD)
            \]
            with inverse given by left Kan extension along $\eta_\cC$.
            In particular, if $f \colon \cC \to \cD$ is a functor in $\wTCat$ of amplitude $[a,b]$,
            then $\rc{f} = (\eta_{\cC})_!(\eta_\cD f) \colon \rc{\cC} \to \rc{\cD}$
            again has amplitude $[a,b]$.
            Analogously for $(a,b]$ instead of $[a,b]$.

        \item In particular, right completion also defines a left adjoint
            to the inclusions $\wTCatr_{\leq 0} \subseteq \wTCat_{\leq 0}$
            respectively $\wTCatr \subseteq \wTCat$.
    \end{enumerate}
\end{theorem}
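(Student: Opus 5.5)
The plan is to dualize the proof of Theorem \ref{thm:upw-comp} step by step, replacing weight connectivity by $t$-coconnectivity and $\cC_{<\infty}$ by $\cC_{>-\infty}$. Note that we stay in $\Ind$-objects rather than passing to $\Pro$-objects, since right $t$-completion is again a colimit completion. First I would establish the analogue of Proposition \ref{prop:ind-weight}, namely Proposition \ref{prop:ind-t}. For a weak $t$-category $\cC$ this gives a $t$-structure on $\Ind(\cC_{>-\infty})$ with connectives $\Ind(\cC_{\geq 0})$. It is an honest $t$-structure by \cite[Proposition 1.4.4.11]{HA}, since $\Ind(\cC_{\geq 0})$ is presentable and closed under colimits and extensions. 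With this structure:
\begin{itemize}
\item the Yoneda embedding is $t$-exact, using the defect-$m$ decompositions to see that $\cC_{\leq 0}$ lands in the right orthogonal of $\Ind(\cC_{\geq 1})$;
\item $\Ind(\cC_{>-\infty})$ is right complete, since an object in $\Ind(\cC_{>-\infty})_{\leq -\infty}$ receives only zero maps from $\cC_{>-\infty}$ and hence vanishes;
\item the coconnectives are closed under filtered colimits because the generators are compact.
\end{itemize}

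Next I would prove the duals of Lemmas \ref{lem:weight-comp-lkan} and \ref{lem:upw-comp-basic}, together with Proposition \ref{prop:weight-comp-lkan} and Corollary \ref{cor:left-comp-via-embed}. The arguments go through verbatim, with Example \ref{ex:corep-bdd-t} replacing Example \ref{ex:corep-bdd}. That example says that for $c\in\cC_{>-\infty}$, the functor $\hom(c,-)$ preserves sequential colimits growing in coconnectivity, which gives:
\begin{itemize}
\item the lifting of maps between colimits of such towers to maps of towers, which in turn gives stability of $\langle\cC_{>-\infty}\rangle$ and its explicit descriptions;
\item the cofinality of Whitehead towers, which makes the left Kan extension formula a sequential colimit.
\end{itemize}
One then sets $\rc{\cC}:=\langle \cC_{>-\infty}\rangle\subseteq \Ind(\cC_{>-\infty})$, with $\rc{\cC}_{\leq 0}$ given by colimits of towers in $\cC_{(-\infty,0]}$ and $\rc{\cC}_{\geq0}=\rc{\cC}\cap\Ind(\cC_{\geq0})$. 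The $t$-decompositions of defect $m$ come from decomposing the first term of a Whitehead tower, exactly as in Lemma \ref{lem:upw-comp-basic}(3).

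The step I expect to be the main obstacle is showing that $\cC_{\geq 0}\subseteq \rc{\cC}_{\geq 0}$ is dense, and that $\cC_{(-\infty,0]}=\cC_{>-\infty}\cap\rc{\cC}_{\leq0}$. These are conditions (3) and (4) of Corollary \ref{cor:left-comp-via-embed} in dual form. For density I would argue as follows:
\begin{enumerate}
\item Take $X\in\rc{\cC}_{\geq 0}$. By right density it is a retract of some $c\in\cC_{>-\infty}$, hence compact in $\Ind(\cC_{>-\infty})$.
\item Since $X\in\Ind(\cC_{\geq0})$ and the inclusion $\Ind(\cC_{\geq0})\subseteq\Ind(\cC_{>-\infty})$ preserves filtered colimits, $X$ is compact in $\Ind(\cC_{\geq 0})$.
\item Therefore $X\in(\cC_{\geq 0})^\idem$.
\end{enumerate}
This is easier than the weight case because connectives are literally $\Ind(\cC_{\geq0})$. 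For the intersection identity, which is the analogue of Lemma \ref{lem:upw-comp-basic}(2), take $c\in\cC_{\geq -a}$ that is a colimit of a tower in $\cC_{(-\infty,0]}$ with cofibers in $\cC_{\leq -n-1}$. Then for large $n$ the map from the $n$-th term to $c$ splits, so $c$ is a retract of a coconnective object and hence coconnective because $\cC_{(-\infty,0]}$ is closed under retracts in $\cC_{>-\infty}$. Care is needed to use the retract closure of $\cC_{\leq 0}$ in $\cC$ rather than saturation.

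With these inputs, the universal property (3) follows from the dual of Proposition \ref{prop:weight-comp-lkan} via the triangle argument of the proof of Theorem \ref{thm:upw-comp}, and (4) follows formally from (3). For (2b), fix $X\in\cC_{\geq n}$ and a Whitehead tower $Y_\bullet$ of $Y$. The comparison square of the weight case shows that the top map is an equivalence because $\colim\hom(X,Y/Y_k)\in\Sp_{\leq-\infty}=0$. For (2d), apply the dual of Lemma \ref{lem:upw-comp-quot}: the semiorthogonal decomposition of $\cC$ into $\langle\cC_{>-\infty}\rangle$ and $\cC_{\leq-\infty}$ identifies $\eta$ with the Verdier projection.
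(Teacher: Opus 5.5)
Your proposal is correct and is essentially the paper's proof. The paper likewise applies Lemma \ref{lem:dw-comp-basic} to the $t$-exact Yoneda embedding $\cC_{>-\infty}\hookrightarrow\Ind(\cC_{>-\infty})$ and sets $\rc{\cC}\coloneqq\langle\cC_{>-\infty}\rangle$; it singles out exactly your compactness argument for the density of $\cC_{\geq 0}\subseteq\rc{\cC}_{\geq 0}$ as the one step that does not carry over verbatim from Theorem \ref{thm:upw-comp} (one small aside: $t$-exactness of the Yoneda embedding needs only the orthogonality axiom, not the defect-$m$ decompositions).
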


We now show that in the case $m=0$ the above right completion
recovers the usual right completion of $t$-structures.

\begin{corollary}\label{cor:t-comp-compare}
    Let $\cC$ be a stable category with $t$-structure.
    Then $\cC$ is right complete if and only if it is right complete
    in our sense. Moreover we obtain a natural $t$-exact equivalence $\rc{\cC} \simeq \lim_n \cC_{\geq -n}$ under $\cC$,
    i.e.~our definition of right $t$-completion specializes to the usual one.
\end{corollary}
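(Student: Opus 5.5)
The plan is to dualize the argument of Lemma \ref{lem:left-t-comp-is-conn-comp}, and then to identify the two completions via the universal property of Theorem \ref{thm:t-dw-comp}. Throughout, right completeness ``in our sense'' means that the coconnectivity structure $(\cC,\cC_{t\leq 0})$ is colimit complete.

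\textbf{Step 1: comparison of the two notions of right completeness.} Dualizing Lemma \ref{lem:left-t-comp-is-conn-comp} (pass to $\cC^\op$, where $t$-structures stay $t$-structures with the roles of connectives and coconnectives swapped), the following are equivalent:
\begin{itemize}
\item the $t$-structure is right complete in the classical sense, i.e.\ $\cC \to \lim_n \cC_{\geq -n}$ is an equivalence, with transition maps $\tau_{\geq -n}$;
\item $(\cC,\cC_{t \leq 0})$ is colimit complete.
\end{itemize}
The key input is that a sequential colimit growing in coconnectivity becomes eventually constant after applying each $\tau_{\geq -n}$. This gives existence of such colimits through the limit description, and closure of $\cC_{\leq 0}$ under them follows from the identification with the Whitehead tower. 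This settles the first sentence of the statement.

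\textbf{Step 2: the tower category $\what{\cC}$ is right complete.} Set $\what{\cC} \coloneqq \lim_n \cC_{\geq -n}$. It carries the standard $t$-structure, with connectives the towers whose entries lie in $\cC_{\geq 0}$. By the classical theory \cite[Prop.~1.2.1.17]{HA}, dualized, $\what{\cC}$ is a classically right complete $t$-category and the canonical functor $\cC \to \what{\cC}$ is $t$-exact. By Step 1, $\what{\cC}$ is right complete in our sense.

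\textbf{Step 3: $\cC \to \what{\cC}$ is a right completion.} We check that it has the same effect as $\eta_\cC$. By Theorem \ref{thm:t-dw-comp}(3) there is a $t$-exact comparison functor $F \colon \rc{\cC} \to \what{\cC}$ under $\cC$. To show $F$ is an equivalence we apply Lemma \ref{lem:t-dw-comp-eqv}(1), so it suffices to check that $F$ restricts to an equivalence $\rc{\cC}_{>-\infty} \to \what{\cC}_{>-\infty}$. This in turn follows from two facts:
\begin{itemize}
\item $\cC_{>-\infty} \to \what{\cC}_{>-\infty}$ is an equivalence, since a bounded-below tower is eventually constant and hence comes from $\cC_{\geq -n}$ for some $n$;
\item $\cC_{>-\infty} \to \rc{\cC}_{>-\infty}$ is an equivalence. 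Theorem \ref{thm:t-dw-comp}(2b) gives full faithfulness. For essential surjectivity, density (2c) shows that an object of $\rc{\cC}_{\geq -n}$ is a retract of an object of $\cC_{\geq -n}$, and $\cC_{\geq -n}$ is idempotent complete in $\cC_{>-\infty}$ once we know that $\cC_{>-\infty}$ is idempotent complete.
\end{itemize}

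\textbf{Main obstacle.} The hardest step is this idempotent-completeness point for a general, non-right-complete $t$-category $\cC$. I would avoid it by a different route. A weak $t$-structure with $m=0$ has $\rc{\cC}$ and $\what{\cC}$ agreeing on $\cC_{\geq 0}$: the truncations $\tau_{\leq k}$ exist in $\cC$ and are functorial. Hence objects of $\rc{\cC}_{\geq 0}$ are ind-colimits of Whitehead-type towers in $\cC_{\geq 0}$ that are eventually constant after truncation, and these are therefore honest objects of $\cC_{\geq 0}$.

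\textbf{Naturality.} Naturality in $\cC$ follows from the universal property once both constructions are identified under $\cC$.
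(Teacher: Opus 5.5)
\textbf{There is a genuine gap at the step you flagged.} The reduction is fine up to that point: the comparison functor $F\colon\rc{\cC}\to\what{\cC}$ under $\cC$, Step 1, Step 2, and the appeal to Lemma \ref{lem:t-dw-comp-eqv}(1) all work. But as written you do not establish essential surjectivity of $\eta\colon\cC_{>-\infty}\to\rc{\cC}_{>-\infty}$.

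\emph{The primary argument fails.} $\cC_{>-\infty}$, and hence $\cC_{\geq -n}$, need not be idempotent complete for a general $t$-category. For example, take any non-idempotent-complete stable $\cC$ with $\cC_{\geq 0}=\cC$ and $\cC_{\leq 0}=0$.

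\emph{The fallback is only gestured at.}
\begin{itemize}
\item The truncations that make a Whitehead tower $c_\bullet$ eventually constant are the connective covers $\tau_{\geq -k}$, not $\tau_{\leq k}$.
\item The assertion that $\rc{\cC}$ and $\what{\cC}$ ``agree on $\cC_{\geq 0}$'' is essentially what you are trying to prove.
\item The claim that replacing $c_\bullet$ by $\tau_{\geq 0}c_\bullet$ leaves the colimit unchanged needs an argument. One option: the levelwise sequences $\tau_{\geq 0}c_n\to c_n\to\tau_{\leq -1}c_n$ yield, on colimits in $\rc{\cC}$, a $t$-decomposition of $X$. Its last term must vanish because $X\in\rc{\cC}_{\geq 0}$ and $t$-decompositions are unique.
\end{itemize}

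\textbf{How to close the gap.} You do not need this essential surjectivity at all. Write $\iota\colon\cC\to\what{\cC}$, so $F\eta\simeq\iota$.
\begin{itemize}
\item $\iota$ restricts to equivalences $\cC_{\geq -n}\simeq\what{\cC}_{\geq -n}$, so $F\colon\rc{\cC}_{>-\infty}\to\what{\cC}_{>-\infty}$ is essentially surjective.
\item $F$ is fully faithful on objects $\eta c$ with $c\in\cC_{>-\infty}$, by 2-out-of-3 using Theorem \ref{thm:t-dw-comp}(2b) and full faithfulness of $\iota$ there.
\item By density (Theorem \ref{thm:t-dw-comp}(2c), shifted), every object of $\rc{\cC}_{\geq -n}$ is a retract of such an $\eta c$. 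Full faithfulness passes to retracts, so $F|_{>-\infty}$ is an equivalence.
\end{itemize}
Lemma \ref{lem:t-dw-comp-eqv}(1) then finishes the proof. The equivalence $\cC_{>-\infty}\simeq\rc{\cC}_{>-\infty}$ you were after follows as a byproduct.

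\textbf{Comparison with the paper.} The paper also avoids the idempotent-completeness issue by a 2-out-of-3 argument, but a different one. It applies $\what{(-)}$ to $\eta$ to get $G\colon\what{\cC}\to\what{\rc{\cC}}$. This $G$ is fully faithful as a limit of the fully faithful functors $\cC_{\geq -n}\to\rc{\cC}_{\geq -n}$. It is essentially surjective because $G\circ F$ is the equivalence $\rc{\cC}\simeq\what{\rc{\cC}}$ given by right completeness of $\rc{\cC}$. So the paper never needs your reduction to bounded-below parts.
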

\begin{proof}
    The dual of Lemma \ref{lem:left-t-comp-is-conn-comp}
    shows that the $t$-structure is right complete if and only if the underlying
    coconnectivity structure is colimit complete, which is precisely
    our definition of right completeness.

    Next, the universal property of $\rc{\cC}$ yields a $t$-exact comparison map
    $\rc{\cC} \to \what{\cC} \coloneqq \lim_n \cC_{\geq -n}$.
    Moreover, this extends to a commutative diagram
    \[\begin{tikzcd}
        \cC & {\what{\cC}} \\
        {\rc{\cC}} & {\what{\rc{\cC}}}
        \arrow[from=1-1, to=1-2]
        \arrow[from=1-1, to=2-1]
        \arrow[two heads, from=1-2, to=2-2]
        \arrow[from=2-1, to=1-2]
        \arrow["\simeq"', from=2-1, to=2-2]
    \end{tikzcd}\]
    where the bottom map is an equivalence by right completeness of $\rc{\cC}$.
    This implies that the right vertical map is essentially surjective.
    But it is also fully faithful as limit of the fully faithful
    functors $\cC_{\geq -n} \to (\rc{\cC})_{\geq -n}$, cf.~Theorem \ref{thm:t-dw-comp}(2b).
    By 2-out-of-3 also the diagonal functor in this diagram is an equivalence,
    as desired.
\end{proof}

\begin{remark}
    If $\cC$ is a $t$-category,
    then from the above it is clear that the right completion
    only depends on $\cC_{>-\infty} = \SW(\cC_{\geq 0})$
    (cf.~Lemma \ref{lem:stable-conn-struc}(4)).
    Thus by \cite[Remark C.1.2.10]{SAG}
    there is another description of the right completion of
    $\cC$ as the category $\Sp(\cC_{\geq 0}) = \lim(\cdots \to \cC_{\geq 0} \xto{\Omega} \cC_{\geq 0} \xto{\Omega} \cC_{\geq 0})$ of spectrum objects in $\cC_{\geq 0}$.
    We view the right completion of weak $t$-structures
    as a generalized way to take spectrum objects on a prestable category
    and come back to this in Section \ref{sec:prestable-spectrum}.
\end{remark}

The following are the direct analogues of Corollaries \ref{cor:upw-comp-idem} and \ref{cor:left-comp-inverts-idem}.

\begin{corollary}\label{cor:dw-comp-idem}
    Let $\cC$ be a weak $t$-category. Then:
    \begin{enumerate}
        \item $\cC_{\tb} \hookrightarrow (\rc{\cC})_{\tb}$
            is an idempotent completion.

        \item $\rc{\cC}_{>-\infty} = \{X \in \Ind(\cC_{>-\infty}) \mid \text{ there is some bounded above $Y$ with }X \oplus Y \in \cC_{>-\infty}\}$.

        \item $\cC_{>-\infty} = (\rc{\cC})_{>-\infty}$
            if and only if $\cC_{\tb}$ is idempotent complete.
            In particular, if $\cC_{>-\infty}$ is idempotent complete
            then also $(\rc{\cC})_{>-\infty}$ is.
    \end{enumerate}
\end{corollary}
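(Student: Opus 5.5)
This is the exact dual of Corollary \ref{cor:upw-comp-idem}, and we plan to transport that proof, substituting Theorem \ref{thm:t-dw-comp} for Theorem \ref{thm:upw-comp}. Two facts do the work throughout. First, $\rc{\cC}$ is right complete, so by Observation \ref{obs:right-comp} its bounded part $(\rc{\cC})_{\tb}$ is idempotent complete. Second, Theorem \ref{thm:t-dw-comp}(2c) says that $\cC_{>-\infty}\subseteq\rc{\cC}$ is right dense and that $\cC_{(-\infty,0]}=\cC_{>-\infty}\cap\rc{\cC}_{\leq 0}$.

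For (1), since the target is idempotent complete, it suffices to show density. Let $X\in\rc{\cC}_{[a,b]}$. By right density we pick $c\in\cC_{>-\infty}$ and a map $c\to X$ whose cofiber lies in $\rc{\cC}_{\leq a-1}$. The composite $X\to\cofib$ vanishes by orthogonality, because $X\in\rc{\cC}_{\geq a}$ and the cofiber is $(a-1)$-coconnective. Hence $X$ is a retract of $c$. We then show that $c$ lies in $\cC_{\tb}$: it is bounded below, and it is an extension of $X\in\rc{\cC}_{\leq b}$ and $\Omega\cofib\in\rc{\cC}_{\leq a-2}$, so $c\in\rc{\cC}_{\leq b}$. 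By (2c), $c\in\cC_{>-\infty}\cap\rc{\cC}_{\leq b}=\cC_{(-\infty,b]}$, as needed.

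For (2), the inclusion ``$\subseteq$'' follows the same pattern. Given $X\in\rc{\cC}_{\geq n}$, choose $jc\to X$ with cofiber $C\in\rc{\cC}_{\leq n-1}$. The map $X\to C$ is null, so $jc\simeq X\oplus\Omega C$, and $Y=\Omega C$ is bounded above. For ``$\supseteq$'', suppose $X\oplus Y\simeq jc$ with $Y$ bounded above. We run the doubling trick from the proof of Corollary \ref{cor:upw-comp-idem}(2) with shifts reversed. The trick produces cofiber sequences $je_n\to Y\to\Sigma^{-2n}Y$, or the appropriate variant, with $e_n\in\cC_{>-\infty}$. The third term becomes arbitrarily coconnective, so by the first description in Theorem \ref{thm:t-dw-comp}(1) we get $Y\in\rc{\cC}$, and therefore also $X\in\rc{\cC}$. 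We expect the only real care to be in choosing the direction of the shifts here: the cofibers must become more coconnective, rather than more connective as in the weight case.

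Part (3) then follows formally from (1) and (2), exactly as in Corollary \ref{cor:upw-comp-idem}(3). If $\cC_{\tb}$ is idempotent complete, then for $X\in(\rc{\cC})_{>-\infty}$ we get $X\oplus Y\in\cC_{>-\infty}$ with $Y$ lying in $\cC_{\tb}$ by (1), and hence $X\in\cC_{>-\infty}$. Conversely, suppose $\cC_{>-\infty}=(\rc{\cC})_{>-\infty}$. Any retract of an object of $\cC_{\tb}$ lies in $(\rc{\cC})_{\tb}$ by (1), hence in $\cC_{>-\infty}$, and it is bounded, so it lies in $\cC_{\tb}$.
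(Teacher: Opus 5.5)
Your proposal is correct and is essentially the paper's own argument: the paper gives no separate proof and simply calls this the direct analogue of Corollary \ref{cor:upw-comp-idem}, and your transport of parts (1)--(3) is exactly that dualization, using right density, Theorem \ref{thm:t-dw-comp}(2c) and idempotent completeness of $(\rc{\cC})_{\tb}$. The one point you hedge on in (2) does work out, but not by literally reversing the shifts in the paper's cofiber sequences: in the paper's doubling trick the maps $Y\to\Sigma^{2n}Y$ are null, being composites of connecting maps of split sequences, so $Y\oplus\Sigma^{2n+1}Y$ lies in the essential image of $j$, and desuspending it exhibits $\Sigma^{-(2n+1)}Y\oplus Y$ as the fiber of the zero map $Y\to\Sigma^{-2n}Y$, which gives precisely the cofiber sequences $je\to Y\to\Sigma^{-2n}Y$ you need.
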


\begin{corollary}\label{cor:right-comp-inverts-idem}
    Let $\cC,\cD$ be weak $t$-categories
    and $f \colon \cC \to \cD$ of amplitude $(-\infty,b]$ for $b \in \Z$.
    \begin{enumerate}
        \item $\rc{f} \colon \rc{\cC}_{<\infty} \to \rc{\cD}_{<\infty}$
            is fully faithful \alt{an equivalence}
            exactly if $f^\idem \colon (\cC_{\tb})^\idem \to (\cD_{\tb})^\idem$ is.

        \item $\rc{f} \colon \rc{\cC} \to \rc{\cD}$ is fully faithful
            exactly if $f \colon \cC_{>-\infty} \to \cD_{>-\infty}$ is.

        \item $\rc{f} \colon \rc{\cC} \to \rc{\cD}$ is an equivalence
            exactly if $f \colon \cC_{>-\infty} \to \cD_{>-\infty}$ is a fully faithful right dense inclusion.
    \end{enumerate}
    Here `equivalence' means `equivalence of underlying categories'
    as opposed to `equivalence in $\wTCat$'.\footnote{Though in the saturated case this is implied
        by the analogue of Lemma \ref{lem:saturated}(3),
        and for $m < \infty$ it also follows from the analogue of Lemma \ref{lem:saturation}.}
\end{corollary}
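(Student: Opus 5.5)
The statement is Corollary \ref{cor:right-comp-inverts-idem}. I will dualize the proof of Corollary \ref{cor:left-comp-inverts-idem} step by step. Throughout I use Theorem \ref{thm:t-dw-comp}, Corollary \ref{cor:dw-comp-idem} and Lemma \ref{lem:t-dw-comp-eqv} in place of Theorem \ref{thm:upw-comp}, Corollary \ref{cor:upw-comp-idem} and Lemma \ref{lem:upw-comp-eqv}.

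First, Theorem \ref{thm:t-dw-comp}(3) shows that $\rc{f}$ again has amplitude $(-\infty,b]$. For (1), suppose $\rc{f}$ restricted to $\rc{\cC}_{<\infty}$ is fully faithful, resp.~an equivalence. Then so is its restriction to $t$-bounded objects. By Corollary \ref{cor:dw-comp-idem}(1), $(\rc{\cC})_{\tb}$ and $(\rc{\cD})_{\tb}$ are the idempotent completions of $\cC_{\tb}$ and $\cD_{\tb}$. There is a commutative square formed by $\eta_\cC$, $\eta_\cD$, $f$ and $\rc{f}$, and it identifies $\rc{f}|_{\tb}$ with $f^\idem$. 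This gives the forward direction.

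For the converse, $\rc{\cC}_{<\infty}$ is right complete (Observation \ref{obs:right-comp}). So by Lemma \ref{lem:t-dw-comp-eqv}(1) applied to $\rc{\cC}_{<\infty} \to \rc{\cD}_{<\infty}$, it suffices to know that the restriction to $(\rc{\cC}_{<\infty})_{>-\infty} = (\rc{\cC})_{\tb}$ is fully faithful, resp.~an equivalence. That restriction is $f^\idem$ by the same square. The amplitude of this restricted functor is bounded, since it is bounded above by $b$ and the source is bounded above, so the lemma applies.

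For (2), we have fully faithful inclusions $\cC_{>-\infty} \subseteq \rc{\cC}_{>-\infty} \subseteq (\cC_{>-\infty})^\idem$, coming from Theorem \ref{thm:t-dw-comp}(2c) and density. Using these, together with cancellation for fully faithful functors and the fact that $(-)^\idem$ preserves full faithfulness, full faithfulness of $f$ on $\cC_{>-\infty}$ is equivalent to that of $\rc{f}$ on $\rc{\cC}_{>-\infty}$. Lemma \ref{lem:t-dw-comp-eqv}(1) then upgrades this to all of $\rc{\cC}$.

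For (3), first assume $f\colon \cC_{>-\infty} \to \cD_{>-\infty}$ is fully faithful and right dense. Composing with the right dense inclusion $\cD_{>-\infty}\subseteq \rc{\cD}$ keeps it right dense. Hence every $X \in \rc{\cD}$ has a Whitehead tower by objects $f(c_n)$, and this tower converges by right completeness. The sequence $c_\bullet$ grows in coconnectivity, because $f$ is fully faithful and $\eta_\cC$ reflects coconnectivity on $\cC_{>-\infty}$ by Theorem \ref{thm:t-dw-comp}(2c). So $c_\bullet$ has a colimit in $\rc{\cC}$, which $\rc{f}$ preserves by Lemma \ref{lem:cn-cone-colim} since $\rc{f}$ has bounded above amplitude. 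This gives essential surjectivity, and full faithfulness comes from (2).

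Conversely, suppose $\rc{f}$ is an equivalence. Full faithfulness of $f$ on $\cC_{>-\infty}$ follows from (2). For right density, take $d \in \cD_{>-\infty}$ and write $\eta_\cD d = \rc{f}(X)$. Choose $c \to X$ with cofiber in $\rc{\cC}_{\leq -n-b}$. Then $\eta_\cD f c \to \eta_\cD d$ has cofiber in $\rc{\cD}_{\leq -n}$. Since $\eta_\cD$ is fully faithful on $\cD_{>-\infty}$, this descends to a map $fc \to d$ in $\cD_{>-\infty}$. Its cofiber lies in $\cD_{>-\infty}\cap\rc{\cD}_{\leq -n} = \cD_{(-\infty,-n]}$.

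The main point to check is the degree bookkeeping in this last step, namely that the amplitude shift $b$ enters with the correct sign. I also expect a small verification that Lemma \ref{lem:t-dw-comp-eqv} applies to the restricted functors, i.e.~that their amplitude is genuinely bounded.
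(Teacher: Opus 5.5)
Your route is the paper's: the corollary is stated there as the ``direct analogue'' of Corollary~\ref{cor:left-comp-inverts-idem}, and you dualize that proof faithfully. Part (2), both ``only if'' halves, and the degree bookkeeping at the end are fine (a cofiber in $\rc{\cC}_{\leq -n-b}$ goes to $\rc{\cD}_{\leq -n}$).

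\textbf{The gap: the ``if'' half of (3).} It sits in the sentence asserting that $c_\bullet$ grows in coconnectivity. What you actually know is that $f(\cofib(c_k\to c_{k+1}))\in\cD_{(-\infty,-n]}$ for $k\gg0$. To conclude, you need $f$ itself to reflect coconnectivity on $\cC_{>-\infty}$, up to a fixed shift. Theorem~\ref{thm:t-dw-comp}(2c) gives this only for $\eta_\cC$. Full faithfulness of $f$ does not give it when the lower amplitude bound is $-\infty$.

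In fact the step fails, and so does (3) as stated. Let $\cD=\Sp^\ft$ with its standard $t$-structure, so $\rc{\cD}\simeq\Sp^\fg$. Let $\cC$ be the same category with the degenerate $t$-structure $\cC_{\geq0}=\cC$, $\cC_{\leq0}=0$, and let $f=\id$.
\begin{itemize}
\item $f$ has amplitude $(-\infty,0]$ and is an equivalence $\cC_{>-\infty}\to\cD_{>-\infty}$, hence fully faithful and right dense.
\item $\cC$ is already right complete, since sequences growing in coconnectivity are eventually constant. So $\rc{\cC}=\cC$, and $\rc{f}$ is the inclusion $\Sp^\ft\subseteq\Sp^\fg$, which is not an equivalence.
\item The Whitehead towers $\tau_{\geq-n}X$ do lift to $\cC$, but their cofibers are nonzero while $\cC_{\leq-k}=0$, so the lifts do not grow in coconnectivity in $\cC$.
\end{itemize}

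\textbf{The same issue in (1).} The equivalence half of (1) rests on the same point. The essential-surjectivity part of Lemma~\ref{lem:t-dw-comp-eqv}(1) lifts a $\cD_{>-\infty}$-Whitehead tower along $f$. Its equivalence clause fails for $a=-\infty$: take the same $\cC$, now with $\cD=\Sp^\fg$ and $f$ the inclusion. So the question is not whether the amplitude is bounded, as you suspected. The lemma allows $a=-\infty$, but it is only correct given this reflection property.

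The paper's argument skips the same step; in the weight case it reads ``Since $c_\bullet$ is growing in connectivity, it admits a colimit in $\lc{\cC}$''. So what is missing is an extra hypothesis, not just a justification.

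\textbf{A sufficient fix.} Assume $f$ has amplitude $[a,b]$ with $a\in\Z$, and $\cC$ has finite defect $m$ (which $\rc{\cC}$ inherits). Then reflection holds:
\begin{itemize}
\item Let $x\in\cC_{>-\infty}$ with $f(x)\in\cD_{\leq-n}$, and take a decomposition $x_{>k-m}\to x\to x_{\leq k}$ with $k=m-a-n$.
\item Then $f(x_{>k-m})\in\cD_{\geq -n+1}$, so $f(x_{>k-m})\to f(x)$ is null.
\item By full faithfulness, $x_{>k-m}\to x$ is null, so $x$ is a retract of $x_{\leq k}$. Hence $x\in\cC_{\leq m-a-n}$.
\end{itemize}
With this uniform reflection, your lifted towers grow in coconnectivity. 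The same argument applies to $\rc{f}$ on $\rc{\cC}_{<\infty}$, and the rest of your proof then goes through for both (1) and (3).
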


\begin{remark}
    If $\cC$ is a weak co-$t$-category in the sense of Warning \ref{warn:wtcat-non-dual},
    so that $\cC^\op$ is a weak $t$-category, then as in Remark \ref{rem:dw-comp}
    we can dualize the definitions and results in this section
    to obtain a good theory of left completion for weak co-$t$-structures
    (hence for weak $t$-structures of defect $m < \infty$).

    Overall, we can summarize the various notions of completeness
    introduced in this paper via the following table:
    \renewcommand{\arraystretch}{1.5}
    \begin{center}
    \begin{tabular}{c | c | c}
                         & left complete                            & right complete\\\hline
        (weak) $t$-structure    & $(\cC,\cC_{t \geq 0})$ limit complete    & $(\cC,\cC_{t \leq 0})$ colimit complete\\\hline
        (weak) weight structure & $(\cC,\cC_{w \geq 0})$ colimit complete  & $(\cC,\cC_{w \leq 0})$ limit complete
    \end{tabular}
    \end{center}
\end{remark}

The proof of Theorem \ref{thm:t-dw-comp}
is entirely analogous to the proof of Theorem \ref{thm:upw-comp} in the weight case.
For the convenience of the reader, below we spell out the analogues of the series of Lemmas and Propositions
from Section \ref{sec:upw-comp} which lead up to the proof of Theorem \ref{thm:upw-comp}; the proofs go through with minimal adaptions.

\begin{lemma}\label{lem:t-comp-lkan}
    Let $\cC$ be a weak $t$-category and $(\cD,\cD_{\leq 0})$
    a colimit complete stable coconnectivity structure.
    Suppose that $i \colon \cU \subseteq \cC$ is right dense
    and equip $\cU$ with coconnectives $\cU_{\leq 0} = \cU \cap \cC_{\leq 0}$.
    For $b \in \Z$, Kan extension and restriction along $i$ induce an equivalence
    \[
        i_! \colon \Fun^\Ex_{\leq b}(\cU,\cD) \simeq \Fun^\Ex_{\leq b}(\cC,\cD) \noloc i^*
    \]
    where $\Fun^\Ex_{\leq b} \subseteq \Fun^\Ex$ is the full subcategory
    of functors of amplitude $\leq b$, i.e.~restricting to $\cC_{\leq 0} \to \cD_{\leq b}$.
\end{lemma}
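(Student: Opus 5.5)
The plan is to dualize the proof of Lemma \ref{lem:weight-comp-lkan} step by step, replacing connectivity by coconnectivity and weight complexes by Whitehead towers.

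First we show that $i_!f$ exists for an exact $f \colon \cU \to \cD$ of amplitude $\leq b$. By the pointwise formula, $(i_!f)(X)$ is the colimit of $\cU_{/X} \to \cU \xto{f} \cD$. Right density of $\cU$ together with the dual of Lemma \ref{lem:weight-cplx} (stated just above) provides a $\cU$-Whitehead tower $U_\bullet \colon \N \to \cU_{/X}$, and this tower is cofinal. The cofinality argument uses that for $U \in \cU \subseteq \cC_{>-\infty}$ the functor $\hom_\cC(U,-)$ preserves colimits of sequences growing in coconnectivity (Example \ref{ex:corep-bdd-t}). Hence it suffices to show that $\colim_n f(U_n)$ exists. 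Since $f$ has amplitude $\leq b$, the sequence $f(U_\bullet)$ still grows in coconnectivity, by the dual of Observation \ref{obs:pres-amp}. Its colimit therefore exists because $\cD$ is colimit complete.

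Next we check that $i_!f$ again has amplitude $\leq b$. Let $X \in \cC_{\leq 0}$. The cofibers of $U_n \to X$ are eventually $(-1)$-coconnective, so after discarding finitely many terms, each $U_n$ is an extension of $X$ and the fiber of $X \to X/U_n$, and hence lies in $\cC_{\leq 0}$. Indeed $\Omega(X/U_n) \in \cC_{\leq -2} \subseteq \cC_{\leq 0}$, and $\cC_{\leq 0}$ is closed under extensions. Thus $U_n \in \cU \cap \cC_{\leq 0} = \cU_{\leq 0}$. It follows that $(i_!f)(X) = \colim_n f(U_n)$ is a colimit in $\cD_{\leq b}$ of a sequence growing in coconnectivity, and by colimit completeness it remains in $\cD_{\leq b}$.

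Since $i$ is fully faithful, $i_!$ is fully faithful, and $i^*$, which clearly preserves amplitude $\leq b$, is its right adjoint. For essential surjectivity, take $f \colon \cC \to \cD$ of amplitude $\leq b$. The counit $i_!i^*f \Rightarrow f$ evaluated at $X$ is the map $\colim_n f(U_n) \to f(X)$, where $U_\bullet$ is a Whitehead tower. This is a cone growing in coconnectivity, so the map is an equivalence by the coconnectivity version of Lemma \ref{lem:cn-cone-colim}, applied to the underlying coconnectivity structures.

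The main point requiring care is this dualization of Lemma \ref{lem:cn-cone-colim} and of the cofinality argument to coconnectivity structures with colimits (Remark \ref{rem:cocon}). In particular one must check that $f$ having bounded above amplitude transports cones growing in coconnectivity to such cones. The rest is formal.
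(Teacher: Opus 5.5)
Your proposal is correct and is exactly the paper's argument. The paper's proof only says it is analogous to Lemma~\ref{lem:weight-comp-lkan}, and you carry out that dualization faithfully: a cofinal $\cU$-Whitehead tower gives existence, the tower eventually lying in $\cU_{\leq 0}$ gives the amplitude bound, and the coconnective version of Lemma~\ref{lem:cn-cone-colim} handles the counit.

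One small point, which neither you nor the paper spells out, is that $i_!f$ is again exact. This follows, for instance, from the natural identification $\hom_\cD((i_!f)X,d)\simeq\lim_{(U\to X)\in\cU_{/X}}\hom_\cD(fU,d)$. The right-hand side is the mapping spectrum in $\Fun^\Ex(\cU^\op,\Sp)\simeq\Ind(\cU)$ from the filtered colimit $\colim_{\cU_{/X}}\hom_\cU(-,U)\simeq\hom_\cC(i(-),X)$ to $\hom_\cD(f(-),d)$, and it is therefore exact in $X$.
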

\begin{proof}
    Analogous to Lemma \ref{lem:weight-comp-lkan}.
\end{proof}

\begin{proposition}\label{prop:t-comp-lkan}
    Let $\cU,\cC,\cD$ be weak $t$-categories.
    Suppose that
    \begin{enumerate}
        \item $\cD$ is right complete.
        \item There is a fully faithful $t$-exact inclusion $\cU \subseteq \cC_{>-\infty}$ so that $i \colon \cU \subseteq \cC$ is right dense.
        \item $\cU_{\geq 0} \subseteq \cC_{\geq 0}$ is dense.
        \item $\cU_{\leq 0} = \cU \cap \cC_{\leq 0}$.\footnote{For example, by Lemma \ref{lem:saturated} this holds if $\cU$ is left saturated (e.g.~$m=0$).}
    \end{enumerate}
    Then for $b \in \Z$ and $-\infty \leq a \leq b$
    the equivalence of Lemma \ref{lem:t-comp-lkan} restricts to
    \[
        i_! \colon \Fun^\Ex_{[a,b]}(\cU,\cD) \simeq \Fun^\Ex_{[a,b]}(\cC,\cD) \noloc i^*
    \]
    and similarly for $(a,b]$.
\end{proposition}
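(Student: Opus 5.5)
This is the $t$-analogue of Proposition \ref{prop:weight-comp-lkan}, and I would follow that proof with the roles of connectives and coconnectives exchanged. By Lemma \ref{lem:t-comp-lkan}, applied to the colimit complete coconnectivity structure $(\cD,\cD_{\leq 0})$ underlying the right complete weak $t$-category $\cD$, restriction and left Kan extension already give an equivalence $\Fun^\Ex_{\leq b}(\cU,\cD) \simeq \Fun^\Ex_{\leq b}(\cC,\cD)$. Hypothesis (4) is exactly what that lemma needs to say that $i$ itself respects coconnectives. So it only remains to check the lower amplitude bound in both directions.

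First, restriction is harmless. Since $i$ is $t$-exact, $\cU_{\geq 0} \subseteq \cC_{\geq 0}$, so if $g \colon \cC \to \cD$ sends $\cC_{\geq 0}$ into $\cD_{\geq a}$, then $i^*g$ sends $\cU_{\geq 0}$ into $\cD_{\geq a}$. Hence $i^*$ preserves amplitude $[a,b]$, and likewise $(a,b]$.

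Second, for the left Kan extension, let $f \colon \cU \to \cD$ have amplitude $[a,b]$ and take $X \in \cC_{\geq 0}$. By density (3), $X$ is a retract of some $U \in \cU_{\geq 0}$. Because $i$ is fully faithful, the counit gives $(i_!f)(U) \simeq f(U)$, and this lies in $\cD_{\geq a}$. Therefore $(i_!f)(X)$ is a retract of an object of $\cD_{\geq a}$, and so lies in $\cD_{\geq a}$, since $\cD_{\geq 0}$ is closed under retracts by the axioms of a weak $t$-structure. The case $(a,b]$ is identical, with $\cD_{>a}$ in place of $\cD_{\geq a}$. The case $a=-\infty$ is vacuous.

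Combining the two steps, the equivalence of Lemma \ref{lem:t-comp-lkan} restricts to the full subcategories of amplitude $[a,b]$ (respectively $(a,b]$), because both functors preserve these subcategories. There is no real obstacle here. The only point to check is that we use closure of $\cD_{\geq a}$ under retracts rather than any saturation property, and this holds by axiom (1) of Definition \ref{def:weak-t}.
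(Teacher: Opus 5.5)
Your proposal is correct and follows the paper's argument. The paper proves this by saying it is analogous to Proposition \ref{prop:weight-comp-lkan}, and that proof is the same retract-via-density argument you give, with connectives and coconnectives exchanged. One small correction: $(i_!f)(U)\simeq f(U)$ holds because the \emph{unit} $f \Rightarrow i^*i_!f$ of $i_!\dashv i^*$ is an equivalence when $i$ is fully faithful; it is not the counit.
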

\begin{proof}
    Analogous to Proposition \ref{prop:weight-comp-lkan}.
\end{proof}

\begin{corollary}\label{cor:right-comp-via-embed}
    Let $\cC,\cD$ be a weak $t$-categories.
    Suppose that
    \begin{enumerate}
        \item $\cD$ is right complete.
        \item There is a fully faithful $t$-exact $j \colon \cC_{>-\infty} \hookrightarrow \cD$
            which is right dense.
        \item $\cC_{\geq 0} \subseteq \cD_{\geq 0}$ is dense.
        \item $\cC_{(-\infty,0]} = \cC_{>-\infty} \cap \cD_{\leq 0}$
            (e.g.~$\cC_{>-\infty}$ is left saturated, e.g.~a $t$-category).
    \end{enumerate}
    Then $j$ and the left Kan extension $i_!j \colon \cC \to \cD$
    of $j$ along $i \colon \cC_{>-\infty} \subseteq \cC$ are right completions.
\end{corollary}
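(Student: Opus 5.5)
The plan is to dualize the proof of Corollary \ref{cor:left-comp-via-embed} verbatim, with Proposition \ref{prop:t-comp-lkan} playing the role of Proposition \ref{prop:weight-comp-lkan}.

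\emph{Step 1: $j$ is a right completion.} Apply Proposition \ref{prop:t-comp-lkan} with $\cU = \cC_{>-\infty}$ (regarded as a weak $t$-category via Remark \ref{rem:bounded-is-weak-t}), with ambient category $\cD$, and with $i = j$. Hypotheses (1)--(4) of the proposition are exactly the assumptions (1)--(4) of the corollary; assumption (4) gives $\cU_{\leq 0} = \cU \cap \cD_{\leq 0}$. Then for any right complete weak $t$-category $\cE$, taking $a=b=0$, restriction gives an equivalence
\[
j^* \colon \Fun^\Ex_{[0,0]}(\cD,\cE) \simeq \Fun^\Ex_{[0,0]}(\cC_{>-\infty},\cE).
\]
Since $\cD$ is right complete and $j$ is $t$-exact, this is precisely the universal property of Definition \ref{def:right-completion}.

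\emph{Step 2: $i_!j$ is $t$-exact and $i^*$ is an equivalence.} Apply Proposition \ref{prop:t-comp-lkan} again, now to the inclusion $i \colon \cC_{>-\infty} \subseteq \cC$ with target $\cE$. The hypotheses hold as follows:
\begin{itemize}
\item $\cC_{>-\infty}$ is right dense in $\cC$, as the canonical example of Definition \ref{def:right-dense}.
\item $\cC_{\geq 0} \subseteq \cC_{\geq 0}$ is trivially dense.
\item $(\cC_{>-\infty})_{\leq 0} = \cC_{>-\infty} \cap \cC_{\leq 0}$ holds by definition of the induced structure.
\end{itemize}
Hence $i^* \colon \Fun^\Ex_{[0,0]}(\cC,\cE) \to \Fun^\Ex_{[0,0]}(\cC_{>-\infty},\cE)$ is an equivalence with inverse $i_!$. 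Taking $\cE = \cD$, which is right complete, shows that $i_!j$ exists and is $t$-exact.

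\emph{Step 3: conclude by 2-out-of-3.} Since $i$ is fully faithful, $(i_!j)\circ i \simeq j$, so we get a commuting triangle $i^* \circ (i_!j)^* \simeq j^*$. Both $j^*$ and $i^*$ are equivalences, so $(i_!j)^*$ is one too. This is the required universal property for $i_!j$.

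The only real point to check is that $\cC_{>-\infty}$ with the restricted structure satisfies the standing hypotheses of Proposition \ref{prop:t-comp-lkan}, namely that it is a weak $t$-category into which the inclusion is $t$-exact. This follows from Remark \ref{rem:bounded-is-weak-t} with trivial decompositions.
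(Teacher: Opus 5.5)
Your proposal is correct and follows the paper's route. The paper proves this corollary by dualizing Corollary \ref{cor:left-comp-via-embed}: it applies Proposition \ref{prop:t-comp-lkan} first to $j$ and then to $i \colon \cC_{>-\infty} \subseteq \cC$, and concludes for $i_!j$ by 2-out-of-3 in the commuting triangle of restriction functors. The one step you leave implicit, that $j$ lands in $\cD_{>-\infty}$, follows at once from $t$-exactness.
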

\begin{remark}
    In particular, if $\cC$ is right complete, then $\cC_{>-\infty} \subseteq \cC$ is a right completion.
\end{remark}
\begin{proof}
    Analogous to Corollary \ref{cor:left-comp-via-embed}.
\end{proof}

\begin{lemma}\label{lem:dw-comp-basic}
    Let $\cC$ be a weak $t$-category of defect $m$ and $\cD$ a weak $t$-category.
    Suppose that:
    \begin{enumerate}
        \item[(i)] $\cD$ admits sequential colimits growing in coconnectivity and $\cD_{\leq 0}$ is closed under them.
        \item[(ii)] There is a fully faithful $t$-exact embedding $j \colon \cC_{>-\infty} \hookrightarrow \cD$.
    \end{enumerate}
    Denote by $\langle \cC_{>-\infty}\rangle \subseteq \cD$
    the smallest full subcategory of $\cD$ which contains $\cC_{>-\infty}$
    and is closed under sequential colimits growing in coconnectivity.
    Then:
    \begin{enumerate}
        \setlength{\itemsep}{.3em}
        \item If $c_\bullet,d_\bullet \colon \N \to\cC_{>-\infty}$ are growing in coconnectivity,
            then any map $\colim c_\bullet \to \colim d_\bullet$ in $\cD$
            lifts to a map of diagrams $c_\bullet \Rightarrow d_{f(\bullet)}$ for some
            strictly increasing $f \colon \N \to \N$.

        \item As full subcategories of $\cD$, we have
            \begin{align*}
                \langle \cC_{>-\infty} \rangle
                    =\ & \{\colim_n c_n \mid c_\bullet \colon \N \to \cC_{>-\infty} \text{ growing in coconnectivity}\},\\
                \langle \cC_{>-\infty} \rangle_{\leq 0}
                    \coloneqq\ &
                    \{\colim_n c_n \mid c_\bullet \colon \N \to \cC_{(-\infty,0]} \text{ growing in coconnectivity}\}
                    \subseteq \langle \cC_{>-\infty}\rangle \cap \cD_{\leq 0},\\
                \langle \cC_{>-\infty} \rangle \cap \cD_{\leq -\infty}
                    =\ &0,\\
                \qquad\qquad\cC_{>-\infty} \cap \langle \cC_{>-\infty}\rangle_{\leq 0}
                    =\ & \cC_{(-\infty,0]}.
            \end{align*}
            Moreover, $\langle \cC_{>-\infty}\rangle_{\leq 0}$
            is closed under sequential colimits growing in coconnectivity.
            In case that $\cC_{(-\infty,0]} = \cC_{>-\infty} \cap \cD_{\leq 0}$
            (e.g.~if $\cC_{>-\infty}$ is left saturated)
            we have $\langle \cC_{>-\infty}\rangle_{\leq 0} = \langle \cC_{>-\infty}\rangle \cap \cD_{\leq 0}$.

        \item $\langle \cC_{>-\infty} \rangle$ is a stable subcategory of $\cD$
            and with $\langle \cC_{>-\infty}\rangle_{\geq 0} \coloneqq \langle \cC_{>-\infty}\rangle \cap \cD_{\geq 0}$ we have:
            \begin{enumerate}
                \item $\langle \cC_{>-\infty}\rangle$ is a right complete weak $t$-category of defect $m$.
                \item Both inclusions $\cC_{>-\infty} \subseteq \langle \cC_{>-\infty}\rangle
                    \subseteq \cD$ are $t$-exact, and the first is right dense.

                \item If $\cC_{\geq 0} \subseteq \langle \cC_{>-\infty}\rangle_{\geq 0}$
                    is dense, then both $\cC_{>-\infty} \subseteq \langle \cC_{>-\infty}\rangle$
                    and its Kan extension $\cC \to \langle \cC_{>-\infty}\rangle$
                    are right completions.
            \end{enumerate}

        \item If $\cD_{\leq -\infty} = 0$ so that $\cD$ is right complete,
            then $\langle \cC_{>-\infty} \rangle$ is the largest stable subcategory
            of $\cD$ in which $\cC_{>-\infty}$ is right dense, i.e.
            \begin{align*}
                \langle \cC_{>-\infty} \rangle
                =\ &\{X \in \cD \mid \text{for $n \geq 0$ there is $c \in \cC_{>-\infty}$ and a map
                    $c \to X$ with cofiber in $\cD_{\leq -n}$}\}\\
                =\ & \{X \in \cD \mid X\text{ admits a $\cC_{>-\infty}$-Whitehead tower}\}.
            \end{align*}
    \end{enumerate}
\end{lemma}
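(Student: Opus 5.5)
The plan is to dualize the proof of Lemma \ref{lem:upw-comp-basic} step by step, replacing weight connectivity by $t$-coconnectivity, $\cC_{<\infty}$ by $\cC_{>-\infty}$, and Example \ref{ex:corep-bdd} by Example \ref{ex:corep-bdd-t}. Note that the duality is not a passage to opposite categories: we still take colimits, now growing in coconnectivity. So each step has to be rechecked rather than quoted.

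For (1), let $c_\bullet,d_\bullet$ grow in coconnectivity with colimits $c_\infty,d_\infty$. Since $c_0\in\cC_{>-\infty}\subseteq\cD_{>-\infty}$, Example \ref{ex:corep-bdd-t} gives $\colim_n\hom(c_0,d_n)\simeq\hom(c_0,d_\infty)$. Hence $c_0\to d_\infty$ factors through some $d_{f(0)}$. Inductively, form the pushout $P=c_{k+1}\amalg_{c_k}d_{f(k)}$; it is bounded below, being an extension of bounded below objects. Factor $P\to d_\infty$ through some $d_{f(k+1)}$ with $f(k+1)>f(k)$.

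For (2), the main point is that the right-hand description is already closed under sequential colimits growing in coconnectivity. Take such a diagram $c^\bullet_\infty$ and pass to a subsequence so that $\cofib(c^n_\infty\to c^{n+1}_\infty)\in\cD_{\leq -(n+1)}$. Using (1), lift it to $c^\bullet_\bullet\colon\N\times\N\to\cC_{>-\infty}$ with $\cofib(c^n_k\to c^n_{k+1})\in\cC_{\leq -(k+1)}$, and take the diagonal. The $3\times 3$ cofiber diagram argument shows the diagonal still grows in coconnectivity; here the needed bounds use that $c^n_\infty/c^n_k\in\cD_{\leq -(k+1)}$, by closure of $\cD_{\leq 0}$ under such colimits. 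The remaining identities also dualize:
\begin{itemize}
\item[(a)] $\langle\cC_{>-\infty}\rangle\cap\cD_{\leq-\infty}=0$: every map $c_n\to c_\infty$ is null, by axiom (2) and boundedness below of $c_n$, so $\id_{c_\infty}=0$.
\item[(b)] The identity $\cC_{>-\infty}\cap\langle\cC_{>-\infty}\rangle_{\leq 0}=\cC_{(-\infty,0]}$: for $c\in\cC_{\geq a}$ with $c=\colim c_n$, the map $c\to c/c_{-a}$ vanishes, so $c$ is a retract of $c_{-a}$.
\item[(c)] The saturated case runs through the cofiber sequences $c_n\to X\to X/c_n$.
\end{itemize}

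For (3), stability follows from (1), since pointwise cofibers of coconnectivity-growing diagrams still grow. For a $t$-decomposition of $X=\colim c_n$ with $\cofib(c_n\to c_{n+1})\in\cC_{\leq-(n+1)}$, take a decomposition $(c_0)_{>-m}\to c_0\to(c_0)_{\leq 0}$ in $\cC$. Then $\cofib((c_0)_{>-m}\to X)$ is an extension of $(c_0)_{\leq 0}$ and $X/c_0\in\langle\cC_{>-\infty}\rangle_{\leq -1}$. The remaining axioms are inherited:
\begin{itemize}
\item[(a)] Orthogonality is inherited from $\cD$.
\item[(b)] $\langle\,\rangle_{\geq 0}$ is closed under retracts because $\cD_{\geq 0}$ is.
\item[(c)] $\langle\,\rangle_{\leq 0}$ is closed under retracts by the dual of Corollary \ref{cor:grow-conn-idem}, since it admits the relevant colimits.
\end{itemize}
Right completeness follows from (2). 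Part (3c) is then Corollary \ref{cor:right-comp-via-embed}, whose hypothesis (4) is the last identity of (2).

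Part (4) dualizes Lemma \ref{lem:upw-comp-basic}(4) by building Whitehead towers as in the dual of Lemma \ref{lem:weight-cplx}; these converge by right completeness of $\cD$.

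The main obstacle is the diagonal argument in (2), in particular getting the connectivity indices right after reindexing. I would handle it by always first normalizing to linear growth, so that the $3\times3$ diagram gives $\cofib(c^n_n\to c^{n+1}_{n+1})\in\cD_{\leq -(n+1)}$ directly.
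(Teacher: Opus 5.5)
Your strategy is the paper's own: its proof of this lemma just says ``analogous to Lemma~\ref{lem:upw-comp-basic}''. However, your dualization uses two different notions of ``growing in coconnectivity'' for diagrams in $\cC_{>-\infty}$, and nothing in the argument connects them.

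\textbf{Where the two notions diverge.} In (2), the $3\times 3$ diagram only bounds objects in $\cD$. It gives $\cofib(c^n_n\to c^{n+1}_n)\in\cC_{>-\infty}\cap\cD_{\leq -(n+1)}$, so the diagonal is only known to grow in $\cD$-coconnectivity. That is also the only kind of growth available for a general object of $\langle\cC_{>-\infty}\rangle$, since it is built from colimits in $\cD$. In (3), however, you start from a tower with $\cofib(c_n\to c_{n+1})\in\cC_{\leq-(n+1)}$. That is exactly what puts $X/c_0$ into $\langle\cC_{>-\infty}\rangle_{\leq -1}$, which is generated by $\cC_{(-\infty,-1]}$ and not by $\cC_{>-\infty}\cap\cD_{\leq -1}$. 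The same normalization is needed for right density in (3b).

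\textbf{Two smaller points.}
\begin{enumerate}
\item Closure of $\langle\cC_{>-\infty}\rangle_{\leq 0}$ under retracts is not a dual of Corollary~\ref{cor:grow-conn-idem}, because Dold--Kan does not turn colimits growing in coconnectivity into geometric realizations. Use instead the swindle of Lemma~\ref{lem:lim-comp-conn-idem} on the opposite category, as in Observation~\ref{obs:right-comp}.
\item Closure of $\langle\cC_{>-\infty}\rangle_{\leq 0}$ under fibers and extensions is not inherited from $\cD_{\leq 0}$. It follows from (1) by taking pointwise fibers of lifted maps.
\end{enumerate}

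\textbf{The fix for $m<\infty$.} Let $e\in\cC_{>-\infty}$ with $je\in\cD_{\leq -k}$. Take a shifted $t$-decomposition $e'\to e\to e''$ in $\cC$ with $e'\in\cC_{\geq 1-k}$ and $e''\in\cC_{\leq m-k}$. The map $je'\to je$ is null by orthogonality in $\cD$, so $e$ is a retract of $e''$. Hence $\cC_{>-\infty}\cap\cD_{\leq -k}\subseteq\cC_{\leq m-k}$, so $\cD$-growth implies $\cC$-growth. After passing to a subsequence you recover the normalization you use in (3). If $\cC_{>-\infty}$ is left saturated, one even has $\cC_{>-\infty}\cap\cD_{\leq 0}=\cC_{(-\infty,0]}$ directly.

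\textbf{For $m=\infty$ without saturation the statement itself fails.} Take $\cC=\Sp^\fg_{>-\infty}$ with $\cC_{\geq 0}=\Sp^\fg_{\geq 0}$ and $\cC_{\leq 0}=0$. This is a weak $t$-structure of defect $\infty$ with trivial decompositions. Let $\cD=\Sp$ and let $j$ be the inclusion.
\begin{itemize}
\item $\KU=\colim_n\tau_{\geq -n}\KU$ lies in $\langle\cC_{>-\infty}\rangle$.
\item But $\cC_{(-\infty,0]}=0$ forces $\langle\cC_{>-\infty}\rangle_{\leq 0}=0$.
\item So $\KU$ admits no $t$-decomposition, and both (3a) and (3b) fail.
\end{itemize}
The weight lemma you are dualizing glosses over the same point for $\ell=\infty$. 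Your proof therefore needs the retract argument above, together with a restriction to $m<\infty$ or to left saturated $\cC_{>-\infty}$.
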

\begin{proof}
    Analogous to Lemma \ref{lem:upw-comp-basic}.
\end{proof}

\begin{lemma}\label{lem:dw-comp-quot}
    Let $\cC$ be a weak $t$-category of defect $m$
    and suppose that $\cC$ admits sequential colimits growing in coconnectivity and $\cC_{\leq 0}$ is closed under them.
    Let $\langle \cC_{>-\infty}\rangle$ be as in Lemma \ref{lem:dw-comp-basic} ($\cC = \cD$).
    Then:
    \begin{enumerate}
        \item The full stable subcategories $\langle \cC_{>-\infty}\rangle,\cC_{\leq -\infty}$ determine a semiorthogonal decomposition of $\cC$
        \[\begin{tikzcd}[ampersand replacement=\&]
            {\langle \cC_{>-\infty}\rangle} \&\& \cC \&\& {\cC_{\leq -\infty}}
            \arrow[""{name=0, anchor=center, inner sep=0}, "j", shift left=2, hook, from=1-1, to=1-3]
            \arrow[""{name=1, anchor=center, inner sep=0}, "{j^R}", shift left=2, from=1-3, to=1-1]
            \arrow[""{name=2, anchor=center, inner sep=0}, "p", shift left=2, from=1-3, to=1-5]
            \arrow[""{name=3, anchor=center, inner sep=0}, "{p^R}", shift left=2, hook', from=1-5, to=1-3]
            \arrow["\dashv"{anchor=center, rotate=-90}, draw=none, from=0, to=1]
            \arrow["\dashv"{anchor=center, rotate=-90}, draw=none, from=2, to=3]
        \end{tikzcd}\]
            where both $j$ and $j^R$ are $t$-exact
            and commute with the inclusions of $\cC_{>-\infty}$.

        \item Given a $\cC_{>- \infty}$-Whitehead tower
            $X_{\bullet}$ of $X \in \cC$ we have an identification
            of cofiber sequences
            \[\begin{tikzcd}[ampersand replacement=\&]
                {\colim_n X_n} \& X \& {\colim_n X/X_n} \\
                {jj^RX} \& X \& {p^RpX}
                \arrow[from=1-1, to=1-2]
                \arrow["\simeq"', from=1-1, to=2-1]
                \arrow[from=1-2, to=1-3]
                \arrow["\simeq", from=1-3, to=2-3]
                \arrow["{\eps_X^j}", from=2-1, to=2-2]
                \arrow[Rightarrow, no head, from=2-2, to=1-2]
                \arrow["{\eta^p_X}", from=2-2, to=2-3]
            \end{tikzcd}\]
            where $\eps^j \colon jj^R \Rightarrow \id$ is the counit
            and $\eta^p \colon \id \Rightarrow p^Rp$ the unit.

        \item $j^R$ agrees with the left Kan extension of $\cC_{>-\infty} \subseteq \langle \cC_{>-\infty}\rangle$
            along $\cC_{>-\infty} \subseteq \cC$
            and exhibits $\cC/\cC_{\leq -\infty} \simeq \langle \cC_{>-\infty}\rangle$ as right completion of $\cC$.

        \item If $\cC$ is already right complete,
            then $j$ and $j^R$ are mutually inverse $t$-exact equivalences.
    \end{enumerate}
\end{lemma}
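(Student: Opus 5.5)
The plan is to dualize the proof of Lemma \ref{lem:upw-comp-quot}, replacing weight connectivity by $t$-coconnectivity and Lemma \ref{lem:upw-comp-basic} by Lemma \ref{lem:dw-comp-basic}.

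\textbf{Step 1: the fiber sequence.} Let $X \in \cC$ and pick a $\cC_{>-\infty}$-Whitehead tower $X_\bullet$. It exists because $\cC_{>-\infty}$ is right dense in itself, using the dual of Lemma \ref{lem:weight-cplx}. The tower is growing in coconnectivity, so it has a colimit in $\cC$, and this colimit lies in $\langle \cC_{>-\infty}\rangle$ by Lemma \ref{lem:dw-comp-basic}(2). The cofiber is $\colim_n X/X_n$. For $k \geq n$ we have $X/X_k \in \cC_{\leq -n}$, so $\cofib(X_k \to X_{k'})$ lies in $\cC_{\leq -n}$ for $k' \geq k \geq n$. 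Closure of each $\cC_{\leq -n}$ under sequential colimits growing in coconnectivity then gives $\colim_k X/X_k \in \cC_{\leq -n}$ for all $n$, hence in $\cC_{\leq -\infty}$.

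\textbf{Step 2: orthogonality.} I claim $\hom(\langle\cC_{>-\infty}\rangle, \cC_{\leq -\infty}) = 0$. For $c \in \cC_{\geq a}$ and $Y \in \cC_{\leq -\infty}$, the spectrum $\hom(c,Y)$ is $(-a-n)$-coconnective for every $n$ by axiom (2) of Definition \ref{def:weak-t}, so it vanishes. Now write $X \in \langle \cC_{>-\infty}\rangle$ as $\colim c_n$; then $\hom(X,Y)=\lim_n \hom(c_n,Y)=0$. Together with Step 1 this gives a semiorthogonal decomposition by the standard criterion \cite[Section 7.2.1]{SAG}. This produces $j^R$, $p$ and $p^R$, and identifies the fiber sequence of Step 1 with $jj^RX \to X \to p^RpX$, which proves (1) and (2) apart from $t$-exactness.

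\textbf{Step 3: $t$-exactness and the completion property.} The inclusion $j$ is $t$-exact by Lemma \ref{lem:dw-comp-basic}(3b). Since $j^Rj \simeq \id$, the functor $j^R$ commutes with the inclusions of $\cC_{>-\infty}$.
\begin{itemize}
\item \emph{Connectives:} $j^R$ restricted to $\cC_{>-\infty}$ is the identity, so it preserves $\cC_{\geq 0}$.
\item \emph{Coconnectives:} for $X \in \cC_{\leq 0}$, choose the Whitehead tower as in the dual of Lemma \ref{lem:weight-cplx}. After discarding initial terms its members $X_n$ lie in $\cC_{\leq 0}$, because the cofibers $X/X_n$ become $(-1)$-coconnective. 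Hence $j^RX = \colim X_n$ lies in $\langle\cC_{>-\infty}\rangle_{\leq 0}$.
\end{itemize}
So $j^R$ is $t$-exact. It is left Kan extended from its restriction by Proposition \ref{prop:t-comp-lkan}, with $\cU = \cC_{>-\infty}$. Applying Lemma \ref{lem:dw-comp-basic}(3c), with density of $\cC_{\geq 0}$ in itself, shows that $j^R$ is a right completion. Finally $j^R$ induces $\cC/\cC_{\leq -\infty} \simeq \langle\cC_{>-\infty}\rangle$, which is formal from the recollement; this gives (3).

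\textbf{Step 4: the complete case.} For (4), if $\cC$ is right complete then $\cC_{\leq-\infty}=0$, so $j$ is an equivalence. Alternatively this follows from Lemma \ref{lem:dw-comp-basic}(4).

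The main subtlety is Step 1, specifically that $\cC_{\leq -n}$ is closed under the relevant colimits in the hypothesis's form. This follows by shifting the assumed closure of $\cC_{\leq 0}$.
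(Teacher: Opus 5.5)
Your proof is correct and is essentially the paper's argument. The paper proves this lemma as the dual of Lemma \ref{lem:upw-comp-quot}, exactly as you do: the Whitehead-tower cofiber sequence plus orthogonality give the semiorthogonal decomposition, towers in $\cC_{\leq 0}$ give $t$-exactness of $j^R$, and Proposition \ref{prop:t-comp-lkan} together with Lemma \ref{lem:dw-comp-basic}(3c),(4) finish the argument. One wording fix: the existence of Whitehead towers uses that $\cC_{>-\infty}$ is right dense in $\cC$, which comes from the $t$-decompositions, not that it is right dense ``in itself''.
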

\begin{proof}
    Analogous to Lemma \ref{lem:upw-comp-quot}.
\end{proof}

\begin{proposition}\label{prop:ind-t}
    Let $\cC$ be a weak $t$-category of defect $m$.
    There is a $t$-structure on $\Ind(\cC)$ such that the Yoneda embedding is $t$-exact and
    \begin{enumerate}
        \item $\Ind(\cC)_{\geq 0} = \Ind(\cC_{\geq 0})$.

        \item We have $\Ind(\cC_{\leq 0}) \subseteq \Ind(\cC)_{\leq 0} \subseteq \Ind(\cC_{\leq m})$.\footnote{For $m = \infty$, the second inclusion is vacuous. If $\cC$ is also a weak co-$t$-structure
                (cf.~Warning \ref{warn:wtcat-non-dual}), i.e.~we also have weight decompositions
                $X_{\geq 0} \to X \to X_{<\infty}$,
                then the same proof shows that $\Ind(\cC)_{\leq 0} \subseteq \Ind(\cC_{<\infty})$.}
                In particular, for a $t$-structure on $\cC$ (i.e.~for $m = 0$) we have equality.

        \item There exists a stable recollement
            \[\begin{tikzcd}
                {\Ind(\cC_{>- \infty})} && {\Ind(\cC)} && {\Ind(\cC)_{\leq -\infty}}
                \arrow["{j_!}", shift left=3, hook', from=1-1, to=1-3]
                \arrow["{j_*}"', shift right=3, hook, from=1-1, to=1-3]
                \arrow["{j^*}"{description}, from=1-3, to=1-1]
                \arrow["p", shift left=3, from=1-3, to=1-5]
                \arrow["{p^{RR}}"', shift right=3, from=1-3, to=1-5]
                \arrow["{p^R}"{description}, hook', from=1-5, to=1-3]
            \end{tikzcd}\]
            where $j_!$ is induced by the inclusion via $\Ind(-)$,
            and $j_!,j^*$ are $t$-exact and $j_*$ has amplitude $[-\infty,0]$,
            and $j^*$ is a right completion.
            In particular, $\Ind(\cC_{>- \infty})$ is right complete.

        \item If $\cD$ is another weak $t$-category
            and $f \colon \cC \to \cD$ a map of $t$-amplitude $[a,b]$ \alt{$(a,b]$}
            for $-\infty \leq a \leq b \leq \infty$
            then $f_! = \Ind(f) \colon \Ind(\cC) \to \Ind(\cD)$
            is of $t$-amplitude $[a,b+m]$ \alt{$(a,b+m]$}.
    \end{enumerate}
\end{proposition}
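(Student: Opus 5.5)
The plan is to mirror the proof of Proposition \ref{prop:ind-weight}, with the roles of connectives and coconnectives exchanged. The $t$-structure on $\Ind(\cC)$ is the one whose connectives are $\Ind(\cC_{\geq 0})$. This subcategory is closed under colimits and extensions in the presentable stable category $\Ind(\cC)$: closure under extensions follows because $\cC_{\geq 0}$ is closed under extensions and filtered colimits are exact. By \cite[Proposition 1.4.4.11]{HA} it is the connective part of a $t$-structure, with coconnectives
\[
\Ind(\cC)_{\leq 0} = \{X \mid \map(c,X) \simeq * \text{ for all } c \in \cC_{\geq 1}\}.
\]
The Yoneda embedding preserves connectives by definition. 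It preserves coconnectives by the orthogonality axiom (2) of Definition \ref{def:weak-t}, since for $c\in\cC_{\geq 1}$ and $d\in \cC_{\leq 0}$ the spectrum $\hom(c,d)$ is $(-1)$-coconnective. This gives $t$-exactness and point (1).

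For (2), the inclusion $\Ind(\cC_{\leq 0}) \subseteq \Ind(\cC)_{\leq 0}$ holds because $\Ind(\cC)_{\leq 0}$ is closed under filtered colimits, as objects of $\cC$ are compact, and it contains $\cC_{\leq 0}$. For the other inclusion, take $X \in \Ind(\cC)_{\leq 0}$ and write $X = \colim_{c \in \cC_{/X}} c$; here $\cC_{/X}$ is filtered. Given $c \to X$, choose a $t$-decomposition $c_{\geq 1-m}\to c \to c_{\leq 0}$, shifted appropriately so that the first term lies in $\cC_{\geq 1}$ and the last in $\cC_{\leq m}$. The composite $c_{\geq 1}\to X$ vanishes, so $c\to X$ factors through $c \to c_{\leq m}$. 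Hence $(\cC_{\leq m})_{/X}\subseteq \cC_{/X}$ is weakly cofinal between filtered categories, and therefore cofinal by Kerodon 06CK/06CP, exactly as in the weight case. This gives $X\in\Ind(\cC_{\leq m})$. For $m=0$ the two inclusions coincide. The same argument with co-$t$-decompositions gives the footnote.

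For (3), apply Lemma \ref{lem:dw-comp-quot} to the $t$-category $\Ind(\cC)$. It admits all colimits, and $\Ind(\cC)_{\leq 0}$ is closed under sequential colimits growing in coconnectivity, indeed under all filtered colimits. We must check $\langle \Ind(\cC)_{>-\infty}\rangle = \Ind(\cC_{>-\infty})$. The containment $\supseteq$ holds because $\langle\cdot\rangle$ contains $\cC_{>-\infty}$ and is closed under arbitrary sums. To see the latter, use Whitehead towers $X^i_n\in \Ind(\cC)_{\geq -n}$: the connectives are closed under sums, so the summed towers still grow in coconnectivity. It is also stable, hence closed under all colimits. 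For $\subseteq$, Whitehead-tower colimits of objects of $\Ind(\cC)_{>-\infty}=\bigcup_n\Ind(\cC_{\geq -n})$ lie in $\Ind(\cC_{>-\infty})$.

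The recollement and the further adjoints $j_*$, $p^{RR}$ then follow formally, as in Proposition \ref{prop:ind-weight}(5), since $j_!$ preserves compacts and colimits. The $t$-exactness of $j^*$ comes from Lemma \ref{lem:dw-comp-quot}(1),(3), which also identifies $j^*$ as a right completion, so $\Ind(\cC_{>-\infty})$ is right complete. That $j_*$ preserves coconnectives follows from the analogue of Lemma \ref{lem:saturated}(2) applied to $j^*\dashv j_*$, using left saturation of genuine $t$-structures. The main subtlety here is identifying the two $t$-structures on $\Ind(\cC_{>-\infty})$; both are saturated and have the same connectives $\Ind(\cC_{\geq 0})$, so they agree.

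For (4), the connective bound is immediate: $f_!$ preserves colimits, so it maps $\Ind(\cC_{\geq 0})$ into $\Ind(\cD_{\geq a})$. For the coconnective bound, (2) gives
\[
\Ind(\cC)_{\leq 0}\subseteq\Ind(\cC_{\leq m}) \xrightarrow{f_!} \Ind(\cD_{\leq b+m})\subseteq \Ind(\cD)_{\leq b+m},
\]
and similarly in the half-open case. I expect the only real obstacle to be the cofinality argument in (2) together with the identification of $\langle\Ind(\cC)_{>-\infty}\rangle$ in (3); the rest is formal dualization.
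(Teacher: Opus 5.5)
Your route is the paper's: its proof of this proposition just says ``as in Proposition \ref{prop:ind-weight}'', with one caveat. Parts (1), (3), (4) and the first inclusion of (2) go through as you describe. The gap is exactly at that caveat, in the second inclusion of (2).

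You assert that $(\cC_{\leq m})_{/X}$ is filtered ``exactly as in the weight case'', but the weight-case reason does not dualize. There, $(\cC_{\geq -\ell})_{/X}$ is filtered because $\cC_{\geq -\ell}$ is closed under finite colimits, so the slice has finite colimits. Here $\cC_{\leq m}$ is only closed under finite limits and extensions. For instance, the pushout of $0 \leftarrow d \to 0$ is $\Sigma d$, which in general leaves $\cC_{\leq m}$. Hence finite colimits in $\cC_{/X}$ of diagrams in $(\cC_{\leq m})_{/X}$ need not stay in the subcategory, and filteredness is not automatic.

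You need filteredness twice. First, for the cofinality criterion you invoke. Second, to pass from $X \simeq \colim_{(\cC_{\leq m})_{/X}} c$ to $X \in \Ind(\cC_{\leq m})$: this subcategory of $\Ind(\cC)$ is closed only under filtered colimits, not arbitrary ones.

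The fix reuses your own factorization trick.
\begin{enumerate}
\item Given a finite diagram $K \to (\cC_{\leq m})_{/X}$, take its colimit $c$ in $\cC_{/X}$. It exists, but need not lie in $\cC_{\leq m}$.
\item Choose a defect-$m$ decomposition $c_{\geq 1} \to c \to c_{\leq m}$.
\item Use $\map(c_{\geq 1}, X) \simeq *$ to factor $c \to X$ through $c_{\leq m}$.
\item Composing the colimit cone with $c \to c_{\leq m}$ gives a cone on the original diagram with tip in $(\cC_{\leq m})_{/X}$.
\end{enumerate}
Equivalently, a full weakly cofinal subcategory of a filtered $\infty$-category is automatically filtered. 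This is precisely the extra step the paper supplies.

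There is also a minor slip in (3). Growth in coconnectivity of the summed Whitehead towers uses that the coconnectives $\Ind(\cC)_{\leq -n}$ are closed under sums. This holds, since they are closed under filtered colimits and finite sums. Closure of the connectives under sums only guarantees that the terms stay in $\Ind(\cC)_{>-\infty}$.
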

\begin{proof}
    The $t$-structure exists by \cite[1.4.4.11(1)]{HA},
    and the rest follows as in Proposition \ref{prop:ind-weight},
    with one slight caveat;
    for the proof of $\Ind(\cC)_{\leq 0} \subseteq \Ind(\cC_{\leq m})$,
    the category $(\cC_{\leq m})_{/X}$
    does generally not have finite colimits hence is not filtered for the same reason as $\cC_{/X}$.
    However, given any finite diagram in $(\cC_{\leq m})_{/X}$
    it admits a colimit $c \in (\cC_{< \infty})_{/X}$, and then picking a $t$-decomposition
    of defect $m$ for the colimit yields a factorization of $c \to X$
    through $c \to c_{\leq m}$. Thus there is a cone on the original diagram
    in $(\cC_{\leq m})_{/X}$, showing that $(\cC_{\leq m})_{/X}$
    is filtered and cofinal in $\cC_{/X}$.
\end{proof}

\begin{warning}\label{warn:two-t-on-ind}
    If $\cC$ is a weakly $\ell$-weighted category
    which also admits a weak $t$-structure,
    then the induced weight structure
    on $\Ind(\cC)$ admits an adjacent $t$-structure
    by Proposition \ref{prop:ind-weight},
    and the above Proposition shows that the weak $t$-structure
    also induces a $t$-structure on $\Ind(\cC)$.
    In general, these do not agree.
    However, if $\cC_{w \geq 0} = \cC_{t \geq 0}$
    and $\ell = 0$, then Propositions \ref{prop:ind-weight}(3) and \ref{prop:ind-t}(1) guarantee that the two $t$-structures on $\Ind(\cC)$ do in fact agree.
\end{warning}

\begin{proof}[Proof of Theorem \ref{thm:t-dw-comp}]
    Again, the proof is largely analogous to that of Theorem \ref{thm:upw-comp}.
    Concretely, one wants to apply \ref{lem:dw-comp-basic} to the $t$-exact Yoneda embedding
    $\cC_{>-\infty} \hookrightarrow \Ind(\cC_{>-\infty})$
    to deduce that $\rc{\cC} \coloneqq \langle \cC_{>-\infty}\rangle$ is the desired right completion
    of $\cC_{>-\infty}$ and $\cC$.
    To this end, we need to check that $\cC_{\geq 0} \subseteq \rc{\cC}_{\geq 0}$ is dense.
    By right density of $\cC_{>-\infty} \subseteq \rc{\cC}$ it follows that a given $X \in \rc{\cC}_{\geq 0}$
    is a retract of some $c \in \cC_{>-\infty}$ and hence compact in $\Ind(\cC_{>-\infty})$.
    Now $\Ind(\cC)_{\geq 0} = \Ind(\cC_{\geq 0})$ and the inclusion into $\Ind(\cC_{>-\infty})$
    preserves filtered colimits. So $X$ is also compact in $\Ind(\cC)_{\geq 0} = \Ind(\cC_{\geq 0})$,
    hence lies in $(\cC_{\geq 0})^\idem$, as desired.
    The rest of the proof adapts verbatim.
\end{proof}

\section{The setting of prestable categories}\label{sec:prestable}

We saw in Section \ref{sec:conn} that a prestable category $\cC$
canonically gives rise to a bounded below stable connectivity structure $(\SW(\cC),\cC)$.
In subsection \ref{sec:prestable-comp} we investigate how colimit completeness
of $\SW(\cC)$ can be expressed in terms of $\cC$,
and define a colimit completion for certain kinds of prestable categories.
Moreover, in subsection \ref{sec:prestable-spectrum} we give a construction of spectrum
objects for prestable categories which generalizes the usual one that is defined
when the prestable category admits finite limits.
We begin by recalling the following equivalent characterizations of prestability.

\begin{proposition}[{{\cite[Section C.1.2]{SAG}}}]
    Let $\cC$ be a category. The following are equivalent:
    \begin{enumerate}
        \item $\cC$ is pointed, admits finite colimits, $\Sigma \colon \cC \to \cC$ is fully faithful,
            and every morphism $X \to \Sigma Y$ extends to a bifiber sequence $F \to X \to \Sigma Y$.

        \item $\cC$ is a finite-colimit-closed and extension-closed full subcategory of a stable category.
    \end{enumerate}
    Moreover, in this case:
    \begin{enumerate}
        \item[(a)] The canonical map $\cC \to \SW(\cC)$ is fully faithful.
        \item[(b)] If $f \colon \cC \hookrightarrow \cD$ is any fully faithful
            embedding where $\cD$ is stable and $f$ preserves finite colimits,
            then the essential image of $f$ is closed under extensions in $\cD$.
    \end{enumerate}
\end{proposition}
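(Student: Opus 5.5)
The plan is to prove (1)$\Rightarrow$(2) by embedding $\cC$ into its Spanier--Whitehead stabilization, and (2)$\Rightarrow$(1) by direct verification inside the ambient stable category. The additional claims (a) and (b) then follow along the way.

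For (2)$\Rightarrow$(1), let $\cC \subseteq \cD$ be closed under finite colimits and extensions with $\cD$ stable. Then $\cC$ contains $0$, so it is pointed and has finite colimits computed in $\cD$. In particular $\Sigma_\cC$ is the restriction of the equivalence $\Sigma_\cD$, hence fully faithful. Given $X \to \Sigma Y$ with $X,Y \in \cC$, let $F$ be its fiber in $\cD$. It sits in a fiber sequence $Y \to F \to X$, so $F \in \cC$ by extension-closure. The square $F \to X \to \Sigma Y$ is a pushout in $\cD$ whose terms lie in $\cC$, so it is also a pushout in $\cC$. It is a pullback in $\cD$, and therefore also in the full subcategory $\cC$. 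Hence it is a bifiber sequence.

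For (1)$\Rightarrow$(2), form $\SW(\cC) = \colim(\cC \xto{\Sigma} \cC \xto{\Sigma} \cdots)$ in $\Cat$, which is stable because $\Sigma$ becomes invertible. Since each transition map is fully faithful, the canonical map $\cC \to \SW(\cC)$ is fully faithful; this is (a). It preserves finite colimits because the transition functors preserve finite colimits. Closure under extensions is the main obstacle. Given a fiber sequence $A \to E \to B$ in $\SW(\cC)$ with $A,B \in \cC$, I would first rotate it to the map $B \to \Sigma A$, which lies in $\cC$ by full faithfulness. The bifiber-sequence axiom produces $F \to B \to \Sigma A$ in $\cC$. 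I then need to show this stays a fiber sequence in $\SW(\cC)$, so that $F \simeq E$. For this I would use that the bifiber sequence is a pushout in $\cC$, and $\cC \to \SW(\cC)$ preserves pushouts. A pushout square in a stable category is a pullback, so $F \simeq \fib(B \to \Sigma A) \simeq E$.

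Finally, (b) follows from the same argument applied to $f\colon \cC\hookrightarrow\cD$. Given an extension $A \to E \to B$ in $\cD$ with $A,B$ in the image, the classifying map $B \to \Sigma A$ lies in $\cC$ because $f$ is fully faithful and preserves $\Sigma$. Its bifiber $F$ in $\cC$ is sent to a pushout, hence pullback, square in $\cD$. Therefore $f(F) \simeq E$.
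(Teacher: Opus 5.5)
Your proof is correct and is essentially the standard argument from \cite[Section C.1.2]{SAG}, which the paper cites rather than reproves. For (2)$\Rightarrow$(1) you verify directly inside the ambient stable category, and for (1)$\Rightarrow$(2) and (b) you push the bifiber square of the classifying map $B \to \Sigma A$ forward along a right exact fully faithful functor, where stability makes it a pullback; the background facts you use without proof (stability of $\SW(\cC)$, right exactness of $\cC \to \SW(\cC)$, and the computation of mapping spaces in a sequential colimit of fully faithful functors) are the ones Lurie supplies in Section C.1.1 of SAG.
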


We also saw in Lemma \ref{lem:stable-conn-struc} that a prestable category
is equivalently a choice of connectivity structure on a stable category.
By its universal property, $\SW(\cC)$ is the initial stable category into which $\cC$ embeds
in a finite-colimit preserving way, and this defines a connectivity structure $\SW(\cC)_{\geq 0} = \cC$
which is closed under retracts in $\SW(\cC)$ (cf.~Lemma \ref{lem:stable-conn-struc}).
This yields a reasonable notion of connectivity on $\cC$,
and generally we let $\cC_{\geq n} = \SW(\cC)_{\geq n}= \{\Sigma^n X \mid X \in \cC\}$
for $n \geq 0$ (which is then also closed under retracts in $\cC$).
We will always mean this notion of connectivity when talking about, say, sequential diagrams
growing in connectivity in a prestable category.
There is more to be said when $\cC$ admits finite limits:

\begin{proposition}[{{\cite[Lemma C.1.2.9]{SAG}}}]\label{prop:prestable-finlim}
    Let $\cC$ be a prestable category. The following are equivalent:
    \begin{enumerate}
        \item $\cC$ admits finite limits.
        \item $\SW(\cC)_{\geq 0} = \cC$ is the connective part of a $t$-structure on $\SW(\cC)$.
    \end{enumerate}
    In this case $\cC$ admits categorical truncations
    by restricting the $t$-truncations on $\SW(\cC)$
    (cf.~\cite[Warning 1.2.1.9]{HA}).
\end{proposition}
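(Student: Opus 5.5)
The plan is to prove the equivalence of (1) and (2) by producing, in each direction, the missing structure inside the stable envelope $\SW(\cC)$, with $\cC \subseteq \SW(\cC)$ closed under finite colimits, extensions and retracts by Lemma \ref{lem:stable-conn-struc}(3).

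For (2) $\Rightarrow$ (1): if $\cC = \SW(\cC)_{t \geq 0}$ for a $t$-structure, then the inclusion $\cC \subseteq \SW(\cC)$ has a right adjoint $\tau_{\geq 0}$. A finite diagram in $\cC$ has a limit $L$ in the stable category $\SW(\cC)$. Applying $\tau_{\geq 0}$ gives a limit in $\cC$, because right adjoints preserve limits and $\tau_{\geq 0}$ restricts to the identity on $\cC$.

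For (1) $\Rightarrow$ (2), I would produce the right adjoint first and then invoke \cite[Proposition 1.4.4.11]{HA}-style reasoning. The right adjoint is a coreflection $\tau_{\geq 0}$, and by the standard criterion a coreflective, extension-closed, suspension-closed subcategory of a stable category is the connective part of a $t$-structure. The construction goes as follows.
\begin{itemize}
\item By Lemma \ref{lem:stable-conn-struc}(4), every $X \in \SW(\cC)$ has the form $X \simeq \Omega^n Y$ with $Y \in \cC$.
\item Define $\tau_{\geq 0} X$ as the $n$-fold loop object $\Omega^n_\cC Y$ computed in $\cC$, as an iterated pullback $0 \times_Y 0$. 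This exists by (1).
\item Check the universal property: for $Z \in \cC$ we need $\map(Z, \Omega^n_\cC Y) \simeq \map(Z, \Omega^n Y)$. This holds because mapping spaces out of $Z$ turn pullbacks into pullbacks in both $\cC$ and $\SW(\cC)$, and the inclusion $\cC \subseteq \SW(\cC)$ is fully faithful.
\item Check that the definition is independent of the choice of $n$. Since $\Sigma$ is fully faithful on $\cC$, we have $\Omega_\cC \Sigma Y \simeq Y$, so replacing $(n, Y)$ by $(n+1, \Sigma Y)$ gives the same object.
\end{itemize}

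With the coreflection in hand, for every $X$ we take the cofiber sequence $\tau_{\geq 0}X \to X \to X''$. Maps $Z \to X''$ with $Z \in \cC_{\geq 1}$ then vanish, by comparing long exact sequences of mapping spaces and using the adjunction for $Z$ and $\Omega Z$. Hence $X'' \in (\cC_{\geq 1})^\perp$, and together with the evident orthogonality this gives the $t$-structure.

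The main obstacle is the fiber-vanishing step just described. To establish $\pi_0 \map(Z, X'') = 0$, we use that $\map(Z, \tau_{\geq 0}X) \to \map(Z, X)$ is an equivalence for $Z \in \cC$. This must be applied to both $Z$ and $\Sigma^{-1}Z$; the latter lies in $\cC$ because $Z \in \cC_{\geq 1}$.

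Finally, the addendum on truncations follows from the same formulas: the truncation functors on $\SW(\cC)$ restrict to endofunctors of $\cC$, using the closure properties of $\cC$.
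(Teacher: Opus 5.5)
Your skeleton is the standard argument; the paper gives no proof of its own and only cites SAG. Two parts are correct:
\begin{itemize}
\item (2)$\Rightarrow$(1), via the limit-preserving coreflection $\tau_{\geq 0}$.
\item The construction in (1)$\Rightarrow$(2) of the right adjoint of $\cC \subseteq \SW(\cC)$ by $\Omega^n Y \mapsto \Omega^n_\cC Y$, checked on mapping spaces. The independence-of-$n$ check is superfluous, since pointwise representability already gives the adjoint.
\end{itemize}
Your reference is off, though. \cite[Proposition 1.4.4.11]{HA} requires presentability, and $\SW(\cC)$ is small. What you need is the recognition criterion of \cite[Section 1.2.1]{HA}: a full subcategory of a stable category that is closed under suspensions and extensions, and whose inclusion has a right adjoint, is the connective part of a $t$-structure.

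The genuine problem is the step you call the main obstacle. Your long-exact-sequence argument shows $\pi_0\map(Z,X'')=0$ only for $Z \in \cC_{\geq 1}$. That places $X''$ in $(\cC_{\geq 1})^{\perp} = \SW(\cC)_{t \leq 0}$. For a $t$-structure with connective part $\cC$, however, the cofiber of $\tau_{\geq 0}X \to X$ must lie in $(\cC_{\geq 0})^{\perp} = \SW(\cC)_{t \leq -1}$. A decomposition $X_{\geq 0} \to X \to X_{\leq 0}$ gives only a weak $t$-structure of defect $1$ in the sense of Definition \ref{def:weak-t}.

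For $Z \in \cC \setminus \cC_{\geq 1}$, your method would need the coreflection property for $\Omega Z$, which need not lie in $\cC$. So it does not give the required injectivity of $\pi_0\map(Z,\Sigma\tau_{\geq 0}X) \to \pi_0\map(Z,\Sigma X)$.

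The missing ingredient is that $\cC$ is closed under extensions in $\SW(\cC)$, which your argument never uses. The fix goes as follows.
\begin{itemize}
\item Given $f\colon Z \to X''$ with $Z \in \cC$, form the pullback $P = Z \times_{X''} X$. It is an extension of $Z$ by $\tau_{\geq 0}X$, so $P \in \cC$.
\item Hence $P \to X$ factors through $\tau_{\geq 0}X$. Because $\tau_{\geq 0}X \to X$ induces an equivalence on $\map(\tau_{\geq 0}X,-)$, this factorization is a retraction of $\tau_{\geq 0}X \to P$.
\item Therefore the boundary map $Z \to \Sigma\tau_{\geq 0}X$, which is $f$ followed by the original boundary, is null.
\item So $f$ lifts to a map $Z \to X$. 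That lift factors through $\tau_{\geq 0}X$, and hence $f \simeq 0$.
\end{itemize}
This is exactly the argument inside the recognition criterion, so citing that criterion correctly also closes the gap.

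For the addendum, it is not enough that $\tau_{\leq n}$ preserves $\cC$. You must also identify $\cC \cap \SW(\cC)_{t \leq n}$ with the $n$-truncated objects of $\cC$. This holds because $\map_\cC(Y,X) = \Omega^\infty\hom(Y,X)$ for $Y \in \cC$ (compare Lemma \ref{lem:truncated}). Only then is the restricted $\tau_{\leq n}$ the left adjoint to the inclusion of $n$-truncated objects.
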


\subsection{Completeness}\label{sec:prestable-comp}

We start by formulating colimit completeness for prestable categories.

\begin{definition}
    A prestable category $\cC$ is colimit \alt{limit} complete if
    \begin{enumerate}
        \item $\cC_{\geq \infty} \coloneqq \bigcap_{n \geq 0} \cC_{\geq n} = 0$.
        \item $\cC$ admits sequential colimits \alt{limits} growing in connectivity.
    \end{enumerate}
\end{definition}

By Lurie's prestable Dold-Kan correspondence \cite[Theorem C.1.3.1]{SAG}
we obtain an analogue of Example \ref{ex:dold-kan}. In particular, we get:

\begin{lemma}
    A prestable category $\cC$ admits sequential colimits growing in connectivity
    if and only if it admits geometric realizations.
\end{lemma}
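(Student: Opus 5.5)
The plan is to transport the question to the stable setting via $\SW(\cC)$ and then apply the Dold--Kan correspondence, in the prestable form of \cite[Theorem C.1.3.1]{SAG}, exactly as in Example \ref{ex:dold-kan} and Lemma \ref{lem:colim-comp-via-geom-real}. That correspondence gives an equivalence between simplicial objects $Y \colon \Delta^\op \to \cC$ and sequential diagrams $X_0 \to X_1 \to \cdots$ in $\cC$ with $\cofib(X_n \to X_{n+1}) \in \cC_{\geq n+1}$, the cofibers being computed in $\SW(\cC)$. Under this equivalence $X_n = |Y|_n = \colim_{\Delta^\op_{\leq n}} Y$; these finite colimits exist in $\cC$ because $\cC$ has finite colimits, which are preserved by $\cC \subseteq \SW(\cC)$. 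Moreover, a colimit of $Y$ exists exactly when $\colim_n X_n$ exists, and then $|Y| \simeq \colim_n X_n$ by cofinality.

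For the direction ``sequential colimits growing in connectivity $\Rightarrow$ geometric realizations'', I take a simplicial $Y$ in $\cC$. By point (3) of Example \ref{ex:dold-kan}, the cofiber of $|Y|_n \to |Y|_{n+1}$ is a summand of $\Sigma^{n+1}Y_{n+1}$. Since $\cC_{\geq n+1}$ is closed under retracts in $\SW(\cC)$ by Lemma \ref{lem:stable-conn-struc}, this cofiber lies in $\cC_{\geq n+1}$. So $(|Y|_n)_n$ is growing in connectivity, it has a colimit in $\cC$ by hypothesis, and that colimit is $|Y|$.

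For the converse, I take $X_\bullet \colon \N \to \cC$ growing in connectivity. Passing to a cofinal subsequence, I may assume $\cofib(X_n \to X_{n+1}) \in \cC_{\geq n+1}$ for all $n \geq 0$. Unlike the stable case, I cannot replace $X_\bullet$ by $X_\bullet/X_0$, but this is unnecessary: the prestable Dold--Kan statement already applies to diagrams with $X_0$ arbitrary in $\cC$ and successive cofibers in $\cC_{\geq n+1}$. It therefore produces a simplicial object $Y$ in $\cC$ with $|Y|_n \simeq X_n$, and the geometric realization $|Y|$, which exists by hypothesis, is the colimit of $X_\bullet$.

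I expect the main obstacle to be making sure the Dold--Kan equivalence is really available with exactly this growth condition in the prestable setting. One has to check that the relevant cofibers are taken in $\SW(\cC)$ and stay inside $\cC$, and that no finite limits are needed. Both follow from the prestable formulation in \cite{SAG} together with closure of $\cC$ under extensions and retracts in $\SW(\cC)$. The rest is formal cofinality.
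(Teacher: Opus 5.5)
Your proposal is correct and takes the same route as the paper, which derives this lemma directly from Lurie's prestable Dold--Kan correspondence \cite[Theorem C.1.3.1]{SAG}, viewed as the prestable analogue of Example~\ref{ex:dold-kan} (see also Corollary~\ref{cor:grow-conn-idem}). Your two directions are a faithful unpacking of that one-line argument. This includes the cofinal reindexing to arrange $\cofib(X_n \to X_{n+1}) \in \cC_{\geq n+1}$ and the identification $|Y| \simeq \colim_n |Y|_n$.
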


\begin{example}
    A non-trivial stable category $\cC$, considered as a prestable category,
    can never be (co)limit complete, since $\cC_{\geq n} = \cC$ for each $n$.
\end{example}

\begin{lemma}\label{lem:conn-comp-prestable}
    Let $(\cC,\cC_{\geq 0})$ be a stable connectivity structure.
    Then this is colimit \alt{limit} complete if and only if the prestable category
    $\cC_{\geq 0}$ is colimit \alt{limit} complete and the inclusion $\cC_{\geq 0} \subseteq \cC$
    preserves colimits \alt{limits} of sequential diagrams growing in connectivity.\footnote{In the colimit case, this is of course equivalent to the inclusion preserving geometric realizations.}
\end{lemma}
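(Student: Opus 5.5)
I will prove the colimit case and obtain the limit case by the same argument with all arrows reversed. The plan is to compare the three conditions of Observation \ref{obs:colim-comp} for $(\cC,\cC_{\geq 0})$ with the two conditions defining colimit completeness of the prestable category $\cC_{\geq 0}$. First I note that the connectivity on $\cC_{\geq 0}$ is that of $\SW(\cC_{\geq 0}) \simeq \cC_{>-\infty}$ (Lemma \ref{lem:stable-conn-struc}(4)). Hence $(\cC_{\geq 0})_{\geq n} = \cC_{\geq n}$ for $n \geq 0$, and in particular $(\cC_{\geq 0})_{\geq \infty} = \cC_{\geq \infty}$. This makes condition (1) literally the same on both sides.

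For the forward direction, I assume $(\cC,\cC_{\geq 0})$ is colimit complete. Take a sequential diagram in $\cC_{\geq 0}$ growing in connectivity. It is also growing in connectivity in $\cC$, so it has a colimit in $\cC$, and by (3) this colimit lies in $\cC_{\geq 0}$. Since $\cC_{\geq 0}$ is a full subcategory, the colimit computed in $\cC$ is also a colimit in $\cC_{\geq 0}$. So $\cC_{\geq 0}$ admits these colimits and the inclusion preserves them, as required.

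For the converse, I assume $\cC_{\geq 0}$ is colimit complete and the inclusion preserves the relevant colimits. I then verify condition (2) of Lemma \ref{lem:colim-comp-via-geom-real}. By the prestable Dold--Kan correspondence (the preceding lemma), $\cC_{\geq 0}$ admits geometric realizations, since these are exactly sequential colimits growing in connectivity. The inclusion preserves them because, under Dold--Kan as in Example \ref{ex:dold-kan}, $|X| \simeq \colim_n |X|_n$ with $(|X|_n)_n$ growing in connectivity, and the inclusion preserves both the finite colimits $|X|_n$ (as $\cC_{\geq 0}$ is closed under finite colimits) and this sequential colimit. Lemma \ref{lem:colim-comp-via-geom-real} then gives conditions (2) and (3) in $\cC$, and (1) was matched above.

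The limit case runs in parallel, except that Lemma \ref{lem:colim-comp-via-geom-real} has no limit analogue, and I expect this to be the main obstacle. I would handle it directly. Given $X_\bullet \colon \N^\op \to \cC$ growing in connectivity, pass to a cofinal subsequence so that $\fib(X_{n+1} \to X_n) \in \cC_{\geq n+1}$. The fibers $F_n = \fib(X_n \to X_0)$ then form a diagram growing in connectivity, but they need not be connective; to fix this I shift by $\Sigma^k$ so that they land in $\cC_{\geq 0}$. Their limit exists in $\cC_{\geq 0}$, and by the preservation hypothesis it is also a limit in $\cC$. The limit of $X_\bullet$ is then the extension $\lim_n F_n \to \lim_n X_n \to X_0$: taking the limit over $n$ of the fiber sequences $F_n \to X_n \to X_0$, with $X_0$ constant, shows that $\lim_n X_n$ exists and sits in this fiber sequence. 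When all $X_n$ are connective, each $F_n$ lies in $\cC_{\geq -1}$ and in fact in $\cC_{\geq 0}$ after discarding finitely many terms. The fiber of $\lim_n X_n \to X_0$ is then connective, and this gives the closure of $\cC_{\geq 0}$ under these limits.
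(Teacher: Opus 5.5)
Your proof is correct. In the limit case it is exactly the paper's argument, dualized. The paper proves the colimit case by replacing $X_\bullet$ with $\cofib(X_0 \to X_\bullet)$. After normalizing so that $\cofib(X_n \to X_{n+1}) \in \cC_{\geq n+1}$, this diagram lies in $\cC_{\geq 0}$ and still grows in connectivity. The paper takes its colimit $C$ in $\cC_{\geq 0}$, uses the preservation hypothesis, and recovers $\colim_n X_n$ as $\fib(C \to \Sigma X_0)$; limits are declared dual.

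The genuine difference is your colimit case, which you route through geometric realizations and Lemma~\ref{lem:colim-comp-via-geom-real}. This is valid: converting preservation of sequential colimits growing in connectivity into preservation of geometric realizations is the equivalence the footnote calls obvious. It is, however, a detour. The proof of Lemma~\ref{lem:colim-comp-via-geom-real} itself performs the reduction $X_\bullet \mapsto X_\bullet/X_0$ and then uses Dold--Kan to return to sequential diagrams, so you convert to geometric realizations only to convert back. Your direct limit argument dualizes verbatim. Using it for both cases gives a shorter, uniform proof that never needs Dold--Kan; your route only buys reuse of an already-proved lemma.

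There are two small corrections in the limit case.

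First, no shift is needed. Once you have normalized so that $\fib(X_1 \to X_0) \in \cC_{\geq 1}$, the fiber sequences $\fib(X_{n+1} \to X_n) \to F_{n+1} \to F_n$ show inductively that every $F_n$ already lies in $\cC_{\geq 1}$. Without that normalization a shift would not help anyway, since the $F_n$ need not be bounded below.

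Second, you cannot literally take the limit of the fiber sequences, because $\lim_n X_n$ is not yet known to exist. Instead, use $X_n \simeq \cofib(\Omega X_0 \to F_n)$, naturally in $n$, and define $L \coloneqq \cofib(\Omega X_0 \to \lim_n F_n)$. Then check $\hom(Y,L) \simeq \lim_n \hom(Y,X_n)$ for all $Y$, using that limits commute with cofibers in $\Sp$. This is the dual of the paper's ``one checks''.
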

\begin{proof}
    Clearly if the stable connectivity structure is colimit \alt{limit} complete,
    then also $\cC_{\geq 0}$ is colimit \alt{limit} complete.
    For the converse, it remains to show that $\cC$ admits colimits \alt{limits}
    of sequential diagrams growing in connectivity.
    We will deal with the case of colimits, as the case of limits is dual.
    So let $X_\bullet \colon \N \to \cC$ be a sequential diagram growing in connectivity.
    By cofinality we may assume that $\cofib(X_n \to X_{n+1}) \in \cC_{\geq n+1}$.
    In particular, also $\cofib(X_0 \to X_n)$ is connective for all $n$.
    Then the diagram $\cofib(X_0 \to X_\bullet)$ lies in $\cC_{\geq 0}$
    and is still growing in connectivity, hence admits a colimit $C \in \cC_{\geq 0}$.
    By assumption, this is then also a colimit diagram in $\cC$,
    and one checks that $\fib(C \to \Sigma X_0)$ is the colimit of the original $X_\bullet$.
    We also see that if each $X_n$ is connective, then the colimit lies in $\cC_{\geq 0}$.
\end{proof}

\begin{corollary}\label{cor:sw-complete}
    A category $\cC$ is a colimit \alt{limit} complete prestable category precisely if it is contained as the connectives in a stable category with colimit \alt{limit} complete connectivity structure.
    Concretely, we can embed $\cC$ into the Spanier-Whitehead category $\SW(\cC)$
    which is colimit \alt{limit} complete if and only if $\cC$ is.
\end{corollary}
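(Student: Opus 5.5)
The plan is to reduce both directions to Lemma \ref{lem:conn-comp-prestable} and Lemma \ref{lem:stable-conn-struc}. The characterization in the first sentence follows from the second, so I start with the second. By Lemma \ref{lem:stable-conn-struc}(3), $\SW(\cC)$ carries the stable connectivity structure with $\SW(\cC)_{\geq 0} = \cC$, and connectivity in $\cC$ is by definition computed in $\SW(\cC)$.

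\textbf{If $\SW(\cC)$ is colimit (limit) complete, then so is $\cC$.} This is the easy half of Lemma \ref{lem:conn-comp-prestable}. Moreover, if $\cC$ sits as the connectives of some stable $\cD$ whose connectivity structure is colimit (limit) complete, that same lemma shows that $\cC = \cD_{\geq 0}$ is colimit (limit) complete. This gives the ``only if'' of the first sentence. The ``if'' of the first sentence will follow by taking $\cD = \SW(\cC)$ once the converse below is proven.

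\textbf{If $\cC$ is colimit (limit) complete, then so is $\SW(\cC)$.} I apply the converse half of Lemma \ref{lem:conn-comp-prestable} with $(\SW(\cC),\cC)$. It then remains to check that the inclusion $\cC \subseteq \SW(\cC)$ preserves colimits (limits) of sequential diagrams growing in connectivity; this is the main point.

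Take $X_\bullet$ in $\cC$ growing in connectivity with colimit $X_\infty$ in $\cC$, and any $Y \in \SW(\cC)$. We may write $Y \simeq \Omega^k Y'$ with $Y' \in \cC$. Then $\map_{\SW(\cC)}(\Sigma^k X_n, Y') \simeq \map_\cC(\Sigma^k X_n, Y')$, since $\cC$ is fully faithful in $\SW(\cC)$. So I need that $\Sigma^k$ preserves the colimit inside $\cC$. This holds because $\Sigma$ is a left adjoint-like colimit: the functor $\Sigma \colon \cC \to \cC$ is a pushout $0 \amalg_X 0$, and pushouts commute with colimits that exist. More carefully, $\Sigma X_\infty$ is the colimit of $\Sigma X_n$ in $\cC$ because colimits commute with colimits in $\cC$ whenever both exist.

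Hence $\map_{\SW(\cC)}(X_\infty, Y) \simeq \lim_n \map_{\SW(\cC)}(X_n, Y)$ for all $Y$, and the inclusion preserves the colimit.

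For limits the argument is harder: $\SW(\cC)$ is a colimit in $\Cat$, so maps into $\Omega^k Y'$ are not directly the issue. I handle it dually. A map $Y \to X_n$ with $Y = \Omega^k Y'$ corresponds to $Y' \to \Sigma^k X_n$ in $\cC$, so I must show that $\Sigma^k \lim X_n \simeq \lim \Sigma^k X_n$ in $\cC$.

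I would prove this via the fiber-sequence structure of a prestable category. Assume, by cofinality, that $\fib(X_{n+1}\to X_n)\in\cC_{\geq n+1}$. The tower $\Sigma X_\bullet$ is again growing in connectivity, and there are bifiber sequences $X_n \to 0 \to \Sigma X_n$. Since $\cC$ is a finite-colimit-closed, extension-closed subcategory, I expect to check, using the limit-completeness axiom $\cC_{\geq\infty}=0$, that the comparison $\Sigma\lim X_n \to \lim \Sigma X_n$ has $\infty$-connective fiber. I expect this step to be the main obstacle; if it fails directly, I would instead reduce to a tower of bifiber sequences and use that limits of such towers in $\cC$ are computed levelwise.
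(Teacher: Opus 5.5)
Your colimit argument is correct and is essentially the paper's. The paper moves the shift onto the test object: $\Sigma^nY\in\cC$, so $\hom(X,Y)$ is a shift of $\hom(X,\Sigma^nY)$. You move it onto the diagram instead, which forces the Fubini step $\Sigma^k\colim X_n\simeq\colim\Sigma^kX_n$ in $\cC$. That step is valid because pushouts exist in $\cC$, and it can be bypassed using $\map(X_n,\Omega^kY')\simeq\Omega^k\map(X_n,Y')$. Deducing the first sentence from the second is also fine in the colimit case.

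\textbf{The limit case has a genuine gap.} You rightly note that it does not dualize. You correctly reduce it to $\Sigma^k\lim X_n\simeq\lim\Sigma^kX_n$ in $\cC$; since $\Sigma\colon\cC\simeq\cC_{\geq1}$, this is the same as $\cC_{\geq1}$ being closed in $\cC$ under limits of towers growing in connectivity. You do not prove this, and neither suggested route can work:
\begin{itemize}
\item The definition of a limit complete prestable category only gives $\cC_{\geq\infty}=0$ and the existence of the limits. It says nothing about the connectivity of $\lim X_n\to X_m$, so nothing controls the fiber of $\Sigma\lim X_n\to\lim\Sigma X_n$.
\item ``Limits of towers of bifiber sequences are computed levelwise in $\cC$'' simply restates the claim.
\end{itemize}
The paper's proof dismisses this case with ``Analogously for limits'', which overlooks exactly the point you flagged.

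\textbf{In fact the step, and with it the limit half of the statement, is false.} Take the example from the Warning after Lemma \ref{lem:left-t-comp-is-conn-comp}: $\cD=\mathrm{Shv}^{\mathrm{hyp}}(\prod_{n\geq1}S^n;\cD(\Z))$ and $\cC=\cD_{t\geq0}$.
\begin{itemize}
\item $\cC$ is presentable, so it has all limits, and $\cC_{\geq\infty}=0$ by hypercompleteness. So $\cC$ is limit complete as a prestable category.
\item $\SW(\cC)\simeq\cD_{>-\infty}$ is not left $t$-complete. Otherwise $\map(G,\cF)\simeq\lim_n\map(G,\tau_{\leq n}\cF)$ for all bounded below $G$. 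Testing against $G=\tau_{\geq -k}K$ then gives $K=\fib(\cF\to\lim_n\tau_{\leq n}\cF)\in\cD_{t\leq-\infty}$. This is $0$ because coconnectivity is detected on sections, contradicting the cited failure for $\cF=\bigoplus_{n\geq1}\Z[n]$.
\item By Lemma \ref{lem:left-t-comp-is-conn-comp}, $\SW(\cC)$ is therefore not limit complete.
\item By Lemma \ref{lem:conn-comp-prestable}, the inclusion $\cC\subseteq\SW(\cC)$, equivalently $\Sigma$ on $\cC$, fails to preserve some limit of a tower growing in connectivity. Concretely, limits in $\cC$ are $\tau_{\geq0}$ of limits in $\cD$, and the latter can acquire negative homotopy.
\item The same example shows $\cC$ is not the connective part of any limit complete stable connectivity structure. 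If $\cC=\cE_{\geq0}$ with $\cE$ limit complete, then $\SW(\cC)\simeq\cE_{>-\infty}$ would be limit complete.
\end{itemize}

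To rescue the limit version you need an extra hypothesis, for instance the prestable analogue of axiom (3): if all transition maps $X_{n+1}\to X_n$ are $k$-connective, then so is $\lim X_n\to X_0$. Passing to a subtower with $0$-connective transition maps, this gives $\lim\Sigma X_n\in\cC_{\geq1}$, and your reduction then finishes at once.
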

\begin{proof}
    By Lemma \ref{lem:conn-comp-prestable},
    it only remains to check that if $\cC$ is a colimit \alt{limit} complete prestable category,
    then the inclusion $\cC \subseteq \SW(\cC)$ preserves colimits \alt{limits} of sequential
    diagrams growing in connectivity. But this follows from the fact that the connectivity
    structure on $\SW(\cC)$ is bounded below; if $X_\bullet \colon \N \to \cC$
    is a sequential diagram growing in connectivity with colimit $X$,
    and $Y \in \SW(\cC)$; then $\Sigma^nY \in \cC$ for some $n \geq 0$,
    and hence $\hom(X,Y) \to \lim_n \hom(X_n, Y)$ is a shift of the equivalence
    $\hom(X,\Sigma^nY) \simeq \lim_n \hom(X_n,\Sigma^nY)$.
    Analogously for limits of sequential diagrams growing in connectivity.
\end{proof}

\begin{corollary}\label{cor:prest-idem}
    A colimit complete prestable category is idempotent complete
    and admits the kinds of colimits described in Lemma \ref{lem:fintype-colim}.
    Also a limit complete prestable category is idempotent complete.
\end{corollary}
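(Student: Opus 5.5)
The plan is to reduce both claims to the stable results already proven, via Corollary \ref{cor:sw-complete}. Let $\cC$ be a colimit complete prestable category. By Corollary \ref{cor:sw-complete}, the stable connectivity structure $(\SW(\cC),\cC)$ is colimit complete, and $\cC = \SW(\cC)_{\geq 0}$ by Lemma \ref{lem:stable-conn-struc}(3). Corollary \ref{cor:grow-conn-idem} then gives that $\cC_{\geq 0} = \cC$ and $\SW(\cC)$ are idempotent complete.

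The statement about colimits goes through geometric realizations. By the prestable Dold-Kan correspondence (the lemma just above), $\cC$ admits sequential colimits growing in connectivity if and only if it admits geometric realizations. The hypothesis therefore gives geometric realizations in $\cC$. Since $\cC$ is prestable, it also has finite coproducts, so Lemma \ref{lem:fintype-colim} applies directly. It yields all the ``finite type'' colimits described there, namely $\cD$-indexed colimits whenever every $\map(\Delta^n,\cD)$ is a compact anima, and it also yields idempotent completeness a second time.

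For the limit complete case, let $\cC$ be a limit complete prestable category. By Corollary \ref{cor:sw-complete}, $(\SW(\cC),\cC)$ is a limit complete stable connectivity structure. Lemma \ref{lem:lim-comp-conn-idem} then says that $\SW(\cC_{\geq 0}) = \SW(\cC)$ and $\cC_{\geq 0} = \cC$ are idempotent complete.

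The only step needing care is the identification of the connective part of $\SW(\cC)$ with $\cC$ itself, together with the fact that this part is retract-closed. Both are supplied by Lemma \ref{lem:stable-conn-struc}(3). Everything else is a direct citation, so I expect no real obstacle.
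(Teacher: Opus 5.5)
Your proposal is correct and matches the paper's argument: pass to $(\SW(\cC),\cC)$ via Corollary \ref{cor:sw-complete}, get the colimits and idempotent completeness from Lemma \ref{lem:fintype-colim} (using geometric realizations supplied by the prestable Dold--Kan lemma), and handle the limit case with Lemma \ref{lem:lim-comp-conn-idem}. In the colimit case the detour through $\SW(\cC)$ and Corollary \ref{cor:grow-conn-idem} is not needed, since Lemma \ref{lem:fintype-colim} applied directly to $\cC$ already gives idempotent completeness.
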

\begin{proof}
    This is a consequence of Corollary \ref{cor:sw-complete} and Lemmas \ref{lem:fintype-colim} and \ref{lem:lim-comp-conn-idem}.
\end{proof}

In the case where $\cC$ even admits finite limits,
we get an analogue of Lemma \ref{lem:left-t-comp-is-conn-comp}.

\begin{lemma}
    Let $\cC$ be a prestable category admitting finite limits.
    \begin{enumerate}
        \item For $n \geq 0$, we have $X \in \cC_{\geq n+1}$
            if and only if $\tau_{\leq n}X = 0$
            (cf.~Proposition \ref{prop:prestable-finlim}.)

        \item The following are equivalent:
            \begin{enumerate}
                \item $\cC$ is limit complete.
                \item $\cC$ is limit and colimit complete.
                \item $\cC$ is (Postnikov) complete in the sense of \cite[Definition C.1.2.12]{SAG}, i.e.~if $\tau_{\leq n}X =0$ for all $n$ then $X =0$,
                    and $\cC = \lim(\cdots \to \tau_{\leq 2}\cC\xto{\tau_{\leq 1}} \tau_{\leq 1}\cC \xto{\tau_{\leq 0}} \tau_{\leq 0}\cC)$.
                \item The $t$-structure on $\SW(\cC)$ is left complete.
            \end{enumerate}
    \end{enumerate}
\end{lemma}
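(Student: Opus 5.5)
The plan is to reduce everything to the stable setting via $\SW(\cC)$, where Proposition \ref{prop:prestable-finlim} gives a $t$-structure on $\SW(\cC)$ with connective part $\cC$. For (1), if $X \in \cC_{\geq n+1}$ then $\tau_{\leq n}X = 0$ by the defining property of $t$-truncations. Conversely, suppose $\tau_{\leq n}X = 0$. The fiber sequence $\tau_{\geq n+1}X \to X \to \tau_{\leq n}X$ in $\SW(\cC)$ then shows $X \simeq \tau_{\geq n+1}X \in \SW(\cC)_{t\geq n+1} = \Sigma^{n+1}\cC = \cC_{\geq n+1}$. So (1) is just unwinding that the truncations on $\cC$ are restrictions of those on $\SW(\cC)$.

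For (2), the goal is to show that (a)--(c) are each equivalent to (d).

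\textbf{(a)$\Leftrightarrow$(d) and (b)$\Leftrightarrow$(d).} By Corollary \ref{cor:sw-complete}, $\cC$ is limit (resp.\ colimit) complete exactly when the connectivity structure $(\SW(\cC),\cC)$ is limit (resp.\ colimit) complete. This connectivity structure is the underlying one of the $t$-structure on $\SW(\cC)$. Lemma \ref{lem:left-t-comp-is-conn-comp} then states precisely that the $t$-structure being left complete, the connectivity structure being limit complete, and it being both limit and colimit complete are all equivalent.

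\textbf{(c)$\Leftrightarrow$(d).} I expect this to be the main bookkeeping step. First, the separation conditions match by (1): $\tau_{\leq n}X = 0$ for all $n$ is equivalent to $X \in \cC_{\geq\infty} = \SW(\cC)_{\geq\infty}$, so separation in (c) is left separation of the $t$-structure. Second, for every $n \geq 0$ we have $\tau_{\leq n}\cC = \cC \cap \SW(\cC)_{\leq n} = \SW(\cC)_{[0,n]}$. The tower in (c) is therefore the tower $\lim_n \SW(\cC)_{[0,n]}$, and convergence of the tower in (c) is left completeness restricted to connective objects, in the sense of \cite[Proposition 1.2.1.17]{HA}.

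It remains to check that left completeness of $\SW(\cC)$ can be tested on $\SW(\cC)_{\geq 0} = \cC$. Every object of $\SW(\cC)$ is a shift $\Sigma^{-k}$ of an object of $\cC$, and the equivalence $\SW(\cC) \to \lim_n \SW(\cC)_{\leq n}$ is compatible with shifts. Hence it suffices to know that $\cC \to \lim_n \tau_{\leq n}\cC$ is an equivalence. Alternatively, one can cite \cite[Proposition C.1.2.13]{SAG}, which identifies completeness of $\cC$ with left completeness of $\SW(\cC)$ when $\cC$ has finite limits. The only care needed is that $\SW(\cC) = \SW(\cC)_{>-\infty}$, so no unbounded objects arise.
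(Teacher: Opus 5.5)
Your proposal is correct. For (1) and for (a)$\Leftrightarrow$(d) it coincides with the paper: both use Corollary \ref{cor:sw-complete} and Lemma \ref{lem:left-t-comp-is-conn-comp}.

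The route to (b) and (c) differs. The paper runs the cycle (b)$\Rightarrow$(a)$\Rightarrow$(d)$\Rightarrow$(c)$\Rightarrow$(b):
\begin{itemize}
\item For (d)$\Rightarrow$(c) it uses the vertically right adjointable square comparing $\SW(\cC)\to\lim_n\SW(\cC)_{t\le n}$ with $\cC\to\lim_n\tau_{\le n}\cC$. Connective covers commute with truncations, so the bottom functor is fully faithful, and it is essentially surjective because the vertical right adjoints are localizations.
\item For (c)$\Rightarrow$(b) it re-runs the argument of Lemma \ref{lem:left-t-comp-is-conn-comp} inside $\lim_n\tau_{\le n}\cC$: truncations of sequential diagrams growing in connectivity are eventually constant.
\end{itemize}

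You instead get (b)$\Leftrightarrow$(d) directly from the full three-way equivalence in that lemma. You then prove (c)$\Leftrightarrow$(d) by comparing the two towers of categories:
\begin{itemize}
\item For (d)$\Rightarrow$(c) you restrict to the full subcategory $\lim_n\SW(\cC)_{[0,n]}\subseteq\lim_n\SW(\cC)_{t\le n}$. The preimage $X$ of a connective tower is connective, since $\tau_{\le -1}X\simeq\tau_{\le -1}X_0=0$.
\item For (c)$\Rightarrow$(d) you use shifting.
\end{itemize}
This makes (c) a formal reformulation of left completeness and avoids redoing the (co)limit construction. Note also that separation in (c) is automatic once $\cC\to\lim_n\tau_{\le n}\cC$ is an equivalence.

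Your route costs one piece of bookkeeping, which you only gesture at with ``no unbounded objects arise''. For essential surjectivity in (c)$\Rightarrow$(d), you need every object of $\lim_n\SW(\cC)_{t\le n}$ to be a shift of an object of $\lim_n\SW(\cC)_{[0,n]}$. This holds, and you should spell out why:
\begin{itemize}
\item In a compatible tower $(X_n)$ we have $X_n\simeq\tau_{\le n}X_{n+1}$, so $\fib(X_{n+1}\to X_n)=\tau_{\ge n+1}X_{n+1}\in\SW(\cC)_{t\ge n+1}$.
\item Hence if $X_0\in\SW(\cC)_{\ge -k}$ with $k\ge 0$, then every $X_n$ lies in $\SW(\cC)_{\ge -k}$.
\item So $\Sigma^k$ of the tower lies in $\lim_n\SW(\cC)_{[0,n+k]}\simeq\lim_n\tau_{\le n}\cC$, by cofinality.
\end{itemize}
Together with the compatibility of the comparison functor with shifts, this gives essential surjectivity, and full faithfulness follows the same way.
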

\begin{proof}
    If $X$ is an object in a stable category
    with $t$-structure, then $\tau_{\leq n}X = 0$ if and only if $X$ is $(n+1)$-$t$-connective. Hence (1) is immediate from Proposition \ref{prop:prestable-finlim}.

    For (2) the implication (b) $\Rightarrow$ (a) is clear.
    Next, (a) implies by Corollary \ref{cor:sw-complete} that also $\SW(\cC)$
    is limit complete. Since here the connectivity structure on $\SW(\cC)$
    comes from a $t$-structure, we get (d) by Lemma \ref{lem:left-t-comp-is-conn-comp}.
    Given left completeness of $\SW(\cC)$,
    we again obtain limit-completeness of $\cC$ and hence that $\cC_{\geq \infty} = 0$, which by (1) shows that $\infty$-connective objects vanish.
    Moreover, for $n \geq 0$, the $t$-truncation $\tau_{\leq n} \colon \SW(\cC)_{t \leq n+1} \to \SW(\cC)_{t \leq n}$
    restricts to the categorical truncation $\tau_{\leq n} \colon \tau_{\leq n+1}\cC \to \tau_{\leq n}\cC$.
    We obtain a commutative square
    \[\begin{tikzcd}[ampersand replacement=\&]
        {\SW(\cC)} \& {\lim_n \SW(\cC)_{t \leq n}} \\
        \cC \& {\lim_n \tau_{\leq n}\cC}
        \arrow["{(\tau_{\leq n})_n}", from=1-1, to=1-2]
        \arrow[hook, from=2-1, to=1-1]
        \arrow["{(\tau_{\leq n})_n}"', from=2-1, to=2-2]
        \arrow[hook, from=2-2, to=1-2]
    \end{tikzcd}\]
    which is vertically right adjointable since truncations
    commute with taking connective covers, see \cite[Proposition 1.2.1.10]{HA}.
    Since the top map in the above square is an equivalence by assumption,
    we see that the bottom map is fully faithful.
    The vertical right adjoints are localizations hence essentially surjective,
    and it follows that the bottom map is also essentially surjective,
    hence an equivalence, giving (c).
    Finally, for (c) $\Rightarrow$ (b) we can argue as in
    Lemma \ref{lem:left-t-comp-is-conn-comp}.
\end{proof}

As a next step, we want to define and construct colimit completions of prestable categories.

\begin{definition}\label{def:colim-comp}
    A right exact functor of prestable categories $\eta \colon \cC \to \lc{\cC}$
    exhibits $\lc{\cC}$ as the colimit completion of $\cC$
    if $\lc{\cC}$ is colimit complete,
    and for any other colimit complete prestable category $\cD$,
    precomposition by $\eta$ induces an equivalence
    \[
        \eta^* \colon \Fun^\rex(\lc{\cC},\cD) \xto{\simeq} \Fun^\rex(\cC,\cD).
    \]
\end{definition}

From the above results it should be clear that a colimit completion of $\cC$
should be a kind of colimit completion of the stable connectivity structure $(\SW(\cC),\cC)$.
In particular, if the latter underlies a weak weight structure,
we can use Theorem \ref{thm:upw-comp} to achieve this via left completion of the weak weight structure.

\begin{observation}\label{obs:sw-weight}
    Let $\SW(\cC)_{w \leq 0} = \lperp{(\cC_{\geq 1})}$
    and $\cC_{w \leq n} \coloneqq \cC \cap \SW(\cC)_{w \leq n}$ for $n \geq 0$.
    Since $\cC \subseteq \SW(\cC)$ is closed under retracts (Lemma \ref{lem:stable-conn-struc}),
    we see that $(\SW(\cC),\cC,\SW(\cC)_{w \leq 0})$ satisfies
    all the axioms of a weak weight structure except the existence of weight decompositions
    (cf.~Definition \ref{def:weak-w}).
\end{observation}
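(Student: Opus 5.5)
The plan is to verify the axioms (1) and (2) of Definition \ref{def:weak-w} one at a time for the triple $(\SW(\cC),\cC,\SW(\cC)_{w \leq 0})$, where $\SW(\cC)_{\geq n} = \Sigma^n\cC$ and $\SW(\cC)_{w\leq n} = \Sigma^n \SW(\cC)_{w \leq 0}$. The only input about $\cC$ is that it is prestable, used through Lemma \ref{lem:stable-conn-struc}(3). Axiom (3), the existence of weight decompositions, is exactly what is not claimed, so nothing needs to be said about it.

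For the connective side, we argue as follows. $\cC \subseteq \SW(\cC)$ is closed under finite colimits and extensions, being prestable and embedded via the finite-colimit-preserving map $\cC \to \SW(\cC)$ (part (b) of the recalled characterization of prestable categories). In particular $\cC$ is closed under suspensions, and it is closed under retracts in $\SW(\cC)$ by Lemma \ref{lem:stable-conn-struc}(3).

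For the coconnective side, write $\lperp{(\cC_{\geq 1})}$ for the objects $X$ such that $\pi_0\map(X,Z)=0$ for all $Z \in \cC_{\geq 1}$.
\begin{enumerate}
\item Closure under retracts is immediate, since a retract of a zero group is zero.
\item For extensions $X' \to X \to X''$, the exact sequence $\pi_0\map(X'',Z) \to \pi_0\map(X,Z) \to \pi_0\map(X',Z)$ shows that $\pi_0\map(X,Z)=0$ whenever the outer terms vanish.
\item For loops, maps $\Omega X \to Z$ correspond to maps $X \to \Sigma Z$. Since $\Sigma Z \in \cC_{\geq 2} \subseteq \cC_{\geq 1}$, these vanish when $X \in \lperp{(\cC_{\geq 1})}$.
\end{enumerate}
By shifting, the same properties hold for every $\SW(\cC)_{w \leq n}$.

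For the orthogonality axiom (2), take $X \in \SW(\cC)_{w \leq 0}$ and $Y \in \cC$. For $k \geq 1$ we have
\[
\pi_{-k}\hom(X,Y) \cong \pi_0\map(X,\Sigma^k Y) = 0,
\]
because $\Sigma^k Y \in \cC_{\geq k} \subseteq \cC_{\geq 1}$. Hence $\hom(X,Y) \in \Sp_{\geq 0}$.

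No step is a genuine obstacle. The only point requiring care is the retract-closure of $\cC$ inside $\SW(\cC)$, which is where prestability enters; this is already supplied by Lemma \ref{lem:stable-conn-struc}.
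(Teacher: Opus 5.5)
Your proposal is correct and follows the paper's (one-line) reasoning: the only non-formal input is the retract-closure of $\cC$ in $\SW(\cC)$ from Lemma \ref{lem:stable-conn-struc}(3). The closure of $\lperp{(\cC_{\geq 1})}$ under loops, extensions and retracts, and the orthogonality axiom, follow formally from its definition as a left orthogonal, exactly as you spell out (compare Lemma \ref{lem:orth-closure}).
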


\begin{example}
    Note that  $\cC_{w \le 0}$ could easily be zero. For example, every stable category is prestable and in this case $\cC_{w \le 0}$ is zero.
    For another example more in the spirit of this section we let $\cC = \Sp^\fg_{t[0,\infty)}$
    be the category of bounded above, connective spectra with finitely generated homotopy groups, then $\cC_{w \le n} = 0$ for every $n$.
    To see this, we will use Anderson duality (cf.~Section \ref{sec:anderson}).
    Recall that $I_\Z$ is a coconnective spectrum with finitely generated homotopy
    groups, and for any $X \in \Sp^\fg$ the canonical map
    $X \to \hom(\hom(X,I_\Z),I_\Z)$ is an equivalence.
    Now if $X \in \cC_{w \leq 0}$, then
    \[
        0
        = \pi_{-1}\hom(X,\Sigma^n\tau_{\geq -n}I_\Z)
        = \pi_{-(n+1)}\hom(X, \tau_{\geq -n}I_\Z)
    \]
    for $n \geq 0$. Since $X$ is connective,
    the last term surjects onto $\pi_{-(n+1)}\hom(X,I_\Z)$,
    and so we get that $\hom(X,I_\Z)$ is connective.
    Then $X \simeq \hom(\hom(X,I_\Z),I_\Z)$ is coconnective,
    hence concentrated in degree 0.
    In fact, using Lemma \ref{lem:sum-iz} we see that $\pi_0X$ must be free,
    so $X = \Z^n$ for some $n \geq 0$.
    But then $\hom(X,\Z)$ cannot be connective unless $n = 0$,
    in which case $X=0$.
\end{example}

\begin{definition}\label{def:weak-pre-weight}
    For $0 \leq \ell \leq \infty$, we say that a prestable category $\cC$ is weakly $\ell$-pre-weighted if
    for each $X \in \cC$ and $n \geq 0$ there exists a cofiber sequence (``weight decomposition'')
    \[
        X_{< n+\ell} \to X \to X_{\geq n}
    \]
    with $X_{< n+\ell} \in \cC_{w< n+\ell}$ and $X_{\geq n} \in \cC_{\geq n}$.
    A functor of weakly preweighted categories is weight exact if it is right exact
    and preserves weight coconnectives.
\end{definition}

\begin{lemma}
    Let $\cC$ be prestable and $0 \leq \ell \leq \infty$. The following are equivalent:
    \begin{enumerate}
        \item $\cC$ is weakly $\ell$-pre-weighted.

        \item $(\SW(\cC),\cC, \SW(\cC)_{w \leq 0})$ is weakly $\ell$-weighted.

        \item There exists a weakly $\ell$-weighted category $\cD$
            and an equivalence $\cC \simeq \cD_{\geq 0}$.
    \end{enumerate}
\end{lemma}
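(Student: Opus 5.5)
The plan is to prove (1)$\Leftrightarrow$(2) by comparing the two notions of weight decomposition, and then (2)$\Leftrightarrow$(3), where (3)$\Rightarrow$(2) is the main point.

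For (1)$\Rightarrow$(2), Observation \ref{obs:sw-weight} already gives every axiom of a weakly $\ell$-weighted category on $(\SW(\cC),\cC,\SW(\cC)_{w\leq 0})$ except the existence of weight decompositions, so only these need to be produced. Let $Y\in\SW(\cC)$. By Lemma \ref{lem:stable-conn-struc}(4) we can write $Y\simeq\Omega^n X$ with $X\in\cC$ and $n\geq 0$. A pre-weight decomposition $X_{<n+\ell}\to X\to X_{\geq n}$ in $\cC$ is also a cofiber sequence in $\SW(\cC)$, because $\cC\subseteq\SW(\cC)$ is right exact and closed under extensions. Applying $\Omega^n$ gives $\Omega^nX_{<n+\ell}\to Y\to \Omega^n X_{\geq n}$. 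Here the first term lies in $\SW(\cC)_{w<\ell}$, since $\SW(\cC)_{w\leq k}$ is defined as a shift of the orthogonal $\lperp{(\cC_{\geq 1})}$ and $\cC_{w<n+\ell}\subseteq\SW(\cC)_{w<n+\ell}$. The last term lies in $\cC$. For $\ell=\infty$ the same argument works verbatim with $\cC_{<\infty}$.

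For (2)$\Rightarrow$(1), let $X\in\cC$ and $n\geq 0$. Take a weight decomposition of $\Omega^nX$ in $\SW(\cC)$ and suspend it $n$ times to get $A\to X\to X_{\geq n}$ with $A\in\SW(\cC)_{w<n+\ell}$ and $X_{\geq n}\in\cC_{\geq n}$. Since $X$ and $X_{\geq n}$ are connective, so is $A=\fib(X\to X_{\geq n})$: it is an extension of $X$ by $\Omega X_{\geq n}\in\cC_{\geq n-1}$. For $n\geq 1$ this places $A$ in $\cC$ directly. For $n=0$ I would first pass to $n=1$ and then compose the maps. Hence $A\in\cC\cap\SW(\cC)_{w<n+\ell}=\cC_{w<n+\ell}$.

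The equivalence (2)$\Leftrightarrow$(3) has an easy direction: (2)$\Rightarrow$(3) follows by taking $\cD=\SW(\cC)$. For (3)$\Rightarrow$(2), which I expect to be the main obstacle, let $\cD$ be weakly $\ell$-weighted with $\cC\simeq\cD_{\geq 0}$. Lemma \ref{lem:stable-conn-struc}(4) identifies $\SW(\cC)\simeq\cD_{>-\infty}$ compatibly with connectives. The coconnectives of $\cD$ may differ from $\lperp{(\cD_{\geq1})}$, so I would first replace them by their left saturation using Lemma \ref{lem:saturation}(1). This keeps $\cD$ weakly $\ell$-weighted with the same connectives, and its coconnectives become $\lperp{(\cD_{\geq 1})}$.

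It remains to restrict to $\cD_{>-\infty}$. For $Y\in\cD_{\geq -k}$, take a weight decomposition $Y_{<\ell}\to Y\to Y_{\geq 0}$ in $\cD$. The first term is the fiber of a map between objects of $\cD_{\geq -k}$, so it lies in $\cD_{\geq -k-1}\subseteq\cD_{>-\infty}$, which gives decompositions inside $\cD_{>-\infty}$. The orthogonal computed in $\cD_{>-\infty}$ is the intersection of $\lperp{(\cD_{\geq1})}$ with $\cD_{>-\infty}$, so it agrees with $\SW(\cC)_{w\leq 0}$. This is the step that needs care, namely that saturating and restricting commute, and that holds because $\cD_{\geq 1}\subseteq\cD_{>-\infty}$.
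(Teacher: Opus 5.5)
Your proof is correct. The parts (1)$\Leftrightarrow$(2) and (2)$\Rightarrow$(3) are the paper's argument: Observation \ref{obs:sw-weight} plus shifting weight decompositions.

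The difference is in how you close the cycle. The paper proves (3)$\Rightarrow$(1) directly. It uses only that $\cC\simeq\cD_{\geq 0}$ restricts to $\cC_{\geq n}\simeq\cD_{\geq n}$. Then a shifted weight decomposition in $\cD$ of an object of $\cD_{\geq 0}$ is already a pre-weight decomposition: for $n\geq 1$ its first term is connective, and it is orthogonal to $\cD_{\geq n+\ell}$. You instead prove (3)$\Rightarrow$(2), passing through $\SW(\cC)\simeq\cD_{>-\infty}$ from Lemma \ref{lem:stable-conn-struc}(4) and restricting decompositions. This is slightly heavier, but it makes the identification of $\SW(\cC)_{w\leq 0}$ with $\lperp{(\cD_{\geq 1})}\cap\cD_{>-\infty}$ explicit, which the paper leaves implicit.

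Two small corrections.
\begin{enumerate}
\item The saturation step via Lemma \ref{lem:saturation} is unnecessary. The orthogonality axiom already gives $\cD_{\leq k}\subseteq\lperp{(\cD_{\geq k+1})}$ for every $k$, so the original decompositions in $\cD$ suffice.
\item In (2)$\Rightarrow$(1), the case $n=0$ needs no argument. Since $\cC_{\geq 0}=\cC$, the sequence $0\to X\to X$ is already a pre-weight decomposition. Your suggestion to pass to $n=1$ and compose would only give a first term in $\cC_{w<1+\ell}$, not $\cC_{w<\ell}$, so you should drop it.
\end{enumerate}
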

\begin{proof}
    The equivalence of (1) and (2) follows from Observation \ref{obs:sw-weight}
    and shifting the weight decompositions.
    Clearly (2) implies (3), and (3) implies (1) because an equivalence $\cC \simeq \cD_{\geq 0}$
    restricts to equivalences $\cC_{\geq n} \simeq \cD_{\geq n}$ for all $n \geq 0$
    (since both sides are the essential images of $\Sigma^n$ on $\cC$ resp.~$\cD_{\geq 0}$).
\end{proof}

\begin{example}\label{ex:prestable-weight-bounded}
    Say that $\cC$ is weight bounded above if for each $X \in \cC$ there is some $n \geq 0$
    so that all maps from $X$ to any $Y \in \cC_{\geq n}$ vanish.
    Equivalently, if $\cC_{w <\infty} = \cC$.
    In this case $\cC$ is weakly $\infty$-preweighted with trivial weight decompositions
    by Remark \ref{rem:bounded-is-weakly-weighted}.

    Another example of similar fashion is the case where $\cC$ is a stable category
    considered as a prestable one. Then $\cC_{w \leq 0} = 0$ and $\cC_{w \geq \infty} = \cC$,
    so we also trivially have weight decompositions.
\end{example}

\begin{observation}
    If $\cC$ is a colimit complete pre-weighted category ($\ell = 0$),
    then by Proposition \ref{prop:weight-pdelta} we have $\cC \simeq \cP^{\Delta^\op}(\cC_{w = 0})$
    for $\cC_{w=0} = \cC_{w \leq 0} = \cC \cap \SW(\cC)_{w \leq 0}$.
\end{observation}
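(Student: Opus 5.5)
The plan is to reduce the statement to Proposition \ref{prop:weight-pdelta}, applied to the stabilization $\SW(\cC)$ equipped with the weight structure coming from $\cC$.

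First, since $\cC$ is pre-weighted with $\ell = 0$, the preceding lemma (equivalence of (1) and (2)) shows that $(\SW(\cC),\cC,\SW(\cC)_{w \leq 0})$ is weakly $0$-weighted. Here $\SW(\cC)_{w\leq 0} = \lperp{(\cC_{\geq 1})}$ as in Observation \ref{obs:sw-weight}. By Remark \ref{rem:saturated}, a weakly $0$-weighted category is simply a weighted category. I will double-check the retract-closure axiom explicitly. The connectives $\cC$ are closed under retracts in $\SW(\cC) = \SW(\cC)_{>-\infty}$ by Lemma \ref{lem:stable-conn-struc}(3). The coconnectives are closed under retracts because they are defined as a left orthogonal. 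So $\SW(\cC)$ carries a genuine weight structure with connective part $\cC$.

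Second, I show that this weight structure is left complete. By Definition \ref{def:weight-left-complete}, left completeness only concerns the underlying connectivity structure $(\SW(\cC),\cC)$. Corollary \ref{cor:sw-complete} says exactly that this connectivity structure is colimit complete if and only if the prestable category $\cC$ is. The hypothesis gives the latter, so $\SW(\cC)$ is a left complete weighted category.

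Third, I apply Proposition \ref{prop:weight-pdelta} to $\SW(\cC)$. It yields an equivalence
\[
\cP^{\Delta^\op}(\SW(\cC)_{w=0}) \xto{\simeq} \SW(\cC)_{w\geq 0} = \cC.
\]
It remains to identify the heart. By definition, $\SW(\cC)_{w=0} = \SW(\cC)_{w\geq 0}\cap \SW(\cC)_{w\leq 0} = \cC \cap \SW(\cC)_{w\leq 0}$, which is the subcategory $\cC_{w\leq 0}$ in the notation of Observation \ref{obs:sw-weight}. We set $\cC_{w=0}$ to be this same subcategory, giving $\cC_{w=0} = \cC_{w \leq 0}$ as claimed.

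I expect no serious obstacle; the argument is bookkeeping once the results above are in hand. The only point needing care is that the weak weight structure really has defect $0$, so that Proposition \ref{prop:weight-pdelta} (stated for genuine weighted categories) applies. This follows from the $\ell=0$ hypothesis together with the saturation supplied by the orthogonal-complement definition of the coconnectives.
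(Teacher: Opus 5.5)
Your proposal is correct and follows the same route the paper intends. The paper states this observation as an immediate consequence of Proposition \ref{prop:weight-pdelta}, and you have filled in the implicit steps correctly: for $\ell=0$, pre-weightedness makes $(\SW(\cC),\cC,\SW(\cC)_{w\le 0})$ a genuine weighted category; Corollary \ref{cor:sw-complete} turns colimit completeness of $\cC$ into left completeness of $\SW(\cC)$; and the weight heart of $\SW(\cC)$ is $\cC\cap\SW(\cC)_{w\le 0}=\cC_{w\le 0}$.
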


The following result is then essentially a consequence of Theorem \ref{thm:upw-comp}.
Given a weakly preweighted $\cC$, we endow $\Ind(\SW(\cC))$ with the weight structure
of Proposition \ref{prop:ind-weight}, which allows us to also talk about (weight) (co)connectivity
in $\Ind(\cC) \subseteq \Ind(\SW(\cC))$.

\begin{proposition}\label{prop:prestable-colim-comp}
    A weakly $\ell$-pre-weighted category $\cC$ admits a colimit completion.
    Concretely,
    \begin{align*}
        \lc{\cC} \coloneqq \ &\{X \in \Ind(\cC_{w< \infty}) \mid \text{$\forall n \geq 0$ there is $c \in \cC_{w<\infty}$ and a map
        $jc \to X$ with $n$-connective cofiber}\}\\
                =\ &\{\indcolim{n} c_n \in \Ind(\cC_{w<\infty}) \mid c_\bullet \colon \N \to \cC_{w < \infty} \text{ growing in connectivity}\}
    \end{align*}
    is the smallest subcategory of $\Ind(\cC_{w<\infty})$ containing
    $\cC_{w < \infty}$ and closed under sequential colimits growing in connectivity.
    Moreover:
    \begin{enumerate}
        \item $\lc{\cC}$ is the connective part of the left weight completion
            of $(\SW(\cC),\cC, \SW(\cC)_{w \leq 0})$.
            In fact, there is an equivalence of categories $\SW(\lc{\cC}) \simeq \lc{\SW(\cC)}$ under $\lc{\cC}$.\footnote{The left completion $\lc{\SW(\cC)}$ seems to generally not be left saturated, even if $\SW(\cC)$ is.
                Hence the equivalence under $\lc{\cC}$ need not be compatible with the coconnectives on both sides.}

        \item The Yoneda embedding $\cC_{w < \infty} \hookrightarrow \lc{\cC}$
            left Kan extends to a right exact functor $\eta \colon \cC \to \lc{\cC}$
            which exhibits $\lc{\cC}$ as the colimit completion of $\cC$.

        \item Let $\Cat_{\prest}$ be the category of prestable categories and right exact functors
            and $\Cat_{\prest}^{\uparrow}$ the full subcategory on the colimit complete ones.
            The above construction defines a partial left adjoint
            \[
                \lc{(-)} \colon \Cat_{\prest}^{\textsf{wpw}} \to \Cat_\prest^{\uparrow}
            \]
            to $\Cat_{\prest}^{\uparrow} \subseteq \Cat_{\prest}$,
            defined on the full subcategory $\Cat_{\prest}^{\textsf{wpw}}$
            of weakly pre-weighted categories.
    \end{enumerate}
\end{proposition}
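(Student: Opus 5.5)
The plan is to reduce everything to Theorem \ref{thm:upw-comp} applied to the weakly $\ell$-weighted category $\cD \coloneqq (\SW(\cC),\cC,\SW(\cC)_{w\leq 0})$, which exists by the preceding lemma, and then pass to connective parts. First I will note that $\cD_{w<\infty}\cap \cD_{\geq 0} = \cC_{w<\infty}$, while $\cD_{<\infty} = \bigcup_n \Sigma^{-n}\cC_{w<\infty}$. Since $\Ind$ commutes with the colimit defining $\SW$ on compact objects, $\Ind(\cC_{w<\infty})$ identifies with a full subcategory of $\Ind(\cD_{<\infty})$; it lies in the weight connectives of Proposition \ref{prop:ind-weight}, because it is closed under colimits and contains the connective generators. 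Under this identification, the displayed descriptions of $\lc{\cC}$ match the connective part $(\lc{\cD})_{\geq 0}$ from Theorem \ref{thm:upw-comp}(1).

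The one point to check is that a sequential diagram in $\cD_{<\infty}$ growing in connectivity whose colimit is connective can be replaced by one landing in $\cC_{w<\infty}$. I will handle this by passing to a tail and using $\lc{\cD}_{\geq 0} = \{\indcolim{n} c_n \mid c_\bullet \colon \N \to \cD_{[0,\infty)}\}$, which holds under the right saturation hypothesis implicit in (4) of Lemma \ref{lem:upw-comp-basic}. Without that hypothesis, I would simply \emph{define} $\lc{\cC}$ as $\langle\cC_{w<\infty}\rangle_{\geq 0}$ from Lemma \ref{lem:upw-comp-basic}(2), which is exactly the stated description. That lemma then gives closure under sequential colimits growing in connectivity and the minimality statement. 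For (1), Lemma \ref{lem:stable-conn-struc}(4) applied to the left complete $\lc{\cD}$ yields $\SW(\lc{\cC}) \simeq \lc{\cD}_{>-\infty}$. To get all of $\lc{\cD}$, I will observe that $\lc{\cD} = \lc{(\cD_{<\infty})}$ and that $\cD = \cD_{>-\infty}$, so every object of $\lc{\cD}$ is bounded below, since its weight complex starts in $\Sigma^{-n}\cC$.

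For (2), let $\cE$ be a colimit complete prestable category. By Corollary \ref{cor:sw-complete}, $\SW(\cE)$ is a colimit complete stable connectivity structure, and right exact functors $\cC\to\cE$ correspond to exact functors $\SW(\cC)\to\SW(\cE)$ of amplitude $\geq 0$. Lemma \ref{lem:weight-comp-lkan} applies to the left dense subcategory $\cD_{<\infty}\subseteq\lc{\cD}$ and to $\cD_{<\infty}\subseteq\cD$, giving
\[
\Fun^\Ex_{\geq 0}(\lc{\cD},\SW(\cE)) \simeq \Fun^\Ex_{\geq 0}(\cD_{<\infty},\SW(\cE)) \simeq \Fun^\Ex_{\geq 0}(\cD,\SW(\cE)).
\]
Here the connectives of $\cD_{<\infty}$ are taken as $\cD_{<\infty}\cap\cC$, which agrees with $\cD_{<\infty}\cap\lc{\cD}_{\geq 0}$ by Theorem \ref{thm:upw-comp}(2c). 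Restricting to connective parts via the universal property of $\SW$ (Lemma \ref{lem:stable-conn-struc}(3)) gives the equivalence $\Fun^\rex(\lc{\cC},\cE)\simeq\Fun^\rex(\cC,\cE)$. I will also check that $\eta$ is the left Kan extension of the Yoneda embedding, which holds because $\eta_\cD$ is one by Theorem \ref{thm:upw-comp}(2a). Part (3) is then the formal pointwise construction of a partial left adjoint from these universal arrows.

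The main obstacle is the comparison in the first step: matching the connective part of $\lc{\cD}\subseteq\Ind(\cD_{<\infty})$ with the stated subcategory of $\Ind(\cC_{w<\infty})$ without assuming saturation. I would resolve it through the $\N$-indexed lifting of Lemma \ref{lem:upw-comp-basic}(1)--(2), working directly with the $\langle-\rangle_{\geq 0}$ construction rather than with an intersection with connectives.
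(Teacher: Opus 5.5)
Your proposal is correct and follows the paper's proof essentially step for step. You apply Theorem \ref{thm:upw-comp} to $(\SW(\cC),\cC,\SW(\cC)_{w\leq 0})$, read off $\lc{\cC}$ as the connective part, get (1) from Lemma \ref{lem:stable-conn-struc}(4) together with the fact that $\lc{\SW(\cC)}$ is still bounded below, and get (2) from the universal property of $\SW$ combined with Lemma \ref{lem:weight-comp-lkan} for the common left dense subcategory $\SW(\cC)_{w<\infty}=\SW(\cC_{w<\infty})$ of $\SW(\cC)$ and $\lc{\SW(\cC)}$, plus two-out-of-three. Your detour through right saturation is unnecessary: Theorem \ref{thm:upw-comp}(1) describes $\lc{\SW(\cC)}_{\geq 0}$ unconditionally as the colimits of sequences in $\SW(\cC)_{[0,\infty)}=\cC_{w<\infty}$ growing in connectivity, because that is how the connectives of $\langle \SW(\cC)_{<\infty}\rangle$ are defined in Lemma \ref{lem:upw-comp-basic}(2), and saturation would only matter for comparing this with the intersection of $\lc{\SW(\cC)}$ with the $\Ind$-connectives, which the argument never uses.
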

\begin{proof}
    We apply Theorem \ref{thm:upw-comp} to the weakly $\ell$-weighted
    category $(\SW(\cC),\cC, \SW(\cC)_{w \leq 0})$
    and recognize $\lc{\cC} = \lc{\SW(\cC)}_{w \geq 0}$,
    which shows the other descriptions of $\lc{\cC}$.
    For (1), note that by Lemma \ref{lem:stable-conn-struc},
    we have an equivalence $\SW(\lc{\cC}) = \SW(\lc{\SW(\cC)}_{\geq 0}) \simeq \lc{\SW(\cC)}_{>-\infty}$
    under $\lc{\cC}$, but since $\lc{\SW(\cC)}$ is still right bounded, the latter agrees with $\lc{\SW(\cC)}$.
    Clearly this equivalence identifies the two embeddings of $\lc{\cC}$.

    Since (3) follows formally from (2), it remains to show the latter.
    As $\lc{\cC}$ is the connective part of a left complete weak weight structure,
    it is colimit complete.
    To show the remaining property of Definition \ref{def:colim-comp},
    let $\cD$ be a colimit complete prestable category.
    We know from Corollary \ref{cor:sw-complete} that $(\SW(\cD),\cD)$ is a colimit complete stable connectivity structure.
    Now there is a commutative square
    \[\begin{tikzcd}[ampersand replacement=\&]
        {\Fun^\Ex_{\geq 0}(\SW(\lc{\cC}), \SW(\cD))} \& {\Fun^\Ex_{\geq 0}(\SW(\cC), \SW(\cD))} \\
        {\Fun^\rex(\lc{\cC}, \cD)} \& {\Fun^\rex(\cC,\cD)}
        \arrow["{\SW(j)^*}", from=1-1, to=1-2]
        \arrow[from=1-1, to=2-1]
        \arrow[from=1-2, to=2-2]
        \arrow["{j^*}"', from=2-1, to=2-2]
    \end{tikzcd}\]
    where the vertical maps restrict along the canonical inclusions $\cC \subseteq \SW(\cC)$
    respectively $\lc{\cC} \subseteq \SW(\lc{\cC})$.
    We claim these vertical restrictions are equivalences.
    More generally, let $\cA$ be prestable and $(\cB,\cB_{\geq 0})$ be a stable connectivity structure.
    By \cite[Proposition C.1.1.7]{SAG} the restriction $\Fun^\Ex(\SW(\cA),\cB) \to \Fun^\rex(\cA,\cB)$
    is an equivalence.
    It is straightforward to verify that this restricts to an equivalence between the full subcategories
    $\Fun^\Ex_{\geq 0}(\SW(\cA),\cB)$ (of functors preserving connectives, i.e.~restricting to $\cA \to \cB_{\geq 0}$)
    and $\Fun^\rex(\cA,\cB_{\geq 0})$.
    In particular, we can pick $\cA = \cC$ and $\cB = (\SW(\cD),\cD)$, which yields the claim.

    So to show (2), it remains to see that also the top horizontal morphism is an equivalence.
    This follows by noting that both $\SW(\cC_{w < \infty}) \subseteq \SW(\cC)$
    and $\SW(\cC_{w < \infty}) \subseteq \SW(\lc{\cC})$ are left dense,
    so we can use Lemma \ref{lem:weight-comp-lkan}
    and a 2-out-of-3 argument as in the proof of Corollary \ref{cor:left-comp-via-embed}.
\end{proof}

By Example \ref{ex:prestable-weight-bounded} the above Proposition gives us a functorial
colimit completion for weight bounded above prestable categories.
In analogy with the weighted case,
one might now be tempted to simply define $\lc{\cC}$ as $\lc{(\cC_{w < \infty})}$
for a general prestable category $\cC$.
Via left Kan extension along the inclusion $\cC_{w<\infty} \subseteq \cC$
the Yoneda embedding then induces a map $\cC \to \Ind(\cC_{w<\infty})$.
However, there is now in general no reason for this to factor through $\lc{(\cC_{w<\infty})}$.
In fact, the natural condition to guarantee this is that every object in $\cC$
admits $\cC_{w < \infty}$-weight complexes, i.e.~that $\SW(\cC_{w < \infty}) \subseteq \SW(\cC)$
is left dense. But this is precisely the condition that $\cC$ is weakly pre-weighted.
The following Corollary makes a version of this statement precise.

\begin{corollary}\label{cor:conn-complete-needs-w}
    For a prestable category $\cC$, the following are equivalent:
    \begin{enumerate}
        \item $\cC$ is colimit complete,
            and the colimit functor $\lc{(\cC_{w<\infty})} \to \cC$
            (left Kan extended from the inclusion $\cC_{w<\infty} \subseteq \cC$) is an equivalence.

        \item $\cC$ is weakly pre-weighted,
            and $\SW(\cC)$ is a left complete weakly weighted category.
    \end{enumerate}
    In this case, the inverse of the colimit
    functor is the restricted weight complex functor
    $w \colon \cC \to \lc{(\cC_{w<\infty})}$.
\end{corollary}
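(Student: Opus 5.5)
The plan is to show (2) $\Rightarrow$ (1) first and then (1) $\Rightarrow$ (2), using Proposition \ref{prop:prestable-colim-comp} and Corollary \ref{cor:sw-complete} as the bridge between prestable and stable statements.

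For (2) $\Rightarrow$ (1), suppose $\cC$ is weakly pre-weighted and $\SW(\cC)$ is left complete. Corollary \ref{cor:sw-complete}, or directly Lemma \ref{lem:conn-comp-prestable}, then shows that $\cC = \SW(\cC)_{\geq 0}$ is a colimit complete prestable category. Next I apply Theorem \ref{thm:upw-comp}(2d), or Lemma \ref{lem:upw-comp-quot}(4), to the left complete weakly weighted category $\SW(\cC)$. This shows that the unit $\eta \colon \SW(\cC) \to \lc{\SW(\cC)}$ is a weight exact equivalence. Restricting to connectives and using Proposition \ref{prop:prestable-colim-comp}(1) gives $\cC \simeq \lc{\cC}$, and this equivalence is the restricted weight complex functor $w$. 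It remains to identify $\lc{\cC}$ with $\lc{(\cC_{w<\infty})}$. The weakly pre-weighted category $\cC_{w<\infty}$ has trivial weight decompositions by Example \ref{ex:prestable-weight-bounded}. Moreover $\SW(\cC_{w<\infty}) = \SW(\cC)_{w<\infty}$, so Theorem \ref{thm:upw-comp}(2) (the equivalence $\lc{(\cC_{<\infty})}\simeq \lc{\cC}$) yields $\lc{(\cC_{w<\infty})} \simeq \lc{\cC}$ compatibly with the Yoneda embeddings. The colimit functor $\lc{(\cC_{w<\infty})} \to \cC$ restricts to the inclusion on $\cC_{w<\infty}$, and $w$ does too, so their composite is the identity on $\cC_{w<\infty}$. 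By the universal property of Proposition \ref{prop:prestable-colim-comp}(2), a right exact functor out of $\lc{(\cC_{w<\infty})}$ into a colimit complete target is determined by its restriction to $\cC_{w<\infty}$. Hence the colimit functor is inverse to $w$.

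For (1) $\Rightarrow$ (2), suppose $\cC$ is colimit complete and $\lc{(\cC_{w<\infty})} \to \cC$ is an equivalence. Every object of $\lc{(\cC_{w<\infty})}$ is $\indcolim{n} c_n$ for some $c_\bullet$ in $\cC_{w<\infty}$ growing in connectivity. The colimit functor preserves such colimits, being right exact between colimit complete prestable categories and hence of bounded below amplitude; here I would use Corollary \ref{cor:bdd-amp-pres} applied to the $\SW$'s via Corollary \ref{cor:sw-complete}. So every $X \in \cC$ is $\colim_n c_n$ with $c_n \in \cC_{w<\infty}$ and $\cofib(c_n \to X) \in \cC_{\geq n}$ for $n$ large. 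Choosing, for each $k$, some $n$ with $c_n \in \cC_{w\leq b}$ gives weight decompositions $c_n \to X \to X/c_n$, which shows that $\cC$ is weakly $\infty$-pre-weighted. Finally, $\SW(\cC)$ is colimit complete as a connectivity structure by Corollary \ref{cor:sw-complete}, and by definition this is exactly left completeness of the weakly weighted category $\SW(\cC)$.

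The main obstacle I expect is the (1) $\Rightarrow$ (2) step where one must know that the colimit functor sends the formal colimit $\indcolim{n} c_n$ to the actual colimit in $\cC$. This needs $\lc{(\cC_{w<\infty})}$ to be colimit complete, which holds by Proposition \ref{prop:prestable-colim-comp}, together with the functor having bounded below amplitude. The amplitude point should follow from the functor preserving connectives, since it restricts to the inclusion on the connective generators, and I will verify it by checking that the left Kan extension preserves $\cC_{\geq n}$.
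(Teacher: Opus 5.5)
Your proposal is correct and follows essentially the paper's route. Both directions rest on the left completion theory for $\SW(\cC)$ together with uniqueness of right exact extensions from $\cC_{w<\infty}$: the paper phrases (2)$\Rightarrow$(1) as both the colimit functor and $w$ being left Kan extended from $\SW(\cC)_{w<\infty}$, and dismisses (1)$\Rightarrow$(2) as transport of structure, which your explicit weight decompositions spell out. The obstacle you anticipate is moot, since under (1) the colimit functor is an equivalence and therefore preserves every colimit that exists, in particular the sequential ones growing in connectivity.
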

\begin{proof}
    Let us first argue that if $\cC$ is colimit complete,
    then the colimit functor exists.

    Since $\cC_{w < \infty} = \cC \cap \SW(\cC)_{w < \infty}$
    we have $\SW(\cC_{w < \infty}) = \SW(\cC)_{w < \infty}$ as full subcategories of $\SW(\cC)$.
    This subcategory is trivially weakly weighted,
    and the inclusion $\SW(\cC_{w < \infty}) \subseteq \lc{\SW(\cC_{w < \infty})}$ is left dense
    by Theorem \ref{thm:upw-comp}.
    Now if $\cC$ is colimit complete, then so is $(\SW(\cC),\cC)$,
    and hence the left Kan extension of $\SW(\cC_{w < \infty}) \to \SW(\cC)$
    along $\SW(\cC_{w < \infty}) \subseteq \lc{\SW(\cC_{w < \infty})}$
    exists by Lemma \ref{lem:weight-comp-lkan},
    which also shows that the resulting functor restricts to
    $\lc{(\cC_{w < \infty})} = \lc{\SW(\cC_{w < \infty})}_{\geq 0} \to \SW(\cC)_{\geq 0} = \cC$.
    It is straightforward to check that this is still left Kan extended from $\cC_{w < \infty}$.
    Hence the colimit functor exists as claimed.

    Now the implication (1) $\Rightarrow$ (2)
    is clear from the assumption that the colimit functor is an equivalence,
    and that it identifies the two evident inclusions of $\cC_{w < \infty}$.

    Conversely, given (2) we know that $\cC$ is colimit complete,
    and as we saw above this yields a colimit functor
    $f \colon \lc{\SW(\cC_{w < \infty})} \to \SW(\cC)$ which is left Kan extended
    from its restriction to $\SW(\cC_{w < \infty}) = \SW(\cC)_{w < \infty}$.
    On the other hand, Theorem \ref{thm:upw-comp}
    also yields a weight complex functor $w \colon \SW(\cC) \to \lc{\SW(\cC_{w < \infty})}$
    which is also left Kan extended from the inclusion $\SW(\cC_{w < \infty}) \subseteq \lc{\SW(\cC_{w < \infty})}$.
    Since both $f$ and $w$ are left Kan extended from $\SW(\cC_{w < \infty})$,
    it follows that they are mutually inverse,
    and by Lemma \ref{lem:weight-comp-lkan} both functors preserve connectives.
    In particular, the restricted colimit functor $\lc{(\cC_{w < \infty})} \to \cC$ is an equivalence,
    showing (1).
\end{proof}

\subsection{Spectrum objects}\label{sec:prestable-spectrum}

Given a prestable category $\cC$, there are generally many stable categories $\cD$ and fully faithful
embeddings $\cC \hookrightarrow \cD$ for which $\cC$ defines a stable connectivity structure on $\cD$.
The minimal choice $\SW(\cC)$ was investigated above,
but as Lurie details in \cite[Remark C.1.2.10]{SAG},
under the additional assumption that $\cC$ admits finite limits,
there is also a ``maximal'', or better, ``terminal'' choice for such a $\cD$;
this is the category
\[
    \Sp(\cC) = \lim( \cdots \xto{\Omega}  \cC \xto{\Omega}  \cC \xto{\Omega} \cC)
\]
of spectrum objects in $\cC$, which we can also identify as the right completion of the $t$-structure on $\SW(\cC)$.

We shall now generalize the construction of spectrum objects to prestable categories which do not necessarily admit finite limits.
More precisely, we give a construction of $\Sp(\cC)$ for an arbitrary prestable category $\cC$
which agrees with the old construction whenever $\cC$ admits finite limits.
To state the definition, we let $\SW(\cC)_{t \leq 0} = \rperp{(\cC_{\geq 1})} \subseteq \SW(\cC)$
and $\cC_{\tau \leq n} \coloneqq \SW(\cC)_{t[0,n]} = \cC \cap \SW(\cC)_{t \leq n}$ for $n \geq 0$.

\begin{observation}
    Since $(\SW(\cC),\cC)$ is a bounded below stable connectivity structure
    and $\cC \subseteq \cC_{\geq 0}$ is closed under retracts,
    it follows from Remark \ref{rem:bounded-is-weak-t}
    that $(\SW(\cC),\cC,\SW(\cC)_{t \leq 0})$ is always a right bounded weak $t$-structure
    with trivial $t$-decompositions.
\end{observation}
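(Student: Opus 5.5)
The claim is that $(\SW(\cC),\cC,\SW(\cC)_{t \leq 0})$ is a right bounded weak $t$-structure with trivial $t$-decompositions. I would reduce it directly to Remark \ref{rem:bounded-is-weak-t}. That remark says: if a triple satisfies axioms (1) and (2) of Definition \ref{def:weak-t} and every object lies in $\cC_{>-\infty}$, then the trivial decompositions $X \xto{\id} X \to 0$ give axiom (3) for any defect, in particular $m = \infty$. So the work is to check the hypotheses of that remark for $\cD = \SW(\cC)$, $\cD_{\geq 0} = \cC$, $\cD_{\leq 0} = \rperp{(\cC_{\geq 1})}$.

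First, boundedness below. By Lemma \ref{lem:stable-conn-struc}(3), $\SW(\cC)$ carries the stable connectivity structure with $\SW(\cC)_{\geq 0} = \cC$. By the construction of $\SW(\cC)$ as a colimit of $\Sigma$, every object is of the form $\Sigma^{-n}X$ with $X \in \cC$. Hence $\SW(\cC) = \SW(\cC)_{>-\infty}$; this is also the case $\cC_{\geq 0} = \cC$ of Lemma \ref{lem:stable-conn-struc}(4).

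Second, axiom (1). The subcategory $\cC$ is closed under suspensions and extensions because it is prestable. It is closed under retracts in $\SW(\cC)$ by Lemma \ref{lem:stable-conn-struc}(3), which we need since retract closure is only guaranteed there inside $\cC_{>-\infty}$, and here that is all of $\SW(\cC)$. The class $\rperp{(\cC_{\geq 1})}$ is automatically closed under loops, extensions and retracts, as the right orthogonal of a $\Sigma$-closed class. For loops: if $Y \in \rperp{(\cC_{\geq 1})}$ and $X \in \cC_{\geq 1}$, then $\pi_0\hom(X,\Omega Y) = \pi_0\hom(\Sigma X, Y) = 0$ because $\Sigma X \in \cC_{\geq 1}$.

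Third, axiom (2). I would read $\SW(\cC)_{t\leq 0}$ as the full subcategory of $Y$ with $\pi_0\hom(X,Y)=0$ for all $X \in \cC_{\geq 1}$, equivalently $\hom(X,Y) \in \Sp_{\leq -1}$ for such $X$, using that $\cC_{\geq 1}$ is $\Sigma$-stable. Then for $X \in \cC$ we have $\Sigma X \in \cC_{\geq 1}$, and
\[
\pi_k\hom(X,Y) = \pi_0\hom(\Sigma^k X, Y) = 0 \quad \text{for } k \geq 1.
\]
So $\hom(X,Y) \in \Sp_{t\leq 0}$, as required.

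With these in hand, Remark \ref{rem:bounded-is-weak-t} gives the conclusion via the trivial decompositions. The structure is right bounded in the sense that $\SW(\cC)_{>-\infty} = \SW(\cC)$. The only point needing care, rather than being a real obstacle, is the retract closure of $\cC$ in the whole of $\SW(\cC)$, which we take from Lemma \ref{lem:stable-conn-struc}(3).
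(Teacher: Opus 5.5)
Your proposal is correct and follows the paper's route: the Observation is exactly an application of Remark \ref{rem:bounded-is-weak-t}, using that $\SW(\cC) = \SW(\cC)_{>-\infty}$ and that $\cC$ is closed under retracts in $\SW(\cC)$ by Lemma \ref{lem:stable-conn-struc}(3). The only difference is that you write out the routine checks of axioms (1) and (2) for $\rperp{(\cC_{\geq 1})}$, which the paper leaves implicit.
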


\begin{lemma}\label{lem:truncated}
    For $X \in \cC$ and $n \geq 0$, the following are equivalent:
    \begin{enumerate}
        \item $\Omega^n X \in \SW(\cC)_{t \leq 0}$.
        \item $X \in \cC_{\tau \leq n}$.
        \item $X$ is $n$-truncated in $\cC$.
    \end{enumerate}
\end{lemma}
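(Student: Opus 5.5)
The plan is to show (1)$\Leftrightarrow$(2) by unwinding definitions and (2)$\Leftrightarrow$(3) by turning mapping spaces in $\cC$ into mapping spectra in $\SW(\cC)$. Recall that $\cC_{\tau \leq n} = \cC \cap \SW(\cC)_{t \leq n}$, where $\SW(\cC)_{t\leq n} = \Sigma^n \SW(\cC)_{t \leq 0}$ is the right orthogonal of $\cC_{\geq n+1} = \Sigma^{n+1}\cC$. For (1)$\Leftrightarrow$(2) I note that $X \in \Sigma^n\SW(\cC)_{t \leq 0}$ holds exactly when $\Omega^n X \in \SW(\cC)_{t\leq 0}$, since $\Sigma$ is an autoequivalence of $\SW(\cC)$. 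Here $\Omega^n X$ is computed in $\SW(\cC)$, and it may not lie in $\cC$.

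For (2)$\Leftrightarrow$(3), fix $X \in \cC$. By definition, $X \in \SW(\cC)_{t\leq n}$ means that $\pi_0\map_{\SW(\cC)}(\Sigma^{n+1}Y, X) = 0$ for all $Y \in \cC$. Equivalently, $\pi_{n+1+k}\hom_{\SW(\cC)}(Y,X)=0$ for all $k \geq 0$ and all $Y \in \cC$; this uses that $\cC$ is closed under $\Sigma$. On the other hand, $X$ being $n$-truncated in $\cC$ means that $\map_\cC(Y,X)$ is $n$-truncated for every $Y \in \cC$. Since $\cC \subseteq \SW(\cC)$ is fully faithful (Lemma~\ref{lem:stable-conn-struc}), $\map_\cC(Y,X) \simeq \Omega^\infty\hom_{\SW(\cC)}(Y,X)$. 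For $j\geq 1$ we then get $\pi_j(\map_\cC(Y,X),f) \cong \pi_j\hom(Y,X)$ at every basepoint $f$, because $\Omega^\infty$ of a spectrum is a grouplike $E_\infty$-space and all its components are equivalent. So $n$-truncatedness is the vanishing of $\pi_j\hom(Y,X)$ for $j \geq n+1$, which matches the condition above. The case $n=0$ fits as well, since $0$-truncated means the higher homotopy groups vanish at all basepoints.

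I expect no serious obstacle. The one point needing care is that everything is computed inside $\SW(\cC)$, because $\cC$ may lack finite limits and loops; the loop identification is legitimate only after passing to $\SW(\cC)$. For (1)$\Leftrightarrow$(2) it is also worth confirming that orthogonality to $\Sigma^{n+1}\cC$ already covers orthogonality to every higher suspension, which holds because $\cC$ is closed under $\Sigma$.
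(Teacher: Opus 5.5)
Your proposal is correct and follows essentially the same route as the paper. In both, (1)$\Leftrightarrow$(2) is a shift of definitions, and (2)$\Leftrightarrow$(3) rewrites membership in $\SW(\cC)_{t\leq n}$ as $\hom(Y,X)\in\Sp_{t\leq n}$ for all $Y\in\cC$ and matches this with $n$-truncatedness of $\map_\cC(Y,X)\simeq\Omega^\infty\hom(Y,X)$. You simply spell out the basepoint and homotopy-group bookkeeping that the paper leaves implicit.
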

\begin{proof}
    The equivalence of the first two is clear by shifting and definition.
    For $X \in \cC$, we have that $X \in \cC_{t \leq n}$ if and only if for all $Y \in \cC$
    we have that $\hom(Y,X) \in \Sp_{t \leq n}$.
    But this is precisely the case if $\map(Y,X) = \tau_{\geq 0}\hom(Y,X) \in \tau_{\leq n}\An$,
    which is the definition of $X$ being $n$-truncated.
\end{proof}

\begin{definition}
    A functor of prestable categories $\cC \to \cD$ is said to have truncation amplitude $\leq n$
    if it sends $\cC_{\tau \leq k}$ to $\cD_{\tau \leq k + n}$ for $k \gg 0$.
    It has bounded truncation amplitude if there is such an $n \geq 0$.
\end{definition}

The following is then an immediate consequence of Lemma \ref{lem:truncated}.

\begin{lemma}
    $f \colon \cC \to \cD$ has truncation amplitude $\leq n$
    if and only if $\SW(f)$ has $t$-amplitude $[0,n]$.
    In particular, if $f$ is even an exact functor of prestable categories admitting finite limits,
    then $\SW(f)$ is $t$-exact.
\end{lemma}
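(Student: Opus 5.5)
The statement is a direct translation through Lemma \ref{lem:truncated}, so the plan is to unwind both amplitude conditions and match them degree by degree. First we recall that $\SW(f)$ is the unique exact extension of the right exact $f$, and that $f$ sends $\cC = \SW(\cC)_{\geq 0}$ into $\cD$. Hence $\SW(f)$ automatically has connectivity amplitude $\geq 0$. The whole content is therefore the comparison of the coconnectivity amplitude of $\SW(f)$ with the truncation amplitude of $f$. We read ``$t$-amplitude $[0,n]$'' for the right bounded weak $t$-structures $(\SW(\cC),\cC,\SW(\cC)_{t \leq 0})$ in the same ``for large enough $k$'' sense as in Definition \ref{def:conn-struc}. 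Under this reading the condition amounts to $\SW(f)$ sending $\SW(\cC)_{t[-k,k]}$ into $\SW(\cD)_{t \leq k+n}$ for $k \gg 0$, after shifting.

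For the forward direction, take $Y \in \SW(\cC)_{t \leq m}$. Since everything is bounded below, $Y \in \SW(\cC)_{t[-a,m]}$ for some $a$, so $\Sigma^a Y \in \cC_{\tau \leq m+a}$. Choosing $a$ large enough that $m+a$ is in the range where the truncation bound holds, we get $f(\Sigma^a Y) \in \cD_{\tau \leq m+a+n}$. Desuspending with the exact functor $\SW(f)$ gives $\SW(f)Y \in \SW(\cD)_{t \leq m+n}$. Lemma \ref{lem:truncated} is used to identify $\cC_{\tau \leq k}$ with the $k$-truncated objects, i.e.\ with $\cC \cap \SW(\cC)_{t \leq k}$. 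For the converse, an object $X \in \cC_{\tau \leq k}$ lies in $\SW(\cC)_{t \leq k}$, so $f(X) = \SW(f)(X) \in \cD \cap \SW(\cD)_{t \leq k+n} = \cD_{\tau \leq k+n}$.

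For the addendum, we note that if $\cC$ and $\cD$ admit finite limits and $f$ is exact, then $f$ commutes with $\Omega$ inside $\cC$. Moreover, $n$-truncatedness is characterized by $\Omega^{n+1}X = 0$ in the prestable category, since truncations exist by Proposition \ref{prop:prestable-finlim}. Thus $f$ preserves $k$-truncated objects, i.e.\ it has truncation amplitude $\leq 0$. By the first part, $\SW(f)$ then has $t$-amplitude $[0,0]$, i.e.\ it is $t$-exact.

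The only delicate point is bookkeeping of the ``for $k \gg 0$'' clause. We need the shift by $a$ to land in the valid range, which is always possible because shifting up only increases $k$. Apart from that, the argument is formal.
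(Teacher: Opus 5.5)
Your argument is correct and is essentially the paper's: the paper just calls the lemma an immediate consequence of Lemma~\ref{lem:truncated}, and you supply the details it leaves implicit, namely the shift bookkeeping (using that every object of $\SW(\cC)$ is bounded below) and, for the addendum, the characterization of $n$-truncatedness by $\Omega^{n+1}_{\cC}X \simeq 0$. One cosmetic point: the ``for $k \gg 0$'' reading of $t$-amplitude is unnecessary, since for the exact functor $\SW(f)$ amplitude $\leq n$ just means $\SW(\cC)_{t\leq 0} \to \SW(\cD)_{t \leq n}$, and your forward direction proves exactly this for all $m$.
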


\begin{definition}\label{def:prestable-spectrum}
    Let $\cC$ be a prestable category.
    We define $\Sp(\cC) \coloneqq \trc{\SW(\cC)}$
    as the right completion of the above weak $t$-structure on $\SW(\cC)$.
\end{definition}

In particular, all the results of Theorem \ref{thm:t-dw-comp} apply.
We will specialize some into the following corollary.
As usual, we equip $\Ind(\SW(\cC))$ with the induced $t$-structure
of Proposition \ref{prop:ind-t}.

\begin{theorem}\label{thm:prestable-spectrum}
    For a prestable $\cC$,
    there is a $t$-exact inclusion $\Sp(\cC) \subseteq \Ind(\SW(\cC))$
    which identifies
    \begin{align*}
        \Sp(\cC) =\ &\{X \in \Ind(\SW(\cC)) \mid \text{$\forall n \geq 0$ there is $c \in \cC$ and a map
        $c \to X$ with $n$-coconnective cofiber}\}\\
        =\ &\{X \in \Ind(\SW(\cC)) \mid X \text{ admits a $\SW(\cC)$-Whitehead tower}\}\\
        =\ &\{\indcolim{n} c_n \in \Ind(\SW(\cC)) \mid c_\bullet \colon \N \to \SW(\cC) \text{ growing in coconnectivity}\}
    \end{align*}
    as the largest stable subcategory of $\Ind(\SW(\cC))$ in which $\SW(\cC)$ is right dense,
    and also the smallest stable subcategory of $\Ind(\SW(\cC))$ containing
    $\cC$ and closed under sequential colimits growing in coconnectivity.
    Moreover:
    \begin{enumerate}
        \item There is a fully faithful, $t$-exact and right dense inclusion $j \colon \SW(\cC) \subseteq \Sp(\cC)$.
            Moreover, the inclusion $\cC \subseteq \Sp(\cC)_{t \geq 0}$ is dense.

        \item The stable connectivity structure $(\Sp(\cC),\cC)$ is colimit complete\footnote{Recall that left completeness of the weak $t$-structure corresponds to \emph{limit} completeness of the underlying connectivity
            structure.}
            if and only if $\cC$ is colimit complete,
            in which case $\cC = \Sp(\cC)_{t \geq 0}$ by idempotent completeness.

        \item For every right complete weak $t$-category $\cD$
            and $0 \leq b < \infty$, restriction induces an equivalence
            \[
                \Fun^\Ex_{[0,b]}(\Sp(\cC), \cD)
                \xto{\simeq} \Fun^\rex_{\tau \leq b}(\cC,\cD_{\geq 0})
            \]
            where $\Fun^\rex_{\tau \leq b} \subseteq \Fun^\rex$
            is the full subcategory on functors of truncation amplitude $\leq b$.

        \item Let $\Cat_\prest^{\tau < \infty}$ denote the category of prestable categories
            and right exact functors of bounded truncation amplitude.
            The above construction defines a functor
            \[
                \Sp(-) \colon \Cat_\prest^{\tau < \infty} \to \wTCatr
            \]
            which fits into a commutative diagram
            \[\begin{tikzcd}[ampersand replacement=\&]
                {\Cat_\prest^{\tau <\infty}} \&\& \wTCatr \\
                {\Cat_\prest^\lex} \& {\Cat^\lex} \& {\Cat^\Ex}
                \arrow["{\Sp(-)}", from=1-1, to=1-3]
                \arrow[from=1-3, to=2-3]
                \arrow[hook', from=2-1, to=1-1]
                \arrow[from=2-1, to=2-2]
                \arrow["{\Sp(-)}", from=2-2, to=2-3]
            \end{tikzcd}\]
            where the unlabeled arrows are the evident forgetful functors / inclusions,
            and the bottom $\Sp(-)$ is the usual spectrum objects construction.
            The analogous statements hold if we restrict to truncation amplitude $\tau \leq 0$,
            which lets us land in $\wTCatr_{=0}$ ($t$-amplitude $[0,0]$).
    \end{enumerate}
\end{theorem}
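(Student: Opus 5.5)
The plan is to read off almost everything from Theorem \ref{thm:t-dw-comp}, applied to the right bounded weak $t$-structure $(\SW(\cC),\cC,\SW(\cC)_{t\leq 0})$ of the Observation preceding Lemma \ref{lem:truncated}. Since this weak $t$-category satisfies $\SW(\cC)_{>-\infty}=\SW(\cC)$, Theorem \ref{thm:t-dw-comp}(1) immediately gives the $t$-exact inclusion $\Sp(\cC)\subseteq\Ind(\SW(\cC))$ together with the three descriptions and the two extremal characterizations. Here $\Ind(\SW(\cC))$ carries the $t$-structure of Proposition \ref{prop:ind-t}, with connectives $\Ind(\cC)$. In the first description we may replace $c\in\SW(\cC)$ by $c\in\cC$: if $c\to X$ has $n$-coconnective cofiber and $c\in\cC_{\geq -k}$, then precomposing with $\Sigma^{-k}$ of a suitable map only changes things within the bounded range. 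More precisely, closure under sequential colimits growing in coconnectivity of the subcategory generated by $\cC$ contains $\Omega^k\cC$ as colimits of trivial towers, since the subcategory is stable. This handles "containing $\cC$". Point (1) is then Theorem \ref{thm:t-dw-comp}(2b),(2c) verbatim, because $\SW(\cC)_{\geq 0}=\cC$.

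For (2), note that $\Sp(\cC)_{t\geq 0}$ is the same as $\Sp(\cC)_{\geq 0}$ for the connectivity structure, and $\cC\subseteq\Sp(\cC)_{t\geq 0}$ is dense. First suppose $\cC$ is colimit complete. By Corollary \ref{cor:prest-idem}, $\cC$ is idempotent complete, so by density $\cC=\Sp(\cC)_{t\geq 0}$. Then $(\Sp(\cC),\Sp(\cC)_{t\geq0})$ has connective part $\cC$, which is colimit complete. To apply Lemma \ref{lem:conn-comp-prestable} I still need that $\cC\subseteq\Sp(\cC)$ preserves sequential colimits growing in connectivity. I expect this to be the main obstacle, and I would handle it as in Corollary \ref{cor:sw-complete}. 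First reduce to testing against $\SW(\cC)$: this uses $\hom$ out of a colimit together with right density, i.e.\ write $Y$ as a colimit of a Whitehead tower and use that $\hom(X,-)$ for $X$ bounded below commutes with these colimits (Example \ref{ex:corep-bdd-t}). Then use boundedness below of $\SW(\cC)$ exactly as in Corollary \ref{cor:sw-complete}. Conversely, if $(\Sp(\cC),\cC)$ is colimit complete, then Lemma \ref{lem:conn-comp-prestable} gives directly that $\cC$ is colimit complete.

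For (3), apply Theorem \ref{thm:t-dw-comp}(3) with $a=0$, which identifies $\Fun^\Ex_{[0,b]}(\Sp(\cC),\cD)$ with $\Fun^\Ex_{[0,b]}(\SW(\cC),\cD)$. Then use \cite[Proposition C.1.1.7]{SAG} in the restricted form from the proof of Proposition \ref{prop:prestable-colim-comp}: exact functors $\SW(\cC)\to\cD$ preserving connectives correspond to right exact functors $\cC\to\cD_{\geq 0}$. Finally, the upper bound $b$ translates via the lemma after Lemma \ref{lem:truncated}: truncation amplitude $\leq b$ corresponds to $t$-amplitude $[0,b]$. One point to check is that the relevant lemma is formulated for target a prestable category, so I would apply it to $\cD_{\geq 0}$, or redo the short argument with Lemma \ref{lem:truncated} directly; coconnectives in $\SW(\cC)$ are shifts of objects of $\cC_{\tau\leq n}$.

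For (4), functoriality comes from $\SW(-)$: a functor of bounded truncation amplitude goes to one of bounded above $t$-amplitude, and then we compose with the left adjoint $\trc{(-)}$ from Theorem \ref{thm:t-dw-comp}(4). For commutativity of the diagram, let $\cC$ admit finite limits. Then $\SW(\cC)$ carries an honest $t$-structure by Proposition \ref{prop:prestable-finlim}, so Corollary \ref{cor:t-comp-compare} identifies $\trc{\SW(\cC)}\simeq\lim_n\SW(\cC)_{\geq -n}\simeq\Sp(\cC)$, naturally in left exact functors (see \cite[Remark C.1.2.10]{SAG}). The truncation amplitude $\tau\leq 0$ version follows in the same way using $t$-exactness.
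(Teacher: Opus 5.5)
You follow the paper's route for the descriptions, (1), (3) and (4): Theorem \ref{thm:t-dw-comp} applied to the right bounded weak $t$-structure on $\SW(\cC)$, the connective-preserving form of \cite[Proposition C.1.1.7]{SAG}, and Corollary \ref{cor:t-comp-compare} with \cite[Remark C.1.2.10]{SAG}.

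\textbf{The gap is in (2)}, at the step you flagged yourself. Write $Y=\colim_k Y_k$ via a $\SW(\cC)$-Whitehead tower. Example \ref{ex:corep-bdd-t} gives $\hom(X,Y)\simeq\colim_k\hom(X,Y_k)$ and $\hom(X_n,Y)\simeq\colim_k\hom(X_n,Y_k)$. Corollary \ref{cor:sw-complete} gives $\hom(X,Y_k)\simeq\lim_n\hom(X_n,Y_k)$. From these facts alone you only obtain
\[
\hom(X,Y)\simeq\colim_k\lim_n\hom(X_n,Y_k)\longrightarrow\lim_n\colim_k\hom(X_n,Y_k)\simeq\lim_n\hom(X_n,Y).
\]
This interchange map is not an equivalence in general, since sequential colimits do not commute with sequential limits in $\Sp$. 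So ``reduce to testing against $\SW(\cC)$'' is not justified by what you cite.

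\textbf{Repairing it with a uniform estimate.} Since every $X_n$ lies in $\cC$, the spectra $\hom(X_n,Y/Y_k)$ lie in $\Sp_{\leq -m_k}$ with $m_k\to\infty$ independently of $n$. Limits preserve $\Sp_{\leq -m_k}$, so the cofiber $\lim_n\hom(X_n,Y/Y_k)$ of $\lim_n\hom(X_n,Y_k)\to\lim_n\hom(X_n,Y)$ lies in $\Sp_{\leq -m_k}$. A sequential colimit of such objects vanishes, which makes the interchange valid.

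\textbf{The paper's simpler route.} It uses only a single stage. Right density gives $C\to Y\to Y_{\leq 0}$ with $C\in\SW(\cC)$ and $Y_{\leq0}\in\Sp(\cC)_{t\leq0}$.
\begin{itemize}
\item For $C$, the claim is Corollary \ref{cor:sw-complete}.
\item For $Y_{\leq 0}$, the functor $\hom(-,Y_{\leq0})$ sends connective objects to coconnective spectra. Hence the fiber $\lim_n\hom(X/X_n,Y_{\leq0})$ of $\hom(X,Y_{\leq0})\to\lim_n\hom(X_n,Y_{\leq0})$ lies in $\bigcap_m\Sp_{\leq -m}=0$.
\end{itemize}

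\textbf{The first description.} Drop your argument for replacing $c\in\SW(\cC)$ by $c\in\cC$; the claim cannot be proved. For $\cC=\Sp^\omega_{\geq 0}$ one has $\Sp(\cC)=\Sp^\omega$. Any $c\to\Sigma^{-1}\mathbb{S}$ with $c$ connective and finite and with bounded-above cofiber would have zero cofiber, since that cofiber is finite. It would then be an equivalence, which is impossible. That description must be read with $c\in\SW(\cC)$, which is exactly what Theorem \ref{thm:t-dw-comp}(1) gives. Your remark about the smallest stable subcategory containing $\cC$ is fine as it stands.
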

\begin{proof}
    The equivalent descriptions and results of (1) are immediate
    from the definition of $\Sp(\cC)$ as right $t$-completion
    and Theorem \ref{thm:t-dw-comp}.

    For (2), note first that if $\cC$ is colimit complete,
    then it is idempotent complete by Corollary \ref{cor:prest-idem} so by (1) we have $\cC = \Sp(\cC)_{\geq 0}$.
    Now by Lemma \ref{lem:conn-comp-prestable}
    and Corollary \ref{cor:sw-complete}
    it remains to show that if $\cC$ is colimit complete,
    then the inclusion $\SW(\cC) \subseteq \Sp(\cC)$ preserves sequential colimits growing in connectivity.
    Since $\SW(\cC)$ is idempotent complete by Corollary \ref{cor:grow-conn-idem},
    we have $\SW(\cC) = \Sp(\cC)_{>-\infty}$ by the right density in (1).
    Now suppose that $X_\bullet \colon \N \to \SW(\cC)$ is a sequential diagram growing in connectivity and $X = \colim_n X_n \in \SW(\cC)$.
    Since any $Y \in \Sp(\cC)$ admits a cofiber sequence $C \to Y \to Y_{\leq 0}$
    where $C \in \SW(\cC)$ and $Y_{\leq 0} \in \Sp(\cC)_{\leq 0}$,
    it suffices to show that the canonical map $\hom(X,Y_{\leq 0}) \to \lim_n \hom(X_n, Y_{\leq 0})$
    is an equivalence for all $Y_{\leq 0} \in \Sp(\cC)_{t \leq 0}$.
    Analogously to Example \ref{ex:corep-bdd-t}
    we see that $\hom(-,Y_{\leq 0})$ sends connective objects to coconnective spectra,
    so by right completeness of $\Sp$ sends the sequential cone growing in connectivity
    $X_\bullet \Rightarrow \const X$ to a limit cone.
    This concludes the proof of (2).

    For (3), we already know from Theorem \ref{thm:t-dw-comp} that restriction along $\SW(\cC) \subseteq \Sp(\cC)$
    induces an equivalence
    \[
        \Fun^\Ex_{[0,b]}(\Sp(\cC),\cD) \xto{\simeq} \Fun^\Ex_{[0,b]}(\SW(\cC), \cD).
    \]
    At the end of the proof of Proposition \ref{prop:prestable-colim-comp}, we saw
    that restricting along $\cC \subseteq \SW(\cC)$ gives an equivalence
    $\Fun^\Ex_{[0,\infty]}(\SW(\cC), \cD) \simeq \Fun^\rex(\cC,\cD_{\geq 0})$.
    It is then straightforward to verify that this further restricts to an equivalence on full subcategories
    of $t$-amplitude $[0,b]$ respectively truncation amplitude $\tau \leq b$:
    \[
        \Fun^\Ex_{[0,b]}(\SW(\cC),\cD) \xto{\simeq} \Fun^\rex_{\tau \leq b}(\cC,\cD_{\geq 0}).
    \]
    By composing, this proves (3).

    Finally, to see (4), we recognize the claimed commutative diagram as the outer edge
    of the following commutative diagram
    \[\begin{tikzcd}[ampersand replacement=\&]
        {\Cat_\prest^{\tau<\infty}} \& \wTCat \& \wTCatr \& {\Cat^\Ex} \\
        {\Cat_\prest^{\lex}} \& \TCat \& \TCatr \& {\Cat^\lex}
        \arrow["\SW"', from=1-1, to=1-2]
        \arrow["{\Sp(-)}", curve={height=-12pt}, from=1-1, to=1-3]
        \arrow["{(-)^{\downarrow t}}"', from=1-2, to=1-3]
        \arrow[from=1-3, to=1-4]
        \arrow[hook', from=2-1, to=1-1]
        \arrow["\SW", from=2-1, to=2-2]
        \arrow[curve={height=18pt}, from=2-1, to=2-4]
        \arrow[hook', from=2-2, to=1-2]
        \arrow["{(-)^{\downarrow t}}", from=2-2, to=2-3]
        \arrow[hook', from=2-3, to=1-3]
        \arrow[from=2-3, to=1-4]
        \arrow["{\Sp(-)}"', from=2-4, to=1-4]
    \end{tikzcd}\]
    where all unlabeled functor are forgetful functors / inclusions.
    Note that $\Cat^\lex_\prest$ is indeed a full subcategory of $\Cat^{\tau < \infty}_{\prest}$
    since equivalences automatically preserve finite limits,
    and exact functors of prestable categories admitting finite limits
    have truncation amplitude $\tau \leq 0$.
    Similarly, since $t$-categories are saturated, the analogue of Lemma \ref{lem:saturated}
    shows that $\TCat$ is actually a full subcategory of $\wTCat$
    (where in the latter we consider exact functors of bounded above $t$-amplitude),
    and analogously for $\TCatr \subseteq \wTCatr$.
    Now commutativity of the left and middle squares as well as the right upper triangle is clear.
    The first two top horizontal functors are our definition of $\Sp(-)$ for prestable categories.

    Now note that by Corollary \ref{cor:t-comp-compare} the functor $\trc{(-)} \colon \TCat \to \TCatr$
    agrees with the usual definition of right $t$-completion of $t$-categories,
    so it follows from \cite[Remark C.1.2.10]{SAG} that $\trc{\SW(-)} \simeq \Sp(-)$
    as functors $\Cat^\lex_\prest \to \Cat^\Ex$, i.e.~that the bottom face of the diagram commutes as well.
\end{proof}

\begin{remark}
    More generally, one could define $\Sp(\cC)$ for any category $\cC$ with finite colimits,
    without assuming $\cC$ is prestable.
    To this end, one first passes to the \emph{prestable envelope} of $\cC$,
    defined as the full subcategory of $\SW(\cC)$ generated under finite colimits and extensions by the essential image of the functor $\cC \to \SW(\cC)$ (which need not be fully faithful).
    One then applies the above construction to this prestable envelope.
\end{remark}

\begin{example}
    It can happen that $\cC$ does not have any truncated objects.
    For example, we can take the category $\cC = \Sp^\omega_{\geq 0}$ of connective, finite spectra.
    Then $\SW(\cC) = \Sp^\omega$, and hence $\cC_{t \leq n} = 0$ for all $n$,
    since non-zero finite spectra are never bounded above in the standard $t$-structure on spectra.\footnote{If $X$ is a finite $t$-bounded above spectrum, then so is $X/p$ for any prime $p$.
        It follows that $X/p$ lies in the localizing subcategory generated by $\F_p$.
        By the chromatic convergence theorem, we have $X/p = \lim_n L_n X/p$.
        But $L_n \F_p = 0$ for all $n$, which gives $X/p = 0$ for all $p$,
        i.e.~that $X$ is rational. But finite spectra have finitely generated homotopy groups,
        so this yields $X =0$.}
    It follows that every sequential diagram $c_\bullet \colon \N \to \SW(\cC) = \Sp^\omega$
    growing in $t$-coconnectivity must be eventually constant, and hence $\Sp(\cC) = \SW(\cC)$.
    Even more extreme, if $\cC$ is already stable,
    then it also doesn't have any truncated objects and we find that $\Sp(\cC) = \SW(\cC) = \cC$.
\end{example}

\begin{example}
    Let $\cC = \Sp^{\fg}_{\geq 0}$ be the category of connective spectra with finitely generated homotopy groups.
    This is the connective part of the standard $t$-structure on spectra restricted to the full subcategory
    $\Sp^\fg$ of spectra with finitely generated homotopy groups.
    In particular, $\cC$ is a prestable category admitting finite limits,
    with $\SW(\cC) = \Sp^{\fg}_{>-\infty}$ and $\Sp(\cC) = \Sp^\fg$.
\end{example}

\begin{warning}
    Even if $\cC$ is pre-weighted,
    so that $(\SW(\cC),\cC)$ is part of a weight structure,
    the connectivity structure $(\Sp(\cC),\cC)$ (or $(\Sp(\cC),\Sp(\cC)_{t \geq 0})$)
    can generally fail to be part of a weak weight structure.
    A concrete counterexample is given by $\cC = \Sp^\fg_{\geq 0}$, see Example \ref{ex:no-weight-on-t-comp}.
\end{warning}

\section{Adjacency of weak weight and weak $t$-structures}\label{sec:adj}

In practice, weight structures often come together with an adjacent $t$-structure,
i.e.~one having the same category of connective objects.
In this section we study the analogous interaction between weak weight- and weak $t$-structures, showing in particular that the existence of an adjacent weak $t$-structure is preserved under left weight completion,
but that conversely weight structures are in general not preserved by right $t$-completion.
We introduce the mixed weight- and $t$-completion process $\cA \mapsto \K(\cA)$ for additive categories $\cA$,
and deduce a reconstruction result (Theorem \ref{thm_recovery} / Theorem \ref{thm:k}) from our previous results.

\begin{definition}\label{def:adj}
    Let $\cC$ be a weakly weighted category. We say that a weak $t$-structure $(\cC_{t \geq 0},\cC_{t \leq 0})$ on $\cC$ is adjacent to the weak weight structure if
    \begin{enumerate}
        \item $\cC_{w \geq 0} = \cC_{t \geq 0}$, in which case we simply denote this by $\cC_{\geq 0}$, and
        \item the weak $t$-structure is left saturated: $\cC_{t \leq 0} = \rperp{(\cC_{\geq 1})}$.
    \end{enumerate}
    Dually, a weak $t$-structure on $\cC$ is co-adjacent if $\cC_{w \leq 0} = \cC_{t \leq 0}$ and $\cC_{t \geq 0} = \lperp{(\cC_{\leq -1})}$.
\end{definition}

\begin{remark}
    If we are considering (non-weak) $t$-structures, condition (2) follows from (1).
\end{remark}

\begin{lemma}\label{lem:adj-t-on-upw-comp}
    Let $\cC$ be a weakly weighted category with adjacent weak $t$-structure
    of defect $m$.
    Then also the weakly weighted category $\wlc{\cC}$
    admits an adjacent weak $t$-structure of defect $m$
    and the map $\eta \colon \cC \to \wlc{\cC}$ is both weight and $t$-exact.
\end{lemma}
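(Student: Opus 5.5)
We define the candidate weak $t$-structure on $\wlc{\cC}$ by taking connectives $\wlc{\cC}_{t \geq 0} \coloneqq \wlc{\cC}_{w\geq 0}$ and coconnectives $\wlc{\cC}_{t\leq 0} \coloneqq \rperp{(\wlc{\cC}_{\geq 1})}$. Adjacency then holds by definition. Axioms (1) and (2) of Definition \ref{def:weak-t} are also immediate: $\wlc{\cC}_{\geq 0}$ is closed under suspensions, extensions and retracts because it is the connective part of a weak weight structure, and the right orthogonal complement is closed under loops, extensions and retracts. What remains is to produce $t$-decompositions of defect $m$, and to check that $\eta$ is $t$-exact.

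For the decompositions, let $X \in \wlc{\cC}$. By Theorem \ref{thm:upw-comp}(2c) we may pick $c \in \cC_{<\infty}$ and a map $\eta c \to X$ whose cofiber $Q$ lies in $\wlc{\cC}_{\geq 1}$. In $\cC$, choose a $t$-decomposition $c_{>-m} \to c \to c_{\leq 0}$. We then form $X_{\leq 0} \coloneqq \eta(c_{\leq 0}) \amalg_{\eta c} X$, so that there is a cofiber sequence $\eta c_{\leq 0} \to X_{\leq 0} \to Q$. The fiber of $X \to X_{\leq 0}$ is the fiber of $\eta c \to \eta c_{\leq 0}$, namely $\eta(c_{>-m}) \in \wlc{\cC}_{>-m}$, since $\eta$ is weight exact. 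This step fails as set up, however: $Q$ is $1$-connective, not coconnective, so $X_{\leq 0}$ is not coconnective. The roles must be swapped, and the correct construction instead absorbs $Q$ into the connective part.

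So we proceed as follows. Write $X = \colim_n \eta c_n$ via a $\cC_{<\infty}$-weight complex, with $\cofib(c_n \to c_{n+1}) \in \cC_{\geq n+1}$. Since $\cC_{\geq 1}$ is the same for the weight and $t$-structures, the cofiber $X/\eta c_0$ lies in $\wlc{\cC}_{\geq 1}$. Take the $t$-decomposition $c_{0,>-m} \to c_0 \to c_{0,\leq 0}$ in $\cC$, and define $X_{>-m}$ as the fiber of the composite $X \to \eta c_0 \amalg \ldots$. Concretely, we let $X_{\leq 0} \coloneqq \cofib(\eta(c_{0,>-m}) \to X)$. This sits in an extension of $\eta c_{0,\leq 0}$ and $X/\eta c_0$, which is again not coconnective.

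The cleaner route is to transport the whole sequence. First take $t$-decompositions $c_{n,>-m} \to c_n \to c_{n,\leq 0}$. Next, use $t$-coconnectivity of $c_{n,\leq 0}$ together with $\cofib(c_n\to c_{n+1}) \in \cC_{\geq n+1}$; when $n \geq m$, the relevant obstruction maps vanish, and this lets us choose compatible maps $c_{n,\leq 0} \to c_{n+1,\leq 0}$ that are eventually equivalences. This is a cellular-approximation argument in the style of the weight-complex lemma. Passing to the colimit in $\Ind(\cC_{<\infty})$ then gives $X_{\leq 0} = \eta c_{N,\leq 0}$ for $N$ large, and a fiber $\colim_n \eta c_{n,>-m}$, which lies in $\wlc{\cC}_{>-m}$ because $\wlc{\cC}_{\geq 0}$ is closed under sequential colimits growing in connectivity. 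Finally, $\eta c_{N,\leq 0}$ lies in $\wlc{\cC}_{t\leq 0}$. To see this, map into it from $Y = \colim \eta d_k$ with $d_k \in \cC_{[1,\infty)}$ (Theorem \ref{thm:upw-comp}(1)). Then $\hom(Y,\eta c) = \lim_k \hom(d_k,c)$ by Theorem \ref{thm:upw-comp}(2b), and each term of this limit is $(-1)$-truncated.

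For $t$-exactness of $\eta$: connectives are preserved by weight exactness. For coconnectives, the same computation of $\hom(Y,\eta c)$ for $c \in \cC_{t\leq 0}$ applies, after reducing to $c\in\cC_{<\infty}$. This reduction uses the truncation $c_{>-m}\to c\to c_{\leq 0}$, noting that $t$-coconnective objects are retracts up to bounded defect. The main obstacle is the compatible choice of the $c_{n,\leq 0}$, since $t$-decompositions are not functorial. I would first attempt it by lifting through the fiber, exactly as in the $\ell=0$ case of the cellular approximation lemma.
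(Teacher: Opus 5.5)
Your argument for $t$-exactness is essentially the paper's: write an object of $\wlc{\cC}_{\geq 1}$ as $\colim_k \eta d_k$ with $d_k\in\cC_{[1,\infty)}$, then use Theorem~\ref{thm:upw-comp}(2b) and the closure of $t$-coconnective spectra under limits. Note that (2b) holds for every target in $\cC$, so the ``reduction to $c\in\cC_{<\infty}$'' is unnecessary. It would not be available anyway, since $t$-coconnective objects need not be weight bounded above.

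\textbf{The gap.} The real gap is the construction of $t$-decompositions, which you yourself leave as the ``main obstacle''. Choosing decompositions $c_{n,>-m}\to c_n\to c_{n,\le 0}$ independently and then asking for compatible maps $c_{n,\le 0}\to c_{n+1,\le 0}$ that are eventually equivalences cannot work.
\begin{itemize}
\item Such a map under $c_n\to c_{n+1}$ exists only if the composite $c_{n,>-m}\to c_n\to c_{n+1}\to c_{n+1,\le 0}$ is null. This is an element of $\pi_0$ of a spectrum that is only $(m-1)$-coconnective, so nothing makes it vanish for large $n$.
\item Nor are the maps forced to become equivalences, since decompositions of defect $m$ are far from unique. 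For a genuine $t$-structure regarded as having defect $1$, both $\tau_{\le 0}c$ and $\tau_{\le -1}c$ are admissible, and there is no map $\tau_{\le -1}c\to\tau_{\le 0}c$ under $c$ once $\pi_0 c\neq 0$.
\item The uniqueness step in the $\ell=0$ cellular approximation lemma uses exactly the orthogonality that is missing here.
\end{itemize}

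\textbf{The missing idea.} You should extend one truncation map instead of comparing several. Choose $\eta c\to X$ with cofiber $Q\in\wlc{\cC}_{\ge 2}$; left density allows any connectivity. Take a single decomposition $c_{>-m}\to c\to c_{\le 0}$. The obstruction to extending $\eta c\to\eta c_{\le 0}$ over $X$ is the composite $\Omega Q\to\eta c\to\eta c_{\le 0}$. It vanishes because $\Omega Q\in\wlc{\cC}_{\ge 1}$ and $\eta c_{\le 0}$ is $t$-coconnective by the $t$-exactness you already proved. The resulting map $X\to\eta c_{\le 0}$ has fiber $F$ sitting in a cofiber sequence $\eta c_{>-m}\to F\to Q$. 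Hence $F\in\wlc{\cC}_{>-m}$, and $F\to X\to\eta c_{\le 0}$ is the required decomposition; this is the paper's proof.

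Your first attempt was one step away from this. With $Q\in\wlc{\cC}_{\ge 2}$ instead of $\wlc{\cC}_{\ge 1}$, the connecting map $Q\to\Sigma\eta c_{\le 0}$ of your pushout is null. So the pushout splits as $\eta c_{\le 0}\oplus Q$, and you should simply project away the $Q$-summand.

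Your sequential version can be repaired the same way. Fix one decomposition of $c_1$ and successively extend $c_n\to c_{1,\le 0}$ along $c_n\to c_{n+1}$. The obstruction now lies in $\pi_0$ of a $(-n)$-coconnective spectrum, so it vanishes for $n\ge 1$.
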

\begin{proof}
    Let $0 \leq \ell \leq \infty$ be so that $\cC$ and hence $\wlc{\cC}$ are weakly $\ell$-weighted.
    Since $\cC \to \wlc{\cC}$ is weight exact and the (putative) weak $t$-structures
    are adjacent, for $t$-exactness it suffices to check that $t$-coconnective
    objects are preserved.
    So given $Y \in \cC_{t \leq 0}$ and $X \in (\wlc{\cC})_{w \geq 0}$,
    we need to show that $\hom_{\wlc{\cC}}(X,\eta Y) \in \Sp_{t \leq 0}$.
    Using the explicit description of $(\wlc{\cC})_{w \geq 0}$ from Theorem \ref{thm:upw-comp}
    and the fact that $\Sp_{t \leq 0}$ is closed under limits,
    we reduce to the case that $X \in \cC_{w[0,n]}$ for some $n < \infty$.
    But then Theorem \ref{thm:upw-comp}(2b) yields that
    \[
        \hom_{\cC}(X,Y) \simeq \hom_{\wlc{\cC}}(\eta X, \eta Y),
    \]
    which is $t$-coconnective since $X \in \cC_{w \geq 0} = \cC_{t \geq 0}$ and $Y \in \cC_{t \leq 0}$.

    It remains to construct a $t$-decomposition for any $X \in \wlc{\cC}$.
    By left density of $\cC_{<\infty}$ in $\wlc{\cC}$
    we find a cofiber sequence $c \to X \to X_{w \geq 2}$
    with $c \in \cC_{<\infty}$ and $X_{w \geq 2} \in (\wlc{\cC})_{w \geq 2}$.
    Picking a $t$-decomposition $c_{w > -m} \to c \to c_{t \leq 0}$,
    we see that the composite $\Omega X_{w \geq 2} \to c_{t \leq 0}$
    vanishes, and hence we can form the diagram of cofiber sequences
    \[\begin{tikzcd}
        {\Omega X_{w \geq 2}} & {c_{w > -m}} & F \\
        {\Omega X_{w \geq 2}} & c & X \\
        0 & {c_{t \leq 0}} & {\eta c_{t \leq 0}}
        \arrow[dashed, from=1-1, to=1-2]
        \arrow[Rightarrow, no head, from=1-1, to=2-1]
        \arrow[from=1-2, to=1-3]
        \arrow[from=1-2, to=2-2]
        \arrow[from=1-3, to=2-3]
        \arrow[from=2-1, to=2-2]
        \arrow[from=2-1, to=3-1]
        \arrow[from=2-2, to=2-3]
        \arrow[from=2-2, to=3-2]
        \arrow[from=2-3, to=3-3]
        \arrow[from=3-1, to=3-2]
        \arrow[Rightarrow, no head, from=3-2, to=3-3]
    \end{tikzcd}\]
    where $F \in (\wlc{\cC})_{w > -m}$,
    and so $F \to X \to \eta c_{t \leq 0}$ is a $t$-decomposition of defect $m$ for $X$.
\end{proof}

Unfortunately, the dual assertion, that the weak weight structure is preserved by right $t$-completion, fails completely, as the following example shows.

\begin{example}\label{ex:no-weight-on-t-comp}
    Let $\Sp^{\fg} \subseteq \Sp$ be the category of spectra with finitely generated homotopy groups,
    and $\Sp^\ft = \Sp^{\fg}_{>-\infty}$ the category of finite type spectra,
    i.e.~bounded below spectra with finitely generated homotopy groups.
    We have previously seen that the standard weight structure on $\Sp$ restricts to $\Sp^{\ft}$
    (and exhibits the latter as the left weight completion of $\Sp^\omega$).
    However, we claim that it does not restrict to a weight structure on $\trc{(\Sp^\ft)} = \Sp^\fg$,
    or even to a weak one.
    Indeed, note that $\KU \in \Sp^\fg$. Suppose we have a weight decomposition
    $A_{w \leq n} \to \KU \to B_{w \geq 0}$ for some $0 \leq n < \infty$.
    Tensoring with $\Z$, we see that $\Z \tensor A_{w \leq n}$ is $n$-coconnective,
    whereas $\Z \tensor B_{w \geq 0} \in \Sp^\fg_{\geq 0}$ is connective and still has finitely generated
    homotopy groups. However, this means $\pi_{2n+2}(\Z \tensor B_{w \geq 0}) = \pi_{2n+2}(\Z \tensor \KU) = \Q$
    is finitely generated, which is absurd.
    It seems that the next best category onto which the weight and $t$-structures restrict
    is $\Sp^{\omega_1}$, the category of spectra with countable homotopy groups.
\end{example}

Nevertheless, we can note that in the above example,
the $t$-structure on $\Sp^\fg$ is so that the bounded below part
$\Sp^\fg_{>-\infty} = \Sp^\ft$ admits a left complete weight structure.
We will see below that this is generally enough to determine the entire category purely from the weight heart
of the bounded below part, see Theorem \ref{thm:k}.
The dual notion of a weakly weighted category
where $\cC_{w<\infty}$ admits an adjacent weak $t$-structure
in fact already guarantees the existence of the adjacent weak $t$-structure on the entire category.

\begin{lemma}\label{lem:adj-t-on-wt-cocon}
    Let $\cC$ be weakly weighted and let $\cC_{t \leq 0} \coloneqq \rperp{(\cC_{\geq 1})}$.
    If there exists some $0 \leq m \leq \infty$ such that every $X \in \cC_{w < \infty}$
    admits an adjacent $t$-decomposition $X_{w > -m} \to X \to X_{t \leq 0}$
    of defect $m$, then every $X \in \cC$ does, so $\cC$ has an adjacent weak $t$-structure of defect $m$.
\end{lemma}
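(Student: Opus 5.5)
The plan is to reduce to the bounded-above case by means of a weight decomposition, and then splice together decompositions exactly as in the proof of Lemma \ref{lem:adj-t-on-upw-comp}. Take an arbitrary $X \in \cC$. Since $\cC$ is weakly weighted, there is a weight decomposition $X_{w<\ell} \to X \to X_{w\geq 0}$ with $X_{w<\ell} \in \cC_{w<\infty}$ (where $\ell \leq \infty$ is the defect). It is convenient to shift first: applying the weight decomposition to $\Sigma^{-1}X$ and then suspending, or simply taking the decomposition for the structure shifted by one, I obtain a cofiber sequence $c \to X \to X_{w\geq 1}$ with $c \in \cC_{w<\infty}$ and $X_{w\geq 1} \in \cC_{\geq 1}$.

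By hypothesis, $c$ admits a decomposition $c_{w>-m} \to c \to c_{t\leq 0}$ with $c_{w>-m}\in\cC_{>-m}$ and $c_{t\leq 0}\in \rperp{(\cC_{\geq 1})}$. The composite $\Omega X_{w\geq 1} \to c \to c_{t\leq 0}$ vanishes, because $\Omega X_{w\geq 1}\in\cC_{\geq 0}$. In the proof of Lemma \ref{lem:adj-t-on-upw-comp} one used $X_{w\geq 2}$, so that $\Omega X_{w\geq2}\in\cC_{\geq1}$ and orthogonality to $c_{t\leq 0}$ applies directly. I will do the same here: I choose the cofiber sequence $c\to X\to X_{w\geq 2}$ by shifting the weight decomposition twice, which is legitimate since $(\cC,\cC_{\geq k},\cC_{\leq k})$ are again weakly weighted. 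Then $\Omega X_{w\geq2}\in\cC_{\geq1}$ maps trivially to $c_{t\leq0}$, so $\Omega X_{w\geq 2}\to c$ lifts to $c_{w>-m}$.

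Now I form the $3\times3$ diagram of cofiber sequences: the top row is $\Omega X_{w\geq2}\to c_{w>-m}\to F$, the middle row is $\Omega X_{w\geq 2}\to c\to X$, and the bottom row is $0\to c_{t\leq0}\xrightarrow{=} c_{t\leq 0}$. The right column then gives a cofiber sequence $F\to X\to c_{t\leq 0}$. Here $F$ is an extension of $c_{w>-m}\in\cC_{>-m}$ and $X_{w\geq2}\in\cC_{\geq2}\subseteq\cC_{>-m}$, hence $F\in\cC_{>-m}$ by extension-closure. For $m=\infty$ this is read as $\cC_{>-\infty}$, which is also extension-closed, since each $\cC_{\geq -n}$ is and the union is filtered. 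This gives the desired $t$-decomposition of defect $m$.

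It remains to check the axioms of an adjacent weak $t$-structure with $\cC_{t\geq0}=\cC_{w\geq0}$ and $\cC_{t\leq0}=\rperp{(\cC_{\geq1})}$. Closure under suspensions, loops, retracts and extensions follows from the weight axioms and from orthogonal complements. Orthogonality holds because $\hom(\cC_{\geq0},\cC_{t\leq0})$ is coconnective: each $\Sigma^kY$ with $k\geq1$ and $Y \in \cC_{\geq 0}$ maps trivially, by definition. Left saturation is built in. The main subtlety, which is mild, is only the index bookkeeping needed so that the vanishing map uses $\cC_{\geq1}$-orthogonality; the shift to $X_{w\geq2}$ handles this.
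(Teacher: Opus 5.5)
Your proof is correct and is essentially the paper's argument. You use a weight decomposition $c\to X\to X_{w\geq 2}$ with $c\in\cC_{w<\infty}$, lift $\Omega X_{w\geq 2}\to c$ through the given $t$-decomposition of $c$, and read off $F\to X\to c_{t\leq 0}$ from the $3\times 3$ diagram.

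Your shift to $X_{w\geq 2}$ is genuinely needed. The paper's proof writes $X_{w\geq 1}$, but maps from $\Omega X_{w\geq 1}\in\cC_{\geq 0}$ into $\cC_{t\leq 0}$ need not vanish: in $\Sp$, the composite $\fib(\S\to\F_2)\to\S\to\tau_{\leq 0}\S$ is the inclusion $2\Z\subseteq\Z$ on $\pi_0$. So delete your intermediate sentence claiming that vanishing for $\Omega X_{w\geq 1}$.
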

\begin{proof}
    For $X \in \cC$ there is some $0 \leq n < \infty$, a weight decomposition
    $X_{w \leq n} \to X \to X_{w \geq 1}$ and a $t$-decomposition
    $(X_{w \leq n})_{w > -m} \to X_{w \leq n} \to (X_{w \leq n})_{t \leq 0}$.
    Since the map
    $\Omega X_{w \geq 1} \to (X_{w \leq n})_{t \leq 0}$
    vanishes, we can build the diagram of cofiber sequences
    \[\begin{tikzcd}
        {\Omega X_{w \geq 1}} & {(X_{w \leq n})_{> -m}} & {X_{>-m}} \\
        {\Omega X_{w \geq 1}} & {X_{w \leq n}} & X \\
        0 & {(X_{w \leq n})_{t \leq 0}} & {(X_{w \leq n})_{t \leq 0}}
        \arrow[dashed, from=1-1, to=1-2]
        \arrow[Rightarrow, no head, from=1-1, to=2-1]
        \arrow[from=1-2, to=1-3]
        \arrow[from=1-2, to=2-2]
        \arrow[from=1-3, to=2-3]
        \arrow[from=2-1, to=2-2]
        \arrow[from=2-1, to=3-1]
        \arrow[from=2-2, to=2-3]
        \arrow[from=2-2, to=3-2]
        \arrow[from=2-3, to=3-3]
        \arrow[from=3-1, to=3-2]
        \arrow[Rightarrow, no head, from=3-2, to=3-3]
    \end{tikzcd}\]
    The right vertical cofiber sequence yields the desired $t$-decomposition of defect $m$ for $X$.
\end{proof}

In the remainder of this section, we investigate stable categories which admit adjacent
weight and weak $t$-structures and are colimit complete for both,
i.e.~left complete for the weight structure and right complete for the (weak) $t$-structure.
One of the canonical examples is the unbounded derived category of a Grothendieck abelian category
with enough projectives, but also examples we have already seen above, such as $\LMod(R)$ for a connective $\E_1$-ring $R$.

\begin{definition}\label{def:k}
    Let $\cA$ be an additive category.
    Since $\wlc{\Stab(\cA)}$ is right bounded, it trivially admits a weak $t$-structure
    by Remark \ref{rem:bounded-is-weak-t}.
    This allows us to define
    \[
        \K(\cA) \coloneqq \wtc{\Stab(\cA)} \coloneqq \trc{(\wlc{\Stab(\cA)})} \simeq \Sp(\cP^{\Delta^\op}(\cA))
    \]
    For the last equivalence,
    we used that $\wlc{\Stab(\cA)} \simeq \SW(\cP^{\Delta^\op}(\cA))$ and the definition
    of $\Sp(-)$ from Definition \ref{def:prestable-spectrum}.
    We also define $\K^\op(\cA) \coloneqq \K(\cA^\op)^\op$.
\end{definition}

\begin{observation}\label{obs:k-basic}
    Note that:
    \begin{enumerate}
        \item $\K(\cA)$ admits a right complete weak $t$-structure,
            and $\K(\cA)_{> -\infty} \simeq \wlc{\Stab(\cA)} \simeq \SW(\cP^{\Delta^\op}(\cA))$
            (cf.~Proposition \ref{prop:weight-pdelta}) admits an adjacent (in that weight- and $t$-connectives agree)
            left complete weight structure with weight heart $\cA^\idem$.

        \item If the left complete weighted category $\K(\cA)_{>-\infty}$
            happens to admit an adjacent $t$-structure, then $\K(\cA)$ agrees with the usual
            right completion of said $t$-structure.
            In fact, in this case it follows that $\cP^{\Delta^\op}(\cA)$ is a prestable category
            admitting finite limits and $\K(\cA) \simeq \Sp(\cP^{\Delta^\op}(\cA))$
            (cf.~\cite[Proposition C.1.2.9, Remark C.1.2.10]{SAG}).

        \item Using the definition of Spectrum objects of a prestable category
            from section \ref{sec:prestable-spectrum}, we can even write
            $\K(\cA) = \Sp(\cP^{\Delta^\op}(\cA))$ in full generality.
    \end{enumerate}
\end{observation}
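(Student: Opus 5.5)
The three points of Observation \ref{obs:k-basic} come straight from unwinding Definition \ref{def:k} against Theorem \ref{thm:t-dw-comp}, Proposition \ref{prop:weight-pdelta} and Corollary \ref{cor:t-comp-compare}; we treat them in order.

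\textbf{Point (1).} By Theorem \ref{thm:t-dw-comp}, $\K(\cA)=\trc{(\wlc{\Stab(\cA)})}$ is right complete for its weak $t$-structure. By Theorem \ref{thm:t-dw-comp}(2b),(2c) together with Corollary \ref{cor:dw-comp-idem}(3), the bounded below part $\K(\cA)_{>-\infty}$ identifies with $(\wlc{\Stab(\cA)})_{>-\infty}$, provided the latter is idempotent complete.
\begin{itemize}
\item By Observation \ref{obs:left-comp}, the bounded below part of a left complete weakly weighted category is idempotent complete. Since $\wlc{\Stab(\cA)}$ is right bounded, $(\wlc{\Stab(\cA)})_{>-\infty}=\wlc{\Stab(\cA)}$.
\item Hence $\K(\cA)_{>-\infty}\simeq \wlc{\Stab(\cA)}\simeq \SW(\cP^{\Delta^\op}(\cA))$, the last equivalence being Proposition \ref{prop:pdelta-add}(2).
\item The weight structure on $\wlc{\Stab(\cA)}$ is left complete with heart $\cA^\idem$, by Proposition \ref{prop:pdelta-add}.
\end{itemize}
It remains to see that weight- and $t$-connectives agree. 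The weak $t$-structure on $\wlc{\Stab(\cA)}$ was defined (Remark \ref{rem:bounded-is-weak-t}) with connectives equal to the weight connectives. Theorem \ref{thm:t-dw-comp}(2c) says $\cC_{\geq 0}\subseteq\rc{\cC}_{\geq 0}$ is dense, and $\cC_{\geq 0}$ is idempotent complete. So under the identification above, $\K(\cA)_{t\geq 0}$ equals $(\wlc{\Stab(\cA)})_{w\geq 0}$.

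\textbf{Point (2).} Suppose $\K(\cA)_{>-\infty}$ carries a genuine $t$-structure whose connectives are these. Then it is saturated, and the weak $t$-structure on $\wlc{\Stab(\cA)}$ agrees with it after left saturation (Remark \ref{rem:t-saturated}). Right completion only depends on the coconnectives up to bounded defect, and by Observation \ref{obs:right-comp} it also does not depend on $\cC_{\geq 0}$. So Corollary \ref{cor:t-comp-compare} identifies $\K(\cA)$ with $\lim_n \cC_{\geq -n}$, the usual right completion.

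By Proposition \ref{prop:prestable-finlim}, the existence of this $t$-structure on $\SW(\cP^{\Delta^\op}(\cA))$ is equivalent to $\cP^{\Delta^\op}(\cA)$ admitting finite limits. Then \cite[Remark C.1.2.10]{SAG} identifies the usual right completion with $\Sp(\cP^{\Delta^\op}(\cA))$.

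\textbf{Point (3)} is immediate from Definition \ref{def:prestable-spectrum}. Using $\wlc{\Stab(\cA)}\simeq\SW(\cP^{\Delta^\op}(\cA))$, we need the weak $t$-structure of Remark \ref{rem:bounded-is-weak-t} to match the one used there, namely connectives $\cP^{\Delta^\op}(\cA)$ and coconnectives the right orthogonal. This holds by construction; if the coconnectives were specified differently, we would pass to the left saturation as in Remark \ref{rem:t-saturated}.

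\textbf{Main obstacle.} The only delicate step is this bookkeeping: checking that the weak $t$-structures in (2) and (3) are defined with compatible (saturated) coconnectives. Observation \ref{obs:right-comp} handles it, since right completeness and right completion do not depend on the connectives.
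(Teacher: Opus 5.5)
Your proposal is correct and follows the paper's route. The paper gives no separate argument; it justifies the observation inline exactly as you do, by unwinding Definition \ref{def:k} against Theorem \ref{thm:t-dw-comp}, Propositions \ref{prop:weight-pdelta} and \ref{prop:pdelta-add}, Corollary \ref{cor:t-comp-compare}, \cite[Proposition C.1.2.9, Remark C.1.2.10]{SAG} and Definition \ref{def:prestable-spectrum}.

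One small correction. Observation \ref{obs:right-comp} only says that right \emph{completeness} is independent of $\cC_{\geq 0}$; the right completion itself is built from $\cC_{>-\infty}$ and does depend on the connectives. You never actually need this, because the weak $t$-structure of Remark \ref{rem:bounded-is-weak-t} on the right bounded $\wlc{\Stab(\cA)}$ already has coconnectives $\rperp{(\cC_{\geq 1})}$. It therefore literally coincides with any adjacent $t$-structure and with the structure used in Definition \ref{def:prestable-spectrum}, so your ``main obstacle'' is not an obstacle.
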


\begin{remark}\label{rem:k-vs-kw}
    Recall the notation $\Kw(\cA)$ from Definition \ref{def:kw}
    and note that
    \[
        \Kw(\cA)_{w > -\infty} \simeq \wlc{\Stab(\cA)} \simeq \K(\cA)_{t>-\infty}
    \]
    or equivalently $\Kw(\cA)_{w \geq 0} \simeq \cP^{\Delta^\op}(\cA) \simeq \K(\cA)_{t \geq 0}$.
    The definitions of both $\Kw$ and $\K$ then let us write
    \[
        \Kw(\cA) \simeq \wrc{(\K(\cA)_{t>-\infty})}
        \quad\text{and}\quad
        \K(\cA) \simeq \trc{(\Kw(\cA)_{w > -\infty})} \simeq \Sp(\Kw(\cA)_{w \geq 0}).
    \]
    In this sense, we can think of $\K(\cA)$ as the right $t$-completion of $\Kw(\cA)$,
    and dually of $\Kw(\cA)$ as the right weight completion of $\K(\cA)$.
    Let us note however that for reasons similar
    to the discussion around Corollary \ref{cor:conn-complete-needs-w}
    there will generally not exist functors $\K(\cA) \to \Kw(\cA)$
    or $\Kw(\cA) \to \K(\cA)$ that are completions.
    For one, $\K(\cA)$ need not even admit a weak weight structure
    as the example of $\cA = \{\bigoplus_\fin \S\}$ from Example \ref{ex:no-weight-on-t-comp} shows
    (note $\K(\cA) = \Sp^\fg$).
    Similarly, $\Kw(\cA)$ need not admit an adjacent weak $t$-structure.
    Nevertheless, we will see in Section \ref{sec:anderson}
    that it can happen that we do have functors in both directions.
\end{remark}

\begin{theorem}\label{thm:k}
    Let $(\cC,\cC_{\geq 0})$ be a stable connectivity structure
    and define $\cC_{w \leq 0} = \lperp{(\cC_{\geq 1})}$
    as well as $\cC_{t \leq 0} = \rperp{(\cC_{\geq 1})}$.
    There is an equivalence $\cC \simeq \K(\cC_{w=0})$
    restricting to $\cC_{\geq 0} \simeq \K(\cC_{w=0})_{\geq 0}$
    if and only if
    \begin{enumerate}
        \item $(\cC,\cC_{\geq 0}, \cC_{t \leq 0})$ is a right complete
            weak $t$-structure,
        \item $(\cC_{>-\infty}, \cC_{\geq 0}, \cC_{>-\infty} \cap \cC_{w \leq 0})$
            is a left complete weight structure.
    \end{enumerate}
\end{theorem}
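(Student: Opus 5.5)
We prove the two implications separately. The ``only if'' direction is a transport of structure; the ``if'' direction needs two recognition steps.

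For the ``only if'' direction, set $\cA = \cC_{w=0}$. By Observation \ref{obs:k-basic}(1), $\K(\cA)$ carries a right complete weak $t$-structure whose connectives are $\K(\cA)_{\geq 0}$. Its bounded below part $\K(\cA)_{>-\infty} \simeq \wlc{\Stab(\cA)}$ carries an adjacent left complete weight structure. We transport both structures along the given equivalence $\cC \simeq \K(\cA)$, which matches connectives. The coconnectives of the transported structures must then be identified with $\cC_{t\leq 0} = \rperp{(\cC_{\geq 1})}$ and $\cC_{>-\infty}\cap\cC_{w\leq 0}$. On the $t$-side, right completion preserves left saturation (the coconnectives of $\trc{(-)}$ are right orthogonal to the connectives; compare Lemma \ref{lem:dw-comp-basic}(2)). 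On the weight side, a genuine weight structure is saturated (Remark \ref{rem:saturated}). Both conditions (1) and (2) follow.

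For the ``if'' direction, first put $\cD \coloneqq \cC_{>-\infty}$ with the left complete weight structure of (2), and let $\cA \coloneqq \cD_{w=0} = \cC_{w=0}$. Since $\cD$ is left complete, Observation \ref{obs:left-comp} shows it is idempotent complete and $\cA$ is idempotent complete. By Theorem \ref{thm:vova}, the inclusion $\cA \subseteq \cD$ induces a weight exact $\Stab(\cA) \to \cD_{\wb}$, which is an equivalence onto the idempotent complete $\cD_{\wb}$. Corollary \ref{cor:left-comp-inverts-idem}(1), or alternatively Theorem \ref{thm_adjoint} restricted to the right bounded case, then gives a weight exact equivalence
\[
\wlc{\Stab(\cA)} \simeq \wlc{(\cD_{\wb})} \simeq \cD,
\]
where the last step uses that $\cD$ is left complete and right bounded (Lemma \ref{lem:upw-comp-eqv}). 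This equivalence preserves connectives, i.e.\ $\cP^{\Delta^\op}(\cA)\simeq \cC_{\geq 0}$, as in Proposition \ref{prop:weight-pdelta}.

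Second, we recognize $\cC$ as the right $t$-completion of $\cD$ via Corollary \ref{cor:right-comp-via-embed}, applied to the inclusion $j\colon \cC_{>-\infty} = \cD \hookrightarrow \cC$. Its hypotheses are checked as follows.
\begin{itemize}
\item[(i)] $\cC$ is right complete by (1).
\item[(ii)] $j$ is fully faithful and $t$-exact, since $\cD$ carries the restricted weak $t$-structure with trivial decompositions (Remark \ref{rem:bounded-is-weak-t}). It is right dense because $(1)$ supplies, for each $X$, $t$-decompositions $X_{>-\infty}\to X\to X_{t\leq 0}$. Shifting these gives maps from $\cC_{>-\infty}$ with cofiber in $\cC_{t\leq -n}$ for every $n$.
\item[(iii)] $\cD_{\geq 0} = \cC_{\geq 0}$, so density of connectives is trivial.
\item[(iv)] $\cD \cap \cC_{t\leq 0}$ is exactly the saturated coconnective part used on $\cD$.
\end{itemize}
Hence $\cC \simeq \trc{\cD} \simeq \trc{(\wlc{\Stab(\cA)})} = \K(\cA)$. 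The equivalence is $t$-exact, hence restricts to $\cC_{\geq 0}\simeq\K(\cA)_{\geq0}$.

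The main obstacle is checking that the weak $t$-structure used in defining $\K(\cA)$ agrees with the one on $\cD$ under the first equivalence. The former is the trivial weak $t$-structure on the right bounded $\wlc{\Stab(\cA)}$ with saturated coconnectives $\rperp{(\,\cdot_{\geq1})}$. Since both structures are determined by their connectives, which match, and both coconnective classes are the right orthogonals, they agree. So the functoriality in Theorem \ref{thm:t-dw-comp}(3) applies and no $t$-amplitude is lost.

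A subtler point is the right density in (ii). The $t$-decomposition in (1) has defect $\infty$, so we only obtain $X_{>-\infty}$ rather than a uniformly bounded piece. This suffices, however, because the definition of right density only asks for some $U\in\cC_{>-\infty}$ with cofiber in $\cC_{t\leq -n}$, and applying the decomposition to $\Sigma^{n}X$ and desuspending provides exactly that.
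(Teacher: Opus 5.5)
Your proposal is correct and follows the paper's proof. The ``only if'' direction is transport of structure, and the ``if'' direction goes through Theorem \ref{thm:vova} to get $\Stab(\cC_{w=0})\simeq\cC_{\wb}$, then left completion to get $\wlc{\Stab(\cC_{w=0})}\simeq\cC_{>-\infty}$, then right $t$-completion to recover $\cC$. The only differences are cosmetic: you apply Corollary \ref{cor:right-comp-via-embed} directly, checking right density from the defect-$\infty$ decompositions, where the paper cites Theorem \ref{thm:t-dw-comp}; and you spell out the saturation and coconnective identifications that the paper leaves implicit.
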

\begin{proof}
    Given an equivalence $f \colon \cC \simeq \K(\cC_{w=0})$
    restricting to an equivalence $\cC_{\geq 0} \simeq \K(\cC_{w=0})_{\geq 0}$,
    we can simply transfer all the structure from $\K(\cC_{w=0})$
    through this equivalence.

    Conversely, suppose that (1) and (2) are satisfied.
    By Theorem \ref{thm:vova} the evident map $\cC_{w=0} \to \cC$
    uniquely extends to a weight exact map $\Stab(\cC_{w=0}) \to \cC$
    which is an equivalence onto $\cC_{wb}$.
    By Theorem \ref{thm:upw-comp},
    this uniquely extends to a weight exact equivalence
    $\wlc{\Stab(\cC_{w=0})} \simeq \wlc{\cC_{wb}} \simeq \cC_{>-\infty}$.
    Finally, viewing $\wlc{\Stab(\cC_{w=0})}$ as
    a trivial weak $t$-category
    via Remark \ref{rem:bounded-is-weak-t} then lets us use Theorem \ref{thm:t-dw-comp}
    to obtain a $t$-exact equivalence $\K(\cC_{w=0}) = \trc{(\wlc{\Stab(\cC_{w=0})})} \simeq \trc{(\cC_{>-\infty})} \simeq \cC$, as desired.
\end{proof}

\begin{example}\label{example_modules}
    By Proposition \ref{prop:lmod-weight}, for a connective $\E_1$-ring $R$
    there is a $t$-exact equivalence
    \[
        \K(\{R\}^\oplus) \xto{\simeq} \LMod(R)
    \]
    which restricts to a weight exact equivalence $\wlc{\Stab(\{R\}^\oplus)} \simeq \SW(\cP^{\Delta^\op}(\{R\}^\oplus)) \simeq \LMod(R)_{>-\infty}$.
\end{example}

\begin{example}
    There is a $t$-exact equivalence $\Sp^\fg \simeq \K(\{\bigoplus_\fin \S\})$
    and similarly $\cD(\Z)^\fg \simeq \K(\{\bigoplus_\fin \Z\})$.
\end{example}

\begin{example}
    We will see in Proposition \ref{prop:anderson-weight} that there is also a $t$-exact equivalence
    \[
        \K^\op(\{I_\Z\}^\oplus) \simeq \Sp
    \]
    restricting to a weight exact equivalence $\wrc{\Stab(\{I_\Z\}^\oplus)} \simeq \Sp_{t < \infty}$
    where $\Sp_{t < \infty} = \Sp_{w' < \infty}$ is equipped with the Anderson weight structure
    from Proposition \ref{prop:anderson-weight}.
\end{example}

Recall from Corollary \ref{cor:d-} that if $\cA$ is an abelian category with enough projectives,
we can understand the bounded below derived category of $\cA$
as the left weight completion of $\Stab(\cA_\proj)$.
Moreover, $\cD^-(\cA)$ admits a left complete and right bounded $t$-structure.
Now if $\cA$ is instead a Grothendieck abelian category,
then there is a model structure on $\Ch(\cA)$ constructed in
\cite[Proposition 1.3.5.3]{HA} with underlying equivalences the quasi-isomorphisms
and whose underlying $\infty$-category is the unbounded derived category $\cD(\cA)$.
The fibrant replacement functor defines a left Bousfield localization $\Ch_\infty(\cA) \to \cD(\cA)$
which precisely inverts the quasi-isomorphisms.
By \cite[Proposition 1.3.5.21]{HA} the derived category $\cD(\cA)$ admits a right complete $t$-structure,
which is generally not left complete,
see e.g.~\cite{Neeman-Derived}.
However, if $\cA$ furthermore admits enough projectives, we have the following result.

\begin{proposition}\label{prop:da-as-k}
    Let $\cA$ be a Grothendieck abelian category with enough projectives.
    Then we have a $t$-exact equivalence $\cD^-(\cA) \simeq \cD(\cA)_{>-\infty}$
    which exhibits $\cD(\cA)$ as the right $t$-completion of $\cD^-(\cA)$.
    In particular, we have an equivalence $\K(\cA_\proj) \simeq \cD(\cA)$,
    and $\cD(\cA)$ is $t$-complete and left weight complete.
\end{proposition}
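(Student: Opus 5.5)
We reduce the statement to Theorem \ref{thm:k}, together with Corollary \ref{cor:t-comp-compare} and Corollary \ref{cor:d-}. We have to verify two things for $\cC = \cD(\cA)$ with the connectives of its standard $t$-structure: that $(\cD(\cA),\cD(\cA)_{\geq 0},\cD(\cA)_{t\leq 0})$ is a right complete $t$-structure, and that $\cD(\cA)_{>-\infty}$ carries a left complete weight structure with these connectives and heart $\cA_\proj$.

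The first point is \cite[Proposition 1.3.5.21]{HA}, and Corollary \ref{cor:t-comp-compare} shows that our notion of right completeness agrees with the classical one.

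For the second point, the key step is to construct a $t$-exact equivalence $\cD^-(\cA) = \Ch^-_\infty(\cA_\proj) \simeq \cD(\cA)_{>-\infty}$.
\begin{enumerate}
\item The inclusion $\Ch^-(\cA_\proj)\subseteq \Ch(\cA)$ induces a functor $\Ch^-_\infty(\cA_\proj) \to \Ch_\infty(\cA) \to \cD(\cA)$, which lands in $\cD(\cA)_{>-\infty}$.
\item This functor is fully faithful. A bounded below complex of projectives $P$ is $K$-projective, so for any complex $Q$ the chain homotopy classes $[P,Q]$ agree with $\Hom_{\cD(\cA)}(P,Q)$. More precisely, the mapping spectra agree, because the mapping complex from $P$ sends quasi-isomorphisms to quasi-isomorphisms. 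I would cite \cite[Proposition 1.3.2.19 and 1.3.5]{HA} or argue directly via the injective/projective model structure comparison.
\item It is essentially surjective onto bounded below objects, because every bounded below complex admits a quasi-isomorphism from a bounded below complex of projectives, using enough projectives (the standard Cartan--Eilenberg style resolution).
\item $t$-exactness follows from the homological description in Corollary \ref{cor:d-}(1).
\end{enumerate}
Transporting the weight structure of Corollary \ref{cor:d-} along this equivalence then gives a left complete weight structure on $\cD(\cA)_{>-\infty}$ adjacent to the $t$-structure, with heart $\cA_\proj$.

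With both points in hand, Theorem \ref{thm:k} yields $\cD(\cA)\simeq \K(\cA_\proj)$. Alternatively, since $\cD(\cA)$ is right $t$-complete, Corollary \ref{cor:right-comp-via-embed} applied to the inclusion $\cD(\cA)_{>-\infty}\subseteq\cD(\cA)$ exhibits $\cD(\cA)$ as the right $t$-completion of $\cD^-(\cA)$. The hypotheses of that corollary hold: the inclusion is right dense, the connectives coincide, and $\cD(\cA)_{>-\infty}$ is left saturated.

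For left $t$-completeness, $\cD^-(\cA)$ is left $t$-complete by Corollary \ref{cor:d-}(1). I would then show that left completeness of $\cD(\cA)$ reduces to this, using that Postnikov towers only involve $\tau_{\leq n}$, and $\tau_{\leq n}$ of an unbounded complex is handled by right completeness. Concretely, I would write $X=\colim_k \tau_{\geq -k}X$ and check that $X\to\lim_n\tau_{\leq n}X$ is an equivalence. This needs the interchange $\lim_n\colim_k$, which holds because each $\tau_{\leq n}$ tower is eventually constant in the relevant degrees on homotopy objects, aided by exactness of products. The main obstacle I expect is exactly this step: Grothendieck categories need not have exact products in general, so I would instead exploit enough projectives, via the existence of the weight structure on $\cD(\cA)$ given by $K$-projective complexes. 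Left weight completeness of $\cD(\cA)$ then follows from Lemma \ref{lem:t-vs-w-comp}(1) once left $t$-completeness is known, or directly from Observation \ref{obs:left-comp} applied to $\K(\cA_\proj)$ via the transported weight structure on $\cD(\cA)$.
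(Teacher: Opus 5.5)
Your argument for the $t$-exact equivalence $\cD^-(\cA)\simeq\cD(\cA)_{>-\infty}$ (this is \cite[Proposition 1.3.5.24]{HA}), for right completeness, and for $\K(\cA_\proj)\simeq\cD(\cA)$ is essentially the paper's. The gap is left $t$-completeness of $\cD(\cA)$. This claim is genuinely nontrivial, since it fails for general Grothendieck abelian categories \cite{Neeman-Derived}, and you do not establish it. The interchange of $\lim_n$ and $\colim_k$ you propose is precisely the content of left completeness for unbounded objects, and you leave it unjustified. The fallback ``exploit enough projectives via the weight structure given by $K$-projective complexes'' is not an argument. Your second route to left weight completeness also fails as stated: Observation \ref{obs:left-comp} passes from $\cC$ to $\cC_{>-\infty}$, not conversely. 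Moreover, the transported weight structure lives only on $\cD(\cA)_{>-\infty}$, and $\K(-)$ need not be left weight complete in general.

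The missing step is formal given what you already have. You have $\cD(\cA)\simeq\trc{\cD^-(\cA)}$ with $\cD^-(\cA)$ left $t$-complete. Right $t$-completion preserves left $t$-completeness; this is the $t$-analogue of Proposition \ref{prop:lr-complete}(1), proved the same way, and it is exactly how the paper concludes.

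Alternatively, your exact-products idea does go through, contrary to your worry. Enough projectives forces products in $\cA$ to be exact: lift a projective cover $P\twoheadrightarrow\prod_i B_i$ along each epimorphism $A_i\twoheadrightarrow B_i$. Hence products in $\cD(\cA)$ are computed termwise, and $\cD(\cA)_{\geq 0}$ is closed under countable products. Then \cite[Proposition 1.2.1.19]{HA} applies, because homology detects zero objects.

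Left weight completeness then follows directly from Lemma \ref{lem:left-t-comp-is-conn-comp}, since it depends only on the connectives (Observation \ref{obs:left-comp}). So you do not need to construct a weight structure on all of $\cD(\cA)$ in order to invoke Lemma \ref{lem:t-vs-w-comp}(1).
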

\begin{proof}
    By the above discussion,
    it only remains to establish the $t$-exact equivalence
    $\cD^-(\cA) \simeq \cD(\cA)_{>-\infty}$.
    The proof is essentially unwinding the definitions of the categories
    and their $t$-structures as given in \cite[Section 1.3.2,1.3.5]{HA}.

    Namely, recall that $\cD(\cA) = N_{dg}(\Ch(\cA)^\circ) \subseteq N_{dg}(\Ch(\cA)) \eqqcolon \Ch_\infty(\cA)$
    where $\Ch(\cA)^\circ \subseteq \Ch(\cA)$ is the full subcategory
    on (bi-)fibrant objects in the model structure constructed
    in \cite[Proposition 1.3.5.3]{HA}.
    The fibrant replacement yields a left Bousfield localization
    $L \colon \Ch_\infty(\cA) \to \cD(\cA)$
    with fully faithful right adjoint $i$.
    In particular, the unit of this adjunction is a pointwise quasi-isomorphism
    (these are the weak equivalences in the model structure).

    Now by \cite[Proposition 1.3.5.18]{HA} there is a $t$-structure
    on $\Ch_\infty(\cA)$ with $\Ch_\infty(\cA)_{t \geq 0}$
    consisting of those complexes with homology concentrated in non-negative
    degrees, and dually for $\Ch_\infty(\cA)_{t \leq 0}$.
    Since the unit of the adjunction $L \dashv i$
    is a pointwise quasi-isomorphism,
    it follows that the composite $iL$ is $t$-exact.

    The $t$-structure on $\cD(\cA)$ is descended from $\Ch_\infty(\cA)$
    in the sense that $X \in \cD(\cA)$ is (co)connective
    if and only if $iX$ is (co)connective.
    By the above, it then follows that both $L$ and $i$ are $t$-exact.
    Moreover, the induced $t$-structure on $\cD(\cA)$
    is right complete by \cite[Proposition 1.3.5.21]{HA}.

    We also have an obvious inclusion
    $j \colon \cD^-(\cA) \coloneqq \Ch_\infty^-(\cA_\proj) \subseteq \Ch_\infty(\cA)$, and the $t$-structure on $\cD^-(\cA)$ is defined analogously as for $\Ch_\infty(\cA)$
    so that $j$ is $t$-exact, and it is left complete
    by \cite[Proposition 1.3.3.16]{HA}.

    Finally, \cite[Proposition 1.3.5.24]{HA} shows
    that the composite $Lj \colon \cD^-(\cA) \to \cD(\cA)$
    is fully faithful with image $\cD(\cA)_{>-\infty}$.
    By the above, it is also $t$-exact,
    and thus induces a $t$-exact equivalence
    $\cD^-(\cA) \simeq \cD(\cA)_{>-\infty}$.
    Since $\cD(\cA)$ is right complete, we get
    \[
        \cD(\cA)
        \simeq \trc{(\cD(\cA)_{>-\infty})}
        \simeq \trc{\cD^-(\cA)}
    \]
    so that $\cD(\cA)$ is the right $t$-completion of a left $t$-complete category,
    so both left and right $t$-complete.
\end{proof}

\begin{corollary}\label{abelian}
    Let $\cA$ be a Grothendieck abelian category with enough projectives.
    Then
    \[
        \Kw(\cA_\proj) \simeq \Ch_\infty(\cA_\proj)
        \quad\text{and}\quad
        \K(\cA_\proj) \simeq \cD(\cA)
    \]
\end{corollary}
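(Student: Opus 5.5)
The corollary has two halves. Both should follow by combining results already in hand: Theorem \ref{thm:chains} handles $\Kw$, and Proposition \ref{prop:da-as-k} handles $\K$. In each case the main task is to check that the additive category fed into the construction is really $\cA_\proj$ and that idempotent completion causes no change.

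For the first equivalence, apply Theorem \ref{thm:chains}(5) to the ordinary additive category $\cA_\proj$. This gives a weight exact equivalence $\Kw(\cA_\proj)\simeq \Ch_\infty(\cA_\proj)$ directly. The only point to record is that $\cA_\proj$ is idempotent complete, because $\cA$ is abelian and a retract of a projective is projective. Hence the heart $(\cA_\proj)^\idem$ appearing in Theorem \ref{thm:chains}(1) equals $\cA_\proj$, and nothing is lost in identifying the weight hearts. By Theorem \ref{thm_adjoint}(2), $\Kw$ depends only on the idempotent completion, so this step is harmless either way.

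For the second equivalence, I would use Proposition \ref{prop:da-as-k}, which already states $\K(\cA_\proj)\simeq\cD(\cA)$. Spelled out, the argument runs as follows. Corollary \ref{cor:d-}(5) gives a weight exact equivalence $\cD^-(\cA)\simeq \wlc{\Stab(\cA_\proj)}$. The $t$-structure on $\cD^-(\cA)$ is adjacent to this weight structure by Corollary \ref{cor:d-}(1), so the trivial weak $t$-structure of Remark \ref{rem:bounded-is-weak-t} on the right bounded category $\wlc{\Stab(\cA_\proj)}$ matches the genuine $t$-structure on $\cD^-(\cA)$. It has the same connectives and is left saturated on both sides, hence has the same coconnectives. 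Right completing and using Definition \ref{def:k} then gives
\[
\K(\cA_\proj)=\trc{(\wlc{\Stab(\cA_\proj)})}\simeq \trc{\cD^-(\cA)}\simeq \cD(\cA).
\]
Here the last step is Proposition \ref{prop:da-as-k}, and by Corollary \ref{cor:t-comp-compare} our right completion agrees with the classical one.

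Alternatively, one can verify hypotheses (1) and (2) of Theorem \ref{thm:k} for $\cC=\cD(\cA)$. Right $t$-completeness comes from \cite[Proposition 1.3.5.21]{HA}. Left weight completeness of $\cD(\cA)_{>-\infty}\simeq\cD^-(\cA)$ comes from Corollary \ref{cor:d-}(4). Its weight heart is $\cA_\proj$ by Corollary \ref{cor:d-}(3).

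The only step needing care is this matching of the trivial weak $t$-structure on $\wlc{\Stab(\cA_\proj)}$ with the actual $t$-structure on $\cD^-(\cA)$, that is, that the coconnectives agree. Left saturation on both sides, $\cC_{t\le0}=\rperp{(\cC_{\geq1})}$, settles it, so I expect no real obstacle. Everything else is bookkeeping with already-established equivalences.
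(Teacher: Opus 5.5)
Your proposal is correct and matches the paper: the corollary is stated there without separate proof because it is just Theorem~\ref{thm:chains}(5) applied to the ordinary additive category $\cA_\proj$, combined with Proposition~\ref{prop:da-as-k}. The proof of that proposition is your chain $\K(\cA_\proj)\simeq \cD^-(\cA)$ right-completed $\simeq \cD(\cA)$, via Corollary~\ref{cor:d-} and the $t$-exact equivalence $\cD^-(\cA)\simeq\cD(\cA)_{>-\infty}$. Your extra checks (idempotent completeness of $\cA_\proj$, matching coconnectives via left saturation) are harmless but not needed, since Theorem~\ref{thm:chains}(5) holds for any ordinary additive category.
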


\section{Weight structures on Spectra}\label{sec:anderson}

We saw in Example \ref{spectra} that the standard weight structure on Spectra is left weight complete.
One of the main results of this section is the identification of its (right) weight completion as modules over the Steenrod algebra $A$.
In fact, we will see in Proposition \ref{prop:right-comp-steenrod}
that right weight completion lets us pass from $\Sp$ to $\LMod(A)$,
whereas right $t$-completion lets us pass back from $\LMod(A)$ to $\Sp$.
Along the way, it will be convenient to introduce another more exotic weight structure on spectra
which is, in a suitable sense, dual to the standard one.
It is $\omega_1$-compactly generated by the single object $\Z$ in the sense of Theorem \ref{thm:gen-weight}.
We call it the Anderson weight structure because its weight heart identifies with the full subcategory $\{I_\Z\}^\oplus \subseteq \Sp$ on arbitrary sums of the Anderson dual spectrum $I_\Z$.
Let us begin by recalling the construction of $I_{\mathbb{Z}}$ and its relevant properties.

\begin{construction}
    If $D$ is a divisible abelian group
    then by the Brown Representability Theorem there exists a spectrum $I_D \in \Sp$
    so that we have natural isomorphisms $\pi_n\hom(X,I_D) = \Hom(\pi_{-n}X,D)$ for all $n \in \Z$.
    Moreover, this is weakly functorial in that a map of divisible abelian
    groups $f \colon D \to D'$ lifts to a map of spectra $F \colon I_D \to I_{D'}$
    so that the map $\pi_n\hom(-,F)$
    corresponds to $\Hom(\pi_{-n}(-),f)$ under the above natural isomorphisms, for any $n \in \Z$.

    Despite its notation, the Anderson dual spectrum $I_\Z$ does not arise via this construction, as $\Z$ is not divisible.
    Instead, the above yields a map of spectra $I_\Q \to I_{\Q/\Z}$ which on $\pi_0$ is the canonical quotient map $\Q \to \Q/\Z$,
    and we define $I_\Z \coloneqq \fib(I_\Q \to I_{\Q/\Z})$.
    Here $I_{\Q/\Z}$ is the Brown-Comenetz dual spectrum introduced in \cite{Brown-Comenetz}
    and is infamous for its many curious properties.
    On the other hand, $I_\Q$ is simply the Eilenberg MacLane spectrum of $\Q$.
    The Anderson dual spectrum was introduced in \cite{Anderson} to deduce a universal coefficient theorem
    for complex $K$-theory.
\end{construction}

We will also need the following Lemma.

\begin{lemma}\label{lem:sum-iz}
    Let $J$ be a set and $n \in \Z$.
    There is a natural short exact sequence of abelian groups
    \[
        0 \to \Ext^1(\pi_{-(n+1)}X, \bigoplus_J \Z)
        \to \pi_n\hom(X, \bigoplus_J I_\Z)
        \to \Hom(\pi_{-n}X, \bigoplus_J \Z)
        \to 0
    \]
    which splits unnaturally. Moreover:
    \begin{enumerate}
        \item Under the isomorphism $\pi_n\hom(X,\bigoplus_J I_\Z) \cong \pi_0\hom(X, \Omega^n\bigoplus_J I_\Z)$, the second map takes $\pi_{-n}$.

        \item Let $\{\S\}^\oplus,\{I_\Z\}^\oplus \subseteq \Sp$ denote the full subcategories of $\Sp$
            on sums of spheres respectively sums of $I_\Z$'s.
            Then the adjunction $I_\Z \tensor - \dashv \hom(I_\Z,-)$ restricts to an adjoint equivalence of categories
            $\Thick(\{\S\}^\oplus) \simeq \Thick(\{I_\Z\}^\oplus)$.
    \end{enumerate}
\end{lemma}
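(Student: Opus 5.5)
The plan is to derive everything from the defining fiber sequence $I_\Z \to I_\Q \to I_{\Q/\Z}$, after first checking that arbitrary sums interact well with it. Since $\Sp$ is stable, sums commute with fibers, so $\bigoplus_J I_\Z \simeq \fib(\bigoplus_J I_\Q \to \bigoplus_J I_{\Q/\Z})$.

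\textbf{Step 1: sums of Brown representing spectra.} I would show that $\bigoplus_J I_\Q \simeq I_{\bigoplus_J\Q}$ and $\bigoplus_J I_{\Q/\Z}\simeq I_{\bigoplus_J \Q/\Z}$, compatibly with the map. The first is clear because $I_\Q = H\Q$, $\bigoplus_J H\Q = H(\bigoplus_J \Q)$, and maps into rational Eilenberg--MacLane spectra are computed by $\Hom(\pi_{-n}X,-)$.

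For $\Q/\Z$ there is a canonical map $\bigoplus_J I_{\Q/\Z}\to I_{\bigoplus_J\Q/\Z}$ induced by the inclusions. To see that it is an equivalence on homotopy groups, I would test against the sphere: $\pi_n$ of the target is $\Hom(\pi_{-n}\S,\bigoplus_J\Q/\Z)$, and since $\pi_{-n}\S$ is finitely generated this equals $\bigoplus_J\Hom(\pi_{-n}\S,\Q/\Z)$, which is $\pi_n$ of the source. This finite-generation point is the key input and the place where some care is needed.

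\textbf{Step 2: the long exact sequence.} With $D=\bigoplus_J\Q$, $D'=\bigoplus_J\Q/\Z$ and $\bigoplus_J\Z = \ker(D\to D')$, the long exact sequence of $\pi_*\hom(X,-)$ applied to the fiber sequence reads
\[
\Hom(\pi_{-(n+1)}X,D)\to\Hom(\pi_{-(n+1)}X,D')\to\pi_n\hom(X,\textstyle\bigoplus_J I_\Z)\to\Hom(\pi_{-n}X,D)\to\Hom(\pi_{-n}X,D').
\]
Because $0\to\bigoplus_J\Z\to D\to D'\to0$ is an injective resolution, the cokernel of the first map is $\Ext^1(\pi_{-(n+1)}X,\bigoplus_J\Z)$ and the kernel of the last map is $\Hom(\pi_{-n}X,\bigoplus_J\Z)$. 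This yields the natural short exact sequence.

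To identify the middle map, I would use the weak functoriality in the Construction, checking it via naturality in $X$. One has to verify that the boundary in the long exact sequence matches the connecting map of $\Ext$; I expect this to be routine. Point (1) then follows: the map $\pi_n\hom(X,I)\to\Hom(\pi_{-n}X,\bigoplus\Z)$ is literally ``apply $\pi_{-n}$'', using $\pi_{-n}\Omega^n\bigoplus I_\Z = \pi_0 = \bigoplus\Z$.

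\textbf{Step 3: the splitting.} The splitting is unnatural. I would obtain it by writing $X$ in terms of its homotopy, which is the standard Anderson universal coefficient argument. A cleaner route is to note that $\bigoplus_J\Z$ has injective dimension $1$, so the extension of $\Hom$ by $\Ext^1$ arises from a two-stage filtration. Splitting then reduces to choosing a map $\tau_{\ge -n}X\to$ and using freeness of subgroups of free groups: the $\Hom$ term has image in a free group only when restricted suitably. Concretely, I would use that any hom $\pi_{-n}X\to\bigoplus\Z$ factors through a free quotient, which can be realized by a map from a sum of spheres. I am least sure about this step.

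\textbf{Step 4: point (2).} For (2), I first compute $\hom(I_\Z,\bigoplus_J I_\Z)$ using the sequence. Since $\pi_* I_\Z = \Z$ concentrated in degree $0$ (by the sequence with $X=\S$ and $\pi_*\S$ finite in positive degrees, all $\Ext$ terms of the torsion groups contribute in degree $\leq0$), we have $\pi_*I_\Z=\Z$ in degree $0$. Then the sequence gives $\hom(I_\Z,\bigoplus_J I_\Z)\simeq\bigoplus_J\S$.

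Dually, $I_\Z\otimes\bigoplus_J\S = \bigoplus_J I_\Z$. The unit $\S\to\hom(I_\Z,I_\Z\otimes\S)$ is thus an equivalence on sums of spheres, and the counit is an equivalence on sums of $I_\Z$. Both functors are exact, so they restrict to mutually inverse equivalences between the thick closures.
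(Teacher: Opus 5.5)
Your Steps 1 and 2 are essentially the paper's argument. You identify $\bigoplus_J I_\Q$ and $\bigoplus_J I_{\Q/\Z}$ with $I_{\bigoplus_J \Q}$ and $I_{\bigoplus_J \Q/\Z}$ using finite generation of the test object's homotopy; the paper tests with $\pi_0\hom(X,-)$ for finite $X$, you test with $\pi_n$ of the sphere, which amounts to the same thing. You then run the long exact sequence for the injective resolution $\bigoplus_J\Z\to\bigoplus_J\Q\to\bigoplus_J\Q/\Z$.

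Step 3 is not yet an argument. A map from a sum of spheres \emph{into} $X$ does not produce an element of $\pi_n\hom(X,\bigoplus_J I_\Z)$. Choosing a lift for each homomorphism separately also does not give an additive section. The paper does not reprove the splitting either; it defers to the classical Anderson universal coefficient theorem.

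The genuine gap is in Step 4. Your claim that $\pi_*I_\Z$ is $\Z$ concentrated in degree $0$ is false. The defining fiber sequence gives $\pi_{-k}I_\Z\cong\Ext^1(\pi_{k-1}\S,\Z)$ for $k\geq 2$, so for example $\pi_{-2}I_\Z\cong\Z/2$; only $\tau_{\geq -1}I_\Z\simeq\Z$ holds. The deduction is also internally inconsistent: if $I_\Z$ were $H\Z$, your exact sequence would give $\hom(I_\Z,\bigoplus_J I_\Z)\simeq H(\bigoplus_J\Z)$, not $\bigoplus_J\S$. More fundamentally, an abstract computation of homotopy groups cannot show that the specific unit map $\bigoplus_J\S\to\hom(I_\Z,\bigoplus_J I_\Z)$ is an equivalence, and that is what your final unit/counit argument needs. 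Two inputs are required.

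First, $\S\to\hom(I_\Z,I_\Z)$ must be an equivalence. This is Anderson self-duality of the sphere, i.e.\ double duality for spectra with finitely generated homotopy groups, which the paper uses as a known input.

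Second, because $I_\Z$ is not compact, the canonical map $\bigoplus_J\hom(I_\Z,I_\Z)\to\hom(I_\Z,\bigoplus_J I_\Z)$ is not formally an equivalence. The paper proves it by mapping the direct sum over $J$ of the $J=*$ sequences for $X=I_\Z$ into the sequence for $\bigoplus_J I_\Z$. The outer maps are isomorphisms because each $\pi_k I_\Z$ is finitely generated, so $\Hom(\pi_{-n}I_\Z,-)$ and $\Ext^1(\pi_{-(n+1)}I_\Z,-)$ commute with direct sums; the five lemma then finishes. This is exactly where the $J$-indexed version of the sequence is needed, and your proposal never carries out this step.

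With these two facts in place, your concluding argument goes through: the unit on sums of spheres, then the counit on sums of $I_\Z$ via the triangle identity, then exactness to pass to thick closures.
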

\begin{proof}
    For $J$ a singleton, this is the usual UCT sequence for the Anderson
    dual spectrum, see e.g.~\cite[Lemma 2.4]{Thomas-Anderson}.
    The case of general $J$ is shown in the same way once we know
    the natural isomorphisms
    \[
        \pi_n\hom(X,\bigoplus_J \Q) \cong \Hom(\pi_{-n}X, \bigoplus_J \Q)
        \quad\text{and}\quad
        \pi_n\hom(X,\bigoplus_J I_{\Q/\Z}) \cong \Hom(\pi_{-n}X, \bigoplus_J \Q/\Z).
    \]
    Consider a collection of divisible abelian groups
    $(D_j)_{j \in J}$, and lift each of the maps $D_j \to \bigoplus_j D_j$
    to a map $I_{D_j} \to I_{\bigoplus_j D_j}$.
    We claim that together these induce an equivalence
    $\bigoplus_j I_{D_j} \xto{\simeq} I_{\bigoplus_j D_j}$.
    Indeed, we can check this after applying $\pi_0\hom(X,-)$ for an arbitrary
    finite spectrum $X$, where it follows from the following commutative diagram,
    where we also use compactness of $\pi_0X \in \Ab$:
    \[\begin{tikzcd}
        {\bigoplus_j \pi_0\hom(X,I_{D_j})} & {\pi_0\hom(X, \bigoplus_j I_{D_j})} & {\pi_0\hom(X,I_{\bigoplus_jD_j})} \\
        {\bigoplus_j \Hom(\pi_0X, D_j)} && {\Hom(\pi_0X, \bigoplus_j D_j)}
        \arrow["\cong", from=1-1, to=1-2]
        \arrow["\cong"', from=1-1, to=2-1]
        \arrow[from=1-2, to=1-3]
        \arrow["\cong", from=1-3, to=2-3]
        \arrow["\cong", from=2-1, to=2-3]
    \end{tikzcd}\]
    The addendum (1) is clear from the construction.
    Finally, to see (2), it suffices to check that the adjunction restricts to an adjoint equivalence
    $\{\S\}^\oplus \simeq \{I_\Z\}^\oplus$.
    Note that by the above, for any set $J$ and integer $n$
    we obtain a map of short exact sequences
    \[\begin{tikzcd}
        0 & {\bigoplus_J \Ext(\pi_{-(n+1)}I_\Z, \Z)} & {\bigoplus_J \pi_n\hom(I_\Z,I_\Z)} & {\bigoplus_J \Hom(\pi_{-n}I_\Z, \Z)} & 0 \\
        0 & {\Ext(\pi_{-(n+1)}I_\Z,\bigoplus_J \Z)} & {\pi_n\hom(I_\Z,\bigoplus_J I_\Z)} & {\Hom(\pi_{-n}I_\Z, \bigoplus_J \Z)} & 0
        \arrow[from=1-1, to=1-2]
        \arrow[from=1-2, to=1-3]
        \arrow["\cong"', from=1-2, to=2-2]
        \arrow[from=1-3, to=1-4]
        \arrow[from=1-3, to=2-3]
        \arrow[from=1-4, to=1-5]
        \arrow["\cong"', from=1-4, to=2-4]
        \arrow[from=2-1, to=2-2]
        \arrow[from=2-2, to=2-3]
        \arrow[from=2-3, to=2-4]
        \arrow[from=2-4, to=2-5]
    \end{tikzcd}\]
    where the outer vertical maps are isomorphisms since $I_\Z$ has
    finitely generated homotopy groups.
    We conclude that the canonical map $\bigoplus_J \hom(I_\Z,I_\Z) \to \hom(I_\Z,\bigoplus_J I_\Z)$
    is an equivalence.
    It follows that the adjunction $I_\Z \tensor - \dashv \hom(I_\Z,-)$ restricts to the full subcategories
    $\{\S\}^\oplus \rightleftarrows \{I_\Z\}^\oplus$.
    Together with the fact that tensoring with $I_\Z$ induces the equivalence $\hom(\S,\S) \xto{\simeq} \hom(I_\Z,I_\Z)$,
    we then conclude that $I_\Z \tensor -\colon \{\S\}^\oplus \to \{I_\Z\}^\oplus$ is fully faithful and hence an equivalence.
\end{proof}

Using the above, it is not hard to show that the functor $\hom(-,I_\Z) \colon \Sp^\op \to \Sp$
restricts to an involution on the full subcategory $\Sp^\fg \subseteq \Sp$ of spectra with finitely generated
homotopy groups, i.e.~we have an equivalence $\hom(-,I_\Z) \colon (\Sp^\fg)^\op \simeq \Sp^\fg$,
see e.g.~\cite[Theorem 2.9]{Thomas-Anderson}.\footnote{In fact, in forthcoming work \cite{CondDual} we show that for $X \in \Sp$, the canonical map $X \to \hom(\hom(X,I_\Z),I_\Z)$ into the double dual is an equivalence if and \emph{only if} $X \in \Sp^\fg$.}
We can now construct the Anderson weight structure on Spectra
mentioned in the introduction,
which uses the case $\kappa = \omega_1$ of Theorem \ref{thm:gen-weight}.

\begin{proposition}\label{prop:anderson-weight}
    There is a weight structure $w'$ on spectra $\omega_1$-compactly generated by the single object $\Z$, satisfying
    \begin{enumerate}
        \item $\Sp_{w' \geq 0} = \{X \in \Sp \mid \hom(\Z,X) \in \Sp_{\geq 0}\}$.

        \item $\Sp_{w' \leq 0} = \{X \in \Sp \mid \text{$\pi_k(X) = 0$ for $k>0$ and $\pi_0(X)$ free}\}$.

        \item $\Sp_{w'=0} = \{I_\Z\}^\oplus$ is the full subcategory of $\Sp$ on direct sums of $I_\Z$.

        \item The weight structure is right complete.

        \item $\bigcap_{n \geq 0} \Sp_{w' \geq n} = \{X \mid \hom(\Z,X) = 0\}$.
    \end{enumerate}
    We will refer to this weight structure as the Anderson weight structure on spectra.
\end{proposition}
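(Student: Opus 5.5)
Existence and (1) come directly from Theorem \ref{thm:gen-weight} with $\kappa=\omega_1$ and $S=\{\Z\}$, once we know $\Z$ is $\omega_1$-compact in $\Sp$. To see this, resolve $\Z$ by countable free modules over $\S$ via a cell structure with countably many cells in each dimension. Equivalently, $\Z$ has countable homotopy groups, and such spectra are countable colimits of finite spectra, hence $\omega_1$-compact.

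For (2), the main tool is Anderson duality. For any spectrum $X$ we have $\hom(\Z,X)$ and the functor $\hom(-,I_\Z)$. I would first identify the candidate class $\cW=\{X\mid \pi_{>0}X=0,\ \pi_0X\text{ free}\}$ and show $\cW\subseteq \lperp{(\Sp_{w'\geq 1})}$.

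\begin{itemize}
\item For $X=\bigoplus_J I_\Z$, Lemma \ref{lem:sum-iz} gives $\pi_n\hom(Y,\bigoplus_J I_\Z)$ in terms of $\Hom(\pi_{-n}Y,\bigoplus\Z)$ and $\Ext$. That is the wrong variance for maps out of $X$, so instead I would use the key identity $\hom(\bigoplus_J I_\Z, Y)$ versus $\hom(\Z,Y)$.
\item Concretely, $I_\Z\simeq\hom(\Z,I_\Z)$-type dualities and $\Z\simeq \hom(I_\Z,I_\Z)\otimes\ldots$ should give $\hom(I_\Z,Y)\simeq \hom(\Z, Y)$ twisted by $\hom(I_\Z,\Z)$. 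Rather than guess, I would use the theorem's description (2): $\Sp_{w'\le0}=\langle\Z\rangle$ is generated by $\Z$ under finite limits, sums, retracts and $\le\omega_1$-transfinite extensions.
\item So I would show that $\langle\Z\rangle\subseteq\cW$. $\cW$ contains $\Z$, is closed under sums, retracts and loops, and is closed under extensions since free groups split. For the transfinite colimits, freeness of $\pi_0$ at limit stages needs an $\omega_1$-Pontryagin-type argument: $\pi_0$ of the colimit is a continuous union of free groups with free successive quotients, hence free.
\item Conversely, every $X\in\cW$ lies in $\langle\Z\rangle$. Write $X$ as an extension of $\bigoplus\Z=\pi_0X$ and $\tau_{<0}X$, and build $\tau_{<0}X$ as a sequential limit. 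Better still, use the weight decomposition $F_\kappa\to X\to X_\kappa$ from the proof of Theorem \ref{thm:gen-weight} and show that $X\to X_\kappa$ vanishes when $X\in\cW$, using (5)-style arguments. This is the main obstacle, and I would attack it via (3) first.
\end{itemize}

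For (3), a heart object $H$ satisfies $\hom(\Z,H)$ connective and $H\in\cW$. By the UCT (Lemma \ref{lem:sum-iz}), $I_\Z$ lies in $\cW$ ($\pi_0I_\Z=\Z$, $\pi_{-1}=0$ after checking, $\pi_{<0}$ torsion-free data). Moreover $\hom(\Z,I_\Z)\simeq \hom(\Z\otimes\S,I_\Z)$ is the Anderson dual of $\Z$, which is $\Z$ and hence connective. Sums of $I_\Z$ behave the same way by the sum equivalence in Lemma \ref{lem:sum-iz}. For the converse, given $H$ in the heart, take $\bigoplus_{\pi_0H\text{ basis}}I_\Z\to H$ and show its cofiber is $w'$-$1$-connective, so the map splits as in Proposition \ref{prop:ind-weight}(4); then argue the complement vanishes by checking $\pi_0$.

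(5) is immediate from (1). For (4), use the dual of Observation \ref{obs:colim-comp}. First, $\bigcap\Sp_{w'\le -n}$ consists of spectra with vanishing homotopy by (2), so it is zero. Second, $\Sp_{w'\le0}$ is closed under sequential limits growing in coconnectivity: by (2) these limits are computed in $\Sp$, Milnor sequences show $\pi_0$ is eventually constant, so it stays free and $\pi_{>0}$ stays zero.
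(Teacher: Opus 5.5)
Your existence argument, the inclusion $\Sp_{w'\le 0}=\langle\Z\rangle\subseteq\mathcal{W}$ (the transfinite step is exactly Lemma \ref{lem:free-ab}), part (5), and the vanishing of $\bigcap_n\Sp_{w'\le -n}$ are all fine. The genuine gap is the reverse inclusion $\mathcal{W}\subseteq\Sp_{w'\le 0}$. You call it the main obstacle and leave it open, and none of your suggested routes closes it:
\begin{itemize}
\item The weight-decomposition route is circular: $X\to X_\kappa$ being null is equivalent to $X\in\lperp{(\Sp_{w'\ge 1})}=\Sp_{w'\le 0}$.
\item Going ``via (3) first'' is also circular, since your (3) needs $I_\Z\in\Sp_{w'\le 0}$, not merely $I_\Z\in\mathcal{W}$.
\item Building $\tau_{\le -1}X$ as a sequential \emph{limit} points the wrong way. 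Closure under such limits is (4), which you deduce from (2).
\end{itemize}
The missing idea is a free resolution. For an abelian group $A$, choose $0\to F_1\to F_0\to A\to 0$ with $F_i$ free. Then $A[-1]\simeq\fib(F_1\to F_0)$ is a finite limit of sums of $\Z$, so it lies in $\Sp_{w'\le 0}$. Loops and extensions then give every bounded below $(-1)$-coconnective spectrum. A general $(-1)$-coconnective $X$ is $\colim_n\tau_{\ge -n}X$, whose successive cofibers $(\pi_{-n-1}X)[-n-1]$ lie in the class, so Theorem \ref{thm:gen-weight}(2) with $\gamma=\omega$ gives $X\in\Sp_{w'\le 0}$. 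Finally, any $X\in\mathcal{W}$ is an extension of $\tau_{\le -1}X$ by $(\pi_0X)[0]\simeq\bigoplus\Z$.

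This inclusion is also what (3) and the second half of (4) silently rely on. Knowing $\bigoplus_J I_\Z\in\mathcal{W}$ or $\lim_n X_n\in\mathcal{W}$ only gives weight coconnectivity once $\mathcal{W}\subseteq\Sp_{w'\le 0}$ is established. Separately, in the converse half of (3) your map goes the wrong way. Lemma \ref{lem:sum-iz} controls maps \emph{into} $\bigoplus_J I_\Z$, while maps out of $I_\Z$ are scarce: since $\Z\tensor I_\Z\simeq\Q$, one gets $\pi_0\hom(I_\Z,\Z)\cong\Hom(\Q,\Z)=0$. So there is no evident way to realize a basis of $\pi_0H$ by a map $\bigoplus_J I_\Z\to H$. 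The paper instead uses the split UCT sequence to choose $f\colon H\to\bigoplus_J I_\Z$ inducing an isomorphism $\pi_0H\cong\bigoplus_J\Z$. Both objects are $t$-coconnective and $w'$-connective, so $\hom(\Z,-)$ of each is concentrated in degree $0$ with value $\pi_0$. Hence $\hom(\Z,f)$ is an equivalence, and $\fib(f)\in\Sp_{w'\le 0}\cap\Sp_{w'\ge\infty}=0$.
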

\begin{proof}
    Applying Theorem \ref{thm:gen-weight}, point (1) is clear.
    Let $\cC \subseteq \Sp$ be the full subcategory on $t$-coconnective
    spectra with free $\pi_0$.
    Note that this is closed under finite limits,
    extensions, retracts, and sums.
    The fact that it is also closed under colimits of ordinal-indexed diagrams
    as in Lemma \ref{lem:weight-closure}(4) follows from Lemma \ref{lem:free-ab}.
    Since $\Z \in \cC$, we thus get $\Sp_{w' \leq 0} \subseteq \cC$.
    For the reverse inclusion,
    we simply note that if $A$ is an abelian group,
    then $A[-1]$ lies in $\Sp_{w' \leq 0}$ as fiber of a map
    between sums of $\Z$'s.
    By induction, we see that any $(-1)$-t-coconnective
    and bounded below spectrum lies in $\Sp_{w' \leq 0}$.
    Since $\Sp_{w' \leq 0}$ is also closed under sequential colimits
    of maps whose cofiber lies in $\Sp_{w' \leq 0}$,
    we conclude that if $X$ is $(-1)$-t-coconnective,
    then $X = \colim_n \tau_{\geq -n}X \in \Sp_{w' \leq 0}$.
    From this we deduce $\cC \subseteq \Sp_{w' \leq 0}$,
    and thus (2). This immediately yields (5),
    and (4) by combining Remark \ref{rem:conn-comp-inv-under-eqv}
    and (the dual of) Lemma \ref{lem:left-t-comp-is-conn-comp}.

    It remains to show (3).
    Note that the standard $t$-structure on spectra lets us conclude
    that $\hom(\Z,X)$ is $t$-coconnective whenever $X$ is $t$-coconnective.
    But now if $X \in \Sp_{w'=0}$, then $\hom(\Z,X)$ is concentrated in a single degree,
    where it is given by the free abelian group
    \[
        \pi_0\hom(\Z,X) \simeq \pi_0\map(\Z,X) \simeq \pi_0\map(\Z,\pi_0X) \cong \pi_0X.
    \]
    Next, we can deduce from Lemma \ref{lem:sum-iz} that $\bigoplus_J I_\Z \in \Sp_{w' = 0}$.
    Now pick an isomorphism $\pi_0X \cong \bigoplus_J \Z$.
    By Lemma \ref{lem:sum-iz} we have a split short exact sequence
    \[
        0 \to \Ext(\pi_{-1}(X),\bigoplus_J \Z) \to \pi_0\hom(X,\bigoplus_J I_\Z) \to \Hom(\pi_0X,\bigoplus_J \Z) \to 0.
    \]
    We then find a map of spectra $f \colon X \to \bigoplus_J I_\Z$
    which is a $\pi_0$-isomorphism.
    We have a commutative diagram
    \[\begin{tikzcd}
        {\pi_0\hom(\Z,\pi_0X)} & {\pi_0\hom(\Z,\pi_0\bigoplus_JI_\Z)} \\
        {\pi_0\hom(\Z,X)} & {\pi_0\hom(\Z,\bigoplus_J I_\Z)}
        \arrow["{(\pi_0f)_*,\cong}", from=1-1, to=1-2]
        \arrow["\cong"', from=1-1, to=2-1]
        \arrow["\cong", from=1-2, to=2-2]
        \arrow["{f_*}"', from=2-1, to=2-2]
    \end{tikzcd}\]
    showing that $\hom(\Z,f)$ is a $\pi_0$-isomorphism.
    But by the above argument, both source and target of this map
    are concentrated in degree $0$, so $\hom(\Z,\fib(f)) = 0$.
    But then $\fib(f) \in \Sp_{w' \leq 0} \cap \Sp_{w' \geq 1} = 0$,
    so $f$ is an equivalence, as desired.
\end{proof}

We note that $\bigcap_{n \geq 0} \Sp_{w' \geq n} = \{X \mid \hom(\Z,X) = 0\}$ contains for example all finite $p$-complete spectra\footnote{Suppose $X$ is finite $p$-complete. By chromatic convergence,
    we have $\hom(\F_p,X) = \lim_n \hom(\F_p,L_n X) = \lim_n \hom(L_n \F_p,L_n X) = 0$, since $L_n \F_p =0$ for all $n$.
Since $\hom(\Z,X)$ is $p$-complete, this shows it vanishes.}.
So this weight structure is very different from the standard one.

\begin{corollary}
    The Anderson weight structure restricts to the full subcategory $\Sp^\fg_{t < \infty} \subseteq \Sp$
    of bounded above spectra with finitely generated homotopy groups.
    Taking Anderson duals gives a weight-exact equivalence $(\Sp^\fg_{>-\infty})^\op \simeq \Sp^\fg_{t < \infty}$,
    and this allows us to exhibit $\Thick(I_\Z) \subseteq \Sp^\fg_{t < \infty}$
    and hence also the composite
    $I_\Z \tensor - \colon \Sp^\omega = \Thick(\S) \simeq \Thick(I_\Z) \subseteq \Sp^\fg_{t < \infty}$
    as right completions.
    Dually, the composite $\hom(I_\Z,-) \colon \Thick(I_\Z) \simeq \Thick(\S) \subseteq \Sp^\fg_{>-\infty}$
    is a left completion.
\end{corollary}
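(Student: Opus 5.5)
The plan is to check four things in order: the Anderson weight structure restricts to $\Sp^\fg_{t<\infty}$; Anderson duality is weight exact; the dual of the left completion of $\Sp^\omega$ is a right completion; and the $\hom(I_\Z,-)$ statement is the same thing read backwards.

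\textbf{Restriction.} By Proposition \ref{prop:anderson-weight}(2), $\Sp_{w'\leq 0}$ consists of the $t$-coconnective spectra with free $\pi_0$. Hence $\Sp_{w'<\infty}$ equals $\Sp_{t<\infty}$ up to retracts, and every $Y\in\Sp^\fg_{t<\infty}$ lies in some $\Sp_{w'\leq n}$.

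To see that the structure restricts to $\cD:=\Sp^\fg_{t<\infty}$ with defect $0$, we need weight decompositions inside $\cD$. The cleanest route is to transport the standard weight structure along Anderson duality $D=\hom(-,I_\Z)\colon(\Sp^\fg)^\op\simeq\Sp^\fg$, which exchanges $\Sp^\fg_{>-\infty}$ and $\Sp^\fg_{t<\infty}$. The standard weight structure restricts to $\Sp^\fg_{>-\infty}$ by Example \ref{ex:left-completion}. Transporting it gives a weight structure on $\cD$, and it remains to identify that structure with the restriction of $w'$. For this I check two things.
\begin{enumerate}
\item $D$ sends standard coconnectives $X$ (with $H_*(X;\Z)$ free in degree $0$ and zero above) to $w'$-connectives. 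The condition is $\hom(\Z,DX)=\hom(\Z\otimes X,I_\Z)\in\Sp_{\geq 0}$, and this follows from the UCT of Lemma \ref{lem:sum-iz}, since $\Z\otimes X$ is coconnective with free $\pi_0$.
\item $D$ sends connectives $X\in\Sp^\fg_{\geq 0}$ to $w'$-coconnectives. Here $DX$ is $t$-coconnective, and $\pi_0DX=\Hom(\pi_0X,\Z)$ is free because $\pi_0X$ is finitely generated.
\end{enumerate}
Since both structures are saturated (Remark \ref{rem:saturated}), these two inclusions give equality of the classes, by the argument of Lemma \ref{lem:saturated}. So $D\colon(\Sp^\fg_{>-\infty})^\op\simeq\Sp^\fg_{t<\infty}$ is weight exact, with the opposite of the standard structure on the source.

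\textbf{Completions.} By Example \ref{ex:left-completion}, $\Sp^\omega\subseteq\Sp^\fg_{>-\infty}$ is a left completion. Dualizing via Remark \ref{rem:dw-comp}, its opposite is a right completion. Applying $D$ turns this into the inclusion $D(\Sp^\omega)\subseteq\Sp^\fg_{t<\infty}$, and $D(\Sp^\omega)=\Thick(D\S)=\Thick(I_\Z)$. For $X$ finite we have $DX\simeq I_\Z\otimes X^\vee$, and $X\mapsto X^\vee$ is a weight exact self-duality of $\Sp^\omega$ exchanging its connectives and coconnectives. So the composite $I_\Z\otimes-$ is identified with the right completion, with weights matching.

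By Lemma \ref{lem:sum-iz}(2), $\hom(I_\Z,-)$ is the inverse equivalence $\Thick(I_\Z)\simeq\Thick(\S)$. Composing with $\Sp^\omega\subseteq\Sp^\fg_{>-\infty}$ therefore gives the left completion.

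\textbf{Main obstacle.} The step I expect to be hardest is the two weight-class identifications under $D$, in particular the Ext-term bookkeeping in Lemma \ref{lem:sum-iz} for $\hom(\Z\otimes X,I_\Z)$. I would handle it by reducing to the case where $\Z\otimes X$ is bounded with finitely generated homology and then inducting over the Postnikov tower.
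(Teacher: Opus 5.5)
Your argument is correct, but it handles the key step differently from the paper.

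\textbf{Common part.} Both proofs transport the opposite of the standard weight structure on $\Sp^\fg_{>-\infty}$ along Anderson duality. Both then conclude, via saturation (Lemma \ref{lem:saturated}(1)), that the transported structure is the restriction of the Anderson structure, once the inclusion $\Sp^\fg_{t<\infty}\subseteq\Sp$ is known to be weight exact.

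\textbf{Paper's route.} The paper gets this weight exactness formally. It notes that $\Thick(I_\Z)\subseteq\Sp$ and $\Thick(\S)^\op\simeq\Thick(I_\Z)$ are weight exact, by the heart computation and functoriality of $\Stab$. Hence $\Thick(I_\Z)\subseteq\Sp^\fg_{t<\infty}$ is a right completion for the transported structure. Right completeness of the Anderson structure then gives a weight exact extension $f\colon\Sp^\fg_{t<\infty}\to\Sp$. This $f$ is fully faithful by the dual of Corollary \ref{cor:left-comp-included}, and the paper identifies it with the inclusion.

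\textbf{Your route.} You verify weight exactness directly, using the universal coefficient sequence of Lemma \ref{lem:sum-iz} together with the explicit descriptions of $\Sp_{w\leq 0}$, $\Sp_{w'\geq 0}$ and $\Sp_{w'\leq 0}$. Both of your computations are right. This is more elementary, and it avoids having to identify the Kan-extended functor $f$ with the inclusion, a step the paper calls clear. The cost is that you need the concrete UCT, whereas the paper uses only formal properties of completions plus the heart computation.

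\textbf{Your flagged obstacle.} It is not actually an obstacle. Lemma \ref{lem:sum-iz} with $J=\ast$ holds for every spectrum. For $Y=\Z\tensor X$ coconnective with free $\pi_0$, it immediately gives $\pi_n\hom(Y,I_\Z)=0$ for $n<0$, so no Postnikov induction is needed.

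\textbf{The composite $I_\Z\tensor-$.} Writing $I_\Z\tensor X\simeq D(X^\vee)$ and using that Spanier--Whitehead duality is a weight-exact self-duality of $\Sp^\omega$ is a valid alternative. The paper instead appeals to functoriality of $\Stab$ and Lemma \ref{lem:sum-iz}(2).
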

\begin{proof}
    We saw in Example \ref{ex:left-completion}
    that $\Thick(\S) = \Sp^\omega \subseteq \Sp^\fg_{>-\infty}$ is a left completion.
    Since Anderson duality restricts to an involution $(\Sp^{\fg})^\op \simeq \Sp^\fg$
    of $t$-amplitude $[-1,0]$, we obtain compatible equivalences
    \[\begin{tikzcd}[ampersand replacement=\&]
        {\Thick(\S)^\op} \& {\Thick(I_\Z)} \\
        {(\Sp^\fg_{>-\infty})^\op} \& {\Sp^\fg_{t < \infty}}
        \arrow["\simeq", from=1-1, to=1-2]
        \arrow[hook, from=1-1, to=2-1]
        \arrow[hook', from=1-2, to=2-2]
        \arrow["\simeq", from=2-1, to=2-2]
    \end{tikzcd}\]
    Now $\Thick(I_\Z)$ agrees with the stable envelope generated by the additive category
    consisting of finite sums of $I_\Z$, and hence the inclusion $\Thick(I_\Z) \subseteq \Sp$ is weight exact.
    The top horizontal map is induced by the functoriality of $\Stab(-)$ hence also weight exact.
    It follows that if we define a weight structure on $\Sp^\fg_{t < \infty}$
    via transferring the one on $(\Sp^\fg_{>-\infty})^\op$,
    then the right vertical map is a right completion.
    Since $\Sp$ with the Anderson structure is already right complete,
    we obtain an induced weight exact $f \colon \Sp^\fg_{t < \infty} \to \Sp$
    which restricts to the inclusion on $\Thick(I_\Z)$.
    By Corollary \ref{cor:left-comp-included}
    also $f$ is fully faithful and clearly identifies with the obvious inclusion.
    In particular, the ``transferred'' weight structure on $\Sp^\fg_{t < \infty}$
    is restricted from the Anderson weight structure,
    so all the claims follow from the above.
\end{proof}

\begin{corollary}
    Let $X \in \Sp$ be $b$-coconnective and suppose that $\hom(\Z,X)$ is
    $a$-connective for integers $a \leq b$.
    Then $X \in \Sp_{a \leq w' \leq b+1}$, so $X$ admits a finite filtration
    with subquotients equivalent to shifts of direct sums of $I_\Z$'s.
    If $X$ has moreover finitely generated homotopy groups,
    we can pick these sums to be finite,
    and in particular $X$ lies in the thick subcategory generated by $I_\Z$.
\end{corollary}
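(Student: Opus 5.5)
The plan is to first establish the weight bounds $X \in \Sp_{w' \geq a}$ and $X \in \Sp_{w' \leq b+1}$, and then read off the filtration from a bounded weight structure.

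\textbf{Lower bound.} The condition $X \in \Sp_{w' \geq a}$ is immediate from Proposition \ref{prop:anderson-weight}(1) after shifting, since $\hom(\Z,X)$ is $a$-connective by hypothesis.

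\textbf{Upper bound.} We need $\Sigma^{-(b+1)}X \in \Sp_{w' \leq 0}$. By Proposition \ref{prop:anderson-weight}(2) this means $\Sigma^{-(b+1)}X$ is $t$-coconnective with free $\pi_0$. Since $X$ is $b$-coconnective, $\Sigma^{-(b+1)}X$ is $(-1)$-$t$-coconnective. Its $\pi_0$ therefore vanishes and is trivially free, which gives $X \in \Sp_{w' \leq b+1}$.

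\textbf{Filtration.} With $X \in \Sp_{[a,b+1]}$ for the (non-weak) Anderson weight structure, I would take the weight complex of Example \ref{ex:standard-weight-complex}. For $X_{\leq n}$ with $a \leq n \leq b+1$, the successive cofibers lie in the shifted hearts $\Sp_{w'=n}$. By Proposition \ref{prop:anderson-weight}(3) these are shifts of sums of $I_\Z$. The filtration is finite because $X_{\leq a-1} = 0$ (orthogonality plus retract argument) and $X_{\leq b+1} = X$.

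\textbf{Finite generation.} This is the main obstacle: showing the sums can be chosen finite when $X$ has finitely generated homotopy. I would restrict to $\Sp^\fg_{t<\infty}$, on which the preceding corollary gives a weight structure restricted from the Anderson one. Its weight structure is transferred via Anderson duality from $(\Sp^\fg_{>-\infty})^\op$, whose heart consists of the finite sums of $\S$. Indeed, the heart of $\Sp^\fg_{>-\infty}$ is generated by finitely many spheres, since it is the left completion of $\Sp^\omega$ and the heart is the idempotent completion of finite free sums, i.e.\ finitely generated projective $\S$-modules, hence finite free.

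Hence the heart of $\Sp^\fg_{t<\infty}$ consists of finite sums of $I_\Z$. Since the weight decompositions can be taken inside $\Sp^\fg_{t<\infty}$, the terms of the weight complex have finitely generated homotopy. It follows that $X$ is a finite iterated extension of shifts of finite sums of $I_\Z$, and so $X \in \Thick(I_\Z)$.
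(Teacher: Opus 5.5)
Your proposal is correct, and it is exactly how the statement follows from Proposition \ref{prop:anderson-weight}(1)--(3) together with the preceding corollary: the Anderson weight structure restricts to $\Sp^\fg_{t<\infty}$, and its heart there consists of the finite sums of $I_\Z$ (the paper leaves this proof implicit). One small correction: for an arbitrary weight decomposition $X_{\leq a-1}\to X\to X_{\geq a}$, orthogonality only shows that the first map is null, so $X_{\leq a-1}$ can be a nonzero object of $\Sigma^{a-1}\Sp_{w'=0}$; you should instead simply choose the trivial decompositions $0\to X\xrightarrow{\mathrm{id}} X$ and $X\xrightarrow{\mathrm{id}} X\to 0$ at the two ends, or equivalently start the filtration at $X_{\leq a}$, which already lies in $\Sp_{w'=a}$.
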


\begin{corollary}\label{cor:sp-st-vs-and}
    The functor $I_\Z \tensor -\colon \Sp \to \Sp$
    restricts to $\Sp_{w \leq 0} \to \Sp_{w' \leq 0}$,
    and its right adjoint $\hom(I_\Z,-)$
    restricts to $\Sp_{w' \geq 0} \to \Sp_{\geq 0}$.
    Both further restrict to mutually inverse weight exact equivalences
    $I_\Z \tensor - \colon \Sp_{-\infty < w < \infty} \simeq \Sp_{-\infty <w'<\infty} \noloc \hom(I_\Z,-)$.
\end{corollary}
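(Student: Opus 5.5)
We need three things for Corollary \ref{cor:sp-st-vs-and}: the coconnectivity statement for $I_\Z \tensor -$, the connectivity statement for $\hom(I_\Z,-)$, and the equivalence on weight-bounded parts.

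\textbf{Coconnectives.} The class of spectra $X$ with $I_\Z \tensor X \in \Sp_{w'\leq 0}$ is closed under finite limits, retracts, arbitrary sums, and the ordinal-indexed colimits of Theorem \ref{thm:gen-weight}(2). This holds because $I_\Z \tensor -$ is an exact colimit-preserving functor and $\Sp_{w'\leq 0}$ enjoys these closure properties, by saturation and Lemma \ref{lem:weight-closure}. The class contains $\S$, since $I_\Z \tensor \S = I_\Z \in \Sp_{w'=0}$ by Proposition \ref{prop:anderson-weight}(3). By Theorem \ref{thm:gen-weight}(2) applied to $S=\{\S\}$ with $\kappa=\omega$, $\Sp_{w\leq 0}$ is the smallest class containing $\S$ with these closure properties, so $I_\Z\tensor\Sp_{w\leq 0}\subseteq \Sp_{w'\leq 0}$.

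\textbf{Connectives.} The connectivity statement for the right adjoint then follows by adjunction. For $Y\in\Sp_{w'\geq 0}$ and any $n\geq 0$, the spectrum $\S[-n]$ lies in $\Sp_{w\leq 0}$, so
\[
\pi_{-n}\hom(I_\Z,Y)=\pi_0\map(I_\Z\tensor \S[-n],Y)=0,
\]
since $I_\Z \tensor \S[-n]=I_\Z[-n] \in \Sp_{w' \leq -1}$ for $n \geq 1$ maps trivially into $Y$, and the case $n=0$ gives connectivity of the homotopy group in the relevant range. Equivalently one can argue via Lemma \ref{lem:saturated}(2), using that the standard weight structure on $\Sp$ is right saturated.

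\textbf{Equivalence on bounded parts.} Both weight structures are saturated, so $\Sp_{w\,\mathrm{b}}$ is the thick closure, in the bounded sense, of finite filtrations by shifts of objects of $\{\S\}^\oplus$, and similarly $\Sp_{w'\,\mathrm{b}}$ is built from shifts of $\{I_\Z\}^\oplus$. Concretely, a bounded object is a retract of an iterated extension of shifts of heart objects; these are the Sosnilo-type descriptions following from Theorem \ref{thm:vova}. Hence $\Sp_{-\infty<w<\infty}=\Thick(\{\S\}^\oplus)$ and $\Sp_{-\infty<w'<\infty}=\Thick(\{I_\Z\}^\oplus)$. Indeed each bounded object lies in the thick closure by weight decompositions; conversely, the heart objects are bounded and bounded objects are closed under the thick operations. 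Lemma \ref{lem:sum-iz}(2) gives the adjoint equivalence $I_\Z\tensor-\dashv\hom(I_\Z,-)$ between these two thick subcategories.

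\textbf{Weight exactness.} $I_\Z\tensor-$ sends $\{\S\}^\oplus$ to $\{I_\Z\}^\oplus$, hence sends bounded connectives, which are generated by nonnegative shifts of heart objects under extensions and retracts, into $w'$-connectives. The coconnective statement was shown above. The inverse, being both adjoint to the original functor, is weight exact by the same heart-matching argument, or by Lemma \ref{lem:saturated}(3).

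\textbf{Main obstacle.} The delicate point is the identification of the weight-bounded parts with the thick closures of the hearts. This identification relies on idempotent completeness of $\Sp_{w\,\mathrm{b}}$, which follows from the idempotent completeness of $\Sp$, and on saturation.
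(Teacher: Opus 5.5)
Your proof is correct. For the equivalence on weight-bounded parts it is the paper's argument: Lemma~\ref{lem:sum-iz}(2) combined with identifying $\Sp_{-\infty<w<\infty}$ and $\Sp_{-\infty<w'<\infty}$ with the thick closures of the hearts $\{\S\}^\oplus$ and $\{I_\Z\}^\oplus$. You spell out that identification; the paper leaves it implicit.

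For the two restriction statements you take a genuinely different route.

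\emph{The paper's route.} It argues with homotopy groups.
\begin{itemize}
\item Coconnectives: it uses that $\Z\tensor X$ is coconnective with free $\pi_0$ for $X\in\Sp_{w\leq 0}$. It writes $I_\Z\tensor X=\colim_n \tau_{\geq -n}I_\Z\tensor X$ as a colimit growing in $t$-coconnectivity, and uses $\tau_{\geq -1}I_\Z=\Z$ together with the description of $\Sp_{w'\leq 0}$ in Proposition~\ref{prop:anderson-weight}(2).
\item Connectives: it writes $\hom(I_\Z,X)=\lim_n\hom(\tau_{\geq -n}I_\Z,X)$ and uses connectivity of $\hom(\Z,X)$.
\end{itemize}

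\emph{Your route.} You only use that $I_\Z$ lies in the Anderson heart.
\begin{itemize}
\item Coconnectives: you combine the generation of $\Sp_{w\leq 0}$ by $\S$ (Theorem~\ref{thm:gen-weight}(2) with $\kappa=\omega$, via Proposition~\ref{prop:lmod-weight}) with the closure properties of the saturated class $\Sp_{w'\leq 0}$ and the fact that $I_\Z\tensor -$ is exact and preserves colimits.
\item Connectives: you use orthogonality.
\end{itemize}

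What each buys: your argument is shorter and purely formal, and never needs the explicit homological descriptions of either coconnective class. The paper's argument checks membership directly from those descriptions, without invoking the generation statement.

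Two small clean-ups.
\begin{itemize}
\item In the connectivity step the case $n=0$ plays no role, and $\pi_0\hom(I_\Z,Y)$ need not vanish. Connectivity only requires $\pi_{-n}\hom(I_\Z,Y)=0$ for $n\geq 1$. In fact the whole claim is just the orthogonality axiom applied to $I_\Z\in\Sp_{w'\leq 0}$ and $Y\in\Sp_{w'\geq 0}$.
\item The point you flag as the main obstacle is not delicate. Two facts suffice:
  \begin{itemize}
  \item iterated weight decompositions exhibit any object of some $\Sp_{[a,b]}$ as a finite iterated extension of shifts of heart objects;
  \item the bounded part is closed under retracts in $\Sp$, since each $\Sp_{[a,b]}$ is.
  \end{itemize}
  Idempotent completeness of $\Sp$ alone would not give idempotent completeness of a full subcategory without that retract-closure.
\end{itemize}
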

\begin{proof}
    The last point is immediate from Lemma \ref{lem:sum-iz}(2)
    and the characterisations of the weight hearts of the standard
    respectively Anderson weight structure as $\{\S\}^\oplus$
    respectively $\{I_\Z\}^\oplus$.

    Suppose $X \in \Sp_{w \leq 0}$, so that $\Z \tensor X$ is coconnective
    with free $\pi_0$.
    Then $A \tensor X$ is $1$-$t$-coconnective for any $A \in \Ab$,
    and hence if $Y$ has homotopy groups concentrated in degrees $[-n,0]$,
    also $Y \tensor X$ is 1-$t$-coconnective.
    We conclude that $I_\Z \tensor X = \colim_n \tau_{\geq -n}I_\Z \tensor X$
    is a sequential colimit growing in $t$-coconnectivity
    of 1-coconnective spectra. Thus $I_\Z \tensor X$ is 1-coconnective,
    and $\tau_{[0,1]}(I_\Z \tensor X) = \tau_{[0,1]}(\tau_{\geq -1}I_\Z \tensor X)$.
    However, the defining fiber sequence $I_\Z \to \Q \to I_{\Q/\Z}$
    shows that $\tau_{\geq-1} I_\Z = \Z$, and so $I_\Z \tensor X$ is indeed $t$-coconnective
    with free $\pi_0$, hence Anderson-coconnective.

    Now let $X \in \Sp_{w' \geq 0}$, so that $\hom(\Z,X)$ is connective.
    Then $\hom(I_\Z, X) = \lim_n \hom(\tau_{\geq -n}I_\Z, X)$
    and an analogous analysis as in the first point shows that this is a sequential
    limit growing in connectivity of connective spectra, hence connective.
\end{proof}

\begin{remark}\label{rem:sp-and-not-upw-comp}
    As in the standard weight structure on $\Sp$ (cf.~Remark \ref{rem:sp-not-dw-comp}), point (5) of Proposition \ref{prop:anderson-weight} is not the only reason that the Anderson weight structure is not left complete,
    so we cannot just quotient out $\Sp_{w' \geq \infty}$
    to guarantee left completeness by Lemma \ref{lem:upw-comp-quot}.

    To see this, recall that $I_\Z \tensor \F_p = 0$.
    Indeed, by the defining fiber sequence $I_\Z \to \Q \to I_{\Q/\Z}$,
    this follows from $I_{\Q/\Z} \tensor \F_p = 0$, which was shown in \cite[Lemma 7.1]{Hovey-Palmieri}.
    Since $\F_p$ is of finite type, we can pick a weight complex
    for $\F_p$ in the standard weight structure of $\Sp$
    which exhibits $\F_p = \colim_n X_n$
    for finite spectra $X_n$ where $\cofib(X_n \to X_{n+1})$ is a finite sum
    of $\S^{n+1}$.
    But then $I_\Z \tensor X_\bullet$ is a sequential diagram
    of Anderson-connective spectra growing in Anderson-connectivity
    whose colimit is 0.
    In particular, for any $k \geq 1$, we have that
    $(I_\Z \tensor X_n/X_k)_{n \geq k}$ is a sequential diagram
    in $\Sp_{w' \geq k+1}$ which is growing in Anderson-connectivity,
    but whose colimit is $I_\Z \tensor \Sigma X_k$.
    Since $I_\Z \tensor -$ is a weight exact equivalence
    from standard-weight-bounded spectra to Anderson-weight-bounded spectra,
    and $\Sigma X_k$ is 1-connective but not 2-connective,
    we see that also $I_\Z \tensor \Sigma X_k$ lies in $\Sp_{w' \geq 1}$
    but not in $\Sp_{w' \geq 2}$ by Corollary \ref{cor:sp-st-vs-and}, since $\Sigma X_k$ is weight bounded.
    Thus for each $k \geq 0$,
    the sequential diagram $(\Sigma^{-(k+1)}I_\Z \tensor X_n /X_k)_{n \geq k}$
    is pointwise Anderson connective and also growing in Anderson connectivity,
    but its colimit $\Sigma^{-k}I_\Z \tensor X_k$
    is not Anderson $-(k-1)$-connective.
\end{remark}

\begin{remark}\label{remark_bounded}
    We can use Remarks \ref{rem:sp-not-dw-comp} and \ref{rem:sp-and-not-upw-comp}
    to see that Corollary \ref{cor:sp-st-vs-and} is in some sense optimal;
    $I_\Z \tensor -$ does not restrict to $\Sp_{w \geq 0} \to \Sp_{w' \geq 0}$
    and $\hom(I_\Z,-)$ does not restrict to $\Sp_{w' \leq 0} \to \Sp_{w \leq 0}$.
    For the former, using the notation of Remark \ref{rem:sp-and-not-upw-comp}
    we see that $\F_p/X_k$ is $(k+1)$-connective, but $I_\Z \tensor \F_p/X_k \simeq I_\Z \tensor \Sigma X_k$
    is only Anderson 1-connective, so $I_\Z \tensor -$ does not restrict to $\Sp_{w\geq 0} \to \Sp_{w' \geq -k}$ for any $k$.
    For the latter, we note that $X= \prod_{n \geq 0} I_\Z[-n] \in \Sp_{w' \leq 0}$ but $\hom(I_\Z,X) = \prod_{n \geq 0}\S[-n]$ is not even weight bounded above in the standard weight structure on spectra by the other remark.
\end{remark}

We thank Vova Sosnilo for a fruitful discussion from which the following result emerged.
Let $\Coh(\Sp) \subseteq \Sp$ denote the full subcategory consisting of those spectra
with only finitely many nontrivial homotopy groups, each of which is finitely generated.
Equivalently, this is the thick subcategory of $\Sp$ generated by $\Z$.
Since $\Ind(\Coh(\Sp))$ is compactly generated, we can use Theorem \ref{thm:gen-weight}
to endow it with the weight structure compactly generated by the single object $j\Z$.

\begin{proposition}\label{prop_andersoncompletion}
    The inclusion $\ell \colon \Coh(\Sp) \subseteq \Sp$ $\Ind$-extends to a functor $L \colon \Ind\Coh(\Sp) \to \Sp$
    whose right adjoint $R \colon \Sp \to \Ind\Coh(\Sp)$ is a left completion for the Anderson weight structure on spectra.
    Under the equivalence $\Ind\Coh(\Sp) \simeq \Fun^{\Ex}(\Coh(\Sp)^\op,\Sp)$
    we have $RX = \hom(\ell(-),X)$.
\end{proposition}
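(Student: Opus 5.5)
The plan is to apply Corollary \ref{cor:left-comp-via-embed} to $j = R|_{\Sp_{w'<\infty}}$, with $\cD = \Ind\Coh(\Sp)$ equipped with the weight structure compactly generated by $j\Z$.

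First we identify $R$. Since $L$ is the colimit-preserving extension of $\ell$, its right adjoint is $RX = \map_\Sp(\ell(-),X)$, viewed as an exact presheaf on $\Coh(\Sp)$. This gives the formula in the statement. In particular $\hom_{\Ind\Coh}(j\Z, RX) \simeq \hom(\Z,X)$. By Theorem \ref{thm:gen-weight}(1), $RX$ is connective exactly when $X \in \Sp_{w'\geq 0}$, so $R$ both preserves and reflects connectives.

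Next we verify that $\Ind\Coh(\Sp)$ is left complete. We use Theorem \ref{thm:gen-weight}(5) with $\kappa=\omega$. For this we need the shifts of $j\Z$ to detect equivalences. The thick subcategory generated by $\Z$ is all of $\Coh(\Sp)$, and the compact objects $j\Coh(\Sp)$ detect equivalences, so this holds.

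For coconnectives, we show that $R$ carries $\Sp_{w'\leq 0}$ into $\cD_{\leq 0}$, and that $R$ is fully faithful on $\Sp_{w'<\infty}$. Because the weight heart is $\{I_\Z\}^\oplus$ and everything is generated by extensions and limits, the key computation is that $R$ preserves sums of $I_\Z$. The intended argument runs as follows. For $c \in \Coh(\Sp)$ we have $\hom(c, \bigoplus I_\Z) \simeq \bigoplus \hom(c, I_\Z)$, because $c$ has finitely generated homotopy and the UCT of Lemma \ref{lem:sum-iz} commutes with sums in that case. Hence $R(\bigoplus I_\Z) \simeq \bigoplus R(I_\Z)$ in the Ind category. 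Then $R I_\Z$ should be $j(\hom(\Z,I_\Z))$-like, which is weight zero in $\cD$; I would check this by using Anderson duality on $\Coh(\Sp)$, namely $\hom(c,I_\Z) \simeq \hom(I_\Z c^{\vee})$ with $c^\vee$ in $\Coh(\Sp)$, to identify $RI_\Z$ with an Ind-colimit of $j$ of truncations. Full faithfulness on $\Sp_{w'<\infty}$ then reduces, by a dévissage along finite weight filtrations and Anderson-bounded-above Whitehead towers, to the isomorphism $\hom(\bigoplus I_\Z, \bigoplus I_\Z) \simeq \hom(R\bigoplus I_\Z, R\bigoplus I_\Z)$. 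This isomorphism follows from the adjunction once we know that the counit $LR \to \id$ is an equivalence on $\{I_\Z\}^\oplus$. The counit statement is just that $I_\Z$ is a colimit of objects of $\Coh$ mapping to it, for instance $\colim_n \tau_{\geq -n} I_\Z$.

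The remaining hypotheses of the corollary are left density and density of the coconnectives. For left density, we take an object $Y \in \cD$ and approximate it by $R$ of bounded-above spectra with highly connective cofiber. Since $j\Coh(\Sp) = R(\Coh(\Sp)) \subseteq R(\Sp_{w'<\infty})$ and the Ind-generators are $j\Z$ (which are weight bounded above), the weight complexes constructed in the proof of Theorem \ref{thm:gen-weight} should provide this, provided each step stays in the essential image of $R$. For density of $R(\Sp_{w'\leq 0})$ in $\cD_{\leq 0}$, note that by Theorem \ref{thm:gen-weight}(4) the heart of $\cD$ is retracts of sums of $(j\Z)_{=0}$, and we must match these with $R(\{I_\Z\}^\oplus)$. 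The main obstacle is exactly this identification of $(j\Z)_{=0}$ with $R(I_\Z)$, together with preservation of sums, since it rests on the delicate UCT-with-sums behaviour of $I_\Z$. I would attack it by computing maps $j\Z[n] \to RI_\Z$ as $\hom(\Z,I_\Z)\simeq \Z$ in degree $0$, which shows $RI_\Z$ is in the heart and receives a map from $j\Z$ with $1$-connective cofiber.
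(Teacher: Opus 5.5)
Your plan fails at the reduction to the weight heart $\{I_\Z\}^\oplus$. Finite extensions of heart objects only reach weight-bounded objects. But $\Sp_{w'<\infty}$ contains objects that are not Anderson-weight bounded below, for instance $\Z$, since $\hom(\Z,\Z)$ is not bounded below. In particular, the $t$-bounded pieces of a Whitehead tower cannot be reduced to the heart by finite weight filtrations.

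Reaching such an $X$ from weight-bounded pieces requires right completeness of the Anderson structure, $X\simeq\lim_n X_{w'\ge -n}$. To then conclude $RX\in\cD_{\le 0}$, you would need $\cD_{\le 0}$ to be closed under sequential limits growing in coconnectivity, i.e.\ right completeness of $\Ind\Coh(\Sp)$. That is not available at this point. The paper only deduces it afterwards, from this proposition via Proposition~\ref{prop:lr-complete}. It also fails for the analogous compactly generated standard weight structure on $\Sp$ (Remark~\ref{rem:sp-not-dw-comp}).

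The same problem undermines several other steps:
\begin{itemize}
\item Your d\'evissage for full faithfulness needs $R$ to preserve coconnectives, so that the cones $RX\to RX_n$ grow in coconnectivity.
\item Your density argument does not go through, because matching hearts does not give density of $\cD_{\le 0}$.
\item For left density, the decisive claim that each step ``stays in the essential image of $R$'' is unproved.
\end{itemize}

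The heart computation itself is also off:
\begin{itemize}
\item $\hom(j\Z[n],RI_\Z)\simeq\hom(\Z[n],I_\Z)$ only shows that $RI_\Z$ is connective and lies in the adjacent $t$-heart. It does not show that $RI_\Z$ is weight coconnective.
\item The cofiber of $j\Z\to RI_\Z\simeq\indcolim{n} j\tau_{\ge -n}I_\Z$ is $\indcolim{n} j\tau_{[-n,-2]}I_\Z$. This is a nonzero $t$-coconnective object, not a $1$-connective one.
\item ``$j(\hom(\Z,I_\Z))$-like'' would give $j\Z$, which is wrong.
\item The counit statement on $I_\Z$ is not ``just'' $I_\Z\simeq\colim_n\tau_{\ge -n}I_\Z$. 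The real content is that $R$ commutes with this colimit.
\end{itemize}

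The missing idea is to work with the generator $\Z$ and the \emph{colimit}-type generation of Theorem~\ref{thm:gen-weight}(2), and to avoid the heart entirely.
\begin{itemize}
\item By your formula, $R$ restricts to the Yoneda embedding on $\Coh(\Sp)$, so the counit $LR\to\id$ is an equivalence there.
\item For $c\in\Coh(\Sp)$, $\hom(c,-)$ preserves uniformly bounded above filtered colimits (Lemma~\ref{lem:ex-tensor}(2)). Hence so do $R$ and $LR$.
\item $\Sp_{w'<\infty}=\Sp_{t<\infty}$ is generated from $\Coh(\Sp)$ under such colimits and finite (co)limits. So the counit is an equivalence on all of $\Sp_{w'<\infty}$, and $R$ is fully faithful there.
\item Since $\Sp_{w'\le 0}\subseteq\Sp_{t\le 0}$, the same lemma shows that $R$ preserves the sums and ordinal colimits generating $\Sp_{w'\le 0}$ from $\Z$. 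Together with $R\Z=j\Z$, this shows $R$ restricts to an equivalence $\Sp_{w'\le0}\simeq\cD_{\le 0}$, because its image is closed under the operations generating $\cD_{\le0}$ from $j\Z$.
\end{itemize}

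Your observation that $R$ preserves and reflects connectives because $\hom(j\Z,RX)\simeq\hom(\Z,X)$ is correct. It is more direct than the paper's appeal to Lemma~\ref{lem:saturated}(2). Combining it with left completeness of $\cD$ (Theorem~\ref{thm:gen-weight}(5)), Corollary~\ref{cor:easy-left-comp} finishes the proof, and density and left density come for free. If you keep Corollary~\ref{cor:left-comp-via-embed} instead, you also need Lemma~\ref{lem:weight-comp-lkan} to identify $i_!j$ with $R$.
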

\begin{proof}
    As a first goal, we show that $R$ is fully faithful
    on the full subcategory $\Sp_{w' < \infty} = \Sp_{t < \infty}$.

    By definition there is an equivalence $\alpha \colon Lj \simeq \ell$,
    where $j \colon \Coh(\Sp) \subseteq \Ind\Coh(\Sp)$
    denotes the Yoneda embedding.
    We claim that the mate $BC(\alpha) \colon j \xto{\eta j} RLj \xto[\simeq]{R\alpha} \ell$
    of $\alpha$ is again an equivalence,
    where $\eta \colon \id \to RL$ is the unit.
    By the Yoneda lemma and the fact that $j\Z \in \Ind\Coh(\Sp)$
    generates under colimits,
    it suffices to check that for all $C \in \Coh(\Sp)$ the composite
    \[
        \hom_{\Ind\Coh(\Sp)}(j\Z,jC) \xto{(\eta_{jC})_*} \hom_{\Ind\Coh(\Sp)}(j\Z, RLjC)
    \]
    is an equivalence, which follows by 2-out-of-3
    from the following commutative diagram:
    \[\begin{tikzcd}[ampersand replacement=\&]
        {\hom(\Z,C)} \& {\hom(j\Z,jC)} \& {\hom(j\Z, RLjC)} \& {\hom(Lj\Z, LjC)} \\
        {\hom(\ell \Z, \ell C)} \&\&\& {\hom(Lj\Z, \ell C)}
        \arrow["\simeq", from=1-1, to=1-2]
        \arrow["j"', from=1-1, to=1-2]
        \arrow["\simeq"', from=1-1, to=2-1]
        \arrow["\ell", draw=none, from=1-1, to=2-1]
        \arrow["{(\eta_{jC})_*}", from=1-2, to=1-3]
        \arrow["L"', curve={height=12pt}, from=1-2, to=1-4]
        \arrow["\simeq", from=1-3, to=1-4]
        \arrow["{(\alpha_C)_*}"', from=1-4, to=2-4]
        \arrow["\simeq", draw=none, from=1-4, to=2-4]
        \arrow["{\alpha_\Z^*}", from=2-1, to=2-4]
        \arrow["\simeq"', draw=none, from=2-1, to=2-4]
    \end{tikzcd}\]
    Thus the Beck--Chevalley map $BC(\alpha) \colon j \to R\ell$ is an equivalence.
    Now the following left square induces by the general theory
    of mates (cf.~\cite[Lemma C.2]{CLL}) the right commutative square of functors
    \[\begin{tikzcd}[ampersand replacement=\&]
        \cC \& {\Ind(\cC)} \&\& Lj \&\& {LR\ell} \\
        \cC \& \cD \&\& \ell \&\& \ell
        \arrow["j", from=1-1, to=1-2]
        \arrow[Rightarrow, no head, from=1-1, to=2-1]
        \arrow[""{name=0, anchor=center, inner sep=0}, "L", curve={height=-6pt}, from=1-2, to=2-2]
        \arrow["{LBC(\alpha)j}", from=1-4, to=1-6]
        \arrow["\simeq"', draw=none, from=1-4, to=1-6]
        \arrow["\alpha"', from=1-4, to=2-4]
        \arrow["\simeq", draw=none, from=1-4, to=2-4]
        \arrow["{\eps\ell}", from=1-6, to=2-6]
        \arrow["\ell"', from=2-1, to=2-2]
        \arrow[""{name=1, anchor=center, inner sep=0}, "R", curve={height=-6pt}, from=2-2, to=1-2]
        \arrow[Rightarrow, no head, from=2-4, to=2-6]
        \arrow["\dashv"{anchor=center, rotate=-180}, draw=none, from=0, to=1]
    \end{tikzcd}\]
    which shows that the counit $\eps \colon LR \to \id$
    of the adjunction is an equivalence on $\Coh(\Sp)$.

    Next, we claim that $R$ and hence both source and target of
    the counit $\eps \colon LR \to \id$
    preserve uniformly bounded above filtered colimits.
    Using the explicit description of $R$,
    this reduces to showing that $\hom_{\Sp}(\Z,-)$ preserves uniformly bounded above filtered colimits,
    which is an instance of Lemma \ref{lem:ex-tensor}.

    Now $\Sp_{t < \infty}$ is generated as a stable category
    under uniformly bounded above filtered colimits by $\Coh(\Sp)$;
    one first obtains arbitrary Eilenberg--Mac Lane spectra
    by writing an abelian group as filtered colimit of its finitely
    generated subgroups,
    and then uses the Whitehead tower to obtain any bounded above spectrum.
    This proves that $\eps \colon LR \to \id$
    is an equivalence at all $X \in \Sp_{t<\infty}$.
    Given another $Y \in \Sp$, the map $\hom_\Sp(X,Y) \to \hom_{\Ind\Coh(\Sp)}(RX,RY)$ is then an equivalence,
    since postcomposing with the adjunction equivalence,
    the composite identifies with precomposition by the equivalence $\eps_X$.
    In particular, $R$ is fully faithful on $\Sp_{t<\infty}$.

    As a next step,
    we note that since $R\Z = j\Z$ and $Lj\Z = \Z$
    and both $L$ and $R$ preserve all the operations
    under which the weight coconnectives are generated
    from $\Z$ resp.~$j\Z$ (Theorem \ref{thm:gen-weight}(2)),
    it follows that both $L$ and $R$ preserve weight coconnectives.
    We now deduce via Lemma \ref{lem:saturated}(2) that $R$ also preserves
    connectives, so $R$ is weight exact.

    Since $R \colon \Sp_{w' \leq 0} \hookrightarrow \Ind\Coh(\Sp)_{w \leq 0}$,
    hits the generator, and preserves the relevant operations,
    we even get that it is an equivalence $\Sp_{w' \leq 0} \simeq \Ind\Coh(\Sp)_{w \leq 0}$.

    In view of Corollary \ref{cor:easy-left-comp} it remains to see that $\Ind\Coh(\Sp)$
    is left complete, but this is an instance of Theorem \ref{thm:gen-weight}(5),
    as $\mathbb{Z}$ is a compact generator of $\Ind\mathrm{Coh}(\Sp)$.
\end{proof}

\begin{corollary}
    The above weight structure on $\Ind\Coh(\Sp)$
    is both left and right complete.
\end{corollary}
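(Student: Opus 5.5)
Left completeness of the weight structure on $\Ind\Coh(\Sp)$ compactly generated by $j\Z$ is already contained in the proof of Proposition \ref{prop_andersoncompletion}, via Theorem \ref{thm:gen-weight}(5), since $j\Z$ is a compact generator. So only right completeness remains. I would prove it by transporting right completeness of the Anderson weight structure along the functor $R$ of Proposition \ref{prop_andersoncompletion}, using the (dual) form of Proposition \ref{prop:lr-complete}(1).

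First, the proof of Proposition \ref{prop_andersoncompletion} shows two things: $R$ is a weight exact left completion $\Sp \to \Ind\Coh(\Sp)$, and $R$ restricts to an equivalence $\Sp_{w'\leq 0}\simeq \Ind\Coh(\Sp)_{w\leq 0}$. By Proposition \ref{prop:anderson-weight}(4), the Anderson weight structure on $\Sp$ is right complete, so $\Ind\Coh(\Sp)$ is the left completion $\lc{\Sp}$ of a right complete weighted category. Proposition \ref{prop:lr-complete}(1), for the $\ell=0$ case where $\Sp$ is weakly bi-weighted, then says directly that $\lc{\Sp}$ is both left and right complete. This already finishes the argument.

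I would also record a direct check, in case one wants to avoid the identification with $\lc{\Sp}$. Condition $\Ind\Coh(\Sp)_{w\leq -\infty}=0$ follows from the equivalence $\Sp_{w'\leq 0}\simeq\Ind\Coh(\Sp)_{w\leq 0}$ together with $\Sp_{w'\leq-\infty}=0$. The latter holds because the Anderson coconnectives are the $t$-coconnective spectra with free $\pi_0$, and their intersection over all shifts is zero.

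For the limit conditions, take an inverse sequential diagram growing in coconnectivity. After applying a weight decomposition of the top term, as in the proof of Proposition \ref{prop:lr-complete}, the diagram can be assumed to lie in some $\Ind\Coh(\Sp)_{w\leq b}\simeq\Sp_{w'\leq b}$. There the limit exists and stays coconnective, by right completeness of $\Sp$ and because $R$ preserves limits as a right adjoint.

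The main obstacle is verifying that the hypotheses of Proposition \ref{prop:lr-complete} really apply, i.e.\ that $\Sp$ with the Anderson structure is weakly bi-weighted. For a genuine weight structure this is immediate, since the weight decompositions $X_{\leq 0}\to X\to X_{\geq 1}$ serve for both directions. Beyond that, the argument only needs the identification $R\simeq\eta_{\Sp}$ provided by Corollary \ref{cor:easy-left-comp}.
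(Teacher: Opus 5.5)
Your proposal is correct and is essentially the paper's proof: the paper simply observes that $\Ind\Coh(\Sp)$ is the left completion of the right complete Anderson weight structure and applies Proposition~\ref{prop:lr-complete}(1). Your supplementary direct check, which transports coconnective limits along the equivalence $\Sp_{w'\leq 0}\simeq\Ind\Coh(\Sp)_{w\leq 0}$, is also sound, though it is not needed.
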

\begin{proof}
    It is the left completion of a right complete
    weight structure, so we can use Proposition \ref{prop:lr-complete}.
\end{proof}

Since the Anderson weight structure is already right complete,
the last result shows that the composite
\[
    \Stab(\{I_\Z\}^\oplus) \simeq \Sp_{w'b} \subseteq \Sp \xto{R} \Ind\Coh(\Sp)
\]
is a (left and right) weight completion
of the weight-bounded part of the Anderson weight structure.
This composite sends $I_\Z \mapsto I_\bullet \coloneqq \indcolim{n} \tau_{\geq -n} I_\Z$, as one checks that the equivalence $\Ind\Coh(\Sp) \simeq \Fun^\Ex(\Coh(\Sp)^\op,\Sp)$ identifies $I_\bullet$ with $\hom(-,I_\Z)|_{\Coh(\Sp)^\op}$.

By Corollary \ref{cor:sp-st-vs-and} there is a weight exact equivalence
$I_\Z \tensor - \colon \Sp_{wb} \simeq \Sp_{w'b}$,
so the composite functor $\Sp_{wb} \to \Ind\Coh(\Sp)$ sending
$\S \mapsto I_\bullet$
is also a weight
completion of the weight-bounded part of the standard weight structure on spectra.

\begin{corollary}\label{co_functor}
    The colimit preserving functor
    \[
        I_\bullet \otimes - \colon \Sp \to \Ind\Coh(\Sp)
    \]
    exhibits $\Ind\mathrm{Coh}(\Sp)$ as the right weight completion of the standard weight structure on $\Sp$.
    Moreover:
    \begin{enumerate}
        \item As any right weight completion
            (cf.~the dual of Theorem \ref{thm:upw-comp}(2b))
            it is an equivalence on bounded below objects:
            \[
                \Sp_{>-\infty} \simeq \Ind\Coh(\Sp)_{w >-\infty}.
            \]

        \item We have an identification $I_{\bullet+1} \simeq \hom(\tau_{\leq \bullet}\S, I_\Z)$ which allows us to identify
            the unit map of the adjunction $I_\bullet \tensor - \dashv \hom_{\Ind\Coh(\Sp)}(I_\bullet,-)$
            with the canonical map
            \[
                \eta_X \colon X \to \lim_n (\tau_{\leq n}\S \tensor X).
            \]
            This map is an equivalence for bounded below spectra $X$.
    \end{enumerate}
\end{corollary}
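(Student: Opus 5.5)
The plan is to obtain the functor $I_\bullet \otimes -$ formally from the right-completion universal property and the preceding discussion, and then check its properties. First, the composite $\Sp_{wb} \xrightarrow{I_\Z\otimes -} \Sp_{w'b} \subseteq \Sp \xrightarrow{R} \Ind\Coh(\Sp)$ sends $\S \mapsto I_\bullet$. By the discussion preceding the corollary, it is a left and right weight completion of $\Sp_{wb}$, using Corollary \ref{cor:sp-st-vs-and} together with Proposition \ref{prop_andersoncompletion} and the corollary after it. Since $\Sp$ with the standard weight structure is left complete, Proposition \ref{prop:lr-complete} and Notation \ref{not:lr-comp} identify the right completion $\rc{\Sp}$ with $\lrc{(\Sp_{wb})}$, and this is therefore $\Ind\Coh(\Sp)$. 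The dual of Theorem \ref{thm:upw-comp} then says the unit $\Sp \to \rc{\Sp}$ is the right Kan extension from $\Sp_{w>-\infty}$ of the restriction.

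The point needing care is that this unit coincides with the colimit-preserving functor $I_\bullet\otimes -$, i.e.\ the unique colimit-preserving functor $\Sp\to\Ind\Coh(\Sp)$ sending $\S\mapsto I_\bullet$. Both agree on $\Sp^\omega$, and hence on $\Sp_{w b}$ by exactness and sums; here I use that $I_\bullet\otimes-$ preserves sums and the heart is $\{\S\}^\oplus$. To extend to all of $\Sp$, I would show $I_\bullet\otimes-$ has bounded above weight amplitude and use the dual of Lemma \ref{lem:weight-comp-lkan}: a functor of bounded above amplitude into a right complete target is determined by its restriction to the right dense subcategory $\Sp_{w>-\infty}$. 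Bounded above amplitude holds because $\Sp_{w\le 0}$ is generated from $\S$ under finite limits, sums, retracts and the ordinal colimits of Theorem \ref{thm:gen-weight}(2), all of which the colimit-preserving $I_\bullet\otimes-$ preserves. This reduces the claim to agreement on $\Sp_{w>-\infty}$. There, every bounded below $X$ is a colimit of a weight complex growing in connectivity, which both functors preserve: $I_\bullet\otimes-$ preserves all colimits, and the completion functor preserves these by left completeness. This is the step I expect to be the main obstacle, because one must verify that the target is left complete (it is, by the corollary after Proposition \ref{prop_andersoncompletion}) so that Lemma \ref{lem:cn-cone-colim} applies.

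Point (1) is then the dual of Theorem \ref{thm:upw-comp}(2b), stated for $\cC_{>-\infty}$, together with idempotent completeness of $\Sp_{>-\infty}$. The dual of Corollary \ref{cor:upw-comp-idem}(3) gives essential surjectivity onto $\Ind\Coh(\Sp)_{w>-\infty}$.

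For (2), Anderson duality on $\Coh(\Sp)$ gives $\hom(\tau_{\le n}\S,I_\Z)\simeq \tau_{\ge -(n+1)}I_\Z$, since the homotopy groups of $\tau_{\le n}\S$ are finite in positive degrees. This yields $I_{\bullet+1}\simeq \hom(\tau_{\le\bullet}\S,I_\Z)$ as ind-objects. The right adjoint is then $\hom_{\Ind\Coh}(I_\bullet, Y)\simeq \lim_n \hom(\tau_{\ge -n}I_\Z, -)$, evaluated levelwise via the equivalence with $\Fun^\Ex(\Coh(\Sp)^\op,\Sp)$. Applying this to $I_\bullet\otimes X$, whose value on $C$ is $\hom(C,I_\Z\otimes X)$ for compact-type reasons (Lemma \ref{lem:ex-tensor}), gives $\lim_n \hom(\hom(\tau_{\le n}\S,I_\Z),I_\Z)\otimes X \simeq \lim_n \tau_{\le n}\S\otimes X$ by double duality on $\Sp^\fg$. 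Under this identification the unit becomes $\eta_X$. It is an equivalence for bounded below $X$ either by (1), since the unit is then an equivalence, or directly because $X\otimes-$ preserves the limit of the Postnikov tower by connectivity.
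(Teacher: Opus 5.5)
Your argument for the main claim is correct, but it is organized differently from the paper's.

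The paper verifies directly that $I_\bullet\tensor-$ is weight exact. For coconnectives it uses the generation of $\Sp_{w\le 0}$ from $\S$, which is exactly your amplitude argument. For connectives it uses that, by left completeness, $\Sp_{w\ge 0}$ and $\Ind\Coh(\Sp)_{w\ge 0}$ are generated under colimits by their hearts, on which $I_\bullet\tensor-$ is an equivalence. It then gets the equivalence on bounded below objects from Lemma~\ref{lem:upw-comp-eqv}(2) and concludes with the dual of the recognition criterion, Corollary~\ref{cor:easy-left-comp}.

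You instead start from the abstract unit $\Sp\to\rc{\Sp}\simeq\Ind\Coh(\Sp)$ and identify it with $I_\bullet\tensor-$ by two uniqueness statements for Kan extensions:
\begin{itemize}
\item a left Kan extension from $\Sp_{wb}$ to $\Sp_{>-\infty}$, using left completeness of the target;
\item a right Kan extension from $\Sp_{>-\infty}$ to $\Sp$, via the dual of Lemma~\ref{lem:weight-comp-lkan} and right completeness of the target.
\end{itemize}
This gives you preservation of connectives for free and avoids the recognition criterion. The price is the bookkeeping $\rc{\Sp}\simeq\lrc{(\Sp_{wb})}$ under $\Sp_{wb}$. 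That equivalence really comes from $\lc{(\Sp_{wb})}\simeq\Sp_{>-\infty}$ together with the dual of Theorem~\ref{thm:upw-comp}(2), not from Proposition~\ref{prop:lr-complete} alone. Also, ``agree on $\Sp_{wb}$ by exactness and sums'' needs the unit, not just $I_\bullet\tensor-$, to preserve infinite sums of spheres. This holds because the unit restricts to an equivalence $\Sp_{wb}\simeq\Ind\Coh(\Sp)_{wb}$, and the heart of $\Ind\Coh(\Sp)$ is closed under sums in $\Ind\Coh(\Sp)$. The paper leaves the same identification on $\Sp_{wb}$ implicit.

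In (2) there is a false intermediate step: you claim the value of $I_\bullet\tensor X$ at $C$ is $\hom(C,I_\Z\tensor X)$. Under $\Ind\Coh(\Sp)\simeq\Fun^{\Ex}(\Coh(\Sp)^\op,\Sp)$, tensoring with $X$ is pointwise, so the value is $\hom(C,I_\Z)\tensor X$. These differ already for $C=\Z$ and $X=\F_p$: $I_\Z\tensor\F_p=0$, while $\hom(\Z,I_\Z)\tensor\F_p\simeq\Z\tensor\F_p\neq 0$.

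Lemma~\ref{lem:ex-tensor} does not apply, because $I_\Z\tensor X$ is not a uniformly bounded above colimit of objects of $\Coh(\Sp)$. Taken literally, your step would give $\lim_n\hom(\hom(\tau_{\le n}\S,I_\Z),I_\Z\tensor X)$. For the bounded below spectrum $X=\F_p$ this limit vanishes, contradicting the claimed equivalence.

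The formula you actually write next, $\lim_n\hom(\hom(\tau_{\le n}\S,I_\Z),I_\Z)\tensor X$, is the correct one. Justify it as the paper does. First, $\hom(jC,-)$ preserves colimits and hence commutes with $-\tensor X$. Second, $\hom(jC,I_\bullet)\simeq\colim_k\hom(C,\tau_{\ge -k}I_\Z)\simeq\hom(C,I_\Z)$ by Example~\ref{ex:corep-bdd-t}, using that $C$ is bounded below.
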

\begin{proof}
    By the above observations we know that the restriction to $\Sp_{wb}$
    is a weight completion.
    In particular, $I_\bullet$ lies in the weight heart of $\Ind\Coh(\Sp)$.
    Thus $I_\bullet \tensor -$ preserves all operations
    under which $\Sp_{w \leq 0}$ is generated by $\S$,
    and sends $\S$ into $\Ind\Coh(\Sp)_{w \leq 0}$,
    so preserves weight-coconnectives.
    Similarly, since both sides are left complete,
    both $\Sp_{w \geq 0}$ and $\Ind\Coh(\Sp)$ are generated
    under colimits by the respective weight hearts,
    so the fact that $I_\bullet \tensor -$ is an equivalence on weight hearts
    and preserves colimits shows that it also preserves weight connectives.
    Thus $I_\bullet \tensor -$ is weight exact
    and an equivalence on weight-bounded objects.
    Lemma \ref{lem:upw-comp-eqv}(2) shows that the map is then also an equivalence on weight bounded below objects,
    and we deduce from the dual of Corollary \ref{cor:easy-left-comp}
    that $I_\bullet \tensor -$ is a right weight completion.

    The proof also verified (1).
    For (2), note that the special case $J=*$
    of Lemma \ref{lem:sum-iz} and the fact
    that the higher homotopy groups of $\S$ are torsion
    allows us to deduce that
    $\hom(\tau_{\geq n+1}\S,I_\Z) \in \Sp_{t \leq -(n+2)}$,
    and hence the canonical map
    \[
        \tau_{\geq -(n+1)}\hom(\tau_{\leq n}\S,I_\Z) \to \tau_{\geq -(n+1)}\hom(\S,I_\Z)
        \simeq \tau_{\geq -n}I_\Z
    \]
    is an equivalence.
    Moreover, we can also deduce from said Lemma
    that $\hom(\tau_{\leq n}\S,I_\Z) \in \Sp_{\geq -(n+1)}$,
    which yields a natural identification
    $\hom(\tau_{\leq n}\S, I_\Z) \simeq \tau_{\geq -(n+1)}I_\Z$.
    In particular, we obtain an equivalence of $\Ind$-systems
    $I_\bullet \simeq \indcolim{n} \hom(\tau_{\leq n}\S,I_\Z)$.
    This yields for any $X \in \Sp$
    \begin{align*}
        \hom_{\Ind\Coh(\Sp)}(I_\bullet, I_\bullet \tensor X)
        &\simeq \lim_n \colim_k \hom_{\Ind\Coh(\Sp)}(j\hom(\tau_{\leq n}\S,I_\Z), (j\tau_{\geq -k}I_\Z) \tensor X)\\
        &\simeq \lim_n \colim_k \hom_{\Sp}(\hom(\tau_{\leq n}\S,I_\Z), (\tau_{\geq -k}I_\Z)) \tensor X\\
        &\simeq \lim_n \hom_{\Sp}(\hom(\tau_{\leq n}\S, I_\Z), I_\Z) \tensor X\\
        &\simeq \lim_n (\tau_{\leq n}\S \tensor X)
    \end{align*}
    where we use that $\hom(j\hom(\tau_{\leq n}\S,I_\Z),-)$
    preserves colimits and hence the $\Sp$-tensoring,
    and that $\hom(\tau_{\leq n}\S,I_\Z) \in \Sp_{\geq -(n+1)}$
    in combination with Example \ref{ex:corep-bdd-t}
    to pull the colimit over $k$ back into the hom.
    Finally, the last equivalence uses that $\tau_{\leq n}\S$
    has finitely generated homotopy groups in each degree,
    so that the canonical map $\tau_{\leq n}\S \to \hom(\hom(\tau_{\leq n}\S,I_\Z),I_\Z)$ is an equivalence.
\end{proof}

Anderson Duality induces an involution $\hom(-,I_\Z) \colon \Coh(\Sp)^\op \simeq \Coh(\Sp)$, which in turn induces
\[
    \Phi \colon \Ind(\Coh(\Sp)^\op) \simeq \Ind\Coh(\Sp)
\]
which sends $j\Z \mapsto j\Z$. In particular, if we also endow
$\Ind(\Coh(\Sp)^\op)$ with the weight structure compactly
generated by $j\Z$, then $\Phi$ is weight exact.
In particular, also the composite $\Phi^{-1} \circ (I_\bullet \tensor -)$
is a right weight completion of the standard weight structure on $\Sp$.
We can also describe the composite functor $\Phi^{-1} \circ (I_\bullet \tensor -)$ as
\[
    \Sp
    \xto{I_\bullet \tensor -} \Ind\Coh(\Sp)
    \xto[\simeq]{\Phi^{-1}} \Ind(\Coh(\Sp)^\op)
    \xto[\simeq]{} \Fun^\Ex(\Coh(\Sp),\Sp),\ X \mapsto (C \mapsto C \tensor X)
\]
Upon identifying $\Fun^\Ex(\Coh(\Sp), \Sp)$ with left modules over the ``integral Steenrod algebra'' $A \coloneqq \hom(\Z,\Z) \in \Alg(\Sp)$
by restriction to the generator $\Z \in \Coh(\Sp)$,
the above composite functor becomes taking homology.

\begin{proposition}\label{prop:right-comp-steenrod}
    The category $\LMod(A)$ of left modules over the integral Steenrod
    algebra admits a weight structure such that
    \begin{enumerate}
        \item The equivalence $\Ind(\Coh(\Sp)) \simeq \LMod(A)$
            is weight exact.
            In other words, the weight structure on $\LMod(A)$
            is compactly generated by $A$.
            In particular, $A \in \LMod(A)_{w \leq 0}$.

        \item $\LMod(A)_{w \geq 0}$ consists of the $A$-modules
            which are underlying connectives.

        \item The functor $\Z \tensor -\colon \Sp \to \LMod(A)$
            is a right weight completion,
            and in particular restricts to an equivalence
            $\Sp_{w>-\infty} \simeq \LMod(A)_{w>-\infty}$
            with inverse given by the right adjoint $\hom_A(\Z,-)$.
            The unit of this adjunction can be identified with
            \[
                X \to \hom_A(\Z, \Z \tensor X) \simeq \lim_n (\tau_{\leq n}\S \tensor X)
            \]
            and is an equivalence on bounded below spectra $X$.

        \item Since the weight structure is compactly generated, it admits an adjacent $t$-structure.
            With respect to this, the right adjoint $\hom_A(\Z,-) \colon \LMod(A) \to \Sp$
            is a right $t$-completion. Hence
            \[
                \wrc{\Sp} \simeq \LMod(A)
                \quad\text{and}\quad
                \trc{\LMod(A)} \simeq \Sp.
            \]
            In particular, we also have $\K(\{\S\}^\oplus) \simeq \Sp$
            and $\Kw(\{\S\}^\oplus) \simeq \LMod(A)$,
            and the above equivalences are an instance of Remark \ref{rem:k-vs-kw} where we do have comparison functors in both directions.
    \end{enumerate}
\end{proposition}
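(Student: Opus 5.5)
The plan is to transport everything along the chain of equivalences already set up before the statement. We have $\Ind\Coh(\Sp)\xleftarrow{\Phi}\Ind(\Coh(\Sp)^\op)\simeq\Fun^\Ex(\Coh(\Sp),\Sp)\simeq\LMod(A)$, where the last equivalence restricts to the generator $\Z$. Schwede--Shipley style reasoning gives it, since $\Z$ generates $\Coh(\Sp)$ as a thick subcategory and $\End(\Z)^{\op}$ identifies appropriately with $A$.

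\textbf{Point (1).} We define the weight structure on $\LMod(A)$ as the one compactly generated by $A$ via Theorem \ref{thm:gen-weight}. The composite equivalence sends $j\Z$ to $A$: in $\Fun^\Ex(\Coh(\Sp),\Sp)$ the image of $j\Z$ is $\hom(\Z,-)$, or equivalently $C\mapsto C\tensor D\Z$, and its value at $\Z$ is $A$. Since compactly generated weight structures are determined by the generator through Theorem \ref{thm:gen-weight}(1), the equivalence is weight exact, and $\Phi$ is weight exact as noted above. This gives (1).

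\textbf{Point (2).} By definition, $M$ is weight connective iff $\hom_A(A,M)$, which is the underlying spectrum of $M$, is connective.

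\textbf{Point (3).} We identify the composite $\Sp\to\LMod(A)$ with $X\mapsto\Z\tensor X$. Under the displayed description $X\mapsto(C\mapsto C\tensor X)$, evaluating at $\Z$ gives $\Z\tensor X$ with its $A$-action. Corollary \ref{co_functor} then yields that $\Z\tensor-$ is a right weight completion and an equivalence on bounded below objects; its right adjoint is $\hom_A(\Z,-)$. For the unit, we transport the formula from Corollary \ref{co_functor}(2), or compute directly: $\Z=\colim$ in $\LMod(A)$, and $\hom_A(\Z,\Z\tensor X)\simeq\lim_n\tau_{\leq n}\S\tensor X$. The main point to check is that the adjunctions match under the equivalences, meaning that the right adjoint of $I_\bullet\tensor-$ corresponds to $\hom_A(\Z,-)$. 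This holds because $I_\bullet$ corresponds to the module $\Z$, being the image of $\S$.

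\textbf{Point (4), the main obstacle.} Proposition \ref{prop:ind-weight}(2), or Theorem \ref{thm:gen-weight} with \cite[1.4.4.11]{HA}, gives the adjacent $t$-structure on $\LMod(A)$ with connectives the underlying connectives. The strategy is to apply Theorem \ref{thm:k}, or rather Corollary \ref{cor:right-comp-via-embed}, with the target $\Sp$, which is right $t$-complete. We must establish three things:
\begin{itemize}
\item[(i)] $\hom_A(\Z,-)$ is $t$-exact. For connectives this is the dual of Lemma \ref{lem:saturated}(2), since $\Z\tensor-$ preserves coconnectives; for coconnectives use adjointness and the fact that $\Z\tensor-$ preserves connectives.
\item[(ii)] On $\LMod(A)_{>-\infty}$, which equals the weight bounded below part because the structures are adjacent, it is an equivalence onto $\Sp_{>-\infty}$ by (3).
\item[(iii)] $\Sp_{>-\infty}$ is right dense in $\Sp$ with $\Sp_{\geq0}$ dense, both of which are trivial.
\end{itemize}
Corollary \ref{cor:right-comp-via-embed} then identifies $\Sp$ as the right $t$-completion of $\LMod(A)_{>-\infty}$. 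By Theorem \ref{thm:t-dw-comp}(2), this is the right $t$-completion of $\LMod(A)$, with the functor being the Kan extension, which agrees with $\hom_A(\Z,-)$ since that functor preserves sequential colimits growing in coconnectivity by Example \ref{ex:corep-bdd-t}.

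The final identifications $\K(\{\S\}^\oplus)\simeq\Sp$ and $\Kw(\{\S\}^\oplus)\simeq\LMod(A)$ follow. The first comes from Example \ref{example_modules} with $R=\S$. The second holds because $\Sp$ is left complete, so $\LMod(A)=\wrc{\Sp}$ is complete by Proposition \ref{prop:lr-complete} with heart $\{\S\}^{\oplus,\idem}$, and Theorem \ref{thm_adjoint}(2) applies.
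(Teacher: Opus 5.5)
Your proposal is correct and follows the same route as the paper. You transport the compactly generated weight structure along $\Ind\Coh(\Sp)\simeq\LMod(A)$, under which $j\Z\mapsto A$ and the composite from $\Sp$ becomes $\Z\tensor-$; you read off (1)--(3) from Corollary~\ref{co_functor}; and for (4) you obtain $t$-exactness of $\hom_A(\Z,-)$ from saturation and adjointness and then invoke Corollary~\ref{cor:right-comp-via-embed}. The only slips do not affect the argument: the side remark that the image of $j\Z$ is $C\mapsto C\tensor D\Z$ is not justified, since $\Z$ is not dualizable, and the connective half of your step (i) is Lemma~\ref{lem:saturated}(2) itself rather than its dual.
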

\begin{proof}
    The image of $\S$ under the composite map $\Sp \to \Fun^\Ex(\Coh(\Sp),\Sp) \simeq \LMod(A)$ is precisely $\Z$,
    and the image of $j\Z$ under the equivalence
    $\Ind(\Coh(\Sp)) \simeq \LMod(A)$ is precisely $A$.
    Hence, transferring the weight structure through these equivalences,
    we recognize it as being compactly generated by $A$.
    From this points (1)-(3) follow from Corollary \ref{co_functor}.
    To see (4), we need to argue that $\hom_A(\Z,-)$ is $t$-exact.
    By adjacency of the $t$-structures and the weight-exact equivalence from (3)
    we see that both $\hom_A(\Z,-)$ and its left adjoint $\Z \tensor -$ preserve $t$-connectives.
    The analogue of Lemma \ref{lem:saturated}(2)
    shows that $\hom_A(\Z,-)$ is then actually $t$-exact,
    and we deduce from Corollary \ref{cor:right-comp-via-embed} that it is a right $t$-completion.
\end{proof}

One can also formulate Proposition \ref{prop_andersoncompletion} in a similar manner by identifying $\Ind\mathrm{Coh}(\Sp)$ with right modules $\RMod(A) \simeq \LMod(A^\op)$.
Chasing through the identifications, left completion for the Anderson weight structure described there becomes
\[
    \Sp \to \RMod(A) \qquad X \mapsto \hom(\Z, X).
\]
The Anderson duality equivalence $\Phi$ then becomes the observation that we have an equivalence of algebras
$A \simeq A^\op$ induced by applying the functoriality of Anderson duality to $A \coloneqq \hom(\Z,\Z)$
using that $\Z$ is Anderson selfdual.

\begin{remark}
    Following \cite[Section 5.2]{Carlsson},
    the unit map $X \to \hom_A(\Z, \Z \tensor X)$
    of the above adjunction
    can be viewed as a completion at $\Z$ inspired by Morita Theory.
    More generally, given a map $B \to C$ in $\CAlg(\Sp)$
    we consider the endomorphism algebra $E \coloneqq \hom_B(C,C)$.
    Then for any $B$-module $M$, the basechange $C \tensor_B M$
    is also canonically an $E$-module,
    and we may consider the canonical map
    $M \to \hom_E(C, C \tensor_B M)$.
    Endofunctors of this form are studied in \cite{DGI}
    and Carlsson improves on some of their results
    to deduce that this map
    identifies with the Bousfield--Kan / Adams style nilpotent completion
    $M \to \Tot(C^{\tensor_B \bullet+1} \tensor_B M)$
    under various finiteness hypotheses on the map $B \to C$.

    To the best of our knowledge, none of their results apply
    to our setting where $B \to C$ is $\S \to \Z$
    and $E = A$ is the integral Steenrod algebra.
    Nevertheless, there is a natural comparison
    map $\Tot(\Z^{\tensor \bullet+1} \tensor X) \to \hom_A(\Z, \Z \tensor X)$,
    from the nilpotent completion of $X$ at $\Z$
    to this Morita-inspired completion of $X$ at $\Z$.
    By the above results, it follows that this map
    is an equivalence for bounded below $X$,
    where the nilpotent completion at $\Z$ just recovers $X$ itself.

    This suggests that it might always be an equivalence,
    but we expect this to be false;
    one can reduce this to checking that our unit map above
    $X \to \hom_A(\Z, \Z \tensor X)$ is always an equivalence
    for $X$ of the form $X = \Z \tensor Y$.
    Since the bounded below case is already known,
    a crucial test case is $X = \bigoplus_{n \geq 0}\Z[-n]$.
    Based on an explicit computation
    of an algebraic analogue of this question
    (considering maps of graded modules over the mod $p$ Steenrod algebra)
    we expect this to fail.

    By the identification of the unit map part (3)
    of the above Proposition,
    the question is equivalent to the canonical map
    \[
        \Z \tensor Y \to \lim_n (\tau_{\leq n}\S \tensor \Z \tensor Y)
    \]
    being an equivalence for all $Y$.
    Again this is easily deduced for bounded below $Y$,
    but the case $Y = \bigoplus_{n \geq 0}\S[-n]$ has no reason to be true.
\end{remark}

\begin{remark}
    For any ring spectrum $R$, there is a natural weight structure on $\LMod(R)$ whose connective objects are those whose underlying spectra are connective; this structure is compactly generated by $R$ itself. If $R$ is coconnective, then the weight coconnectives are contained in the underlying coconnectives, but are not so easy to describe explicitly (apart from being generated from $R$ under operations). Moreover, this weight structure admits an adjacent $t$-structure, since the connective objects are closed under colimits and extensions. Note that this is in a sense dual to the more commonly used $t$-structure in which coconnective objects are those whose underlying spectra are coconnective (see e.g.~\cite{Burklund-Levy}).

    However, it is in general difficult to describe the $t$-coconnective objects or the associated connective cover functor. In some cases, the situation even degenerates: if $R$ admits a unit in positive degree, then the only connective object is zero. On the other hand, if $R$ is connective, this recovers the standard $t$-structure.

    We now consider the situation of Proposition~\ref{prop:right-comp-steenrod}, where $R = A$ is the integral Steenrod algebra, which is coconnective. In this case the connective cover functor admits a concrete description. Namely, it is given by
    \[
        M \;\mapsto\; \mathbb{Z} \otimes \bigl(\tau_{\geq 0}\hom_A(\mathbb{Z}, M)\bigr).
    \]
    In particular we find that the $t$-coconnective objects are those $M$ such that $\hom_A(\Z,M)$ is connective.

    To see this formula, one uses the adjunction $\Z \tensor - \colon \Sp \rightleftarrows \LMod(A) \noloc \hom_A(\Z,-)$
    together with the fact that every connective object in $\LMod(A)$ is of the form $\mathbb{Z} \otimes X$ for some connective spectrum $X$.
\end{remark}

\begin{remark}\label{remark_weight}
In Corollary \ref{cor:sp-st-vs-and} we saw that the functor
\[
I_\Z \otimes - : \Sp \to \Sp
\]
restricts to an equivalence on weight bounded objects, where the source is equipped with the
standard weight structure and the target with the Anderson weight structure. However, this functor is not weight exact in general (see Remark \ref{remark_bounded}). If it were weight exact, it would induce an equivalence on weight completions fitting into a commutative square
\[
\begin{tikzcd}
\mathrm{Sp} \arrow[r, "I_{\mathbb{Z}} \otimes -"] \arrow[d, "\mathbb{Z} \otimes -"'] & \mathrm{Sp} \arrow[d, "\mathrm{hom}(\Z {,} -)"] \\
\mathrm{LMod}(A) \arrow[r, dashed, "?"', "\simeq"] & \mathrm{RMod}(A).
\end{tikzcd}
\]
However the dashed arrow does not exist (see the computation for $\Z \in \Sp$ below).
On the other hand, there is an equivalence of weight completions
\[
    \Psi \colon \LMod(A) \simeq \RMod(A),
\]
induced by the equivalence $A \simeq A^\op$, which acts as the identity on underlying spectra. In fact we obtain a lax commuting diagram
\[
\begin{tikzcd}
\mathrm{Sp}
  \arrow[r, "I_{\mathbb{Z}} \otimes -"]
  \arrow[d, "\mathbb{Z} \otimes -"']
& \mathrm{Sp}
  \arrow[d, "\mathrm{hom}(\mathbb{Z}{,}{-})"] \\
\mathrm{LMod}(A)
  \arrow[r, "\Psi"', "\simeq"{above}]
& \mathrm{RMod}(A)
\arrow[Rightarrow, from=2-1, to=1-2, shorten <=10pt, shorten >=10pt, "\not\simeq"]
\end{tikzcd}
\]
which strongly commutes on weight bounded objects, but not in general.
Namely, for $\mathbb{Z} \in \Sp$ we have
\[
\Psi(\mathbb{Z} \otimes \mathbb{Z}) \not\simeq \mathrm{hom}(\mathbb{Z}, \mathbb{Z} \otimes I_\Z),
\]
since the target is equivalent to $\mathbb{Q}$ whereas the source is not. Still we want to think of the equivalence $\Psi$ as the `correct' extension of Corollary \ref{cor:sp-st-vs-and}.
It is natural to ask whether this square can nevertheless be understood as arising from the functor $I_\Z \otimes -$, despite its failure to be weight exact--more precisely, whether non-weight-exact (or more generally non-weight-bounded) functors can still induce functors on weight completions in some weaker sense. At present, we do not know a satisfactory answer to this question.
\end{remark}

\section{Weight and $t$-structures on Tate categories}\label{sec:tate}

In this section we apply our results to Tate categories.
We show that if $\cC$ is a weakly weighted category,
then $\Tate(\cC)$ always admits a weight structure
whose heart recovers Drinfeld's Tate vector spaces for $\cC = \Perf(k)$
where $k$ is a field, cf.~\cite{Drinfeld}.
Moreover, we give sufficient conditions
under which there exists an adjacent weak $t$-structure
on $\Tate(\cC)$ or its left completion,
as well as a concrete example showing that generally
one can only hope for a \emph{weak} adjacent $t$-structure
(Example \ref{ex:only-weak-t}).
The motivation is that $\wtc{\Tate(\cC)} = \K(\Tate(\cC)_{w=0})$
yields an interesting category of Tate objects which is entirely
determined by the weight heart, as we saw in Section \ref{sec:adj}.
In forthcoming work \cite{CondDual}, for certain algebraic examples of $\cC$ such as $\cD(\Z)^\omega$,
we relate this to a full subcategory of condensed $\Ind$-objects in $\cC$.
Let us begin by recalling the notion of Tate objects and Tate categories.

\begin{definition}
    Let $\cC$ be a small stable category.
    We let $\Tate(\cC) \subseteq \Ind(\Pro(\cC))$
    denote the smallest stable category spanned by $\Pro(\cC)$ and $\Ind(\cC)$.
\end{definition}

This was investigated by Hennion \cite{Hennion}, although we warn
the reader that he worked in an idempotent complete context,
and he would refer to our Tate objects as elementary Tate objects.

\begin{proposition}[{{\cite[Theorem 3]{Hennion}}}]
    The category $\Tate(\cC)$ admits an equivalent description
    as the full subcategory of $\Ind(\Pro(\cC))$
    on those objects $X$ which sit in an extension
    $X^p \to X \to X^i$, where $X^p \in \Pro(\cC)$ and $X^i \in \Ind(\cC)$.
\end{proposition}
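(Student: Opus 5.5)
The plan is to show that the full subcategory $\cE \subseteq \Ind(\Pro(\cC))$ of extensions $X^p \to X \to X^i$, with $X^p \in \Pro(\cC)$ and $X^i \in \Ind(\cC)$, is a stable subcategory. Since $\cE$ clearly contains $\Pro(\cC)$ and $\Ind(\cC)$, and is contained in any stable subcategory containing both, this yields $\cE = \Tate(\cC)$.

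Closure under shifts and finite sums is immediate, since $\Pro(\cC)$ and $\Ind(\cC)$ are stable and the extension data can be taken termwise. The substance is closure under cofibers. The key input is an orthogonality statement in $\Ind(\Pro(\cC))$: the mapping spectrum $\hom(I, P)$ from an object $I \in \Ind(\cC)$ to an object $P \in \Pro(\cC)$. Write $I = \colim_\alpha c_\alpha$ in $\Ind(\cC)$ (viewed inside $\Ind\Pro(\cC)$ via the constant pro-object) and $P = \lim_\beta d_\beta$ in $\Pro(\cC)$. Since each $c_\alpha$ is compact in $\Ind\Pro(\cC)$, we get
\[
\hom(I,P) = \lim_\alpha \hom(c_\alpha, P) = \lim_\alpha \lim_\beta \hom(c_\alpha, d_\beta),
\]
using that $c_\alpha$ is cofiltered-cocompact in $\Pro(\cC)$, i.e.\ that $\Pro(\cC)$-mapping spectra into limits of constant objects are computed as limits. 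This shows maps from Ind-objects to Pro-objects are controlled, and in particular every such map factors through a constant object.

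Given a map $f \colon X \to Y$ of objects of $\cE$, I would first realize the composite $X^p \to X \to Y \to Y^i$ as a map from a Pro-object to an Ind-object. The main step is then to "swap" the order: show that any $Z$ sitting in an extension $I \to Z \to P$, with $I$ Ind and $P$ Pro, also lies in $\cE$. I expect this to be the main obstacle. The first approach I would try is the factorization above: the boundary map $P \to \Sigma I$ should, after writing $P = \lim_\beta d_\beta$ and using compactness-type arguments, factor through some $d_\beta \to \Sigma I$. I would then rearrange via the octahedral axiom, putting $\fib(P \to d_\beta) \in \Pro(\cC)$ into the Pro-part and $d_\beta$ together with $I$ into the Ind-part.

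With the swap lemma in hand, closure under cofibers reduces to an octahedral-axiom bookkeeping argument. From $f$, form the cofiber and filter it by $\cofib(X^p \to Y^p)$-type pieces. Whenever an Ind-piece precedes a Pro-piece in this filtration, apply the swap lemma, so that the filtration can be rewritten with all Pro-pieces first and all Ind-pieces last. Collecting the pieces then gives an extension of the required form.
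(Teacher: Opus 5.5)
The paper only cites Hennion for this statement and gives no proof, so I'm judging your argument on its own. The architecture is sound: show that $\cE$ is closed under extensions via a swap lemma, then view $\cofib(f)$ as sitting in $Y\to\cofib(f)\to\Sigma X$. However, \emph{the key lemma is stated in the wrong direction, and as stated it is false}. Both equalities in your formula for $\hom(I,P)$ are purely formal. They use only the universal property of the colimit $I$ and of the limit $P$ in $\Pro(\cC)$, so they yield no factorization through $\cC$. For instance, in $\Ind\Pro(\Perf(k))$ the canonical map $\bigoplus_{\N}k\to\prod_{\N}k$ does not factor through any $c\in\Perf(k)$, since otherwise every $k^n$ would be a retract of $c$. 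The finiteness lives in the other direction. Because $P$ is compact in $\Ind\Pro(\cC)$ and each $c_\alpha\in\cC$ is cocompact in $\Pro(\cC)$, we have
\[
\hom(P,I)\simeq\colim_\alpha\colim_\beta\hom_\cC(d_\beta,c_\alpha).
\]
So every map from a Pro-object to an Ind-object factors through an object of $\cC$.

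The boundary map $P\to\Sigma I$ in your swap is of exactly this type, so the swap goes through once the lemma is corrected. Factor it as $P\to d\to\Sigma I$ with $d\in\cC$, and set $F=\fib(P\to d)\in\Pro(\cC)$. The obstruction $F\to P\to\Sigma I$ to lifting $F\to P$ along $Z\to P$ factors through the null composite $F\to P\to d$, so it vanishes. Then $\cofib(F\to Z)\simeq\fib(d\to\Sigma I)$ lies in $\Ind(\cC)$, whose image in $\Ind\Pro(\cC)$ is a stable subcategory.

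For the bookkeeping, note there is no map $X^p\to Y^p$ a priori, so ``$\cofib(X^p\to Y^p)$-type pieces'' do not exist. Instead, use the filtration $Y^p,Y^i,\Sigma X^p,\Sigma X^i$ of $\cofib(f)$ and swap the middle two pieces.

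Your own first observation actually gives a shorter route that needs no swap. The composite $X^p\to Y^i$ factors as $X^p\to d\to Y^i$ with $d\in\cC$. With $P'=\fib(X^p\to d)$, the map $P'\to Y$ then lifts to $Y^p$. This gives a map of fiber sequences $(P'\to X\to I')\to(Y^p\to Y\to Y^i)$, and the sequence $d\to I'\to X^i$ shows $I'\in\Ind(\cC)$. Then $\cofib(P'\to Y^p)\to\cofib(f)\to\cofib(I'\to Y^i)$ is the required decomposition of $\cofib(f)$.
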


Let $\cC$ be a weakly weighted category.
Recall from Proposition \ref{prop:ind-weight} that $\Ind(\cC)$
attains an induced weight structure compactly generated by $\cC_{\leq 0}$.
Dually, $\Pro(\cC) = \Ind(\cC^\op)^\op$ attains a weight structure
cocompactly cogenerated by $\cC_{\geq 0}$, in that
\[
    \Pro(\cC)_{\leq 0} = \{X \in \Pro(\cC) \mid \hom(X,c) \in \Sp_{\geq 0}\text{ for }c \in \cC_{\geq 0}\}
\]
and $\Pro(\cC)_{\geq 0}$ is the smallest class containing $\cC_{\geq 0}$
and closed under finite colimits, extensions, retracts, products and sequential
limits along maps whose fiber already lies in the class.

\begin{theorem}\label{thm:tate-weight}
    For a weakly $\ell$-weighted $\cC$,
    there exists a weight structure on $\Tate(\cC)$ such that:
    \begin{enumerate}
        \item The inclusions $\cC \subseteq \Ind(\cC),\Pro(\cC) \subseteq \Tate(\cC)$ are weight exact.

        \item For $-\infty \leq a \leq b \leq \infty$ where at least one of $a,b$ is finite,
            the category $\Tate(\cC)_{[a,b]}$ is the retract-closure\footnote{This retract-closure
                is taken in $\Tate(\cC)$, which generally does not have many retracts.
            } of the full subcategory
            on those Tate objects $X^p \to X \to X^i$ with $X^p \in \Pro(\cC)_{[a,b]}$
            and $X^i \in \Ind(\cC)_{[a,b]}$.

        \item For every $X \in \Tate(\cC)$ there is a weight decomposition $X_{\leq 0} \to X \to X_{\geq 1}$
            where both $X_{\leq 0}$ and $X_{\geq 1}$ are actually (not just retracts of) extensions
            $X_{\leq 0}^p \to X_{\leq 0} \to X_{\leq 0}^i$ and $X_{\geq 1}^p \to X_{\geq 1} \to X_{\geq 1}^i$.

        \item If $\ell = 0$, then $\Tate(\cC)_{=0}$
            is the retract-closure of the full subcategory
            on Tate-objects of the form $\bigoplus_i c_i \oplus \prod_j d_j$
            with $c_i,d_j \in \cC_{=0}$ (where we view $\bigoplus_i c_i$ as $\Ind$- and $\prod_j d_j$ as $\Pro$-object).

        \item If $\cD$ is another weakly weighted category
            and $f \colon \cC \to \cD$ an exact functor of weight amplitude $[a,b]$,
            then $\Tate(f) \colon \Tate(\cC) \to \Tate(\cD)$
            is of weight amplitude $[a-\ell,b+\ell]$,
            and similarly for the (half)-open intervals.
    \end{enumerate}
\end{theorem}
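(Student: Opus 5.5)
The plan is to \emph{define} the weight structure on $\Tate(\cC)$ by the description in (2). Concretely, let $\Tate(\cC)_{\geq 0}$ be the retract-closure in $\Tate(\cC)$ of the Tate objects $X^p \to X \to X^i$ with $X^p \in \Pro(\cC)_{\geq 0}$ and $X^i \in \Ind(\cC)_{\geq 0}$. Define $\Tate(\cC)_{\leq 0}$ in the same way with $\Pro(\cC)_{\leq 0}$ and $\Ind(\cC)_{\leq 0}$. Here $\Ind(\cC)$ carries the weight structure of Proposition \ref{prop:ind-weight} and $\Pro(\cC)$ carries its dual. The work then splits into three steps: orthogonality, the existence of weight decompositions (which gives (3)), and the closure properties. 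Once these are in place, (1) and (2) hold by construction, and (4) and (5) are formal consequences.

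\emph{Orthogonality.} Since both classes are closed under extensions and retracts, it suffices to show that $\hom(A,B) \in \Sp_{\geq 0}$ whenever $A$ is a pro- or ind-piece of weight $\leq 0$ and $B$ is a pro- or ind-piece of weight $\geq 0$. The pro/pro and ind/ind cases are orthogonality inside $\Pro(\cC)$ and $\Ind(\cC)$, using that $\Pro(\cC),\Ind(\cC) \subseteq \Ind(\Pro(\cC))$ are fully faithful.

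For $A \in \Ind(\cC)_{\leq 0}$ and $B \in \Pro(\cC)_{\geq 0}$, I would use the generation description of Proposition \ref{prop:ind-weight}(1b). Fix $B$ and consider the class of $A$ with $\hom(A,B)$ connective. It contains $\cC_{\leq 0}$, since $\cC_{\leq 0} \subseteq \Pro(\cC)_{\leq 0}$. It is a left-orthogonal class, so by Lemma \ref{lem:orth-closure} it is closed under sums, extensions, retracts, finite limits and the relevant sequential colimits. Hence it contains all of $\Ind(\cC)_{\leq 0}$.

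For $A \in \Pro(\cC)_{\leq 0}$ and $B = \colim_i y_i \in \Ind(\cC)_{\geq 0}$, pro-objects are compact in $\Ind(\Pro(\cC))$, so $\hom(A,B) = \colim_i \hom(A,y_i)$. I would argue dually: fix $A$ and show that the class of $B$ with $\hom(A,B)$ connective is a right-orthogonal class containing $\cC_{\geq 0}$ and closed under colimits. This uses compactness of $A$ together with the fact that, by (the proof of) Proposition \ref{prop:ind-weight}(2), $\Ind(\cC)_{\geq 0}$ is generated under colimits and extensions from the image of $\cC_{\geq 0}$ up to the defect. This is the step where I expect the defect $\ell$ to cause trouble, because $\Ind(\cC)_{\geq 0}$ is only sandwiched between $\Ind(\cC_{\geq 0})$ and $\Ind(\cC_{\geq -\ell})$. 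If a direct generation argument fails, I would first try to identify $\Ind(\cC)_{\geq 0}$ as the right orthogonal of $\cC_{\leq -1}$ and test against pro-objects via the cofiltered presentation.

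\emph{Weight decompositions.} Given $X^p \to X \to X^i$, choose decompositions $X^p_{\leq 0} \to X^p \to X^p_{\geq 1}$ in $\Pro(\cC)$ and $X^i_{\leq 0} \to X^i \to X^i_{\geq 1}$ in $\Ind(\cC)$. Note that $\Ind(\Pro(\cC))$ gives genuine weight structures with no defect, which is what makes (3) possible even for $\ell > 0$. Form $X/X^p_{\leq 0}$; it is an extension of $X^i$ by $X^p_{\geq 1}$. Pull this back along $X^i_{\leq 0} \to X^i$. The resulting extension of $X^i_{\leq 0}$ by $X^p_{\geq 1}$ is classified by a map $X^i_{\leq 0} \to \Sigma X^p_{\geq 1}$. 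This map lies in $\pi_{-1}\hom(X^i_{\leq 0}, X^p_{\geq 1})$, which vanishes because the cross orthogonality above shows $\hom(X^i_{\leq 0}, X^p_{\geq 1})$ is $1$-connective. So the extension splits, and an octahedral diagram chase as in Lemma \ref{lem:adj-t-on-wt-cocon} gives a cofiber sequence $X_{\leq 0} \to X \to X_{\geq 1}$. Here $X_{\leq 0}$ is an extension of $X^i_{\leq 0}$ by $X^p_{\leq 0}$, and $X_{\geq 1}$ is an extension of $X^i_{\geq 1}$ by $X^p_{\geq 1}$. This proves (3), and with orthogonality it establishes the weight structure and (1). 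For (2), the containment $\supseteq$ is the definition; conversely, an object of $\Tate(\cC)_{[a,b]}$ is a retract of the middle term of its own decomposition, by the usual retract argument of Lemma \ref{lem:saturation}.

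\emph{Remaining points.} For (4), take $\ell = 0$. Proposition \ref{prop:ind-weight}(4) and its dual identify the hearts of $\Ind(\cC)$ and $\Pro(\cC)$ as retracts of $\bigoplus_i c_i$ and of $\prod_j d_j$ respectively. A heart extension $X^p \to X \to X^i$ splits, since $\Ind$-heart objects are projective against $\Pro$-heart objects by orthogonality. Combined with (2), this gives the stated description. For (5), Proposition \ref{prop:ind-weight}(6) and its dual show that $\Ind(f)$ and $\Pro(f)$ have amplitude $[a-\ell,b]$ and $[a,b+\ell]$. Applying these to the pieces of the description in (2) gives amplitude $[a-\ell,b+\ell]$ for $\Tate(f)$.
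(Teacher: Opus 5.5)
Your outline is the paper's proof. It uses the same definition of the (co)connectives via (2) and the same reduction of orthogonality to pure $\Pro$/$\Ind$ pieces. It treats the cross term $\hom(\Ind(\cC)_{\leq 0},\Pro(\cC)_{\geq 0})$ the same way, through the generation of $\Ind(\cC)_{\leq 0}$ from $\cC_{\leq 0}$. It builds the weight decompositions with the same diagram, which only needs that cross term and is fine for every $\ell$, and it argues (4) and (5) identically.

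The gap is the step you flag yourself. You never show that $\hom(A,B)$ is connective for $A\in\Pro(\cC)_{\leq 0}$ and $B\in\Ind(\cC)_{\geq 0}$.

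For $\ell=0$ your generation idea does close it. Proposition \ref{prop:ind-weight}(3) then gives $\Ind(\cC)_{\geq 0}=\Ind(\cC_{\geq 0})$. The class of $B$ with $\hom(A,B)$ connective is closed under colimits, because $A$ is compact in $\Ind(\Pro(\cC))$ and $\hom(A,-)$ is exact. It contains $\cC_{\geq 0}$ by the definition of $\Pro(\cC)_{\leq 0}$, so it contains all of $\Ind(\cC)_{\geq 0}$. This is equivalent to the paper's computation $\hom(\prolim{i} c_i,\indcolim{j} d_j)\simeq\colim_{i,j}\hom_\cC(c_i,d_j)$ with $c_i\in\cC_{\leq 0}$ and $d_j\in\cC_{\geq 0}$.

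For $\ell\geq 1$, no argument can close this step, because the claim is false. Take $\cC=\Sp^\omega$ with $\cC_{\geq 0}=\Sp^\omega_{\geq 1}$ and $\cC_{\leq 0}=\Sp^\omega_{w\leq 0}$, the standard weight coconnectives. The standard decompositions $X_{w\leq 0}\to X\to X_{w\geq 1}$ show that this is weakly $1$-weighted.
\begin{itemize}
\item $\S^1\in\Pro(\cC)_{\leq 0}$, since $\hom(\S^1,c)=\Sigma^{-1}c$ is connective for every $c\in\Sp^\omega_{\geq 1}$.
\item Under $\Ind(\cC)\simeq\Sp$ we have $\Ind(\cC)_{\geq 0}=\Sp_{\geq 0}$, so $\S\in\Ind(\cC)_{\geq 0}$.
\item Yet $\hom(\S^1,\S)\simeq\Sigma^{-1}\S$ is not connective.
\end{itemize}
Taking $\cC_{\geq 0}=\Sp^\omega_{\geq \ell}$ instead gives the same failure for every $\ell\geq 1$.

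Consequently no weight structure on $\Tate(\cC)$ can make both inclusions in (1) weight exact, and the statement fails in general for $\ell\geq 1$. The paper's own proof of this cross term has the same gap. It writes $X^p_{\leq 0}$ and $Y^i_{\geq 0}$ as pro- and ind-systems in $\cC_{\leq 0}$ and $\cC_{\geq 0}$, which is justified only when $\ell=0$.

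For general finite $\ell$, the inclusion $\Pro(\cC)_{\leq 0}\subseteq\Pro(\cC_{\leq\ell})$ only gives that $\hom(\Pro(\cC)_{\leq 0},\Ind(\cC)_{\geq 0})$ is $(-\ell)$-connective. That is orthogonality up to a shift by $\ell$, not a weight structure.
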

\begin{proof}
    We define $\Tate(\cC)_{\leq 0}$ and $\Tate(\cC)_{\geq 0}$ so that (2) is satisfied
    when $a$ or $b$ is infinite,
    which automatically guarantees (1).
    Now let $X_{\leq 0} \in \Tate(\cC)_{\leq 0}$ and $Y_{\geq 0} \in \Tate(\cC)_{\geq 0}$. We need to show that $\hom(X_{\leq 0},Y_{\geq 0})$ is connective.
    Since this is closed under retracts and extensions in both variables,
    we may reduce to the cases where $X_{\leq 0}$ and $Y_{\geq 0}$
    are pure $\Pro$- or $\Ind$-objects. The two cases where they are both $\Pro$-
    or both $\Ind$-objects follow from the weight structures on $\Pro(\cC)$ and $\Ind(\cC)$.
    Writing $X^p_{\leq 0} = \prolim{i} c_{i,\leq 0}$ and $Y^i_{\geq 0} = \indcolim{j} d_{j,\geq 0}$, we have
    \[
        \hom_{\Tate(\cC)}(X^p_{\leq 0}, Y_{\geq 0}^i)
        \simeq \colim_{i,j}\hom_\cC(c_{i,\leq 0}, d_{j,\geq 0})
    \]
    which is a filtered colimit of connective spectra, hence connective.
    The remaining case is $\hom(X^i_{\leq 0}, Y^p_{\geq 0})$.
    But the class of $X^i_{\leq 0} \in \Ind(\cC)_{\leq 0}$
    for which this hom spectrum is connective
    is closed under the operations by which $\Ind(\cC)_{\leq 0}$
    is generated from $\cC_{\leq 0}$ (cf.~Proposition \ref{prop:ind-weight}).
    Since $\cC_{\leq 0}$ is also contained in this class
    by the weight structure on $\Pro(\cC)$,
    we see that the class agrees with all of $\Ind(\cC)_{\leq 0}$.
    This concludes the proof of orthogonality of $\Tate(\cC)_{\leq 0}$
    and $\Tate(\cC)_{\geq 0}$.

    Next, we need to verify that weight decompositions exist.
    So let $X \in \Tate(\cC)$ sit in an extension $X^p \to X \to X^i$.
    Using the weight structures on $\Pro(\cC)$ and $\Ind(\cC)$,
    we pick weight decompositions $X^i_{\leq 0} \to X^i \to X^i_{\geq 1}$
    and $X^p_{\leq 0} \to X^p \to X^p_{\geq 1}$.
    By the orthogonality we just verified,
    the composite map $X^i_{\leq 0} \to \Sigma X^p_{\geq 1}$ vanishes,
    so that we obtain factorizations as indicated in the following diagram
    \[\begin{tikzcd}
        {X_{\leq 0}} & {X^i_{\leq 0}} & {\Sigma X^p_{\leq 0}} \\
        X & {X^i} & {\Sigma X^p} \\
        {X_{\geq 1}} & {X^i_{\geq 1}} & {\Sigma X^p_{\geq 1}}
        \arrow[from=1-1, to=1-2]
        \arrow[from=1-1, to=2-1]
        \arrow[dashed, from=1-2, to=1-3]
        \arrow[from=1-2, to=2-2]
        \arrow[from=1-3, to=2-3]
        \arrow[from=2-1, to=2-2]
        \arrow[from=2-1, to=3-1]
        \arrow[from=2-2, to=2-3]
        \arrow[from=2-2, to=3-2]
        \arrow[from=2-3, to=3-3]
        \arrow[from=3-1, to=3-2]
        \arrow[dashed, from=3-2, to=3-3]
    \end{tikzcd}\]
    where all sequences are (defined to be) fiber sequences.
    This gives extensions $X^p_{\leq 0} \to X_{\leq 0} \to X^i_{\leq 0}$
    showing $X_{\leq 0} \in \Tate(\cC)_{\leq 0}$,
    and dually $X_{\geq 1} \in \Tate(\cC)_{\geq 1}$,
    so that $X_{\leq0} \to X \to X_{\geq 1}$ is a weight decomposition
    in $\Tate(\cC)$. This also proves (3).

    To check (2) for finite $a$ and $b$
    (the case where one is infinite being true by definition),
    it suffices to consider the case
    $[a,b] = [0,n]$ by shifting.
    Then given $X_{[0,n]} \in \Tate(\cC)_{[0,n]}$ we can assume (up to retracts)
    that $X_{[0,n]}$ admits an extension
    $X^p_{\geq 0} \to X_{[0,n]} \to X^i_{\geq 0}$.
    As above we can pick weight decompositions
    $X^p_{[0,n]} \to X^p_{\geq 0} \to X^p_{\geq n+1}$
    and analogously for $X^i_{\geq 0}$,
    and form the diagram of fiber sequences
    \[\begin{tikzcd}
        F & {X^i_{[0,n]}} & {\Sigma X^p_{[0,n]}} \\
        {X_{[0,n]}} & {X^i_{\geq 0}} & {\Sigma X^p_{\geq 0}} \\
        {X_{\geq n+1}} & {X^i_{\geq n+1}} & {\Sigma X^p_{\geq n+1}}
        \arrow[from=1-1, to=1-2]
        \arrow[from=1-1, to=2-1]
        \arrow[dashed, from=1-2, to=1-3]
        \arrow[from=1-2, to=2-2]
        \arrow[from=1-3, to=2-3]
        \arrow[from=2-1, to=2-2]
        \arrow[from=2-1, to=3-1]
        \arrow[from=2-2, to=2-3]
        \arrow[from=2-2, to=3-2]
        \arrow[from=2-3, to=3-3]
        \arrow[from=3-1, to=3-2]
        \arrow[dashed, from=3-2, to=3-3]
    \end{tikzcd}\]
    The bottom map in the left vertical fiber sequence vanishes
    by orthogonality, which exhibits $X_{[0,n]}$
    as retract of $F$, which in turn sits in an extension
    $X^p_{[0,n]} \to F \to X^i_{[0,n]}$. This proves (2).

    To see (4), by (2) we know that if $X_{=0} \in \Tate(\cC)_{=0}$
    then it is a retract of some $Y_{=0}$ which admits
    an extension $Y^p_{=0} \to Y_{=0} \to Y^i_{=0}$.
    But since $Y^i_{=0} \to \Sigma Y^p_{=0}$ necessarily vanishes,
    this fiber sequence splits, and we get $Y_{=0} \simeq Y^p_{=0} \oplus Y^i_{=0}$. We conclude by the descriptions of $\Ind(\cC)_{=0}$ and $\Pro(\cC)_{=0}$ in the case of $\ell = 0$, using Proposition \ref{prop:ind-weight}(4) and its dual.

    Finally, (5) follows from (2), the fact that $\Tate(f)$ is exact,
    and $\Ind(f)$ respectively $\Pro(f)$ having weight amplitude
    $[a-\ell,b]$ respectively $[a,b+\ell]$ by Proposition \ref{prop:ind-weight}(6) and its dual. The argument for (half-)open intervals is analogous.
\end{proof}

\begin{corollary}
    If $\cC$ is a weighted category equipped with a weight exact equivalence $D \colon \cC^\op \xto{\simeq} \cC$,
    then this extends to a weight exact equivalence $\Tate(D) \colon \Tate(\cC)^\op \xto{\simeq} \Tate(\cC)$
    which yields a commutative diagram of weight exact functors
    \[\begin{tikzcd}
        {\Ind(\cC)^\op} & {\Tate(\cC)^\op} & {\Pro(\cC)^\op} \\
        {\Pro(\cC)} & {\Tate(\cC)} & {\Ind(\cC)}
        \arrow[hook', from=1-1, to=1-2]
        \arrow["\simeq"', from=1-1, to=2-1]
        \arrow["\simeq", from=1-2, to=2-2]
        \arrow[hook, from=1-3, to=1-2]
        \arrow["\simeq", from=1-3, to=2-3]
        \arrow[hook, from=2-1, to=2-2]
        \arrow[hook', from=2-3, to=2-2]
    \end{tikzcd}\]
    In other words, the extended equivalence acts on extensions as
    \[
        \prolim{i} c_i \to X \to \indcolim{j} d_j \qquad\mapsto\qquad \prolim{j} Dd_j \to DX \to \indcolim{i} Dc_i.
    \]
\end{corollary}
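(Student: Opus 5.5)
The plan is to define $\Tate(D)$ by combining the two functorial extensions of $D$, and to reduce weight exactness to the description of connectives and coconnectives in Theorem \ref{thm:tate-weight}(2).

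First, $D \colon \cC^\op \simeq \cC$ induces equivalences
\[
    \Ind(D) \colon \Ind(\cC^\op) = \Pro(\cC)^\op \simeq \Ind(\cC),
\]
and dually $\Pro(D) \colon \Ind(\cC)^\op \simeq \Pro(\cC)$. From these we get an equivalence
\[
    \Ind\Pro(D) \colon \Ind(\Pro(\cC))^\op \simeq \Pro(\Ind(\cC^\op))^\op \simeq \Pro(\Ind(\cC)).
\]
This does not obviously land in $\Ind(\Pro(\cC))$. Instead, I will argue directly with Hennion's description of $\Tate$ as extensions. Concretely, use the canonical fully faithful embedding $\Ind(\Pro(\cC)) \supseteq \Tate(\cC)$, and note that $\Tate(\cC)^\op$ is the smallest stable subcategory of $\Ind(\Pro(\cC))^\op$ spanned by $\Pro(\cC)^\op = \Ind(\cC^\op)$ and $\Ind(\cC)^\op = \Pro(\cC^\op)$. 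Hence $\Tate(\cC)^\op \simeq \Tate(\cC^\op)$. This is the standard self-duality of elementary Tate objects (Hennion), which I would cite or verify via the mixed hom formula $\hom(\prolim{} c, \indcolim{} d) = \colim \hom(c,d)$ appearing in the proof of Theorem \ref{thm:tate-weight}. By functoriality of $\Tate$ in exact functors, $\Tate(D) \colon \Tate(\cC^\op) \to \Tate(\cC)$ is an equivalence, with inverse $\Tate(D^{-1})$. The composite then gives the equivalence $\Tate(\cC)^\op \simeq \Tate(\cC)$. Commutativity of the displayed diagram holds because the identification $\Tate(\cC)^\op \simeq \Tate(\cC^\op)$ restricts to the tautological identifications $\Ind(\cC)^\op = \Pro(\cC^\op)$ and $\Pro(\cC)^\op = \Ind(\cC^\op)$. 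The formula on extensions follows, since $\Tate(D)$ is exact and the opposite of a fiber sequence $X^p \to X \to X^i$ is the fiber sequence $X^i \to X \to X^p$ in the opposite category.

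For weight exactness, first observe that $\Tate(\cC)^\op$ carries the opposite weight structure, which agrees with the one of Theorem \ref{thm:tate-weight} for $\cC^\op$. Indeed, by Theorem \ref{thm:tate-weight}(2), the connectives and coconnectives are retract closures of extensions of $\Pro$- and $\Ind$-objects in the appropriate parts. Under opposites, $\Ind(\cC)_{\geq 0}$ corresponds to $\Pro(\cC^\op)_{\leq 0}$ because the Pro weight structure is defined as the dual of the Ind one. Then Theorem \ref{thm:tate-weight}(5) with $\ell = 0$ and amplitude $[0,0]$ shows that $\Tate(D)$ is weight exact. Its inverse $\Tate(D^{-1})$ is weight exact for the same reason, and the horizontal inclusions are weight exact by Theorem \ref{thm:tate-weight}(1).

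The main obstacle is the identification $\Tate(\cC)^\op \simeq \Tate(\cC^\op)$ inside ambient categories that are not literally opposite: $\Ind\Pro$ versus $\Pro\Ind$. I expect to handle this by working only with the full subcategory of extensions. Both sides are generated by the same two pieces, and the mapping spectra between pieces in each direction are computed by the same formulas, namely $\colim$ of homs for Pro-to-Ind, and zero-free general formulas via Yoneda for the other direction.
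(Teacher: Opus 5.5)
Your proposal is correct, and it is essentially the argument the paper leaves implicit, since the corollary is stated there without proof. You identify $\Tate(\cC)^\op\simeq\Tate(\cC^\op)$, apply functoriality of $\Tate$, and obtain weight exactness from Theorem~\ref{thm:tate-weight}(5) with $\ell=0$ after matching the opposite weight structure via Theorem~\ref{thm:tate-weight}(2). The diagram and the formula on extensions then follow because $\Tate(D)$ restricts to $\Ind(D)$ and $\Pro(D)$.

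One caveat if you verify the self-duality yourself rather than citing it: matching hom formulas on $\Ind$- and $\Pro$-objects does not by itself produce an equivalence. You need an exact comparison functor, for instance the filtered-colimit extension $\Ind(\Pro(\cC^\op))\to\Pro(\Ind(\cC^\op))=\Ind(\Pro(\cC))^\op$. Your hom computations then show that this functor is fully faithful on $\Tate(\cC^\op)$ with essential image $\Tate(\cC)^\op$.
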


\begin{lemma}\label{lem:tate-upw-comp}
    Let $\cC$ be a weakly $\ell$-weighted category for some $\ell < \infty$.
    Then the map $\eta \colon \cC \to \lc{\cC}$
    induces equivalences
    $\lc{\Pro(\cC)} \simeq \lc{\Pro(\lc{\cC})}$ of amplitude $[0,\ell]$
    and
    $\lc{\Tate(\cC)} \simeq \lc{\Tate(\lc{\cC})}$ of amplitude $[-\ell,\ell]$.\footnote{The analogous statement that there is an amplitude $[-\ell,0]$ equivalence $\lc{\Ind(\cC)} \simeq \lc{\Ind(\lc{\cC})}$ is less interesting but also true by Proposition \ref{prop:ind-weight}.}
    In particular, the left completion of $\Pro(\cC)$ resp.~$\Tate(\cC)$,
    only depends on $\Pro(\lc{\cC})$ resp.~$\Tate(\lc{\cC})$ up to equivalence in $\wWCat$ (or $\wWCat_{=0}$ in the case $\ell = 0$).
\end{lemma}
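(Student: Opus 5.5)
The plan is to use Corollary \ref{cor:left-comp-inverts-idem}. By its point (3), once we have a functor $f$ of bounded below amplitude, it suffices to check that $f$ restricts to a fully faithful left dense inclusion on weight bounded above parts. The inverse also having bounded amplitude should then follow from Lemma \ref{lem:saturation}(5), since all categories involved are weakly $\ell'$-weighted with finite defect.

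First we set up the functor. The unit $\eta \colon \cC \to \lc{\cC}$ is weight exact, i.e.~of amplitude $[0,0]$, and $\lc{\cC}$ is again weakly $\ell$-weighted. By the dual of Proposition \ref{prop:ind-weight}(6), $\Pro(\eta)$ then has amplitude $[0,\ell]$. By Theorem \ref{thm:tate-weight}(5), $\Tate(\eta)$ has amplitude $[-\ell,\ell]$. Theorem \ref{thm:upw-comp}(3) turns these into functors $\lc{\Pro(\cC)} \to \lc{\Pro(\lc{\cC})}$ and $\lc{\Tate(\cC)} \to \lc{\Tate(\lc{\cC})}$ with the same amplitudes.

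Next we show full faithfulness.
\begin{enumerate}
\item $\eta$ is fully faithful on $\cC_{<\infty}$. Moreover, for $X \in \cC_{<\infty}$ we have $\hom_\cC(X,-) \simeq \hom_{\lc{\cC}}(\eta X,\eta -)$ by Theorem \ref{thm:upw-comp}(2b).
\item In the Pro case, homs between pro-objects are $\lim_i\colim_j$ of homs in $\cC$.
\item A weight bounded above object of $\Pro(\cC)$ should be presentable as a pro-system in $\cC_{<\infty}$, up to retracts. This uses the cogeneration description of $\Pro(\cC)_{\leq 0}$ together with Proposition \ref{prop:ind-weight}(3) dualized; since $\ell<\infty$, $\Pro(\cC)_{\leq 0} \subseteq \Pro(\cC_{\leq \ell})$.
\item From (1)–(3), $\Pro(\eta)$ is fully faithful on $\Pro(\cC)_{<\infty}$.
\item For Tate, homs are computed via the extension $X^p\to X\to X^i$. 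The mixed terms $\hom(\text{pro},\text{ind})$ are $\colim\colim$ of homs in $\cC$. The terms $\hom(\text{ind},\text{pro})$ are $\lim\lim$ of homs in $\cC$. Both are preserved because the $\Ind$ parts can be truncated to bounded above pieces, using the weight decompositions of Theorem \ref{thm:tate-weight}(3).
\end{enumerate}

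The main obstacle is left density: every weight bounded above object of $\Pro(\lc{\cC})$ (resp.~$\Tate(\lc{\cC})$) must be approximated, up to $n$-connective cofiber, by one coming from $\Pro(\cC)$.
\begin{enumerate}
\item Take $Y = \prolim{i} y_i$ with $y_i \in \lc{\cC}_{<\infty}$, which we may assume bounded above uniformly as in the previous step.
\item By Corollary \ref{cor:upw-comp-idem}(2) and left density, each $y_i$ is a retract of some $\eta c_i$ modulo an $n$-connective summand.
\item Choose compatibly in $i$ maps $\eta c_i \to y_i$ with $n$-connective cofiber. This needs a weight decomposition functorial enough in a pro-system. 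I would try to pass to $\Pro(\lc{\cC})$ directly: use the weight decomposition there, obtained from the dual small-object argument. Then show that the $\leq b$ part of $Y$ lies in the retract closure of $\Pro(\cC_{\leq b})$, using density $\cC_{\leq 0}\subseteq \lc{\cC}_{\leq 0}$ (Theorem \ref{thm:upw-comp}(2c)). The remaining cofiber is $n$-connective because the cogenerators $\cC_{\geq 0}$ and $\lc{\cC}_{\geq 0}$ are related by left density.
\item Retracts are then absorbed in the left completion, since $\lc{\Pro(\cC)}_{<\infty}$ is retract-closed enough by Corollary \ref{cor:upw-comp-idem}.
\end{enumerate}

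The Tate case follows the same pattern, applying the Pro and Ind arguments to the two ends of the extension.
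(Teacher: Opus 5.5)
Your skeleton is the paper's: bound the amplitudes of $\Pro(\eta)$ and $\Tate(\eta)$, prove full faithfulness on weight bounded above objects, and invoke Corollary~\ref{cor:left-comp-inverts-idem}(3). Those parts are fine. The gap is the step you yourself flag as the main obstacle, left density, which your steps (3)--(4) do not establish.

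\begin{itemize}
\item Compatible levelwise choices $\eta c_i \to y_i$ in a pro-system are not available, as you note.
\item The assertion that the remaining cofiber is $n$-connective ``because the cogenerators are related by left density'' is not an argument.
\item Corollary~\ref{cor:upw-comp-idem} does not say that left completion absorbs retracts. By its part (2) only retracts with bounded below complement are absorbed. By part (3), $\cC_{<\infty}=(\lc{\cC})_{<\infty}$ holds only when $\cC_{\wb}$ is idempotent complete.
\end{itemize}

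For a general fully faithful functor, being dense up to retracts does not give the \emph{left} density that Corollary~\ref{cor:left-comp-inverts-idem}(3) requires. In the Pro case you could rescue retract-absorption from idempotent completeness of $\Pro(\cC)$ plus full faithfulness. That is not what you cite, though, and it is unavailable for $\Tate(\cC)$, which need not be idempotent complete.

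The missing observation is that in the Pro case nothing has to be approximated. Since $\cC_{\leq n}\subseteq(\lc{\cC})_{\leq n}$ is dense (Theorem~\ref{thm:upw-comp}(2c)) and pro-categories are idempotent complete, $\Pro(\cC_{\leq n})\to\Pro((\lc{\cC})_{\leq n})$ is an equivalence for every $n$. Interleave this with $\Pro(\cC_{\leq 0})\subseteq\Pro(\cC)_{\leq 0}\subseteq\Pro(\cC_{\leq \ell})$ (dual of Proposition~\ref{prop:ind-weight}(3)) and its analogue for $\lc{\cC}$. This shows that $\Pro(\eta)$ restricts to an \emph{equivalence} $\Pro(\cC)_{<\infty}\simeq\Pro(\lc{\cC})_{<\infty}$, so left density is automatic.

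Likewise, density of $\cC_{\leq 0}$ in $(\lc{\cC})_{\leq 0}$ and the generation description of $\Ind(\cC)_{\leq 0}$ give $\Ind(\cC)_{<\infty}\simeq\Ind(\lc{\cC})_{<\infty}$. You never supply this Ind input, although your Tate step relies on it.

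For $\Tate$, left density then comes from Theorem~\ref{thm:tate-weight}(3). Given $X\in\Tate(\lc{\cC})$ and $n$, choose a weight decomposition $X_{\leq n}\to X\to X_{\geq n+1}$ where $X_{\leq n}$ sits in an honest fiber sequence $X^p_{\leq n}\to X_{\leq n}\to X^i_{\leq n}$, not merely a retract of one. The two ends lie in the image of $\Tate(\eta)$ by the Pro and Ind equivalences. Hence so does $X_{\leq n}$, by exactness and full faithfulness on bounded above objects. Working with honest extensions rather than retracts is exactly what makes this go through.
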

\begin{proof}
    Recall from Theorem \ref{thm:upw-comp}(2) that $\eta_{<\infty}
    \colon \cC_{<\infty} \hookrightarrow (\lc{\cC})_{<\infty}$
    is a fully faithful weight exact functor with dense image.
    It follows that the inclusion induces an equivalence
    $\Pro(\cC_{\leq n}) \simeq \Pro((\lc{\cC})_{\leq n})$ for every $n$.
    From this we obtain a commutative diagram
    \[\begin{tikzcd}
        {\Pro(\cC_{\leq 0})} & {\Pro(\cC)_{\leq 0}} & {\Pro(\cC_{\leq \ell})} & {\Pro(\cC)_{\leq \ell}} & {\Pro(\cC_{\leq 2\ell})} & \cdots \\
        {\Pro((\lc{\cC})_{\leq 0})} & {\Pro(\lc{\cC})_{\leq 0}} & {\Pro((\lc{\cC})_{\leq \ell})} & {\Pro(\lc{\cC})_{\leq \ell}} & {\Pro((\lc{\cC})_{\leq 2\ell})} & \cdots
        \arrow[hook', from=1-1, to=1-2]
        \arrow["\simeq"', from=1-1, to=2-1]
        \arrow[hook', from=1-2, to=1-3]
        \arrow[hook', from=1-3, to=1-4]
        \arrow["\simeq"', from=1-3, to=2-3]
        \arrow[hook', from=1-4, to=1-5]
        \arrow[hook', from=1-5, to=1-6]
        \arrow["\simeq"', from=1-5, to=2-5]
        \arrow[hook, from=2-1, to=2-2]
        \arrow[hook, from=2-2, to=2-3]
        \arrow[hook, from=2-3, to=2-4]
        \arrow[hook, from=2-4, to=2-5]
        \arrow[hook, from=2-5, to=2-6]
    \end{tikzcd}\]
    where the vertical maps are induced by $\eta$,
    and the horizontal inclusions come from the dual
    of Proposition \ref{prop:ind-weight}(3).
    So the functor $\Pro(\eta)$ is of amplitude $[0,\ell]$ (by the dual of Proposition \ref{prop:ind-weight}(6))
    and restricts to an equivalence $\Pro(\cC)_{< \infty} \xto{\simeq} \Pro(\lc{\cC})_{< \infty}$.
    By Corollary \ref{cor:left-comp-inverts-idem} this yields an equivalence
    $\lc{\Pro(\eta)} \colon \lc{\Pro(\cC)} \simeq \lc{\Pro(\lc{\cC})}$ of amplitude $[0,\ell]$.

    Similarly, since $\cC_{\leq 0} \subseteq (\lc{\cC})_{\leq 0}$
    is a dense full subcategory, the concrete description of $\Ind(\cC)_{\leq 0}$
    as generated by $\cC_{\leq 0}$
    immediately yields that the induced map
    $\Ind(\cC)_{\leq 0} \to \Ind(\lc{\cC})_{\leq 0}$
    is an equivalence.
    Shifting and taking the union, this yields $\Ind(\cC)_{<\infty} \simeq \Ind(\lc{\cC})_{<\infty}$.

    By exactness of $\Tate(\eta)$, the description of $\Tate(\cC)_{< \infty}$
    from Theorem \ref{thm:tate-weight}(2), and the cases
    of $\Pro$ and $\Ind$ above, $\eta$ induces
    a fully faithful dense inclusion $\Tate(\cC)_{< \infty} \hookrightarrow \Tate(\lc{\cC})_{< \infty}$.
    Since $\Tate(\eta)$ also has amplitude $[-\ell,\ell]$ by Theorem \ref{thm:tate-weight},
    Corollary \ref{cor:left-comp-inverts-idem}
    shows that it remains to check that $\Tate(\cC)_{<\infty} \subseteq \Tate(\lc{\cC})_{< \infty}$
    is also left dense.
    To this end, let $X \in \Tate(\lc{\cC})$ and $n \geq 0$.
    By Theorem \ref{thm:tate-weight}
    we can construct a weight decomposition
    $X_{\leq n} \to X \to X_{\geq n+1}$
    with an extension $X^p_{\leq n} \to X_{\leq n} \to X^i_{\leq n}$.
    Now $\Tate(\eta)$ restricts to an equivalence on weight bounded above
    pure $\Ind$- or $\Pro$-objects, and hence $X^p_{\leq n},X^i_{\leq n}$
    lie in its essential image. Since it is also fully faithful (and exact)
    on this subcategory, also $X_{\leq n}$ lies in the image of $\Tate(\eta)$.
    Thus $\Tate(\cC)_{<\infty}$ is left dense in $\Tate(\lc{\cC})$,
    and we conclude as mentioned above.
    Finally, the last statement follows from Lemma \ref{lem:saturation}(5),
    where in the case $\ell = 0$ we note that all the weight structures are saturated
    anyways so we can instead use Lemma \ref{lem:saturated}(3)
    to deduce that the inverse equivalences are also weight exact.
\end{proof}

Next, we investigate under what conditions one can expect an adjacent
weak $t$-structure on $\Tate(\cC)$.

\begin{theorem}[{{\cite[Lemma 4.1, Theorem 5.1]{Neeman}}}]\label{thm:neeman}
    Let $\cC$ be a countable stable category,
    meaning $\cC$ has a countable set of isomorphism classes of objects,
    and for any two objects there are only countably many homotopy
    classes of maps between them.
    Then every object $X \in \Ind(\cC)$ sits in a cofiber sequence
    \[
        \bigoplus_j d_j \to \bigoplus_i c_i \to X
    \]
    where $d_j,c_i \in \cC$.
\end{theorem}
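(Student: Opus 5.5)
The plan is to follow the classical Neeman argument, written in the $\infty$-categorical setting. Its core is the standard fact that a countable filtered diagram can be replaced by a sequential one. Since $\cC$ is countable, we reduce to $\cC$ having a countable set $S$ of objects representing all isomorphism classes. Each $X \in \Ind(\cC)$ is then a filtered colimit $\colim_{c \in \cC_{/X}} c$, indexed by a category whose objects and maps we control only up to homotopy.

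The first step is to show that $X$ is a countable sequential colimit of objects of $\cC$, or of countable sums of objects of $\cC$. I would construct a sequence $P_0 \to P_1 \to \cdots$ of countable direct sums of objects of $\cC$, together with compatible maps to $X$, by a small-object style argument. For $P_0$ I would like to take $\bigoplus_{c \in S}\bigoplus_{f \in \pi_0\map(c,X)} c$, but $\pi_0\map(c,X)$ need not be countable. So I would instead use the cofiber-sequence formulation directly.

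Take $P = \bigoplus_{c \in S,\ f \colon c \to X} c$, an arbitrary (possibly uncountable) sum in $\Ind(\cC)$. The map $P \to X$ is surjective on $\pi_0\map(c,-)$ for every $c \in \cC$. Let $K = \fib(P \to X)$. The goal is to show that $K$ is again a sum of objects of $\cC$, which is where countability enters. Following Neeman, write $X$ as a filtered colimit. Any filtered category $I$ built from a countable $\cC$ is a union of countable filtered subcategories, and over each countable filtered subcategory $J$ one can choose a cofinal sequence $\N \to J$. A sequential colimit $\colim_n c_n$ sits in the Milnor cofiber sequence
\[
\bigoplus_n c_n \xrightarrow{1-\mathrm{shift}} \bigoplus_n c_n \to \colim_n c_n,
\]
which is exactly of the required shape.

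The main obstacle is passing from countable sub-colimits to all of $X$. This is Neeman's Theorem 5.1, which proves that $\Ind(\cC)$ has the property that every object has projective dimension $\le 1$ relative to $\cC$. Concretely, consider the homological functor $\pi_0\map(-,X)$ on $\cC$ and take a presentation by representables. Since $\cC$ is countable, the category of additive functors $\cC^{\op} \to \Ab$ has every flat module of projective dimension $\le 1$; this is the Lazard–Gruson–Jensen bound for countably presented flat modules, extended via Kaplansky-type filtration arguments. I would therefore pick a projective resolution $0 \to Q_1 \to Q_0 \to \pi_0\map(-,X) \to 0$ with each $Q_i$ a sum of representables. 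I would realize $Q_0 \to \pi_0\map(-,X)$ by a map $\bigoplus_i c_i \to X$, and $Q_1$ by $\bigoplus_j d_j$ mapping to the fiber; this uses that sums of representables are projective, so maps lift to $\Ind(\cC)$.

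Finally I would check that $\bigoplus_j d_j \to \bigoplus_i c_i \to X$ is a cofiber sequence. Its cofiber maps to $X$ by a map inducing an isomorphism on $\pi_0\map(c,-)$ for all shifts of $c \in \cC$, using the long exact sequence and the injectivity of $Q_1 \to Q_0$. Since $\cC$ generates $\Ind(\cC)$, that map is an equivalence. I expect the projective-dimension-one statement for countable $\cC$ to be the hardest part, and I would cite Neeman's Lemma 4.1 rather than reprove it.
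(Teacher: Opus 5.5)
Your final argument is correct and is essentially the proof in Neeman's cited paper, which the paper invokes without reproducing: $\pi_0\map(-,X)|_{\cC}$ is homological, hence flat, on the countable additive category $h\cC$; flat functors there have projective dimension at most one (Gruson--Jensen/Simson, as in Neeman); and a free length-one resolution, arranged by an Eilenberg swindle, is realized by a map $\bigoplus_j d_j \to \bigoplus_i c_i$ whose cofiber is identified with $X$ by testing against the compact generators. Discard your exploratory first step, since $X$ is in general not a sequential colimit of countable sums of objects of $\cC$ (e.g.\ $\bigoplus_{\aleph_1}\mathbb{S} \in \Sp$ has uncountable $\pi_0$), but nothing in your final route depends on it.
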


\begin{remark}
    If $R$ is an $\E_1$-ring such that $\pi_*R$ is countable, then $\Perf(R)$ is countable stable.
\end{remark}

\begin{proposition}\label{prop:adj-t-on-pro}
    Let $\cC$ be a weakly weighted category with adjacent weak $t$-structure
    of defect $m < \infty$.
    Suppose that $\cC_{w < \infty}$ is countable.
    Then $\Pro(\cC)$ admits an adjacent weak $t$-structure of defect $2m+1$.
    Moreover, we have inclusions $\Pro(\cC_{t \leq 0}) \subseteq \Pro(\cC)_{t \leq 0} \subseteq \Pro(\cC_{t \leq 2m+1})$ and the inclusion $\cC \subseteq \Pro(\cC)$ is weight- and $t$-exact.
\end{proposition}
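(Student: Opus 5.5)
The plan is to construct the $t$-decompositions directly, using the dual of Neeman's theorem (Theorem \ref{thm:neeman}) to present pro-objects as fibers of maps between products. We define $\Pro(\cC)_{t \geq 0} \coloneqq \Pro(\cC)_{w \geq 0}$ and $\Pro(\cC)_{t \leq 0} \coloneqq \rperp{(\Pro(\cC)_{\geq 1})}$. Axioms (1) and (2) of Definition \ref{def:weak-t} are then automatic, and the structure is left saturated, hence adjacent in the sense of Definition \ref{def:adj}. What remains is:
\begin{itemize}
    \item the inclusion $\Pro(\cC_{t \leq 0}) \subseteq \Pro(\cC)_{t \leq 0}$;
    \item the existence of $t$-decompositions of defect $2m+1$;
    \item the upper bound $\Pro(\cC)_{t \leq 0} \subseteq \Pro(\cC_{t \leq 2m+1})$;
    \item exactness of $\cC \subseteq \Pro(\cC)$.
\end{itemize}
Exactness of the inclusion follows from the first item together with weight exactness (Theorem \ref{thm:tate-weight}(1), or the dual of Proposition \ref{prop:ind-weight}).

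\textbf{Step 1: $\Pro(\cC_{t\leq 0}) \subseteq \Pro(\cC)_{t\leq 0}$.} First let $c \in \cC_{t \leq 0}$. Consider the class of $Y \in \Pro(\cC)$ with $\hom(Y,c) \in \Sp_{t \leq 0}$. Since $c$ is cocompact in $\Pro(\cC)$, this class is closed under:
\begin{itemize}
    \item finite colimits of suspension type, extensions and retracts;
    \item products, because $\hom(\prod Y_i, c) \simeq \bigoplus \hom(Y_i,c)$;
    \item sequential limits, because $\hom(\lim Y_n, c) \simeq \colim \hom(Y_n, c)$ is a filtered colimit of coconnective spectra.
\end{itemize}
The class contains $\cC_{\geq 1}$ by adjacency in $\cC$. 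By the dual description of $\Pro(\cC)_{\geq 0}$ as generated from $\cC_{\geq 0}$, the class therefore contains $\Pro(\cC)_{\geq 1}$. For a general cofiltered limit $\prolim{} c_k$ with $c_k \in \cC_{t\leq 0}$, maps into it are limits of maps into the $c_k$, and $\Sp_{t\leq 0}$ is closed under limits. Since $\Pro(\cC_{t\le0})$ is closed under finite products, this class is closed under arbitrary products in $\Pro(\cC)$.

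\textbf{Step 2: reduction to weight bounded above objects.} By Lemma \ref{lem:adj-t-on-wt-cocon}, applied to the weighted category $\Pro(\cC)$, it suffices to produce $t$-decompositions for $X \in \Pro(\cC)_{w < \infty}$. By the dual of the footnote to Proposition \ref{prop:ind-weight}(3), such $X$ lies in $\Pro(\cC_{w<\infty}) = \Ind((\cC_{w<\infty})^\op)^\op$. Since $\cC_{w<\infty}$ is countable, the dual of Theorem \ref{thm:neeman} gives a fiber sequence
\[
    X \to P = \prod_i c_i \to Q = \prod_j d_j
\]
with $c_i, d_j \in \cC_{w<\infty}$.

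\textbf{Step 3: assembling the decomposition.} We decompose each factor $c_i$ in $\cC$ as $(c_i)_{>-m} \to c_i \to (c_i)_{t\leq 0}$. Taking products gives $P_{>-m} \to P \to P_{t \leq 0}$, where $P_{t\leq0} \in \Pro(\cC)_{t\leq 0}$ by Step 1. Here $P_{>-m}$ is weight/$t$-connective in the shifted sense, because $\Pro(\cC)_{\geq 0}$ is closed under products. Similarly, we decompose the factors of $Q$ with a shift, $Q_{>-(2m+1)} \to Q \to Q_{t \leq -(m+1)}$. By orthogonality, the composite $P_{>-m} \to Q \to Q_{t\leq -(m+1)}$ vanishes, which induces a map $P_{t\leq 0} \to Q_{t \leq -(m+1)}$. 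Its fiber $F$ is an extension of $\Omega Q_{t\leq -(m+1)}$ and $P_{t\leq 0}$, so $F$ is $t$-coconnective. The fiber of the induced map $X \to F$ is an extension of $P_{>-m}$ and $\Omega Q_{>-(2m+1)}$, so it is bounded below with explicit connectivity.

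The main obstacle is this bookkeeping of shifts: the naive count gives a connectivity bound off by one from the claimed defect $2m+1$. I would first try to gain the missing degree by choosing the shift on $Q$ more carefully, using that the relevant maps only need to vanish after composing with $X \to P$. Failing that, I would exploit the fact that one may take $d_j$ in the image of the map from the $c_i$ side. The bound $\Pro(\cC)_{t \leq 0} \subseteq \Pro(\cC_{t\leq 2m+1})$ should then follow as in Proposition \ref{prop:ind-t}(2): given $Y \in \Pro(\cC)_{t\leq 0}$, the decomposition above exhibits $Y$ as a retract of an object of the form $F$. Using the dual filtered-cofinality argument, one then shows that the pro-system of $(2m+1)$-truncations is cofinal.
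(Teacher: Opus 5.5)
Your overall route matches the paper's. You reduce via Lemma \ref{lem:adj-t-on-wt-cocon}, present $X$ as $\fib(\prod_i c_i\to\prod_j d_j)$ by the dual of Theorem \ref{thm:neeman}, $t$-decompose the factors and take fibers in a $3\times 3$ diagram. Your Step 1 is also the paper's argument, modulo a harmless slip: your class contains $\cC_{\geq 0}$, not just $\cC_{\geq 1}$, and that is what gives $\pi_0\hom(Y,c)=0$ for $Y\in\Pro(\cC)_{\geq 1}$.

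\textbf{Main gap: the defect.} As written you only get defect $2m+2$, and you leave the missing degree open. The cause is an index slip in the orthogonality, not a structural problem. Since $\cC_{>-m}=\cC_{\geq -m+1}$, the product $P_{>-m}$ lies in $\Pro(\cC)_{\geq -m+1}$. This is already orthogonal to $\Pro(\cC)_{t\leq -m}=\rperp{(\Pro(\cC)_{\geq -m+1})}$; in $\cC$, $\hom_\cC(a,b)\in\Sp_{t\leq -1}$ for $a\in\cC_{\geq -m+1}$ and $b\in\cC_{t\leq -m}$. So decompose the $d_j$ with shift $m$ rather than $m+1$, as $(d_j)_{>-2m}\to d_j\to (d_j)_{t\leq -m}$; this is exactly the paper's choice. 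Then:
\begin{itemize}
\item the composite $P_{>-m}\to Q\to Q_{t\leq -m}$ is null;
\item $F=\fib(P_{t\leq 0}\to Q_{t\leq -m})$ is still $t$-coconnective, because $\Omega Q_{t\leq -m}\in\Pro(\cC)_{t\leq -(m+1)}$;
\item $\fib(P_{>-m}\to Q_{>-2m})$ is an extension of $\Omega Q_{>-2m}\in\Pro(\cC)_{>-(2m+1)}$ and $P_{>-m}$.
\end{itemize}
This gives defect exactly $2m+1$. Your fallbacks (vanishing only after precomposing with $X\to P$, or choosing the $d_j$ specially) are unnecessary.

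\textbf{Second gap: the upper bound.} The cofinality argument of Proposition \ref{prop:ind-t}(2) does not dualize to the coconnectives here. There, $t$-coconnectivity of $X\in\Ind(\cC)$ kills maps $c_{>0}\to X$ \emph{into} $X$, so $c\to X$ factors through $c\to c_{\leq m}$. For $Y\in\Pro(\cC)_{t\leq 0}$ you would instead need to factor maps $Y\to c$ \emph{out of} $Y$ through objects of $\cC_{t\leq k}$ mapping to $c$. Coconnectivity of $Y$ does not control such maps, and a weak $t$-structure has no functorial truncations to assemble into a pro-system.

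The observation you skip is that $F$ itself lies in $\Pro(\cC_{t\leq 0})$, since $\Pro(\cC_{t\leq k})\subseteq\Pro(\cC)$ is closed under products and finite limits. Now take $Y\in\Pro(\cC)_{t\leq 0}$ and its shifted decomposition $Y_{>0}\to Y\to Y_{t\leq 2m+1}$. If $Y$ is not weight bounded above, use the decomposition produced by Lemma \ref{lem:adj-t-on-wt-cocon}; its coconnective term has the same form. The first map is null, so $Y$ is a retract of an object of $\Pro(\cC_{t\leq 2m+1})$. That category is idempotent complete, hence $Y\in\Pro(\cC_{t\leq 2m+1})$.
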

\begin{proof}
    Recall that by Lemma \ref{lem:adj-t-on-wt-cocon} it is enough to produce
    $t$-decompositions of the desired defect for objects in $\Pro(\cC)_{w < \infty} \subseteq \Pro(\cC_{w<\infty})$ (the inclusion holds by the dual of Proposition \ref{prop:ind-weight}(3)).
    Given such an $X \in \Pro(\cC_{w < \infty})$,
    we note that by the dual of Theorem
    \ref{thm:neeman} there exists a fiber sequence
    $X \to \prod_i c^i \to \prod_j d^j$
    with $c^i,d^j \in \cC_{w < \infty}$.
    Pick $t$-decompositions $c^i_{w > -m} \to c^i \to c^i_{t \leq 0}$
    for all $i$ and $d^j_{w > -2m} \to d^j \to d^j_{t \leq -m}$ for all $j$.
    Now
    \[
        \hom_{\Pro(\cC)}(\prod_i c^i_{w > -m}, \prod_j d^j_{t \leq -m})
        = \prod_j \bigoplus_i \hom_\cC(c^i_{w > -m}, d^j_{t \leq -m})
    \]
    is $(-1)$-coconnective since $\cC_{w > -m} = \cC_{t > -m}$
    and coconnective spectra are closed under products and sums.
    Hence we may construct the diagram of fiber sequences
    \[\begin{tikzcd}
        {X_{w > -(2m+1)}} & {\prod_i c^i_{w > -m}} & {\prod_j d^j_{w > -2m}} \\
        X & {\prod_i c^i} & {\prod_j d^j} \\
        {X_{t \leq 0}} & {\prod_i c^i_{t \leq 0}} & {\prod_j d^j_{t \leq -m}}
        \arrow[from=1-1, to=1-2]
        \arrow[from=1-1, to=2-1]
        \arrow[dashed, from=1-2, to=1-3]
        \arrow[from=1-2, to=2-2]
        \arrow[from=1-3, to=2-3]
        \arrow[from=2-1, to=2-2]
        \arrow[from=2-1, to=3-1]
        \arrow[from=2-2, to=2-3]
        \arrow[from=2-2, to=3-2]
        \arrow[from=2-3, to=3-3]
        \arrow[from=3-1, to=3-2]
        \arrow[dashed, from=3-2, to=3-3]
    \end{tikzcd}\]
    Once we show that $\Pro(\cC_{t \leq 0}) \subseteq \Pro(\cC)_{t \leq 0} = \rperp{(\Pro(\cC)_{\geq 1})}$
    it will follow that the left vertical fiber sequence
    is a $t$-decomposition of defect $2m+1$ for $X$,
    establishing the claimed adjacent weak $t$-structure on $\Pro(\cC)$.
    Since $t$-coconnective spectra are closed under limits,
    it suffices to show that $\cC_{t \leq 0} \subseteq \Pro(\cC)_{t \leq 0}$.
    For $c \in \cC_{t \leq 0}$,
    the class of $Z \in \Pro(\cC)_{w \geq 0}$ for which
    $
        \hom_{\Pro(\cC)}(Z,c)
    $
    is $t$-coconnective is closed under extensions, retracts, products
    and sequential limits of maps with fiber already in the class.
    It also contains $\cC_{w \geq 0} = \cC_{t \geq 0}$,
    and thus consists of all $Z \in \Pro(\cC)_{w \geq 0}$,
    showing that $c \in \Pro(\cC)_{t \leq 0}$.
    Note that this also shows that $\cC \to \Pro(\cC)$
    is $t$-exact, by adjacency and weight exactness.

    Moreover, if $X \in \Pro(\cC)_{t \leq 0}$,
    then by shifting, the above constructs a $t$-decomposition
    $X_{w > 0} \to X \to X_{t \leq 2m+1}$ where $X_{t \leq 2m+1} \in \Pro(\cC_{t \leq 2m+1})$
    (since this category is closed under finite limits in $\Pro(\cC)$).
    The map $X_{w > 0} \to X$ vanishes, exhibiting $X$ as retract
    of $X_{t \leq 2m+1}$.
    But $\Pro(\cC_{t \leq 2m+1})$ is already idempotent complete,
    so this shows $\Pro(\cC)_{t \leq 0} \subseteq \Pro(\cC_{t \leq 2m+1})$.
\end{proof}

\begin{proposition}\label{prop:adj-t-on-tate}
    Let $\cC$ be a weakly $\ell$-weighted category with adjacent weak $t$-structure of defect $m$,
    where $\ell,m < \infty$.
    If $\cC_{w < \infty}$ is countable,
    then $\Tate(\cC)$ admits an adjacent weak $t$-structure of defect $2m+2+\ell$ such that:
    \begin{enumerate}
        \item $\Tate(\cC)_{t \leq 0}$ is contained in the retract-closure
            of those Tate-objects sitting in an extension of $\Pro(\cC)_{t \leq 2m+1}$
            and $\Ind(\cC)_{t \leq 2m+2+\ell}$.

        \item The inclusion $\Ind(\cC) \subseteq \Tate(\cC)$ is weight- and $t$-exact,
            whereas the inclusion $\Pro(\cC) \subseteq \Tate(\cC)$ is weight exact
            but of $t$-amplitude $[0,\ell]$.
    \end{enumerate}
\end{proposition}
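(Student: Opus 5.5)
We need adjacent weak $t$-structure on $\Tate(\cC)$ with $\Tate(\cC)_{t\leq 0}\coloneqq \rperp{(\Tate(\cC)_{\geq 1})}$, where the connectives are those of the weight structure from Theorem \ref{thm:tate-weight}. The orthogonality axiom and the closure properties hold automatically for an orthogonal complement. So what remains is to produce $t$-decompositions of defect $2m+2+\ell$ for every object, to establish the containment (1), and to check the exactness claims (2). By Lemma \ref{lem:adj-t-on-wt-cocon} it suffices to treat $X\in\Tate(\cC)_{w<\infty}$. By Theorem \ref{thm:tate-weight}(3) and a shift, we may take $X$ itself to be an extension $X^p\to X\to X^i$ with $X^p\in\Pro(\cC)_{<\infty}$ and $X^i\in\Ind(\cC)_{<\infty}$.

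\textbf{Step 1: the two halves separately.} For the $\Ind$-part I would use the adjacent $t$-structure on $\Ind(\cC)$ from Proposition \ref{prop:ind-weight}(2). Its connectives satisfy $\Ind(\cC)_{w\geq 0}\subseteq\Ind(\cC_{\geq -\ell})$ by Proposition \ref{prop:ind-weight}(3). For the $\Pro$-part, Proposition \ref{prop:adj-t-on-pro} gives an adjacent weak $t$-structure of defect $2m+1$.

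First I check that these coconnectives remain coconnective in $\Tate(\cC)$. Take $Y\in\Ind(\cC)_{t\leq 0}$ and let $Z$ range over $\Tate(\cC)_{\geq 1}$, reducing to pure $\Ind$- or $\Pro$-pieces via Theorem \ref{thm:tate-weight}(2).
\begin{itemize}
\item For $Z$ an $\Ind$-object, $\hom(Z,Y)$ is $(-1)$-coconnective by adjacency.
\item For $Z=\prolim{i}z_i$ with $Z\in\Pro(\cC)_{\geq 1}$, we have $\hom(Z,Y)=\colim\hom(z_i,Y)$. Here I need $Z\in\Pro(\cC_{\geq 1-\ell})$-ish connectivity, which costs the shift $\ell$. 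The dual of Proposition \ref{prop:ind-weight}(3) gives $\Pro(\cC)_{\geq 0}=\Pro$ of connective objects only up to this. This shows $\Ind(\cC)_{t\leq 0}\subseteq\Tate(\cC)_{t\leq \ell}$, and it explains the claim in (2) that $\Pro(\cC)\subseteq\Tate(\cC)$ has $t$-amplitude $[0,\ell]$ while the $\Ind$ inclusion is $t$-exact.
\end{itemize}
I expect this bookkeeping to be the main obstacle: which inclusion is exact and which loses $\ell$. I would first check it by hand on $\hom(\Pro,\Ind)$ as a filtered colimit and on $\hom(\Ind,\Pro)$ via the generation description of $\Ind(\cC)_{\leq 0}$.

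\textbf{Step 2: gluing.} Choose a $t$-decomposition $X^p_{>-(2m+1)}\to X^p\to X^p_{t\leq 0}$ in $\Pro(\cC)$ and one $X^i_{>-k}\to X^i\to X^i_{t\leq -1}$ in $\Ind(\cC)$ (for the adjacent $t$-structure, $k=0$). The connecting map $X^i\to\Sigma X^p$ composed down to $\Sigma X^p_{t\leq 0}$ restricts to $X^i_{>-k}$, and I want that restriction to vanish after a shift. This is where orthogonality between $\Ind$-connectives and $\Pro$-coconnectives is needed. Concretely: $\hom(\Ind(\cC)_{\geq a},\Pro(\cC)_{t\leq 0})$ is coconnective up to a shift, because $\Pro(\cC)_{t\leq0}\subseteq\Pro(\cC_{t\leq 2m+1})$ and $\Ind$-connectives lie in $\Ind(\cC_{\geq-\ell})$. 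The total shift needed is $2m+1+\ell+1$.

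With the truncation of $X^i$ taken at $-(2m+2+\ell)$, the composite vanishes. The same $3\times 3$ diagram of fiber sequences as in Lemma \ref{lem:adj-t-on-wt-cocon} and Theorem \ref{thm:tate-weight} then gives a fiber sequence
\[ X_{w>-(2m+2+\ell)}\to X\to X_{t\leq 0}. \]
Its left term is an extension of $\Pro$-connectives and $\Ind$-connectives, hence lies in $\Tate(\cC)_{>-(2m+2+\ell)}$. Its right term is an extension of $\Pro(\cC)_{t\leq 0}$ and a shifted $\Ind(\cC)_{t\leq\cdot}$, hence lies in $\Tate(\cC)_{t\leq 0}$ by Step 1, after adjusting the indices.

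\textbf{Step 3: containment (1).} Given $X\in\Tate(\cC)_{t\leq 0}$, shift the construction so that its first term lands in $\Tate(\cC)_{\geq 1}$. The map $X_{w\geq 1}\to X$ then vanishes, so $X$ is a retract of the third term. That term is an extension of $\Pro(\cC)_{t\leq 2m+1}$ and $\Ind(\cC)_{t\leq 2m+2+\ell}$, which gives (1).
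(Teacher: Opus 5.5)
\textbf{Verdict: the structure matches the paper, but the bookkeeping is wrong.} The paper also defines the $t$-coconnectives as $\rperp{(\Tate(\cC)_{\geq 1})}$, proves (2) first, glues a $\Pro$- and an $\Ind$-decomposition through a $3\times 3$ diagram once the connecting map vanishes, and gets (1) by the retract trick. However, the step you flag as ``the main obstacle'' is wrong in your write-up, and it is exactly what determines both (2) and the defect.

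\textbf{Step 1 puts the $\ell$-loss on the wrong pairing.} The dual of Proposition \ref{prop:ind-weight}(3) is $\Pro(\cC_{\leq 0})\subseteq\Pro(\cC)_{\leq 0}\subseteq\Pro(\cC_{\leq \ell})$. So the defect sits on the \emph{coconnective} side of $\Pro(\cC)$. By contrast, $\Pro(\cC)_{\geq 0}$ is generated from $\cC_{\geq 0}$ under finite colimits, extensions, retracts, products and sequential limits.
\begin{itemize}
\item Fix $Y\in\Ind(\cC)_{t\leq 0}$. The class of $Z\in\Pro(\cC)$ with $\hom(Z,Y)$ coconnective contains $\cC_{\geq 0}$ and is closed under all these operations. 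Indeed, homs from $\Pro$-objects to $\Ind$-objects are filtered colimits, products become sums, and sequential limits become colimits.
\item Hence $\Ind(\cC)_{t\leq 0}\subseteq\Tate(\cC)_{t\leq 0}$ with no loss. Your conclusion $\Ind(\cC)_{t\leq 0}\subseteq\Tate(\cC)_{t\leq\ell}$ would give amplitude $[0,\ell]$ to the $\Ind$-inclusion, which contradicts the very claim in (2) you say it explains.
\item The $\ell$ actually enters in the pairing you never treat in Step 1. The spectrum $\hom(\Ind(\cC)_{\geq \ell},\Pro(\cC)_{t\leq 0})$ is coconnective, because $\Ind(\cC)_{\geq\ell}\subseteq\Ind(\cC_{\geq 0})$, $\cC_{\geq 0}\subseteq\Pro(\cC)_{t\geq 0}$ by adjacency, and $\hom(\colim_i c_i,Y)=\lim_i\hom(c_i,Y)$. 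This gives $\Pro(\cC)_{t\leq 0}\subseteq\Tate(\cC)_{t\leq\ell}$, which is the $[0,\ell]$ claim.
\end{itemize}

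\textbf{Step 2 does not yield defect $2m+2+\ell$.} Your estimate goes through $\Pro(\cC)_{t\leq 0}\subseteq\Pro(\cC_{t\leq 2m+1})$. That only gives $\hom(\Ind(\cC)_{\geq 0},\Pro(\cC)_{t\leq 0})\in\Sp_{t\leq 2m+1+\ell}$.
\begin{itemize}
\item This is too weak for (2).
\item In the gluing it forces the $\Ind$-cut to sit $2m+2+\ell$ above the $\Pro$-cut. Adding the defect $2m+1$ of the $\Pro$-decomposition itself gives $4m+3+\ell$. Your number $2m+2+\ell$ only comes out because you use the $2m+1$ once for the vanishing and then drop the $\Pro$-defect.
\item Your claim that the right-hand term lies in $\Tate(\cC)_{t\leq 0}$ is also false as stated: $X^p_{t\leq 0}$ only lies in $\Tate(\cC)_{t\leq\ell}$.
\end{itemize}

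\textbf{The fix, with the sharp bounds, is the paper's gluing.}
\begin{enumerate}
\item Take $X^p_{w>-(2m+1)}\to X^p\to X^p_{t\leq 0}$ in $\Pro(\cC)$, and the genuine $t$-decomposition $X^i_{w\geq \ell+2}\to X^i\to X^i_{t\leq \ell+1}$ in $\Ind(\cC)$.
\item The map $X^i_{w\geq\ell+2}\to\Sigma X^p_{t\leq 0}$ vanishes, since its target lies in $\Tate(\cC)_{t\leq \ell+1}$.
\item The $3\times 3$ diagram then gives $X_{w>-(2m+1)}\to X\to X_{t\leq \ell+1}$, of defect $(2m+1)+(\ell+1)$.
\item Shifting this construction by $2m+1$ yields exactly the containment (1).
\end{enumerate}
Your reduction to $\Tate(\cC)_{w<\infty}$ via Lemma \ref{lem:adj-t-on-wt-cocon} is harmless but unnecessary, since every Tate object is already an extension of a $\Pro$- and an $\Ind$-object.
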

\begin{proof}
    Let us begin by showing (2).
    Since all the (putative) weak $t$-structures are adjacent and we already know weight exactness
    of both inclusions by Theorem \ref{thm:tate-weight},
    we only need to show the statement for $t$-coconnectives.
    By the description of $\Tate(\cC)_{\geq 0}$
    and the fact that coconnective spectra are closed under extensions and retracts,
    this reduces to checking that $\hom(X^p_{w \geq 0}, X^i_{t \leq 0})$
    and $\hom(X^i_{w \geq \ell}, X^p_{t \leq 0})$ are coconnective spectra.
    For the former, we use that the class of $Y \in \Pro(\cC)$ for which
    $\hom(Y,X^i_{t \leq 0})$ is coconnective contains $\cC_{\geq 0}$
    and is closed under extensions, retracts, finite colimits, products, and sequential limits
    along maps whose fiber already lies in the class.
    It thus contains all of $\Pro(\cC)_{w \geq 0}$, as desired.
    The latter claim follows directly from the fact that $\Ind(\cC)_{\geq \ell} \subseteq \Ind(\cC_{\geq 0})$
    by Proposition \ref{prop:ind-weight}(3).

    Next, we provide $t$-decompositions of Tate objects.
    Recall that $\Pro(\cC)$ admits an adjacent weak $t$-structure of defect $2m+1$
    by Proposition \ref{prop:adj-t-on-pro}.
    Given $X \in \Tate(\cC)$ with extension $X^p \to X \to X^i$,
    we have $t$-decompositions $X^i_{w \geq \ell+2} \to X^i \to X^i_{t \leq \ell+1}$ in $\Ind(\cC)$
    and $X^p_{w > -(2m+1)} \to X^p \to X^p_{t \leq 0}$ in $\Pro(\cC)$.
    By (2) we get that the map $X^i_{w \geq \ell+2} \to \Sigma X^p_{t \leq 0}$
    vanishes, which allows us to consider the following diagram of fiber sequences
    \[\begin{tikzcd}[ampersand replacement=\&]
        {X_{w > -(2m+1)}} \& {X^i_{w \geq \ell+2}} \& {\Sigma X^p_{w > -(2m+1)}} \\
        X \& {X^i} \& {\Sigma X^p} \\
        {X_{t \leq \ell+1}} \& {X^i_{t \leq \ell+1}} \& {\Sigma X^p_{t \leq 0}}
        \arrow[from=1-1, to=1-2]
        \arrow[from=1-1, to=2-1]
        \arrow[dashed, from=1-2, to=1-3]
        \arrow[from=1-2, to=2-2]
        \arrow[from=1-3, to=2-3]
        \arrow[from=2-1, to=2-2]
        \arrow[from=2-1, to=3-1]
        \arrow[from=2-2, to=2-3]
        \arrow[from=2-2, to=3-2]
        \arrow[from=2-3, to=3-3]
        \arrow[from=3-1, to=3-2]
        \arrow[dashed, from=3-2, to=3-3]
    \end{tikzcd}\]
    The left vertical fiber sequence is then a $t$-decomposition for $X$ of defect $2m+2+\ell$.
    This establishes the adjacent weak $t$-structure of defect $2m+2+\ell$ on $\Tate(\cC)$.
    Note also that if $X \in \Tate(\cC)_{t \leq 0}$,
    then by shifting, the above constructs a $t$-decomposition
    $X_{w > 0} \to X \to X_{t \leq 2m+2+\ell}$
    which exhibits $X$ as a retract of $X_{t \leq 2m+2+\ell}$,
    which in turn sits in an extension of an object in $\Ind(\cC)_{t \leq 2m+2+\ell}$
    and an object in $\Pro(\cC)_{t \leq 2m+1}$.
\end{proof}

\begin{corollary}\label{coro_weak}
    Let $\cC$ be a countable weighted category
    such that $\lc{\cC}$ admits an adjacent weak $t$-structure
    of defect $m$, and $m < \infty$.
    Then $\lc{\Pro(\cC)}$ respectively $\lc{\Tate(\cC)}$ admit adjacent
    weak $t$-structures of defect $2m+1$ respectively $2m+2$.
\end{corollary}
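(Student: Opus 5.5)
The plan is to reduce to the case where $\cC$ itself carries an adjacent weak $t$-structure, apply Propositions \ref{prop:adj-t-on-pro} and \ref{prop:adj-t-on-tate}, and then pass to left completions.

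\textbf{Reduction.} By Lemma \ref{lem:tate-upw-comp} with $\ell = 0$, $\eta$ induces weight exact equivalences
\[
\lc{\Pro(\cC)} \simeq \lc{\Pro(\lc{\cC})}, \qquad \lc{\Tate(\cC)} \simeq \lc{\Tate(\lc{\cC})}.
\]
So it suffices to find adjacent weak $t$-structures on $\lc{\Pro(\lc{\cC})}$ and $\lc{\Tate(\lc{\cC})}$. Note that $\lc{\cC}$ is weighted ($\ell=0$) and has an adjacent weak $t$-structure of defect $m$ by hypothesis.

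\textbf{Countability.} To apply Propositions \ref{prop:adj-t-on-pro} and \ref{prop:adj-t-on-tate} to $\lc{\cC}$, we need $(\lc{\cC})_{w<\infty}$ to be countable. By Corollary \ref{cor:upw-comp-idem}(2) and Theorem \ref{thm:upw-comp}(2), $(\lc{\cC})_{w<\infty}$ is a full subcategory of $(\cC_{<\infty})^\idem$. Its objects are retracts of objects of $\cC$, each specified by an idempotent. Since $\cC$ is countable, there are countably many objects and countably many homotopy classes of idempotents, and the mapping groups remain countable. Hence $(\lc{\cC})_{w<\infty}$ is countable.

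\textbf{Applying the propositions.} Proposition \ref{prop:adj-t-on-pro} then gives $\Pro(\lc{\cC})$ an adjacent weak $t$-structure of defect $2m+1$. Proposition \ref{prop:adj-t-on-tate} with $\ell=0$ gives $\Tate(\lc{\cC})$ one of defect $2m+2$. Both $\Pro(\lc{\cC})$ and $\Tate(\lc{\cC})$ are weighted by Theorem \ref{thm:tate-weight} and its $\Pro$ analogue.

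\textbf{Passing to left completions.} Lemma \ref{lem:adj-t-on-upw-comp} says left weight completion preserves adjacent weak $t$-structures of the same defect. Applying it yields the claim for $\lc{\Pro(\lc{\cC})}$ and $\lc{\Tate(\lc{\cC})}$. We then transport along the equivalences from the reduction. Since these equivalences are weight exact, they identify connectives, and the $t$-coconnectives are defined as the right orthogonal $\rperp{(\,\cdot_{\geq 1})}$, so they are identified as well. Thus adjacency and the defect are preserved.

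\textbf{Main obstacle.} The delicate step is the countability check for $(\lc{\cC})_{w<\infty}$, specifically that the idempotent completion of a countable stable category stays countable. Everything else is a formal combination of the cited results.
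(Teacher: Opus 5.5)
Your proposal is correct and is essentially the paper's proof. Countability of $(\lc{\cC})_{w<\infty}$ follows from the dense fully faithful inclusion of $\cC_{w<\infty}$. Propositions~\ref{prop:adj-t-on-pro} and~\ref{prop:adj-t-on-tate} then apply to $\lc{\cC}$ with $\ell=0$, and you conclude via the equivalences of Lemma~\ref{lem:tate-upw-comp} together with Lemma~\ref{lem:adj-t-on-upw-comp}. One precision: for the transport step, use that Lemma~\ref{lem:tate-upw-comp} gives equivalences in $\wWCat_{=0}$ when $\ell=0$, so the inverses are weight exact too; that is what lets the equivalences identify connectives and hence the right orthogonals.
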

\begin{proof}
    Since $\cC_{w <\infty } \hookrightarrow (\lc{\cC})_{w < \infty}$
    is fully faithful with dense image,
    it follows that also $(\lc{\cC})_{w <\infty}$ is countable,
    and hence $\lc{\cC}$ satisfies the assumptions of Propositions \ref{prop:adj-t-on-pro} and \ref{prop:adj-t-on-tate},
    so that $\Pro(\lc{\cC})$ and $\Tate(\lc{\cC})$ admit adjacent weak $t$-structures
    of defects $2m+1$ respectively $2m+2$.
    By Lemma \ref{lem:tate-upw-comp} we have equivalences $\lc{\Pro(\cC)} \simeq \lc{\Pro(\lc{\cC})}$
    and $\lc{\Tate(\cC)} \simeq \lc{\Tate(\lc{\cC})}$ in $\wWCat_{=0}$,
    so we conclude by Lemma \ref{lem:adj-t-on-upw-comp}.
\end{proof}

\begin{example}\label{ex:only-weak-t}
    $\Sp^\omega$ is a countable weighted category
    and $\Sp^{\omega,\uparrow} = \Sp^\ft = \Sp^\fg_{>-\infty}$
    admits an adjacent $t$-structure.
    Thus $\lc{\Pro(\Sp^\omega)}$ respectively $\lc{\Tate(\Sp^\omega)}$
    admit adjacent weak $t$-structures of defect $1$ respectively $2$.
    Here, a weak $t$-structure on the left completion is really the best one can
    hope for: an adjacent weak $t$-structure on $\Pro(\Sp^\omega)$
    corresponds under the equivalence $\Pro(\Sp^\omega) \simeq \Sp^\op$
    induced by Spanier-Whitehead duality to a co-adjacent weak $t$-structure
    on $\Sp$. However, if $X$ is a spectrum such that $\hom(X,Y) \in \Sp_{t \leq 0}$
    for all $Y \in \Sp_{w \leq 0}$, then already $X = 0$!
    Indeed, this is because $\Z \tensor I_{\Q/\Z} = 0$ (\cite[Lemma 7.1]{Hovey-Palmieri})
    so that $\Sigma^nI_{\Q/\Z} \in \Sp_{w \leq 0}$ for all $n$,
    and $0 = \pi_0\hom(X,\Sigma^nI_{\Q/\Z}) = \Hom(\pi_n X, \Q/\Z)$
    for all $n$ implies $X = 0$.
\end{example}

\appendix

\section{Mittag-Leffler conditions for ordinal-indexed inverse limits}\label{sec:app}

In this appendix we give an analogue of a Mittag--Leffler condition
which for example guarantees that an ordinal indexed inverse diagram
of connective spectra has a connective limit.
This is crucial for proving Theorem \ref{thm:gen-weight} in the case $\kappa > \omega$.

Recall the notion of connectivity of anima and maps of anima
from Example \ref{ex:conn-topos}.
Concretely, an anima is $0$-connective if it is nonempty,
and a $0$-connective map is a $\pi_0$-surjection.
For $n \geq 1$, an $n$-connective anima satisfies $\pi_k(X) \cong \pi_k(*)$
for $k \leq n-1$, and a map of anima is $n$-connective
if its fiber is $n$-connective for any choice of base point.
Note that if $X \to Y$ is an $n$-connective map and $Y$ is $n$-connective,
then also $X$ is $n$-connective.

\begin{proposition}\label{prop:mittag-leffler-an}
    Let $\gamma$ be an ordinal, $n \geq 0$, and $X_\bullet \colon \gamma^\op \to \An$ a diagram satisfying
    \begin{enumerate}
        \item $X_0$ is $n$-connective.
        \item For each successor ordinal $\alpha+1 < \gamma$,
            the map $X_{\alpha+1} \to X_\alpha$
            is $n$-connective.
        \item For each limit ordinal $\lambda < \gamma$,
            the map $X_\lambda \to \lim_{\alpha < \lambda} X_\alpha$
            is $n$-connective.
    \end{enumerate}
    Then all the $X_\alpha$ for $\alpha < \gamma$
    and also $X_\gamma \coloneqq \lim_{\alpha < \gamma} X_\alpha$ are $n$-connective.
\end{proposition}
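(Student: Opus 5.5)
The plan is to prove the stronger statement that for every $\beta \leq \gamma$ and every $\alpha \leq \beta$, the map $X_\beta \to X_\alpha$ is $n$-connective. Here $X_\beta$ for a limit ordinal $\beta$ means $\lim_{\alpha<\beta}X_\alpha$ when $\beta=\gamma$; for $\beta<\gamma$ it means the given object. Since $X_0$ is $n$-connective and the composite $X_\beta \to X_0 \to *$ then consists of two $n$-connective maps, $X_\beta$ is $n$-connective. Using the 2-out-of-3 properties of Definition \ref{def:conn-struc} for the connectivity structure on $\An$ (Example \ref{ex:conn-topos}), it is enough to show that each $X_\beta \to X_0$ is $n$-connective. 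This I would prove by transfinite induction on $\beta$.

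For the base case and the successor step, take $\beta = \alpha+1$. Then $X_{\alpha+1} \to X_\alpha \to X_0$ is a composite of two $n$-connective maps, by hypothesis (2) and induction, so it is $n$-connective. At a limit ordinal $\lambda$, hypothesis (3) reduces the claim to showing that $\lim_{\alpha<\lambda}X_\alpha \to X_0$ is $n$-connective; the case $\lambda=\gamma$ is exactly this. So the whole proposition reduces to the following key lemma. Let $\lambda$ be a limit ordinal and $(X_\alpha)_{\alpha<\lambda}$ a diagram in which every transition map $X_\beta \to X_\alpha$ is $n$-connective (known by induction). Then $\lim_{\alpha<\lambda} X_\alpha \to X_0$ is $n$-connective.

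To prove the key lemma, fix a basepoint $x_0 \in X_0$ and pass to fibers. The fiber of the limit over $x_0$ is the limit of the fibers $F_\alpha = \fib_{x_0}(X_\alpha \to X_0)$, and these again have $n$-connective transition maps, since fibers of compatible maps with $n$-connective comparison are controlled by Definition \ref{def:conn-struc}(4). It therefore suffices to show that a $\lambda^\op$-indexed limit of $n$-connective anima with $n$-connective transition maps is $n$-connective. I would argue this by transfinite construction of compatible points and homotopies.

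First I would show nonemptiness ($n=0$ already requires care). Choose points $y_\alpha \in F_\alpha$ together with coherent paths, by transfinite recursion. The successor step lifts along a $\pi_0$-surjection. The limit step applies the induction hypothesis for smaller limit ordinals, which gives the connectivity needed to extend. More cleanly, I would formulate everything via lifting against spheres. A map $\lim F_\alpha$ being $n$-connective is equivalent to every lifting problem $\partial D^{k} \to \lim F$, $D^k \to *$ with $k \leq n$ admitting a solution. Such a problem is a compatible family of problems for each $F_\alpha$. I would then solve it by transfinite recursion on $\alpha$, extending a compatible family of solutions. At successors, connectivity of $F_{\alpha+1}\to F_\alpha$ lets me lift a solution relative to the boundary. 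At limit stages, the compatible solutions already constructed define a solution into the limit $\lim_{\beta<\mu}F_\beta$, which must then be lifted along $F_\mu \to \lim_{\beta<\mu}F_\beta$; that map is $n$-connective by hypothesis (3).

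The main obstacle is making this coherent recursion rigorous in $\infty$-categorical terms. The difficulty is that a "compatible family up to $\alpha$" is a point of a limit space, and the limit stage needs the relative lifting property. I would phrase it as follows: the map $\map(D^k, \lim_{\beta \le \alpha}F_\beta) \to \map(\partial D^k,\lim_{\beta\le\alpha} F_\beta)\times \ldots$ is surjective on $\pi_0$. I would then prove by transfinite induction that the restriction maps $\lim_{\beta\le\alpha'}F_\beta \to \lim_{\beta\le\alpha}F_\beta$ are $n$-connective, which is exactly the form of hypothesis (2) and (3) for the truncated towers. Surjectivity on $\pi_0$ of sections at the limit then follows from a Milnor-type argument that only uses $\pi_0$-surjectivity of relative mapping spaces, which is fine because we only need existence of a lift, not uniqueness.
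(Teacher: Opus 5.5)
Your reduction of the higher-$n$ case to lifting problems against $\partial D^k\to D^k$ is sound. It turns the question into nonemptiness of a limit of spaces of relative solutions, which plays the role of the paper's path-space reduction $P_{x,y}\lim X_\bullet\simeq\lim P_{x,y}X_\bullet$. The gap is at the only nontrivial step, the limit stage of the $n=0$ case.

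First, your key lemma as stated is false. It keeps only $n$-connectivity of the transition maps $X_\beta\to X_\alpha$ and drops hypothesis (3) at the intermediate limit ordinals $\mu<\lambda$. Already for $n=0$ and $\lambda=\omega_1$, let $X_\alpha$ be the discrete set of injections $\alpha\hookrightarrow\N$ whose image has infinite complement. Then $X_0=*$ and every restriction map is surjective, but the limit is empty, since it would contain an injection $\omega_1\hookrightarrow\N$. Here (3) fails at $\mu=\omega$, e.g.\ at the identity of $\N$. Your later sketch does invoke (3), so you can repair this by restating the lemma with (2) and (3) as hypotheses. But then it is essentially the proposition itself in relative form, and the outer induction on $\beta$ buys nothing. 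Separately, the stronger claim that all $X_\beta\to X_\alpha$ are $n$-connective does not follow from the case $\alpha=0$ by 2-out-of-3: axiom (4c) would need $X_\alpha\to X_0$ to be $(n+1)$-connective. You would have to apply the result to the shifted diagram instead.

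The more serious problem is the limit step itself. In $\An$, solutions $y_\beta\in F_\beta$ for $\beta<\mu$ that are compatible up to chosen homotopies do not define a point of $\lim_{\beta<\mu}F_\beta$; you need a fully coherent family. Compatibility only on $\pi_0$ would require surjectivity of $\pi_0\lim_{\beta<\mu}F_\beta\to\lim_{\beta<\mu}\pi_0F_\beta$, which is exactly what is at stake. A Milnor sequence only handles countable cofinality. For $\mu$ of uncountable cofinality, such as $\omega_1$ (the case needed for Theorem~\ref{thm:gen-weight} with $\kappa=\omega_1$), you give no substitute, and you yourself flag this as the main obstacle.

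The missing idea is rigidification. The paper unstraightens $X_\bullet$ to a right fibration $p\colon\int X_\bullet\to\gamma$ and models $\lim_{\alpha<\beta}X_\alpha$ by the Kan complex $\Fun_{/\gamma}(\beta,\int X_\bullet)$ of sections. These section complexes are strictly continuous in $\beta$, because $\beta=\colim_{\alpha<\beta}\alpha$ holds strictly along monomorphisms. The restriction maps between them are right fibrations, so the $\pi_0$-surjectivity coming from (2) and (3) becomes surjectivity on $0$-simplices. Passing to vertices gives a diagram of sets with surjective successor maps and genuine limits at limit stages. There a transfinite choice of compatible vertices directly produces a point of $\lim_{\alpha<\gamma}X_\alpha$.
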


Let us first show that we can reduce this proposition to the following
special case, which we learned from Maxime Ramzi, see \cite[Theorem 1.1]{Ramzi-Ordinals}.
For the convenience of the reader we reproduce and elaborate on its proof below.

\begin{lemma}\label{lem:mittag-leffler-an}
     Let $\gamma$ be an ordinal, $n \geq 0$, and $X_\bullet \colon \gamma^\op \to \An$ a diagram satisfying
    \begin{enumerate}
        \item $X_\alpha$ is nonempty for all $\alpha < \gamma$.
        \item For each successor ordinal $\alpha+1 < \gamma$,
            the map $X_{\alpha+1} \to X_\alpha$
            is a $\pi_0$-surjection.
        \item For each limit ordinal $\lambda < \gamma$,
            the map $X_\lambda \to \lim_{\alpha < \lambda}X_\alpha$
            is a $\pi_0$-surjection.
    \end{enumerate}
    Then $X_\gamma \coloneqq \lim_{\alpha < \gamma} X_\alpha$ is nonempty.
\end{lemma}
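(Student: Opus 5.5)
We prove the statement by transfinite induction on $\gamma$, with a single uniform induction hypothesis: for every $\beta<\gamma$, the limit $X_\beta\coloneqq\lim_{\alpha<\beta}X_\alpha$ of the restricted diagram is nonempty, and the restriction maps $\lim_{\alpha<\beta}X_\alpha \to \lim_{\alpha<\beta'}X_\alpha$ are $\pi_0$-surjective for $\beta'\leq\beta$. The statement to prove is only nonemptiness. However, nonemptiness alone does not propagate through limit stages; what propagates is the stronger claim that the canonical map
\[
    \lim_{\alpha<\beta}X_\alpha \to \lim_{\alpha<\beta'}X_\alpha
\]
is a $\pi_0$-surjection for all $\beta'\leq\beta\leq\gamma$. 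This stronger claim holds for the given diagram as well. We therefore prove surjectivity for every pair $\beta'\le\beta$, by induction on $\beta$. Nonemptiness then follows by taking $\beta'=1$, since $X_0$ is nonempty.

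The successor case is formal. If $\beta=\delta+1$, then $\lim_{\alpha<\beta}X_\alpha\simeq X_\delta$. The map from $X_\delta$ to $\lim_{\alpha<\delta}X_\alpha$ is $\pi_0$-surjective by hypothesis (2) or (3), according to whether $\delta$ is a successor or a limit (for $\delta=0$ the target is a point and we use (1)). Composing with the inductively known surjection onto $\lim_{\alpha<\beta'}X_\alpha$, and using that $\pi_0$-surjections compose, handles all $\beta'\le\delta$.

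The limit case $\beta=\lambda$ is the main obstacle. Here $\lim_{\alpha<\lambda}X_\alpha\simeq\lim_{\beta''<\lambda}\lim_{\alpha<\beta''}X_\alpha$, and we must show that a point of $Y_{\beta'}\coloneqq\lim_{\alpha<\beta'}X_\alpha$ lifts to this limit. We cannot simply pass to $\pi_0$, because $\pi_0$ of an inverse limit is not the limit of the $\pi_0$'s (there are $\lim^1$ phenomena). Instead, we build a compatible family of points by transfinite recursion. Let $Y_\beta\coloneqq\lim_{\alpha<\beta}X_\alpha$ for $\beta'\le\beta<\lambda$, and choose points $y_\beta\in Y_\beta$ together with paths from the image of $y_{\beta}$ in $Y_{\beta_0}$ to $y_{\beta_0}$. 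To make this coherent, we replace the diagram $\beta\mapsto Y_\beta$ on the ordinal interval $[\beta',\lambda)$ by a strict tower of Kan fibrations, for instance via an injective fibrant replacement in $\Fun([\beta',\lambda)^\op,\sSet)$. For fibrant diagrams indexed by an ordinal, Reedy/injective fibrancy means precisely that each $Y_\beta\to\lim_{\alpha<\beta}Y_\alpha$ is a Kan fibration. It also means that the strict limit computes the homotopy limit. This is exactly the continuity we need at limit stages, since $Y_\mu\to\lim_{\beta<\mu}Y_\beta$ is an equivalence in this case. In this strict model, a $\pi_0$-surjective Kan fibration is surjective on vertices up to a path, and path-lifting then gives honest surjectivity on vertices for each successor transition. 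At limit stages $\mu<\lambda$, the transition map is an isomorphism onto the strict limit, so it is trivially surjective. Transfinite recursion then produces vertices $y_\beta$ that are compatible on the nose: successor steps use lifting, and limit steps take the induced point of the strict limit. Together these define a vertex of the strict limit over $\lambda$, hence a point of $\lim_{\alpha<\lambda}X_\alpha$ mapping to the given component of $Y_{\beta'}$.

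The delicate part is justifying that the fibrant replacement keeps $\pi_0$-surjectivity and the continuity at limits. Both are invariant under levelwise equivalence, so they transfer. This completes the induction, and taking $\beta=\gamma$ gives the lemma.
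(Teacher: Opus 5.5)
Your argument is correct, and its core matches the paper's: strictify the tower of limits to a tower of Kan fibrations, use that a $\pi_0$-surjective Kan fibration is surjective on vertices, and choose a compatible family of vertices by transfinite recursion.

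The difference is how you strictify. The paper unstraightens $X_\bullet$ to a right fibration $\int X_\bullet\to\gamma$ and models $\lim_{\alpha<\beta}X_\alpha$ by the section complexes $\Fun_{/\gamma}(\beta,\int X_\bullet)$. These form a tower of right fibrations that is continuous on the nose at limit ordinals, since $\beta=\colim_{\alpha<\beta}\alpha$ strictly. Passing to vertices therefore gives a continuous tower of sets with surjective successor maps, and the recursion is immediate.

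In your injective (equivalently Reedy) fibrant replacement, the limit-stage maps $Y'_\mu\to\lim_{\beta<\mu}Y'_\beta$ are \emph{not} isomorphisms as you claim. That property is special to the paper's model. They are Kan fibrations by Reedy fibrancy, and weak equivalences by continuity together with the fact that strict limits of fibrant diagrams compute homotopy limits. Hence they are trivial fibrations and thus surjective on vertices. So at limit stages you must choose a lift rather than ``take the induced point''; with that fix the recursion goes through. Your route costs the model-structure input (Reedy $=$ injective for ordinal-indexed inverse diagrams, plus rectification of the $\infty$-functor). The paper's route gets strict continuity for free but requires checking that the section complexes model the partial limits and that the restriction maps are right fibrations.

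Finally, the outer induction on $\beta$ is never used. Your limit-case construction only invokes hypotheses (1)--(3) at successor steps, so it proves the lemma directly for limit $\gamma$. The successor case is trivial, exactly as in the paper's initial reduction.
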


\begin{proof}[Proof of Prop.~\ref{prop:mittag-leffler-an} assuming Lemma \ref{lem:mittag-leffler-an}]
    Note that Lemma \ref{lem:mittag-leffler-an}
    is just the $n=0$ case of the Proposition,
    with the following stronger assumption in place of (1):
    \begin{enumerate}
        \item[(1')] $X_\alpha$ is $n$-connective for all $\alpha < \gamma$.
    \end{enumerate}
    Let us first show how this implies the cases for $n \geq 1$
    where we also assume (1') instead of (1).

    For the case $n=1$, the $n=0$-case immediately yields that $\lim X_\bullet$ is $0$-connective, hence nonempty.
    To show it is 1-connective, we need to argue that $\pi_0\lim X_\bullet = *$.
    Given two points in $x,y \colon * \to \lim X_\bullet$, let $P_{x,y}\lim X_\bullet$ be the space of paths,
    which we need to prove is nonempty.
    The base points $x,y$ induce two base points at each level $X_\alpha$ which are preserved by the structure maps.
    We will also denote these by $x,y$.
    Then $P_{x,y}\lim_{\alpha < \gamma}X_\alpha \simeq \lim_{\alpha < \gamma}P_{x,y}X_\alpha$,
    and the diagram $P_{x,y}X_\bullet$ satisfies the assumptions of the $n=0$ case,
    so that its limit is non-empty. This settles the case $n=1$.
    For $n \geq 2$, we can do a similar reduction to $n=1$ by taking iterated loop spaces,
    where we now already know that $\pi_0\lim X_\bullet = *$ so there is only one choice of basepoint.

    Thus the assumptions (1')+(2)+(3) imply the main claim for all $n \geq 0$.
    Let us now show that we can replace (1') with (1),
    for which we need to show that under assumptions (1)-(3),
    all the $X_\alpha$ are $n$-connective.
    We do this by transfinite induction.
    Namely, condition (2) guarantees that if $X_\alpha$ is $n$-connective
    then also $X_{\alpha+1}$ is. Moreover, if $\lambda < \gamma$
    is a limit ordinal so that for all $\alpha < \lambda$
    we have already shown that $X_\alpha$ is $n$-connective,
    then $X_\bullet|_\lambda$ satisfies (1')+(2)+(3)
    so that $\lim_{\alpha < \lambda} X_\alpha$ is $n$-connective.
    By (3) we see that $X_\lambda$ sits in a fiber sequence
    between two $n$-connective anima, and is thus itself $n$-connective
    (for $n\geq 1$ there is no issue with base points,
    and for $n=0$ (3) precisely tells us that we have $\pi_0$-surjections,
    so $X_\lambda$ is nonempty).
    By transfinite induction, this shows that for all $\alpha < \gamma$,
    the anima $X_\alpha$ is $n$-connective, as desired.
\end{proof}

\begin{proof}[Proof of Lemma \ref{lem:mittag-leffler-an}]
    Note that if $\gamma = \beta+1$ is not a limit ordinal,
    then $\lim_{\alpha < \gamma}X_\alpha = X_\beta$, and we are done by (1).
    So suppose from now on that $\gamma$ is a limit ordinal.

    By straightening-unstraightening,
    the diagram $X_\bullet$ induces a right fibration of quasi-categories
    $p \colon \int X_\bullet \to \gamma$, and we can identify
    $\lim_{\alpha < \gamma} X_\alpha$ with the Kan-complex of sections
    $\Fun_{/\gamma}(\gamma,\int X_\bullet)$,
    which we have to prove is nonempty.
    More generally, for each $\beta \leq \gamma$
    we can identify the Kan complex $\Fun_{/\gamma}(\beta, \int X_\bullet)$
    with the limit $\lim_{\alpha < \beta}X_\alpha$.
    Here we implicitly identify an ordinal $\alpha$
    with the nerve $N(\alpha)$ of the 1-category given by the poset $\alpha$.

    Because the transition maps $\alpha \to \beta$
    are monomorphisms of simplicial sets and hence cofibrations in Quillen model structure whenever $\alpha \leq \beta$,
    the expression $\beta = \colim_{\alpha < \beta} \alpha$
    is true both strictly (i.e.~in simplicial sets) and homotopically (i.e.~in $\Cat_\infty$).
    Since $\Fun_{/\gamma}(-,\int X_\bullet)$ also sends colimits
    of simplicial sets to limits,
    we conclude that for each $\beta \leq \gamma$,
    the strict limit $\lim_{\alpha < \beta}\Fun_{/\gamma}(\alpha,\int X_\bullet)$
    is a Kan-complex representing the limit $\lim_{\alpha < \beta}X_\alpha$.

    For $\alpha < \beta < \gamma$, consider the following diagram of pullbacks of
    simplicial sets
    \[\begin{tikzcd}
        {\Fun_{/\gamma}(\beta,\int X_\bullet)} & {\Fun(\beta,\int X_\bullet)} \\
        {\Fun_{/\gamma}(\alpha, \int X_\bullet)} & P & {\Fun(\alpha,\int X_\bullet)} \\
        {*} & {\Fun(\beta,\gamma)} & {\Fun(\alpha,\gamma)}
        \arrow[from=1-1, to=1-2]
        \arrow["{R'}"', from=1-1, to=2-1]
        \arrow["\lrcorner"{anchor=center, pos=0.125}, draw=none, from=1-1, to=2-2]
        \arrow["R", from=1-2, to=2-2]
        \arrow[from=2-1, to=2-2]
        \arrow[from=2-1, to=3-1]
        \arrow["\lrcorner"{anchor=center, pos=0.125}, draw=none, from=2-1, to=3-2]
        \arrow[from=2-2, to=2-3]
        \arrow[from=2-2, to=3-2]
        \arrow["\lrcorner"{anchor=center, pos=0.125}, draw=none, from=2-2, to=3-3]
        \arrow["{p_*}", from=2-3, to=3-3]
        \arrow["\inc", from=3-1, to=3-2]
        \arrow[from=3-2, to=3-3]
    \end{tikzcd}\]
    Note that since $p$ is a right fibration,
    also $p_*$ is a right fibration by \cite[\href{https://kerodon.net/tag/00TQ}{Tag 00TQ}]{Kerodon}.
    Since $N(\alpha) \to N(\beta)$ is a monomorphism
    of simplicial sets, \cite[\href{https://kerodon.net/tag/00TP}{Tag 00TP}]{Kerodon} tells us that also $R$ is a right fibration.
    Since right fibrations are stable under pullback,
    we conclude that all vertical maps in this diagram are right fibrations,
    and thus that all the strict pullbacks present the corresponding
    (homotopy-)pullbacks taken in $\Cat_\infty$.

    Assumption (2) now tells us that the right fibration
    $\Fun_{/\gamma}(\beta+1,\int X_\bullet) \to \Fun_{/\gamma}(\beta, \int X_\bullet)$ is surjective on $\pi_0$, and thus surjective on 0-simplices
    by the defining lifting property against $\{1\} \to \Delta^1$.
    Similarly, assumption (3) tells us that $\Fun_{/\gamma}(\lambda+1, \int X_\bullet) \to \Fun_{/\gamma}(\lambda, \int X_\bullet)$
    is surjective on 0-simplices.
    Hence, passing to $0$-simplices in the $\gamma^\op$-indexed diagram
    $\beta \mapsto \Fun_{/\gamma}(\beta,\int X_\bullet)$
    thus yields a diagram $Y_\bullet \colon \gamma^\op \to \Set$ satisfying:
    \begin{enumerate}
        \item Each $Y_{\alpha}$ is nonempty for $\alpha < \gamma$.
            Indeed, since $\gamma$ is a limit ordinal,
            we know that $\alpha+1 < \gamma$.
            Now $Y_{\alpha+1}$ is the set of 0-simplices
            of a Kan complex representing the nonempty anima $X_\alpha$
            and is thus nonempty.
            Since we have a map $Y_{\alpha+1} \to Y_\alpha$,
            also the latter is nonempty.

        \item For any $\alpha < \gamma$,
            the map $Y_{\alpha+1} \to Y_\alpha$ is surjective.
            Note that here we use both assumptions (2) and (3)
            (the latter in the case $\alpha=\lambda$).

        \item For any limit-ordinal $\lambda < \gamma$,
            we have $Y_\lambda \xto{\cong} \lim_{\alpha < \lambda}Y_\alpha$,
            since $\Fun_{/\gamma}(\lambda, \int X_\bullet) \cong \lim_{\alpha < \lambda} \Fun_{/\gamma}(\alpha,\int X_\bullet)$ is a strict limit
            in simplicial sets, which passes to 0-simplices.
    \end{enumerate}
    We need to show that $\lim_{\alpha < \gamma} Y_\alpha \neq \empty$.
    We define a compatible system of elements $y_\alpha \in Y_\alpha$
    via transfinite induction.
    Pick any $y_0 \in Y_0$.
    Having defined $y_\alpha \in Y_\alpha$, we let $y_{\alpha+1} \in Y_{\alpha+1}$ be any preimage of $y_\alpha$ under the surjection $Y_{\alpha+1} \to Y_\alpha$.
    If $\lambda \leq \gamma$ is a limit ordinal so that we have defined
    the compatible family $y_\alpha$ for all $\alpha < \lambda$,
    then this precisely defines us a point in $\lim_{\alpha < \lambda}Y_\alpha \cong Y_\lambda$.
    By transfinite induction, we obtain a point in $Y_\gamma \coloneqq \lim_{\alpha < \gamma} Y_\alpha$.
\end{proof}

\begin{corollary}\label{cor:mittag-leffler-sp}
    Let $R$ be a connective $\E_1$-ring,
    $\gamma$ an ordinal, $n \in \Z$, and $X_\bullet \colon \gamma^\op \to \LMod(R)$ a diagram satisfying
    \begin{enumerate}
        \item $X_{0}$ is $n$-connective (in the canonical $t$-structure on $\Mod(R)$).

        \item For each successor ordinal $\alpha+1 < \gamma$,
            the map $X_{\alpha+1} \to X_\alpha$ has $n$-connective fiber.

        \item For each limit ordinal $\lambda < \gamma$,
            the map $X_\lambda \to \lim_{\alpha < \lambda}X_\alpha$
            has $n$-connective fiber.
    \end{enumerate}
    Then all the $X_\alpha$ for $\alpha < \gamma$
    and also $\lim_{\alpha < \gamma} X_\alpha$ are $n$-connective.
\end{corollary}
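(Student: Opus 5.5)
The plan is to reduce to Proposition \ref{prop:mittag-leffler-an} by testing connectivity against mapping anima out of shifts of $R$. An $R$-module $M$ is $n$-connective precisely if $\pi_k M = 0$ for all $k<n$. Equivalently, for every $m \geq 0$ the anima $\Omega^{\infty}\Sigma^{m-n}M \simeq \map_{\LMod(R)}(\Sigma^{n-m}R, M)$ is $m$-connective in the sense of Example \ref{ex:conn-topos}. So I would fix $m \geq 0$ and apply the anima version of the Mittag--Leffler statement to the diagram
\[
    Y^{(m)}_\bullet \coloneqq \Omega^\infty \Sigma^{m-n} X_\bullet \colon \gamma^\op \to \An.
\]
Since $\Omega^\infty$ preserves limits, we have $\lim_{\alpha<\gamma} Y^{(m)}_\alpha \simeq \Omega^\infty\Sigma^{m-n}\lim_{\alpha<\gamma}X_\alpha$, and similarly at every limit ordinal $\lambda<\gamma$.

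The next step is to check the three hypotheses of Proposition \ref{prop:mittag-leffler-an} with $n$ replaced by $m$. Hypothesis (1) holds because $\Sigma^{m-n}X_0$ is $m$-connective, so its infinite loop space is $m$-connective. For (2) and (3) I would use that $\Omega^\infty$ preserves fiber sequences. If $f \colon A \to B$ is a map of $R$-modules with $\fib(f)$ $n$-connective, then $\Omega^\infty\Sigma^{m-n}f$ has fiber over every basepoint equivalent to $\Omega^\infty \Sigma^{m-n}\fib(f)$. Basepoints only matter up to the $\pi_0$-torsor action, and homogeneity of infinite loop spaces takes care of this. That fiber is $m$-connective. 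There is one subtlety: for $m=0$, $0$-connectivity of a map means $\pi_0$-surjectivity, and fibers over points outside the image would be empty. This is handled by the long exact sequence: $\pi_0 A \to \pi_0 B \to \pi_{-1}\fib(f)$, and the last group vanishes since $\fib(f)$ is $0$-connective after shifting. The same long exact sequence argument gives $\pi_0$-surjectivity for every $m$, so the maps are genuinely $m$-connective. At limit ordinals, I apply the same reasoning to $X_\lambda \to \lim_{\alpha<\lambda}X_\alpha$, using that $\Omega^\infty\Sigma^{m-n}$ commutes with that limit.

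Proposition \ref{prop:mittag-leffler-an} then gives that every $Y^{(m)}_\alpha$ and the limit $Y^{(m)}_\gamma$ are $m$-connective. Taking $m = n-k$ would require $m \geq 0$ for each $k<n$, so it is cleaner to fix a large $m$ and read off homotopy groups. For $k<n$ we have
\[
    \pi_k \lim_\alpha X_\alpha \cong \pi_{k+m-n}\Omega^\infty \Sigma^{m-n}\lim_\alpha X_\alpha .
\]
Choosing $m \geq n-k$ makes this index nonnegative and less than $m$, so the group vanishes. The same argument applies to each $X_\alpha$. Hence all $X_\alpha$ and the limit are $n$-connective.

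The main obstacle is the careful translation between fiber connectivity of module maps and connectivity of maps of anima, especially the $\pi_0$-surjectivity at the bottom degree and the independence of basepoint. Once that translation is in place, the statement follows formally from Proposition \ref{prop:mittag-leffler-an}.
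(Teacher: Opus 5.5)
Your proposal is correct and is essentially the paper's argument: the paper first passes to underlying spectra via the forgetful functor $U \colon \LMod(R) \to \Sp$ and then applies $\Omega^{\infty-k}$ for $k \gg 0$ to reduce to Proposition \ref{prop:mittag-leffler-an}. Your $\Omega^\infty\Sigma^{m-n} \simeq \map(\Sigma^{n-m}R,-)$ with $m$ large is the same functor, and your checks of $\pi_0$-surjectivity and basepoint independence for the induced maps of anima fill in details the paper leaves implicit.
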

\begin{proof}
    Since connectivity, limits and colimits are detected and preserved by the forgetful functor $U \colon \LMod(R) \to \Sp$,
    we may as well assume that $R = \S$ so that $\LMod(R) = \Sp$.
    But then we can reduce to Proposition \ref{prop:mittag-leffler-an}
    by considering $\Omega^{\infty-k}$ for $k \gg 0$.
\end{proof}

\begin{remark}
    In the case where the diagram $X_\bullet$ takes values in $\Sp_{t=0} \simeq \Ab$ (though the limits are still taken in $\Sp$)
    the second condition simply becomes that the relevant maps are all surjections.
    That such inverse systems of abelian groups have vanishing $\lim^{(n)}$-terms for $n > 0$
    was shown in \cite[Corollary A.3.14]{Neeman-Triangulated}.
\end{remark}

Recall the orthogonal complement notation from Section \ref{sec:notation}.

\begin{lemma}\label{lem:orth-closure}
    Let $\cC$ be a stable category and $\cD \subseteq \cC$ a full subcategory.
    Then:
    \begin{enumerate}
        \item ${}^\perp\cD$ and $\cD^\perp$
            are closed under retracts and extensions.

        \item If $\cD$ is closed under looping \alt{suspensions}
            then ${}^\perp \cD$ \alt{$\cD^\perp$} is closed under all colimits
            \alt{all limits}
            and $\cD^\perp$ \alt{${}^\perp\cD$}
            is closed under finite limits \alt{finite colimits}
            and colimits \alt{limits}
            of diagrams $X_\bullet \colon \gamma \to \cC$ \alt{$X_\bullet \colon \gamma^\op \to \cC$} for some ordinal $\gamma$ satisfying
            \begin{enumerate}
                \item For a successor ordinal $\alpha+1 < \gamma$,
                    the map $X_{\alpha} \to X_{\alpha+1}$ has cofiber in $\cD$
                    \alt{the map $X_{\alpha+1} \to X_\alpha$ has fiber in $\cD$}.
                \item For a limit ordinal $\lambda < \gamma$,
                    the map $\colim_{\alpha < \lambda} X_\alpha \to X_\lambda$
                    has cofiber in $\cD$
                    \alt{the map $X_\lambda \to \lim_{\alpha < \lambda} X_\alpha$
                    has fiber in $\cD$}.
            \end{enumerate}
    \end{enumerate}
\end{lemma}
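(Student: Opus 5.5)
The plan is to translate every orthogonality condition into a condition on hom spectra, and then read off each closure property from closure properties of $\Sp_{t\leq -1}$ or $\Sp_{t \geq 1}$. The ordinal-indexed case is the only one that needs more than this, and there I will use Corollary \ref{cor:mittag-leffler-sp}.

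For (1), $X\in\lperp{\cD}$ means $\pi_0\map(X,d)=0$ for all $d\in\cD$, and dually for $\rperp{\cD}$. Closure under retracts holds because a retract of a zero group is zero. For extensions $X'\to X\to X''$, the sequence $\pi_0\map(X'',d)\to\pi_0\map(X,d)\to\pi_0\map(X',d)$ is exact in the middle, so the middle term vanishes when the outer ones do. The case of $\rperp{\cD}$ is the same with the variables swapped.

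For (2), assume $\cD$ is closed under $\Omega$. Then for $X\in\lperp{\cD}$ and $d\in\cD$ we get $\pi_k\hom(X,d)=\pi_0\map(X,\Omega^kd)=0$ for all $k\geq 0$, so $\hom(X,d)\in\Sp_{t\leq -1}$. Conversely, this condition for all $d$ recovers $X\in\lperp{\cD}$. Since $\hom(\colim_iX_i,d)\simeq\lim_i\hom(X_i,d)$ and $\Sp_{t\leq -1}$ is closed under all limits, $\lperp{\cD}$ is closed under all colimits. In the same way, $X\in\rperp{\cD}$ is equivalent to $\hom(d,X)\in\Sp_{t\geq 1}$ for all $d\in\cD$, since $\pi_{-k}\hom(d,X)=\pi_0\map(\Omega^kd,X)$. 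Closure of $\rperp{\cD}$ under finite limits then reduces to closure under fibers, which I would check on these hom spectra together with the long exact sequence, using once more that $\cD$ is closed under $\Omega$. The bracketed alternative statements follow by passing to $\cC^\op$.

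The ordinal-indexed statement is the main step, and I read it in the form in which it is used in Lemma \ref{lem:weight-closure}(4): $X_0$ and the successive cofibers $\cofib(X_\alpha\to X_{\alpha+1})$ and $\cofib(\colim_{\alpha<\lambda}X_\alpha\to X_\lambda)$ lie in the orthogonal class itself. For a test object $d$, I apply $\hom(-,d)$, which produces an inverse diagram $\gamma^\op\to\Sp$ with limit $\hom(X_\gamma,d)$. The hypotheses translate into: the term at $0$ is $n$-connective for the relevant $n$ (after shifting, $n=0$); each transition map $\hom(X_{\alpha+1},d)\to\hom(X_\alpha,d)$ has fiber $\hom(\cofib,d)$, which is $n$-connective; and at a limit ordinal $\lambda$ the map $\hom(X_\lambda,d)\to\lim_{\alpha<\lambda}\hom(X_\alpha,d)$ likewise has $n$-connective fiber. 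Corollary \ref{cor:mittag-leffler-sp} then gives that the limit is $n$-connective, which is exactly the orthogonality condition for $X_\gamma$.

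The main obstacle is bookkeeping: choosing the correct shift $n$, so that the connectivity condition on hom spectra is equivalent to membership in the class, and verifying the limit-ordinal clause. For the latter I use the identification $\hom(\colim_{\alpha<\lambda}X_\alpha,d)\simeq\lim_{\alpha<\lambda}\hom(X_\alpha,d)$, which makes the required fiber equal to $\hom$ of the limit-stage cofiber. I would fix the conventions first in the case $\cD=\cC_{\geq 1}$, where everything is explicit, and then run the general argument.
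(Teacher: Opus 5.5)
Part (1) and the closure of ${}^\perp\cD$ under all colimits are fine. The step ``closure of $\cD^\perp$ under finite limits reduces to closure under fibers, checked via the long exact sequence'' fails, however.

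When $\cD$ is closed under $\Omega$, you correctly have $X\in\cD^\perp$ iff $\hom(d,X)\in\Sp_{\geq 1}$ for all $d\in\cD$. But $\Sp_{\geq 1}$ is not closed under fibers. For a fiber sequence $F\to X\to Y$ with $X,Y\in\cD^\perp$, the long exact sequence only gives a surjection $\pi_1\hom(d,Y)\to\pi_0\hom(d,F)$. Here $\pi_1\hom(d,Y)=\pi_0\map(\Sigma d,Y)$ is uncontrolled, since $\Sigma d$ need not lie in $\cD$, and closure of $\cD$ under $\Omega$ does not help.

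Concretely, take $\cC=\Sp$ and $\cD=\Sp_{t\leq 0}$. Then $\Sigma\Q\in\cD^\perp$, since $\pi_0\map(d,\Sigma\Q)\cong\Hom(\pi_1(d)\otimes\Q,\Q)=0$ for $d\in\Sp_{t\leq 0}$. On the other hand, $\Omega\Sigma\Q=\Q\notin\cD^\perp$. So that clause of the displayed statement is false as printed.

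What your hom-spectrum description actually gives is the following. The class $\cD^\perp$ is closed under finite \emph{colimits}. Applying Corollary~\ref{cor:mittag-leffler-sp} with $n=1$ to $\hom(d,X_\bullet)$, it is also closed under limits of $\gamma^{\op}$-indexed towers whose fibers lie in $\cD^\perp$. Equivalently, finite limits and ordinal-indexed colimits belong to ${}^\perp\cD$ under the hypothesis that $\cD$ is closed under \emph{suspensions}. This is the form in which the lemma is used in Lemma~\ref{lem:weight-closure}, with $\cD=\cC_{\geq 1}$ and ${}^\perp\cD=\cC_{\leq 0}$. The second clause of the statement has its two alternatives interchanged, and you should prove the correctly paired version rather than assert a check that cannot go through.

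Your transfinite argument in fact already proves this corrected version, not the one you set out to prove. Testing with $\hom(-,d)$ concerns ${}^\perp\cD$, not $\cD^\perp$. Moreover, $\hom(X,d)$ has a lower connectivity bound characterizing membership only when $\cD$ is closed under $\Sigma$. In that case $X\in{}^\perp\cD$ iff $\hom(X,d)\in\Sp_{\geq 1}$, so the corollary is applied with $n=1$, not $n=0$. Under the $\Omega$-hypothesis one instead has $\hom(X,d)\in\Sp_{t\leq -1}$, and ordinal colimits in ${}^\perp\cD$ are already covered by closure under all colimits.

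So the ``bookkeeping'' you postpone is exactly where the content lies. Once the hypotheses are paired correctly, your route coincides with the paper's one-line proof. You translate orthogonality into $\hom(X,d)$ or $\hom(d,X)$ lying in a shift of $\Sp_{\geq 0}$ or $\Sp_{t\leq 0}$, use the closure properties of these classes, and invoke Corollary~\ref{cor:mittag-leffler-sp} for the transfinite case.
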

\begin{proof}
    This follows from the analogous closure properties of
    $\Sp_{\geq 0} \subseteq \Sp$
    and $\Sp_{t \leq 0} \subseteq \Sp$.
    For the ordinal-indexed (co)limits,
    this is Corollary \ref{cor:mittag-leffler-sp}.
\end{proof}

The following lemma is needed to identify the weight coconnectives
in the Anderson weight structure on spectra, see Proposition \ref{prop:anderson-weight}.

\begin{lemma}\label{lem:free-ab}
    Let $\gamma$ be an ordinal and $A_\bullet \colon \gamma \to \Ab$
    a diagram of abelian groups. Suppose that
    \begin{enumerate}
        \item Each $A_\alpha$ is a free abelian group.
        \item For a successor ordinal $\alpha+1 < \gamma$,
            the map $A_{\alpha} \to A_{\alpha+1}$ is injective
            with free abelian cokernel.
        \item For each limit ordinal $\lambda < \gamma$,
            the map $\colim_{\alpha < \lambda} A_\alpha \to A_\lambda$
            is injective with free abelian cokernel.
    \end{enumerate}
    Then $A_\gamma \coloneqq \colim_{\alpha < \gamma} A_\alpha$
    is also free abelian.
\end{lemma}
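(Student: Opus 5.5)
The plan is transfinite induction on $\beta \leq \gamma$, proving the stronger statement that for every $\beta \leq \gamma$ the colimit $A_\beta' \coloneqq \colim_{\alpha<\beta} A_\alpha$ is free, and moreover that the canonical maps $A_\alpha \to A'_\beta$ for $\alpha < \beta$ are injective with free cokernel. For $\beta = \alpha+1$ a successor, $A'_\beta = A_\alpha$, which is free by assumption (1).

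The core idea is to construct compatible bases. I would build, by transfinite recursion on $\alpha<\gamma$, subsets $B_\alpha \subseteq A_\alpha$ with three properties: $B_\alpha$ is a basis of $A_\alpha$; the transition map $A_{\alpha'} \to A_\alpha$ sends $B_{\alpha'}$ injectively into $B_\alpha$ for $\alpha'<\alpha$; and the complement of the image spans a free complement. At $\alpha=0$, take any basis of $A_0$. At a successor $\alpha+1$, the injection $A_\alpha \to A_{\alpha+1}$ has free cokernel $Q$, so the short exact sequence splits, $A_{\alpha+1} \cong A_\alpha \oplus Q$. Set $B_{\alpha+1} = B_\alpha \sqcup C$, where $C$ is the image under a chosen section of a basis of $Q$.

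At a limit $\lambda<\gamma$, first observe that the union $B'_\lambda \coloneqq \bigcup_{\alpha<\lambda} B_\alpha$ (taken in the filtered colimit) is a basis of $\colim_{\alpha<\lambda} A_\alpha$. Since filtered colimits of abelian groups are exact and colimits of injections are injective, the image $B'_\lambda$ spans, and any linear relation among its elements already lives at some stage $\alpha<\lambda$, where those elements are part of the basis $B_\alpha$. Then assumption (3) gives a split injection $\colim_{\alpha<\lambda}A_\alpha \to A_\lambda$ with free cokernel, and we extend $B'_\lambda$ by lifting a basis of the cokernel, exactly as in the successor step.

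Finally, the same union argument at $\gamma$ itself shows that $\bigcup_{\alpha<\gamma}B_\alpha$ is a basis of $A_\gamma$. One subtlety is that transfinite recursion requires choosing splittings and bases at every stage, which uses the axiom of choice; this is harmless. I expect the limit step, verifying linear independence of the union, to be the only real point. It reduces to compactness of finite linear relations in filtered colimits together with injectivity of all transition maps. Injectivity of the composite $A_\alpha\to A_\beta$ for arbitrary $\alpha<\beta$ is itself part of the inductive hypothesis and follows inductively from (2) and (3).
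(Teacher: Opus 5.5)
Your proposal is correct and is essentially the paper's argument: both build compatible bases by transfinite recursion, extending along the split injections with free cokernel at successor and limit stages, and then observe that the union of the bases is a basis of the colimit. The only cosmetic difference is in the limit stages. The paper phrases them via the fact that $\mathbb{Z}[-]$ commutes with colimits, whereas you verify spanning and linear independence of the union directly, using injectivity of the transition maps and the fact that linear relations are finite.
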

\begin{proof}
    If $\gamma$ is a successor ordinal, we are done by (1),
    so assume from now on that it is a limit ordinal.
    Extend $A_\bullet$ to a diagram $\gamma+1 \to \Ab$
    by taking the colimit.
    We will construct a diagram $X_\bullet \colon \gamma+1 \to \Set$
    where each map $X_{\alpha} \to X_{\beta}$ is an injection
    for $\alpha < \beta \leq \gamma$,
    together with a natural isomorphism $\Z[X_\bullet] \cong A_\bullet$.

    Since $A_0$ is free, we have $A_0 \cong \Z[X_0]$ for some set $X_0$.
    Suppose we have constructed a diagram $X_\bullet \colon \alpha+1 \to \Set$
    together with a natural isomorphism $\Z[X_\bullet] \cong (A_\bullet)_{\alpha+1}$. To extend this to $\alpha+2$,
    we note that there is a short exact sequence $A_\alpha \to A_{\alpha+1} \to C$
    where $C$ is a free abelian group. Thus the sequence splits,
    which allows us to extend the basis $X_{\alpha}$ of $A_\alpha$
    to a basis $X_{\alpha+1}$ of $A_{\alpha+1}$, giving an extension
    of $X_\bullet$ and the natural isomorphism to $\alpha+2$.

    Now if $\lambda < \gamma$ is a limit ordinal
    so that we have constructed the
    lift up to stage $\alpha$ for all $\alpha < \lambda$
    then we can use the fact that
    $\Fun(\lambda,\Set) \cong \lim_{\alpha < \lambda} \Fun(\alpha,\Set)$
    and the concrete description of limits in the 1-category
    of 1-categories to obtain the extension $X_\bullet \colon \lambda \to \Set$
    of injections with a natural isomorphism $\Z[X_\bullet] \cong A_\bullet|_{\lambda}$.
    Now assumption (3) guarantees by the same argument
    as above that we can extend the basis $\colim_{\alpha < \lambda}X_\alpha$
    of $\colim_{\alpha < \lambda}A_\alpha$
    to a basis of $A_\lambda$, and thus obtain an extension
    of $X_\bullet$ and the natural isomorphism to $\lambda+1$.
    This concludes the transfinite induction.
\end{proof}

\bibliography{reference}
\end{document}